\DeclareMathAlphabet{\mathpzc}{OT1}{pzc}{m}{it}
\definecolor{mygray}{rgb}{0.95,0.95,0.92}
\makeatletter\newenvironment{remark}{
  \vspace{1em}
  \noindent\begin{lrbox}{\@tempboxa}\begin{minipage}{\textwidth}\small
      \noindent\textbf{Observación:}}{\end{minipage}\end{lrbox}
  \colorbox{mygray}{\usebox{\@tempboxa}}
  \vspace{1em}
}\makeatother
\definecolor{codegreen}{rgb}{0,0.6,0}
\definecolor{codegray}{rgb}{0.5,0.5,0.5}
\definecolor{codepurple}{rgb}{0.58,0,0.82}
\definecolor{backcolour}{rgb}{0.95,0.95,0.92}
\lstdefinestyle{mystyle}{
    backgroundcolor=\color{mygray},   
    keywordstyle=\color{red},
    numberstyle=\tiny\color{codegray},
    stringstyle=\color{codepurple},
    basicstyle=\ttfamily\footnotesize,
    breakatwhitespace=false,         
    breaklines=true,                 
    captionpos=b,                    
    keepspaces=true,                 
    numbers=left,                    
    numbersep=5pt,                  
    showspaces=false,                
    showstringspaces=false,
    showtabs=false,                  
    tabsize=2
}
\numberwithin{equation}{chapter}
\theoremstyle{plain}
\newtheorem{thm}{Teorema}[chapter]
\newtheorem{conj}[thm]{Conjetura}
\newtheorem{lem}[thm]{Lema}
\newtheorem{cor}[thm]{Corolario}
\newtheorem{prop}[thm]{Proposición}
\newtheorem{con}[thm]{Conjetura}
\newtheorem{defi}[thm]{Definición}
\newtheorem{ex}[thm]{Ejemplo}
\newtheorem{ejer}{Ejercicio}[chapter]
\newtheorem{introthm}{Teorema}
\theoremstyle{nonumberplain}
\newtheorem{proof}{Demostración}
\crefname{section}{sección}{secciones}
\crefname{footnote}{pie de página}{pies de página}
\setlist[description]{%
  topsep=30pt,               
  itemsep=5pt,               
  font={\normalfont\textit}, 
}
\newlist{arabiclist}{enumerate}{2}
\setlist[arabiclist]{leftmargin=*,labelindent=\parindent,label=(\arabic*)}
\setlist[arabiclist,2]{label=(\roman*)}
\let\C\@undefined
\let\G\@undefined
\newcommand{\isom}{\simeq}
\newcommand{\complexi}{\mathrm i}
\newcommand{\R}{\mathbb{R}}
\newcommand{\C}{\mathbb{C}}
\newcommand{\Q}{\mathbb{Q}}
\newcommand{\Qbar}{\overline\Q}
\newcommand{\F}{\mathbb{F}}
\newcommand{\Fp}{\mathbb{F}_p}
\newcommand{\Z}{\mathbb{Z}}
\newcommand{\Ncero}{\mathbb N_{\ge0}}
\newcommand{\Nuno}{\mathbb N_{\ge1}}
\newcommand{\Zl}{\mathbb{Z}_{\ell}}
\newcommand{\Zp}{\mathbb{Z}_{p}}
\renewcommand{\O}{\mathcal O}
\newcommand{\Qp}{\mathbb{Q}_{p}}
\newcommand{\Qpbar}{\Qbar_p}
\newcommand{\LL}{\Lambda}
\newcommand{\mut}{\widetilde{\mu}}
\newcommand{\lat}{\widetilde{\lambda}}
\newcommand{\nut}{\widetilde{\nu}}
\newcommand{\w}{\omega}
\newcommand{\M}{\mathfrak{M}}
\newcommand{\p}{\mathfrak{p}}
\newcommand{\Kp}{K_{\p}}
\newcommand{\e}{\mathrm e}
\newcommand{\G}[1]{\mathrm G_{#1}}
\newcommand{\GL}{\mathrm{GL}}
\newcommand{\GQ}{\G\Q}
\newcommand{\Bcris}{\mathrm B_{\mathrm{cris}}}
\newcommand{\BdR}{\mathrm B_{\mathrm{dR}}}
\newcommand{\nr}{\mathrm{nr}}
\newcommand{\ab}{\mathrm{ab}}
\newcommand{\HL}{\mathrm H}
\newcommand{\Hf}{\HL_{\mathrm f}}
\DeclareMathOperator{\Sel}{Sel}
\newcommand{\betti}{\mathrm{B}}
\newcommand{\et}{\text{\normalfont ét}}
\newcommand{\dR}{\mathrm{dR}}
\newcommand{\GQp}{\G{\Qp}}
\renewcommand{\Re}{\operatorname{Re}}
\newcommand{\Frob}{\mathrm{Frob}}
\DeclareMathOperator{\Hom}{Hom}
\DeclareMathOperator{\Prin}{Prin}
\DeclareMathOperator{\Div}{Div}
\DeclareMathOperator{\Cl}{Cl}
\DeclareMathOperator{\End}{End}
\DeclareMathOperator{\Aut}{Aut}
\DeclareMathOperator{\Gal}{Gal}
\DeclareMathOperator{\rg}{rg}
\DeclareMathOperator{\Sop}{sop}
\DeclareMathOperator{\Log}{Log}
\DeclareMathOperator{\Rad}{rad}
\DeclareMathOperator{\Norm}{\mathit{N}}
\DeclareMathOperator{\Frac}{Frac}
\DeclareMathOperator{\coker}{coker}
\newcommand{\abs}[1]{\left\vert#1\right\vert}
\newcommand{\Clog}[1]{\widetilde{\mathcal{C}\ell}_{#1}}
\DeclareMathOperator{\id}{id}
\DeclareMathOperator{\Spec}{Spec}
\DeclareMathOperator{\rk}{rg}
\DeclareMathOperator{\im}{im}
\DeclareMathOperator{\charideal}{car}
\newcommand{\labeledarrow}[1]{\stackrel{#1}{\longrightarrow}}
\newcommand{\isomarrow}{\labeledarrow{\isom}}
\newcommand{\integrald}{\,\mathrm d}
\newcommand{\Rc}{\mathcal{R}}
\newcommand{\Uc}{\mathcal{U}}
\newcommand{\Jc}{\mathcal{J}}
\DeclareMathOperator{\Pl}{Pl}
\newcommand{\ejercicios}{\vspace*{2.5ex plus 1ex minus 1ex}\penalty-200\noindent{\sffamily{\bfseries{Ejercicios}}}\vspace*{1.4ex plus .4ex minus .4ex}\nopagebreak[4]}
\appto\inlineextras@spanish{\renewcommand\lim{\qopname \relax m{lím}}}
\appto\blockextras@spanish{\renewcommand\lim{\qopname \relax m{lím}}}
\def\varlim@#1#2{%
  \vtop{\m@th\ialign{##\cr
    \hfil$#1\operator@font \lim$\hfil\cr
    \noalign{\nointerlineskip\kern1.5\ex@}#2\cr
    \noalign{\nointerlineskip\kern-\ex@}\cr}}%
}
\def\varcolim@#1#2{%
  \vtop{\m@th\ialign{##\cr
    \hfil$#1\operator@font co\!\lim$\hfil\cr
    \noalign{\nointerlineskip\kern1.5\ex@}#2\cr
    \noalign{\nointerlineskip\kern-\ex@}\cr}}%
}
\def\varinjlim{%
  \mathop{\mathpalette\varcolim@{\rightarrowfill@\textstyle}}\nmlimits@
}
\def\varprojlim{%
  \mathop{\mathpalette\varlim@{\leftarrowfill@\textstyle}}\nmlimits@
}
\appto\inlineextras@spanish{\renewcommand\mod{\qopname \relax m{mód}}}
\appto\blockextras@spanish{\renewcommand\mod{\qopname \relax m{mód}}}
\appto\inlineextras@spanish{\renewcommand\max{\qopname \relax m{máx}}}
\appto\blockextras@spanish{\renewcommand\max{\qopname \relax m{máx}}}
\NewDocumentCommand{\tensop}{m}
 {%
  \mathbin{\mathop{\otimes}\displaylimits_{#1}}%
 }
\NewDocumentCommand{\tensor}{t_}
 {%
  \IfBooleanTF{#1}
   {\tensop}
   {\otimes}%
 }
 \DeclareMathOperator{\Ann}{ann}
\newcommand*{\define}[2][]{%
 \def\iearg{#1}%
  \ifdefempty{\iearg}%
    {\def\ie@arg{#2}}%
    {\def\ie@arg{#1}}%
  \esindex[def]{\ie@arg}%
  \emph{#2}%
}
\newcommand*{\importante}[2][]{%
 \def\iearg{#1}%
  \ifdefempty{\iearg}%
    {\def\ie@arg{#2}}%
    {\def\ie@arg{#1}}%
  \esindex[def]{\ie@arg}%
  #2%
}
\newcommand*{\definen}[2][]{%
 \def\iearg{#1}%
  \ifdefempty{\iearg}%
    {\def\ie@arg{#2}}%
    {\def\ie@arg{#1}}%
  \esindex[def]{\ie@arg}%
}
\title{Torres infinitas sorprendentes}
\author{\textsc{Michael {Fütterer} \& José {Villanueva}}}
\date{}
\begin{document}

\frontmatter


\maketitle

\chapter*{Introducción}  

En un sentido amplio, el objetivo de la teoría de números es estudiar como se comportan los
números enteros o algebraicos y las ecuaciones que los involucran en diferentes situaciones. Muchas veces esta conducta puede ser descrita por estructuras algebraicas. Por ejemplo, el grupo de
clases de ideales de un campo de números describe la descomposición en factores primos
únicos en su anillo de enteros (o más bien su ausencia). Otro ejemplo es el grupo de Selmer
de una curva elíptica sobre un campo de números, que tiene que ver con la discrepancia entre
la existencia de puntos globales en la curva (es decir, definidos sobre el campo de números)
y locales (es decir, definidos sobre una completación). Por eso, estos grupos son de mucho
interés pero su estudio es intrincado.

Este patrón se puede extender a otras
situaciones, la más general siendo la de un \define{motivo} -- un término que no vamos a
explicar en este texto, pero es quizás el objeto de interés más general de la geometría
aritmética, y un campo de números o una curva elíptica son ejemplos de motivos.

La idea pionera de \textsc{Kenkichi Iwasawa} (1917--1998) era estudiar estas conductas no
sobre un solo campo de números $K$, sino sobre todos los campos de una torre infinita
$K_1\subseteq K_2\subseteq\dotsm$ de estos. Aunque esto parece hacer todo aún más
complicado, en realidad lleva a una teoría rica y fecunda. Este proceso analiza los grupos
que uno quiere entender como módulos sobre un cierto anillo, el \emph{álgebra de Iwasawa}
$\LL$, y así los hace más manejables -- de manera similar a como el dominio entero de los
enteros $p$-ádicos $\Zp$ se comporta mejor que los anillos finitos $\Z/p^r\Z$. Con estos
métodos, Iwasawa consiguió demostrar teoremas importantes sobre los grupos de clases de
campos de números, como por ejemplo el siguiente.

\begin{introthm}[Iwasawa, 1959]\label{thm:iwasawa-intro}
    Sea $K$ un campo de números y $p$ un primo. Para cada $r\in\Nuno$ sea $K_r/K$ una extensión
  tal que $\Gal(K_r/K)\isom\Z/p^r\Z$ y $K_r\subseteq K_{r+1}$. Escribimos $p^{e_r}$ como la
  máxima potencia de $p$ que divide el orden del grupo de clases de $K_r$. Entonces existen
  constantes $\mu,\lambda\in\Ncero$ y $\nu\in\Z$ tal que
  \[ e_r=\mu p^r+\lambda r + \nu \quad\text{para }r\gg0. \] 
\end{introthm}

Otro objeto en el centro de interés de la teoría de números moderna son las funciones $L$,
por ejemplo la función zeta de Riemann, la función zeta de Dedekind de un campo de números o
la función $L$ de Hasse y Weil de una curva elíptica (en general, se puede definir una tal
función para cualquier motivo). Desde hace mucho tiempo existía evidencia de que estas funciones
tienen alguna conexión con los grupos que mencionamos anteriormente. Por ejemplo, la
fórmula analítica de números de clases conecta los grupos de clases de un campo de
números a su función zeta de Dedekind, o la conjetura de Birch y Swinnerton-Dyer conecta
el grupo de Selmer de una curva elíptica a la función $L$ de Hasse y Weil. Otro resultado en
este estilo es el \define{criterio de Kummer}:

\begin{introthm}[Kummer, 1850]\label{thm:kummer-crit}
  Sea $h_p$ el número de clases del campo ciclotómico $\mathbb Q(\mu_p)$. Entonces
  \begin{align*}
    p \mid h_p & \iff p \mid \zeta(1-n) \text{ para algún }n\ge 1\text{ par} \\
               &\iff p \text{ divide uno de } \zeta(-1),\zeta(-3),\dotsc,\zeta(4-p).
  \end{align*}
  Aquí, $\zeta$ es la
  función zeta de Riemann.\footnote{De hecho se sabe que los valores en los enteros
    negativos de la función zeta de Riemann son racionales, que es un fenómeno habitual,
    aunque no trivial, de muchas funciones $L$. Decimos que $p\mid\frac a b$ para un número
    racional $\frac a b$ con $(a,b)=1$ si $p\mid a$.}
\end{introthm}

La razón por la cual \textsc{Ernst Eduard Kummer} (1810--1893) se interesó en esto es que
fue capaz de demostrar un caso especial del Último Teorema de Fermat para exponentes $p$ con
$p\nmid h_p$. Su demostración bonita es muy instructiva para entender la teoría básica de
campos ciclotómicos, que juegan un papel importante en la teoría de Iwasawa, y la esbozamos
en el \cref{ejer:fermat}.

Una de las ideas revolucionarias de Iwasawa fue usar su teoría de torres de extensiones
infinitas y el álgebra de Iwasawa para estudiar estas conexiones sorprendentes entre las
funciones $L$ con grupos de clases o sus análogos y buscar una explicación profunda para
ellas. Esto es viable porque no sólo ciertos valores especiales de funciones $L$ son
racionales (o al menos algebraicos), sino además varían de manera $p$-ádicamente
continua. Estos asombrosos fenómenos llevan a la existencia de un análogo $p$-ádico de
muchas funciones $L$. Este análogo es \enquote{más algebraico} en el sentido que puede ser
visto como elemento en el álgebra de Iwasawa $\LL$. Iwasawa intuyó que estas \emph{funciones
  $L$ $p$-ádicas} son como un eslabón intermedio entre las funciones $L$ clásicas
(complejas) y los grupos de origen aritmético \dots\ esto es la \emph{Conjetura Principal de
  Iwasawa}.

De manera más precisa, en el caso clásico esto significa lo siguiente. El análogo $p$-ádico
de la función zeta de Riemann fue construido por primera vez por \textsc{Tomio Kubota}
(*1930) y \textsc{Heinrich-Wolfgang Leopoldt} (1927--2011), y luego con métodos más
novedosos por Iwasawa. Estos últimos métodos ven este análogo $p$-ádico como un objeto
(esencialmente) en el álgebra de Iwasawa que está conectado a la función compleja de Riemann
por una \emph{fórmula de interpolación}:

\begin{introthm}[Kubota/Leopoldt, 1964; Iwasawa, 1969]\label{thm:palf-intro}
  Existe un único elemento $\mu\in\LL[1/h]$, donde $\LL$ es el álgebra de Iwasawa y
  $h\in\LL$ es un elemento regular, con la propiedad de que
  \[ \kappa^{1-n}(\mu) = (1-p^{n-1})\zeta(1-n) \] para cada $n\in\Nuno$, donde los
  $\kappa^{1-n}\colon\LL\rightarrow\Zp$ con $n\in\Nuno$ son una familia de morfismos canónicos.
\end{introthm}

Para $n\ge1$ sea $X_n$ un cierto subgrupo\footnote{Más precisamente, tomamos el subgrupo de
  la $p$-parte donde la conjugación compleja actúa por $-1$.} del grupo de clases del campo
$K_n:=\mathbb Q(\mu_{p^n})$ y sea $X=\varprojlim_n X_n$. Entonces $X$ es un módulo sobre el
álgebra de Iwasawa $\LL$. La Conjetura Principal de Iwasawa, demostrada por \textsc{Barry
  Mazur} (*1937) y \textsc{Andrew Wiles} (*1953), conecta la función zeta de Riemann usando
su análogo $p$-ádico, con estos grupos de clases:

\begin{introthm}[Mazur/Wiles, 1984]\label{thm:mc-intro}
  Existe un morfismo de $\LL$-módulos \[ \LL/(h\mu)\rightarrow X \] con núcleo y
  conúcleo finito. 
\end{introthm}

El poder de esta afirmación podría no ser obvio a primera vista. Sin embargo, es una de las
relaciones más profundas entre las funciones $L$ y la aritmética. El criterio de Kummer del
\cref{thm:kummer-crit} es una consecuencia de la Conjetura Principal, como son otros resultados
sobre los grupos de clases como los teoremas de Stickelberger y Herbrand/Ribet.

Este texto invita al lector que conozca los fundamentos de la teoría de números
algebraica, al fascinante mundo de la Teoría de Iwasawa. Vamos a conocer el álgebra de
Iwasawa (capítulos \labelcref{sec:algebra-de-iwasawa} y \labelcref{sec:modulos-iwasawa}) y
su teoría de torres de extensiones y usar esto para demostrar su \cref{thm:iwasawa-intro} y
algunos resultados más sobre grupos de clases (\cref{sec:grupos-de-clases}). Luego vamos
a construir la función zeta $p$-ádica de Riemann del \cref{thm:palf-intro} y las funciones
$L$ $p$-ádicas de Dirichlet (\cref{sec:palf}). Después vamos explicar la Conjetura Principal
de Iwasawa y sus implicaciones (\cref{sec:mc}). La demostración de la
Conjetura Principal es mucho más difícil y por eso no podemos decir mucho sobre ella en este
texto. Finalmente, en el \cref{sec:generalizaciones} vamos a describir algunas de las áreas de
investigación activa en la Teoría de Iwasawa que muestran como todas estas ideas pueden ser
generalizadas a nuevos terrenos.

\section*{Guía}

Los conocimientos necesarios para entender este texto son modestos. Asumimos que el lector tiene conocimientos básicos del álgebra conmutativa, la teoría de Galois, la teoría de números algebraica y la topología. En el \cref{chap:preambulo} recapitulamos algunos resultados importantes de dichas áreas y damos referencias. Los capítulos
\ref{sec:algebra-de-iwasawa}--\ref{sec:mc} constituyen la parte principal de este texto, en el que explicamos en detalle y de la manera más autocontenida posible los aspectos de la Teoría de Iwasawa clásica que nos parecen más importantes. Finalmente, en el último
\cref{sec:generalizaciones} explicamos generalizaciones. Únicamente en este capítulo suponemos algunos conocimientos adicionales.

El siguiente diagrama muestra la dependencia de los capítulos.
\begin{center}
  \begin{tikzcd}[row sep=1.2ex,column sep=1cm]
    & \ref{chap:preambulo}\arrow[dd]\\\\
    & \ref{sec:algebra-de-iwasawa}\arrow[dl]\arrow[ddr]\\
    \ref{sec:modulos-iwasawa}\arrow[dd]\\
    && \ref{sec:palf}\arrow[ddl]\\
    \ref{sec:grupos-de-clases}\arrow[dr]\\
    &\ref{sec:mc}\arrow[dd]\\\\
    &\ref{sec:generalizaciones}
  \end{tikzcd}
\end{center}

\section*{Otros textos}

Aquí queremos mencionar algunos textos importantes que pueden ser útiles para un lector que
quiere aprender de la Teoría de Iwasawa y que influyeron a los autores en su camino de
conocerla. Por supuesto nuestra presentación de esta área no es ni remotamente completa, y
estos otros textos pueden complementarla.

Los libros de Washington \cite{MR1421575} y Lang \cite{MR1029028} sobre campos ciclotómicos
son clásicos que cubren mucho material, entre otros la Teoría de Iwasawa clásica y mucho de
lo que hacemos aquí, aunque la presentación ya no es la más moderna. Una buena visión
conjunta sobre la Teoría de Iwasawa clásica es dada por las notas de Sharifi
\cite{SharifiIT}, pero él asume más conocimientos previos como por ejemplo cohomología de
grupos. Un texto más compendiado que también incluye la Teoría de Iwasawa para curvas
elípticas son las notas de Wüthrich \cite{MR3586809}.  También queremos mencionar el libro
\cite{NSW} de Neukirch, Schmidt y Wingberg que contiene algunos aspectos (más algebraicos)
de la Teoría de Iwasawa. Finalmente, el libro \cite{MR2256969} de Coates y Sujatha explica
la Conjetura Principal de Iwasawa y contiene una demostración completa sin pedir muchos
conocimientos previos.

El texto corto de Kato \cite{MR2334196} de su plática en el ICM de 2006 da un panorama
general y bonito de la Teoría de Iwasawa clásica y también la más moderna, incluyendo
desarrollos recientes. En un estilo inteligible explica también mucho sobre las motivaciones,
con foco sobre todo en la Conjetura Principal y sus generalizaciones. Otro texto digno de
leerse es \cite{MR1846466} de Greenberg, que también incluye mucha información sobre la
historia de la teoría.

Por supuesto deberíamos mencionar algunos textos de Iwasawa mismo; aquí solo listamos los
que nos parecen más importantes. En \cite{MR0124316} demostró su fórmula del
\cref{thm:iwasawa-intro} para el orden de los grupos de clases, uno de los primeros
resultados importantes en su teoría. Luego en \cite{MR0269627} reinterpretó los resultados
de Kubota y Leopoldt sobre las funciones $L$ $p$-ádicas, dando una nueva demostración de su
existencia. Esto le permitió formular su Conjetura Principal en \cite{MR0255510}, después de
ya haber demostrado un caso muy especial en \cite{MR0215811} (véase
\cite[(4.1)]{MR1846466}). Finalmente, el texto \cite{MR0349627} contiene un gran panorama en
general de los aspectos algebraicos de su trabajo junto con nuevos resultados.

Para literatura más avanzada o más especifica remitimos a las referencias que se encuentran
durante el texto, sobre todo en el \cref{sec:generalizaciones} sobre generalizaciones.

\section*{Agradecimientos}

En un inicio este texto consistía de nuestras notas del curso \enquote{Introducción a la Teoría de Iwasawa} que impartimos en la Escuela de Otoño de Teoría de Números, en el marco de las actividades del 51 Congreso de la Sociedad Matemática Mexicana (21–26 octubre 2018) en Villahermosa, Tabasco. Agradecemos profundamente a Carlos Castaño la invitación que nos extendió para participar como ponentes y por haber alentado la redacción de este trabajo. 

También queremos agradecer a los participantes del curso, especialmente a Tim Gendron, cuyos valiosos comentarios mejoraron la redacción del texto.

Un agradecimiento profundo a nuestros colegas del Instituto de Matemáticas de Heidelberg: Benjamin Kupferer, Pavel Sechin y Oliver Thomas, por su disponibilidad para discutir de matemáticas con nosotros y especialmente a Katharina Hübner quien también leyó una versión preliminar. Las innumerables discusiones que tuvimos con ellos son invaluables para la redacción de este trabajo. 

Agradecemos a Otmar Venjakob quien incondicionalmente apoyó este proyecto, con ánimos de
atraer a más estudiantes hispanohablantes a la Teoría de Iwasawa.

De gran ayuda han sido las anotaciones y comentarios de Cornelius Greither, además de
comentar la parte matemática nos ayudó a mejorar la presentación y el estilo.

Una parte esencial de los \cref{sec:algebra-de-iwasawa,sec:modulos-iwasawa} está basada en un curso dictado por Jean-Fran\c{c}ois Jaulent en la Universidad de Burdeos en 2014, a quien agradecemos sus detalladas explicaciones.

\section*{Notación}

El símbolo $p$ sin mayor explicación siempre denota un primo, y si no decimos otra cosa
asumimos que es impar (la mayoría de lo que haremos funciona también si $p=2$, pero así la
presentación es ligeramente mas fácil).

Para evitar confusión con los números naturales, escribiremos $\Ncero$ para los números
enteros $\ge0$ y $\Nuno$ para aquellos $\ge1$ y no usaremos el símbolo $\mathbb N$.

Fijemos cerraduras algebraicas $\Qbar$ de $\Q$, $\overline\Q_\ell$ de $\Q_\ell$ para cada
primo $\ell$ y $\C$ de $\R$ y encajes de $\Qbar$ en $\overline\Q_\ell$ y en $\C$.

Escribimos $\GQ$ para el grupo absoluto de Galois de $\Q$, es decir $\GQ=\Gal(\Qbar/\Q)$, y
de manera similar para los otros campos. Entonces nuestros encajes definen inclusiones de grupos
\[ \G{\Q_\ell}\hookrightarrow\GQ, \quad \G\R\hookrightarrow\GQ. \]
En particular, tiene sentido hablar de la conjugación compleja en objetos con una acción de $\GQ$.

\tableofcontents

\mainmatter

\chapter{Preámbulo}
\label{chap:preambulo}

En este capítulo coleccionamos algunos resultados de varias partes de las matemáticas que
necesitaremos a lo largo del texto y damos referencias para ellos.
El lector es libre de saltar este capítulo y solo consultarlo por necesidad.

\section{Estructuras algebraicas profinitas}
\label{sec:profinito}

Las estructuras algebraicas profinitas son generalizaciones de las estructuras algebraicas finitas
que en muchos aspectos se comportan similarmente y juegan un papel importante en la teoría de
Iwasawa y en la teoría de números en general. Una referencia muy útil que explica muchos
aspectos del álgebra profinita es el libro \cite{MR2599132}, al cual remitimos para más
detalles. 

\begin{defi}
  Un \define{grupo profinito} es un grupo topológico que, como grupo topológico, es isomorfo
  a un límite inverso de grupos finitos. De la misma manera definimos \emph{anillos profinitos}\index[def]{anillo!profinito}, \emph{módulos profinitos}\index[def]{módulo@módulo!profinito} y
  \emph{álgebras profinitas}\index[def]{algebra profinita@álgebra profinita} sobre anillos profinitos.

  Un grupo profinito se llama \define[grupo pro-p]{pro-$p$} si es (isomorfo a un) límite de grupos
  finitos cuyos órdenes son potencias de $p$, y de manera similar para anillos, módulos, etcétera. 
\end{defi}

Ejemplos de grupos profinitos incluyen por supuesto todos los grupos finitos o los enteros
$p$-ádicos $\Zp=\lim_{r\in\Nuno}\Z/p^r\Z$ (que incluso son un anillo profinito).

Los grupos profinitos aparecen naturalmente en la teoría de Galois para extensiones
infinitas. Resumimos estos resultados a continuación.

Fijamos una extensión de Galois de campos $L/K$ con grupo de Galois $G$. Sea
$\mathcal{Z}$ el conjunto de campos intermedios entre $K$ y $L$ y $\mathcal{S}$ el conjunto
de subgrupos de $G$.
Si $L/K$ es finito entonces el teorema principal de la teoría de Galois dice que los mapeos
\begin{align*}
  F\colon \mathcal{S}\rightarrow\mathcal{Z},&\quad H\mapsto L^H := \{ x\in L : \forall \sigma\in H\colon \sigma(x)=x \} \\
  U\colon \mathcal{Z}\rightarrow\mathcal{S},&\quad M\mapsto \Gal(L/M)
\end{align*}
son biyecciones inversas la una a la otra.
Si $L/K$ es infinito, es fácil ver que todavía tenemos $F\circ U = \id_{\mathcal{Z}}$.
Sin embargo, la composición $U\circ F$ en general no es la identidad, como muestra el
ejemplo siguiente. Sean $p$ y $\ell$ primos diferentes con $\ell\neq2$. Consideramos la
extension de Galois de $K=\F_p$ que es dada por
\[ L := \bigcup_{i=0}^\infty K_{i}, \] donde para $i\in\Nuno$, \[ K_i := \F_{p^{\ell^i}} \] es la única extension de $K$ de grado $\ell^i$. Sea $G=\Gal(L/K)$ y $H$ el subgrupo generado
por el Frobenius $\Frob$. Entonces $L^G=L^H=K$, pero se puede construir un elemento $\sigma$ de $G$
que no está en $H$.  Para esto sea $e_i = 1+\ell+\ell^2+\dotsc+\ell^{i-1}$ para
$i\in\Nuno$. Cada $x\in L$ está en algún $K_i$ y definimos
\[ \sigma(x) = \Frob^{e_i}(x). \] Como $e_{i+1} \equiv e_i\ (\mod \ell^i)$ tenemos
$\Frob^{e_{i+1}}|_{K_i}=\Frob^{e_i}|_{K_i}$, así que esto define un elemento $\sigma\in
G$. Si $\sigma\in H$ entonces existiría $n\in\Ncero$ con $\sigma=\Frob^n$. Esto
significaría 
\[ n\equiv e_i\ (\mod \ell^i) \] para cada $i\ge1$. Multiplicando ambos lados por $(1-\ell)$
obtenemos $n(1-\ell)\equiv 1\ (\mod \ell^i)$ para cada $i\in\Nuno$, es decir $n(1-\ell)=1$,
que no puede ser porque $\ell>2$.

Aun así, este ejemplo nos da una idea como se podría reparar la situación. Aunque
$\sigma\notin H$, los $\Frob^{e_i}\in H$ \enquote{aproximan} $\sigma$ en el sentido que
coinciden con $\sigma$ en subextensiones $K_i$ mas y mas grandes (pero siempre finitas). Esto
sugiere definir la siguiente topología.

\begin{defi}
  Sea $L/K$ una extensión de Galois con grupo $G$. La \define{topología de Krull} es
  definida con decir que los subgrupos \[ \Gal(L/M)\text{, $M$ una subextensión finita} \]
  sean una base de vecindades del elemento neutro.
\end{defi}

Es decir, dos elementos $\sigma,\tau\in G$ están \enquote{cercanos} si y solo si existe una
subextensión finita $M$ con $\sigma^{-1}\tau\in\Gal(L/M)$ para la cual $\Gal(L/M)$ sea
\enquote{pequeño}, es decir si y solo si $\sigma$ y $\tau$ coinciden en un subcampo \enquote{grande}.

Se verifica entonces fácilmente que $G$ es un grupo topológico, es decir la multiplicación y
la inversión son operaciones continuas. Además, para una extensión finita obtenemos la topología discreta.
Con esta topología tenemos \[ U(F(H)) = \overline{H} \] para cada subgrupo $H$ de $G$. Esto
implica la siguiente generalización del teorema principal de la teoría de Galois.

\begin{thm}[Teorema principal de la teoría de Galois infinita]\label{thm:hauptsatz-unendlgal}
  Sea $L/K$ una extensión de Galois con grupo $G$, $\mathcal{Z}$ el conjunto de campos
  intermedios entre $K$ y $L$, y $\mathcal{S}$ el conjunto de los subgrupos \emph{cerrados} de
  $G$.
  \begin{enumerate}
  \item Los mapeos
    \begin{align*}
      F\colon \mathcal{S}\rightarrow\mathcal{Z},&\quad H\mapsto L^H \\
      U\colon \mathcal{Z}\rightarrow\mathcal{S},&\quad M\mapsto \Gal(L/M)
    \end{align*}
    son biyecciones inversas la una a la otra que invierten inclusiones.
  \item  Un campo intermedio $M$ es normal sobre $K$ si y solo si el subgrupo $\Gal(L/M)$ es normal
    en $G$, en este caso la restricción a $M$ da un isomorfismo
    \[ {\Gal(L/K)}/{\Gal(L/M)}\isomarrow\Gal(M/K). \]
  \item  Un campo intermedio $M$ es finito sobre $K$ si y solo si el subgrupo $\Gal(L/M)$ es
    abierto en $G$.
  \item El mapeo canónico
    \[ G\isomarrow\varprojlim_{\substack{M\in\mathcal Z\\\text{finito}}}\Gal(M/K)=\varprojlim_{\substack{H\in\mathcal
          S\\\text{abierto}}}G/H \]
    es un isomorfismo.
  \end{enumerate}
\end{thm}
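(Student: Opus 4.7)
My plan is to take as starting point the two identities already noted in the text: the equality $F\circ U=\id_{\mathcal Z}$ (valid even in the infinite case) and the formula $U(F(H))=\overline H$. Granting these, part (a) is immediate, since if $H\in\mathcal S$ is closed then $U(F(H))=\overline H=H$, so $F$ and $U$ are mutually inverse; both clearly reverse inclusions. The real content of (a) is therefore the formula $U(F(H))=\overline H$. The inclusion $\overline H\subseteq U(F(H))$ is easy because $U(F(H))$ is closed, being an intersection of the closed conditions $\sigma(x)=x$ for $x\in L^H$. The reverse inclusion is the main obstacle, and I would prove it by reducing to the finite Galois correspondence: for each finite Galois subextension $M/K$, the image of $H$ in $\Gal(M/K)$ has fixed field $L^H\cap M$ by the finite theorem, so any $\sigma$ fixing $L^H$ agrees on $M$ with some element of $H$, hence lies in every open neighborhood $H\cdot\Gal(L/M)$, i.e., in $\overline H$.

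Part (b) then follows the classical pattern. Applying $U$ to the condition $\sigma(M)=M$ for all $\sigma\in G$ translates normality of $M/K$ into $\sigma\Gal(L/M)\sigma^{-1}=\Gal(L/M)$, i.e., normality of $U(M)$ in $G$. The restriction map $G\to\Gal(M/K)$ has kernel $\Gal(L/M)$ by definition, and its surjectivity follows by extending any $K$-automorphism of $M$ to one of $L$ via Zorn's lemma, using that $L/M$ is algebraic. Part (c) is a direct consequence of the correspondence: every finite intermediate field $M$ is contained in a finite Galois subextension $N$, so $\Gal(L/M)\supseteq\Gal(L/N)$ is open as a superset of a basic neighborhood; conversely, an open subgroup $H$ contains some basic $\Gal(L/N)$, whence $F(H)\subseteq N$ and is therefore finite.

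For (d), I would define the canonical map $G\to\varprojlim\Gal(M/K)$ by $\sigma\mapsto(\sigma|_M)_M$, where the limit runs over finite Galois subextensions of $L/K$. Injectivity is clear because $L$ is the union of such $M$, and surjectivity follows by gluing: a compatible family $(\sigma_M)$ defines an element $\sigma\in G$ via $\sigma(x):=\sigma_M(x)$ for any $M\ni x$, well-defined by compatibility. That this bijection is a homeomorphism is built into the definition of the Krull topology, since on both sides the kernels $\Gal(L/M)$ of the projections form a base of open neighborhoods of the identity. This simultaneously exhibits $G$ as a profinite group, closing the circle with the general framework of the preceding section.
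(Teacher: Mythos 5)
Tu propuesta es correcta, pero conviene señalar que el texto no demuestra este teorema: la "demostración" del artículo es únicamente la referencia \cite[Thm.\ 2.11.3]{MR2599132}. Tú, en cambio, das un argumento autocontenido, apoyándote en las dos identidades enunciadas justo antes del teorema ($F\circ U=\id_{\mathcal Z}$ y $U(F(H))=\overline H$) y reconociendo acertadamente que el contenido esencial es la segunda; tu reducción al caso finito (la imagen $H_M$ de $H$ en $\Gal(M/K)$ tiene campo fijo $M\cap L^H$, de donde $\sigma|_M\in H_M$ y por tanto $\sigma\in H\,\Gal(L/M)$ para toda subextensión finita de Galois $M$, que son cofinales) es exactamente la demostración estándar que el libro citado desarrolla, de modo que lo que tu enfoque aporta es hacer explícito lo que el texto delega a la referencia. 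Dos precisiones menores, ninguna de las cuales constituye una laguna seria: en (b), para que el automorfismo de $M$ extendido por Zorn sea un automorfismo \emph{de} $L$ (y no sólo un encaje en una cerradura algebraica) hace falta usar la normalidad de $L/K$, no sólo que $L/M$ sea algebraica; y en (d) tu elección de tomar el límite sobre las subextensiones finitas \emph{de Galois} es la lectura correcta del enunciado (son cofinales entre las finitas y para ellas $\Gal(M/K)$ es de veras un grupo cociente), lo cual además hace transparente que la biyección es un homeomorfismo al comparar las bases de vecindades $\Gal(L/M)$ en ambos lados.
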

\begin{proof}
  \cite[Thm.\ 2.11.3]{MR2599132}
\end{proof}

En particular, este teorema muestra que cada grupo de Galois es un grupo
profinito.\footnote{También es verdad que cada grupo profinito es un grupo de Galois, véase
  \cite[Thm.\ 2.11.5]{MR2599132}.}

\begin{defi}
  Sea $G$ un grupo profinito. Entonces un elemento $g\in G$ se llama \define{generador
    topológico} si el subgrupo generado por $g$ es denso en $G$.
\end{defi}

Por ejemplo, $1\in\Zp$ es un generador topológico.

\begin{remark}\label{rem:locally-profinite}
  La propiedad de un grupo topológico de ser profinito puede ser caracterizado de manera
  puramente topológica: Un grupo topológico es profinito si y solo si es compacto, Hausdorff
  y totalmente disconexo. Lo mismo es verdad para un anillo topológico; aquí, esto
  incluso es equivalente al anillo siendo sólo compacto y Hausdorff. Véase \cite[Thm.\ 2.1.3,
  Prop.\ 5.1.2]{MR2599132} para estos hechos. Generalizando esto, un grupo topológico se
  llama \define[grupo topológico localmente profinito]{localmente profinito} si es Hausdorff, totalmente disconexo y localmente
  compacto; análogamente se define anillos y álgebras localmente profinitos.
\end{remark}

Muchas construcciones del álgebra abstracta tienen análogos profinitos. Por ejemplo, $\Z$
tiene la propiedad de que para cualquier grupo $G$ y cada elemento $g\in G$ hay un único
morfismo de grupos $f\colon\Z\rightarrow G$ tal que $f(1)=g$. El grupo profinito $\Z_p$
tiene una propiedad similar: Para cualquier grupo pro-$p$ $G$ y cada elemento $g\in G$ hay
un único morfismo de grupos topológicos $f\colon\Z_p\rightarrow G$ tal que $f(1)=g$. Se dice
que $\Zp$ es un \define{grupo abeliano pro-$p$ libre} de rango $1$. Por esta razón, como
cada grupo abeliano es un módulo sobre $\Z$ de manera única, cada grupo abeliano pro-$p$ es
de manera única un módulo sobre $\Z_p$.

Otra construcción que queremos extrapolar al mundo profinito es la del álgebra de grupos. Si
$R$ es un anillo y $G$ es un grupo, tenemos el álgebra de grupos $R[G]$ con la propiedad
siguiente: para cada $R$-álgebra $S$ y cada morfismo de grupos $G\rightarrow S^\times$ hay
una única extensión a un morfismo de $R$-álgebras $R[G]\rightarrow S$.

\begin{defi}
  Sea $R$ un anillo profinito y $G$ un grupo profinito. Definimos el \define{anillo de
    grupos profinito} como \[ R\llbracket G\rrbracket :=\varprojlim_{N\underset{\circ}{\trianglelefteq} G } R[G/N], \] el límite tomado
  sobre todos los subgrupos normales abiertos de $G$. 
\end{defi}

El grupo $G$ puede ser visto canónicamente como subconjunto de $R\llbracket G\rrbracket$ y
el álgebra de grupos ordinaria $R[G]$ es una $R$-subálgebra que es densa, véase \cite[Lem.\ 5.3.5]{MR2599132}.

\begin{prop}\label{prop:propiedad-universal-anillo-de-grupos}
  Sea $R$ un anillo profinito, $G$ un grupo profinito y $S$ una $R$-álgebra
  profinita. Entonces cada morfismo de grupos topológicos $G\rightarrow S^\times$ se extiende
  de manera única a un morfismo de $R$-álgebras topológicas \[ R\llbracket G\rrbracket
    \rightarrow S. \]
\end{prop}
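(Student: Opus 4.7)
El plan es demostrar la unicidad por densidad y construir la extensión explotando las presentaciones de $R\llbracket G\rrbracket$ y de $S$ como límites proyectivos. Denotando por $\phi\colon G\to S^\times$ el morfismo dado, primero observaría que $R[G]$ es denso en $R\llbracket G\rrbracket$ (por el comentario anterior a la proposición) y que $S$ es Hausdorff al ser profinito; así, dos extensiones continuas de $\phi$ a morfismos de $R$-álgebras tendrían que coincidir en $R[G]$ por la propiedad universal del álgebra de grupos ordinaria, y por tanto en todo $R\llbracket G\rrbracket$ por continuidad, lo que da la unicidad.

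Para la existencia, escribiría $S=\varprojlim_J S/J$ con $J$ recorriendo los ideales bilaterales abiertos de $S$ con cociente finito (estos forman una base de vecindades de $0$ al ser $S$ profinito). Fijado un tal $J$, la composición $\phi_J\colon G\to S^\times\to(S/J)^\times$ es continua con llegada en un grupo finito discreto, luego su núcleo $N_J$ es normal y abierto en $G$, y $\phi_J$ factoriza por $G/N_J$. La propiedad universal ordinaria, aplicada al morfismo $G/N_J\to(S/J)^\times$ y al morfismo de anillos $R\to S\to S/J$, producirá entonces un morfismo de $R$-álgebras $\psi_J\colon R[G/N_J]\to S/J$. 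Dado que el límite que define a $R\llbracket G\rrbracket$ se toma sobre \emph{todos} los subgrupos normales abiertos de $G$, dispondría en particular de proyecciones canónicas $R\llbracket G\rrbracket\to R[G/N_J]$ para cada $J$.

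El punto que más cuidado requerirá es la compatibilidad de la familia $\{\psi_J\}_J$ al refinar $J$: si $J'\subseteq J$ entonces $N_{J'}\subseteq N_J$, y los dos caminos del cuadrado que involucran a $\psi_{J'}$, $\psi_J$ y la proyección $S/J'\to S/J$ son extensiones al álgebra de grupos del mismo morfismo $G/N_J\to(S/J)^\times$, luego coinciden por la unicidad en la propiedad universal ordinaria. De la familia compatible resultante $\{R\llbracket G\rrbracket\to R[G/N_J]\to S/J\}_J$ obtendría, por la propiedad universal del límite $S=\varprojlim_J S/J$, el morfismo buscado $\Psi\colon R\llbracket G\rrbracket\to S$, que será continuo por construcción (como composición y límite de morfismos continuos) y que extiende a $\phi$, verificación que se reduce a componer con cada proyección $S\to S/J$.
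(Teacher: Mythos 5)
Tu propuesta es correcta y sigue esencialmente el mismo camino que la demostración del texto: unicidad por densidad de $R[G]$ en $R\llbracket G\rrbracket$, y existencia escribiendo $S$ como límite de $R$-álgebras finitas, factorizando cada composición $G\rightarrow S^\times\rightarrow (S/J)^\times$ por un cociente finito $G/N_J$, aplicando la propiedad universal ordinaria y pasando al límite. La única diferencia es cosmética (el texto fija una presentación $S=\varprojlim_i S_i$ mientras tú usas los ideales abiertos, y tú detallas la compatibilidad que el texto declara obvia).
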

\begin{proof}
  Por la propiedad universal del álgebra de grupos ordinaria $R[G]$ y porque este anillo es
  denso en $R\llbracket G\rrbracket$, es claro que la extensión es única si existe.
  Escribimos $S=\varprojlim_iS_i$ con $R$-álgebras finitas $S_{i}$. Notemos que entonces
  $S^\times=\varprojlim_i S_i^\times$, en particular $S^\times$ es un grupo
  topológico (¡Esto no es necesariamente cierto para un anillo topológico arbitrario porque la
  inversión no tiene porque ser continua!). Para cada $i$ consideramos la composición
  $G\rightarrow S^\times\rightarrow S_i^\times$. Por continuidad se factoriza a través del
  grupo finito $G/N$ para un subgrupo normal abierto $N$. La propiedad del álgebra de grupos
  $R[G/N]$ nos da un morfismo de $R$-álgebras $R[G/N]\rightarrow S_i$ que claramente es
  continuo. Si componemos esto con la proyección de $R\llbracket G\rrbracket$ obtenemos un
  morfismo $R\llbracket G\rrbracket\rightarrow S_i$. La familia de estos morfismos para cada
  $i$ es obviamente compatible y la propiedad universal del límite luego da el morfismo
  $R\llbracket G\rrbracket\rightarrow S$ que deseamos.
\end{proof}

Si $R$ es un anillo, $G$ es un grupo y $M$ es un $R$-módulo con una acción $R$-lineal de
$G$, entonces $M$ es de manera única un módulo sobre $R[G]$. Algo similar es verdad en el
álgebra profinita, aunque hay que tener cuidado con algunos detalles topológicos.

\begin{prop}\label{prop:modulos-sobre-rg}
  Sea $R$ un anillo profinito, $G$ un grupo profinito y $M$ un $R$-módulo profinito con una
  acción continua y $R$-lineal de $G$. Supongamos que podemos escribir $M$ como
  $M=\lim_iM/U_i$, los $U_i$ siendo submódulos abiertos de $M$ estables bajo la acción de $G$.

  Entonces $M$ es de manera canónica un módulo sobre
  $R\llbracket G\rrbracket$.
\end{prop}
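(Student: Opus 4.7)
El plan consiste en dotar primero a cada cociente finito $M/U_i$ de una estructura canónica de $R\llbracket G\rrbracket$\-/módulo y después pasar al límite inverso. Observaría en primer lugar que cada $M/U_i$ es finito: siendo $M$ profinito, es compacto y Hausdorff; el subgrupo abierto $U_i$ es automáticamente también cerrado en el grupo topológico aditivo $M$, así que $M/U_i$ es compacto y discreto, y por ende finito.

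A continuación, la $G$\-/estabilidad de $U_i$ garantiza que la acción continua y $R$\-/lineal de $G$ sobre $M$ desciende a una acción continua y $R$\-/lineal sobre $M/U_i$. Como $M/U_i$ es discreto y finito, el estabilizador en $G$ de cada uno de sus (finitos) elementos es abierto, por lo que su intersección contiene a algún subgrupo normal abierto $N_i\trianglelefteq G$. Por lo tanto la acción se factoriza por el grupo finito $G/N_i$, lo que dota canónicamente a $M/U_i$ de una estructura de módulo sobre el álgebra de grupos ordinaria $R[G/N_i]$, y vía la proyección natural $R\llbracket G\rrbracket\rightarrow R[G/N_i]$ de una estructura de $R\llbracket G\rrbracket$\-/módulo. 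Para $U_j\subseteq U_i$, reemplazando $N_j$ por $N_i\cap N_j$ si fuera necesario se tiene $N_j\subseteq N_i$, y la proyección inducida $R[G/N_j]\rightarrow R[G/N_i]$ hace conmutar todo con $M/U_j\rightarrow M/U_i$; el límite inverso de estas estructuras produce entonces la acción buscada sobre $M=\varprojlim_iM/U_i$.

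La parte propiamente laboriosa no es la construcción en sí, que es rutinaria una vez descompuesta en los pasos anteriores, sino constatar que la estructura obtenida es verdaderamente canónica, es decir, independiente tanto de la descomposición $M=\varprojlim_i M/U_i$ como de la elección de los $N_i$. Esto se sigue de la densidad de $R[G]$ en $R\llbracket G\rrbracket$ (mencionada antes de la \cref{prop:propiedad-universal-anillo-de-grupos}) combinada con la continuidad de la acción: cualquier $R\llbracket G\rrbracket$\-/módulo obtenido extendiendo la acción natural de $R$ y $G$ sobre $M$ debe coincidir con el construido arriba, pues ya está determinado en el subanillo denso $R[G]$ y ambas acciones son continuas sobre los cocientes finitos $M/U_i$.
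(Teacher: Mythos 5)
Tu demostración es correcta y descansa en la misma reducción que usa el texto: escribir $M$ como límite de los cocientes finitos $M_i=M/U_i$ y explotar la continuidad para reducir todo a datos finitos. La diferencia está en el empaquetamiento. El texto aplica una sola vez la propiedad universal del anillo de grupos profinito (\cref{prop:propiedad-universal-anillo-de-grupos}) al morfismo continuo $G\rightarrow\varprojlim_i\Aut_R(M_i)=(\varprojlim_i\End_R(M_i))^\times$, obteniendo de golpe el morfismo de $R$-álgebras $R\llbracket G\rrbracket\rightarrow\End_R(M)$; la compatibilidad entre niveles y la canonicidad quedan absorbidas en la unicidad de esa propiedad universal. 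Tú, en cambio, no invocas dicha proposición: factorizas la acción sobre cada cociente finito a través de un cociente finito $G/N_i$ (que es exactamente el mecanismo interno de la demostración de esa propiedad universal), usas las proyecciones canónicas $R\llbracket G\rrbracket\rightarrow R[G/N_i]$ que provienen de la definición de $R\llbracket G\rrbracket$ como límite, y luego verificas a mano la compatibilidad al variar $i$ y la canonicidad mediante la densidad de $R[G]$ y la continuidad. Tu ruta es más autocontenida (no necesita la propiedad universal como caja negra), a costa de rehacer en línea precisamente la contabilidad que aquella proposición empaqueta; ambas construcciones producen la misma estructura canónica de $R\llbracket G\rrbracket$-módulo sobre $M$.
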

\begin{proof}
  Escribimos $M_i=M/U_i$, que es un módulo finito. Como $U_i$ es estable bajo la acción de
  $G$ obtenemos una acción de $G$ en $M_i$, es decir un morfismo
  $G\rightarrow\Aut_R(M_i)$. Porque estos morfismos son compatibles si cambiamos $i$,
  obtenemos un morfismo continuo
  \begin{equation*}
    G\rightarrow\varprojlim_i\Aut_R(M_i)
  \end{equation*}
  de grupos profinitos. Usando que $\varprojlim_i\Aut_R(M_i)=(\varprojlim_i\End_R(M_i))^\times$, la
  propiedad universal del álgebra profinita de grupos induce un morfismo de $R$-álgebras
  \begin{equation*}
    R\llbracket G\rrbracket \rightarrow\varprojlim_i\End_R(M_i)\rightarrow\End_R(M)
  \end{equation*}
  que da $M$ una estructura canónica como $R\llbracket G\rrbracket$-módulo.
\end{proof}

La hipótesis en la proposición anterior es cierta por ejemplo si cada submódulo abierto de
$M$ es estable bajo la acción de $G$, o si $M$ es dado como límite $M=\lim M_i$ con
$R\llbracket G\rrbracket$-módulos finitos.

\begin{remark}\label{rem:hom-de-representaciones}
  En general, sea $R$ un anillo conmutativo y $G$ un grupo. Como mencionamos, un
  $R[G]$-módulo es lo mismo que un $R$-módulo con una acción $R$-lineal de $G$ -- esto
  se llama \emph{representación $R$-lineal de $G$}\index[def]{representación}\index[def]{representación de grupo|see{representación}}. Si $V$ y
  $W$ son dos tal representaciones, hacemos los morfismos de $R$-módulos $\Hom_R(V,W)$ entre
  ellos un $R[G]$-módulo al definir la acción de $G$ como
  \[ (gf)(v)=g(f(g^{-1}v)) \quad\text{para }g\in G,\ f\in\Hom_R(V,W),\ v\in V \]
  (el lector debería verificar que esto induce una acción por la izquierda de $G$ en
  $\Hom_R(V,W)$).
  
  De manera similar, si $R$ es un anillo conmutativo profinito, $G$ es un grupo profinito y $V$,
  $W$ son $R\llbracket G\rrbracket$-módulos, definimos una acción de $G$ en
  $\Hom_R(V,W)$ con la misma fórmula. Si $V$ y $W$ cumplen la hipótesis de la
  \cref{prop:modulos-sobre-rg} entonces es fácil ver que $\Hom_R(V,W)$ la cumple también,
  así que en esta situación $\Hom_R(V,W)$ es un $R\llbracket G\rrbracket$-módulo.
\end{remark}

\ejercicios

\begin{ejer}\label{ejer:gf-finito}
  Demuestre que el grupo de Galois absoluto de un campo finito es canónicamente isomorfo a
  $\widehat\Z$, que es definido por \[ \widehat\Z:=\varprojlim_{n\in\Nuno}\Z/nZ \]
  donde el conjunto $\Nuno$ sobre el que tomamos el límite es ordenado por divisibilidad y los
  mapeos entre los $\Z/n\Z$ son las proyecciones canónicas. Bajo este isomorfismo,
  $1\in\widehat\Z$ corresponde al morfismo Frobenius. Use el \cref{thm:hauptsatz-unendlgal}
  para esto.
\end{ejer}

\begin{ejer}\label{ejer:morfismos-pro-p-pro-ell}
  Sean $p$ y $\ell$ primos diferentes. Demuestre que cada morfismo de un grupo pro-$p$ a un
  grupo pro-$\ell$ es trivial.
\end{ejer}

\begin{ejer}
  Formule la afirmación de la \cref{prop:propiedad-universal-anillo-de-grupos} usando
  funtores adjuntos.
\end{ejer}

\begin{ejer}
  Demuestre que para cada grupo pro-$p$ $G$ y cada elemento $g\in G$ existe un único
  morfismo de grupos profinitos $f\colon\Zp\rightarrow G$ con $f(1)=g$. Demuestre también
  la afirmación análoga con grupos profinitos arbitrarios y $\widehat\Z$ en lugar de
  $\Zp$. Formule estas afirmaciones usando funtores representables o adjuntos; vea también
  \cite[§3.3]{MR2599132}.
\end{ejer}

\begin{ejer}\label{ejer:locally-profinite}
  Demuestre que la propiedad universal del álgebra de grupos profinita de la
  \cref{prop:propiedad-universal-anillo-de-grupos} se generaliza de la manera siguiente. Si
  $R$ es un anillo profinito, $G$ un grupo profinito y $S$ es una $R$-álgebra localmente
  profinita (véase la \cref{rem:locally-profinite}) entonces cada morfismo continuo
  $G\rightarrow S^\times$ se extiende de manera única a un morfismo de $R$-álgebras
  topológicas \[ R\llbracket G\rrbracket \rightarrow S. \]
\end{ejer}

\section{Compendio de la teoría de números algebraica}

Aquí resumimos los resultados más básicos de la teoría de números algebraica que vamos usar
en el resto del texto. El lector debería estar familiarizado con estas definiciones.

Los objetos de estudio central de la teoría de números algebraica son los siguientes.

\begin{defi}
  \begin{enumerate}
  \item   Un \define{campo de números} es una extensión finita $K$ de $\Q$. Su \define{anillo de
      enteros} son los elementos $\O_K$ que son enteros sobre $\Z$.
  \item Si $K/\Q$ es cualquier extensión algebraica, no necesariamente finita, todavía
    podemos definir su \emph{anillo de enteros} $\O_K$ como el conjunto de los elementos
    enteros sobre $\Z$.
  \end{enumerate}
\end{defi}

Se estudia estos campos sobre todo vía los ideales de su anillo de enteros, como explicamos
a continuación.

\begin{defi}\label{defi:div-etc}
  \begin{enumerate}
  \item Un \define{ideal fraccional} es un $\O_K$-submódulo no trivial de $K$ finitamente
    generado. En particular, un ideal de $\O_K$ es un ideal fraccional. Denotamos
    $\Div(\O_K)$ el conjunto de ideales fraccionales de $\O_K$.
  \item Para un ideal fraccional $I$ sea $I^{-1}=\{x\in K \mid xI\subseteq \O_K\}$. Esto
    también es un ideal fraccional que se llama el \emph{ideal inverso}\esindex[def]{ideal fraccional!  inverso} de $I$.
  \item\label{defi:prod-ideales} El producto $IJ$ de dos ideales fraccionales $I$, $J$ es el
    $\O$-submodulo de $K$ generado por todos los elementos $ij$ con $i\in I$, $j\in J$.
  \item Un \emph{ideal fraccional principal}\esindex[def]{ideal fraccional!  principal} es un ideal fraccional de la forma
    $\alpha\O_K$ con $\alpha\in K^\times$, que denotamos también $(\alpha)$. Denotamos
    $\Prin(\O_K)$ el subconjunto de $\Div(\O_K)$ de los ideales fraccionales principales.
  \end{enumerate}
\end{defi}

En general, los elementos de $\O_K$ no tienen una factorización única en primos, como es el
caso en el anillo $\Z$. Pero es verdad es que los ideales fraccionales tienen dicha
factorización.

\begin{thm}
  Los ideales fraccionales $\Div(\O_K)$ con la multiplicación y inversión como en la
  \cref{defi:div-etc} constituyen un grupo abeliano con elemento neutro $\O_K$. Este grupo
  es libre generado por los ideales primos no ceros de $\O_K$. Es decir, cada ideal
  fraccional $I$ se escribe de forma única (salvo al orden) como
  \[ I=\mathfrak p_1^{e_1}\dotsm\mathfrak p_r^{e_r} \]
  con ideales primos diferentes $\mathfrak p_i$ y $e_i\in\Z$.
\end{thm}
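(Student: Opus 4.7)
El plan es reconocer que $\O_K$ es un dominio de Dedekind (noetheriano, íntegramente cerrado en $K$, de dimensión de Krull uno) y aplicar el teorema estándar de factorización única de ideales fraccionales en tales dominios. Primero verificaría las tres propiedades: $\O_K$ es noetheriano porque es un $\Z$-módulo libre de rango finito, así que todo ideal es finitamente generado sobre $\Z$ y a fortiori sobre $\O_K$; es íntegramente cerrado en $K$ por definición misma, pues consiste exactamente de los elementos enteros sobre $\Z$; y cada ideal primo no cero $\mathfrak p$ es maximal, ya que $\mathfrak p\cap\Z$ es un ideal primo no cero de $\Z$, por lo cual $\O_K/\mathfrak p$ es un dominio íntegro finito, y por tanto un campo.

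El paso central es probar que para cada ideal primo no cero $\mathfrak p$ se tiene $\mathfrak p^{-1}\supsetneq\O_K$ y $\mathfrak p\mathfrak p^{-1}=\O_K$. La inclusión estricta la obtendría demostrando primero, por inducción noetheriana, que todo ideal no cero de $\O_K$ contiene un producto de primos no ceros; aplicando esto a un ideal principal $(a)\subseteq\mathfrak p$ y eligiendo un producto minimal $\mathfrak p_1\cdots\mathfrak p_r\subseteq(a)$ con $\mathfrak p_1\subseteq\mathfrak p$, se construye $b\in\mathfrak p_2\cdots\mathfrak p_r\setminus(a)$ tal que $b/a\in\mathfrak p^{-1}\setminus\O_K$. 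Para la identidad $\mathfrak p\mathfrak p^{-1}=\O_K$ observo que $\mathfrak p\mathfrak p^{-1}$ es un ideal entre $\mathfrak p$ y $\O_K$, y por maximalidad es $\mathfrak p$ o $\O_K$; el caso $\mathfrak p$ se descarta porque implicaría que todo $x\in\mathfrak p^{-1}$ estabiliza el $\O_K$-módulo finitamente generado $\mathfrak p$, y el truco del determinante forzaría a $x$ a ser entero sobre $\O_K$, contradiciendo $\mathfrak p^{-1}\supsetneq\O_K$ y la cerradura íntegra.

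Con la invertibilidad de los ideales primos establecida, la existencia de la factorización para un ideal no cero $\mathfrak a\subseteq\O_K$ procede por inducción noetheriana: se elige un primo $\mathfrak p\supseteq\mathfrak a$, se verifica que $\mathfrak a\subsetneq\mathfrak a\mathfrak p^{-1}\subseteq\O_K$ (la inclusión es estricta precisamente por la invertibilidad), y se itera hasta llegar a $\O_K$. La unicidad sigue del argumento clásico: si $\mathfrak p_1\cdots\mathfrak p_r=\mathfrak q_1\cdots\mathfrak q_s$, entonces $\mathfrak p_1$ contiene algún $\mathfrak q_j$, y por maximalidad son iguales; multiplicando por $\mathfrak p_1^{-1}$ se cancela y se procede por inducción sobre $r+s$. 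Estos dos hechos combinados muestran que el monoide de ideales no ceros de $\O_K$ es libre sobre los ideales primos no ceros; para extender el resultado a ideales fraccionales uso que cada ideal fraccional $I$ se escribe como $I=\frac1d\mathfrak a$ con $d\in\O_K\setminus\{0\}$ y $\mathfrak a\subseteq\O_K$ un ideal íntegro, lo que automáticamente produce la factorización con exponentes en $\Z$.

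El paso que anticipo como el obstáculo principal es la igualdad $\mathfrak p\mathfrak p^{-1}=\O_K$: es el único lugar donde las tres propiedades de Dedekind intervienen simultáneamente (la finita generación del ideal $\mathfrak p$ permite aplicar el truco del determinante, la cerradura íntegra descarta que elementos no triviales de $\mathfrak p^{-1}$ estabilicen $\mathfrak p$, y la maximalidad de $\mathfrak p$ reduce las posibilidades para $\mathfrak p\mathfrak p^{-1}$). Todo lo demás — las propiedades de grupo abeliano, la factorización en primos y la extensión a ideales fraccionales — es esencialmente formal una vez disponible la invertibilidad.
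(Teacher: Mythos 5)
Tu propuesta es correcta y sigue esencialmente el mismo camino que la fuente que el texto cita para esta demostración (\cite[Chap.\ I, \S3]{MR1697859}): verificar que $\O_K$ es un dominio de Dedekind, probar la invertibilidad de los primos mediante el lema de que todo ideal no nulo contiene un producto de primos junto con el truco del determinante, y deducir existencia y unicidad de la factorización por inducción noetheriana y maximalidad, extendiendo al final a ideales fraccionales. No hay nada que objetar; identificas además correctamente que el punto delicado es la igualdad $\mathfrak p\,\mathfrak p^{-1}=\O_K$.
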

\begin{proof}
  \cite[Chap.\ I, (3.3)]{MR1697859}
\end{proof}

Esto sugiere que es mejor trabajar con ideales fraccionales en lugar de elementos de
$K^\times$ -- de hecho, esto es como se originó la palabra \enquote{ideal}, porque son como
\enquote{números idealizados} de $K$. La discrepancia entre los elementos de $K^\times$ y
los ideales fraccionales es medida por dos grupos:

\begin{defi}
  El \define{grupo de clases} de $K$ es el cociente \[ \Cl(K)=\Div(\O_K)/\Prin(\O_K). \]
\end{defi}

Tenemos una sucesión exacta
\[ 1 \rightarrow \O_K^\times\rightarrow K^\times\rightarrow \Div(\O_K) \rightarrow \Cl(K)
  \rightarrow 1. \] Es decir, el grupo $\O_K^\times$ mide qué tan lejos está la asociación
$\alpha\mapsto(\alpha)$, de $K^\times$ a ideales fraccionales, de ser única. Por su lado, el
grupo $\Cl(K)$ mide \enquote{cuántos más ideales que ideales principales hay} o \enquote{qué
  tan lejos está la factorización en primos de elementos de $\O_K$ de ser única}. De hecho,
$\Cl(K)$ es trivial si y solo si $\O_K$ es un dominio de ideales principales. Por eso es muy
importante estudiar los grupos $\O_K^\times$ y $\Cl(K)$. El primero es descrito por el
teorema siguiente.

\begin{thm}[Teorema de las unidades de Dirichlet]
  Sea $r$ la cantidad de encajes $K\rightarrow\R$ y $c$ la cantidad de parejas conjugadas de
  morfismos $K\rightarrow\C$ con imagen no contenida en $\R$. Entonces (de manera no
  canónica)
  \[ \O_K^\times\isom\mu(K)\times\Z^{r+c-1}, \]
  donde $\mu(K)$ son las raíces de la unidad en $K$.
\end{thm}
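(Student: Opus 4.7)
El plan es usar la geometría de números (teoría de Minkowski). Primero definiría la \emph{inmersión logarítmica}
\[ L\colon \O_K^\times \rightarrow \R^{r+c}, \quad u\mapsto (\log\abs{\sigma_1(u)},\dotsc,\log\abs{\sigma_r(u)},2\log\abs{\tau_1(u)},\dotsc,2\log\abs{\tau_c(u)}), \]
donde $\sigma_1,\dotsc,\sigma_r$ son los encajes reales y $\tau_1,\dotsc,\tau_c$ representantes de las parejas conjugadas de encajes complejos. Este mapeo es un morfismo de grupos, y como la norma absoluta de una unidad es $\pm 1$ (pues tanto $u$ como $u^{-1}$ están en $\O_K$), la fórmula del producto muestra que la imagen está contenida en el hiperplano $H=\{x\in\R^{r+c}:x_1+\dotsm+x_{r+c}=0\}$, que tiene dimensión $r+c-1$.

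Luego calcularía el núcleo de $L$. Un elemento $u\in\ker L$ es un entero algebraico cuyos valores absolutos arquimedianos son todos iguales a $1$; como todos sus conjugados de Galois comparten esta propiedad, los coeficientes de su polinomio mínimo están acotados en términos de $[K:\Q]$. Por ende $\ker L$ es finito, así que es un subgrupo finito de $K^\times$ y por lo tanto coincide con el grupo de raíces de la unidad $\mu(K)$, el cual es cíclico.

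El paso central es mostrar que $L(\O_K^\times)$ es un retículo de rango máximo en $H$. La discretitud se sigue fácilmente del mismo tipo de argumento de finitud: solo hay una cantidad finita de enteros algebraicos en $K$ con todos sus valores absolutos arquimedianos acotados por una cota fija, así que la preimagen bajo $L$ de cualquier región acotada de $\R^{r+c}$ es finita. La parte difícil --- que espero sea el obstáculo principal --- es producir suficientes unidades independientes para generar $H$. El enfoque clásico aplica el teorema de Minkowski sobre puntos reticulares en cuerpos convexos centralmente simétricos, a regiones cuidadosamente escogidas en $\R^{r+c}$, para construir, para cada índice $k\in\{1,\dotsc,r+c\}$, una unidad $u_k\in\O_K^\times$ tal que la $k$-ésima componente de $L(u_k)$ es estrictamente negativa mientras todas las demás son estrictamente positivas. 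Un argumento de álgebra lineal con patrones de signos muestra que cualesquiera $r+c-1$ de los vectores $L(u_k)$ son linealmente independientes en $H$, con lo que su envolvente lineal es todo $H$.

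Finalmente, como $L(\O_K^\times)\isom\Z^{r+c-1}$ es libre, la sucesión exacta corta
\[ 1\rightarrow\mu(K)\rightarrow\O_K^\times\rightarrow L(\O_K^\times)\rightarrow 0 \]
se escinde (de manera no canónica, eligiendo una sección arbitraria), lo que da el isomorfismo $\O_K^\times\isom\mu(K)\times\Z^{r+c-1}$ afirmado en el enunciado.
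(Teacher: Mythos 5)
Tu propuesta es correcta en sus líneas generales y coincide en esencia con la demostración a la que el texto remite: aquí no damos una prueba propia sino que citamos \cite[Chap.\ I, (7.4)]{MR1697859}, y lo que allí se hace es exactamente el argumento clásico que esbozas (inmersión logarítmica, núcleo finito igual a $\mu(K)$, imagen discreta y de rango máximo en el hiperplano de suma cero vía el teorema de Minkowski, y escisión de la sucesión exacta). Solo ten presente que el teorema de Minkowski se aplica a cuerpos convexos en el espacio de Minkowski $K\tensor_\Q\R\isom\R^{r+2c}$ (no en $\R^{r+c}$) y que el paso de puntos reticulares a unidades usa la finitud de los ideales de norma acotada; son precisamente los detalles del paso que tú mismo señalas como el central.
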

\begin{proof}
  \cite[Chap.\ I, (7.4)]{MR1697859}
\end{proof}

Sobre el grupo de clases sólo sabemos lo siguiente.
\begin{thm}
  El grupo $\Cl(K)$ es un grupo abeliano finito.
\end{thm}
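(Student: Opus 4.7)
El plan es demostrar que toda clase de ideales contiene un ideal entero cuya norma absoluta está acotada por una constante $M_K$ que depende sólo de $K$ (la cota de Minkowski), y luego observar que sólo hay finitos ideales enteros con norma acotada. Esto es la estrategia clásica via geometría de los números.

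Para acotar la norma, el primer paso es encajar $K$ en $K_\R := \R^r\times\C^c$ mediante los $r+c$ encajes arquimedianos; este espacio tiene dimensión real $n=[K:\Q]$, e identificándolo con $\R^n$, la imagen de cualquier ideal fraccional $I$ es un retículo completo cuyo covolumen puede calcularse en términos del discriminante $d_K$ y de la norma $N(I)$. Aplicando el teorema de Minkowski (para cuerpos convexos simétricos) a una bola adecuada, se obtiene un elemento $\alpha\in I\setminus\{0\}$ con
\[ \abs{\Norm(\alpha)} \le M_K \cdot N(I), \qquad M_K := \left(\frac{4}{\pi}\right)^c \frac{n!}{n^n} \sqrt{\abs{d_K}}. \]
Ahora, dada una clase $[\mathfrak a]\in\Cl(K)$, represento la clase inversa $[\mathfrak a]^{-1}$ por un ideal entero $\mathfrak b'$ (lo cual siempre es posible multiplicando por un entero racional adecuado) y aplico lo anterior a $I=\mathfrak b'$: existe $\alpha\in\mathfrak b'$ con $\abs{\Norm(\alpha)}\le M_K\cdot N(\mathfrak b')$. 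Entonces $(\alpha)\subseteq\mathfrak b'$ implica $(\alpha)=\mathfrak b'\mathfrak b$ para algún ideal entero $\mathfrak b$ en la clase $[\mathfrak a]$, y tomando normas se ve que $N(\mathfrak b)\le M_K$.

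Finalmente, hay que constatar que sólo existe una cantidad finita de ideales enteros de $\O_K$ con norma acotada por $M_K$. Esto sigue del hecho básico de que $\O_K/n\O_K$ es finito para cada $n\in\Nuno$ (pues $\O_K$ es libre de rango $n$ sobre $\Z$), combinado con la factorización única en primos: todo ideal con $N(\mathfrak b)=m\le M_K$ contiene a $m\O_K$ y por lo tanto corresponde a un ideal de $\O_K/m\O_K$, y sólo hay finitas posibilidades para $m$. Concluyo entonces que el número de clases es finito.

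El paso que requiere más trabajo técnico es la aplicación del teorema de Minkowski: hay que calcular con cuidado el covolumen del retículo asociado a $I$ dentro de $K_\R$ (obteniendo una expresión en términos de $\sqrt{\abs{d_K}}$ y $N(I)$), y elegir correctamente la región convexa simétrica cuyo volumen supere $2^n$ veces dicho covolumen. Todo el resto del argumento es formal una vez obtenida la cota de Minkowski.
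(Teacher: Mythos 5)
Tu propuesta es correcta y sigue esencialmente el mismo camino que la fuente que el texto cita: el libro no incluye demostración propia, sólo remite a \cite[Chap.\ I, (6.3)]{MR1697859}, donde la finitud se prueba exactamente con la cota de Minkowski vía geometría de los números, tal como lo haces (toda clase contiene un ideal entero de norma $\le M_K$ y sólo hay finitos ideales enteros de norma acotada). Los detalles que dejas pendientes (cálculo del covolumen del retículo asociado a $I$ y elección del cuerpo convexo) son exactamente el contenido técnico de esa referencia, así que no hay ninguna laguna conceptual en tu argumento.
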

\begin{proof}
  \cite[Chap.\ I, (6.3)]{MR1697859}
\end{proof}
En general es muy difícil describir su estructura. La Teoría de Iwasawa permite obtener
resultados sobre el grupo de clases en algunas situaciones, como vamos a explicar en las
secciones más adelante.

\begin{defi}
  El orden del grupo de clases, $\#\Cl(K)$, se llama el \define{número de clases} de $K$ y
  se denota $h_K$.
\end{defi}

\begin{defi}
  Sea $K$ un campo de números. Una \define{plaza} de $K$ es
  \begin{arabiclist}
  \item\label{plaza:primo} un ideal primo no nulo de $\O_K$, o
  \item\label{plaza:real} un encaje $K\hookrightarrow \R$, o
  \item\label{plaza:compleja} una pareja de encajes $K\hookrightarrow\C$ conjugadas con
    imagen no contenida en $\R$.
  \end{arabiclist}
  Las plazas de tipo \ref{plaza:primo} se llaman \emph{plazas no arquimedianas}\index[def]{plaza@plaza!no arquimediana} y las
  plazas de tipo \ref{plaza:real} o \ref{plaza:compleja} se llaman \emph{plazas
    arquimedianas}\index[def]{plaza@plaza!arquimediana}; más específicamente, las de tipo \ref{plaza:real} se llaman
  \emph{plazas reales}\index[def]{plaza@plaza!real} y las de tipo \ref{plaza:compleja} \emph{plazas complejas}\index[def]{plaza@plaza!compleja}.

  Equivalentemente y más uniformemente, se puede definir una plaza de un campo de números
  $K$ como una clase de equivalencia de un valor absoluto en $K$, es decir, de una función
  $|\cdot|\colon K\rightarrow\R_{\ge0}$ que es multiplicativa, cumple la desigualdad
  triangular y que toma el valor $0$ sólo en $0$ (dos valores absolutos son equivalentes si
  definen la misma topología en $K$). Los estudiamos en el \cref{ejer:valores-abs}.  Se
  puede verificar que cada valor absoluto en $K$ viene o de la valuación en un ideal primo
  no nulo (en que caso cumple la desigualdad triangular ultramétrica y se llama no
  arquimediana) o de un encaje en $\R$ o $\C$ (en que caso se llama arquimediana). Para
  $K=\Q$ lo demostramos en el \cref{ejer:ostrowski}.

  Véase \cite[§II.3]{MR1697859} para más detalles.
\end{defi}

Sea $L/K$ una extensión finita de campos de números. Para cada ideal primo $\mathfrak p$ de
$\O_K$ podemos descomponer el ideal $\mathfrak p\O_L$ generado por $\mathfrak p$ en
$\O_L$ como
\[ \mathfrak p\O_L=\mathfrak P_1^{e_1}\dotsm\mathfrak P_g^{e_g} \]
con ideales primos $\mathfrak P_i$ de $\O_L$ y $e_i\in\Nuno$. En esta situación decimos que
cada uno de los $\mathfrak P_i$ está \emph{arriba}\index[def]{plaza@plaza!arriba de} de $\mathfrak p$ y $\mathfrak p$ está
\emph{debajo}\index[def]{plaza@plaza!debajo de} de cada uno de los $\mathfrak P_i$.

Para plazas arquimedianas $\sigma$ de $L$ y $\tau$ de $K$ decimos que $\sigma$ está
\emph{arriba}\index[def]{plaza@plaza!arriba de} de $\tau$ y $\tau$ está \emph{debajo}\index[def]{plaza@plaza!debajo de} de $\sigma$ si $\sigma|_K=\tau$.

\begin{defi}
  \begin{enumerate}
  \item   Los exponentes $e_i$ se llaman \define{grado de ramificación}. Para cada $i$ el grado de
    la extensión $f_i:=[\O_L/\mathfrak P_i:\O_K/\mathfrak p]$ se llama \define{grado de
      inercia}. El primo $\mathfrak P_i$ se llama \emph{ramificado}\index[def]{ideal primo!ramificado} si $e_i>1$ y se llama
    \emph{totalmente ramificado}\index[def]{ideal primo!totalmente ramificado} si además $f_i=1$. Decimos que el primo $\mathfrak p$ es
    \emph{ramificado}\index[def]{ideal primo!ramificado} si uno de los $\mathfrak P_i$ lo es y \emph{totalmente ramificado}\index[def]{ideal primo!totalmente ramificado}
    si todos los $\mathfrak P_i$ lo son. La extensión $L/K$ se llama \emph{ramificada}\index[def]{extensión!ramificada} si
    existe un ideal primo $\mathfrak p\subseteq\O_K$ no trivial que es
    ramificado.
  \item Una plaza arquimediana de $K$ se llama \emph{ramificada}\index[def]{plaza@plaza!ramificada} si es una plaza real que
    está debajo de una plaza compleja de $L$.
  \end{enumerate}
\end{defi}

\begin{ex}\label{ex:qi}
  Consideremos la extension $\Q(\complexi)/\Q$. Su anillo de enteros es $\Z[\complexi]\cong\Z[X]/(X^2+1)$,
  así que para un primo $p$ tenemos
  \[ \Z[\complexi]/(p)\cong\Fp[X]/(X^2+1)\cong
    \begin{cases}
      \Fp\times\Fp&\text{si } p\equiv1\ (\mod 4),\\
      \F_{p^2}&\text{si } p\equiv3\ (\mod 4),\\
      \Fp[X]/(X+1)^2&\text{si } p=2.
    \end{cases}
  \]
  De ahí podemos ver que los primos tal que $p\equiv3$ $(\mod 4)$ siguen siendo primos en
  $\Z[\complexi]$, los primos tal que $p\equiv1$ $(\mod 4)$ se descomponen en un producto de dos
  primos diferentes de $\Z[\complexi]$ y el primo $2$ se escribe como $2=\mathfrak p^2$ con un
  primo $\mathfrak p$ de $\Z[\complexi]$, que entonces está ramificado y es la única plaza no
  arquimediana que está ramificada. Además, $\Q(\complexi)$ obviamente tiene una única plaza
  arquimediana, que es compleja y por lo tanto ramificada, porque extiende la única plaza
  arquimediana de $\Q$, que es real.
\end{ex}

Siempre tenemos $\sum_{i=1}^ge_if_i=[L:K]$ (véase \cite[Chap.\ I, (8.2)]{MR1697859}).
Si la extensión $L/K$ es Galois, los $e_i$ y $f_i$ son iguales para cada $i$ (véase
\cite[Chap.\ I, (9.1) y p.\ 55]{MR1697859}), y los llamamos
simplemente $e$ y $f$. Entonces $[L:K]=efg$.

\begin{defi}\label{defi:ramificacion-inercia}
  Sea $L/K$ una extensión de Galois de campos de números y \[ \mathfrak p\O_L=\mathfrak
    P_1^{e}\dotsm\mathfrak P_g^{e} \] la descomposición de un ideal primo de $\O_K$ como
  arriba. El grupo $\Gal(L/K)$ actúa en los $\mathfrak P_i$ transitivamente \cite[Chap.\ I,
  (9.1)]{MR1697859}.
  \begin{enumerate}
  \item Para cada $i$ el estabilizador de $\mathfrak P_i$ se llama \define{grupo de
      descomposición} de $\mathfrak P_i$ y se denota $G_{\mathfrak P_i/\mathfrak p}$.
  \item El núcleo de la aplicación natural sobreyectiva \cite[Chap.\ I, (9.4)]{MR1697859}
    \[ G_{\mathfrak P_i}\twoheadrightarrow\Gal(\O_L/\mathfrak P_i/\O_K/\mathfrak p) \]
    se llama el \define{grupo de inercia} de $\mathfrak P_i$ y se denota $I_{\mathfrak
      P_i/\mathfrak p}$.
  \end{enumerate}
\end{defi}

\begin{prop}
  En la situación de la \cref{defi:ramificacion-inercia} tenemos
  \[ \#G_{\mathfrak P_i/\mathfrak p} = ef,\quad\#I_{\mathfrak P_i/\mathfrak p}=e. \] En
  particular, $\mathfrak p$ es no ramificado si y solo si $I_{\mathfrak P_i/\mathfrak p}$ es
  trivial y es totalmente ramificado si y solo si
  $I_{\mathfrak P_i/\mathfrak p} =G_{\mathfrak P_i/\mathfrak p}$ (para algún, o
  equivalentemente todo $i$).
\end{prop}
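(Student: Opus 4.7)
El plan es aplicar el teorema del estabilizador y luego usar la sucesión exacta que define al grupo de inercia.

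Primero, para calcular $\#G_{\mathfrak P_i/\mathfrak p}$, recordaría que $\Gal(L/K)$ actúa transitivamente sobre el conjunto $\{\mathfrak P_1,\dotsc,\mathfrak P_g\}$ de primos arriba de $\mathfrak p$, según la referencia citada en la \cref{defi:ramificacion-inercia}. Por definición, $G_{\mathfrak P_i/\mathfrak p}$ es el estabilizador de $\mathfrak P_i$ bajo esta acción, así que el teorema del estabilizador (formulado multiplicativamente, $\#G=\#\text{órbita}\cdot\#\text{estabilizador}$) da
\[ \#G_{\mathfrak P_i/\mathfrak p} = \frac{\#\Gal(L/K)}{g} = \frac{[L:K]}{g} = \frac{efg}{g} = ef, \]
usando la fórmula $[L:K]=efg$ mencionada justo antes de la definición.

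Segundo, para calcular $\#I_{\mathfrak P_i/\mathfrak p}$, usaría la sucesión exacta que define al grupo de inercia:
\[ 1\rightarrow I_{\mathfrak P_i/\mathfrak p}\rightarrow G_{\mathfrak P_i/\mathfrak p}\twoheadrightarrow\Gal\bigl((\O_L/\mathfrak P_i)/(\O_K/\mathfrak p)\bigr)\rightarrow 1. \]
La suprayectividad de la flecha de la derecha es precisamente el resultado citado como \cite[Chap.\ I, (9.4)]{MR1697859} en la definición. El cociente residual $\O_L/\mathfrak P_i$ sobre $\O_K/\mathfrak p$ es una extensión de campos finitos de grado $f$ por definición del grado de inercia, y toda extensión de campos finitos es Galois, de modo que $\#\Gal((\O_L/\mathfrak P_i)/(\O_K/\mathfrak p))=f$. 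Combinando esto con la primera parte se obtiene $\#I_{\mathfrak P_i/\mathfrak p} = ef/f = e$.

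Finalmente, las consecuencias son inmediatas de las definiciones: $\mathfrak p$ es no ramificado si y solo si $e=1$, equivalentemente $\#I_{\mathfrak P_i/\mathfrak p}=1$; y es totalmente ramificado si y solo si $e>1$ y $f=1$, equivalentemente $I_{\mathfrak P_i/\mathfrak p}=G_{\mathfrak P_i/\mathfrak p}$ de orden $e$. La única dificultad real es de índole bibliográfica: garantizar que los resultados \cite[Chap.\ I, (9.1), (9.4)]{MR1697859} —transitividad de la acción de Galois y suprayectividad sobre el grupo de Galois residual— están disponibles, lo cual es el caso según la definición. No hay obstáculo técnico propio, puesto que el argumento es esencialmente combinatorio una vez que se tienen estos dos hechos.
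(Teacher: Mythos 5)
Tu argumento es correcto. Conviene señalar que el texto no da una demostración propia: se limita a citar \cite[Chap.\ I, (9.6)]{MR1697859}, así que tu propuesta aporta precisamente lo que el paper delega a la referencia. El camino que sigues —órbita-estabilizador sobre el conjunto $\{\mathfrak P_1,\dotsc,\mathfrak P_g\}$ usando la transitividad de \cite[Chap.\ I, (9.1)]{MR1697859} y la fórmula $[L:K]=efg$ para obtener $\#G_{\mathfrak P_i/\mathfrak p}=ef$, y luego la sucesión exacta
\[ 1\rightarrow I_{\mathfrak P_i/\mathfrak p}\rightarrow G_{\mathfrak P_i/\mathfrak p}\rightarrow\Gal\bigl((\O_L/\mathfrak P_i)/(\O_K/\mathfrak p)\bigr)\rightarrow 1 \]
con la sobreyectividad de \cite[Chap.\ I, (9.4)]{MR1697859} y el hecho de que la extensión residual de campos finitos es Galois de grado $f$ para concluir $\#I_{\mathfrak P_i/\mathfrak p}=e$— es esencialmente la demostración estándar que se encuentra en la referencia citada, de modo que no hay divergencia matemática real, sólo que tú la escribes y el paper la externaliza. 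Las consecuencias finales son, como dices, inmediatas de las definiciones; la única sutileza (heredada del enunciado mismo del paper, no de tu argumento) es el caso degenerado $e=f=1$, donde $I_{\mathfrak P_i/\mathfrak p}=G_{\mathfrak P_i/\mathfrak p}$ es trivial pero la definición del texto exige $e>1$ para hablar de ramificación total; es el abuso habitual y no afecta la validez de tu razonamiento.
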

\begin{proof}
  \cite[Chap.\ I, (9.6)]{MR1697859}
\end{proof}

El resultado anterior nos permite extender estas definiciones a extensiones infinitas. Aquí
seguimos \cite[Appendix, §2]{MR1421575}. 

\begin{defi}\label{defi:ram-infinito}
  Sea $L/K$ una extensión de Galois, posiblemente infinita, donde $K$ es una extensión
  algebraica no necesariamente finita de $\Q$. Sea $\mathfrak P$ un ideal primo de $\O_L$ y
  $\mathfrak p=\mathfrak P\cap\O_K$, que es un ideal primo de $\O_K$. En esta situación
  decimos que $\mathfrak P$ está \emph{arriba}\index[def]{plaza@plaza!arriba de} de $\mathfrak p$ y $\mathfrak p$ está
  \emph{debajo}\index[def]{plaza@plaza!debajo de} de $\mathfrak P$.
  \begin{enumerate}
  \item El \define{grupo de descomposición} de $\mathfrak P$ es
    \[ G_{\mathfrak P/\mathfrak p}:=\{\sigma\in\Gal(L/K) \mid \sigma(\mathfrak P)=\mathfrak P \}. \]
  \item El \define{grupo de inercia} de $\mathfrak P$ es
    \[ I_{\mathfrak P/\mathfrak p}:=\{\sigma\in G_{\mathfrak P/\mathfrak p} \mid
      \forall\,x\in\O_K\colon \sigma(x)\equiv x \mod\mathfrak P \}. \]
  \item Decimos que $\mathfrak P$ es \emph{no ramificado}\index[def]{ideal!no ramificado} si $I_{\mathfrak P/\mathfrak p}$
    es trivial y es \emph{totalmente ramificado}\index[def]{ideal!totalmente ramificado} si
    $I_{\mathfrak P/\mathfrak p} =G_{\mathfrak P/\mathfrak p}$.
  \item Decimos que un primo $\mathfrak p$ de $\O_K$ es \emph{ramificado}\index[def]{ideal primo!ramificado} o
    \emph{totalmente ramificado}\index[def]{ideal primo!totalmente ramificado} si existe un primo $\mathfrak P$ de $\O_L$ arriba de
    $\mathfrak p$ que los es (equivalentemente, todos los primo $\mathfrak P$ de $\O_L$
    arriba de $\mathfrak p$ lo son).
  \end{enumerate}
\end{defi}

Terminamos la sección con la importante definición de la norma de ideales.

\begin{defi}\label{defi:norma-de-ideales}
  Sea $L/K$ una extensión finita de campos de números de grado $n$. El morfismo de grupos
  abelianos
  \[ \mathrm N\colon\Div(\O_L)\rightarrow\Div(\O_K),\quad\mathfrak P\mapsto\mathfrak
    p^{f_{\mathfrak P/\mathfrak p}} \] donde $\mathfrak P$ es un ideal primo en $\O_L$ y
  $\mathfrak p=\mathfrak P\cap\O_K$ se llama \define{norma relativa de ideales}. Tiene la
  propiedad de que para cada $I\in\Div(\O_K)$ tenemos $\mathrm N(I\O_L)=I^n$. Envía ideales
  principales a ideales principales y por eso induce un morfismo
  \[ \mathrm N\colon\Cl(L)\rightarrow\Cl(K) \] que también llamamos \define{norma relativa
    de ideales}.
\end{defi}

\ejercicios

\begin{ejer}
  Sea $K$ un campo de números e $I$ un ideal fraccional. Demuestre que $I^{-1}$ es un ideal
  fraccional y que $II^{-1}=\O_K$.
\end{ejer}

\begin{ejer}\label{ejer:p-nmid-h}
  Sea $K$ un campo de números, $h=\#\Cl(K)$ su número de clases y $p$ un primo. Demuestre
  que $p\nmid h$ es equivalente a lo siguiente: Para cada ideal fraccional $I\neq0$ de $K$,
  si $I^p$ es un ideal principal entonces $I$ es un ideal principal.
\end{ejer}

\begin{ejer}\label{ejer:valores-abs}
  Sea $K$ un campo de números y $|\cdot|\colon K\rightarrow\R_{\ge0}$ un valor absoluto, es decir
  para $a,b\in K$ tenemos que $|ab|=|a||b|$, $|a|=0$ $\iff$ $a=0$, y
  $|a+b|\le|a|+|b|$. Definimos $d(a,b)=|a-b|$ para $a,b\in K$.
  \begin{enumerate}
  \item Verifique que esto induce una
    topología en $K$ tal que $K$ es un campo topológico, i.\,e.\ la adición y la
    multiplicación son continuas.
  \item Llamamos dos valores absolutos $|\cdot|_1$, $|\cdot|_2$ equivalentes si inducen la
    misma topología en $K$. Demuestre que esto pasa si y solo si existe $s\in\R_{>0}$ tal
    que $|\cdot|_1^s=|\cdot|_2$.
  \end{enumerate}
\end{ejer}

\begin{ejer}\label{ejer:ostrowski}
  Sea $|\cdot|\colon\Q\rightarrow\R_{\ge0}$ un valor absoluto no trivial (es decir existe $a\in\Q^\times$
  tal que $|a|\neq1$). Le llamamos
  arquimediano si existe $n\in\Nuno$ tal que $|n|>1$.
  \begin{enumerate}
  \item Demuestre que $|\cdot|$ es no arquimediano si y solo si $|2|\le1$. Use una serie
    $2$-ádica para esto.
  \item Sea $|\cdot|$ arquimediano. Use la descomposición en primos en $\Z$ e inducción para
    demostrar que el valor absoluto es equivalente al valor absoluto clásico.
  \item Sea ahora $|\cdot|$ no arquimediano. Demuestre que hay un primo $p$ tal que $|p|<1$
    y que este primo es único. Concluya que $|\cdot|$ es equivalente al valor $p$-ádico.
  \end{enumerate}
\end{ejer}

\begin{ejer}
  En la situación del \cref{ex:qi} con el campo $\Q(\complexi)$, ¿Cuáles son los grupos de
  descomposición e inercia en $\Gal(\Q(\complexi)/\Q)$ para cada primo de $\Q(\complexi)$?
\end{ejer}

\section{Campos ciclotómicos}

En esta sección introducimos los ejemplos de campos de números que serán los más importantes
en este texto y alistamos algunas de sus propiedades.

\begin{defi}\label{defi:campos-ciclotomicos}
  Si $m\in\Nuno$ escribimos $\Q(\mu_m)$ para el campo generado sobre $\Q$ por las raíces de la
  unidad de orden $m$. Llamamos este campo \define[campo ciclotómico]{el $m$-ésimo campo ciclotómico}. El campo de números $\Q(\mu_m)$ es una extensión de Galois de $\Q$ cuyo grupo de Galois es isomorfo canónicamente a $(\Z/m\Z)^\times$ vía
  \begin{equation}
    \label{eqn:isom-gcic}
    (\Z/m\Z)^\times\isomarrow \Gal(\Q(\mu_m)/\Q), \quad a\mapsto\sigma_a
  \end{equation}
  con $\sigma_a$ actuando en una raíz de la unidad $\zeta$ como
  $\sigma_a(\zeta)=\zeta^a$. Muchas veces tomamos este isomorfismo como una identificación.

  En esta situación escribimos $\mu_m$ para el subgrupo de $\Q(\mu_m)^\times$ de las raíces
  $m$-ésimas de la unidad.
\end{defi}

\begin{prop}\label{prop:enteros-cic}
  El anillo de enteros de $\Q(\mu_m)$ es $\Z[\xi]$, donde $\xi$ es una raíz $m$-ésima
  primitiva de la unidad.
\end{prop}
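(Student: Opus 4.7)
El plan es reducir primero al caso en que $m=p^k$ es una potencia de un primo, y luego tratar ese caso usando que $p$ se ramifica totalmente en $\Q(\mu_{p^k})$ con uniformizador $1-\xi$.

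Para la reducción, si $m=p_1^{a_1}\dotsm p_r^{a_r}$, entonces $\Q(\mu_m)$ es el compuesto de los $\Q(\mu_{p_i^{a_i}})$, y como $\varphi(m)=\prod_i\varphi(p_i^{a_i})$ estos son linealmente disyuntos sobre $\Q$. Una vez que el caso de potencia de primo nos diga que el discriminante de $\Q(\mu_{p^k})$ es una potencia de $p$, los discriminantes de los distintos campos del compuesto son coprimos a pares. Aplicando el resultado estándar de que el anillo de enteros del compuesto de campos de números con discriminantes coprimos es el producto de los anillos de enteros, se obtiene \[\O_{\Q(\mu_m)}=\Z[\xi_{p_1^{a_1}}]\dotsm\Z[\xi_{p_r^{a_r}}]=\Z[\xi_m],\] donde la última igualdad sigue del teorema chino del resto.

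Para el caso $m=p^k$, sea $\xi$ una raíz $p^k$-ésima primitiva de la unidad, $n=\varphi(p^k)$ y $\lambda=1-\xi$. El polinomio ciclotómico $\Phi_{p^k}(X)$ es el polinomio mínimo de $\xi$, y de la identidad $\Phi_{p^k}(X)=(X^{p^k}-1)/(X^{p^{k-1}}-1)$ se deduce $\Phi_{p^k}(1)=p$. De la factorización $\Phi_{p^k}(X)=\prod_{\substack{1\le a<p^k\\(a,p)=1}}(X-\xi^a)$ se sigue $p=\prod(1-\xi^a)$. Cada cociente $(1-\xi^a)/(1-\xi)$ es una unidad de $\Z[\xi]$, pues su inverso, $(1-\xi^{ab})/(1-\xi^a)$ con $ab\equiv1\ (\mod p^k)$, se escribe explícitamente como un polinomio en $\xi^a$. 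Por lo tanto $p=u\lambda^n$ con $u\in\Z[\xi]^\times$, de donde $\mathrm N(\lambda)=\pm p$. Combinando esto con la factorización $(p)\O_K=\prod\mathfrak p_i^{e_i}$ y la fórmula $\sum e_if_i=n$, deducimos que existe un único primo $\mathfrak p=(\lambda)\O_K$ sobre $p$, con $e=n$ y $f=1$, es decir $p$ está totalmente ramificado con uniformizador $\lambda$. Un cálculo directo muestra luego que el discriminante de $\Z[\xi]$ es una potencia de $p$, así que $[\O_K:\Z[\xi]]$ también lo es; y como $\Z[\xi]_{(\lambda)}$ es ya un anillo de valuación discreta que coincide entonces con $\O_{K,\mathfrak p}$, el índice es además coprimo con $p$, luego es $1$.

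El obstáculo principal radica en el caso de potencia de primo, en particular en justificar rigurosamente la $p$-maximalidad de $\Z[\xi]$: hay que combinar con cuidado la relación $p=u(1-\xi)^n$ con la fórmula $\sum e_if_i=n$ para ver que $(1-\xi)$ genera en $\Z[\xi]$ un ideal primo con campo residual $\F_p$. La reducción al caso de potencia de primo, aunque conceptualmente directa, también descansa en el resultado estándar sobre compuestos con discriminantes coprimos, que tomaríamos como conocido.
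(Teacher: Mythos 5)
Tu propuesta es correcta y sigue esencialmente el mismo camino que la demostración a la que el texto remite (la proposición se apoya en \cite[Chap.\ 1, (10.2)]{MR1697859}, cuya prueba es exactamente ésta): primero el caso $m=p^k$ usando que $p=u(1-\xi)^{\varphi(p^k)}$ con $u\in\Z[\xi]^\times$, que $(1-\xi)$ es totalmente ramificado con campo residual $\F_p$ y que el discriminante de $\Z[\xi]$ es potencia de $p$, y luego la reducción al caso general mediante el compuesto de campos con discriminantes coprimos y el teorema chino del resto. Los puntos que señalas como delicados (la $p$-maximalidad vía la localización en $(1-\xi)$ y el resultado sobre compuestos con discriminantes coprimos) son precisamente los ingredientes estándar de esa prueba y están bien empleados.
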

\begin{proof}
  \cite[Chap.\ 1, (10.2)]{MR1697859}  
\end{proof}

Notemos que para cada campo ciclotómico $\Q(\mu_m)$ la conjugación compleja es un automorfismo bien
definido, es decir independiente del encaje en $\C$. Bajo el isomorfismo
\eqref{eqn:isom-gcic} corresponde a $-1\in(\Z/m\Z)^\times$.

En todos los siguientes resultados suponemos que $p$ es primo.

\begin{prop}\label{prop:o-es-mu-o-mas}
  Sea $\O=\O_{\Q(\mu_p)}$. Entonces cada elemento $u\in\O^\times$ se puede escribir
  como $u=\xi v$ con $\xi\in\mu_p$ y $v\in{(\O^\times)}^+$, donde
  ${(\O^\times)}^+\subseteq\O^\times$ son los elementos fijos por la conjugación compleja.
\end{prop}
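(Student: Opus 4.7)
El plan es aplicar el argumento clásico basado en el teorema de Kronecker. Sea $\sigma\in\Gal(\Q(\mu_p)/\Q)$ la conjugación compleja (bien definida según lo mencionado tras la \cref{defi:campos-ciclotomicos}), de modo que $\bar x:=\sigma(x)$ para $x\in\Q(\mu_p)$. La estrategia consiste en demostrar primero que el cociente $w:=u/\bar u$ es una raíz $p$-ésima de la unidad; como $p$ es impar, $w$ admitirá una raíz cuadrada $\xi\in\mu_p$, y el cociente $v:=u/\xi$ resultará fijo por $\sigma$, con lo cual $u=\xi v$ será la descomposición deseada.

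Primero verificaría que $w$ es una raíz de la unidad en $\Q(\mu_p)$. Claramente $w\in\O^\times$, y como $\Gal(\Q(\mu_p)/\Q)$ es abeliano, $\sigma$ conmuta con cualquier otro encaje $\tau\colon\Q(\mu_p)\hookrightarrow\C$, lo que da $\tau(\bar u)=\overline{\tau(u)}$ y por lo tanto $|\tau(w)|=1$ para todo $\tau$. Por el teorema de Kronecker un entero algebraico con todos sus conjugados de módulo $1$ es una raíz de la unidad, así que $w\in\mu_{2p}=\{\pm1\}\cdot\mu_p$, pues éstas son las únicas raíces de la unidad contenidas en $\Q(\mu_p)$ (con $p$ impar).

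El paso clave es descartar el signo negativo. Sea $\mathfrak p$ el único primo de $\O$ sobre $p$, así que $\O/\mathfrak p\cong\Fp$. Como $\mathfrak p$ es el único primo arriba de $p$, es estable bajo $\sigma$, el cual induce entonces un automorfismo de $\Fp$; pero $\Aut(\Fp)=1$, y por lo tanto $u\equiv\bar u\pmod{\mathfrak p}$. Por otro lado, toda raíz $p$-ésima de la unidad es $\equiv1\pmod{\mathfrak p}$, pues en característica $p$ se tiene $x^p-1=(x-1)^p$. Si fuera $w=-\eta$ con $\eta\in\mu_p$, tendríamos $u=-\eta\bar u\equiv-\bar u\pmod{\mathfrak p}$ y por lo tanto $2u\in\mathfrak p$, lo que contradice que $u\in\O^\times$ y $p$ es impar.

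Con esto $w\in\mu_p$; como $p$ es impar, elevar al cuadrado es biyectivo en $\mu_p$ y existe $\xi\in\mu_p$ con $\xi^2=w$. Poniendo $v:=u/\xi$ y usando $\bar\xi=\xi^{-1}$ se verifica directamente que $\bar v/v=\bar u\xi/(u\bar\xi)=\xi^2/w=1$, obteniendo la descomposición $u=\xi v$ con $\xi\in\mu_p$ y $v\in(\O^\times)^+$. El obstáculo principal es el paso intermedio, que usa la reducción módulo $\mathfrak p$ y depende de manera esencial de que $p$ sea impar (el enunciado es falso para $p=2$); también habrá que invocar el teorema de Kronecker, que no se enunció previamente en el texto.
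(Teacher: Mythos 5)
Tu demostración es correcta y es esencialmente el argumento estándar que el texto cita sin reproducir (Washington, Prop.\ 1.5): el cociente $u/\bar u$ es raíz de la unidad por Kronecker, el signo $-1$ se descarta por reducción módulo el único primo sobre $p$, y se extrae una raíz cuadrada en $\mu_p$ para obtener $v$ real. La única observación menor es que, como bien señalas, el teorema de Kronecker debe citarse aparte, pero el argumento en sí no tiene lagunas.
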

\begin{proof}
  \cite[Prop.\ 1.5]{MR1421575}
\end{proof}

Los siguientes resultados son esenciales para entender la ramificación en las extensiones
ciclotómicas.

\begin{lem}\label{lem:cicunit}
  Sea $\xi$ una $p^{r}$-raíz primitiva de la unidad y sea $(p^{r},k)=1$, entonces
  $\frac{\xi^{k}-1}{\xi-1}$ es una unidad en $\Z[\mu_{p^{r}}]$.
\end{lem}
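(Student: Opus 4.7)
The plan is to show directly that the quotient lies in $\Z[\mu_{p^r}]$ and that it has a multiplicative inverse also in $\Z[\mu_{p^r}]$.

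First I would write $k$ as a positive integer (we may always replace $k$ by a representative in $\{1,2,\dotsc,p^r-1\}$, since $\xi^{p^r}=1$ changes nothing). Then the identity
\[ \frac{\xi^k-1}{\xi-1}=1+\xi+\xi^2+\dotsb+\xi^{k-1} \]
shows immediately that the element belongs to $\Z[\xi]=\Z[\mu_{p^r}]$, using \cref{prop:enteros-cic}.

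Next I would exhibit the inverse. The key observation is that since $(p^r,k)=1$, the class of $k$ is a unit in $\Z/p^r\Z$, so there exists $k'\in\Nuno$ with $kk'\equiv 1\pmod{p^r}$. Because $\xi$ has order exactly $p^r$, this congruence implies $\xi^{kk'}=\xi$, that is, $(\xi^k)^{k'}=\xi$. In particular $\xi^k$ is again a primitive $p^r$-th root of unity. Applying the same geometric-series identity to $\xi^k$ in place of $\xi$ gives
\[ \frac{\xi-1}{\xi^k-1}=\frac{(\xi^k)^{k'}-1}{\xi^k-1}=1+\xi^k+\xi^{2k}+\dotsb+\xi^{(k'-1)k}, \]
which again lies in $\Z[\mu_{p^r}]$. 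This is the desired inverse, proving that $(\xi^k-1)/(\xi-1)\in\Z[\mu_{p^r}]^\times$.

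There is really no hard step here; the only minor subtlety is justifying that $\xi^k$ is itself primitive, which is exactly where the hypothesis $(p^r,k)=1$ is used, and observing that the trick is symmetric in $\xi$ and $\xi^k$ precisely because $k'$ exists. Everything else reduces to the standard factorization $X^k-1=(X-1)(1+X+\dotsb+X^{k-1})$.
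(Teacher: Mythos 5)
Tu demostración es correcta y sigue esencialmente la misma idea que el texto: el paper relega la prueba al \cref{ejer:cicunit}, cuya indicación es justamente escribir la fracción como suma geométrica usando un $t$ (tu $k'$) con $a\equiv bt \pmod{p^r}$, es decir, el mismo argumento de que $\frac{\xi^k-1}{\xi-1}$ y su recíproco son sumas de potencias de raíces de la unidad gracias a que $k$ es invertible módulo $p^r$. No hay ningún hueco en tu razonamiento.
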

\begin{proof}
  Dejemos esto como ejercicio; véase el \cref{ejer:cicunit}, donde demostramos una afirmación
  más general.
\end{proof}

\begin{lem}\label{lem:totram}  Sea $\xi$ una $p^{r}$-raíz primitiva de la unidad. Entonces $(1-\xi)$ es un ideal primo en $\Q(\mu_{p^{r}})$ y $(1-\xi)^{(p-1)p^{r-1}}=(p)$, es
  decir $(p)$ es totalmente ramificado en $\Q(\mu_{p^{r}})$. En particular, $(1-\xi)$ es el
  único ideal de $\Q(\mu_{p^r})$ arriba de $(p)$.
\end{lem}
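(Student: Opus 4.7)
El plan es extraer toda la información necesaria del polinomio ciclotómico $p^r$-ésimo, y luego combinar la factorización que este nos provee con el \cref{lem:cicunit} para obtener la descomposición del ideal $(p)$ en $\O:=\Z[\xi]$.

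Primero, recordemos que el polinomio ciclotómico $p^r$-ésimo es
\[ \Phi_{p^r}(X) = \frac{X^{p^r}-1}{X^{p^{r-1}}-1} = 1 + X^{p^{r-1}} + X^{2p^{r-1}} + \dotsb + X^{(p-1)p^{r-1}}, \]
cuyas raíces son exactamente las $\xi^k$ con $1\le k<p^r$ y $(k,p)=1$. Al evaluar el desarrollo de la derecha en $X=1$ obtenemos $\Phi_{p^r}(1)=p$, de manera que
\[ p = \prod_{\substack{1\le k<p^r\\(k,p)=1}} (1-\xi^k). \]
El número de factores es $\varphi(p^r)=(p-1)p^{r-1}$. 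Por el \cref{lem:cicunit}, cada cociente $\frac{1-\xi^k}{1-\xi}$ (para $(k,p)=1$) es una unidad en $\O$, así que existe $u\in\O^\times$ tal que $p = u\,(1-\xi)^{(p-1)p^{r-1}}$ en $\O$. Pasando a ideales,
\[ (p) = (1-\xi)^{(p-1)p^{r-1}}. \]

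Falta ver que $(1-\xi)$ es efectivamente un ideal primo. El paso clave es observar que la identidad anterior, junto con la multiplicatividad de la norma de ideales, nos da $\Norm((1-\xi))^{(p-1)p^{r-1}}=\Norm((p))=p^{(p-1)p^{r-1}}$, de donde $\Norm((1-\xi))=p$. Como el ideal $(1-\xi)$ tiene norma igual al primo $p$, el cociente $\O/(1-\xi)$ es un anillo finito de cardinalidad $p$, y por lo tanto isomorfo al campo $\F_p$; en particular $(1-\xi)$ es un ideal primo (incluso maximal). Alternativamente, uno podría calcular directamente $\Norm_{\Q(\mu_{p^r})/\Q}(1-\xi)=\Phi_{p^r}(1)=p$ usando que este es, salvo signo, el término constante de $\Phi_{p^r}$.

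Con esto hemos descompuesto $(p)$ como potencia de un único ideal primo, de manera que $(p)=\mathfrak P^{e}$ con $\mathfrak P=(1-\xi)$ y $e=(p-1)p^{r-1}=[\Q(\mu_{p^r}):\Q]$. Siendo $e=[\Q(\mu_{p^r}):\Q]$ forzosamente $f=g=1$, es decir, $(p)$ es totalmente ramificado y $(1-\xi)$ es el único primo arriba de él. No veo obstáculos mayores aquí; el único punto delicado es justificar la igualdad $\Phi_{p^r}(1)=p$, pero esto es elemental a partir de la expansión explícita del polinomio.
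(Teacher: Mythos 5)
Tu demostración es correcta y sigue esencialmente el mismo camino que la del texto: evaluar en $X=1$ la factorización de $\frac{X^{p^r}-1}{X^{p^{r-1}}-1}$ para obtener $(p)=\prod_{(k,p)=1}(1-\xi^k)$ y aplicar el \cref{lem:cicunit} para igualar los ideales $(1-\xi^k)=(1-\xi)$. La única variación menor está en la primalidad de $(1-\xi)$: tú la deduces calculando que su norma es $p$, mientras que el texto la obtiene directamente de que el exponente $(p-1)p^{r-1}$ ya iguala el grado de la extensión, vía la identidad $\sum e_if_i=[\Q(\mu_{p^r}):\Q]$; ambas justificaciones son válidas.
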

\begin{proof}
El conjunto de las $p^{r}$-raíces de la unidad satisfacen la ecuación $X^{p^{r}}-1=0$. Si son primitivas no pueden tener orden menor a $p^{r}$ por lo que tenemos
$$\dfrac{X^{p^{r}}-1}{X^{p^{r-1}}-1}=X^{(p-1)p^{r-1}}+X^{(p-2)p^{r-1}}+\cdots +1 = \prod_{(k,p^{r})=1}(X-\xi^{r}),$$
evaluando las expresiones en $1$ y tomando los ideales principales generados por los elementos, tenemos 
$$(p)=\prod_{(k,p^{r})=1}(1-\xi^{k}).$$ 
Por el \cref{lem:cicunit}, hay una igualdad de ideales $(1-\xi)=(1-\xi^{k})$ para $k$ tal que $(k,p^{r})=1$. Es decir $(p)=(1-\xi)^{(p-1)p^{r-1}}$, como el grado de la extensión $[\Q(\mu_{p^{r}}):\Q]$ es $(p-1)p^{r-1}$ tenemos que $(1-\xi)$ debe de ser primo y por lo tanto $p$ es totalmente ramificado. 
\end{proof}

\begin{prop}\label{prop:ramcycfin}
  Un primo $\ell$ es ramificado en $\Q(\mu_m)$ si y solo si $\ell\mid m$, salvo si $\ell=2$ y $(4,m)=2$.
\end{prop}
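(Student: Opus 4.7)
La estrategia es reducir al caso de potencias de primos, en el que ya tenemos la \cref{lem:totram}, y luego pasar al caso general vía composita de extensiones. La peculiaridad del caso $\ell=2$ y $(4,m)=2$ surgirá de la igualdad $\Q(\mu_2)=\Q$.

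Primero escribo $m=\prod_{i}p_i^{a_i}$ la descomposición en primos. Como los $\mu_{p_i^{a_i}}$ generan $\mu_m$, el campo $\Q(\mu_m)$ es el compositum de los $\Q(\mu_{p_i^{a_i}})$. Si $p_i=2$ y $a_i=1$, entonces $\mu_2=\{\pm1\}\subseteq\Q$ y $\Q(\mu_{p_i^{a_i}})=\Q$, así que este factor no contribuye; en ese caso se tiene además $\Q(\mu_m)=\Q(\mu_{m/2})$, y como $m/2$ es impar, esto ya explicará por qué $2$ se comporta como si no dividiera a $m$.

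Segundo, resuelvo el caso $m=p^r$ (con $p^r\neq2$). La ramificación de $p$ en $\Q(\mu_{p^r})$ es la \cref{lem:totram}. Queda ver que un primo $\ell\neq p$ es no ramificado. Para esto uso la \cref{prop:enteros-cic}: $\O_{\Q(\mu_{p^r})}=\Z[\xi]$ con $\xi$ una raíz primitiva $p^r$-ésima, por lo que el discriminante de la extensión divide al discriminante del polinomio minimal $\Phi_{p^r}(X)$ de $\xi$. Un cálculo estándar (derivando la relación $X^{p^r}-1=\Phi_{p^r}(X)\cdot(X^{p^{r-1}}-1)$ y evaluando en $\xi$) muestra que este discriminante es, salvo signo, una potencia de $p$; por lo tanto los únicos primos ramificados en $\Q(\mu_{p^r})$ dividen a $p$, es decir, son $p$ mismo.

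Tercero, paso al caso general. Si $\ell\nmid m$ (o bien $\ell=2$ con $v_2(m)\le1$, donde $\Q(\mu_m)=\Q(\mu_{m'})$ con $m'$ impar no divisible por $\ell$), entonces $\ell$ es no ramificado en cada factor $\Q(\mu_{p_i^{a_i}})$ por el paso anterior; la no ramificación se preserva al tomar composita (un primo no ramificado en dos extensiones de Galois lo es en su compositum, porque los grupos de inercia de los primos arriba actúan trivialmente en cada uno de los cocientes correspondientes), así que $\ell$ es no ramificado en $\Q(\mu_m)$. Recíprocamente, si $\ell\mid m$ y excluimos el caso $\ell=2,(4,m)=2$, entonces alguno de los factores $\Q(\mu_{p_i^{a_i}})$ con $p_i^{a_i}>2$ tiene a $\ell$ como primo ramificado. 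Si $\mathfrak Q$ es un primo de $\Q(\mu_m)$ arriba de $\ell$ y $\mathfrak q$ es su restricción al factor en cuestión, la restricción induce un epimorfismo entre grupos de inercia $I_{\mathfrak Q/\ell}\twoheadrightarrow I_{\mathfrak q/\ell}$, y como este último no es trivial, tampoco lo es el primero; es decir, $\ell$ es ramificado en $\Q(\mu_m)$.

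El punto más delicado será verificar limpiamente que el discriminante de $\Q(\mu_{p^r})$ es sólo divisible por $p$; pero esta parte es un cálculo directo dado que ya conocemos explícitamente el anillo de enteros por la \cref{prop:enteros-cic}, y una alternativa aún más económica es observar que para $\ell\neq p$, el polinomio $X^{p^r}-1$ es separable módulo $\ell$ (pues $p^r$ es invertible en $\F_\ell$), por lo que $\Z[\xi]/\ell\Z[\xi]$ es un producto de campos y ningún primo sobre $\ell$ se ramifica.
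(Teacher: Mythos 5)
Tu propuesta es correcta y sigue esencialmente el mismo camino que la demostración del texto: reducir al caso de potencias de primos, usar el \cref{lem:totram} para la dirección ramificada, un argumento de discriminante (el discriminante de $\Q(\mu_{p^r})$ es una potencia de $p$) para la no ramificación de $\ell\neq p$, y después pasar al compuesto. La única diferencia es de detalle: tú esbozas el cálculo del discriminante (y ofreces la alternativa vía la separabilidad de $X^{p^r}-1$ módulo $\ell$ junto con la \cref{prop:enteros-cic}) y tratas explícitamente el caso excepcional $\ell=2$, $(4,m)=2$ y la preservación de la no ramificación en el compuesto, puntos que el texto deja citados o implícitos.
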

\begin{proof} Si $\ell\mid m$ entonces $\Q(\mu_{\ell})\subset \Q(\mu_{m})$. Como $\ell$
  ramifica en $\Q(\mu_{\ell})$ (\cref{lem:totram}) también lo hace en $\Q(\mu_m)$. Ahora, si
  $\ell$ no divide a $m=\prod p_i^{r_{i}}$, entonces $\ell$ no ramifica en cada
  $\Q(\mu_{p_i^{r_{i}}})$. Para ver esto hay que usar el discriminante de
  $\Q(\mu_{p_i^{r_{i}}})$, que es un elemento de $\Z$ que se puede definir para cualquier
  campo de números y tiene la propiedad de que un primo ramifica si y sólo si divide a este
  número \cite[Chap.\ III, Thm.\ 2.6]{MR1697859}; según \cite[Prop. 2.1]{MR1421575}
  el discriminante de $\Q(\mu_{p_i^{r_{i}}})$ es una potencia de $p_i$. Por lo tanto,
  tampoco ramifica en el compuesto $\Q(\mu_{m})$. Véase \cite[Chap.\ I,
  Cor. 10.4]{MR1697859} para una demostración diferente.
\end{proof}

También vamos a estudiar campos ciclotómicos infinitos.

\begin{defi}
    Si $p$ es un primo y $N\in\Nuno$ no es divisible por $p$, entonces escribimos
  $\Q(\mu_{Np^\infty})$ para la extensión infinita de $\Q$ generada por todas la raíces de
  la unidad de orden $Np^r$ para cada $r\in\Ncero$. Denotamos
  $\mu_{Np^\infty}\subseteq\Q(\mu_{Np^\infty})^\times$ el subgrupo de estas raíces de la unidad.

  Por la teoría de Galois infinita, su grupo de Galois es isomorfo a
  \begin{multline*}
    \Gal(\Q(\mu_{Np^\infty})/\Q)\isom\varprojlim_{r\in\Ncero}\Gal(\Q(\mu_{Np^r})/\Q)\isom\varprojlim_{r\in\Ncero}(\Z/Np^r\Z)^\times
    \\
    \isom\varprojlim_{r\in\Ncero}\left((\Z/N\Z)^\times\times(\Z/p^r\Z)^\times\right)\isom(\Z/N\Z)^\times\times\Z_p^\times.
  \end{multline*}
  En particular, si $N=1$, tenemos un isomorfismo
  \begin{equation}
    \label{eqn:kappa}
    \Gal(\Q(\mu_{p^\infty})/\Q)\isomarrow\Z_p^\times.
  \end{equation}
\end{defi}

El siguiente resultado, es sólo una reformulación de la \cref{prop:ramcycfin} para campos ciclotómicos infinitos. 

\begin{prop}\label{prop:ramcycinfin}
  Un primo $\ell$ es ramificado en $\Q(\mu_{Np^\infty})$ si y solo si $\ell=p$ o $\ell\mid
  N$, salvo si $\ell=2$ y $(4,Np)=2$.
\end{prop}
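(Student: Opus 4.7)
El plan es reducir al caso finito de la \cref{prop:ramcycfin} usando que $L=\Q(\mu_{Np^\infty})=\bigcup_{r\ge0}L_r$ con $L_r=\Q(\mu_{Np^r})$. Fijo un primo racional $\ell$ y un primo $\mathfrak P$ de $\O_L$ arriba de $(\ell)$, y pongo $\mathfrak P_r=\mathfrak P\cap\O_{L_r}$. Lo primero que quiero establecer es que $\ell$ es no ramificado en $L$ si y solo si es no ramificado en cada $L_r$.

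Para esto, observo que la restricción $\Gal(L/\Q)\twoheadrightarrow\Gal(L_r/\Q)$ envía el grupo de inercia $I_{\mathfrak P/\ell}$ sobre $I_{\mathfrak P_r/\ell}$: la inclusión en un sentido es inmediata de la \cref{defi:ram-infinito}, pues un elemento que fija $\O_L/\mathfrak P$ módulo $\mathfrak P$ también fija el subcociente $\O_{L_r}/\mathfrak P_r$; para la sobreyectividad, uso que $L/L_r$ es Galois para levantar cualquier $\sigma\in I_{\mathfrak P_r/\ell}$ a un elemento de $I_{\mathfrak P/\ell}$. Combinando esto con el teorema principal de la teoría de Galois infinita (\cref{thm:hauptsatz-unendlgal}), resulta
\[ I_{\mathfrak P/\ell}=\varprojlim_r I_{\mathfrak P_r/\ell}, \]
y en particular este grupo es trivial si y solo si cada $I_{\mathfrak P_r/\ell}$ lo es.

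El paso restante es aplicar la \cref{prop:ramcycfin} a cada $L_r$. Dado que asumimos $p$ impar, el conjunto de primos que dividen a $Np^r$ para $r\ge1$ es siempre $\{p\}\cup\{q:q\mid N\}$, independiente de $r$, y la condición excepcional $(4,Np^r)=2$ equivale a $(4,Np)=2$ para todo $r\ge1$ (para $r=0$ la condición $\ell\mid N$ coincide con $\ell\mid Np^0=N$, así que tampoco aporta casos nuevos). Combinando esto con la equivalencia del primer paso se concluye exactamente la caracterización enunciada.

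El único punto donde hay que tener cuidado es la sobreyectividad de la restricción en los grupos de inercia, ya que en extensiones infinitas algunas propiedades de la teoría de ramificación no son automáticas; sin embargo, al ser $L/L_r$ Galois el levantamiento es estándar y está tratado en \cite[Appendix, §2]{MR1421575}, la misma referencia que respalda la \cref{defi:ram-infinito}.
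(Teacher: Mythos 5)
Tu propuesta es correcta y sigue esencialmente el mismo camino que el texto, que presenta la proposición como una mera reformulación de la \cref{prop:ramcycfin} para el caso infinito: reduces a los niveles finitos $\Q(\mu_{Np^r})$ y aplicas el resultado finito, notando que con $p$ impar la condición excepcional $(4,Np^r)=2$ no depende de $r$. El único detalle que añades explícitamente —que el grupo de inercia en la extensión infinita es el límite inverso de los grupos de inercia finitos, con restricciones sobreyectivas— es el hecho estándar de \cite[Appendix, §2]{MR1421575} que el texto deja implícito, así que no hay nada que objetar.
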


\ejercicios

\begin{ejer}\label{ejer:fermat}
  En este ejercicio vamos a esbozar la demostración de Kummer de un caso especial del Último
  Teorema de Fermat, siguiendo \cite[chap.\ 1]{MR1421575}. El Último Teorema de Fermat dice
  que si $n\ge3$ y $a,b,c\in\Z$ son enteros tal que \[ a^n+b^n=c^n \] entonces $abc=0$.
  \begin{enumerate}
  \item Demuestre que para demostrar esta afirmación es suficiente considerar el caso en que
    $n$ es primo y $a,b,c$ son coprimos.
  \end{enumerate}
  Sea $p>2$ un primo. Supongamos que $a,b,c\in\Z$ son coprimos y tal que \[ a^p+b^p=c^p. \]
  Sea $K=\Q(\mu_p)$ y $\O=\O_K=\Z[\xi]$, donde $\xi$ es una raíz primitiva $p$-ésima de la unidad.
  \begin{enumerate}[resume]
  \item Demuestre que en $\O$ tenemos
    \begin{equation}
      \label{eqn:fermat-zerlegung}
      c^p=\prod_{i=0}^{p-1}(a+\xi^ib).
    \end{equation}
  \end{enumerate}
  A partir de ahora supongamos que $p\nmid abc$.
  \begin{enumerate}[resume]
  \item\label{ejer:fermat:coprimo} Demuestre que para $i\neq j$, $i,j\in\{0,\dotsc,p-1\}$,
    los factores $(a+\xi^ib)$ y $(a+\xi^jb)$ son coprimos en $\O$. Para esto, demuestre que
    un divisor común dividiría también a $(1-\xi)$, que es primo en $\O$, y concluya que
    entonces $p\mid c$.
  \item Veamos la ecuación \eqref{eqn:fermat-zerlegung} como una igualdad en el grupo de
    ideales fraccionales $\Div(\O)$. Concluya de lo anterior que para cada
    $i\in\{0,\dotsc,p-1\}$ el ideal principal $(a+\xi^ib)$ es una potencia $p$-ésima de otro
    ideal $I_i\subseteq\O$.
  \end{enumerate}
  A partir de ahora supongamos que $p$ no divide al número de clases de $K$.
  \begin{enumerate}[resume]
  \item Demuestre que los ideales $I_i$ para $i\in\{0,\dotsc,p-1\}$ son principales (lo
    demostramos en el \cref{ejer:p-nmid-h}). Concluya que existe $\alpha\in\O$ y
    $u\in\O^\times$ tal que $a+\xi b=u\alpha^p$.
  \item Trate el caso $p=3$ separadamente considerando la ecuación módulo $9$.
  \end{enumerate}
  A partir de ahora supongamos que $p\ge5$.
  \begin{enumerate}[resume]
  \item Escribimos $u=\xi^mv$ con $v\in{(\O^\times)}^+$ y $m\in\Z$ usando la
    \cref{prop:o-es-mu-o-mas}. Demuestre que \[ \xi^{-m}(a+\xi b)\equiv v k \quad (\mod
      p) \] con un $k\in\Z$ tal que $\alpha\equiv k$ ($\mod (1-\xi)$). Usando la conjugación
    compleja concluya que \[ \xi^{-m}a + \xi^{1-m} b- \xi^m a - \xi^{m-1} b \equiv 0\quad
      (\mod p). \]
  \item Considere los casos $m=0$ y $m=1$ separadamente y deduzca de lo anterior que $p\mid
    b$ o $p\mid a$ en estos casos (que lleva a una contradicción).
  \item En el caso $m>1$ deduzca de lo anterior que $m=\frac{p+1}2$ (sin pérdida de
    generalidad). Concluya que $p\mid 3a$, que otra vez lleva a una contradicción.
  \end{enumerate}
\end{ejer}

\begin{ejer}\label{ejer:cicunit}
  Aquí demostramos una versión más general del \cref{lem:cicunit} que será útil en la
  \cref{sec:coleman}. Sea $p\neq2$ un primo.

  Sea $r\in\Nuno$, $K=\Q(\mu_{p^r})$ y $\xi\in K^\times$ una raíz primitiva $p^r$-ésima de la
  unidad. Para $a,b\in\Z\setminus\{0\}$ coprimos y primos a $p$ definimos
  \[ c_r(a,b)=\frac{\xi^{-a/2}-\xi^{a/2}}{\xi^{-b/2}-\xi^{b/2}}\in K \] (notemos que
  $2\in(\Z/p^r\Z)^\times$, así que $\xi^{1/2}\in K$).

  Demuestre que $c_r(a,b)\in\O_K^\times$. Para eso es útil escribir
  \[ c_r(a,b)=\xi^{a/2-b/2}\frac{\xi^a-1}{\xi^b-1} \]
  y expresar la fracción usando un $t\in\Z$ tal que $a\equiv bt \mod p^r$.
\end{ejer}

\section{Teoría de Kummer y un poco de teoría de campos de clases}

La teoría de campos de clases es una teoría poderosa que permite describir las extensiones
abelianas de un campo local o global en gran generalidad. La teoría de Kummer describe una clase particular de extensiones
abelianas de cualquier campo. Aquí solo vamos a necesitar unos
casos especiales y resumimos lo que necesitamos de ellos.
Empecemos con los resultados más importantes de la teoría de Kummer.

\begin{defi}
  Sea $d\in\Nuno$ y $F$ un campo cuya característica no divida a $d$ y contenga las raíces
  $d$-ésimas de la unidad. Una extensión abeliana $L/F$ tal que el grupo de Galois
  $\Gal(L/F)$ tiene exponente $d$ se llama \emph{extensión de Kummer}\index[def]{extensión!de Kummer} de exponente $d$.
\end{defi}

Para cada subconjunto $\Delta\subseteq F^\times$ la extensión $F(\sqrt[d]\Delta)$ siempre es
una extensión de Kummer. Al revés, se cumple que también cada extensión de Kummer es de esta
forma: 

\begin{thm}[Kummer]\label{thm:kummer}
  Sea $L/F$ una extensión de Kummer de exponente $d\in\Nuno$ y $\Delta={(L^\times)}^d\cap
  F^\times$. Entonces:
  \begin{enumerate}
  \item\label{thm:kummer:kummer} Tenemos $L=F(\sqrt[d]\Delta)$.
  \item\label{thm:kummer:apareamiento} La asociación
    \[ \left<\cdot,\cdot\right>\colon\Gal(L/F)\times \left(\sqrt[d]\Delta\Big/(\sqrt[d]\Delta\cap
      F^\times)\right) \rightarrow
      \mu_d\subseteq F^\times,\quad\left<\sigma,a\right>=\frac{\sigma(a)}{a} \]
    es un apareamiento perfecto de grupos abelianos que se llama el \define{apareamiento de
      Kummer}.
  \end{enumerate}
\end{thm}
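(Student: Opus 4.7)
The plan is to verify that the pairing is well-defined and bilinear, then reduce the non-degeneracy on the Galois side to part~\ref{thm:kummer:kummer}, and finally prove~\ref{thm:kummer:kummer} by exhibiting enough radical generators via Hilbert~90.

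First I would check that $\left<\cdot,\cdot\right>$ takes values in $\mu_d$ and descends correctly. If $a\in\sqrt[d]\Delta$, then $a^d\in\Delta\subseteq F^\times$, so $(\sigma(a)/a)^d=\sigma(a^d)/a^d=1$; if moreover $a\in F^\times$, then $\sigma(a)/a=1$, which shows the pairing factors through the quotient by $\sqrt[d]\Delta\cap F^\times$. Bilinearity on the right is immediate; on the left, one uses the crucial observation that $\tau(a)/a\in\mu_d\subseteq F^\times$ is fixed by $\sigma$, so
\[ \left<\sigma\tau,a\right>=\frac{\sigma(\tau(a))}{a}=\sigma\!\left(\frac{\tau(a)}{a}\right)\cdot\frac{\sigma(a)}{a}=\left<\tau,a\right>\left<\sigma,a\right>. \]
Non-degeneracy on the second argument is also easy: if $\sigma(a)=a$ for every $\sigma\in\Gal(L/F)$ then $a\in L^{\Gal(L/F)}=F^\times$, hence $a$ is trivial in the quotient (this uses Galois theory of possibly infinite extensions, \cref{thm:hauptsatz-unendlgal}).

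The heart of the argument is part~\ref{thm:kummer:kummer}. Let $M=F(\sqrt[d]\Delta)\subseteq L$; I want to show $L=M$. Write $L$ as the union of its finite Galois subextensions $L'/F$, each of which is abelian of exponent dividing $d$. It suffices to show $L'\subseteq M$ for each such $L'$. Decomposing $\Gal(L'/F)$ as a direct sum of cyclic groups of orders $n_i\mid d$, the fixed fields of the complementary direct summands give cyclic subextensions $L'_i/F$ of degree $n_i$ whose compositum is $L'$, so I reduce to the cyclic case. For a cyclic extension $L'/F$ of degree $n\mid d$ with $\Gal(L'/F)=\left<\sigma\right>$, choose a primitive $n$-th root of unity $\zeta\in F$ (which exists since $\mu_d\subseteq F$). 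Hilbert~90 gives $\alpha\in (L')^\times$ with $\sigma(\alpha)/\alpha=\zeta$; then $\sigma(\alpha^n)=\alpha^n$ forces $\alpha^n\in F^\times$, hence $\alpha^d=(\alpha^n)^{d/n}\in F^\times$, and since $\alpha\in L^\times$ we have $\alpha^d\in (L^\times)^d\cap F^\times=\Delta$. Thus $\alpha\in\sqrt[d]\Delta$ and $L'=F(\alpha)\subseteq M$, as required.

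With~\ref{thm:kummer:kummer} in hand, non-degeneracy on the first argument follows: if $\left<\sigma,a\right>=1$ for every $a\in\sqrt[d]\Delta$, then $\sigma$ fixes $\sqrt[d]\Delta$ pointwise, hence fixes $F(\sqrt[d]\Delta)=L$, so $\sigma=1$. The main obstacle is the proof of~\ref{thm:kummer:kummer}: one must package the Galois-theoretic decomposition into cyclic pieces carefully (especially in the infinite case, where one works with finite subextensions and passes to the union), and then apply Hilbert~90 correctly to produce the generator $\alpha$ with $\alpha^d\in\Delta$. The remaining subtlety is verifying that the pairing is perfect in the appropriate topological sense when $\Gal(L/F)$ is profinite, which amounts to checking that the induced map $\sqrt[d]\Delta/(\sqrt[d]\Delta\cap F^\times)\to\Hom_{\mathrm{cont}}(\Gal(L/F),\mu_d)$ is bijective by reducing to finite subextensions.
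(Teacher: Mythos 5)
Your proposal is correct in substance, but it takes a genuinely different route from the paper: the paper does not prove the theorem itself, it simply cites \cite[Chap.~4, (3.3), (3.6)]{MR1697859}, where the pairing is written on $\Delta/{(F^\times)}^d$, and observes via the snake lemma that this is equivalent to the formulation with $\sqrt[d]\Delta\big/(\sqrt[d]\Delta\cap F^\times)$. You instead give a direct, self-contained argument: well-definedness and bilinearity by computation (using that $\tau(a)/a\in\mu_d\subseteq F$ is fixed by $\sigma$), non-degeneracy in the second variable from $L^{\Gal(L/F)}=F$, and part (a) by reducing to finite subextensions, splitting their Galois groups into cyclic factors, and producing a radical generator via Hilbert~90 (from $\sigma(\alpha)/\alpha=\zeta$ one gets $\alpha^n\in F^\times$, hence $\alpha^d\in\Delta$, and $F(\alpha)=L'$ because the orbit of $\alpha$ has $n$ elements). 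This buys you an argument that works directly with the quotient appearing in the statement, with no snake-lemma translation, and it makes visible exactly where $\mu_d\subseteq F$ is used; what the citation buys the paper is the full perfectness, including the profinite case, for free. That last point is the one place your write-up stops short: non-degeneracy of both induced maps is only injectivity, and perfectness also demands surjectivity. Note, however, that your cyclic-case construction already gives surjectivity of $\sqrt[d]\Delta\big/(\sqrt[d]\Delta\cap F^\times)\rightarrow\Hom(\Gal(L/F),\mu_d)$: a continuous character $\chi$ has open kernel whose fixed field $L'$ is finite and cyclic over $F$, and the element $\alpha$ you construct generates the character group of $\Gal(L'/F)$, so $\chi$ is a power of $\tau\mapsto\tau(\alpha)/\alpha$. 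The surjectivity of $\Gal(L/F)\rightarrow\Hom\left(\sqrt[d]\Delta\big/(\sqrt[d]\Delta\cap F^\times),\mu_d\right)$ then follows either by Pontryagin duality between discrete and profinite abelian groups of exponent $d$, or by the counting argument at each finite level combined with the compactness (inverse-limit) reduction you sketch; writing this out would make your proof complete.
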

\begin{proof}
  Esto es demostrado en \cite[Chap.\ 4, (3.3), (3.6)]{MR1697859}; allí el apareamiento es
  escrito
  \begin{equation*}
    \left<\cdot,\cdot\right>\colon\Gal(L/F)\times \Delta/{(F^\times)}^d\rightarrow
      \mu_d\subseteq F^\times,\quad\left<\sigma,a\right>=\frac{\sigma(\sqrt[d]a)}{\sqrt[d]a} 
  \end{equation*}
  pero con el lema de la serpiente es fácil ver que esto es equivalente a lo de arriba.
\end{proof}

\begin{prop}\label{prop:kummer-apareamiento-equivariante}
  Sean $K\subseteq F\subseteq L$ campos tal que todas las extensiones sean Galois y $L/F$ es
  una extensión de Kummer de exponente $d$, y como antes sea $\Delta={(L^\times)}^d\cap F^\times$. El grupo
  $\Gal(L/K)$ actúa por conjugación en su subgrupo normal $\Gal(L/F)$ y actúa también en
  $\sqrt[d]\Delta\subseteq L$ y en $\mu_d\subseteq F$.

  Entonces el apareamiento del \cref{thm:kummer}~\ref{thm:kummer:apareamiento} es
  equivariante en el sentido
  \[ \left<g\sigma g^{-1},ga\right>=g\left<\sigma,a\right>\quad \text{para cada
    }g\in\Gal(L/K),\ \sigma\in\Gal(L/F),\ a\in\sqrt[d]\Delta. \]
\end{prop}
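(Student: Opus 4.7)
The plan is to verify the equivariance by direct computation, once the necessary well-definedness points are in place. Since the pairing is given by the concrete formula $\langle\sigma,a\rangle=\sigma(a)/a$, both sides of the desired identity are explicit elements of $L$, so one just has to rearrange them.

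First I would check that the three group actions appearing in the statement really land where claimed, so that the equation makes sense. Because $F/K$ is Galois, $\Gal(L/F)$ is a normal subgroup of $\Gal(L/K)$, so $g\sigma g^{-1}\in\Gal(L/F)$ whenever $\sigma\in\Gal(L/F)$ and $g\in\Gal(L/K)$. Next, $\Delta=(L^\times)^d\cap F^\times$ is stable under $\Gal(L/K)$: an element of $\Gal(L/K)$ permutes $F^\times$ (again using that $F/K$ is Galois) and it sends $d$-th powers in $L^\times$ to $d$-th powers in $L^\times$; hence if $\alpha\in\sqrt[d]\Delta$ then $(g\alpha)^d=g(\alpha^d)\in\Delta$, so $g\alpha\in\sqrt[d]\Delta$. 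Finally, $\mu_d\subseteq F^\times$ is stable under $\Gal(L/K)$ because any field automorphism permutes the $d$-th roots of unity.

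Having set up these stabilities, the identity itself is a one-line calculation. I would write
\[
\langle g\sigma g^{-1},\,ga\rangle
=\frac{(g\sigma g^{-1})(ga)}{ga}
=\frac{g(\sigma(a))}{g(a)}
=g\!\left(\frac{\sigma(a)}{a}\right)
=g\langle\sigma,a\rangle,
\]
where the second equality cancels $g^{-1}g$ inside the numerator and the third uses that $g$ is a field homomorphism, in particular multiplicative on $L^\times$.

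There is no real obstacle here; the only thing worth emphasising is that the statement presupposes the compatibility of the three actions of $\Gal(L/K)$, which is precisely what the preliminary check in the first step ensures. After that, the equivariance is just the fact that the action of $g$ commutes with the formation of the quotient $\sigma(a)/a$ in $L^\times$.
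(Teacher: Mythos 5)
Tu prueba es correcta y coincide con lo que el texto tiene en mente: la proposición se deja allí como consecuencia directa de la definición (véase el ejercicio correspondiente), y tu cálculo $\left<g\sigma g^{-1},ga\right>=g(\sigma(a))/g(a)=g\left<\sigma,a\right>$, junto con la verificación previa de que $\Gal(L/K)$ estabiliza $\Gal(L/F)$, $\sqrt[d]\Delta$ y $\mu_d$, es exactamente ese argumento directo.
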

\begin{proof}
  Esto es una consecuencia directa de la definición del apareamiento que dejamos como ejercicio.
\end{proof}

\begin{remark}\label{rem:kummer-apareamiento-equivariante}
  Usamos la notación de la \cref{prop:kummer-apareamiento-equivariante}. En esta situación
  el apareamiento perfecto del \cref{thm:kummer}~\ref{thm:kummer:apareamiento} define
  isomorfismos de grupos abelianos
  \begin{align*}
     \Gal(L/F) &\isomarrow\Hom_\Z\left(\sqrt[d]\Delta\Big/(\sqrt[d]\Delta\cap
      F^\times), \mu_d\right), &\sigma\mapsto\left<\sigma,\cdot\right>,\\
     \sqrt[d]\Delta\Big/(\sqrt[d]\Delta\cap
      F^\times) &\isomarrow\Hom_\Z(\Gal(L/F), \mu_d), &a\mapsto\left<\cdot,a\right>.
  \end{align*}
  En cada de $\Gal(L/F)$, $\sqrt[d]\Delta/(\sqrt[d]\Delta\cap F^\times)$ y $\mu_d$
  tenemos acciones de $\Gal(L/K)$. Además en la \cref{rem:hom-de-representaciones} definimos
  una acción de $\Gal(L/K)$ en los $\Hom_\Z(-,-)$ que aparecen en el lado derecho. Entonces
  la afirmación del \cref{prop:kummer-apareamiento-equivariante} significa que los
  isomorfismos de arriba no solo son isomorfismos de grupos abelianos sino de
  $\Z[\Gal(L/K)]$-módulos.
\end{remark}

\begin{prop}\label{prop:kummer-ramificacion}
  Sea $F$ un campo de números y $L/F$ una extensión de Kummer de exponente $d\in\Nuno$. Si
  $\mathfrak p$ es un ideal primo de $F$ tal que existe un $a\in\Delta$ con
  $\mathfrak p\mid a$, pero $\mathfrak p^d\nmid a$, entonces $\mathfrak p$ es ramificado en
  la extension $L/F$.
\end{prop}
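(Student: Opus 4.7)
La estrategia es usar directamente que $a\in\Delta={(L^\times)}^d\cap F^\times$ para escribir $a=b^d$ con $b\in L^\times$, y luego comparar las valuaciones de $a$ en un primo $\mathfrak P$ de $L$ arriba de $\mathfrak p$ de dos maneras distintas.

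En detalle, primero fijaría un $b\in L^\times$ tal que $a=b^d$ y un ideal primo $\mathfrak P$ de $\O_L$ arriba de $\mathfrak p$. Denoto por $e=e(\mathfrak P/\mathfrak p)$ su grado de ramificación; como $L/F$ es Galois, $e$ no depende de la elección del primo arriba de $\mathfrak p$. Trabajaría con las valuaciones discretas $v_{\mathfrak p}$ y $v_{\mathfrak P}$ normalizadas de modo que, por la definición del grado de ramificación, se cumpla $v_{\mathfrak P}(x)=e\cdot v_{\mathfrak p}(x)$ para cada $x\in F^\times$.

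Luego calcularía $v_{\mathfrak P}(a)$ de dos maneras. Usando $a=b^d$ se obtiene $v_{\mathfrak P}(a)=d\cdot v_{\mathfrak P}(b)$; por otro lado, viendo $a$ como elemento de $F^\times$, se tiene $v_{\mathfrak P}(a)=e\cdot v_{\mathfrak p}(a)$. Si denoto $k=v_{\mathfrak p}(a)$, la hipótesis $\mathfrak p\mid a$ pero $\mathfrak p^d\nmid a$ equivale exactamente a $1\le k\le d-1$. Igualando las dos expresiones llego a $d\cdot v_{\mathfrak P}(b)=e\cdot k$.

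Si $\mathfrak p$ fuera no ramificado en $L/F$, tendríamos $e=1$, y la igualdad anterior obligaría a $d\mid k$, contradiciendo $1\le k\le d-1$. Por lo tanto $e>1$, es decir $\mathfrak p$ es ramificado. No anticipo obstáculos serios: el argumento se reduce a un cálculo elemental de valuaciones; el único punto al que hay que prestar atención es la compatibilidad $v_{\mathfrak P}|_{F^\times}=e\cdot v_{\mathfrak p}$, que es estándar en la teoría de números algebraica.
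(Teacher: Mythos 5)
Tu demostración es correcta y completa. El argumento de valuaciones es exactamente el camino estándar: de $a=b^d$ con $b\in L^\times$ se obtiene $d\,v_{\mathfrak P}(b)=e\,v_{\mathfrak p}(a)$, y como la hipótesis da $1\le v_{\mathfrak p}(a)\le d-1$ (en particular $d\nmid v_{\mathfrak p}(a)$), la posibilidad $e=1$ queda excluida. La compatibilidad $v_{\mathfrak P}|_{F^\times}=e\cdot v_{\mathfrak p}$ es en efecto el único punto técnico y es estándar. El texto no da una demostración propia de esta proposición (sólo remite a la referencia citada, señalando una errata allí), así que no hay un argumento interno con el cual contrastar; tu prueba lo suple de manera autocontenida. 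Dos observaciones menores: tu argumento usa únicamente que $d\nmid v_{\mathfrak p}(a)$, así que demuestra un enunciado ligeramente más general que el propuesto; y la observación de que $e$ no depende del primo $\mathfrak P$ elegido (por ser $L/F$ Galois) es correcta pero innecesaria, pues basta fijar un solo $\mathfrak P$ arriba de $\mathfrak p$, ya que la ramificación de $\mathfrak p$ se define mediante cualquiera de ellos.
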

\begin{proof}
  Véase \cite[Prop.\ 1.83 (2)]{MR1474965}; note que el \enquote{$\mathfrak p\nmid a$} allá
  es una errata.
\end{proof}

Continuamos con unos resultados de la teoría de campos de clases.

\begin{defi}
  Sea $K$ un campo de números. La extensión máxima abeliana no ramificada de $K$ se llama el
  \define{campo de clases de Hilbert} de $K$. Esta extensión siempre existe y es finita.
\end{defi}

La teoría de campos de clases permite describir su grupo de Galois, que va a explicar el nombre.
Sea $H$ el campo de clases de Hilbert de un campo de números $K$, que claramente es
Galois. Si $\mathfrak p$ es un ideal primo de $K$ y
$\mathfrak p=\mathfrak P_1\dotsm\mathfrak P_g$ su descomposición en $H$ (sin exponentes
porque es no ramificada), entonces para cada $\mathfrak P_i$ tenemos un único elemento de
Frobenius en $\Gal(H/K)$ que genera el grupo de Galois de la extensión residual
$\Gal((\O_H/\mathfrak P_i)/(\O_K/\mathfrak p))$, y como $\Gal(H/K)$ actúa transitivamente en
los $\mathfrak P_i$ y es abeliano, este elemento sólo depende de $\mathfrak p$, y lo
denotamos $\operatorname{Frob}_{\mathfrak p}\in\Gal(H/K)$. Véase \cite[§I.9]{MR1697859} para
más detalles.

\begin{thm}\label{thm:campo-de-hilbert}
  La asociaci\'on \[ \mathfrak p \mapsto\operatorname{Frob}_{\mathfrak p} \] induce un
  isomorfismo \[ \left(\frac{H/K}\cdot\right)\colon\Cl(K)\isomarrow\Gal(H/K). \]
\end{thm}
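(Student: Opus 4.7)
El plan es construir un morfismo $\varphi\colon \Div(\O_K) \to \Gal(H/K)$ a partir de la asociación $\mathfrak p \mapsto \Frob_\mathfrak p$, demostrar que se anula en los ideales principales para factorizarlo a través de $\Cl(K)$, y luego verificar que el morfismo resultante es biyectivo.

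Como $H/K$ es no ramificada en cada ideal primo y el grupo $\Gal(H/K)$ es abeliano, la discusión anterior al enunciado muestra que $\Frob_\mathfrak p$ está bien definido en $\Gal(H/K)$ para cada primo $\mathfrak p$ de $\O_K$. Esta asociación se extiende por multiplicatividad a un morfismo de grupos abelianos
\[ \varphi\colon \Div(\O_K) \to \Gal(H/K), \quad \prod_i \mathfrak p_i^{e_i} \mapsto \prod_i \Frob_{\mathfrak p_i}^{e_i}. \]
La sobreyectividad de $\varphi$ se sigue del teorema de densidad de Chebotarev, ya que los Frobenius en primos no ramificados generan topológicamente $\Gal(H/K)$ y este grupo es finito.

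El paso crucial, donde reside el principal obstáculo, es demostrar que $\varphi$ se anula en los ideales principales, es decir, que $\Frob_{(\alpha)}=1$ para cada $\alpha\in K^\times$. Este es precisamente el contenido de la \emph{ley de reciprocidad de Artin} aplicada a la extensión abeliana no ramificada $H/K$, un resultado profundo de la teoría de campos de clases global cuya demostración rebasa el alcance de esta sección; remitimos a \cite[Chap.\ VI]{MR1697859}. Una vez aceptado esto, $\varphi$ factoriza a un morfismo sobreyectivo $\bar\varphi\colon \Cl(K) \twoheadrightarrow \Gal(H/K)$.

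Para la inyectividad comparamos órdenes. La sobreyectividad ya nos da $\#\Cl(K) \ge [H:K]$. La desigualdad opuesta se sigue del teorema de existencia de la teoría de campos de clases: al subgrupo trivial de $\Cl(K)$ le corresponde una extensión abeliana no ramificada de $K$ cuyo grupo de Galois es isomorfo a $\Cl(K)$, y por maximalidad esta extensión está contenida en $H$, de modo que $[H:K]\ge \#\Cl(K)$. Concluimos que $\#\Cl(K)=[H:K]$ y, al ser $\bar\varphi$ un morfismo sobreyectivo entre grupos finitos del mismo orden, se trata de un isomorfismo.
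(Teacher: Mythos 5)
Tu propuesta es correcta y sigue en esencia el mismo camino que el texto: el libro no da demostración propia sino que remite a \cite[Prop.\ VI.6.9 y §VI.7]{MR1697859}, y tu esbozo es justamente la deducción estándar a partir de esos ingredientes (sobreyectividad del mapeo de Artin vía Chebotarev, anulación en los ideales principales por la ley de reciprocidad de Artin, e igualdad de órdenes por el teorema de existencia), que son los mismos resultados profundos a los que apunta la cita. Solo conviene tener presente la observación del texto sobre la convención de ramificación en las plazas arquimedianas (el \enquote{campo de clases de Hilbert pequeño} de Neukirch), de modo que el teorema de existencia se aplique con el módulo trivial incluyendo las plazas infinitas y la extensión obtenida quede contenida en $H$ tal como lo definimos aquí.
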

\begin{proof}
  Véase \cite[Prop.\ VI.6.9 y §VI.7]{MR1697859}; notemos que lo que nosotros llamamos el
  campo de clases de Hilbert se llama el \enquote{campo de clases de Hilbert pequeño} para
  Neukirch porque el define la ramificación de las plazas arquimedianas de una manera no
  tan estándar.
\end{proof}

\ejercicios

\begin{ejer}\label{ejer:artin-symbol-eq}
  Sea $K$ un campo de números que es Galois sobre $\Q$ y $H$ su campo de clases de
  Hilbert. Entonces $\Gal(K/\Q)$ actúa en los ideales de $K$.
  Demuestre que para cada ideal primo $\mathfrak p$ de $K$ y cada
  $\sigma\in\Gal(K/\Q)$ tenemos
  \[ \left(\frac{H/K}{\sigma(\mathfrak p)}\right) = \tilde\sigma\left(\frac{H/K}{\mathfrak
        p}\right)\tilde\sigma^{-1} \]
  donde $\tilde\sigma$ es un levantamiento de $\sigma$ a $\Gal(H/\Q)$.
\end{ejer}

\begin{ejer}
  Verifique la equivariancia del apareamiento de Kummer, es decir demuestre la
  \cref{prop:kummer-apareamiento-equivariante}.
\end{ejer}

\section{Números $p$-ádicos y caracteres}

Para nosotros, un \emph{carácter}\index[def]{carácter} será un morfismo de grupos de la forma
$G\rightarrow R^\times$ con $G$ un grupo y $R$ un anillo. Si $G$ y $R$ además son espacios
topológicos, \emph{siempre asumiremos que los caracteres entre ellos son continuos}. En la
mayoría de los casos $G$ será un grupo profinito y $R$ será un anillo profinito o un campo
topológico.

Hay unos caracteres de importancia especial, que tienen que ver con los números $p$-ádicos
y los campos ciclotómicos. En esta sección introducimos y estudiamos esos caracteres.

Asumimos que el lector conoce los números $p$-ádicos $\Zp$ y $\Qp$: $\Qp$ es la
completación de $\Q$ por el valor absoluto $p$-ádico y $\Zp$ es su anillo de enteros
(equivalentemente, los elementos con valor absoluto $\le1$), o alternativamente $\Zp$ es el
límite inverso de los grupos finitos $\Z/p^r\Z$ ($r\in\Nuno$) y $\Qp$ es su campo de
cocientes.
Para más detalles sobre los números $p$-ádicos remitimos a \cite[§II.1--2]{MR1697859}.
Resumamos unos hechos fundamentales sobre las extensiones de $\Qp$.

\begin{prop}
  Sea $K/\Qp$ una extensión finita. Entonces el valor absoluto $p$-ádico se extiende de
  manera única a $K$ y $K$ es completo por la topología definida por este valor absoluto.

  Sea $\O$ la cerradura integral de $\Zp$ en $K$. Entonces $\O=\{x\in K : \abs{x}\le 1\}$, y
  eso es un anillo local de valuación discreta y completo por la topología definida por las
  potencias de su ideal máximo (que es principal) $\pi \O=\{x\in K : \abs{x}<1\}$. El campo
  residual $k=\O/\pi \O$ es una extensión finita de $\Fp$. En particular, $\O$ es un anillo
  profinito.
\end{prop}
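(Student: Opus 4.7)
El plan es extender el valor absoluto usando la norma relativa, demostrar completitud y unicidad por equivalencia de normas en dimensión finita sobre un campo completo, y luego derivar las propiedades de $\O$.

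Primero, $K$ es un $\Qp$-espacio vectorial de dimensión $n=[K:\Qp]$. Por el hecho clásico de que sobre un campo completo todas las normas en un espacio vectorial de dimensión finita son equivalentes, cualquier norma hace a $K$ homeomorfo a $\Qp^n$, luego completo. Este mismo hecho forzará la unicidad de cualquier extensión del valor absoluto: dos extensiones inducen la misma topología, por multiplicatividad difieren por una potencia positiva, y como ambas restringen al $p$-ádico sobre $\Qp$ deben coincidir.

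Para la existencia, definiría $\abs{x}:=\abs{\Norm_{K/\Qp}(x)}_p^{1/n}$. Claramente extiende al $p$-ádico en $\Qp$ (pues ahí $\Norm(x)=x^n$) y es multiplicativo por multiplicatividad de la norma. La parte delicada es la desigualdad ultramétrica; la establecería pasando a la clausura de Galois $L$ de $K$ sobre $\Qp$: por unicidad aplicada a $L$, cada $\sigma\in\Gal(L/\Qp)$ preserva cualquier extensión del valor absoluto, así que todos los conjugados de $x\in K$ tienen el mismo valor absoluto, y entonces $\abs{\Norm_{K/\Qp}(x)}_p=\abs{x}^n$ en cualquier extensión fijada a $L$. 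De aquí la desigualdad ultramétrica sobre $L$ se traslada a $K$ por restricción.

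Para $\O$, la identificación clave es $\O=\{x\in K:\abs{x}\le 1\}$: si $x$ es entero sobre $\Zp$, de $x^m+a_{m-1}x^{m-1}+\dots+a_0=0$ con $a_i\in\Zp$ la ultrametricidad fuerza $\abs{x}\le 1$; recíprocamente, si $\abs{x}\le 1$, los conjugados cumplen lo mismo, luego los coeficientes del polinomio mínimo (funciones simétricas) están en $\Zp$. El ideal $\pi\O:=\{x:\abs{x}<1\}$ es maximal porque su complemento en $\O$ consta de unidades, y es principal: $\abs{K^\times}$ es un subgrupo discreto de $\R_{>0}$ que contiene $p^\Z$ con índice finito, necesariamente de la forma $p^{\Z/e}$ para algún $e\ge1$, y un $\pi\in K$ con $\abs{\pi}=p^{-1/e}$ genera. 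Que $k=\O/\pi\O$ es finito sobre $\Fp$ sigue de que $\O$ es un $\Zp$-módulo finitamente generado (por ser la cerradura integral de $\Zp$ en una extensión finita separable), luego $\O/p\O$ es un $\Fp$-espacio de dimensión finita y $k$ es un cociente suyo. Finalmente $\O=\varprojlim_r\O/\pi^r\O$ con cocientes finitos, dando la estructura profinita. El único obstáculo real es el orden lógico entre existencia, unicidad y ultrametricidad; alternativamente puede evitarse la circularidad aparente usando el lema de Hensel para extender el valor absoluto sin pasar por Galois.
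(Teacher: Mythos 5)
Tu propuesta es correcta. El texto no da una demostración propia de esta proposición: remite a \cite[Chap.\ II, (4.8), (3.8), (3.9), (5.2)]{MR1697859}, y tu esbozo es esencialmente el argumento estándar de esa referencia (definir $\abs{x}=\abs{\Norm_{K/\Qp}(x)}_p^{1/n}$, obtener completitud y unicidad de la equivalencia de normas en dimensión finita sobre un campo completo, y deducir de ahí las propiedades de $\O$). El único punto delicado es el que tú mismo señalas: para la desigualdad ultramétrica no conviene apelar a la unicidad en la cerradura de Galois antes de tener alguna extensión construida; la salida limpia —que es la de la referencia— es reducir a mostrar que $\abs{x}\le1$ implica $\abs{x+1}\le1$, usando que $\abs{x}\le1$ equivale a que $x$ sea entero sobre $\Zp$ (vía el lema de Hensel aplicado al polinomio mínimo), de modo que la cerradura integral $\O$ es un anillo y la ultrametricidad es automática. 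Con ese ajuste de orden lógico, el resto de tu argumento (principalidad de $\pi\O$ por discretitud del grupo de valores, finitud de $k$ porque $\O$ es un $\Zp$-módulo finitamente generado, y $\O=\varprojlim_r\O/\pi^r\O$ con cocientes finitos) está completo y coincide con la prueba citada.
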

\begin{proof}
  \cite[Chap.\ II, (4.8), (3.8), (3.9), (5.2)]{MR1697859}
\end{proof}

\begin{lem}[\importante{Lema de representación}]\label{lem:lemaderep}
  Sea $\O$ un anillo local de valuación discreta completo con campo residual $k=\O/\pi\O$
  finito. Sea $R$ un sistema de representantes de $k$ en
  $\O$. Entonces todo elemento $x\in\O$ se escribe de manera única como
  $$x=\sum_{i=0}^\infty a_{i}\pi^{i},$$
  con $a_{i}\in R$. 
\end{lem}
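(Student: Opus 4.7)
The plan is to prove existence and uniqueness separately, with the core mechanism being an inductive construction of the digits $a_i$ together with the completeness of $\O$ in the $\pi$\=/adic topology.

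\textbf{Existencia.} Given $x\in\O$, I would define the coefficients $a_i\in R$ and auxiliary elements $y_i\in\O$ recursively. Set $y_0 = x$. Suppose $y_n\in\O$ has been constructed. Since $R$ is a system of representatives of $k=\O/\pi\O$, there is a unique $a_n\in R$ such that $y_n \equiv a_n \pmod{\pi\O}$. Then $y_n - a_n \in \pi\O$, so we may write $y_n - a_n = \pi\,y_{n+1}$ for a unique $y_{n+1}\in\O$ (using that $\pi$ is a nonzero-divisor, which holds because $\O$ is a domain). A straightforward induction on $n$ then shows
\[
x \;=\; \sum_{i=0}^{n} a_i \pi^i + \pi^{n+1} y_{n+1},
\]
so that the partial sums $s_n = \sum_{i=0}^n a_i\pi^i$ satisfy $x - s_n \in \pi^{n+1}\O$. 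Since the ideals $\pi^{n+1}\O$ form a neighbourhood basis of $0$ and $\O$ is complete for this topology, the sequence $(s_n)$ converges and its limit equals $x$.

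\textbf{Unicidad.} Suppose $x = \sum_{i\ge 0} a_i \pi^i = \sum_{i\ge 0} b_i \pi^i$ with $a_i,b_i\in R$. Reducing mod $\pi\O$ one obtains $a_0 \equiv b_0 \pmod{\pi\O}$; since $R$ is a system of representatives, two distinct elements of $R$ have distinct residues, hence $a_0 = b_0$. Then the element $(x-a_0)/\pi \in \O$ admits two expansions $\sum_{i\ge 0} a_{i+1}\pi^i = \sum_{i\ge 0} b_{i+1}\pi^i$, and by induction $a_i = b_i$ for every $i\ge 0$.

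\textbf{Posibles obstáculos.} The argument is essentially mechanical; the only substantive ingredient is the completeness of $\O$, needed to guarantee convergence of $\sum a_i \pi^i$ in the existence part. The finiteness of the residue field is not actually used — what matters is merely that $R$ bijects with $\O/\pi\O$ — but one should be careful to note that the division step $y_n - a_n = \pi\,y_{n+1}$ produces an element of $\O$ precisely because $\pi$ generates the maximal ideal, so that $\pi\O = \{x\in\O : |x|<1\}$ is exactly the set of elements with nontrivial residue zero.
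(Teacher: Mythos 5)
Tu demostración es correcta y sigue esencialmente el mismo camino que la del texto: la extracción iterativa de dígitos $a_n\in R$ vía la reducción módulo $\pi$ y la división por $\pi$, con la completitud de $\O$ garantizando la convergencia de las sumas parciales. De hecho eres más cuidadoso que la demostración del texto, que sólo presenta la iteración de existencia y deja implícitas tanto la convergencia como la unicidad que tú verificas explícitamente.
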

\begin{proof} Sea $x\in \O$, entonces tomando el mapeo canónico de $\O$ a su campo de residuos tenemos que $x\equiv a_{0}\mod \pi$ para algún $a_{0}\in R$. Por lo tanto $x-a_{0}= x_{1}\pi$ con $x_{1}\in \O$, aplicamos ahora el mapeo canónico a $x_{1}$, i.\,e. $x_{1}\equiv a_{1}\mod \pi$ para algún $a_{1}\in R$. Entonces tenemos
\begin{eqnarray*}
x&=&a_{0}+x_{1}\pi  \	\text{ con }x_{1}\in \O \\
 &=&a_{0}+a_{1}\pi +x_{2}\pi^{2} \text{ con }x_{2}\in \O \\
 &\vdots & \\
 &=&\sum_{i=0}^{\infty} a_{i}\pi^{i}.
\end{eqnarray*}
\end{proof}

\begin{ex} Con $\O=\Zp$, $\pi=(p)$ y $k=\F_{p}$ tenemos que $R=\{0,1,2,\ldots,p-1\}$ es un
  sistema de representantes de $\Fp$ en $\Zp$ con el que podemos escribir cada $x\in\Zp$ como
$$x=\sum_{i\geq 0}a_{i}p^{i} \	\text{ con }a_{i}\in\{0,1,\ldots,p-1\}.$$
\end{ex}

\begin{defi}\label{defi:unidades-principales}
  Sea $K/\Qp$ una extensión finita con anillo de enteros $\O$ e ideal máximo
  $\pi\O$. Entonces
  \[ 1+\pi\O\subseteq\O^\times \]
  es un subgrupo cuyos elementos se llaman \define{unidades principales}.
\end{defi}

Estudiamos con más detalle la estructura del grupo $\Z_p^\times$.
 
\begin{lem}\label{lem:z-p-decomposicion}
  Existe un isomorfismo topológico canónico $\Z_p^\times\isom\F_p^\times\times(1+p\Z_p)$.
\end{lem}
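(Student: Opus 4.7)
El plan es escindir canónicamente la sucesión exacta corta
\[ 1 \to 1+p\Z_p \to \Z_p^\times \to \F_p^\times \to 1, \]
inducida por la reducción $x\mapsto x\mod p$; dicho mapeo es continuo, sobreyectivo y por definición de las unidades principales (\cref{defi:unidades-principales}) tiene núcleo $1+p\Z_p$. Exhibir una sección continua y multiplicativa de esta sucesión es equivalente a dar el isomorfismo buscado, así que toda la tarea se reduce a construir tal sección.

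Para construir la sección empleo el \emph{levantamiento de Teichmüller}. Aplico el lema de Hensel al polinomio $f(X)=X^{p-1}-1\in\Z_p[X]$: por el pequeño teorema de Fermat, su reducción módulo $p$ se factoriza como $\prod_{a\in\F_p^\times}(X-a)$, con raíces simples dos a dos distintas. El lema de Hensel garantiza entonces que cada $a\in\F_p^\times$ se levanta de manera única a una raíz $\omega(a)\in\Z_p^\times$ de $f$, que además satisface $\omega(a)\equiv a\mod p$. Como las raíces de $f$ en $\Z_p$ forman un subgrupo multiplicativo $\mu_{p-1}(\Z_p)\subseteq\Z_p^\times$, la reducción módulo $p$ restricta a este subgrupo es un isomorfismo de grupos con $\F_p^\times$, y $\omega$ es justamente su inversa. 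Luego $\omega\colon\F_p^\times\to\Z_p^\times$ es un morfismo de grupos, continuo porque $\F_p^\times$ es discreto finito.

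Finalmente, el isomorfismo deseado es
\[ \Z_p^\times\isomarrow\F_p^\times\times(1+p\Z_p),\quad x\mapsto\bigl(x\mod p,\ \omega(x\mod p)^{-1}\,x\bigr), \]
con inversa $(a,u)\mapsto\omega(a)\,u$; ambas son continuas y es inmediato que son inversas una de la otra. La canonicidad del isomorfismo se sigue del \cref{ejer:morfismos-pro-p-pro-ell}: como $1+p\Z_p$ es un grupo pro-$p$ mientras $\F_p^\times$ tiene orden $p-1$ coprimo con $p$, cualquier morfismo continuo $\F_p^\times\to1+p\Z_p$ es trivial, lo que fuerza la unicidad de la sección escindente (dos secciones diferirían en un tal morfismo). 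El paso más delicado es verificar que los levantamientos de Teichmüller respetan la multiplicación; esto se resuelve limpiamente al identificar $\omega$ con la inversa del isomorfismo $\mu_{p-1}(\Z_p)\isomarrow\F_p^\times$ provisto por Hensel, evitando tener que calcular productos directamente.
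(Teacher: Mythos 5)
Tu demostración es correcta, pero sigue una ruta genuinamente distinta a la del texto. El texto trabaja a nivel finito: para cada $r$ considera la sucesión exacta $1\to(1+p\Z_p)/(1+p^r\Z_p)\to(\Z/p^r\Z)^\times\to(\Z/p\Z)^\times\to1$, la escinde con la sección explícita $a\mapsto a^{p^{r-1}}$ (verificando con estimaciones binomiales que no depende del levantamiento de $a$ a $\Z$) y obtiene el resultado tomando el límite inverso; la sección resultante es precisamente el carácter de Teichmüller en su forma $\omega(a)=\lim_{j\to\infty}a^{p^j}$ que aparece justo después (cf.\ el \cref{ejer:teichmueller-limes}). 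Tú, en cambio, construyes la sección de golpe en $\Z_p$ aplicando el lema de Hensel a $X^{p-1}-1$: esto te da el subgrupo $\mu_{p-1}(\Z_p)$ y la multiplicatividad de $\omega$ sale gratis al identificarla con la inversa del isomorfismo de reducción $\mu_{p-1}(\Z_p)\isomarrow\F_p^\times$, evitando por completo los cálculos con coeficientes binomiales; además añades la unicidad de la sección vía el \cref{ejer:morfismos-pro-p-pro-ell} (dos secciones difieren por un morfismo $\F_p^\times\to1+p\Z_p$, necesariamente trivial), lo que justifica la palabra \enquote{canónico} de manera más explícita que el texto. Lo que compra cada enfoque: el tuyo es más limpio y conceptual pero presupone el lema de Hensel, que el texto no ha enunciado; el del texto es más elemental y encaja con el punto de vista profinito (límites de niveles finitos) que el libro usa sistemáticamente, y produce de paso la fórmula límite para $\omega$ que se explota después. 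Ambas construcciones dan, por la unicidad que tú mismo señalas, el mismo isomorfismo.
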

\begin{proof}
  Sólo demostramos esto en el caso $p\neq2$, el caso $p=2$ siendo similar.
  Sea $r\in\Nuno$, la composición
  \begin{equation*} 1 + p\Z_p \hookrightarrow \Z_p^\times \twoheadrightarrow (\Z/p^r\Z)^\times \end{equation*}
  induce una sucesión exacta
  \begin{equation*} 1 \rightarrow \frac{1+p\Z_p}{1+p^r\Z_p} \rightarrow (\Z/p^r\Z)^\times \rightarrow (\Z/p\Z)^\times \rightarrow 1. \end{equation*}
  Esta sucesión se escinde:
  \begin{equation*} (\Z/p\Z)^\times \rightarrow (\Z/p^r\Z)^\times, \quad a \mapsto
    a^{p^{r-1}} \end{equation*} es una sección del mapeo de la derecha, porque $a^p\equiv a$
  ($\mod p$). Más precisamente, este mapeo toma una clase en $(\Z/p\Z)^\times$, la levanta a
  un entero $a\in\Z$, envía esto a $a^{p^{r-1}}$ y luego a la clase módulo $p^r$. Por
  supuesto hay que asegurarse que esto no depende del levantamiento. Para esto, si
  remplazamos $a$ con $a+bp$, este es enviado a
  \begin{equation*}
    \sum_{i=0}^r\binom{p^{r-1}}ia^i(bp)^{p^{r-1}-i}
  \end{equation*}
  y se puede verificar que $\binom{p^{r-1}}ip^{p^{r-1}-i}$ siempre es divisible por $p$ si
  $0\le i<p^{r-1}$. Omitimos los detalles aquí.

  La afirmación resulta al tomar el límite.
\end{proof}

\begin{prop}\label{prop:isom-z-p-log}
  Sea $p\neq2$. Las series de la \emph{función exponencial}\index[def]{función $p$-ádica!exponencial} y del \emph{logaritmo}\index[def]{función $p$-ádica!logaritmo}
  \[ \exp(x)=\sum_{n=1}^\infty \frac{x^n}{n!}, \quad \text{resp.}\quad
    \log(1+y)=\sum_{n=1}^\infty (-1)^{n+1}\frac{y^n}n \] convergen para $x,y\in p\Zp$,
  respectivamente, y dan isomorfismos de grupos topológicos
  \[\begin{tikzcd}
    p\Zp \arrow[r, "\exp", shift left] & 1+p\Zp \arrow[l, "\log", shift left]
  \end{tikzcd}\]
  inversos el uno al otro.

  Es decir, como grupo topológico $1+p\Z_p\isom\Z_p$ canónicamente.
  En particular, para cada $s\in\Z_p$ y $u\in 1+p\Z_p$, el elemento $u^s\in1+p\Zp$ está bien
  definido.
\end{prop}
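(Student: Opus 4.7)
La estrategia es demostrar por separado las siguientes afirmaciones: (i) ambas series convergen en el rango indicado; (ii) $\exp$ manda $p\Zp$ en $1+p\Zp$ y $\log$ manda $1+p\Zp$ en $p\Zp$; (iii) son morfismos de grupos topológicos; (iv) son mutuamente inversas. De ahí, junto con el \cref{lem:z-p-decomposicion}, seguirá que $1+p\Zp\isom\Zp$ y la última afirmación sobre $u^s$ se definirá por $u^s:=\exp(s\log u)$, usando que $s\log u\in p\Zp$ porque $\log u\in p\Zp$ y $\Zp\cdot(p\Zp)\subseteq p\Zp$.

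Para la convergencia, el ingrediente clave es estimar las valuaciones $p$-ádicas de $n!$ y de $n$. Usando la fórmula $v_p(n!)=\sum_{i\ge1}\lfloor n/p^i\rfloor\le n/(p-1)$, para $x\in p\Zp$ se obtiene
\[ v_p\!\left(\frac{x^n}{n!}\right)\ge n-\frac{n}{p-1}=\frac{n(p-2)}{p-1}, \]
que tiende a $\infty$ cuando $n\to\infty$ puesto que $p\ge3$; así $\exp(x)$ converge en $\Zp$. Además para $n\ge2$ el término $x^n/n!$ tiene valuación estrictamente mayor que $1$ (aquí se usa crucialmente $p\neq2$), de modo que $\exp(x)\equiv1+x\equiv1\pmod{p}$, es decir $\exp(x)\in1+p\Zp$. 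Para el logaritmo, de $v_p(n)\le\log_p n$ se deduce $v_p(y^n/n)\ge n-\log_p n\to\infty$ para $y\in p\Zp$, y mirando el primer término $y$ se ve que $\log(1+y)\in p\Zp$.

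Para las propiedades de morfismo, las identidades formales de series $\exp(X+Y)=\exp(X)\exp(Y)$ y $\log((1+X)(1+Y))=\log(1+X)+\log(1+Y)$ en $\Q\llbracket X,Y\rrbracket$ se traducen a identidades $p$-ádicas válidas, una vez justificado que los reordenamientos (producto de Cauchy y sustituciones) están permitidos, lo cual es consecuencia de la convergencia absoluta $p$-ádica demostrada arriba. La continuidad es inmediata a partir de la estimación explícita sobre $v_p(x^n/n!)$ y $v_p(y^n/n)$, que muestra que ambos mapeos son isometrías respecto de la métrica $p$-ádica en los rangos considerados (en particular $v_p(\exp(x)-1)=v_p(x)$ y $v_p(\log(1+y))=v_p(y)$).

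El paso que requiere más cuidado, y que será el principal obstáculo, es verificar que $\exp$ y $\log$ son inversas la una de la otra: las identidades formales $\exp(\log(1+Y))=1+Y$ y $\log(\exp(X))=X$ en $\Q\llbracket X\rrbracket$ y $\Q\llbracket Y\rrbracket$ involucran series dobles, y hay que justificar el intercambio del orden de suma en el sentido $p$-ádico. El método estándar es truncar: para $x\in p\Zp$ fijo, aproximar por elementos en $\Q$ suficientemente cerca $p$-ádicamente (usando la continuidad ya establecida) y apelar a las identidades clásicas reales/formales, o bien reordenar directamente la serie doble usando las cotas de convergencia uniformes. Una vez establecido esto, combinándolo con (iii) se obtienen isomorfismos topológicos inversos, completando la demostración.
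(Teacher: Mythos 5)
Tu propuesta es correcta, pero sigue una ruta distinta de la del texto: la demostración del documento no desarrolla el argumento analítico, sino que cita \cite[Chap.\ II, (5.4) y (5.5)]{MR1697859} para la convergencia y para el hecho de que $\exp$ y $\log$ son isomorfismos inversos, y sólo añade que $p\Zp\isom\Zp$ como grupo topológico (de donde $1+p\Zp\isom\Zp$ vía $\Phi$) y la definición $u^s:=\Phi(s\Phi^{-1}(u))$, que coincide con tu $u^s=\exp(s\log u)$. Lo que tú haces es, en esencia, reproducir de forma autocontenida la prueba estándar que contiene esa referencia: estimaciones de valuación, identidades formales de series y la justificación de la composición. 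Eso compra un texto independiente de la literatura y además la información extra de que los mapeos son isometrías; el enfoque del documento compra brevedad. Dos observaciones menores sobre tu redacción: (1) la cota burda $v_p(n!)\le n/(p-1)$ no basta para la desigualdad \emph{estricta} que afirmas en $n=2$ (para $p=3$ da exactamente $1$); para eso, y para la isometría $v_p(\exp(x)-1)=v_p(x)$, conviene usar la cota más fina $v_p(n!)=\frac{n-s_p(n)}{p-1}\le\frac{n-1}{p-1}$, con la cual $(n-1)v_p(x)\ge n-1>\frac{n-1}{p-1}\ge v_p(n!)$ para $n\ge2$ y $p\ge3$, y el argumento queda completo. (2) Para concluir $1+p\Zp\isom\Zp$ no hace falta el \cref{lem:z-p-decomposicion}; basta componer $\log\colon 1+p\Zp\isomarrow p\Zp$ con el isomorfismo $p\Zp\isom\Zp$ (división por $p$), que es exactamente lo que hace el texto. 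Ninguna de las dos cosas es una laguna seria.
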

\begin{proof}
  La primera afirmación es demostrada en \cite[Chap.\ II, (5.4) y (5.5)]{MR1697859}.  Porque
  $p\Zp\isom\Zp$ como grupo topológico, la segunda afirmación resulta; escribimos
  $\Phi\colon\Zp\isomarrow1+p\Zp$ para el isomorfismo. Finalmente, para $s\in\Z_p$ y
  $u\in 1+p\Z_p$ definimos $u^s$ como $\Phi(s\Phi^{-1}(u))$.
\end{proof}

Más generalmente, tenemos la siguiente descripción del grupo de unidades de $\O$, donde $\O$
es el anillo de enteros de una extensión finita de $\Qp$.

\begin{prop}\label{prop:unidades-principales}
  Con $\O$ como arriba con ideal máximo $\pi\O$ y campo residual $k$, tenemos
  \[ \O^\times\isom k^\times\times(1+\pi\O). \] Además, $1+\pi\O$ es (no canónicamente)
  isomorfo al producto de un grupo $p$ finito cíclico y $\Z_p^d$ con $d=[K:\Qp]$; en
  particular es un grupo pro-$p$.
\end{prop}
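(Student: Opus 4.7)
The plan is to prove the two parts of the statement separately. For the decomposition $\O^\times\isom k^\times\times(1+\pi\O)$, I would split the canonical short exact sequence
\[ 1 \rightarrow 1+\pi\O \rightarrow \O^\times \rightarrow k^\times \rightarrow 1. \]
Setting $q=\#k$, observe that $\gcd(q-1,p)=1$, so by el lema de Hensel el polinomio $X^{q-1}-1$ se factoriza en factores lineales distintos sobre $\O$, and reduction modulo $\pi$ induces a bijection between its roots in $\O^\times$ and those in $k^\times$. The inverse of this bijection (the Teichmüller lift) is a multiplicative section $k^\times\rightarrow\O^\times$ of the right-hand map, splitting the sequence.

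For the structure of $1+\pi\O$, I would first reduce to a finite-index subgroup on which the $p$-adic logarithm is an isomorphism. Following the pattern of \cref{prop:isom-z-p-log}, choose $n\in\Nuno$ large enough so that the series $\exp$ and $\log$ converge on $\pi^n\O$ and define mutually inverse continuous group isomorphisms
\[ \log\colon 1+\pi^n\O\isomarrow\pi^n\O \]
(with the multiplicative group on the left and the additive one on the right). As a $\Zp$-módulo $\pi^n\O\isom\O$, and by el \cref{lem:lemaderep} $\O$ is free of rank $d=[K:\Qp]$ sobre $\Zp$. Hence $1+\pi^n\O\isom\Zp^d$ como grupos topológicos.

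Since the quotient $(1+\pi\O)/(1+\pi^n\O)\subseteq (\O/\pi^n\O)^\times$ is finite, the group $1+\pi\O$ is a finitely generated $\Zp$-módulo, namely an extension of a finite group by $\Zp^d$. By the structure theorem for finitely generated modules over the PID $\Zp$, we obtain $1+\pi\O\isom T\oplus \Zp^r$, where $T$ is the torsion submodule and $r$ the free rank; comparing with the finite-index subgroup $\Zp^d$ forces $r=d$. Finally, $T$ consists of roots of unity in $K^\times$, and being a finite subgroup of the multiplicative group of a field it is cyclic (of $p$-power order, since the prime-to-$p$ roots of unity have already been absorbed into the $k^\times$ factor via the Teichmüller section).

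The main obstacle I expect is the careful justification of the choice of $n$ making $\exp$ and $\log$ mutually inverse isomorphisms on $\pi^n\O$; in general this requires bounding $v_\pi(n!)$ and controlling the ramification of $K/\Qp$, but the argument follows the well-known pattern already sketched for $\Qp$ in \cref{prop:isom-z-p-log}.
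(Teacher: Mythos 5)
Your proof is correct and takes essentially the same route as the paper, which gives no argument of its own but simply cites \cite[Chap.\ II, (5.3), (5.7)(i)]{MR1697859}, where exactly this strategy is carried out: the Hensel/Teichmüller splitting of $1\rightarrow 1+\pi\O\rightarrow\O^\times\rightarrow k^\times\rightarrow1$, the $\exp$--$\log$ isomorphism $1+\pi^n\O\isom\pi^n\O\isom\Zp^d$ for $n$ large, and the structure theorem for finitely generated $\Zp$-módulos together with the cyclicity of finite subgroups of $K^\times$. One minor slip: \cref{lem:lemaderep} does not literally yield that $\O$ is a free $\Zp$-módulo of rank $d$ (that is the standard fact that $\O$, being finitely generated and torsion-free over the PID $\Zp$ with $\O\tensor_\Zp\Qp=K$, is free of rank $[K:\Qp]$), but this does not affect the validity of your argument.
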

\begin{proof}
  \cite[Chap.\ II, (5.3), (5.7) (i)]{MR1697859}
\end{proof}

Terminamos la sección definiendo y estudiando algunos caracteres importantes.

\begin{defi}\label{defi:teichmueller}
  \begin{enumerate}
  \item   Por el \cref{lem:z-p-decomposicion} tenemos una sección
    \[ \omega\colon\F_p^\times\hookrightarrow\Z_p^\times \] a la proyección
    $\Z_p^\times\cong\F_p^\times\times(1+p\Zp)\rightarrow\F_p^\times$. Esta sección $\omega$
    se llama \emph{carácter de Teichmüller}\index[def]{carácter!de Teichmüller}.
  \item  Al isomorfismo
    \[ \kappa\colon\Gal(\Q(\mu_{p^\infty})/\Q)\isomarrow\Z_p^\times \]
    de \eqref{eqn:kappa} se le llama \emph{carácter ciclotómico}\index[def]{carácter!ciclotómico}.  
  \item Usando el isomorfismo canónico $(\Z/p\Z)^\times\isom\Gal(\Q(\mu_p)/\Q)$ podemos ver al
  carácter de Teichmüller como \[
    \omega\colon\Gal(\Q(\mu_p)/\Q)\rightarrow\Z_p^\times \]
  o también como carácter de $\Gal(\Q(\mu_{p^\infty})/\Q)$.
  Definimos otro carácter\footnote{En la literatura también es común escribir este carácter
    como $\left<\cdot\right>$ en lugar de $\kappa_0$, es decir
    $\kappa_0(x)=\left<x\right>$.}
  \[ \kappa_0=\omega^{-1}\kappa\colon\Gal(\Q(\mu_{p^\infty})/\Q)\rightarrow\Z_p^\times. \]
  Entonces $\kappa_0$ es la composición del carácter ciclotómico con la proyección
  \[ \Z_p^\times\twoheadrightarrow 1+p\Z_p\subseteq\Z_p^\times \] usando el isomorfismo del
  \cref{lem:z-p-decomposicion}.
  \end{enumerate}
\end{defi}

\begin{remark}\label{defi:zpuno}
  Definimos
  \[ \Zp(1):=\varprojlim_{r\in\Nuno}\mu_{p^r}, \] donde $\mu_{p^r}$ son las raíces
  $p^r$-ésimas de la unidad en $\Qbar$ y las aplicaciones
  $\mu_{p^{r+1}}\rightarrow\mu_{p^r}$ están dadas por $\xi\mapsto\xi^p$. Entonces $\Zp(1)$ es un
  $\Zp$-módulo compacto que es isomorfo a $\Zp$ (de manera no canónica porque $\mu_{p^r}$ es isomorfo no canónicamente a $\Z/p^r\Z$). Además $\Zp(1)$ tiene una acción de $\GQ$. Si vemos
  el carácter ciclotómico como carácter de $\GQ$ vía
  $\GQ\twoheadrightarrow\Gal(\Q(\mu_{p^\infty})/\Q)\labeledarrow{\kappa}\Z_p^\times$
  entonces la acción de $\GQ$ en $\Zp(1)$ está dada por
  \[ gz=\kappa(g)z \quad\text{para cada }z\in\Zp(1),\ g\in\GQ. \]
\end{remark}

  Sea $z\in\Z_p^\times$ y lo escribimos como $z=(f,u)\in\F_p^\times\times(1+p\Z_p)$ usando
  el isomorfismo del \cref{lem:z-p-decomposicion}. Entonces en $\Z_p^\times$
  \[ \omega(f)=\lim_{j\to\infty} z^{p^j}. \]
  En particular, si $\overline a\in\F_p^\times$ y $a\in\Z$ es un levantamiento, entonces
  \[ \omega(\overline a)=\lim_{j\to\infty}a^{p^j}. \]

\begin{lem}\label{lem:caracteres-de-g}
  Cada carácter $\chi\colon\Gal(\Q(\mu_{p^\infty})/\Q)\rightarrow\Z_p^\times$ es de la forma
  $\chi=\omega^a\kappa_0^s$ con únicos $a\in\{1,\dotsc,p-1\}$ y $s\in\Z_p$. Aquí la notación
  $\kappa_0^s$ tiene sentido gracias a la \cref{prop:isom-z-p-log}.
\end{lem}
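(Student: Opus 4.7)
El plan es descomponer ambos lados usando los isomorfismos
$\Gal(\Q(\mu_{p^\infty})/\Q)\isom\Z_p^\times\isom\F_p^\times\times(1+p\Z_p)$
(obtenidos vía $\kappa$ y el \cref{lem:z-p-decomposicion}) y analizar
por separado los dos factores. Escribiría $\chi=(\chi_1,\chi_2)$
donde $\chi_1\colon\Gal(\Q(\mu_{p^\infty})/\Q)\to\F_p^\times$ y
$\chi_2\colon\Gal(\Q(\mu_{p^\infty})/\Q)\to 1+p\Z_p$ son las composiciones
de $\chi$ con las dos proyecciones de $\Z_p^\times$; basta determinar
$\chi_1$ y $\chi_2$ por separado.

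Primero aplicaría el \cref{ejer:morfismos-pro-p-pro-ell} para observar
que los morfismos \enquote{cruzados} se anulan: todo morfismo continuo
del grupo pro-$p$ $1+p\Z_p$ al grupo finito $\F_p^\times$ (de orden $p-1$,
coprimo a $p$) es trivial, y simétricamente todo morfismo continuo
de $\F_p^\times$ en el pro-$p$ grupo $1+p\Z_p$ es trivial. En consecuencia,
$\chi_1$ se factoriza por la proyección a $\F_p^\times$ y corresponde
a un endomorfismo de $\F_p^\times$; como este grupo es cíclico de orden
$p-1$, tal endomorfismo es únicamente de la forma $f\mapsto f^a$ para
algún $a\in\{1,\dotsc,p-1\}$, lo que da $\chi_1=\omega^a$ al interpretar
el resultado en $\Z_p^\times$ vía la inclusión $\omega$.

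Para $\chi_2$, razonaría de manera análoga: se factoriza por la proyección
a $1+p\Z_p$ y corresponde a un endomorfismo continuo de $1+p\Z_p$.
Transportando vía el isomorfismo topológico $1+p\Z_p\isom\Z_p$ de la
\cref{prop:isom-z-p-log}, este endomorfismo se identifica con un endomorfismo
continuo del grupo topológico $\Z_p$. Como $\Z$ es denso en $\Z_p$ y la
aditividad determina el morfismo en $\Z$ por su valor en $1$, por continuidad
todo tal endomorfismo es la multiplicación por un único $s\in\Z_p$.
Traduciendo de vuelta vía $\exp$ y $\log$, resulta que $\chi_2$ es la
\enquote{elevación a la potencia $s$}, es decir $\chi_2=\kappa_0^s$ en
el sentido de la \cref{prop:isom-z-p-log}.

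El paso más delicado es el último: la identificación precisa de los
endomorfismos continuos del grupo topológico $\Z_p$ con $\Z_p$ mismo,
que requiere combinar cuidadosamente la aditividad, la densidad de $\Z$
en $\Z_p$ y la continuidad para obtener la unicidad de la extensión
del $1$ al resto. Una vez establecidas ambas partes, multiplicando las
dos componentes en $\Z_p^\times$ obtendría $\chi=\omega^a\kappa_0^s$,
y la unicidad de $a$ y $s$ es consecuencia directa de las unicidades
individuales obtenidas en cada factor.
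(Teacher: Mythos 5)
Tu propuesta es correcta y sigue esencialmente el mismo camino que la demostración del texto: ambas se reducen a la descomposición $\Z_p^\times\isom\F_p^\times\times(1+p\Z_p)$ del \cref{lem:z-p-decomposicion}, anulan las componentes \enquote{cruzadas} (por el argumento pro-$p$/orden coprimo y por la ausencia de torsión en $1+p\Z_p$), identifican la parte en $\F_p^\times$ como potencia de Teichmüller y la parte en $1+p\Z_p$ con un endomorfismo continuo de $\Z_p$, determinado por la imagen de $1$, es decir con un único $s\in\Z_p$. La única diferencia es de organización (tú proyectas al codominio, el texto restringe al dominio), y como detalle menor: para la trivialidad de los morfismos $\F_p^\times\rightarrow1+p\Z_p$ el \cref{ejer:morfismos-pro-p-pro-ell} no aplica literalmente (pues $\F_p^\times$ no es pro-$\ell$ en general); lo directo es usar que $1+p\Z_p\isom\Z_p$ es libre de torsión, como hace el texto.
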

\begin{proof}
  Identificamos $\Gal(\Q(\mu_{p^\infty})/\Q)$ con $\Z_p^\times$ usando el carácter ciclotómico.
  Por la descomposición del \cref{lem:z-p-decomposicion} es claro que cada carácter
  $\chi\colon\Z_p^\times\rightarrow\Z_p^\times$ se descompone en $\chi=\psi\times\eta$ con
  \begin{equation*}
    \psi\colon\F_p^\times\rightarrow\Z_p^\times, \quad \eta\colon1+p\Z_p\rightarrow\Z_p^\times.
  \end{equation*}
  Porque $1+p\Z_p\isom\Z_p$ no tiene torsión, la imagen de $\psi$ está contenida en
  $\F_p^\times$ y por eso $\psi$ es una potencia del carácter de Teichmüller (véase
  \cref{ejer:caracteres-fp}). Por otro lado, si componemos $\eta$ con la proyección
  $\Z_p^\times\rightarrow\F_p^\times$ obtenemos un morfismo de un grupo pro-$p$ a un grupo
  de orden primo a $p$, que necesariamente es trivial según el
  \cref{ejer:morfismos-pro-p-pro-ell}, y vemos que la imagen de $\eta$ está contenida en
  $1+p\Z_p$. Usando la \cref{prop:isom-z-p-log}, podemos estudiar los homomorfismos
  continuos $\Z_p\rightarrow\Z_p$. Pero como explicamos en la \cref{sec:profinito}, estos
  homomorfismos son únicamente determinados por la imagen de $1\in\Z_p$, y esta imagen puede
  ser cualquier elemento de $\Z_p$.
\end{proof}

\ejercicios

\begin{ejer}\label{ejer:teichmueller-limes}
  Demuestre que \[ \omega(f)=\lim_{j\to\infty} z^{p^j} \] para cada $z\in\Z_p^\times$ que
  escribimos como $z=(f,u)\in\F_p^\times\times(1+p\Z_p)$ usando el isomorfismo del
  \cref{lem:z-p-decomposicion}.
\end{ejer}

\begin{ejer}\label{ejer:caracteres-fp}
  Demuestre que cada carácter $\F_p^\times\rightarrow\Qbar_p^\times$ es una potencia del
  carácter de Teichmüller $\omega$.
\end{ejer}

\chapter{El álgebra de Iwasawa}
\label{sec:algebra-de-iwasawa}

En el caso en que $\Gamma$ es un pro-$p$ grupo cíclico isomorfo a $\Zp$, el
\emph{álgebra de Iwasawa}\index[def]{algebra de Iwasawa@álgebra de Iwasawa} $\LL=\Zp\llbracket\Gamma\rrbracket$ con coeficientes en
$\Zp$ se presenta como una trinidad matemática: Por definición es una álgebra de grupos
profinita, por lo tanto tiene una estructura algebraica y topológica; una vez escogido un
generador $\gamma\in\Gamma$ esta se puede identificar al álgebra de series formales sobre
$\Zp$; además los elementos de $\LL$ pueden ser vistos como medidas en $\Gamma$ con
valores en $\Zp$.

Esta trinidad explica su enorme importancia: Es un objeto versátil y por eso aparece en
varios contextos. Primero, en su talle de álgebra de grupos profinita, muchos objetos son
naturalmente módulos sobre $\LL$ -- en cuanto tenemos acción razonable de $\Gamma$ en un
grupo pro-$p$ abeliano, ya tenemos un $\LL$-módulo (véase la
\cref{prop:modulos-sobre-rg}). En las aplicaciones, $\Gamma$ normalmente es un grupo de
Galois, que actúa naturalmente en una multitud de objetos. Si entonces identificamos $\LL$
con el anillo de series de potencias formales sobre $\Zp$, la maleabilidad de este anillo
permite desarrollar una teoría útil de estructura de sus módulos que permite definir
invariantes importantes (\cref{sec:asp} y \cref{sec:modulos-iwasawa}). Finalmente, el punto
de vista de medidas facilita una conexión de dichos módulos a objetos de origen analítico,
como funciones $L$ $p$-ádicas (\cref{sec:medidas} y \cref{sec:palf}).

\section{El anillo de series de potencias}
\label{sec:asp}

En esta sección empezaremos estudiando las propiedades de las series formales
$\O\llbracket T\rrbracket$ en una variable sobre un anillo local de valuación discreta
$\O$. El ejemplo más importante para nosotros es aquel en que $\O$ es el anillo de enteros de
una extensión finita de $\Qp$, por ejemplo $\O=\Zp$. No obstante, algunos de los resultados
de esta sección pueden ser generalizados, por ejemplo al caso en que $\O$ es un anillo local
conmutativo noetheriano (ver \cite[Cap. V. §3]{NSW}). En particular demostramos que si
$\Gamma$ es un pro-$p$ grupo libre de rango $1$ y $\O$ es un anillo de valuación discreta,
entonces $\O\llbracket T\rrbracket$ es isomorfo de manera no canónica al anillo de grupo
profinito $\O\llbracket \Gamma\rrbracket$.

Sea $\O$ un anillo local de valuación discreta, completo por la topología definida por las potencias de su ideal máximo $\pi \O$ de cuerpo residual $k=\O/\pi \O$ finito, de manera que $\O$ es compacto.
Denotaremos $\LL$ el álgebra $$\LL=\O\llbracket T\rrbracket$$ de las series formales en una variable con coeficientes en $\O$.

\begin{prop} $\LL$ es un anillo local de ideal máximo $\M=\pi\LL+T\LL=(\pi,T)$.
\end{prop}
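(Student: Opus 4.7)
El plan es demostrar que $(\pi,T)$ es exactamente el conjunto de elementos no invertibles de $\LL$; un resultado estándar del álgebra conmutativa implica entonces que $\LL$ es local con $(\pi,T)$ como único ideal máximo.

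Primero, caracterizaría la pertenencia a $(\pi,T)$. Dada una serie $f=\sum_{i\ge0}a_iT^i\in\LL$, es inmediato que $f\in(\pi,T)$ si y solo si $a_0\in\pi\O$: la implicación directa se obtiene mirando el coeficiente constante de $\pi g+Th$ para $g,h\in\LL$, y la recíproca se ve escribiendo $a_0=\pi b_0$ con $b_0\in\O$ y $f=\pi b_0+T\bigl(\sum_{i\ge1}a_iT^{i-1}\bigr)$.

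Segundo, caracterizaría los elementos invertibles: $f\in\LL^\times$ si y solo si $a_0\in\O^\times$. La necesidad se obtiene tomando el coeficiente constante de una ecuación $fg=1$. Para la suficiencia, como $\O$ es local, la condición $a_0\in\O^\times$ equivale a $a_0\notin\pi\O$; escribiría $f=a_0(1-u)$ con $u=-a_0^{-1}(f-a_0)\in T\LL$. La serie geométrica formal $\sum_{n\ge0}u^n$ define un elemento bien definido de $\LL$ porque $u^n\in T^n\LL$, de modo que para cada $k\in\Ncero$ solo un número finito de términos contribuyen al coeficiente de $T^k$; este elemento es inverso de $1-u$, y por lo tanto $a_0^{-1}\sum_{n\ge0}u^n$ es inverso de $f$.

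Combinando ambos pasos, $\LL\setminus\LL^\times=\{f\in\LL:a_0\in\pi\O\}=(\pi,T)$. Como este conjunto es un ideal propio (por estar definido como ideal generado), $\LL$ es local con ideal máximo $(\pi,T)$, según el criterio habitual que afirma que un anillo conmutativo es local si y solo si el conjunto de sus no-unidades es cerrado bajo suma. El único punto que requiere cuidado es justificar rigurosamente que la suma formal $\sum_{n\ge0}u^n$ define efectivamente un elemento de $\LL$ (es decir, que cada coeficiente es una suma finita en $\O$), lo cual se reduce al hecho elemental de que $u$ no tiene término constante; no veo ningún otro obstáculo real en la demostración.
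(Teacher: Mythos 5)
Tu propuesta es correcta y sigue esencialmente el mismo camino que la demostración del texto: caracterizar las unidades de $\LL$ por la invertibilidad del coeficiente constante (invirtiendo con la serie geométrica formal, que converge coeficiente a coeficiente porque $u\in T\LL$) y concluir que $\LL\setminus\LL^\times=(\pi,T)$. Tu versión solo es un poco más explícita al verificar que $f\in(\pi,T)$ equivale a $a_0\in\pi\O$ y al justificar la buena definición de $\sum_{n\ge0}u^n$, detalles que el texto da por sentados.
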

\begin{proof} Un elemento $f=\sum_{i=0}f_{i}T^{i}\in \LL$ diferente de cero es invertible sí y sólo si su coeficiente constante $\lambda_{0}$ es invertible en $\Zp$. De hecho si $f\neq 0$ es invertible entonces existe $f^{-1}$ tal que $ff^{-1}=1$, en particular $f_{0}f_{0}^{-1}=1\in\Zp$. Del otro lado si $f_{0}$ es invertible entonces podemos escribir $f=f_{0}(1+gT)$ para un $g\in\LL$. Entonces el elemento $f^{-1}=f_{0}^{-1}\sum_{i=0}^{\infty}g^{i}T^{i}$ es un inverso de $f$. 

Entonces tenemos $\LL^{\times}=\LL\setminus\M$.
\end{proof}

\begin{remark} El ideal $\M$ no es principal como era el caso del anillo $\O$.
\end{remark}

Equipamos $\LL$ con la topología $\M$-ádica, tomando los $(\M^{n})_{n\in\Nuno}$ como sistema
fundamental de vecindades alrededor del $0$. Esto hace de $\LL$ un anillo local completo, con
campo de residuos finito $k=\LL/\M=\O/\pi \O$. De hecho $\LL$ es un espacio topológico
compacto y Hausdorff.

\begin{defi}
  \begin{enumerate}
  \item   Sea $f=\sum_{n=0}^\infty a_nT^n\in\LL\setminus\pi\LL$ y $\bar f\in k\llbracket T\rrbracket$ su reducción. El \define{grado
    de Weierstraß} de $f$ es la valuación en $T$ de $\bar f$, es decir el mínimo $n\in\Ncero$
  tal que $a_n\notin\pi\O$.
\item Sea $f\in\O[T]$. Entonces $f$ se llama \emph{distinguido}\index[def]{polinomio!distinguido} si es mónico y su grado es
  igual a su \importante{grado de Weierstraß}, es decir el coeficiente superior es $1$ y
  todos los demás están en $\pi\O$. Algunos textos llaman estos \emph{polinomio de
    Weierstraß}\index[def]{polinomio!de
    Weierstraß|see{polinomio distinguido}}.
  \end{enumerate}
\end{defi} 

\begin{lem}[\importante{Lema de división}]\label{lema:division} Sea $f$ un elemento de
  $\LL\setminus\pi\LL$ y $\nu$ su grado de Weierstraß. Entonces todo elemento de $g\in\LL$ se escribe de manera única como $$g=f\lambda + r,\	\	\	\text{ con }\	\	\lambda\in \LL \text{ y } r\in \O_{\nu-1}[T],$$ donde $\O_{\nu-1}[T]$ es el $\O$-módulo de los polinomios de grado a lo más $\nu-1$. Equivalentemente
$$\LL=f\LL\oplus \O_{\nu-1}[T].$$

\end{lem}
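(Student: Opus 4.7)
The plan is to reduce the statement to a fixed-point equation in $\LL$ whose operator is a contraction in the $\pi$-adic topology. To set this up, I would introduce two $\O$-linear operators on $\LL$: for $h=\sum_{i\ge 0}h_iT^i\in\LL$, put $\rho(h)=\sum_{i=0}^{\nu-1}h_iT^i$ and $\tau(h)=\sum_{i\ge 0}h_{i+\nu}T^i$, so that $h=\rho(h)+T^\nu\tau(h)$ and $\tau(T^\nu h)=h$. By the definition of the Weierstraß degree of $f$ one has $\rho(f)\in\pi\,\O_{\nu-1}[T]\subseteq\pi\LL$, while the constant coefficient of $\tau(f)$ equals $a_\nu\in\O^\times$, so $\tau(f)\in\LL^\times$ by the criterion established in the previous proposition.

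Writing the desired identity as $g=f\lambda+r$ with $r\in\O_{\nu-1}[T]$, the constraint on $\deg r$ forces $r=\rho(g-f\lambda)$, so the whole lemma is equivalent to the single equation $\tau(g)=\tau(f\lambda)$. Expanding $f\lambda=\rho(f)\lambda+T^\nu\tau(f)\lambda$ and using $\tau(T^\nu h)=h$ together with the $\O$-linearity of $\tau$ gives $\tau(f\lambda)=\tau(\rho(f)\lambda)+\tau(f)\lambda$. Since $\tau(f)$ is a unit in $\LL$, this rearranges to the fixed-point equation
$$\lambda=\tau(f)^{-1}\bigl(\tau(g)-\tau(\rho(f)\lambda)\bigr)=:\Phi(\lambda).$$
Because $\rho(f)\in\pi\LL$, the operator $\Phi$ satisfies $\Phi(\lambda_1)-\Phi(\lambda_2)\in\pi(\lambda_1-\lambda_2)\,\O\llbracket T\rrbracket$; iterating $\Phi$ starting at $\lambda^{(0)}=\tau(f)^{-1}\tau(g)$ produces a Cauchy sequence in the $\pi$-adic metric. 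The ring $\LL$ is $\pi$-adically complete (a $\pi$-adic Cauchy sequence converges coefficient by coefficient, and $\O$ is $\pi$-adically complete), so the limit $\lambda\in\LL$ exists and is the unique fixed point of $\Phi$; setting $r=\rho(g-f\lambda)$ finishes the existence part.

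For uniqueness, as a cross-check, I would argue directly: if $f\lambda=r$ with $r\in\O_{\nu-1}[T]$, reducing modulo $\pi$ yields $T^\nu\,\overline{\tau(f)}\cdot\bar\lambda=\bar r$ in $k\llbracket T\rrbracket$ with $\deg\bar r<\nu$ and $\overline{\tau(f)}\in k\llbracket T\rrbracket^\times$. Since $k\llbracket T\rrbracket$ is an integral domain this forces $\bar\lambda=0$ and $\bar r=0$, so $\lambda\in\pi\LL$ and $r\in\pi\O_{\nu-1}[T]$. Writing $\lambda=\pi\lambda_1$, $r=\pi r_1$ and dividing by $\pi$ reduces to the same situation with $(\lambda_1,r_1)$; iteration gives $\lambda\in\bigcap_n\pi^n\LL=0$, hence also $r=0$.

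The main obstacle is purely bookkeeping: setting up $\rho$, $\tau$ so that they interact cleanly with multiplication by $f$, verifying that $\tau(f)$ is a unit via the previous proposition, and confirming that $\Phi$ is a well-defined self-map of $\LL$ that is strictly contracting in the $\pi$-adic metric. Once these points are in place both existence (by completeness) and uniqueness (by reduction mod $\pi$ and induction) follow with little further effort.
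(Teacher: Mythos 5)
Your proof is correct and, at bottom, it is the paper's own argument in different clothing: the splitting $h=\rho(h)+T^{\nu}\tau(h)$ with $\rho(f)\in\pi\O_{\nu-1}[T]$ and $\tau(f)\in\LL^{\times}$ is exactly the paper's decomposition $f=T^{\nu}\mu+\pi R$, your fixed-point iteration for $\Phi$ starting at $\tau(f)^{-1}\tau(g)$ unrolls to the paper's alternating series $\lambda=\sum_{i}(-1)^{i}a_{i}\pi^{i}$, and your uniqueness step (reduce módulo $\pi$, conclude $\bar\lambda=\bar r=0$, divide by $\pi$ and iterate) is identical to the one in the text. The only wording to adjust: since $\tau$ is merely $\O$-linear, state the contraction as \enquote{$\lambda_1\equiv\lambda_2\ (\mod \pi^{n})$ implies $\Phi(\lambda_1)\equiv\Phi(\lambda_2)\ (\mod \pi^{n+1})$}, which follows from $\rho(f)\in\pi\LL$ and is all you actually use, rather than membership in the ideal $\pi(\lambda_1-\lambda_2)\O\llbracket T\rrbracket$, which $\tau$ need not respect.
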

\begin{proof}
  Sea $\nu$ el grado de Weierstraß de $f$, entonces escribimos
$$f=T^{\nu}\mu + \pi R,\	\	\text{	con }\	\	R\in \O_{\nu-1}[T], $$
para un $\mu\in\LL^{\times}$.

\textit{Existencia:} Sea $g\in \LL$, entonces podemos escribir
\begin{eqnarray*}
g&=&\sum_{i=0}^{\nu-1} a_{i}T^{i} + T^{\nu}\sum_{i=\nu}^{\infty} a_{i}T^{i-\nu} \\
&=& T^{\nu}g' + r_{0} \	\	\	\text{ con } g'\in \LL \text{ y } r_{0}\in \O_{\nu-1}[T].
\end{eqnarray*}
Definimos $a_{0}=\mu^{-1}g'$, entonces 
\begin{eqnarray*}
g-a_{0}f&=& T^{\nu}g' + r_{0} - \mu^{-1}g'(T^{\nu}\mu + \pi R) \\
&=& T^{\nu}g' + r_{0} - g'T^{\nu} - \pi R\mu^{-1} g' \\
&=&r_{0}-\pi\underbrace{ R\mu^{-1} g'}_{\in\LL}  \\
&=&r_{0}-\pi g_{1}.
\end{eqnarray*}
Sea $g_{1}=T^{\nu}g'_{1}+r_{1}$ con $g'_{1}\in \LL$ y $r_{1}\in \O_{\nu-1}[T]$, y definimos $a_{1}=\mu^{-1}g'_{1}$. Tenemos
\begin{eqnarray*}
g_{1}-a_{1}f&=& T^{\nu}g'_{1}+r_{1} - \mu^{-1}g'_{1}(T^{\nu}\mu + \pi R) \\
&=& T^{\nu}g'_{1} + r_{1} - g'_{1}T^{\nu} - \pi R\mu^{-1} g'_{1} \\
&=&r_{1}-\pi R\mu^{-1} g'_{1}  \\
&=&r_{1}-\pi g_{2};
\end{eqnarray*}
entonces para $n\geq 2$ (siguiendo el mismo razonamiento) sea $g_{n}=T^{\nu}g'_{n}+r_{n}$ con $g'_{n}\in \LL$, $r_{n}\in \O_{\nu-1}[T]$ y definimos $a_{n}=\mu^{-1}g'_{1}$. Con  $g=g_{0}$, tenemos
$$g_{n}-a_{n}f=r_{n}-\pi g_{n+1},$$
para todo $n\geq 0$, por lo tanto 
$$g-\left(\sum_{i=0}^{n}(-1)^{i}a_{i}\pi^{i}\right)f=\sum_{i=0}^{n}(-1)^{i}r_{i}\pi^{i} + (-1)^{n+1}\pi^{n+1}g_{n+1}\	\	\	\text{ para } n\geq 0.$$
Tomando límites tenemos que $(-1)^{n+1}\pi^{n+1}g_{n+1} \rightarrow 0$ por lo tanto $g-\lambda f = r$ con $\lambda \in \LL$ y $r\in \O_{\nu-1}[T]$.

\textit{Unicidad:} Sea $\lambda f  = r \in f \LL \cap \O_{\nu-1}[T]$. Módulo $\pi$ tenemos $\bar{\lambda}\bar{f}=\bar{\lambda}\bar{\mu}T^{\nu}=\bar{r}$, entonces $\bar{\lambda}\bar{f}=\bar{r}=0$, es decir $\pi\mid \lambda$ y $\pi\mid r$, entonces
$$\dfrac{\lambda}{\pi}f=\dfrac{r}{\pi},$$
volviendo a iterar el proceso vemos que $\pi^{n}\mid \lambda$ y $\pi^{n}\mid r$ para todo
$n\geq 0$, por lo tanto $\lambda = r = 0$.
\end{proof}

\begin{thm}[\importante{Teorema de preparación de Weierstraß}]\label{thm:weierstrass} Todo elemento $f\in
  \LL\setminus\pi\LL$ se escribe de manera única como
$$f=\mu(T^{\nu}+\pi Q),$$
con $Q\in \O_{\nu-1}[T]$. Es decir, como producto de un invertible $\mu\in\LL^{\times}$ y de un polinomio distinguido $P=T^{\nu}+\pi Q$.
\end{thm}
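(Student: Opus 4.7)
The plan is to apply the Division Lemma (\cref{lema:division}) to the element $T^\nu \in \LL$ with $f$ as divisor, and then read off the desired factorization from the resulting formula. This reduces both the existence and the uniqueness of the Weierstraß decomposition to the uniqueness clause already proved for division.

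For existence, I would start from the unique expression
\[ T^\nu = f\lambda + r, \quad \lambda\in\LL,\ r\in\O_{\nu-1}[T] \]
provided by the Division Lemma. The key step is then to reduce modulo $\pi$: by the very definition of the Weierstraß degree, $\bar f = T^\nu\bar u$ in $k\llbracket T\rrbracket$ for some unit $\bar u$, so the equation becomes $T^\nu = T^\nu\bar u\bar\lambda + \bar r$. Since $\bar r$ has $T$-adic valuation strictly less than $\nu$ whenever it is nonzero, while the rest of the equation has valuation at least $\nu$, comparing valuations forces $\bar r = 0$ and $\bar u\bar\lambda = 1$. Hence $r\in\pi\O_{\nu-1}[T]$, the constant term of $\lambda$ is a unit in $\O$, and therefore $\lambda\in\LL^\times$ by the characterization $\LL^\times = \LL\setminus\M$. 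Setting $\mu = \lambda^{-1}$ and $Q = -r/\pi\in\O_{\nu-1}[T]$ gives $f = \mu(T^\nu + \pi Q)$, as desired.

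For uniqueness, suppose $f = \mu_1(T^\nu + \pi Q_1) = \mu_2(T^\nu + \pi Q_2)$ with $\mu_i\in\LL^\times$ and $Q_i\in\O_{\nu-1}[T]$, and let $P_i := T^\nu + \pi Q_i$, which are distinguished polynomials of Weierstraß degree $\nu$. My idea is to apply the Division Lemma to $P_1$ with divisor $P_2$ in two different ways: on the one hand, $P_1 = (\mu_1^{-1}\mu_2)P_2 + 0$ with remainder $0\in\O_{\nu-1}[T]$, and on the other, $P_1 = 1\cdot P_2 + \pi(Q_1 - Q_2)$ with remainder $\pi(Q_1-Q_2)\in\O_{\nu-1}[T]$. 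The uniqueness clause of the Division Lemma forces the quotients and remainders to coincide, yielding $\mu_1^{-1}\mu_2 = 1$ and $Q_1 = Q_2$.

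The only genuinely delicate point is the mod-$\pi$ reduction used to show that $\lambda$ is a unit; once this valuation argument is in hand, everything else is a formal consequence of the uniqueness already built into the Division Lemma, so I do not expect further obstacles.
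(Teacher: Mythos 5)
Tu demostración de la existencia es esencialmente la misma que la del texto: ambos dividen $T^{\nu}$ entre $f$ con el Lema de división (\cref{lema:division}), reducen módulo $\pi$ y comparan para concluir que $\bar r=0$ y que $\lambda$ es invertible, obteniendo $f=\lambda^{-1}(T^{\nu}+\pi Q)$; tu variante sólo sustituye la escritura $f=T^{\nu}\mu+\pi R$ del texto por el argumento equivalente de valuación $T$-ádica en $k\llbracket T\rrbracket$. La única diferencia real es que tú pruebas explícitamente la unicidad, aplicando la cláusula de unicidad del Lema de división con divisor $P_2=T^{\nu}+\pi Q_2$ a las dos expresiones $P_1=(\mu_1^{-1}\mu_2)P_2+0$ y $P_1=1\cdot P_2+\pi(Q_1-Q_2)$; este argumento es correcto y completa un punto que la demostración del texto deja implícito (y que de hecho se usa en el corolario siguiente).
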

\begin{proof}
Sea $f=T^{\nu}\mu+\pi R$ como en el lema de división.  Aplicamos el lema de división a $T^{\nu}$, es decir 
$$T^{\nu}=\lambda f + r, \	\	\text{ con }r\in \O_{\nu-1}[T].$$ 
Entonces módulo $\pi$ tenemos
$$T^{\nu}=\bar{\lambda}\bar{\mu} T^{\nu}+ \bar{r},$$ 
por lo tanto $\bar{r}=0$ y $\bar{\lambda}\bar{\mu}=1$. En particular $r=\pi Q$ para algún $Q\in \O_{\nu-1}[T]$ y $\lambda$ es invertible. Concluimos que $f=\lambda^{-1}(T^{\nu}-r)$.
\end{proof}

\begin{cor} Los polinomios distinguidos e irreducibles $P\in \O[T]$ son también irreducibles en $\LL$.
\end{cor}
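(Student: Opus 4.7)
El plan es suponer una factorización $P=fg$ en $\LL$ y mostrar que uno de los factores debe ser un invertible. Como $P$ es distinguido su coeficiente superior es $1$, así que $P\notin\pi\LL$; en particular ni $f$ ni $g$ pertenecen a $\pi\LL$, lo que permite aplicar el teorema de preparación de Weierstraß (\cref{thm:weierstrass}) a ambos. Así obtenemos escrituras $f=\mu_1 P_1$ y $g=\mu_2 P_2$, con $\mu_1,\mu_2\in\LL^\times$ y $P_1,P_2\in\O[T]$ distinguidos, que al multiplicarse dan $P=(\mu_1\mu_2)(P_1 P_2)$.

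El paso siguiente es observar que $P_1 P_2$ sigue siendo un polinomio distinguido: escribiendo $P_i=T^{\nu_i}+\pi Q_i$ con $Q_i\in\O_{\nu_i-1}[T]$, el producto resulta ser mónico de grado $\nu_1+\nu_2$ y sus coeficientes no-principales están todos en $\pi\O$. Por lo tanto $P=(\mu_1\mu_2)(P_1P_2)$ es una descomposición de Weierstraß de $P$. Pero la descomposición trivial $P=1\cdot P$ también lo es, y la unicidad en el \cref{thm:weierstrass} fuerza entonces $\mu_1\mu_2=1$ y $P_1P_2=P$.

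Finalmente, esta última igualdad tiene lugar dentro de $\O[T]$, donde por hipótesis $P$ es irreducible. Luego uno de $P_1$, $P_2$ es un invertible de $\O[T]$, es decir un elemento de $\O^\times$; pero un polinomio distinguido que sea constante debe ser igual a $1$ (por ser mónico de grado cero). En consecuencia $P_i=1$ para algún $i$, y el factor correspondiente $f$ o $g$ coincide con $\mu_i\in\LL^\times$, un invertible. El único punto que requiere un poco de cuidado es verificar que el producto de polinomios distinguidos es distinguido, pues es precisamente esto lo que permite invocar la unicidad de la descomposición de Weierstraß y trasladar la factorización desde $\LL$ al anillo de polinomios $\O[T]$, donde la hipótesis de irreducibilidad se aplica directamente.
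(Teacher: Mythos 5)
Tu propuesta es correcta y sigue esencialmente el mismo camino que la demostración del texto: aplicar el teorema de preparación de Weierstraß a cada factor, usar la unicidad de la descomposición para trasladar la factorización a $\O[T]$ y concluir con la irreducibilidad allí. De hecho completas dos detalles que el texto deja implícitos (que el producto de polinomios distinguidos es distinguido y que el factor distinguido trivial debe ser $1$), lo cual está bien.
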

\begin{proof} Sea $P\in \O[T]$ distinguido e irreducible. Supongamos que $P=f_{1}f_{2}$ con $f_{i}\in\LL$, luego por el Teorema de preparación de Weierstraß tenemos $P=\mu_{1}P_{1}\mu_{2}P_{2}$ con $\mu_{i}\in\LL^{\times}$ y $P_{i}$ polinomios distinguidos en $\O[T]$, como la manera de expresar $P$ es única esto implica que $\mu_{1}\mu_{2}=1$ y $P=P_{1}P_{2}$.
\end{proof}

\begin{remark} Los polinomios de Eisenstein son irreducibles en $\LL$.
\end{remark}

\begin{cor} El álgebra $\LL$ es un dominio de factorización única cuyos elementos irreducibles son 
\begin{itemize}
\item el uniformizante $\pi$ de $\O$;
\item los polinomios distinguidos e irreducibles en $\O[T]$.
\end{itemize}
\end{cor}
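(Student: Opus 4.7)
El plan es combinar el \cref{thm:weierstrass} con la factorización única en $\O[T]$. Dado $f\in\LL$ no nulo, primero factorizaría la máxima potencia posible de $\pi$, escribiendo $f=\pi^n g$ con $g\in\LL\setminus\pi\LL$ (esto es lícito porque $\bigcap_{n\ge1}\pi^n\LL=0$, ya que $\bigcap_n\pi^n\O=0$). Aplicando Weierstraß a $g$ obtendría $g=\mu P$ con $\mu\in\LL^\times$ y $P$ distinguido, de manera que $f=\pi^n\mu P$.

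Luego descompondría $P$ en $\O[T]$. Como $\O$ es un anillo de valuación discreta, es un DFU, y el lema de Gauss implica que $\O[T]$ también lo es; escribo $P=cP_1\cdots P_r$ con $c\in\O$ y $P_i\in\O[T]$ irreducibles. Para mostrar que los $P_i$ pueden tomarse distinguidos, reduciría módulo $\pi$: como $\bar P=T^{\deg P}$ en $k[T]$ y $T$ es el único primo de $k[T]$ que aparece en esta igualdad, necesariamente $\bar P_i=c_iT^{d_i}$ con $c_i\in k^\times$, lo que fuerza que el coeficiente principal de $P_i$ sea una unidad de $\O$. Reemplazando cada $P_i$ por un asociado mónico y absorbiendo las constantes resultantes en $\mu$, cada $P_i$ resulta distinguido, y por el corolario anterior irreducible en $\LL$; obtengo así $f=\pi^n u P_1\cdots P_r$ con $u\in\LL^\times$.

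Para la unicidad, dada otra descomposición $f=\pi^{n'}u'Q_1\cdots Q_s$ del mismo tipo, la igualdad $n=n'$ se obtiene por comparación: si $n<n'$, entonces $P_1\cdots P_r=\pi^{n'-n}u'u^{-1}Q_1\cdots Q_s\in\pi\LL$, contradiciendo que el coeficiente principal del producto de distinguidos es $1\notin\pi\O$. Luego la unicidad del \cref{thm:weierstrass} da $u=u'$ y $P_1\cdots P_r=Q_1\cdots Q_s$ en $\O[T]$, y la factorización única en $\O[T]$ identifica los $P_i$ con los $Q_j$ salvo permutación.

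Finalmente, la clasificación de los irreducibles se lee de la descomposición: si $f$ es irreducible y $f=\pi^nuP_1\cdots P_r$, el único factor no unitario permitido obliga a que o bien $n=1$ y todos los $P_i$ tengan grado cero (así $f$ es asociado a $\pi$), o bien $n=0$ y $r=1$ (así $f$ es asociado a un distinguido irreducible); cualquier mezcla con $n\ge1$ y algún $P_i$ de grado positivo produciría $f=\pi\cdot(\pi^{n-1}uP_1\cdots P_r)$ como producto no trivial, pues el término constante del segundo factor yace en $\pi\O$. El paso que espero más delicado es la normalización módulo $\pi$ de los factores de $P$, que depende crucialmente de que $T$ sea el único irreducible de $k[T]$ capaz de aparecer al reducir un polinomio distinguido; el resto son aplicaciones directas de los teoremas de Weierstraß ya establecidos y de las propiedades conocidas de $\O[T]$.
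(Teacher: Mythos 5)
Tu demostración es correcta y sigue esencialmente la misma ruta que el texto: extraer la máxima potencia de $\pi$ y aplicar el teorema de preparación de Weierstraß, completando después la factorización del polinomio distinguido en $\O[T]$. Simplemente desarrollas con más detalle (lema de Gauss, normalización módulo $\pi$ de los factores, unicidad y clasificación de irreducibles) los pasos que la demostración del texto deja implícitos.
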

\begin{proof} Sea $f\in\LL$, sea $\pi^{k}$ la mayor potencia de $\pi$ que divide $f$. Entonces $f=\pi^{k}g$ donde $g\in\LL\setminus\pi\LL$, el resultado sigue aplicando a $g$ el teorema de preparación de Weierstraß. 
\end{proof}

\begin{lem}\label{lem:fgprinci} Sean $f$ y $g$ elementos no nulos en $\LL$ tal que $d$ es su máximo común divisor. Entonces el ideal $f\LL+g\LL$ está contenido en el ideal principal $d\LL$ con índice finito.
\end{lem}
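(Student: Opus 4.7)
La contención $f\LL+g\LL\subseteq d\LL$ es inmediata porque $d$ divide a $f$ y a $g$; todo el trabajo consiste en verificar la finitud del índice. Primero reduciría al caso coprimo: como $\LL$ es un dominio de factorización única (corolario anterior), escribo $f=d\tilde f$ y $g=d\tilde g$ con $\gcd(\tilde f,\tilde g)=1$. Dado que $\LL$ es dominio íntegro, la multiplicación por $d$ es inyectiva e induce un isomorfismo de $\LL$-módulos
\[ \LL/(\tilde f\LL+\tilde g\LL)\isomarrow d\LL/(f\LL+g\LL), \]
con lo que basta demostrar que $\LL/(\tilde f,\tilde g)$ es finito cuando $\tilde f$ y $\tilde g$ son coprimos.

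Luego usaría el \cref{thm:weierstrass} para factorizar $\tilde f=\pi^a uP$ y $\tilde g=\pi^b vQ$, con $u,v\in\LL^\times$ y $P,Q$ polinomios distinguidos. La lista explícita de los irreducibles de $\LL$ ($\pi$ junto con los polinomios distinguidos irreducibles) y la coprimalidad de $\tilde f,\tilde g$ fuerzan $\min(a,b)=0$ y $\gcd(P,Q)=1$ en $\LL$. Sin pérdida de generalidad supondré $a=0$, de modo que el ideal por estudiar es, salvo unidades, $(P,\pi^b Q)$.

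El corazón del argumento es trasladar la coprimalidad a $K[T]$, con $K=\Frac(\O)$, donde $K[T]$ es un dominio de ideales principales. Un eventual factor común no trivial de $P$ y $Q$ en $K[T]$ daría, vía un argumento tipo Gauss (o la unicidad en el teorema de preparación), un factor distinguido común en $\O[T]\subseteq\LL$, contradiciendo $\gcd_\LL(P,Q)=1$. Por Bézout en $K[T]$ existen entonces $A,B\in K[T]$ con $AP+BQ=1$; limpiando denominadores se obtiene $\pi^N=A'P+B'Q$ con $A',B'\in\O[T]$ y $N\in\Ncero$, de donde $\pi^{N+b}\in(P,\pi^b Q)$.

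Finalmente, $\LL/(P,\pi^b Q)$ resulta ser un cociente de $\LL/(P,\pi^{N+b})$, y el \cref{lema:division} identifica este último con $(\O/\pi^{N+b}\O)[T]/(P)$, que es un $\O/\pi^{N+b}\O$-módulo libre de rango $\deg P$. Como el campo residual $k=\O/\pi\O$ es finito, también lo es $\O/\pi^{N+b}\O$, y el cociente queda finito. El paso más delicado será la transferencia limpia de la coprimalidad entre $\LL$ y $K[T]$ junto con el seguimiento cuidadoso de los denominadores; una vez hecho, lo demás es contabilidad rutinaria con el lema de división y el teorema de preparación de Weierstraß.
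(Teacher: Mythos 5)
Tu demostración es correcta, y aunque sigue el mismo esqueleto que la del texto (reducir al caso coprimo, aplicar el teorema de preparación de Weierstraß para que los generadores sean, salvo unidades, potencias de $\pi$ por polinomios distinguidos, y producir una potencia de $\pi$ dentro del ideal), el mecanismo central es distinto: donde tú usas Bézout en $K[T]$ con $K=\Frac(\O)$ y limpias denominadores para obtener $\pi^N\in(P,Q)$, el texto usa el resultante, es decir $\operatorname{Res}(f,g)\in\O$ es no nulo por la coprimalidad, pertenece a $f\LL+g\LL$, y escribiéndolo como $\mu\pi^\alpha$ se obtiene $\pi^\alpha$ en el ideal; luego, con $f=T^\nu+\pi Q$ distinguido, la relación $T^{\nu}\equiv-\pi Q\ (\mathrm{mod}\ f)$ mete $T^{\nu\alpha}$ en el ideal y la finitud se deduce comparando con $(T^{\nu\alpha},\pi^\alpha)$. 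Tu final es ligeramente distinto y quizá más limpio: dominas el cociente por $\LL/(P,\pi^{N+b})$ y lo calculas con el lema de división como $\O/\pi^{N+b}\O$-módulo libre de rango $\deg P$, que es finito porque el campo residual lo es. Vale la pena notar que el paso que señalas como delicado --- transferir la coprimalidad de $\LL$ a $K[T]$ --- también está implícito en la demostración del texto (es exactamente lo que garantiza que el resultante sea no nulo), y tu justificación tipo Gauss (el mcd mónico en $K[T]$ tiene coeficientes enteros sobre $\O$, divide en $\O[T]$ por división mónica, y al dividir a un polinomio distinguido es él mismo distinguido, luego no es unidad en $\LL$) es correcta. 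Asimismo, tu reducción explícita al caso coprimo vía multiplicación por $d$ es válida y en el texto sólo se enuncia sin detalle.
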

\begin{proof} La primera aseveración es directa. Para la segunda afirmación podemos suponer
  que $f$ y $g$ son coprimos, además por el Teorema de preparación de Weierstraß
  (\cref{thm:weierstrass}) podemos suponer que $f$ y $g$ son polinomios y al menos uno de ellos es distinguido, digamos $f=T^{\nu}+\pi Q$ para algún $Q\in\O_{\nu-1}[T]$. 

Consideremos el resultante $\operatorname{Res}(f,g)$ de $f$ y $g$, es no nulo pues $f$ y $g$ son coprimos, además siendo un elemento de $\O$ lo podemos escribir $\operatorname{Res}(f,g)=\mu\pi^{\alpha}$ con $\mu\in\O^{\times}$ y $\alpha\in\Ncero$. El resultante $\operatorname{Res}(f,g)$ está en el ideal generado por $f$ y $g$, por lo tanto, también $\pi^{\alpha}\in f \LL + g \LL $ porque $\mu$ es invertible. Entonces tenemos la siguiente congruencia
$$T^{\nu\alpha}\equiv \pi^{\alpha}Q^{\alpha} \equiv 0\quad \mod\ ( f\LL + g\LL).$$
Por lo que $f\LL + g\LL$ contiene $T^{\nu\alpha}$ y $\pi^{\alpha}$, lo cual implica $(\LL:f\LL + g\LL) \leq  (\LL:T^{\nu\alpha}\LL + \pi^{\alpha}\LL)$, siendo la expresión del lado derecho una potencia de $p$ divisible por $\nu\alpha$.
\end{proof}

\begin{cor}\label{cor:fg-indice-finito}
  Sean $f,g\in\LL\setminus\{0\}$ coprimos (es decir, su máximo común divisor es
  $1$). Entonces el ideal $f\LL+g\LL$ tiene índice finito en $\LL$.
\end{cor}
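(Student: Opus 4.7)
The plan is to simply specialize the preceding \cref{lem:fgprinci} to the case where the greatest common divisor is $d=1$. Since $f$ and $g$ are coprime by hypothesis, $\LL$ being a unique factorization domain (by the corollary above to the Weierstraß preparation theorem) gives $\gcd(f,g)=1$, so $d\LL = \LL$.

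Then \cref{lem:fgprinci} directly asserts that $f\LL + g\LL$ is contained in $d\LL = \LL$ with finite index, which is precisely the claim. So the proof reduces to one line invoking \cref{lem:fgprinci}.

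There is no real obstacle here, since all the work has already been done in the preceding lemma: the resultant argument there established that $\pi^\alpha$ and $T^{\nu\alpha}$ lie in $f\LL + g\LL$ (after reducing via Weierstraß preparation to the case where $f$ is distinguished and $f,g$ are polynomials), and the quotient $\LL/(\pi^\alpha, T^{\nu\alpha})$ is finite because $\LL/(\pi,T) = k$ is finite and $\LL$ is $\M$-adically complete. Nothing new is needed.
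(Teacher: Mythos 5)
Tu propuesta es correcta y coincide con la intención del texto: el corolario se deduce inmediatamente del \cref{lem:fgprinci} tomando $d=1$, de modo que $d\LL=\LL$ y el lema ya afirma que el índice $(\LL:f\LL+g\LL)$ es finito; por eso el paper ni siquiera incluye una demostración separada. El resumen adicional del argumento del resultante es fiel al lema pero no aporta nada nuevo.
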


\begin{prop}\label{prop:contenfin} Todo ideal (no nulo) $\mathfrak{U}$ del álgebra $\LL$ está contenido en un ideal principal mínimo $a\LL$. Entonces decimos que $a$ es un \define{pseudo-generador} del ideal $\mathfrak{U}$ y tenemos $(a\LL:\mathfrak{U})$ finito. 
\end{prop}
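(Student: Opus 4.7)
The plan is to exploit that $\LL$ is a dominio de factorización única (established above via the teorema de preparación de Weierstraß). Fix any nonzero $f_0 \in \mathfrak{U}$ and write $f_0 = u\,p_1^{e_1}\dotsm p_r^{e_r}$ with $u \in \LL^\times$ and irreducibles $p_i$. Any principal ideal $a\LL$ containing $\mathfrak{U}$ forces $a \mid f_0$, so the pseudo-generator must be a divisor of $f_0$ and is therefore determined by finitely many data. For each $i$, let $v_i \in \Ncero$ be the largest integer such that $p_i^{v_i}$ divides every element of $\mathfrak{U}$; this is well-defined because $p_i^{e_i+1}\nmid f_0$ forces $v_i\le e_i$. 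Set $d := \prod_{i=1}^r p_i^{v_i}$. By construction $d$ divides every element of $\mathfrak{U}$, hence $\mathfrak{U} \subseteq d\LL$. Conversely, any principal ideal $a\LL \supseteq \mathfrak{U}$ has $a$ dividing every element of $\mathfrak{U}$, so by unique factorization and the maximality of the $v_i$ we get $a \mid d$, i.e.\ $d\LL \subseteq a\LL$. Thus $d\LL$ is the minimum principal ideal containing $\mathfrak{U}$.

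For the finite-index claim, multiplication by $d$ gives a biyección $\LL \to d\LL$ (as $\LL$ is un dominio y $d \neq 0$) which identifies the ideal $\mathfrak{U}' := \{\, y \in \LL : dy \in \mathfrak{U}\,\}$ of $\LL$ with $\mathfrak{U} \subseteq d\LL$; hence $(d\LL : \mathfrak{U}) = (\LL : \mathfrak{U}')$. By the very definition of $d$ as greatest common divisor, no irreducible of $\LL$ divides every element of $\mathfrak{U}'$. The next step is to produce two coprime elements in $\mathfrak{U}'$ and invoke \cref{cor:fg-indice-finito}. Pick any nonzero $f \in \mathfrak{U}'$ with irreducible factors $q_1,\dotsc,q_s$; since each $(q_i)$ is a prime ideal of the UFD $\LL$ and $\mathfrak{U}' \not\subseteq (q_i)$ for each $i$, prime avoidance applied to the finitely many primes $(q_1),\dotsc,(q_s)$ produces some $g \in \mathfrak{U}'$ lying outside $\bigcup_i (q_i)$. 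Then $\gcd(f,g)$ divides $f$ yet is coprime to every $q_i$, so $\gcd(f,g)=1$.

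With such $f, g \in \mathfrak{U}'$, \cref{cor:fg-indice-finito} yields that $f\LL + g\LL \subseteq \mathfrak{U}'$ has finite index in $\LL$, so $(\LL : \mathfrak{U}') < \infty$ and hence $(d\LL : \mathfrak{U}) < \infty$. The main obstacle is essentially bookkeeping: one must carefully justify that the UFD structure of $\LL$ permits restricting attention to the finitely many irreducible factors of a single fixed element $f_0$ — both when defining $d$ and when applying prime avoidance. The genuinely nontrivial content — that a pair of coprime elements generates an ideal of finite index — has already been dispatched by \cref{cor:fg-indice-finito} via the resultante, so the proof reduces to combining unique factorization with standard prime avoidance.
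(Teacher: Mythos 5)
Your proof is correct, but it takes a genuinely different route from the paper's. The paper leans on the fact that $\LL$ is noetheriano: it writes $\mathfrak{U}=\sum_{i=1}^{m}f_{i}\LL$, lets $D$ be the gcd of the finitely many generators, notes that $\mathfrak{U}\subseteq a\LL$ if and only if $a\mid f_{i}$ for every $i$ (so $D\LL$ is the minimal principal ideal), and then cites \cref{lem:fgprinci} for the finiteness of $(D\LL:\mathfrak{U})$ — strictly speaking, that citation hides a small induction to pass from two generators to $m$. You avoid noetherianity altogether: you construct the pseudo-generator $d$ directly from the irreducible factorization of a single $f_0\in\mathfrak{U}$ via the maximal exponents $v_i$, reduce modulo $d$ to the ideal $\mathfrak{U}'$ whose elements have trivial gcd, produce two coprime elements of $\mathfrak{U}'$ by prime avoidance, and finish with \cref{cor:fg-indice-finito}. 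The essential finiteness input is the same (the resultant argument behind \cref{lem:fgprinci}), but your reduction to the coprime case lets you use only the corollary and sidesteps the multi-generator induction, at the price of some UFD bookkeeping; the paper's argument is shorter because finite generation hands it the gcd immediately. One step of yours deserves one explicit line: that no irreducible divides every element of $\mathfrak{U}'$ is not literally "the definition of $d$" — one should say that such a $q$ would divide $f_0/d$, hence be associate to some $p_i$, and then $p_i^{v_i+1}$ would divide every element of $\mathfrak{U}$, contradicting the maximality of $v_i$. With that sentence added, the argument is complete.
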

\begin{proof} Sabemos que $\LL$ es un anillo noetheriano, entonces un módulo noetheriano sobre sí mismo. Por lo tanto sea $\mathfrak{U}=\sum_{i=1}^{m}f_{i}\LL$. Denotemos $D$ el máximo común divisor de los $f_{i}$. Tenemos 
\begin{eqnarray*}
\mathfrak{U}\subset a\LL  \	\	\	&\iff & \	\	\	a\mid f_{i},\	\forall\,i=1,\ldots,m 
\end{eqnarray*}
por lo que el mínimo ideal principal que contiene a $\mathfrak{U}$ es $D\LL $. Por último, $(D\LL:\mathfrak{U})$ es finito por el \cref{lem:fgprinci}.
\end{proof}

\begin{remark} Si $\mathfrak{U}$ no es principal, entonces tenemos $a\notin \mathfrak{U}$. Por ejemplo $\mathfrak{U}=\M$, entonces $\mathfrak{U}\subset (1)=\LL$ porque $\M$ es máximo. 
\end{remark}

Sea $p=\operatorname{car}(\O/\pi \O)$ y supongamos que $\Gamma$ es un $p$-grupo profinito
libre de rango $1$, es decir existe un isomorfismo no canónico de $\Gamma$ al grupo aditivo
de $\Zp$.

\begin{defi}\label{defi:omega-r}
  Para $r\geq 0$ consideremos los polinomios distinguidos
  \begin{equation}
    \label{eqn:omega-r}
    \w_{r}=(T+1)^{p^{r}}-1.
  \end{equation}
  Definimos el $p^r$-ésimo \emph{polinomio ciclotómico}\index[def]{polinomio!ciclotómico} como
  \begin{equation}\label{def:polcyclo}
    \Phi_{r}=\frac{\omega_{r}}{\omega_{r-1}} 
  \end{equation}
  para $r\geq 1$ y $\Phi_{0}=\omega_{0}$. 
\end{defi} 

\begin{thm}\label{thm:EquivPowPro} Supongamos que $\gamma$ es un generador topológico de $\Gamma$. Entonces la aplicación 
$$\LL=\O\llbracket T\rrbracket\isomarrow \O\llbracket
\Gamma\rrbracket,\quad T\mapsto\gamma-1$$
es un isomorfismo de $\O$-álgebras topológicas. En particular, $\LL$ es una $\O$-álgebra profinita.
\end{thm}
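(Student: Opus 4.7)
El plan consiste en construir dos morfismos continuos de $\O$-álgebras mutuamente inversos entre $\LL=\O\llbracket T\rrbracket$ y $\O\llbracket\Gamma\rrbracket$, usando el lema de división de Weierstrass (\cref{lema:division}) en una dirección y la propiedad universal del \cref{prop:propiedad-universal-anillo-de-grupos} en la otra.

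Para construir $\varphi\colon\LL\rightarrow\O\llbracket\Gamma\rrbracket$, fijamos $r\in\Nuno$. Como $\Gamma/\Gamma^{p^r}$ es cíclico de orden $p^r$ con generador la imagen $\overline\gamma$ de $\gamma$, se tiene $\O[\Gamma/\Gamma^{p^r}]\cong\O[X]/(X^{p^r}-1)$; tras el cambio de variable $X=1+T$ esto se identifica con $\O[T]/(\w_r)$, donde $\w_r$ es el polinomio distinguido de \eqref{eqn:omega-r}. Por el \cref{lema:division}, la inclusión $\O[T]\hookrightarrow\LL$ induce un isomorfismo $\O[T]/(\w_r)\isomarrow\LL/(\w_r)$, así que obtenemos morfismos sobreyectivos y continuos $\varphi_r\colon\LL\twoheadrightarrow\O[\Gamma/\Gamma^{p^r}]$ que envían $T\mapsto\overline\gamma-1$. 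Estos son compatibles con las proyecciones del sistema inverso porque $\w_r\mid\w_{r+1}$, y al pasar al límite obtenemos el morfismo $\varphi$ deseado.

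Para el morfismo inverso basta, por la \cref{prop:propiedad-universal-anillo-de-grupos}, exhibir un morfismo continuo de grupos $\Gamma\rightarrow\LL^\times$. Observamos que el subgrupo $1+\M$ de $\LL^\times$ es un grupo pro-$p$ (puesto que $\LL/\M^n$ tiene orden una potencia de $p$ por ser $\O/\pi\O$ de característica $p$), y que $1+T$ pertenece a $1+\M$. Como $\Gamma\isom\Zp$ es el grupo abeliano pro-$p$ libre de rango $1$, la asignación $\gamma\mapsto 1+T$ se extiende de manera única a un morfismo continuo $\Gamma\rightarrow 1+\M\subseteq\LL^\times$, y al aplicarle la propiedad universal del álgebra de grupos profinita obtendremos un morfismo continuo $\psi\colon\O\llbracket\Gamma\rrbracket\rightarrow\LL$.

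Finalmente, los cálculos directos $\varphi(\psi(\gamma))=\varphi(1+T)=\gamma$ y $\psi(\varphi(T))=\psi(\gamma-1)=T$, combinados con la unicidad en la propiedad universal del álgebra de grupos y la densidad de $\O[T]$ en $\LL$, mostrarán que $\varphi\circ\psi$ y $\psi\circ\varphi$ son las identidades respectivas. El paso del argumento que requerirá más atención es justificar que $1+\M$ es un grupo pro-$p$ de forma que la propiedad universal de $\Zp$ sea aplicable al levantar la asignación $\gamma\mapsto 1+T$; el resto se reduce al lema de división y a las propiedades universales ya establecidas.
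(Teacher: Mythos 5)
Tu propuesta es correcta, pero en la mitad decisiva sigue una ruta distinta de la del texto. La primera parte coincide: ambas usan el lema de división (\cref{lema:division}) para identificar $\LL/\w_r\LL$ con $\O[T]/(\w_r)\isom\O[\Gamma/\Gamma_r]$ y obtener así el morfismo $T\mapsto\gamma-1$. A partir de ahí el texto demuestra directamente que $\LL\isomarrow\varprojlim_r\LL/\w_r\LL$: la inyectividad mediante $\bigcap_r\w_r\LL=0$ (consecuencia de $\w_r\LL\subseteq\M^{p^r}$) y la sobreyectividad mediante la compacidad de $\LL$ en la topología $\M$-ádica. Tú, en cambio, construyes un inverso explícito $\psi$ con la propiedad universal de la \cref{prop:propiedad-universal-anillo-de-grupos} aplicada al carácter continuo $\gamma\mapsto1+T$, lo cual exige observar que $1+\M$ es pro-$p$ y que $\Gamma\isom\Zp$ es libre pro-$p$ de rango $1$; la unicidad en la propiedad universal y la densidad de $\O[T]$ dan entonces que los dos compuestos son la identidad. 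Ambas rutas son válidas: la tuya es más estructural y entrega de inmediato que el isomorfismo es topológico (dos morfismos continuos mutuamente inversos), mientras que la del texto produce como subproducto la identificación $\LL\isom\varprojlim_r\LL/\w_r\LL$ y el hecho $\bigcap_r\w_r\LL=0$, que se reutiliza más adelante (por ejemplo en la demostración del \cref{thm:iwasawa-adj}). Dos detalles que debes dejar asentados: la continuidad de cada $\varphi_r$, que se sigue de la inclusión $\M^{mp^r}\subseteq\pi^m\LL+\w_r\LL$ (misma estimación que \eqref{eq:idealmax}), y que la profinitud de $\LL$ como $\O$-álgebra —necesaria para aplicar la propiedad universal con $S=\LL$— se toma de la discusión previa al teorema ($\LL=\varprojlim_n\LL/\M^n$ es compacta y Hausdorff con cocientes finitos), no de la conclusión del propio teorema, para evitar circularidad; nota que la demostración del texto descansa en esa misma compacidad.
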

\begin{proof}
  Para $r\geq 0$ usaremos los polinomios $\w_r$ y denotamos $\Gamma_{r}$ el único subgrupo de $\Gamma$ tal que $\Gamma/\Gamma_{r}\isom \Zp/p^{r}\Zp$. Por el lema de división (\cref{lema:division}) tenemos 
$$\LL/\w_{r}\LL\isom \O[T]/\omega_{r}\O[T]\	\	\	 \text{ para todo }r\geq 0,$$
además tenemos los isomorfismos de $\O$-álgebras para todo $r\geq0$
\begin{eqnarray*}
\O[T]/\omega_{r}\O[T] &\stackrel{\varphi_{r}}{\longrightarrow} & \O[\Gamma/\Gamma_{r}] \\
T\mod \w_{r} &\mapsto & \gamma -1 \mod \Gamma_{r}.
\end{eqnarray*}
Los inversos de estos morfismos están dados por 
$$\gamma \mod \Gamma_{r} \mapsto T+1 \mod \w_{r}.$$
Como en la \cref{defi:omega-r}, tenemos  $\omega_{r+1}=\omega_{r}\Phi_{r+1}$ por lo tanto las
proyecciones \penalty-10000 $\LL/\w_{r+1} \rightarrow \LL/\w_{r}$ son compatibles con los isomorfismos $\varphi_{r}$, es decir el diagrama
\begin{center}
\begin{tikzcd}
\LL/\w_{r+1} \arrow[r,"\varphi_{r+1}"] \arrow[d,twoheadrightarrow] & \O[\Gamma/\Gamma_{r+1}] \arrow [d,twoheadrightarrow]\\ 
\LL/\w_{r}   \arrow[r,"\varphi_{r}"]  & \O[\Gamma/\Gamma_{r}]
\end{tikzcd}
\end{center}
es conmutativo y tomando límites inversos en ambos lados tenemos
$$\varprojlim _{r} \LL/\w_{r} \isomarrow \varprojlim_{r} \O[\Gamma/\Gamma_{r}] = \O\llbracket \Gamma\rrbracket.$$
Falta demostrar entonces que el límite del lado izquierdo es isomorfo al álgebra de series formales $\LL$. 

Sea $\psi\colon\LL\rightarrow \varprojlim \LL/\w_{r}$, veamos que
\begin{eqnarray*}
\ker \psi &=& \{ f\in \LL \mid  f \in \w_{r}\LL \	\forall r \geq 0 \} \\
&\subseteq& \bigcap_{r\geq 0} \w_{r}\LL\\
&=& 0,
\end{eqnarray*}
la última igualdad se deduce del hecho que 
\begin{equation}\label{eq:idealmax}
\M^{p^{r}}=(\pi,T)^{p^{r}}\supset (p,T)^{p^{r}} \supset \w_{r}\LL, 
\end{equation}
y los $(\M^{r})_{r\in\Nuno}$ forman una base de vecindarios alrededor de $0$ de $\LL$ con la topología $\M$-ádica. 

Finalmente, sea $f=(f_{r})_{r\geq 0}$ un elemento de $\varprojlim_{r}\LL/\omega_{r}$, entonces
$$f_{r}\equiv f_{t} \mod \w_{r+1}$$ 
para todo $0\leq t\leq r$. Es decir $f_{r}-f_{t}\in\w_{r}\LL$, en particular $f_{r}-f_{t}\in \M^{r+1}$ por \eqref{eq:idealmax} para todo $r\geq t \geq 0$. Por lo tanto $f\in \LL$ ya que $\LL$ es compacto por la topología $\M$-ádica. 
\end{proof}

Los polinomios $\Phi_r$ y $\omega_r$ de la \cref{defi:omega-r} que aparecieron en la
demostración anterior también serán importantes en otras situaciones más adelante, por eso
demostramos aquí algunas de sus propiedades.

\begin{lem}\label{lem:phi-s-anula-a-modulo-finito}
  Sea $M$ un $\LL$-módulo finito. Entonces existen $s\gg0$ y $t\ge s$ tales que
  $\Phi_s\dotsm\Phi_t$ anula a $M$.
\end{lem}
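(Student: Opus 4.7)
El plan es como sigue. Primero, como $M$ es un $\LL$-módulo finito (entendido como finito en cardinalidad), el cociente $\LL/\Ann_\LL(M)$ se encaja en $\End_\Z(M)$ y por consiguiente es finito. Dado que $\LL$ es un anillo local noetheriano con ideal máximo $\M=(\pi,T)$ y campo de residuos finito, cualquier ideal de índice finito debe contener alguna potencia de $\M$: su radical forzosamente es $\M$, y $\M$ es finitamente generado. Por lo tanto existe $N\in\Nuno$ tal que $\M^N\subseteq\Ann_\LL(M)$, es decir $\M^N M=0$.

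La observación clave es que cada polinomio ciclotómico $\Phi_s$ pertenece a $\M$. Para $s=0$ esto es inmediato pues $\Phi_0=\omega_0=T\in\M$. Para $s\ge1$, de $\omega_s=\omega_{s-1}\Phi_s$ y del hecho de que $\omega_s$, $\omega_{s-1}$ son mónicos se deduce que $\Phi_s$ es mónico de grado $d=(p-1)p^{s-1}\ge1$; además la reducción módulo $\pi$ de $\omega_r=(T+1)^{p^r}-1$ es $T^{p^r}$ (Frobenius), así que $\Phi_s\equiv T^{d}\pmod{\pi}$. Luego $\Phi_s=T^d+\pi\,q(T)$ con $q\in\O[T]$, y por lo tanto $\Phi_s\in T\LL+\pi\LL=\M$.

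Para concluir, fijo cualquier $s\ge0$ (en particular tan grande como se desee, de acuerdo con la hipótesis \enquote{$s\gg0$} del enunciado) y tomo $t=s+N-1$. El producto $\Phi_s\Phi_{s+1}\dotsm\Phi_t$ consta de exactamente $N$ factores, cada uno en $\M$, así que pertenece a $\M^N\subseteq\Ann_\LL(M)$; en consecuencia anula a $M$, como se afirma.

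No anticipo ningún obstáculo serio en esta demostración. El único ingrediente no trivial es la pertenencia $\Phi_s\in\M$, que se sigue directamente del carácter distinguido de $\Phi_s$ (ya registrado en la \cref{defi:omega-r}); el resto es una contabilidad transparente de potencias del ideal máximo.
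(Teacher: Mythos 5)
Tu demostración es correcta, pero sigue una ruta genuinamente distinta a la del texto. Tú observas que $\LL/\Ann(M)$ es finito, de donde $\Rad(\Ann(M))=\M$ y, por noetherianidad, $\M^N\subseteq\Ann(M)$ para algún $N$ (es exactamente el contenido del \cref{ejer:radical}); como cada $\Phi_s$ es distinguido de grado $\ge1$ (\cref{ejer:omega-r-dist-irr}) se tiene $\Phi_s\in\M$, y entonces cualquier producto de $N$ factores consecutivos cae en $\M^N\subseteq\Ann(M)$. El texto, en cambio, trabaja con congruencias módulo el anulador $I$: de que $I$ es abierto obtiene primero $T^{p^s}\in I$ para $s$ grande, de ahí $(1+T)^{p^s}\equiv 1+pg$ y por lo tanto $\Phi_{s'}\equiv p h\ (\mathrm{m\acute od}\ I)$ para todo $s'\ge s$, y concluye usando que también una potencia de $p$ está en $I$. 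Tu argumento es más corto y da incluso algo más fuerte (para \emph{cada} $s\ge0$ sirve $t=s+N-1$, no solo para $s\gg0$, lo cual basta de sobra para la aplicación en el \cref{lem:alpha-no-depende-de-m}); el cálculo del texto, por su parte, es esencialmente el mismo que se reutiliza después en el \cref{lem:congruencia-phi-s}, lo que explica esa elección. Ambos enfoques descansan en el mismo insumo de finitud: que el anulador de $M$ tiene índice finito (equivalentemente, es abierto) en $\LL$.
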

\begin{proof}
  Sea $I\subseteq\LL$ el anulador de $M$. Entonces $I$ es el núcleo de la aplicación
  continua $\LL\rightarrow\End_{\LL}(M)$, que demuestra que $I$ es un ideal abierto (porque
  $\End_{\LL}(M)$ es finito). Por eso una potencia de $T$ debe estar en $I$. Hacemos
  $s\in\Nuno$ tan grande tal que $T^{p^s}\in I$. Entonces
  \begin{equation*}
    (1+T)^{p^s}=1+\sum_{i=1}^{p^s-1}\binom{p^s}iT^{p^s-1}+T^{p^s}\equiv1+pg \quad(\mod I)
  \end{equation*}
  con $g\in\LL$. Esto implica que
  \begin{align*}
    \Phi_s&=\frac{(1+T)^{p^s}-1}{(1+T)^{p^{s-1}}-1} \\
          &= (1+T)^{(p-1)p^{s-1}}+(1+T)^{(p-2)p^{s-1}}+\dotsm+(1+T)^{p^{s-1}}+1 \\
          &\equiv (1+p g)^{p-1}+(1+p g)^{p-2}+\dotsm+(1+p g)+1 \quad (\mod I)\\
    &\equiv  ph\quad (\mod I)
  \end{align*}
  con algún $h\in\LL$. Esto también es cierto para los $\Phi_{s'}$ con $s'\ge s$. Por la
  misma razón que con $T$, una potencia de $p$ debe estar en $I$. Concluimos que existe
  $t\ge s$ tal que $\Phi_s\dotsm\Phi_t\in I$.
\end{proof}

\begin{lem}\label{lem:congruencia-phi-s}
  Sea $P\in\LL$ un polinomio distinguido de grado $d\in\Ncero$.
  Entonces para $s$ tal que $p^{s-1}\ge d$, $\Phi_s$ es divisible por $\pi$ módulo
  $P$, es decir existe $g\in\LL$ tal que \[ \Phi_s\equiv \pi g\quad (\mod P). \]
  En el caso $\O=\Zp$ (tal que $\pi=p$) tenemos que $g\in\LL^\times$.
\end{lem}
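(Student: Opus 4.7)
El plan es combinar una expansión de $\Phi_s$ módulo $\pi$ (en $\LL$) con la hipótesis de que $P$ es distinguido, usando esta última para reducir potencias altas de $T$ módulo $P$ a múltiplos de $\pi$.

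Primero probaría la congruencia $\Phi_s\equiv T^{(p-1)p^{s-1}}\pmod{\pi}$ en $\LL$. La identidad $(1+T)^p\equiv 1+T^p\pmod{p}$ e inducción dan $\omega_r\equiv T^{p^r}\pmod{p}$, y en particular $\pmod{\pi}$, pues $\pi\mid p$. Como $(\O/\pi)\llbracket T\rrbracket$ es un dominio de integridad y $T^{p^{s-1}}$ no es ahí un divisor de cero, la relación $\omega_s=\omega_{s-1}\Phi_s$ deja despejar la congruencia deseada. En particular puedo escribir $\Phi_s=T^{(p-1)p^{s-1}}+\pi R$ con $R\in\LL$.

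Segundo, explotaría la forma $P=T^d+\pi Q$ con $Q\in\O[T]$ de grado a lo más $d-1$: módulo $P$ se tiene $T^d\equiv -\pi Q$, y la hipótesis $p^{s-1}\ge d$ implica $T^{p^{s-1}}\equiv \pi S\pmod{P}$ para cierto $S\in\LL$. Elevando a la potencia $(p-1)$-ésima obtengo $T^{(p-1)p^{s-1}}\equiv\pi^{p-1}S^{p-1}\pmod{P}$. Combinando con la descomposición anterior,
\[ \Phi_s\equiv \pi R + \pi^{p-1}S^{p-1} = \pi\bigl(R+\pi^{p-2}S^{p-1}\bigr)\pmod{P}, \]
lo que prueba la primera afirmación tomando $g:=R+\pi^{p-2}S^{p-1}$.

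Para el caso $\O=\Zp$ (donde $\pi=p$) evaluaría $g$ en $T=0$. Como $T^{(p-1)p^{s-1}}$ se anula en $T=0$ y $\Phi_s(0)=\sum_{i=0}^{p-1}1=p$, obtengo $R(0)=1$; por lo tanto $g(0)=1+p^{p-2}S(0)^{p-1}\equiv 1\pmod{p}$, usando que $p\ge 3$ garantiza $p-2\ge 1$. En consecuencia $g(0)\in\Zp^\times$ y $g\in\LL^\times$, como se pide. El paso más delicado del plan es la primera congruencia $\Phi_s\equiv T^{(p-1)p^{s-1}}\pmod{\pi}$: hay que justificar la \enquote{división} por $\omega_{s-1}=T^{p^{s-1}}$ módulo $\pi$, obstáculo que se sortea al notar que $(\O/\pi)\llbracket T\rrbracket$ es un dominio.
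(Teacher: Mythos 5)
Tu propuesta es correcta y sigue esencialmente el mismo camino que la demostración del texto: ambas descansan en la congruencia $(1+T)^{p^{s-1}}\equiv 1+T^{p^{s-1}}$ $(\mod p)$ y en la reducción de la potencia $T^{p^{s-1}}$ módulo el polinomio distinguido $P$ (usando $p^{s-1}\ge d$) para hacer aparecer el factor $\pi$; tú solo reordenas el cálculo, reduciendo primero $\Phi_s$ módulo $\pi$ y luego módulo $P$, mientras que el texto reduce primero $(1+T)^{p^{s-1}}\equiv 1+\pi h$ $(\mod P)$ y expande después la suma $\sum_{i=0}^{p-1}(1+T)^{ip^{s-1}}$ como en el \cref{lem:phi-s-anula-a-modulo-finito}. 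Tu verificación de que $g\in\LL^\times$ cuando $\O=\Zp$, evaluando en $T=0$ y usando $\Phi_s(0)=p$, es un atajo limpio y correcto, equivalente a la conclusión $g\equiv 1$ $(\mod p)$ que arroja el cálculo del texto.
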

\begin{proof}
  Si $s$ es tal que $p^{s-1}\ge d$ entonces tenemos $(1+T)^{p^{s-1}} \equiv 1+\pi h$ módulo
  $P$, con $h\in\O[T]$, porque $P$ es distinguido. La afirmación resulta de un cálculo
  análogo a aquel que hicimos en la demostración del
  \cref{lem:phi-s-anula-a-modulo-finito}. Lo dejamos como ejercicio.
\end{proof}

\begin{defi}
  Para $G$ un grupo profinito y $R$ un anillo profinito llamamos \emph{álgebra de Iwasawa}\index[def]{algebra de Iwasawa@álgebra de Iwasawa} de $G$ con coeficientes en $R$ al anillo de grupos profinito
$$\LL(G):=R\llbracket G\rrbracket=\varprojlim_{U\unlhd G} R[G/U]$$
(aquí normalmente $R=\O$ es el anillo de enteros de una extensión finita de $\Qp$, por
ejemplo $R=\Zp$, y debería ser claro del contexto).
\end{defi}

En el caso en que $\Gamma$ es un $p$-grupo profinito libre de rango $1$ y $\O$ un anillo de valuación discreta, el \cref{thm:EquivPowPro} nos dice que el álgebra de Iwasawa $\LL(\Gamma)$ de $\Gamma$ es isomorfa a las series formales $\O\llbracket T\rrbracket$ en una variable con coeficientes en $R$. En este caso particular, denotamos simplemente $\LL:=\LL(\Gamma)$ si no hay confusión.

\ejercicios

\begin{ejer}\label{ejer:omega-r-dist-irr}
  Demuestre que los polinomios $\Phi_{r}\in\LL$ de la \cref{defi:omega-r} son elementos
  distinguidos e irreducibles en $\LL$.
\end{ejer} 

\begin{ejer}\label{ejer:weierstrass}
  Cada serie de potencias $f\in\O\llbracket T\rrbracket$ define una función
  \[ \pi\O\rightarrow\O, \quad x\mapsto f(x) \]
  (verifique que esto está bien definido). Supongamos que esta función tiene una infinitud de
  ceros. Use el Teorema de preparación de Weierstraß para demostrar que $f=0$.
\end{ejer}

\begin{ejer}\label{ejer:maxideal}Demuestre las inclusiones \eqref{eq:idealmax} del \cref{thm:EquivPowPro}.
\end{ejer}
 
\begin{ejer}Sea $G$ un $p$-grupo abeliano profinito libre de rango $n$ y $\O$ un anillo de
  valuación discreta, demuestre que el álgebra de Iwasawa $\LL(G)$ es isomorfa al álgebra de
  series formales $\O\llbracket T_{1},\ldots,T_{m}\rrbracket$ en $m$ variables con
  coeficientes en $\O$.
\end{ejer}

\begin{ejer}
  Complemente los detalles en la demostración del \cref{lem:congruencia-phi-s}.
\end{ejer}

 \section{Medidas}
 \label{sec:medidas}

En esta sección explicamos cómo los elementos del álgebra de Iwasawa $\LL(G)$ pueden ser
vistos como medidas en $G$, para cualquier grupo profinito $G$. Luego lo estudiamos en más
detalle en el caso especial $G=\Gamma$ con $\Gamma\isom\Z_p$. Este punto de vista no será
tan importante en este texto, pero es muy común en la literatura y no debería faltar aquí.

Fijamos un anillo $\O$ como anteriormente, que sea un anillo local de valuación discreta,
completo por la topología definida por las potencias de su ideal máximo. El
ejemplo más importante es aquel en que $\O$ es el anillo de enteros de una extensión finita de
$\Qp$. Esta teoría se puede desarrollar en más generalidad, pero lo siguiente será suficiente para
nosotros.

\begin{defi}\label{defi:cgo}
  Sea $G$ un grupo profinito. Escribimos $\mathrm C(G,\O)$ como el
  $\O$-módulo de funciones continuas $G\rightarrow \O$ y $\mathrm C^\infty(G,\O)$ como el
  submódulo de funciones localmente constantes. Definimos una norma en $\mathrm C(G,\O)$ como
  \[ \abs{f}=\sup_{g\in G}\abs{f(g)} \quad (f\in\mathrm C(G,\O)). \]
\end{defi}

El submódulo $\mathrm C^\infty(G,\O)$ puede ser caracterizado de diferentes maneras según el \cref{ejer:propiedades-c-infty}.

\begin{prop}\label{prop:c-infty-denso}
  $\mathrm C(G,\O)$ es completo con respecto a la norma introducida en la
  \cref{defi:cgo} y $\mathrm C^\infty(G,\O)$ es denso.
\end{prop}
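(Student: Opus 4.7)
El plan es demostrar las dos afirmaciones por separado, explotando la compacidad de $G$, la completitud de $\O$ y la estructura discreta de los cocientes $\O/\pi^n\O$.

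Para la completitud de $\mathrm C(G,\O)$ aplicaría el argumento estándar para espacios de funciones con la norma del supremo. Si $(f_n)$ es una sucesión de Cauchy, la desigualdad $|f_n(g)-f_m(g)|\le|f_n-f_m|$ implica que $(f_n(g))_n$ es de Cauchy en $\O$ para cada $g\in G$; la completitud de $\O$ da un valor límite $f(g)\in\O$. Como el supremo controla todos los puntos a la vez, la convergencia $f_n\to f$ es uniforme, y por lo tanto el límite $f$ es continuo. Este paso es rutinario y no presenta obstáculos.

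Para la densidad el argumento principal es el siguiente. Dados $f\in\mathrm C(G,\O)$ y $\epsilon>0$, escojo $n\in\Nuno$ con $|\pi|^n<\epsilon$ y compongo $f$ con la proyección para obtener una función continua $\overline f\colon G\to\O/\pi^n\O$ a un anillo finito, que considero con la topología discreta. Por la compacidad de $G$ (véase la \cref{rem:locally-profinite}), la imagen de $\overline f$ es compacta en un espacio discreto y por lo tanto finita; sus conjuntos de nivel $V_1,\dotsc,V_k$ forman una partición finita de $G$ en subconjuntos abiertos cerrados. Escogiendo un representante $g_i\in V_i$ para cada $i$, defino $\phi(g):=f(g_i)$ para $g\in V_i$. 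Entonces $\phi$ es localmente constante (de hecho constante en cada $V_i$), y cumple $|f-\phi|\le|\pi|^n<\epsilon$ porque $f(g)\equiv f(g_i)\ (\mod\pi^n\O)$ para $g\in V_i$.

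No anticipo obstáculos serios en este enfoque. El único punto sutil es justificar que una función continua $G\to\O/\pi^n\O$ tiene imagen finita, pero esto se sigue inmediatamente de la compacidad de $G$ y del hecho de que $\O/\pi^n\O$ es finito. Una formulación alternativa, quizás más cercana al espíritu profinito, consistiría en usar que los subgrupos abiertos normales de $G$ forman una base de vecindarios de $1$ para encontrar un $U\trianglelefteq G$ abierto tal que $f$ sea constante módulo $\pi^n\O$ en cada clase lateral $gU$; el argumento restante es esencialmente idéntico.
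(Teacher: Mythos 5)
Tu propuesta es correcta y sigue esencialmente el mismo camino que el texto: compacidad de $G$ para obtener una partición finita de $G$ en abiertos-cerrados sobre la cual $f$ varía menos que $\varepsilon$, y luego una función localmente constante definida eligiendo un valor en cada pieza (el texto toma preimágenes de bolas ultramétricas y las hace disjuntas a mano, mientras que tú obtienes la partición directamente como fibras de la reducción módulo $\pi^n$, lo cual es lo mismo porque dichas bolas son clases laterales de $\pi^n\O$, así que la disjunción viene gratis). La completitud, que el texto deja como ejercicio por ser un argumento similar, la resuelves con el razonamiento estándar de sucesiones de Cauchy y convergencia uniforme, que es exactamente lo que se espera.
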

\begin{proof}
  Demostramos sólo la densidad y dejamos la completitud como ejercicio, porque los
  argumentos son muy similares.
  Para ver la densidad de $\mathrm C^\infty(G,\O)$ sean $\varepsilon>0$ y
  $f\in\mathrm C(G,\O)$ dados. Para cada $w$ en la imagen de $f$ sea $B(w,\varepsilon)\subseteq\O$
  la bola de radio $\varepsilon$ alrededor de $w$. Como la topología en $\O$ está
  definida por una ultramétrica, $B(w,\varepsilon)$ es abierta y cerrada. La preimagen
  $U_w=f^{-1}(B(w,\varepsilon))\subseteq G$ entonces también es abierta y cerrada. Los $U_w$
  (para todos los $w$) obviamente cubren $G$, y porque $G$ es compacto podemos elegir una
  cantidad finita $w_1,\dotsc,w_n$ tal que los $U_{w_i}$ para $i=1,\dotsc,n$ cubren
  $G$. Pongamos para cada tal $i$ \begin{equation*} A_i = U_{w_i} \setminus
    \bigcup_{j=i+1}^n U_{w_j}.\end{equation*} Entonces los $A_i$ todavía son abiertos y
  cerrados y además son disjuntos. Definimos una función \begin{equation*} f'=\sum_{i=1}^n
    w_i \mathbb{1}_{A_i} \end{equation*} (donde $\mathbb{1}$ denota la función indicador).
  Entonces
  \begin{equation*} \abs{f-f'} = \sup_{x\in X} \abs{f(x)-f'(x)} = \max_{i=1,\dotsc,n}
    \sup_{x\in A_i} \abs{f(x)-w_i} \le \varepsilon \end{equation*}
  y la densidad de $\mathrm C^\infty(G,\O)$ resulta.
\end{proof}

\begin{defi}
  Definimos $\mathrm D(G,\O)=\Hom_\O(\mathrm C(G,\O),\O)$, donde \enquote{$\Hom_\O$} son
  homomorfismos \emph{continuos} de $\O$-módulos. Los elementos de $\mathrm D(G,\O)$ se
  llaman \define{medidas} en $G$ con valores en $\O$.

  Si $f\in\mathrm C(G,\O)$ y $\mu\in\mathrm D(G,\O)$ a veces escribimos
  \[ \int_G f\integrald\mu \] en lugar de $\mu(f)$.
\end{defi}

Una consecuencia importante de la \cref{prop:c-infty-denso} es que de hecho \linebreak
$\mathrm D(G,\O)=\Hom_\O(\mathrm C^\infty(G,\O),\O)$ (\cref{ejer:c-infty-denso}).

\begin{prop}\label{prop:medidas}
  \begin{enumerate}
  \item\label{prop:isomorfismo-medidas} Existe un isomorfismo canónico de $\O$-módulos
    topológicos \[ \LL(G)\isom\mathrm D(G,\O). \]
  \item\label{prop:medidas-propiedad-universal} Sea $\chi\colon G\rightarrow\O^\times$ un
    carácter y $\mu\in\mathrm D(G,\O)$, y escribimos $\mu$ también para la imagen de $\mu$
    en $\LL(G)$ bajo el isomorfismo de \ref{prop:isomorfismo-medidas}. Entonces la imagen de
    $\mu$ bajo el morfismo canónico $\LL(G)\rightarrow\O$ inducido por $\chi$ es
    \[ \int_G\chi\integrald\mu. \]
  \item\label{prop:convolucion} El isomorfismo de \ref{prop:isomorfismo-medidas} se convierte un
    isomorfismo de $\O$-álgebras si definimos la multiplicación en $\mathrm D(G,\O)$ como la
    convolución
  \[ \int_Gf\integrald(\mu_1*\mu_2)=\int_G\left(\int_Gf(gh)\integrald\mu_1(g)\right)\mathrm
    d\mu_2(h)\text{ para }f\in\mathrm C(G,\O),\ \mu_1,\ \mu_2\in\mathrm D(G,\O). \]
  \end{enumerate}
\end{prop}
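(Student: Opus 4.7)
The strategy is to prove (a) by dualizing a concrete description of $\mathrm C^\infty(G,\O)$ as a direct limit, and then to derive (b) and (c) from (a) by reducing to Dirac measures via density.

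For part (a), I would start by identifying $\mathrm C^\infty(G,\O)$ with the direct limit $\varinjlim_U\mathrm{Maps}(G/U,\O)$ over open normal subgroups $U\unlhd G$, since a locally constant function on the compact group $G$ factors through some finite quotient $G/U$ (this is essentially a refinement of the density argument of \cref{prop:c-infty-denso}, using that any open-and-closed partition of $G$ can be refined by cosets of some open normal $U$). Each $\mathrm{Maps}(G/U,\O)$ is free of finite rank over $\O$ with basis the indicator functions $\mathbb{1}_{gU}$, and the pairing $\langle\mathbb{1}_{gU},h[U]\rangle=\delta_{gU,hU}$ yields a canonical $\O$-linear isomorphism $\Hom_\O(\mathrm{Maps}(G/U,\O),\O)\isom\O[G/U]$. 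Taking the inverse limit over $U$ and using that \cref{prop:c-infty-denso} implies $\mathrm D(G,\O)=\Hom_\O(\mathrm C^\infty(G,\O),\O)$, I obtain the desired isomorphism $\mathrm D(G,\O)\isom\varprojlim_U\O[G/U]=\LL(G)$.

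For (b), unwinding the isomorphism of (a) one sees that a group element $g\in G$, regarded as an element of $\LL(G)$, corresponds to the Dirac measure $\delta_g$ defined by $\delta_g(f)=f(g)$. The canonical morphism $\LL(G)\to\O$ induced by $\chi$ sends $g$ to $\chi(g)=\int_G\chi\integrald\delta_g$. Both this morphism and the functional $\mu\mapsto\int_G\chi\integrald\mu$ are continuous $\O$-linear maps from $\LL(G)$ to $\O$ which coincide on the dense subalgebra $\O[G]$, hence they coincide on all of $\LL(G)$. For (c), the same density argument reduces the statement to checking $\delta_g*\delta_h=\delta_{gh}$, which is immediate from the definition of convolution:
\[ \int_Gf\integrald(\delta_g*\delta_h)=\int_G\left(\int_Gf(xy)\integrald\delta_g(x)\right)\mathrm d\delta_h(y)=f(gh), \]
matching the multiplication $g\cdot h=gh$ in $\O[G]$.

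The main obstacle will be the topological bookkeeping rather than the algebra. I need to verify carefully that the profinite topology on $\LL(G)=\varprojlim_U\O[G/U]$ coincides, under the constructed bijection, with the weak topology on $\mathrm D(G,\O)$ given by evaluation at elements of $\mathrm C^\infty(G,\O)$; compactness of $G$ is decisive here, since together with \cref{prop:c-infty-denso} it lets me approximate any continuous function uniformly by a locally constant one, so continuity of $\O$-linear functionals on $\mathrm C(G,\O)$ is determined by their restriction to $\mathrm C^\infty(G,\O)$. For (c) I must also justify that convolution is well-defined and continuous as an $\O$-bilinear operation on $\mathrm D(G,\O)\times\mathrm D(G,\O)$, which follows from the density of $\O[G]$ together with the elementary estimate $|\mu_1*\mu_2|\le|\mu_1|\cdot|\mu_2|$.
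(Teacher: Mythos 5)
Your proposal is correct and follows essentially the same route as the paper: one identifies $\mathrm D(G,\O)$ with $\varprojlim_U\Hom_\O(\mathrm C(G/U,\O),\O)\isom\varprojlim_U\O[G/U]=\LL(G)$ via the exercises on $\mathrm C^\infty(G,\O)$ and its density, and then reduces (b) and (c) to the facts that $g\in G$ corresponds to the Dirac measure $\delta_g$ and that $\delta_{g_1}*\delta_{g_2}=\delta_{g_1g_2}$. The only cosmetic difference is that where you extend from the dense subalgebra $\O[G]$ by continuity, the paper invokes the uniqueness clause of the universal property of $\O\llbracket G\rrbracket$ (\cref{prop:propiedad-universal-anillo-de-grupos}), which rests on exactly that density argument.
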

\begin{proof}
  De los ejercicios \labelcref{ejer:propiedades-c-infty} y \labelcref{ejer:c-infty-denso} se
  ve fácilmente que
  \begin{equation*}
    \mathrm D(G,\O) = \varprojlim_{U\unlhd G}\mathrm D(G/U,\O).
  \end{equation*}
  Pero como $G/U$ es un grupo finito discreto, $\mathrm C(G/U,\O)$ es un $\O$-módulo libre
  de rango finito con una base canónica dada por los elementos de $G/U$, y lo mismo es
  cierto para su módulo dual. Esto muestra que canónicamente
  \begin{equation*}
    \mathrm D(G/U,\O)\isom\mathrm C(G/U,\O)\isom\O[G/U]
  \end{equation*}
  como $\O$-módulos. La afirmación \ref{prop:isomorfismo-medidas} resulta pues de la
  definición de $\LL(G)$.

  Por la construcción del isomorfismo que acabamos de explicar es obvio que un elemento
  $g\in G$ corresponde a la medida de Dirac $\delta_g$ que envía $f\mapsto f(g)$ para
  $f\in\mathrm C(G,\O)$. Por eso el morfismo $\mathrm D(G,\O)\rightarrow\O$,
  $\mu\mapsto\mu(\chi)$ coincide con $\chi$ si lo restringimos a
  $G\subseteq\LL(G)\isom\mathrm D(G,\O)$. Esto demuestra que la composición
  $\LL(G)\rightarrow\mathrm D(G,\O)\rightarrow\O$ debe ser el morfismo dado por la
  propiedad universal porque ese es único. De ahí resulta \ref{prop:medidas-propiedad-universal}.

  Si $g_1,g_2\in G$ y $\delta_{g_1},\delta_{g_2}$ son las medidas de Dirac correspondientes,
  entonces de la definición de la convolución resulta fácilmente que
  $\delta_{g_1}*\delta_{g_2}=\delta_{g_1g_2}$. Por lo tanto
  \begin{equation*}
    G\rightarrow \mathrm D(G,\O)^\times,\quad g\mapsto \delta_g
  \end{equation*}
  es un morfismo de grupos topológicos que induce un morfismo de $\O$-álgebras
  $\LL(G)\rightarrow\mathrm D(G,\O)$. Pero de la construcción es claro que este coincide con
  el isomorfismo de \ref{prop:isomorfismo-medidas}, tenemos pues \ref{prop:convolucion}.
\end{proof}

A partir de ahora identificamos $\LL(G)$ con las medidas $\mathrm D(G,\O)$ y entonces vemos
elementos de $\LL(G)$ como funciones en $\mathrm C(G,\O)$.

\begin{defi}
  Escribimos $\mathcal Q(G)$ como el \define{anillo de cocientes} de $\LL(G)$, es decir la
  localización en que invertimos todos los elementos que no sean divisores de cero. Además
  sea $I(G)\subset\LL(G)$ el \emph{ideal de aumentación}\index[def]{ideal!de aumentación}, es decir el núcleo del morfismo
  $\LL(G)\rightarrow\O$ que envía todos los elementos de $G$ a $1$.

  Llamamos un elemento $\mu$ de $\mathcal Q(G)$ una \define{pseudo-medida} si $(g-1)\mu\in\LL(G)$
  para cada $g\in G$.
\end{defi}

Notemos que para cada pseudo-medida $\mu$, $I(G)\mu$ es un ideal en $\LL(G)$.

El morfismo $\chi\colon\LL(G)\rightarrow\O$ inducido por un carácter
$\chi\colon G\rightarrow\O^\times$ se extiende a un elemento $\frac a b\in\mathcal Q(G)$ si y
sólo si $b\notin\ker\chi$. Si $\mu=\frac{a}{b}$ es una pseudo-medida entonces
$b\notin\ker\chi$ siempre que $\chi$ no sea trivial: porque $(g-1)\mu\in\LL(G)$
para cada $g\in G$ significa que para cada $g$ existe un $c_g\in\LL(G)$ tal que $g-1=c_gb$,
y si escogemos un $g\in G$ con $\chi(g)\neq1$ entonces
$\chi(b)\chi(c_g)=\chi(g-1)=\chi(g)-1\neq0$ y pues $\chi(b)\neq0$. Esto demuestra que la
siguiente definición tiene sentido.

\begin{defi}
  Si $\mu\in\mathcal Q(G)$ es una pseudo-medida y $\chi\colon G\rightarrow\O^\times$ es un
  carácter no trivial, entonces ponemos
  \[ \int_G\chi\integrald\mu := \frac{1}{\chi(g)-1} \int_G \chi \integrald((g-1)\mu) \] para
  $g\in G$ tal que $\chi(g)\neq1$.
\end{defi}
Es claro que la expresión que acabamos de definir no depende de $g$.

A partir de ahora remplazamos $G$ por el grupo $\Zp$. Si $x\in\Zp$ entonces escribimos
$[x]$ para el elemento del grupo $\Zp\subseteq\LL(\Zp)^\times$ para distinguirlo del elemento
$x\in\Zp\subseteq\LL(\Zp)$ en el anillo de coeficientes. Tenemos entonces el generador
topológico canónico $[1]$.

En este caso describimos con más detalle los tres puntos de vista
del álgebra de Iwasawa. Tenemos un diagrama conmutativo de isomorfismos continuos
\begin{equation}
  \label{eqn:trinidad}
  \begin{tikzcd}
    \LL(\Zp) \arrow[rd, "\text{del \cref{thm:EquivPowPro}}", swap, leftrightarrow] \arrow[rr, leftrightarrow, "\text{de la \cref{prop:medidas}}"] & & \mathrm D(\Zp,\O)
    \arrow[ld, shift left, "\mathcal M"] \\
    & \O\llbracket T\rrbracket \arrow[ru, shift left, "\Upsilon"]
  \end{tikzcd}
\end{equation}
Más adelante vamos a describir explícitamente los mapeos $\mathcal M$ y $\Upsilon$. Primero
deduzcamos el siguiente resultado.

\begin{thm}[Mahler]\label{thm:mahler}
  Cada función $f\in\mathrm C(\Z_p,\O)$ se escribe de manera única como
  \begin{equation}
    \label{eqn:mahler}
    f(x)=\sum_{n=0}^\infty a_n\binom x n \quad(x\in\Zp)
  \end{equation}
  con coeficientes $a_n\in\O$ tal que $\displaystyle\lim_{n\to\infty}a_n=0$. Aquí $\displaystyle\binom x 0 = 1$ y 
  \begin{equation}
    \label{eqn:defi-binom}
    \binom x n =\frac{x(x-1)\dotsm(x-n+1)}{n!}\quad(n\in\Ncero,\ x\in\Zp).
  \end{equation}
  Por otro lado, cada serie de la forma \eqref{eqn:mahler} converge y define un elemento de
  $\mathrm C(\Z_p,\O)$.
\end{thm}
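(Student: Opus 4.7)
The plan is to construct the Mahler coefficients $a_n$ explicitly by iterated finite differences and then verify separately the three parts of the statement: convergence of the series, uniqueness of the expansion, and its existence. The bulk of the work will lie in one technical estimate.

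For the converse direction, assume $a_n\in\O$ with $a_n\to 0$. First I would observe that $\binom{x}{n}\in\Zp$ for every $x\in\Zp$: the polynomial $\binom{X}{n}$ from \eqref{eqn:defi-binom} is continuous $\Qp\to\Qp$ and takes values in $\Z$ on $\Ncero$, which is dense in $\Zp$, so its image on $\Zp$ lies in the closed set $\Zp$. Using the ultrametric, $\sup_{x\in\Zp}\abs{a_n\binom{x}{n}}\le\abs{a_n}\to 0$, so the partial sums of $\sum_n a_n\binom{x}{n}$ are uniformly Cauchy in $\mathrm C(\Zp,\O)$ and the limit is continuous. Uniqueness is then immediate: if $\sum_n a_n\binom{x}{n}=0$ identically, evaluating at $x=k\in\Ncero$ kills all terms with $n>k$ (since $\binom{k}{n}=0$ in the polynomial definition) and inductively forces $a_0=a_1=\dotsb=0$.

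For existence, define $\Delta f(x):=f(x+1)-f(x)$ and
\[ a_n:=(\Delta^n f)(0)=\sum_{k=0}^n(-1)^{n-k}\binom{n}{k}f(k)\in\O. \]
Newton's classical forward difference formula is the identity $f(k)=\sum_{n=0}^k a_n\binom{k}{n}$ for every $k\in\Ncero$. Therefore, once $a_n\to 0$ is known, the continuous function $g(x):=\sum_n a_n\binom{x}{n}$ provided by the converse direction agrees with $f$ on the dense subset $\Ncero\subseteq\Zp$, and hence on all of $\Zp$ by continuity.

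The main obstacle is the claim $a_n\to 0$ for arbitrary continuous $f$, which I would address as follows. The key input is that the $p$\nobreakdash-adic valuation of $\binom{p^r}{k}$ is at least $1$ for every $0<k<p^r$, which (using $p\neq 2$, so $(-1)^{p^r}=-1$) rewrites as
\[ \Delta^{p^r}f(x)=f(x)-f(x+p^r)+p\,\eta_r(x) \]
for some $\eta_r\in\mathrm C(\Zp,\O)$. Since $\Zp$ is compact, $f$ is uniformly continuous, so for $r$ large enough $\abs{f(x+p^r)-f(x)}\le\abs{p}$ uniformly in $x$, whence $\Delta^{p^r}f\in p\cdot\mathrm C(\Zp,\O)$. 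Writing $\Delta^{p^r}f=p\,g$ with $g\in\mathrm C(\Zp,\O)$ and reapplying the same argument to $g$, one gets $\Delta^{p^r+p^{r'}}f=p^2 g'$ with $g'\in\mathrm C(\Zp,\O)$; iterating, for any $k$ there is $n_k$ with $\sup_x\abs{\Delta^{n_k}f(x)}\le\abs{p}^k$. Since $\Delta$ is nonexpansive on $\mathrm C(\Zp,\O)$, this bound propagates to all $n\ge n_k$, and in particular $a_n=(\Delta^n f)(0)\to 0$, completing the proof.
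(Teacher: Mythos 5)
Your proof is correct, but it follows a genuinely different route from the paper. For existence you build the coefficients by iterated finite differences, $a_n=\Delta^nf(0)$, recover $f$ on $\Ncero$ via Newton's forward-difference identity, and prove the crucial decay $a_n\to0$ from the $p$-divisibility of $\binom{p^r}{k}$ for $0<k<p^r$ together with the uniform continuity of $f$ on the compact $\Zp$ and the fact that $\Delta$ is nonexpansive; this is the classical elementary argument (essentially the one the paper itself outlines as an exercise right after the theorem, following Hida). The paper instead deduces existence from the trinity of isomorphisms $\LL(\Zp)\isom\mathrm D(\Zp,\O)\isom\O\llbracket T\rrbracket$: the Dirac measure $\delta_x$ corresponds to the power series $(1+T)^x=\sum_n\binom xnT^n$, whence $f(x)=\sum_n\binom xn\,\Upsilon(T^n)(f)$ and $a_n=\Upsilon(T^n)(f)$, with the decay of $a_n$ coming from continuity of $\Upsilon$. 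Your approach buys self-containedness (no measure machinery at all), while the paper's buys brevity given the structure already built and, more importantly, the explicit identification $a_n=\Upsilon(T^n)(f)$ that is then used immediately to describe the maps $\mathcal M$ and $\Upsilon$ in the corollary that follows; your $a_n=\Delta^nf(0)$ coincides with the paper's uniqueness computation, so the two descriptions are consistent. Two cosmetic remarks: in your key identity the sign should be $\Delta^{p^r}f(x)=f(x+p^r)-f(x)+p\,\eta_r(x)$ (irrelevant for the estimate), and the appeal to $p\neq2$ is not actually needed, since for $p=2$ the extra term $2f(x)$ can be absorbed into $p\,\eta_r(x)$.
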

\begin{proof}
  Es fácil ver que cada serie de la forma \eqref{eqn:mahler} converge y define un elemento de
  $\mathrm C(\Z_p,\O)$; omitimos los detalles aquí. Para ver la unicidad utilizamos el mapeo
  \begin{equation*}
    \Delta\colon\mathrm C(\Z_p,\O)\rightarrow\mathrm C(\Z_p,\O),\quad \Delta
    f(x)=f(x+1)-f(x) \;(x\in\Zp).
  \end{equation*}
  Porque para $n\in\Ncero$ y $x\in\Zp$ tenemos (usando el \cref{ejer:relaciones-binom})
  \begin{equation*}
    \Delta\binom x n = \binom{x+1}n-\binom x n = \binom x {n-1}
  \end{equation*}
  resulta que si $f\in\mathrm C(\Z_p,\O)$ es de la forma \eqref{eqn:mahler} entonces
  $a_n=\Delta^nf(0)$ para cada $n\in\Ncero$. Esto demuestra la unicidad de los coeficientes.

  Para demostrar que cada $f\in\mathrm C(\Zp,\O)$ se escribe en esta forma, fijemos
  $x\in\Zp$. El elemento $[x]\in\LL(G)$ corresponde a la medida de Dirac $\delta_x\in\mathrm
  D(\Zp,\O)$, y también a la serie de potencias
  \begin{equation*}
    (1+T)^x=\sum_{n=0}^\infty\binom x n T^n\in\O\llbracket T\rrbracket.
  \end{equation*}
  Usamos la medida $\Upsilon(T^n)\in\mathrm D(\Zp,\O)$ que corresponde a $T^n$ para
  cada $n$. Porque todos los isomorfismos en \eqref{eqn:trinidad} son continuos, obtenemos
  \begin{equation*}
    f(x) = \delta_x(f) = \sum_{n=0}^\infty\binom x n \Upsilon(T^n)(f)
  \end{equation*}
  y obtenemos la afirmación poniendo $a_n=\Upsilon(T^n)(f)$. Es claro que $a_n\to0$ para
  $n\to\infty$ porque la continuidad de $\Upsilon$ implica que $\sum_na_n=\Upsilon(\sum_nT^n)(f)$.
\end{proof}

\begin{cor}\label{thm:trinidad}
  Los mapeos $\mathcal M$ y $\Upsilon$ en el diagrama \eqref{eqn:trinidad} están dados por
  \[ \mathcal M(\mu)=\sum_{n=0}^\infty c_n(\mu)T^n\text{ con }c_n(\mu)=\int_{\Zp}\binom x
    n\integrald\mu(x)=\mu\left(\binom\cdot n\right)\text{ para }\mu\in\mathrm D(\Zp,\O),\
    n\in\Ncero, \]
  \[ \Upsilon(g)(f)=\sum_{n=0}^\infty a_nb_n \text{ para }f=\sum_{n=0}^\infty a_n\binom\cdot
    n\in\mathrm C(\Zp,\O), \ g=\sum_{n=0}^\infty b_nT^n\in\O\llbracket T\rrbracket, \]
  donde usamos el \cref{thm:mahler} de Mahler para escribir $f$ en esta forma.
\end{cor}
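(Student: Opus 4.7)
El plan es deducir la fórmula para $\Upsilon$ directamente de la demostración del \cref{thm:mahler} y luego obtener la fórmula para $\mathcal M$ invirtiendo $\Upsilon$. La parte más delicada será justificar un intercambio entre una suma infinita y la evaluación de una medida en la verificación final, lo cual se resolverá apelando a la continuidad de $\mu$ y a la convergencia en norma del desarrollo de Mahler.

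Primero observaré que en la demostración del \cref{thm:mahler} ya se estableció esencialmente que $\Upsilon(T^n)(f)=a_n$ cuando $f=\sum_{k\ge0}a_k\binom{\cdot}{k}$ es el desarrollo de Mahler de $f$; esto se obtuvo evaluando en $f$ la identidad $\delta_x=\sum_n\binom{x}{n}\Upsilon(T^n)$ en $\mathrm D(\Zp,\O)$ junto con la unicidad del desarrollo. Por $\O$-linealidad y continuidad de $\Upsilon$, para $g=\sum_n b_nT^n$ arbitraria tendremos $\Upsilon(g)=\sum_n b_n\Upsilon(T^n)$ en $\mathrm D(\Zp,\O)$, y al evaluarla en $f$ resulta $\Upsilon(g)(f)=\sum_n a_nb_n$, serie que converge en $\O$ porque $a_n\to0$ y $\abs{b_n}\le1$.

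Para la fórmula de $\mathcal M$ definiré $c_n(\mu):=\mu(\binom{\cdot}{n})\in\O$ y consideraré la serie formal $h=\sum_n c_n(\mu)\,T^n\in\O\llbracket T\rrbracket$. Como $\Upsilon$ es un isomorfismo con inversa $\mathcal M$, bastará mostrar que $\Upsilon(h)=\mu$. Dada $f\in\mathrm C(\Zp,\O)$ con desarrollo de Mahler $f=\sum_n a_n\binom{\cdot}{n}$, aplicando la fórmula del párrafo anterior y usando la continuidad y $\O$-linealidad de $\mu$ obtendré
\[ \Upsilon(h)(f)=\sum_n a_n\,c_n(\mu)=\sum_n a_n\,\mu\!\left(\binom{\cdot}{n}\right)=\mu\!\left(\sum_n a_n\binom{\cdot}{n}\right)=\mu(f), \]
donde el intercambio entre suma y $\mu$ en la penúltima igualdad se justifica porque las sumas parciales del desarrollo de Mahler convergen a $f$ en la norma de $\mathrm C(\Zp,\O)$ (ya que $a_n\to0$, por el \cref{thm:mahler}), y $\mu$ es continua por definición. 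Esto finalizará la demostración.
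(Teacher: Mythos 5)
Tu propuesta es correcta y sigue esencialmente el mismo camino que el texto: la fórmula para $\Upsilon$ se deduce de la continuidad de $\Upsilon$ y de la identidad $\Upsilon(T^n)(f)=a_n$ establecida en la demostración del \cref{thm:mahler}. Para $\mathcal M$ simplemente llevas a cabo el «cálculo corto» que el texto deja como ejercicio (verificar que la fórmula da la inversa de $\Upsilon$), y tu justificación del intercambio de la suma con $\mu$ vía la convergencia en norma de las sumas parciales de Mahler y la continuidad de $\mu$ es correcta.
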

\begin{proof}
  La continuidad de $\Upsilon$ implica que
  \begin{equation*}
    \Upsilon(g)(f)=\sum_{n=0}^\infty b_n\Upsilon(T^n)(f)
  \end{equation*}
  para $g=\sum_nb_nT^n\in\O\llbracket T\rrbracket$ y $\Upsilon(T^n)(f)=a_n$ según la
  demostración del \cref{thm:mahler}. Esto demuestra la fórmula para $\Upsilon$.

  La afirmación para $\mathcal M$ resulta si verificamos que la fórmula de arriba define un
  mapeo inverso a $\Upsilon$. Esto es un cálculo corto que dejamos como ejercicio.
\end{proof}

\ejercicios

\begin{ejer}\label{ejer:propiedades-c-infty}
  Demuestre que para una función $f\in\mathrm C(G,\O)$ los siguientes enunciados son equivalentes:
  \begin{enumerate}
  \item $f\in\mathrm C^\infty(G,\O)$;
  \item existe un subgrupo abierto $H\subseteq G$ tal que $f$ se factoriza como una función
    $G/H\rightarrow \O$;
  \item la imagen de $f$ es finita.
  \end{enumerate}
\end{ejer}

\begin{ejer}\label{ejer:c-infty-denso}
  \begin{enumerate}
  \item Demuestre que $\mathrm C(G,\O)$ es completo con respecto a la norma introducida en
    la \cref{defi:cgo} (la primera afirmación de la \cref{prop:c-infty-denso}).
  \item Deduzca de la \cref{prop:c-infty-denso} que
    $\mathrm D(G,\O)=\Hom_\O(\mathrm C^\infty(G,\O),\O)$.
  \end{enumerate}
\end{ejer}

\begin{ejer}
  Demuestre que los morfismos $\mathcal M$ y $\Upsilon$ (si las definimos por las fórmulas
  en el \cref{thm:trinidad}) son inversos el uno al otro.
\end{ejer}

\begin{ejer}\label{ejer:upsilon-cero}
  Demuestre que para $g\in\O\llbracket T\rrbracket$ tenemos
  \[ \int_{\Zp}\integrald\Upsilon(g)=g(0). \]
\end{ejer}

\begin{ejer}\label{ejer:relaciones-binom}
  Demuestre las siguientes relaciones para los binomios:
  \begin{enumerate}
  \item  Para cada  $n\in\Ncero$ y $x\in\Zp$, \[ \binom{x+1}n-\binom x n = \binom x {n-1}. \]
  \item Para cada $j,k,x\in\Ncero$ con $k\le x$
    \[ \binom{j+k}k\binom x {j+k} = \binom x k \binom{x-k}j. \]
  \item Para $x,k\in\Ncero$ con $k\le x$
    \[ \sum_{j=0}^{x-k}(-1)^j\binom{x-k}j=
      \begin{cases}
        1,&k=x,\\
        0,&0\le k<x.
      \end{cases}
    \]
    Use el teorema del binomio para esto.
  \end{enumerate}
\end{ejer}

\begin{ejer}
  En este ejercicio damos una demostración directa y elemental del \cref{thm:mahler} de
  Mahler (sin usar los isomorfismos en \eqref{eqn:trinidad}), siguiendo
  \cite[§3.1]{MR1216135} (sin la parte de la unicidad, que ya demostramos de una manera
  elemental en el \cref{thm:mahler}).

  Sea $f\in\mathrm C(\Z_p,\O)$. Para $n\in\Ncero$ definimos
  \begin{equation*}
    a_n = \sum_{k=0}^n(-1)^k\binom n k f(n-k) = \sum_{k=0}^n(-1)^{n-k}\binom n k f(k).
  \end{equation*}
  \begin{enumerate}
  \item   Use relaciones del \cref{ejer:relaciones-binom} para verificar que
    \[  \sum_{n=0}^\infty a_n\binom x n = f(x) \]
    para $x\in\Nuno$. 
  \item Deduzca el teorema de Mahler.
  \end{enumerate}
\end{ejer}
  
\section{Idempotentes y el álgebra de Iwasawa para grupos más grandes}
\label{sec:mas-grandes}

Hasta ahora estudiamos el álgebra de Iwasawa de la forma $\O\llbracket\Gamma\rrbracket$ con
un grupo profinito $\Gamma$ isomorfo a $\Zp$, pero al final queremos usar el anillo análogo
para grupos más grandes de la forma $\Delta\times\Gamma$ con $\Delta$ un grupo
finito. En esta sección estudiamos estas álgebras en las que los \emph{idempotentes
  ortogonales}\index[def]{idempotente!ortogonal} son una herramienta útil. Además resumimos sus propiedades, que son bien
conocidas. En el \cref{ejer:idempotentes} damos algunas indicaciones de la demostración.

\begin{lem}\label{lem:idempotentes}
  Sea $A$ un dominio entero y $\Delta$ un grupo abeliano finito tal que
  $\#\Delta\in A^\times$ y $A$ contiene las raíces de la unidad de orden $\#\Delta$. Para
  cada carácter $\chi\colon \Delta\rightarrow A^\times$ definimos
  \[ e_\chi=\frac{1}{\#\Delta}\sum_{\delta\in \Delta}\chi(\delta)^{-1} \delta\in
    A[\Delta]. \] Este elemento se llama \emph{idempotente asociado a $\chi$}\index[def]{idempotente!asociado a $\chi$}.

  Entonces tenemos lo siguiente:
  \begin{enumerate}
  \item\label{lem:idempotentes:idempotente} $e_\chi^2=e_\chi$ para cada $\chi$;
  \item\label{lem:idempotentes:eigen} $e_\chi \delta=\chi(\delta)e_\chi$ para cada $\chi$ y
    $\delta\in \Delta$;
  \item\label{lem:idempotentes:ortogonal} $e_\chi e_\psi=0$ si $\chi\neq\psi$;
  \item\label{lem:idempotentes:suma} $\sum_\chi e_\chi=1$, la suma tomada sobre todos los
    $\chi$ posibles.
  \end{enumerate}
  Ahora sea $M$ un $A[\Delta]$-módulo. Para cada $\chi\colon \Delta\rightarrow A^\times$ escribimos \[
    M[\chi]=\{m\in M \mid \forall \delta\in \Delta\colon \delta m=\chi(\delta)m\} \]
  para el espacio propio de $\chi$ en $M$.
  Entonces
  \begin{enumerate}[resume]
  \item\label{lem:idempotentes:eigen-m} $M[\chi]=e_\chi M$;
  \item\label{lem:idempotentes:m-descomp} $M=\bigoplus_\chi M[\chi]$, la suma tomada de
    nuevo sobre todos los $\chi\colon\Delta\rightarrow A^\times$ posibles.
  \end{enumerate}
\end{lem}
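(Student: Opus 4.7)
The plan is to deduce everything from two orthogonality relations for characters of $\Delta$. Concretely, under the hypotheses on $A$ the character group $\widehat\Delta=\Hom(\Delta,A^\times)$ has order $\#\Delta$ (since $A$ contains the needed roots of unity and $\#\Delta$ is invertible) and is isomorphic to $\Delta$, so for $\chi\in\widehat\Delta$ and $\delta\in\Delta$ one has
\[
\sum_{\delta\in\Delta}\chi(\delta) =
\begin{cases}
\#\Delta,&\chi=1,\\
0,&\chi\neq1,
\end{cases}
\qquad
\sum_{\chi\in\widehat\Delta}\chi(\delta) =
\begin{cases}
\#\Delta,&\delta=1,\\
0,&\delta\neq1.
\end{cases}
\]
The first is immediate because if $\chi\neq1$ there is $\delta_0$ with $\chi(\delta_0)\neq1$, and the sum equals $\chi(\delta_0)$ times itself after reindexing. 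The second follows from the first applied to $\widehat\Delta$, using that $\widehat{\widehat\Delta}\isom\Delta$.

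With these two relations in hand the first four statements are short computations. For \ref{lem:idempotentes:eigen} I multiply: $e_\chi\delta_0 = \frac1{\#\Delta}\sum_\delta\chi(\delta)^{-1}\delta\delta_0$, and reindexing by $\delta'=\delta\delta_0$ gives $\chi(\delta_0)e_\chi$. For \ref{lem:idempotentes:ortogonal} (which contains \ref{lem:idempotentes:idempotente} as the case $\chi=\psi$) I expand
\[
e_\chi e_\psi = \frac1{(\#\Delta)^2}\sum_{\delta,\delta'}\chi(\delta)^{-1}\psi(\delta')^{-1}\delta\delta',
\]
substitute $\sigma=\delta\delta'$, collect terms and apply the first orthogonality relation to $\chi\psi^{-1}$; this yields $\delta_{\chi,\psi}e_\chi$. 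For \ref{lem:idempotentes:suma} I swap the sums in $\sum_\chi e_\chi$ and apply the second relation.

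For the last two statements I work in a module. If $m\in M[\chi]$ then each term of $e_\chi m$ equals $\frac1{\#\Delta}m$ and summing gives $m$, so $M[\chi]\subseteq e_\chi M$. Conversely, for any $m\in M$ the element $e_\chi m$ satisfies $\delta(e_\chi m)=(\delta e_\chi)m=\chi(\delta)e_\chi m$ by \ref{lem:idempotentes:eigen}, giving $e_\chi M\subseteq M[\chi]$, which is \ref{lem:idempotentes:eigen-m}. Finally, \ref{lem:idempotentes:suma} gives $m=\sum_\chi e_\chi m\in\sum_\chi M[\chi]$, while \ref{lem:idempotentes:ortogonal} implies that this sum is direct: if $\sum_\chi m_\chi=0$ with $m_\chi\in M[\chi]$, apply $e_\psi$ to both sides; by \ref{lem:idempotentes:eigen-m} and \ref{lem:idempotentes:ortogonal} only the term $m_\psi$ survives, so $m_\psi=0$.

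The only conceptual input beyond routine group-ring manipulation is the pair of character orthogonality relations, and the main point to be careful about is the hypothesis: one needs $\#\Delta$ invertible in $A$ (to define $e_\chi$ and to ensure $\widehat\Delta$ has the expected size) and enough roots of unity in $A$ (to ensure $\#\widehat\Delta=\#\Delta$, equivalently $\Delta\isom\widehat\Delta$). The fact that $A$ is a domain is used only to conclude that when a nonzero sum of $\chi(\delta_0)^k$ factors out, the remaining factor $\chi(\delta_0)-1$ is a nonzerodivisor.
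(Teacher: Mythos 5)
Your proof is correct and follows essentially the same route the paper sketches (it relegates the proof to the exercises \ref{ejer:delta-hat}, \ref{ejer:schur-orthogonalidad} and \ref{ejer:idempotentes}): direct reindexing computations in $A[\Delta]$, the two character orthogonality relations for \ref{lem:idempotentes:ortogonal} and \ref{lem:idempotentes:suma}, and then deducing \ref{lem:idempotentes:eigen-m} and \ref{lem:idempotentes:m-descomp} formally from the first four properties. The only cosmetic difference is that you get \ref{lem:idempotentes:idempotente} and \ref{lem:idempotentes:ortogonal} together from a double-sum expansion plus orthogonality, whereas the paper suggests proving \ref{lem:idempotentes:ortogonal} from \ref{lem:idempotentes:eigen}; both are the same routine computation.
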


Ahora sea $G$ un grupo profinito de la forma $G=\Delta\times \Gamma$ con un
grupo finito $\Delta$ y un grupo profinito $\Gamma$. Además sea $A$ un anillo conmutativo
topológico tal que $\#\Delta\in A^\times$ y $A$ contenga las raíces de la unidad de orden
$\#\Delta$. Escribimos
\[ A\llbracket G\rrbracket := \varprojlim_{N}A[G/N], \] el límite tomado sobre
todos los subgrupos normales abiertos de $G$.\footnote{Es decir, si $A$ es un
  anillo profinito, esto es el anillo de grupos profinito que definimos antes, pero ahora
  permitimos anillos más generales como coeficientes. En las aplicaciones el anillo $A$ será
  un anillo profinito o una extensión finita de $\Q$ o $\Qp$.}  Entonces
$A\llbracket G\rrbracket$ es un $A[\Delta]$-módulo y podemos aplicar el
\cref{lem:idempotentes}. Se verifica fácilmente (usando
\cref{lem:idempotentes}~\ref{lem:idempotentes:eigen}) que para cada $\chi$ de hecho
$e_\chi A\llbracket G\rrbracket$ es un $A$-álgebra topológica con multiplicación dada por 
$(e_\chi g) (e_\chi h)=e_\chi (g h)$ para $g,h\in G$.

\begin{lem}\label{lem:descomposicion-lambda}
  Para cada $\chi\colon\Delta\rightarrow A^\times$ existe un isomorfismo canónico de
  $A$-álgebras topologicas
  \[ E_\chi\colon e_\chi A\llbracket G\rrbracket \isomarrow A\llbracket
    \Gamma\rrbracket. \]
  Es decir, \[ A\llbracket
    G\rrbracket\isom\bigoplus_\chi A\llbracket \Gamma\rrbracket, \]
  la suma tomada sobre todos los $\chi\colon\Delta\rightarrow A^\times$.
\end{lem}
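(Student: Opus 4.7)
El plan es reducir la afirmación a niveles finitos, donde aplicamos directamente el \cref{lem:idempotentes}, y luego pasar al límite inverso.

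Primero observo que como $\Delta$ es finito, los subgrupos normales abiertos de $G=\Delta\times\Gamma$ de la forma $\{1\}\times N$, con $N\subseteq\Gamma$ un subgrupo normal abierto, son cofinales entre todos los subgrupos normales abiertos de $G$ (intersectamos cualquier tal subgrupo con $\{1\}\times\Gamma$). Por lo tanto
\[ A\llbracket G\rrbracket \isom \varprojlim_N A[\Delta\times\Gamma/N] \isom \varprojlim_N \bigl(A[\Delta]\otimes_A A[\Gamma/N]\bigr), \]
y análogamente $A\llbracket\Gamma\rrbracket\isom\varprojlim_N A[\Gamma/N]$, donde el límite corre sobre los subgrupos normales abiertos $N$ de $\Gamma$.

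En el nivel finito aplico el \cref{lem:idempotentes} al anillo $A[\Delta]$. La parte \ref{lem:idempotentes:eigen} implica que $e_\chi A[\Delta]$ está generado sobre $A$ por $e_\chi$, y la asociación $a\mapsto a\,e_\chi$ define un isomorfismo de $A$\=/álgebras $A\isomarrow e_\chi A[\Delta]$ cuyo inverso está determinado por $e_\chi\delta\mapsto\chi(\delta)$ (la inyectividad sigue mirando el coeficiente de $1\in\Delta$, que vale $a/\#\Delta$, y $\#\Delta$ es invertible en $A$). Tensorizando con $A[\Gamma/N]$ sobre $A$ obtengo un isomorfismo de $A$\=/álgebras
\[ e_\chi A[\Delta\times\Gamma/N] \isomarrow A[\Gamma/N], \quad e_\chi(\delta,\gamma)\mapsto\chi(\delta)\gamma, \]
y las partes \ref{lem:idempotentes:ortogonal} y \ref{lem:idempotentes:suma} ensamblan estos en una descomposición $A[\Delta\times\Gamma/N]\isom\bigoplus_\chi A[\Gamma/N]$.

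Como los idempotentes $e_\chi$ viven en $A[\Delta]$ y no dependen de $N$, los isomorfismos anteriores son compatibles con las proyecciones $A[\Delta\times\Gamma/N']\to A[\Delta\times\Gamma/N]$ para $N\subseteq N'$. Tomando el límite inverso (y usando que sumas directas \emph{finitas} conmutan con límites inversos) obtengo simultáneamente el isomorfismo canónico $E_\chi$ buscado y la descomposición global. La continuidad es automática porque los isomorfismos provienen de morfismos de $A$\=/álgebras en cada nivel finito, donde la única topología relevante es la heredada de $A$. No anticipo obstáculos serios; lo único delicado es justificar el primer paso de cofinalidad y la compatibilidad con las transiciones, ambos puntos esencialmente formales gracias a la finitud de $\Delta$.
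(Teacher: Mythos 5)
Tu propuesta es correcta y sigue esencialmente el mismo camino que la demostración del texto: reducir al nivel finito $A[\Delta\times\Gamma/N]$, usar la relación $e_\chi(\delta,\gamma)=\chi(\delta)\gamma e_\chi$ para obtener el isomorfismo $e_\chi A[\Delta\times\Gamma/N]\isom A[\Gamma/N]$ y pasar al límite inverso. Tu presentación vía $e_\chi A[\Delta]\isom A$ y el producto tensorial, junto con la justificación de la cofinalidad de los subgrupos $\{1\}\times N$, sólo hace explícitos detalles que el texto da por sentados.
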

\begin{proof}
  Primero observamos que $A\llbracket G\rrbracket=\varprojlim_NA[\Delta\times \Gamma/N]$, el
  límite tomado esta vez sobre los subgrupos normales abiertos de $\Gamma$. Fijemos un tal
  subgrupo $N$. Entonces es suficiente construir un isomorfismo
  $e_\chi A[\Delta\times \Gamma/N]\isom A[\Gamma/N]$, la afirmación resulta de esto al tomar
  el límite.
 
  Para cada $g=(\delta,\gamma)\in \Delta\times \Gamma/N$ tenemos
  $e_\chi g=\chi(\delta)\gamma e_\chi$ (véase \cref{lem:idempotentes}). El mapeo
  \begin{equation*} \quad e_\chi g \mapsto \chi(\delta)\gamma \end{equation*} se extiende
  $A$-linealmente a un morfismo de $A$-módulos $e_\chi A[\Delta\times \Gamma/N] \rightarrow
  A[\Gamma/N]$. Por otra parte el mapeo \begin{equation*} A[\Gamma/N] \rightarrow e_\chi
    A[\Delta\times \Gamma/N], \quad \gamma\mapsto e_\chi(1,\gamma) \quad (\gamma\in
    \Gamma/N)\end{equation*} se extiende a un morfismo en la otra dirección. Un cálculo
  fácil muestra que estos morfismos son inversos el uno al otro, además es obvio de sus
  definiciones que los mapeos son multiplicativos.
\end{proof}

La aplicación más importante de esto será la situación en que $A=\O$ es el anillo de enteros
en una extension finita de $\Qp$ y $G$ es isomorfo a
$\Z_p^\times\isom\F_p^\times\times(1+p\Zp)$, con $\Delta$ correspondiendo a $\F_p^\times$ y
$G$ a $1+p\Zp$, que es entonces isomorfo a $\Zp$ y que por eso denotamos $\Gamma$. Entonces
$\O$ contiene las raíces $(p-1)$-ésimas de la unidad y $p-1\in\O^\times$. Fijemos un
isomorfismo $G\isom\Z_p^\times$. Entonces los caracteres de $\Delta$ son las
potencias del carácter de Teichmüller $\omega$.

\begin{cor}\label{cor:descomposicion-lambda}
  Para cada $i\in\{1,\dotsc,p-1\}$ existe un isomorfismo
  canónico de $\O$-álgebras profinitas
  \[ E_{\omega^i}\colon e_{\omega^i} \O\llbracket G\rrbracket \isomarrow\O\llbracket
    \Gamma\rrbracket. \]
  Es decir, $\O\llbracket
  G\rrbracket\isom\displaystyle\bigoplus_{i=1}^{p-1}\O\llbracket\Gamma\rrbracket$.
\end{cor}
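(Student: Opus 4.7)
The plan is to deduce this directly from \cref{lem:descomposicion-lambda} by checking that the hypotheses are satisfied in the present setting and that the characters of $\Delta=\F_p^\times$ are exactly the Teichmüller powers $\omega^i$ for $i\in\{1,\dotsc,p-1\}$.

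First I would verify the hypotheses. Since $\O$ is the ring of integers of a finite extension of $\Qp$ and $p$ is odd, the integer $p-1=\#\Delta$ is a unit in $\O$ (it is a unit already in $\Zp$). The Teichmüller character $\omega\colon\F_p^\times\hookrightarrow\Zp^\times\subseteq\O^\times$ of \cref{defi:teichmueller} realizes $\F_p^\times$ as a subgroup of $\O^\times$ of order $p-1$, so $\O$ contains all $(p-1)$-th roots of unity. Moreover, since $G$ is topologically the product $\Delta\times\Gamma$ with $\Delta$ finite, the obvious identity $A\llbracket G\rrbracket=\varprojlim_N A[\Delta\times\Gamma/N]$ used in the proof of \cref{lem:descomposicion-lambda} applies, so the lemma is available with $A=\O$.

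Second, I would identify the character group of $\Delta$. Every continuous character $\chi\colon\F_p^\times\rightarrow\O^\times$ actually takes values in the roots of unity of order dividing $p-1$ in $\O^\times$, and hence factors through $\omega(\F_p^\times)\subseteq\O^\times$. By \cref{ejer:caracteres-fp} every such $\chi$ is a power $\omega^i$ with $i\in\{1,\dotsc,p-1\}$ (the exponent being well-defined modulo $p-1$), and conversely each such power is a distinct character. Therefore the family of characters $\chi\colon\Delta\rightarrow\O^\times$ indexing the decomposition in \cref{lem:descomposicion-lambda} is exactly $\{\omega^i : i=1,\dotsc,p-1\}$.

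Combining these two observations, \cref{lem:descomposicion-lambda} applied to $A=\O$ and our decomposition $G=\Delta\times\Gamma$ furnishes, for each $i\in\{1,\dotsc,p-1\}$, a canonical isomorphism of $\O$-algebras $E_{\omega^i}\colon e_{\omega^i}\O\llbracket G\rrbracket\isomarrow\O\llbracket\Gamma\rrbracket$, and taking the direct sum over all $i$ gives $\O\llbracket G\rrbracket\isom\bigoplus_{i=1}^{p-1}\O\llbracket\Gamma\rrbracket$. There is essentially no obstacle here beyond the bookkeeping of characters; the only point that deserves a line of attention is the continuity and the fact that the decomposition of $\O\llbracket G\rrbracket$ as a finite direct sum is compatible with the profinite topology, which is automatic because the idempotents $e_{\omega^i}$ lie in the finite subring $\O[\Delta]\subseteq\O\llbracket G\rrbracket$ and each projector $x\mapsto e_{\omega^i}x$ is continuous.
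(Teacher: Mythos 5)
Your proposal is correct and is essentially the same argument the paper uses: the corollary is stated there as an immediate consequence of \cref{lem:descomposicion-lambda}, with the hypotheses checked exactly as you do (via the Teichmüller character, $\O$ contains the $(p-1)$-th roots of unity and $p-1\in\O^\times$) and with the identification of the characters of $\Delta\isom\F_p^\times$ as the powers $\omega^i$, $i=1,\dotsc,p-1$. Your extra remark on continuity of the projectors $x\mapsto e_{\omega^i}x$ is a harmless addition not spelled out in the paper.
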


En particular, todos los $e_{\omega^i}\O\llbracket G\rrbracket$ son isomorfos a
$\O\llbracket T\rrbracket$ (no canónicamente, después de elegir un generador topológico de
$\Gamma$, véase \cref{thm:EquivPowPro}) y podemos aplicar los resultados de nuestro estudio
de este anillo.

\ejercicios

\begin{ejer}\label{ejer:delta-hat}
  Sea $\Delta$ un grupo abeliano finito y $A$ un anillo conmutativo que contiene las raíces
  $\#\Delta$-ésimas de la unidad. Denotamos $\widehat\Delta$ el grupo de caracteres
  $\Delta\rightarrow A^\times$ (con multiplicación puntual). Demuestre que
  $\widehat\Delta\isom\Delta$ no canónicamente; en particular, existen $\#\Delta$ caracteres
  $\Delta\rightarrow A^\times$. Use el teorema de estructura de grupos abelianos finitos
  para esto.
\end{ejer}

\begin{ejer}\label{ejer:schur-orthogonalidad}
  Sea $\Delta$ un grupo abeliano finito y $A$ un dominio entero que contiene las raíces
  $\#\Delta$-ésimas de la unidad.
  \begin{enumerate}
  \item Sea $\delta\in\Delta$ fijo. Demuestre que
    \[ \sum_{\chi}\chi(\delta)=
      \begin{cases}
        \#\Delta&\text{si }\delta=1,\\
        0&\text{si }\delta\neq1
      \end{cases} \]
    (la suma corriendo por todos los caracteres $\chi\colon\Delta\rightarrow A^\times$).
  \item Sea $\chi\colon\Delta\rightarrow A^\times$ fijo. Demuestre que
    \[ \sum_{\delta}\chi(\delta)=
      \begin{cases}
        \#\Delta&\text{si }\chi=1,\\
        0&\text{si }\chi\neq1
      \end{cases} \]
    (la suma corriendo por todos los $\delta\in\Delta$).
  \end{enumerate}
  Puede ser útil usar el \cref{ejer:delta-hat} para esto.
\end{ejer}

\begin{ejer}\label{ejer:idempotentes}
  Sea $\Delta$ un grupo abeliano finito y $A$ un dominio entero que contiene las raíces
  $\#\Delta$-ésimas de la unidad.  Demuestre las propiedades de los idempotentes del
  \cref{lem:idempotentes}.

  Las propiedades \ref{lem:idempotentes:idempotente} y \ref{lem:idempotentes:eigen} se pueden
  verificar con cálculos directos usando que la multiplicación con un elemento permuta los
  elementos del grupo $\Delta$, que permite transformar las sumas. La relación
  \ref{lem:idempotentes:ortogonal} resulta de \ref{lem:idempotentes:eigen} considerando el
  elemento $e_\chi e_\psi$. La relación \ref{lem:idempotentes:suma} resulta del
  \cref{ejer:schur-orthogonalidad}.  Las afirmaciones \ref{lem:idempotentes:eigen-m} y
  \ref{lem:idempotentes:m-descomp} son consecuencias directas de las propiedades anteriores.
\end{ejer}

\begin{ejer}\label{ejer:diagonal}
  Consideramos la situación del \cref{lem:descomposicion-lambda}. El morfismo \[
    \Gamma\rightarrow G=\Delta\times\Gamma, \quad \gamma\mapsto (1,\gamma) \]
  induce un morfismo de $A$-álgebras $A\llbracket\Gamma\rrbracket\rightarrow
  A\llbracket G\rrbracket$. Componiendo este con el isomorfismo del
  \cref{lem:descomposicion-lambda} obtenemos un morfismo de $A$-álgebras
  \[ A\llbracket\Gamma\rrbracket\rightarrow\bigoplus_\chi A\llbracket\Gamma\rrbracket. \]
  Demuestre que este morfismo es la aplicación diagonal $x \mapsto (x,\dotsc,x)$.
\end{ejer}

\chapter{Módulos noetherianos sobre el álgebra de Iwasawa}
\label{sec:modulos-iwasawa}

Después de haber conocido el álgebra de Iwasawa vamos a estudiar módulos sobre ella. Esto es
interesante porque aparecen de manera natural en muchas situaciones (véase por ejemplo la
\cref{prop:modulos-sobre-rg}) y porque admiten una teoría de estructura muy similar a la de
módulos sobre anillos de ideales principales.

Sea $R$ un anillo conmutativo.
Recordemos que un $R$-módulo $M$ es llamado \emph{noetheriano}\index[def]{módulo@módulo!noetheriano} si todos sus submódulos son finitamente generados. Equivalentemente, si satisface la condición de la cadena ascendente en el orden parcial formado por sus submódulos e inclusiones. Equivalentemente, que para un conjunto $S$ no vacío de submódulos de $M$ existe un submódulo máximo, es decir un submódulo $M_{0}$ de $M$ tal que para cualquier elemento $N$ de $S$ que contenga $M_{0}$ tenemos $N=M_{0}$.

Un anillo conmutativo se llama \emph{noetheriano}\index[def]{anillo!noetheriano} si es noetheriano
como módulo sobre si mismo. Observemos que un módulo sobre un anillo noetheriano es
noetheriano si y solo si es finitamente generado. Como vamos a estudiar sobre todo módulos
sobre el álgebra de Iwasawa (que es ¡Un anillo noetheriano!) podemos usar las expresiones
\enquote{noetheriano} y \enquote{finitamente generado} indiferentemente en este caso.

Además los módulos noetherianos se comportan bien en sucesiones exactas cortas, ya que si tenemos una sucesión exacta de $R$-módulos
$$0 \rightarrow M' \rightarrow M \rightarrow M'' \rightarrow 0,$$
entonces $M$ es noetheriano si y solamente si $M'$ y $M''$ son noetherianos. 

Para un elemento $x$ en un $R$-módulo $M$ definimos el \define{anulador} de $x$
como $$\Ann(x)=\{r \in\LL\mid r \cdot x =0\}.$$ Un elemento $x\neq0$ es dicho de
\define[elemento de torsión]{torsión} si su anulador no es trivial. Un módulo $M$ se llama de \emph{torsión}\index[def]{módulo@módulo!de torsión} si todos
sus elementos son elementos de torsión y se llama \emph{libre de torsión}\index[def]{módulo@módulo!libre de torsión} si ninguno de sus
elementos es elemento de torsión.

Observemos que si $M$ es un $R$-módulo noetheriano, entonces existen $x_{1},\ldots,x_{d}\in M$ tales que $M=\sum_{i=1}^{d}R x_{i}$, por lo tanto tenemos que $$\Ann(M) := \bigcap_{x\in M} \Ann(x) = \bigcap_{i=1}^{d} \Ann(x_{i}).$$

\section{Pseudo-isomorfismos}

\begin{defi}\label{def:Lmodule} Decimos que $M$ es un \define[módulo sobre $\LL$]{$\LL$-módulo}, si es un grupo topológico abeliano y Hausdorff, además de un $\LL$-módulo tal que la acción de $\LL$ es continua. 
\end{defi}

\begin{prop} Decimos que un $\LL$-módulo $M$ noetheriano es \define[módulo pseudo-nulo]{pseudo-nulo} si satisface las siguientes condiciones equivalentes
\begin{enumerate}
\item Para todo ideal primo $\p$ de altura $\leq 1$,  el localizado $M_{\p}$ es trivial.
\item El único ideal primo que contiene al anulador $\Ann(M)$ es $\M$.
\item $M$ es finito. 
\end{enumerate}
\end{prop}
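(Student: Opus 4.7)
El plan es demostrar las implicaciones (a) $\Leftrightarrow$ (b) y luego (b) $\Leftrightarrow$ (c), aprovechando la estructura de $\LL$ establecida en la sección anterior: es un anillo local noetheriano de dimensión de Krull $2$, con ideal máximo $\M=(\pi,T)$ y campo residual finito $k=\LL/\M$, cuyos únicos ideales primos son $(0)$, los principales $(f)$ con $f$ irreducible (es decir $\pi$ o un polinomio distinguido e irreducible), y $\M$ mismo. Así los primos de altura $\leq 1$ son exactamente los distintos de $\M$.

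Para la equivalencia (a) $\Leftrightarrow$ (b) recurriría al hecho estándar del álgebra conmutativa: para un módulo finitamente generado $M$ sobre un anillo noetheriano, el soporte $\{\p\in\Spec(\LL) : M_{\p}\neq 0\}$ coincide con el conjunto de ideales primos que contienen al anulador $\Ann(M)$. Entonces (a) afirma que dicho soporte no contiene ningún primo distinto de $\M$, y (b) es exactamente la misma condición.

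Para (b) $\Rightarrow$ (c) observaría que bajo (b) el cociente $\LL/\Ann(M)$ es un anillo local noetheriano de dimensión cero, luego artiniano, y en particular existe $n\in\Nuno$ con $\M^n\subseteq\Ann(M)$. De ahí $\LL/\Ann(M)$ es un cociente de $\LL/\M^n$; este último anillo es finito porque la filtración por potencias $\M^i/\M^{i+1}$ tiene cocientes que son $k$-espacios vectoriales de dimensión finita (ya que $\M$ es finitamente generado como ideal) y $k$ es finito. Entonces $\LL/\Ann(M)$ es finito y, como $M$ es finitamente generado sobre él, resulta que $M$ es finito como conjunto.

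Para (c) $\Rightarrow$ (b): si $M$ es finito, el morfismo canónico $\LL\to\End_\Z(M)$ tiene núcleo $\Ann(M)$ e imagen finita, de modo que $\LL/\Ann(M)$ es finito. Luego $\Ann(M)$ es un ideal abierto en la topología $\M$-ádica que contiene $\M^n$ para algún $n$, por lo que todo ideal primo que contenga $\Ann(M)$ contiene $\M$ y entonces coincide con $\M$, es decir (b). El paso más delicado es la finitud de $\LL/\M^n$, pero esto es consecuencia directa de la finitud del campo residual $k$ y de la estructura noetheriana de $\LL$, así que no anticipo obstáculos serios una vez establecida la geometría de $\Spec(\LL)$.
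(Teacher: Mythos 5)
Tu propuesta es correcta y sigue esencialmente el mismo camino que la demostración del texto: (a)$\iff$(b) se reduce a la identificación del soporte con los primos que contienen $\Ann(M)$, y (b)$\iff$(c) se obtiene produciendo una potencia $\M^{n}\subseteq\Ann(M)$ y usando la finitud de $\LL/\M^{n}$ junto con la generación finita de $M$. Las únicas diferencias son cosméticas: tú obtienes $\M^{n}\subseteq\Ann(M)$ vía el cociente artiniano de dimensión cero (el texto usa el radical y el \cref{ejer:radical}) y, para (c)$\Rightarrow$(b), vía la finitud de $\LL/\Ann(M)$ y la apertura del anulador, mientras que el texto usa la filtración $\M M\supseteq\M^{2}M\supseteq\dotsm$ y Nakayama.
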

\begin{proof}
  \begin{description}
  \item[(a) $\Rightarrow$ (b):] Si $x\in M$ y denotamos $x_{\p}$ su imagen en $M_{\p}$, entonces $x_{\p}=0$ si existe un $s\in\LL\setminus\p$ tal que $sx=0$, es decir $s\in\Ann(M)\nsubseteq\p$. Por lo que claramente el único ideal que contiene a $\Ann(M)$ tiene que ser el ideal máximo. 
  \item[(b) $\Rightarrow$ (a):] Si $\Ann(M)\nsubseteq\p$ entonces existe $s\notin \p$ tal que $sM=0$, es decir $M_{\p}=0$.
  \item[(b) $\Rightarrow$ (c):] Si el único ideal que contiene $\Ann(M)$ es $\M$ tenemos que
    el radical $\Rad(\Ann(M))=\M$ (véase el \cref{ejer:radical}). Luego como $\LL$ es
    noetheriano $\Ann(M)\supset \M^{k}$, para algún $k$ suficientemente grande (ver el
    \cref{ejer:radical}). Finalmente, si escribimos $M=x_{1}\LL + \ldots + x_{d}\LL$ y
    tomamos en cuenta que $\Ann(M)\subset \Ann(x)$ para todo $x\in M$, obtenemos que
    $|M|\leq (\LL:\M^{k})^{d}$.
    \item[(c) $\Rightarrow$ (b):] Si $M$ es finito, entonces 
$$\M M \supset \M^{2} M \supset \M^{3} M \supset \ldots $$
es una filtración estricta de módulos finitos que eventualmente es $0$. Es decir $\Ann(M)\supset \M^{k}$ par algún $k$, por lo que ningún ideal de altura $1$ contiene al anulador. 

  \end{description}
\end{proof}

\begin{remark} 
  \begin{enumerate}
  \item Un $\LL$-módulo pseudo-nulo es de torsión ya que $M\otimes_{\LL}K=M_{(0)} =0 $.
  \item El orden de un $\LL$-módulo pseudo-nulo siempre es una potencia de $p$ (porque es un $\Zp$-módulo).
  \end{enumerate}
\end{remark}

\begin{defi} \begin{enumerate}
\item Un morfismo $\phi$ de $\LL$-módulos sera llamado \emph{pseudo-inyectivo}\index[def]{morfismo!pseudo-inyectivo} (respectivamente \emph{pseudo-sobreyectivo}\index[def]{morfismo!pseudo-sobreyectivo}) luego de que el núcleo $\ker(\phi)$ (respectivamente su conúcleo $\coker(\phi)$) es pseudo-nulo. 
\item Un morfismo que es pseudo-inyectivo y pseudo-sobreyectivo, decimos que es un \define{pseudo-isomorfismo}.
\item Si $\varphi:M\rightarrow N$ es un pseudo-isomorfismo escribimos $M\sim N$.
\end{enumerate}
\end{defi}

\begin{remark} Tenemos que $\varphi:M\rightarrow N$ es un pseudo-isomorfismo si equivalentemente $\varphi_{\p}:M_{\p}\rightarrow N_{\p}$ es un isomorfismo para todos los ideales $\p$ de altura menor o igual que $1$.
\end{remark}

\ejercicios

\begin{ejer} La relación de pseudo-isomorfismo generalmente no es simétrica, demuestre que
  $\M\sim\LL$ pero $\LL\not\sim \M$ (no obstante, tenga en cuenta el \cref{ejer:relacion-simetrica}).
\end{ejer}

\begin{ejer}\label{ejer:pseudotransitiva}
  La relación de pseudo-isomorfismo es transitiva: demuestre que la composición de dos
  pseudo-isomorfismos es un pseudo-isomorfismo.
\end{ejer}

\begin{ejer}\label{ejer:fgpseudonul} Sea $M$ un $\LL$-módulo noetheriano. Demuestre que $M$ es pseudo-nulo si y solamente si existen $f$ y $g$ elementos coprimos de $\LL$ tal que $fM=gM=0$.
\end{ejer}

\begin{ejer}\label{ejer:pseudo-isomorfo-cociente-finito}
    Sea $\varphi\colon M\rightarrow N$ un pseudo-isomorfismo de $\LL$-módulos y
    $a=\max\{|\ker(\varphi)|,|\coker(\varphi)|\}$. Sean $\alpha\colon M\rightarrow M$ y
    $\beta\colon N\rightarrow N$ morfismos de $\LL$-módulos tales que el siguiente diagrama conmuta
    \begin{equation*}
    \begin{tikzcd}
      M \arrow[r, "\alpha"] \arrow[d, "\varphi"]
      & M \arrow[d, "\varphi"] \\
      N \arrow[r, "\beta"]
      & N.
    \end{tikzcd}
  \end{equation*}  
  Tenemos un diagrama
      \begin{equation*}
    \begin{tikzcd}
      0\arrow[r] & \alpha(M) \arrow[r] \arrow[d, "\varphi'"]
      & M \arrow[d, "\varphi"] \arrow[r]
      & M/\alpha(M) \arrow[d, "\varphi''"] \arrow[r] & 0 \\
      0\arrow[r] & \beta(N) \arrow[r]
      & N \arrow[r]
      & N/\beta(N) \arrow[r] & 0
    \end{tikzcd}
  \end{equation*}  
  con lineas exactas.
  \begin{enumerate}
  \item\label{ejer:pseudo-isomorfo-cociente-finito:a} Demuestre que
    $$|\ker(\varphi')|,\ |\coker(\varphi')|,\ |\coker(\varphi'')|\ \leq a\ \text{ and }\
    |\ker(\varphi'')|\ \leq a^{2}.$$
  \item\label{ejer:pseudo-isomorfo-cociente-finito:b} Deduzca que si
    $\varphi\colon M\rightarrow N$ es un pseudo-isomorfismo y $\nu\in\LL$ entonces
    $\varphi_\nu\colon M/\nu M\rightarrow N/\nu N$ también es un pseudo-isomorfismo y
    $|\ker\varphi_\nu|\le a^2$, $|\coker\varphi_\nu|\le a$.
  \item\label{ejer:pseudo-isomorfo-cociente-finito:c} En la misma situación, deduzca que
  \[ M/\nu M\text{ es finito} \iff N/\nu N\text{ es finito.} \]
  \end{enumerate}
\end{ejer}

\begin{ejer}\label{ejer:soportealpha}
Recordemos que el \define[soporte]{soporte de un módulo} $M$ está definido como el conjunto de los ideales primos tales que la localización es no trivial, es decir
$$\Sop(M)=\{\p\subset R \text{ primo }\,|\, M_{\p}\neq 0 \}.$$

Sea $M$ un $\LL$-módulo noetheriano y sea $\alpha\in \LL$ un elemento diferente de cero tal que $\Sop(\LL/\alpha \LL)$ y $\{\p\in\Sop(M)\mid\p\text{ es de altura }1\}$ son disjuntos. Entonces la multiplicación por $\alpha$ en $M$ es un pseudo-isomorfismo.
\end{ejer}

\begin{ejer}\label{ejer:radical}
Recordemos que el \define{radical de un ideal} $I$ en un anillo conmutativo $R$ está definido como el conjunto de los elementos $r\in R$ que al elevarlos a alguna potencia pertenecen a $I$, es decir
$$\Rad(I)=\{r \in R \,|\, r^{n}\in I \}.$$

\begin{enumerate}
\item Demuestre que $$\Rad(I)=\bigcap_{\substack{\p \in \Spec(R) \\ I \subset \p}} \p.$$
\item En particular, si $M$ es un módulo finitamente generado sobre un anillo noetheriano $R$. Entonces
$$\Rad(\Ann(M))=\bigcap_{\p \in \Sop(M)} \p.$$
\item Sea ahora $R$ un anillo local con ideal máximo $\M$. Con las hipótesis anteriores,
  demuestre que si $\Rad(\Ann(M))=\M$ entonces $\Ann(M)\supset \M^{k}$, para algún $k$
  suficientemente grande.
\end{enumerate}

\end{ejer}

\section{$\LL$-módulos noetherianos de torsión}

Recordemos que si $F$ es un submódulo finito de un $\LL$-módulo $M$, entonces $F$ es de torsión. 

\begin{prop}\label{prop:sinfin} Sea $M$ un $R$-módulo noetheriano (donde $R$ es un anillo conmutativo). $M$ contiene un submódulo máximo finito  $F$, y $M/F$ no contiene submódulos finitos no triviales. 
\end{prop}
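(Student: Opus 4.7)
El plan es dividir la demostración en dos partes correspondientes a las dos afirmaciones: primero construir un submódulo finito máximo $F$ usando la propiedad noetheriana, y después deducir de su maximalidad que $M/F$ no tiene submódulos finitos no triviales.

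Para la existencia de $F$ considero la familia $\mathcal S$ de todos los submódulos finitos de $M$, que es no vacía porque $0\in\mathcal S$. Como $M$ es noetheriano, la caracterización equivalente mencionada al inicio del capítulo garantiza que toda familia no vacía de submódulos posee un elemento máximo, así que puedo escoger $F\in\mathcal S$ máximo. El paso clave que hace funcionar este argumento es la observación de que la suma $F+N$ de dos submódulos finitos de $M$ es otra vez finita: como conjunto $F+N$ es imagen del mapeo $F\times N\rightarrow M$, $(f,n)\mapsto f+n$, así que $|F+N|\le|F|\cdot|N|<\infty$. Por consiguiente, si $N\subseteq M$ es cualquier submódulo finito, entonces $F+N$ también lo es y la maximalidad de $F$ obliga $F+N=F$, es decir $N\subseteq F$. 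Esto muestra que $F$ no sólo es máximo sino que contiene a cada submódulo finito de $M$.

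Para la segunda afirmación supongo que $\overline N\subseteq M/F$ es un submódulo finito y denoto por $N$ su preimagen bajo la proyección canónica $M\twoheadrightarrow M/F$. Entonces $N$ encaja en una sucesión exacta corta
\[ 0\rightarrow F\rightarrow N\rightarrow\overline N\rightarrow 0 \]
de $R$-módulos. Como $F$ y $\overline N$ son ambos finitos se deduce que $N$ es finito (su cardinalidad es $|F|\cdot|\overline N|$). Por la propiedad de máximo de $F$ establecida antes resulta $N\subseteq F$, lo que da $\overline N=N/F=0$.

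No preveo grandes obstáculos: el único punto al que hay que prestar atención es justificar con cuidado que el conjunto $\mathcal S$ admite un elemento máximo en el sentido de la definición recordada (un submódulo que no está propiamente contenido en otro elemento de $\mathcal S$) y que de la maximalidad y de la estabilidad de $\mathcal S$ bajo sumas se obtiene, de hecho, un máximo (un submódulo que contiene a todos los demás). Una vez aclarado esto, el argumento de la sucesión exacta corta es inmediato y concluye la demostración.
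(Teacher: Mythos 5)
Tu propuesta es correcta y sigue esencialmente el mismo camino que la demostración del texto: tomar un elemento máximo $F$ en la familia de submódulos finitos (por la propiedad noetheriana) y usar que la suma de dos submódulos finitos es finita para concluir que $F$ contiene a todos ellos. La única diferencia es que tú detallas explícitamente la segunda afirmación mediante la preimagen y la sucesión exacta corta, paso que el texto deja implícito; tu redacción es, de hecho, más completa.
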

\begin{proof}
El conjunto de los submódulos finitos de $M$ contiene un elemento máximo $F$ por la propiedad noetheriana, de lo contrario si $G$ es otro módulo que no está contenido en $F$, entonces $F+G$ es finito y contiene $F$.
\end{proof}

El anulador de $M$ es un ideal. En el caso especial $R=\LL$, la siguiente proposición revela su naturaleza.

\begin{prop}\label{prop:Annppal} Si $M$ es un $\LL$-módulo sin submódulos finitos no triviales, su anulador $\Ann(M)$ es principal.
\end{prop}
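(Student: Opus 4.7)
\emph{Plan de demostración.} La estrategia es aplicar la \cref{prop:contenfin}: si $\Ann(M)\neq 0$, tomamos un pseudo-generador $a\in\LL$ de $\Ann(M)$, es decir, $\Ann(M)\subseteq a\LL$ con cociente finito, y el objetivo será demostrar que de hecho $a\in\Ann(M)$, para concluir que $\Ann(M)=a\LL$ es principal. (El caso $\Ann(M)=0$ es trivial, pues el ideal nulo es principal; este caso se excluye por ejemplo al suponer $M$ de torsión, hipótesis implícita en el contexto de la sección.)

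El ingrediente clave es que el cociente $N:=a\LL/\Ann(M)$, siendo un $\LL$-módulo finito, es pseudo-nulo. Por la caracterización de los módulos pseudo-nulos establecida al inicio de la sección (en particular por el argumento de la filtración estricta $\M N\supset\M^{2}N\supset\dotsm$ que eventualmente se anula), existe $k\in\Nuno$ tal que $\M^{k}\cdot a\LL\subseteq\Ann(M)$, es decir $\M^{k}\cdot aM=0$. Por lo tanto $aM$ tiene una estructura natural de módulo sobre $\LL/\M^{k}$, que es un anillo \emph{finito} pues $\LL/\M$ es finito y $\LL/\M^{k}$ admite una filtración finita cuyos cocientes $\M^{i}/\M^{i+1}$ son espacios vectoriales de dimensión finita sobre $\LL/\M$.

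Como $M$ es noetheriano, el submódulo $aM\subseteq M$ es finitamente generado sobre $\LL$, y por lo tanto también sobre $\LL/\M^{k}$. Siendo finitamente generado sobre un anillo finito, $aM$ es un submódulo \emph{finito} de $M$. La hipótesis de que $M$ no contiene submódulos finitos no triviales obliga entonces a que $aM=0$, esto es $a\in\Ann(M)$, y en consecuencia $\Ann(M)=a\LL$.

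El único punto que merece algo de cuidado es la implicación \enquote{finito $\Rightarrow$ anulado por una potencia de $\M$} aplicada al cociente $N$, pero esta equivalencia ya está demostrada en la proposición de tres caracterizaciones de los módulos pseudo-nulos, así que la prueba se reduce esencialmente a combinar la \cref{prop:contenfin} con dicha equivalencia y la propiedad noetheriana de $M$.
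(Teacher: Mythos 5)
Tu demostración es correcta y sigue esencialmente la misma ruta que la del texto: tomar un pseudo-generador $a$ de $\Ann(M)$ vía la \cref{prop:contenfin}, probar que el submódulo $aM$ es finito y concluir con la hipótesis de ausencia de submódulos finitos no triviales (tu conclusión directa $aM=0$, luego $\Ann(M)=a\LL$, es la misma contradicción del texto formulada sin reducción al absurdo). La única diferencia, inesencial, está en cómo obtienes la finitud de $aM$: tú usas que $\M^{k}$ anula a $a\LL/\Ann(M)$ y que $\LL/\M^{k}$ es un anillo finito, mientras que el texto acota $|aM|\leq(a\LL:\Ann(M))^{d}$ mediante la sobreyección $(a\LL/\Ann(M))^{d}\twoheadrightarrow aM$ dada por $d$ generadores de $M$.
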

\begin{proof}
  Sea $M=\LL x_1+\dotsm+\LL x_d$.
Supongamos que $\Ann(M)$ no es
principal, entonces por la \cref{prop:contenfin}, $\Ann(M)$ está contenido en un ideal
principal $a\LL$ con índice finito y $a M$ es no trivial.
Existe un morfismo sobreyectivo $(a\LL/\Ann(M))^d\twoheadrightarrow aM$, por lo tanto
$$|a M|=(a M:\Ann(M) M)\leq (a \LL:\Ann(M))^{d},$$
lo cual contradice nuestra hipótesis en la carencia de submódulos finitos no triviales.  
\end{proof}

\begin{lem}\label{lem:pairwiseann} Sea $M$ un $\LL$-módulo noetheriano y de torsión, sin submódulos finitos no triviales. Sea $f=f_{1}f_{2}$ una factorización de su anulador con $f_{1}$ y $f_{2}$ coprimos. Entonces tenemos
$$M\sim f_{1}M\oplus f_{2}M\	\	\	\text{ y }\	\	\	f_{1}M\oplus f_{2}M\sim M$$
con $\Ann(f_{1}M)=f_{2}\LL$ y $\Ann(f_{2}M)=f_{1}\LL $.
\end{lem}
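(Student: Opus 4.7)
El plan es partir de la aplicación suma $\psi\colon f_{1}M\oplus f_{2}M\rightarrow M$, $(a,b)\mapsto a+b$, y construir adicionalmente una quasi-inversa $\phi\colon M\rightarrow f_{1}M\oplus f_{2}M$ componente a componente. Antes de eso calculamos los anuladores: como $\Ann(M)=f_{1}f_{2}\LL$, se tiene $f_{2}\LL\subseteq\Ann(f_{1}M)$, y recíprocamente si $gf_{1}M=0$ entonces $gf_{1}\in f_{1}f_{2}\LL$; cancelando $f_{1}$ en el dominio entero $\LL$ obtenemos $g\in f_{2}\LL$. Análogamente $\Ann(f_{2}M)=f_{1}\LL$, lo que establece la última afirmación del enunciado.

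Para la pseudo-equivalencia $f_{1}M\oplus f_{2}M\sim M$ basta estudiar $\psi$. Su núcleo se identifica con $f_{1}M\cap f_{2}M$, que es un submódulo noetheriano de $M$ aniquilado tanto por $f_{1}$ como por $f_{2}$ (si $x=f_{1}y=f_{2}z$, entonces $f_{1}x=f_{1}f_{2}z=0$ y simétricamente $f_{2}x=0$); el \cref{ejer:fgpseudonul} implica que es pseudo-nulo y por lo tanto finito, y la hipótesis sobre $M$ fuerza $f_{1}M\cap f_{2}M=0$, de modo que $\psi$ es inyectivo. El conúcleo $M/(f_{1}M+f_{2}M)$ está aniquilado por el ideal $f_{1}\LL+f_{2}\LL$, que es de índice finito en $\LL$ por el \cref{cor:fg-indice-finito}, y por ser $M$ noetheriano resulta finito.

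Para la pseudo-equivalencia $M\sim f_{1}M\oplus f_{2}M$ invertimos $\psi$ salvo un factor coprimo a $f$. Los primos de altura $1$ de $\LL$ son principales, así que ninguno puede contener simultáneamente a $f_{1}$ y $f_{2}$; en particular el ideal $f_{1}\LL+f_{2}\LL$ no está contenido en la unión finita de los primos divisores de $f$, y por evitación de primos existe $c\in f_{1}\LL+f_{2}\LL$ coprimo con $f$. Escribimos $c=g_{1}f_{1}+g_{2}f_{2}$ y definimos $\phi(x)=(g_{1}f_{1}x,g_{2}f_{2}x)$. Usando que $f_{2}$ actúa como $0$ en $f_{1}M$, y simétricamente, un cálculo directo da $\psi\circ\phi=c\cdot\id_{M}$ y $\phi\circ\psi=c\cdot\id_{f_{1}M\oplus f_{2}M}$. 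Por el \cref{ejer:soportealpha}, la multiplicación por $c$ es un pseudo-isomorfismo en ambos módulos (su anulador es $f\LL$, coprimo con $c$); por lo tanto $\ker\phi$ está contenido en la $c$-torsión de $M$ y $\coker\phi$ es cociente del conúcleo de la multiplicación por $c$ en $f_{1}M\oplus f_{2}M$, ambos pseudo-nulos.

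La dificultad principal está en exhibir el elemento $c$ coprimo con $f$ dentro de $f_{1}\LL+f_{2}\LL$ (evitación de primos) y en reducir las composiciones de $\phi$ y $\psi$ a la multiplicación por $c$, lo que descansa esencialmente sobre los anuladores calculados al inicio; una vez obtenidos estos ingredientes, el \cref{ejer:soportealpha} cierra el argumento de forma rutinaria.
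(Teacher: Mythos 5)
Tu propuesta es correcta y su esqueleto coincide con el del texto, pero la segunda dirección se verifica por una vía distinta, así que vale la pena comparar. La parte $f_{1}M\oplus f_{2}M\sim M$ es esencialmente la misma: la suma es directa porque $f_{1}M\cap f_{2}M$ está anulado por el par coprimo $f_{1},f_{2}$ (luego es finito y nulo por la hipótesis sobre $M$) y el conúcleo de la suma es finito; además tu cálculo de $\Ann(f_{1}M)=f_{2}\LL$ por cancelación en el dominio entero $\LL$ cubre una afirmación del enunciado que la demostración del texto deja implícita. Para $M\sim f_{1}M\oplus f_{2}M$ tú eliges, por evitación de primos, un $c=g_{1}f_{1}+g_{2}f_{2}$ coprimo con $f$, defines $\phi(x)=(g_{1}f_{1}x,g_{2}f_{2}x)$ y concluyes con las identidades $\psi\circ\phi=c\,\id$ y $\phi\circ\psi=c\,\id$ junto con el \cref{ejer:soportealpha}; el texto hace lo mismo con la elección $g_{1}=g_{2}=1$, para la cual $c=f_{1}+f_{2}$ es automáticamente coprimo con $f=f_{1}f_{2}$ (un irreducible que dividiera a ambos dividiría a $f_{1}$ y a $f_{2}$), de modo que la evitación de primos sobra, y en lugar de invocar el \cref{ejer:soportealpha} comprueba a mano que el núcleo de $x\mapsto(f_{1}x,f_{2}x)$ está anulado por los elementos coprimos $f_{1}+f_{2}$ y $f$ (luego es finito y nulo) y que el conúcleo está anulado por $f_{1}$ y $f_{2}$ vía la inclusión $(f_{1}+f_{2})(f_{1}M\oplus f_{2}M)\subseteq(f_{1}+f_{2})M$, luego es pseudo-nulo. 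Tu empaquetamiento con la cuasi-inversa es más estructural y entrega las finitudes de golpe; el del texto es más elemental y explícito, sin maquinaria adicional. Ambos argumentos son completos.
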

\begin{proof} Es claro que la suma $f_{1}M+f_{2}M$  es directa, ya que si $x\in f_{1}M \cap f_{2}M$, por la \cref{prop:contenfin} tenemos que $\LL/\Ann(x)\isom \LL x$ es finito. Como $M$ no contiene submódulos finitos no triviales concluimos que $x=0$. Por lo tanto tenemos la siguiente sucesión exacta 
$$ 0\rightarrow f_{1}M\oplus f_{2}M \rightarrow M \rightarrow M/f_{1}M\oplus f_{2}M.$$

Veamos que el mapeo $x\mapsto f_{1}x + f_{2}x$ es inyectivo. El núcleo de la aplicación es $\{x\in M \mid f_{1}x = -f_{2}x \}$, por lo que el núcleo es anulado por $(f_{1}+f_{2})$ y $f$. Entonces si $x$ es un elemento del núcleo tenemos
$$ (f_{1}+f_{2})\LL + f\LL  \subset \Ann(x) \subset \LL $$
pero $\LL/\Ann(x)\isom x\LL$ es finito por la \cref{prop:contenfin} ya que 
$(f_{1}+f_{2})$ y $f$ son coprimos. Por lo tanto $x=0$ y tenemos la siguiente sucesión exacta
$$ 0\rightarrow M \rightarrow f_{1}M\oplus f_{2}M \rightarrow (f_{1}M\oplus f_{2}M)/(f_{1}+f_{2})M . $$

Por último, demostramos que los conúcleos son finitos. Observemos que $M/f_{1}M\oplus f_{2}M$ es anulado por $f_{1}$ y por $f_{2}$. Es decir, al localizar en $\p$, existe siempre un elemento $f_{i}$ tal que $f_{i}M \in f_{1}M\oplus f_{2}M$. Por lo que el localizado es nulo para todo $\p$ de altura $\geq 1$. 

Por último, tenemos que
$$(f_{1}+f_{2})(f_{1}M\oplus f_{2}M)\subset (f_{1}+f_{2})M \subset f_{1}M\oplus f_{2}M,$$
además es fácil ver que 
$$f_{1}M\oplus f_{2}M/(f_{1}+f_{2})(f_{1}M\oplus f_{2}M)$$ 
es anulado por $f_{1}$ y $f_{2}$. Por lo tanto es pseudo-nulo.
\end{proof}

\begin{lem}\label{lem:pseudotorsii} Sean $M$ y $N$ $\LL$-módulos noetherianos y de torsión, ambos sin submódulos finitos no triviales. Entonces $M\sim N$ si y solamente si $N\sim M$.
\end{lem}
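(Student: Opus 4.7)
El plan es demostrar una afirmación de simetría, y como las hipótesis sobre $M$ y $N$ son simétricas, basta construir un pseudo-isomorfismo $\psi\colon N\to M$ a partir de uno $\varphi\colon M\to N$. La primera observación es que $\ker\varphi$ es un submódulo finito de $M$ y por lo tanto trivial; así $\varphi$ es inyectivo, y podemos ver $M$ como submódulo de $N$ con conúcleo finito $C=N/\varphi(M)$.

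La idea central es encontrar un elemento $\alpha\in\LL$ que anule a $C$ pero que sea coprimo con $\Ann(N)$, y entonces definir $\psi(n):=\varphi^{-1}(\alpha n)$. Para encontrar dicho $\alpha$ observamos que por la \cref{prop:Annppal} el anulador $\Ann(N)=g\LL$ es principal, mientras que $\Ann(C)$ tiene radical $\M$ por el \cref{ejer:radical} (pues $C$ es finito), así que tiene altura $2$. Como en el dominio de factorización única $\LL$ sólo hay un número finito de primos de altura $1$, digamos $(p_{1}),\dotsc,(p_{r})$, que contienen a $g$, por evasión de primos existe $\alpha\in\Ann(C)\setminus\bigcup_{i}(p_{i})$; este $\alpha$ es coprimo con $g$.

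Con este $\alpha$, la aplicación $\psi(n):=\varphi^{-1}(\alpha n)$ está bien definida, porque $\alpha C=0$ fuerza $\alpha N\subseteq\varphi(M)$, y $\varphi$ es inyectivo. Por el \cref{ejer:soportealpha}, la coprimalidad de $\alpha$ y $g$ garantiza que la multiplicación por $\alpha$ es un pseudo-isomorfismo en $N$, así que tanto $N[\alpha]$ como $N/\alpha N$ son finitos. Por lo tanto $\ker\psi=N[\alpha]$ es un submódulo finito de $N$ y por ende cero, mientras que $\coker\psi\isom\varphi(M)/\alpha N\subseteq N/\alpha N$ es finito. Esto hace de $\psi$ un pseudo-isomorfismo.

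El punto sutil -- y la razón por la cual la simetría falla sin la hipótesis de ausencia de submódulos finitos no triviales (compárese con $\M\sim\LL$ pero $\LL\not\sim\M$) -- es que esta hipótesis se utiliza dos veces: para garantizar que $\Ann(N)$ es principal (de manera que podamos aplicar evasión de primos) y para concluir que el núcleo finito $N[\alpha]$ efectivamente se anula. No anticipo obstáculos más profundos, sólo el cuidado de desenvolver la definición de $\psi$ y verificar las identificaciones de cociente.
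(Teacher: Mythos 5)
Tu demostración es correcta y sigue esencialmente el mismo camino que la del texto: construir el pseudo-isomorfismo inverso como $\varphi^{-1}$ precompuesto con la multiplicación por un elemento coprimo al generador $g$ de $\Ann(N)$ que manda $N$ dentro de la imagen de $\varphi$, y verificar que núcleo y conúcleo resultan finitos. Las únicas diferencias son menores: tú usas la hipótesis de ausencia de submódulos finitos para concluir de entrada que $\varphi$ es inyectivo (el texto en cambio restringe a $cM$ con $c$ que anula $\ker\varphi$), y haces explícita mediante evasión de ideales primos la existencia del elemento $\alpha$ coprimo con $g$ que anula el conúcleo, paso que el texto sólo afirma al elegir su $d$.
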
 
\begin{proof} Sea $M\stackrel{\varphi}{\longrightarrow} N$ un pseudo-isomorfismo. Sea $c\in\LL$ un elemento coprimo a $f$, el generador de $\Ann(M)$, tal que $c$ anula $\ker(\varphi)$ (\cref{ejer:fgpseudonul}). La restricción de $\varphi$ a $cM$ es inyectiva, y $cM$ es de índice finito en $M$. 

La imagen $\varphi(cM)$ es de índice finito en $N$ (\cref{ejer:pseudo-isomorfo-cociente-finito}). Es decir, podemos elegir $d$ coprimo con $g$, el generador de $\Ann(N)$, tal que $dN\subset\varphi(cM)$. La multiplicación por $d$ en $N$ es pseudo-inyectiva. Como la composición de pseudo-isomorfismos es un pseudo-isomorfismo (\cref{ejer:pseudotransitiva}), tenemos un pseudo-isomorfismo
$$N\rightarrow dN \stackrel{\varphi^{-1}}{\longrightarrow} cM \hookrightarrow M,$$
donde $\varphi^{-1}(dN)\subset cM$ y como $\varphi(dM)\subseteq dN$, tenemos que $dM\subseteq \varphi^{-1}(dN)$ y deducimos que $\coker(\varphi^{-1})$ es pseudo-nulo.
\end{proof}

El lema precedente es de hecho cierto con más generalidad y es una consecuencia del teorema de estructura. La relación de pseudo-isomorfismo es una relación de equivalencia entre los $\LL$-módulos noetherianos de torsión (ver el \cref{ejer:relacion-simetrica}). 

\begin{prop}\label{prop:anuladorprimariosumapei} Sea $M$ un $\LL$-módulo noetheriano y de torsión sin submódulo finito y con anulador de la forma $\p^e$ con $\p$ un ideal primo principal de $\LL$. Entonces $M$ es pseudo-isomorfo a una suma directa
  $$M\sim \bigoplus_{i=1}^{k} \LL/\p^{e_{i}}.$$
\end{prop}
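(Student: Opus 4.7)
The strategy is to localize at $\p$ and apply the classical structure theorem for modules over a PID, then lift the resulting decomposition back to $M$ via a pseudo-isomorphism. Since $\LL$ is a regular local UFD of Krull dimension $2$ and $\p$ is a height-one prime, the localization $L := \LL_\p$ is a DVR; let $\pi \in \LL$ be a generator of $\p$. The module $M_\p$ is a finitely generated torsion $L$-module, so the PID structure theorem yields
\[ M_\p \isom \bigoplus_{i=1}^{k} L/\pi^{e_i} L \]
for certain integers $1 \le e_i \le e$.

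The next step is to lift this decomposition to $M$: for each $i$ I pick an element $x_i \in M$ whose image in $M_\p$ is a generator of the $i$-th cyclic summand, clearing denominators if necessary. The crucial calculation is that $\Ann_\LL(x_i) = (\pi^{e_i})$. Since $M$ has no nontrivial finite submodule, neither does the cyclic submodule $\LL x_i$, so by \cref{prop:Annppal} the ideal $\Ann(x_i)$ is principal, say $(\pi^{n_i})$; moreover $\p$-primary ideals of $\LL$ extend and contract through $L$ without change, which forces $n_i$ to equal the $L$-exponent $e_i$ of the image of $x_i$.

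Having secured the lifts, I define the $\LL$-linear map
\[ \varphi \colon N := \bigoplus_{i=1}^{k} \LL/\pi^{e_i}\LL \longrightarrow M \]
that sends the $i$-th canonical generator to $x_i$ (well defined because $\pi^{e_i}x_i = 0$). By construction $\varphi_\p$ coincides with the isomorphism displayed above, so both $\ker(\varphi)$ and $\coker(\varphi)$ vanish after localizing at $\p$, and are therefore annihilated by some element $s \notin \p$; they are also annihilated by $\pi^e$, being subquotients of modules killed by $\pi^e$. As $s$ and $\pi^e$ are coprime in the UFD $\LL$, \cref{ejer:fgpseudonul} shows that both are pseudo-nulos, i.e.\ $\varphi$ is a pseudo-isomorfismo $N \to M$. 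A brief filtration argument using the short exact sequences $0\to\pi\LL/\pi^{e_i}\LL\to\LL/\pi^{e_i}\LL\to\LL/\pi\LL\to0$ shows that $N$ itself has no nontrivial finite submodule, so \cref{lem:pseudotorsii} converts this into the desired $M \sim N$.

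The main obstacle is the annihilator computation in the lifting step: a priori one only knows that $(\pi^{e_i})\subseteq\Ann_\LL(x_i)$, and without the \enquote{no finite submodule} hypothesis this containment could be strict, which would cause $\varphi$ to have pseudo-null cokernel but not pseudo-null kernel. It is precisely \cref{prop:Annppal} that makes the hypothesis usable, reducing the annihilator problem over $\LL$ to an equivalent one over the DVR $L$ via contraction of $\p$-primary ideals.
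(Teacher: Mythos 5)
Tu demostración es correcta, pero sigue una ruta genuinamente distinta a la del texto. La prueba del texto no localiza: distingue los casos $\p=(P)$ con $P$ distinguido e irreducible y $\p=(\pi)$, y en cada uno rehace a mano la inducción clásica del teorema de estructura sobre un dominio principal (usando que $M$ es $\O$-libre, resp.\ analizando $M/\pi M$ sobre $k\llbracket T\rrbracket$, eligiendo un elemento de anulador máximo $\p^e$ y corrigiendo levantamientos de generadores del cociente), para terminar con \cref{lem:pseudotorsii}. Tú, en cambio, delegas toda la parte inductiva al teorema de estructura sobre el anillo de valuación discreta $L=\LL_\p$ (que está dentro de las herramientas del texto, cf.\ \cref{ejer:PIDSmLL}), levantas generadores, identificas $\Ann_\LL(x_i)=(\pi^{e_i})$ vía \cref{prop:Annppal} y la contracción de ideales $\p$-primarios, y concluyes que núcleo y conúcleo de $\varphi$ son anulados simultáneamente por $\pi^e$ y por un $s\notin\p$, luego pseudo-nulos por el \cref{ejer:fgpseudonul}; el cambio de dirección vuelve a usar \cref{lem:pseudotorsii}. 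Tu enfoque es uniforme en $\p$ (sin distinción de casos), más corto y del mismo espíritu que la demostración posterior de la \cref{prop:sumadirtorlib}; el del texto es más elemental y autocontenido, y construye la descomposición explícitamente dentro de $M$. Un detalle menor en tu comentario final: la contención que se tiene a priori es $\Ann_\LL(x_i)\subseteq(\pi^{e_i})$ (es decir, lo que podría fallar sin la hipótesis de ausencia de submódulos finitos es que $\pi^{e_i}x_i\neq0$, y con ello la buena definición de $\varphi$), no la contención inversa que escribes; el cuerpo de tu argumento trata este punto correctamente, así que no afecta la validez de la prueba.
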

\begin{proof} Primero supongamos que $\p=P$ con $P$ un polinomio distinguido e irreducible. El cociente $\LL/P^{e}$ es un $\O$-módulo libre de dimensión $e\deg(P)$. Por lo tanto, como $M$ es finitamente generado como $\LL/P^{e}$-módulo, también lo es como $\O$-módulo. El submódulo de $\O$-torsión $T_{\O}(M)$ de $M$, por el teorema de estructura de módulos finitamente generados sobre anillos principales, es anulado por una potencia $\pi^{m}$ del uniformizante de $\O$. Por lo que $T_{\O}(M)$ es un $\LL$-módulo anulado por $\pi^{m}$ y por $P^{e}$, entonces pseudo-nulo. Por hipótesis, $M$ no contiene submódulos finitos no triviales, de donde concluimos que $M$ es libre de $\O$-torsión y por lo tanto libre. Sea $d=\rg_{\O}M$, vamos a proceder por inducción para demostrar el resultado.

Si $d=0$ no hay nada que demostrar. Supongamos que la hipótesis es cierta para $M$ de rango menor que $d$. 

Sea $x\in M$ de anulador mínimo $P^{e}$ y consideremos la sucesión exacta corta
$$0\longrightarrow \LL x \longrightarrow M \longrightarrow M/\LL x \longrightarrow 0.$$
Por hipótesis de inducción $M/\LL x$ es pseudo-isomorfo a una suma directa $M'=\bigoplus_{i=1}^{m} \LL/\p^{e_{i}}$. Tomemos un $c\notin \p$, tal que la imagen de $M/\LL x$ contiene $cM'$. Escribimos
$$cM'=\bigoplus_{i=1}^{m}\LL cx_{i}$$
con $x_{i}\in M'$. Tomamos levantamientos $y_{i}$ en $M$ de los $cx_{i}$. Dado que $P\nmid x$, podemos escribir
$$P^{e_{i}}y_{i}=fP^{e'_{i}}x\in\LL x$$
con $P\nmid f$ y $e'_{i}\geq e_{i}$. Sea $z_{i}=y_{i}-fP^{e'_{i}-e_{i}}x$ en $M$. Claramente las imágenes $\bar{z}_{i}$ y $\bar{y}_{i}$ de $z_{i}$ y de $y_{i}$ en $M/\LL x$ coinciden, además $P^{e_{i}}z_{i}=0$ en $M$, por lo tanto $\LL z_{i} \isom \LL/\p^{e_{i}}$. La suma $\sum_{i=1}^{m}\LL z_{i}$ es directa, isomorfa a 
$$\bigoplus_{i=1}^{m}\LL/\p^{e_{i}}.$$
Definamos por último 
$$M''=\left( \bigoplus_{i=1}^{m} \LL z_{i} \right) + \LL x,$$
la suma es directa y es un submódulo de $M$. El cociente $M''/\LL x=\bigoplus_{i=1}^{m}\LL \bar{z}_{i}$ es isomorfo a $M'$. Como $M/\LL x$ y $M'$ son pseudo-isomorfos, tenemos que el morfismo inducido
$$(M/\LL x )/(M''/\LL x) \longrightarrow M'/(M''/\LL x)=0$$
tiene núcleo y conúcleo finitos (ver el \cref{ejer:pseudo-isomorfo-cociente-finito}). De donde tenemos que $M/M''$ es finito y por el \cref{lem:pseudotorsii} tenemos que $M\sim M''$.

Ahora supongamos que $\p=\pi$. El cociente $M/\pi M$ es un $k\llbracket T \rrbracket$ módulo noetheriano, por lo tanto es una suma directa de un módulo finito y un módulo libre de dimensión $d$. Vamos a demostrar el enunciado por inducción sobre la dimensión del módulo libre.

Supongamos que $d=0$, entonces $M/\pi M$ es finito, anulado por una potencia de $T$ digamos $T^{m}$. Tenemos $T^{m}M\subset \pi M$, es decir $T^{em}M=0$, ya que $\pi^{e}M=0$. Es decir, $M$ es anulado por $T^{em}$ y $\pi^{e}$, por lo tanto pseudo-nulo. 

Supongamos que la hipótesis se verifica para dimensiones menores que $d$ y sea entonces $M/\pi M$ una suma directa de un módulo finito y un $k\llbracket T \rrbracket$-módulo libre de dimensión $d$. Luego procedemos con la inducción exactamente como en el caso anterior, tomando un elemento $x\in M$ de anulador mínimo $\pi^{e}$.
\end{proof}

\ejercicios

\begin{ejer}\label{ejer:pseudonofinito} Demuestre que todo $\LL$-módulo $M$ noetheriano es pseudo-isomorfo a $M/F$ donde $F$ es su submódulo máximo finito. 
\end{ejer}

\section{Teorema de estructura}

Para un $\LL$-módulo noetheriano $M$ escribimos
\begin{enumerate}
\item $T(M)$ como su submódulo de torsión. 
\item $F(M)=M/T(M)$ su máximo cociente libre de torsión.
\end{enumerate}

\begin{prop}\label{prop:Mcontained} Sea $M$ un $\LL$-módulo noetheriano libre de
  torsión. Entonces $M$ es pseudo-isomorfo a $\LL^{d}$ para un único $d\geq 0$.
\end{prop}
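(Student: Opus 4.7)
Mi plan es separar el argumento en unicidad y existencia. Para la unicidad escribo $K=\Frac(\LL)$ y observo que cualquier pseudo-isomorfismo $\varphi\colon M\to\LL^d$ tiene núcleo y conúcleo finitos, por ende de torsión, que se anulan al tensorizar con $K$; por consiguiente $M\otimes_\LL K\isom K^d$, lo que obliga a $d=\dim_K(M\otimes_\LL K)$.

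Para la existencia fijaré $d=\dim_K(M\otimes_\LL K)$ y trabajaré con la aplicación canónica de bidualidad $\iota\colon M\to M^{**}$, donde $M^*=\Hom_\LL(M,\LL)$. La idea es que $\iota$ es un pseudo-isomorfismo y que $M^{**}$ resulta libre de rango $d$. La inyectividad de $\iota$ es consecuencia de que $M$ es libre de torsión: $M$ se inyecta en $M\otimes_\LL K\isom K^d$, y para cada $0\neq m\in M$ una proyección coordenada apropiada sobre $K$, multiplicada por un denominador común de sus valores en un sistema finito de generadores de $M$, produce un funcional en $M^*$ que no se anula en $m$.

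El siguiente paso es ver que $\coker(\iota)$ es pseudo-nulo, para lo cual basta verificar que $(\coker\iota)_\p=0$ para todo ideal primo $\p\subset\LL$ de altura uno. Como $\LL$ es un dominio de factorización única, cada tal $\p$ es principal y $\LL_\p$ es un anillo local de valuación discreta; por lo tanto $M_\p$ es finitamente generado y libre de torsión sobre $\LL_\p$, de donde es libre y, en particular, reflexivo, dando $(M^{**})_\p=(M_\p)^{**}=M_\p$ e $\iota_\p$ es un isomorfismo.

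El paso más delicado, y el obstáculo principal, será demostrar que $M^{**}$ es libre de rango $d$. El rango se calcula en la fibra genérica, que vale $d$ por construcción. La libertad explota el hecho de que $\LL$ es un anillo regular local de dimensión dos: vía la fórmula de Auslander--Buchsbaum $\mathrm{pd}(N)+\operatorname{prof}(N)=\operatorname{prof}(\LL)=2$ aplicada a $N=M^{**}$, todo módulo reflexivo finitamente generado tiene profundidad al menos $2$ sobre $\LL$, luego dimensión proyectiva nula, luego es proyectivo y libre por ser $\LL$ local. Junto con la inyectividad de $\iota$ y la pseudo-nulidad de su conúcleo, esto proporciona el pseudo-isomorfismo $M\hookrightarrow M^{**}\isom\LL^d$ buscado. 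Es precisamente este último paso el que exige maquinaria homológica más pesada que el resto del texto; una alternativa más elemental sería inducir sobre $d$ usando la saturación de un $\LL x\subseteq M$, pero el paso inductivo de combinar pseudo-isomorfismos a través de $0\to\widehat{\LL x}\to M\to M/\widehat{\LL x}\to 0$ también apela, vía la proyectividad de $\LL^{d-1}$, al mismo contenido homológico.
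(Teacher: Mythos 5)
Tu demostración es correcta, pero sigue una ruta genuinamente distinta a la del texto. El texto procede por inducción sobre el rango $d=\dim_\Phi(M\otimes_\LL\Phi)$: proyecta $V=M\otimes_\LL\Phi$ sobre una recta, usa el caso de rango uno (la \cref{prop:contenfin}) para meter la imagen $M_1$ en $\LL e_1$ con índice finito, construye explícitamente una sección $j_1$ al nivel de espacios vectoriales y demuestra que $M$ queda contenido con índice finito en $M_1\oplus M_2$, concluyendo por inducción; no emplea herramienta homológica alguna, sólo álgebra lineal sobre el campo de cocientes y la \cref{prop:contenfin}. Tú, en cambio, pasas por el bidual: inyectividad de $\iota\colon M\to M^{**}$ por ser $M$ libre de torsión, pseudo-nulidad del conúcleo localizando en los primos de altura uno (donde $\LL_\p$ es de valuación discreta y $M_\p$ es libre, luego reflexivo), y libertad de $M^{**}$ vía profundidad y la fórmula de Auslander--Buchsbaum sobre el anillo local regular de dimensión dos $\LL$. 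Todos estos pasos son correctos (incluida la identificación $(M^{**})_\p=(M_\p)^{**}$, lícita porque $M$ es finitamente presentado, y la observación de que basta revisar los primos de altura uno, pues la anulación ahí implica la anulación en el primo cero); lo que compra tu enfoque es un argumento más corto y conceptual, que además produce el pseudo-isomorfismo canónico $M\hookrightarrow M^{**}\isom\LL^d$, a costa de importar maquinaria (regularidad, profundidad, Auslander--Buchsbaum) que el texto no desarrolla, mientras que el argumento del texto es elemental y autocontenido al nivel del libro. Tu tratamiento explícito de la unicidad de $d$ mediante la fibra genérica es bienvenido: el texto la deja implícita.
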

\begin{proof}
  Sea $\Phi$ el campo de cocientes de $\LL$.
Vamos a proceder por inducción sobre la dimensión de $V=M\otimes_{\LL}\Phi$, que también llamaremos rango de $M$. Si $d=1$, el resultado es una consecuencia de la \cref{prop:contenfin}. Supongamos que el resultado se cumple para módulos de rango menor que $d$. 

Sea $\pi_{1}:V\rightarrow V_{1}$ una proyección de $V$ a un $\Phi$-espacio vectorial $V_1$ de dimensión $1$. Definimos $M_{1}=\pi_{1}(M)$. Sea $e_{1}\in V_{1}$ una base tal que $M_{1}\subset \LL e_{1}$, con índice finito. Entonces existe $n\in\Nuno$ tal que $ne_{1}\in M_{1}$. Sea $m\in M$ una preimagen de $ne_{1}$ bajo $\pi_{1}$. Definimos
\begin{eqnarray*}
j_{1}:V_{1} & \hookrightarrow & V \\
e_{1} & \longmapsto & \dfrac{1}{n} m.
\end{eqnarray*}
Tenemos $\pi_{1}\circ j_{1} =  \id|_{V_{1}}$. Además para todo elemento $m_{1}\in M_{1}$ tenemos que $nj_{1}(m_{1})\in M$. 

Sea $V_{2}=\ker(V\stackrel{\pi_{1}}{\longrightarrow} V_{1})\isom V/j_{1}(V_{1})$. Entonces tenemos los siguientes morfismos
\begin{equation*}
\begin{tikzcd}
      V_{1} \arrow[r,bend left,"j_{1}"] & \arrow[l,bend left,"\pi_{1}"] V \arrow[r,bend left,"\pi_{2}"] & V_{2}, \arrow[l,bend left,"j_{2}"]
\end{tikzcd}
\end{equation*}
con $V\isom V_{1}\oplus V_{2}$. Definamos $M_{2}=\pi_{2}(M)$. 

Consideremos el morfismo
$$(\pi_{1},\pi_{2}):M\hookrightarrow M_{1} \oplus M_{2} \hookrightarrow V. $$
Queremos demostrar que $M$ está contenido con índice finito en $M_{1}\oplus M_{2}$, para esto queremos demostrar que para todo elemento $m_{1}+m_{2}\in M_{1}\oplus M_{2}$, existe un $n$ tal que $n(m_{1}+m_{2})$ está en la imagen de $M$. 

Sea $m'\in M$ una preimagen de $m_{2}$ bajo $\pi_{2}|_{M}$, es decir $\pi_{2}(m')=m_{2}$. Como $m_{1}-\pi_{1}(m')\in M_{1}$, tenemos que $m'' = nj_{1}(m_{1}-\pi_{1}(m'))$ es un elemento de $M$. De donde obtenemos 
$$\pi_{1}(m'')=n(m_{1}-\pi_{1}(m'))\	\	\	\text{ y }\	\	\	\pi_{2}(m'')=0.$$

Sea $m=nm'+m''$. Entonces
\begin{eqnarray*}
\pi_{1}(m)+\pi_{2}(m) &=& n \pi_{1}(m') + \pi_{1}(m'') + n \pi_{2}(m') + \pi_{2}(m'')  \\
&=& n (\pi_{1}(m') + (m_{1}+\pi_{1}(m')) + m_{2}) \\
&=& n (m_{1}+m_{2}). 
\end{eqnarray*}
Es decir, $M$ tiene índice finito en $M_{1}\oplus M_{2}$. Por inducción $M_{1}$ y $M_{2}$ están contenidos en sendos módulos libres con índice finito y por lo tanto su suma directa también. 
\end{proof}

La siguiente proposición, así como los resultados que le siguen, son análogos a los teoremas de estructura de grupos abelianos finitamente generados y más generalmente de módulos sobre anillos principales (e.g. \cite[I.\S 8 y III.\S 7]{MR1878556}).

\begin{prop}\label{prop:sumadirtorlib} Sea $M$ un $\LL$-módulo noetheriano y $T(M)$ su submódulo de torsión. Entonces $M$ es pseudo-isomorfo a la suma directa
$$M\sim T(M) \oplus  L,$$
de $T(M)$ y de un $\LL$-módulo libre $L$ de rango finito. 
\end{prop}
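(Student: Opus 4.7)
El plan es construir un pseudo-isomorfismo explícito $\phi\colon M\to T(M)\oplus\LL^d$, donde $d$ es el rango de $M$ sobre $\LL$. Por la \cref{prop:Mcontained} existe una inyección $\iota\colon F(M)\hookrightarrow\LL^d$ con conúcleo finito $C$. La segunda componente de $\phi$ será la composición $\alpha=\iota\circ\pi\colon M\to\LL^d$, donde $\pi\colon M\twoheadrightarrow F(M)$ es la proyección canónica.

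El punto más delicado es construir la primera componente $r\colon M\to T(M)$ cuya restricción a $T(M)$ sea un pseudo-isomorfismo. Para esto analizo $\operatorname{Ext}^1(F(M),T(M))$: aplicando $\Hom(-,T(M))$ a la sucesión $0\to F(M)\to\LL^d\to C\to 0$ y usando la proyectividad de $\LL^d$, obtengo un isomorfismo $\operatorname{Ext}^1(F(M),T(M))\isom\operatorname{Ext}^2(C,T(M))$. Como $C$ es finito, está anulado por alguna potencia $\M^n$ del ideal máximo; por funtorialidad este grupo Ext también lo está, y al ser un $\LL$-módulo finitamente generado con soporte únicamente en $\M$, resulta finito. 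En particular, la clase $[M]$ de la extensión está anulada por $\M^n$.

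Los primos de altura $1$ en el soporte de $T(M)$ son los minimales del anulador (no nulo) de $T(M)$, y por lo tanto una cantidad finita. Como $\M^n$ no está contenido en ninguno de ellos por razones de altura, la evitación de ideales primos permite escoger $\mu\in\M^n$ fuera de todos ellos. Para tal $\mu$, la multiplicación por $\mu$ es un isomorfismo en $T(M)$ localizado en cada primo de altura $\leq 1$ (trivial en el genérico, pues $T(M)$ es de torsión; y en altura $1$ porque $\mu$ es unidad allí), así que es un pseudo-isomorfismo global. Como $\mu\cdot[M]=0$, existe un morfismo $r\colon M\to T(M)$ que extiende $\mu\cdot\id_{T(M)}$.

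Con $\phi=(r,\alpha)$ se obtiene el diagrama conmutativo
\begin{equation*}
  \begin{tikzcd}
    0 \arrow[r] & T(M) \arrow[r] \arrow[d, "\mu"] & M \arrow[r] \arrow[d, "\phi"] & F(M) \arrow[r] \arrow[d, "\iota"] & 0 \\
    0 \arrow[r] & T(M) \arrow[r] & T(M)\oplus\LL^d \arrow[r] & \LL^d \arrow[r] & 0
  \end{tikzcd}
\end{equation*}
cuyas columnas externas son pseudo-isomorfismos. El lema de la serpiente implica entonces que $\phi$ tiene núcleo y conúcleo finitos, es decir, es un pseudo-isomorfismo. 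El obstáculo principal está en el paso de la construcción de $r$: la identificación del Ext como finito y la aplicación correcta de la evitación de ideales primos.
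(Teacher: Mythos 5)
Tu demostración es correcta y desemboca en el mismo diagrama final que el texto, pero el paso clave —construir $r\colon M\to T(M)$ cuya restricción a $T(M)$ sea la multiplicación por un elemento que evita los primos de altura $1$ del soporte de $T(M)$— lo obtienes por una vía genuinamente distinta. El texto (siguiendo NSW, Prop.\ 5.1.7) localiza en el conjunto multiplicativo $S$ complementario de dichos primos, usa que $S^{-1}\LL$ es un dominio de ideales principales (\cref{ejer:PIDSmLL}) para escindir $0\to T(M)\to M\to F(M)\to 0$ tras localizar, y luego \enquote{limpia denominadores} mediante $\Hom_{S^{-1}\LL}(S^{-1}M,S^{-1}T(M))=S^{-1}\Hom_\LL(M,T(M))$ para obtener $f_1$ con $f_1|_{T(M)}=s_0s_1\id_{T(M)}$. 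Tú, en cambio, usas el encaje de índice finito $F(M)\hookrightarrow\LL^d$ que da la \cref{prop:Mcontained} (nota que cualquier pseudo-isomorfismo $F(M)\to\LL^d$ es automáticamente inyectivo, pues $F(M)$ no tiene torsión) para identificar $\operatorname{Ext}^1_\LL(F(M),T(M))\isom\operatorname{Ext}^2_\LL(C,T(M))$, observas que este grupo está anulado por una potencia $\M^n$ y eliges $\mu\in\M^n$ fuera de los finitos primos de altura $1$ del soporte de $T(M)$ por evitación de primos (posible porque $\M^n\subseteq\p$ forzaría $\M=\p$); la nulidad de $\mu\cdot[M]$ te da $r$, equivalentemente porque la imagen de $\mu\id_{T(M)}$ bajo el morfismo de conexión $\Hom(T(M),T(M))\to\operatorname{Ext}^1(F(M),T(M))$ es $\mu[M]=0$. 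Ambos argumentos concluyen igual: el elemento elegido induce un pseudo-isomorfismo en $T(M)$ (\cref{ejer:soportealpha}) y el lema de la serpiente remata. Tu ruta evita la maquinaria de localización en un dominio de ideales principales y la conmutación de $\Hom$ con la localización (que requiere presentación finita), a cambio de usar funtores $\operatorname{Ext}$, que el texto no desarrolla pero son estándar; además produce de golpe el pseudo-isomorfismo hacia $T(M)\oplus\LL^d$, mientras que el texto compara primero con $T(M)\oplus F(M)$ y después invoca la \cref{prop:Mcontained}. Detalle menor: la finitud de $\operatorname{Ext}^2_\LL(C,T(M))$ es superflua; basta que esté anulado por $\M^n$.
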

\begin{proof}[según Prop. 5.1.7 \cite{NSW}]
Por la \cref{prop:Mcontained} basta demostrar que $M\sim T(M) \oplus F(M)$. 

Sea $\{\p_{1},\ldots,\p_{h}\}$ el conjunto de ideales primos de altura $1$ de $\LL$ para los cuales $T(M)_{\p_{i}}\neq 0$, es decir $\{\p_{1},\ldots,\p_{h}\}\subset \Sop(T(M))$. Si el conjunto es vacío, i.e. $h=0$, entonces $T(M)$ es pseudo-nulo. Tenemos la composición de pseudo-isomofimos
$$M \rightarrow F(M) \hookrightarrow T(M)\oplus F(M),$$
de donde obtenemos el resultado. 

Ahora supongamos que $h\geq 1$. Entonces $S^{-1}\LL$ es un dominio de ideales principales
(ver el \cref{ejer:PIDSmLL}). Además $T(S^{-1}M)=S^{-1}T(M)$, lo cual junto con el teorema
de estructura de módulos sobre dominios de ideales principales que
$S^{-1}M=S^{-1}T(M)\oplus F(S^{-1}M)$. Es decir, la proyección de $S^{-1}M$ en $S^{-1}T(M)$
admite una sección.

El anillo $\LL$ es noetheriano, por lo tanto $M$ es un $\LL$-módulo finitamente presentable. Tenemos la siguiente identidad \cite[Prop. 2.10]{Eisenbud95CA}
$$\Hom_{S^{-1}\LL}(S^{-1}M,S^{-1}T(M)) = S^{-1}\Hom_{\LL}(M,T(M)).$$
La proyección $\pi:S^{-1}M\rightarrow S^{-1}T(M)$ la podemos escribir como $\pi=\dfrac{f_{0}}{s_{0}}$ para un $s_{0}$ en $S$ y $f_{0}:M\rightarrow T(M)$. Entonces $\dfrac{f_{0}}{s_{0}}\vert_{S^{-1}T(M)}=\id_{S^{-1}T(M)}$. 

Ahora sea $x\in T$, entonces $\dfrac{f_{0}(x)}{s_{0}}=\dfrac{x}{1}$. Por la definición de localización en $S$ tenemos que existe $s_{1}\in S$ tal que $s_{1}(f_{0}(x)-s_{0}x)=0$, equivalentemente $s_{1}f_{0}(x)=s_{0}s_{1}x$. Haciendo $f_{1}=s_{1}f_{0}$ tenemos que 
$$f_{1}\vert_{T(M)}=s_{0}s_{1}\id_{T(M)}.$$

Definiendo 
$$ f=(f_{1},g):M  \longrightarrow  T(M)\oplus F(M) $$
donde $g$ es el mapeo canónico de $M$ a $F(M)$, tenemos el diagrama conmutativo
\begin{equation*}
    \begin{tikzcd}
      0 \arrow[r] & T(M) \arrow[d, "f_{1}\vert_{T(M)}"] \arrow[r] & M \arrow[d, "f"]\arrow[r]   & F(M) \arrow[r]  \arrow[d,equal] & 0 \\
      0 \arrow[r] & T(M) \arrow[r] & M \arrow[r] &F(M) \arrow[r] & 0 \\
     \end{tikzcd}
  \end{equation*}
De donde obtenemos que $\ker(f_{1}\vert_{T(M)})=\ker(f)$ y $\coker(f_{1}\vert_{T(M)})=\coker(f)$. Finalmente, es fácil ver que la multiplicación por $s_{0}s_{1}$ en $T(M)$ es un pseudo-isomorfismo. 
\end{proof}

\begin{defi}\label{def:modelemental}
  Decimos que un $\LL$-módulo $E$ es \define[módulo elemental]{elemental} si es de la forma
$$E=\LL^{r} \oplus \left( \bigoplus_{i=1}^{k}\LL/\p_{i}^{e_i}\LL \right),$$
donde los $\p_{i}$ ideales primos (posiblemente repetidos) de $\LL$ de altura $1$ y
$r,e_i\ge0$ son enteros.
\end{defi}

El siguiente teorema es uno de los pilares en teoría de Iwasawa. A pesar de que $\LL$ no sea un anillo principal, es muy interesante que tengamos un teorema de estructura salvo pseudo-isomorfismos. Como veremos más adelante, este resultado y unas técnicas de descenso nos permitirán demostrar el teorema de Iwasawa (\cref{thm:iwasawa-ThmdIwa}) sobre grupos de clases.
  
\begin{thm}[\importante{Teorema de Estructura}]\label{thm:estructura}
  Todo $\LL$-módulo noetheriano es pseudo-isomorfo a un único $\LL$-\define{módulo elemental} $E$
$$M\sim E = \LL^{\rho} \oplus \left( \bigoplus_{i=1}^{m}\LL/\pi^{\mu_{i}}\LL\right)\oplus \left( \bigoplus_{j=1}^{l}\LL/P_{j}\LL\right)$$
donde los $P_{j}$ son polinomios distinguidos ordenados por divisibilidad. 
\end{thm}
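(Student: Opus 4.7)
The plan is to assemble this result from the propositions already proved in the preceding subsections. By \cref{ejer:pseudonofinito}, replacing $M$ with $M/F$ (where $F$ is its maximal finite submodule) changes nothing up to pseudo-isomorphism, so I may assume from the outset that $M$ contains no non-trivial finite submodule. Then by \cref{prop:sumadirtorlib} there is a pseudo-isomorphism
\begin{equation*}
  M \sim T(M) \oplus L
\end{equation*}
with $L$ a free $\LL$-module of finite rank $\rho$, producing the summand $\LL^\rho$. What remains is to produce an elementary decomposition of the torsion part $T:=T(M)$, which I may also assume to have no non-trivial finite submodule (applying \cref{ejer:pseudonofinito} once more if necessary).

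Next, by \cref{prop:Annppal} the annihilator $\Ann(T)$ is principal, say $\Ann(T)=(f)$. Using that $\LL$ is a unique factorization domain I write $f=\pi^{\mu} P_1^{e_1}\cdots P_k^{e_k}$ with distinct irreducible distinguished polynomials $P_j$; the factors $\pi^{\mu}$ and $P_j^{e_j}$ are pairwise coprime. Iterating \cref{lem:pairwiseann} (and keeping track that each summand inherits the property of having no non-trivial finite submodule, again via \cref{ejer:pseudonofinito}) gives
\begin{equation*}
  T \sim T_0 \oplus \bigoplus_{j=1}^k T_j,\quad \Ann(T_0)=(\pi^{\mu}),\ \Ann(T_j)=(P_j^{e_j}).
\end{equation*}
Now each $T_i$ has annihilator a power of a single height-one prime, so \cref{prop:anuladorprimariosumapei} applies and yields a pseudo-isomorphism $T_i \sim \bigoplus_\ell \LL/\mathfrak{p}_i^{e_{i,\ell}}$. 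Collecting the $\mathfrak{p}=(\pi)$ contributions into the $\LL/\pi^{\mu_i}\LL$ summands and the $\mathfrak{p}=(P_j)$ contributions into the $\LL/P_j\LL$ summands (after rearranging so that the distinguished-polynomial summands are ordered by divisibility) gives the claimed elementary form.

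For uniqueness, the idea is to read off each invariant from a quantity that is manifestly preserved under pseudo-isomorphism. The rank $\rho$ equals $\dim_\Phi(M\otimes_{\LL}\Phi)$, where $\Phi=\Frac(\LL)$; this is a pseudo-isomorphism invariant because pseudo-null modules are $\LL$-torsion and vanish after tensoring with $\Phi$. For each height-one prime $\mathfrak{p}$, localizing at $\mathfrak{p}$ kills pseudo-null modules by definition and turns $\LL_{\mathfrak{p}}$ into a DVR; the classical structure theorem for finitely generated modules over a DVR then recovers from $M_{\mathfrak{p}}$ the multiset of exponents of the $\LL/\mathfrak{p}^{e}$-summands of $E$ sitting over that prime. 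Doing this for $\mathfrak{p}=(\pi)$ recovers the $\mu_i$, and for $\mathfrak{p}=(Q)$ with $Q$ an irreducible distinguished polynomial recovers the multiplicities of $Q^e$ appearing among the $P_j$.

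The main obstacle I anticipate is the bookkeeping in the iterative step: one must ensure, when successively splitting $T$ via \cref{lem:pairwiseann}, that the hypothesis of no non-trivial finite submodule can be preserved on each summand, or else argue by passing to the quotient by the maximal finite submodule at each stage and confirming this does not disturb the eventual primary decomposition via \cref{prop:anuladorprimariosumapei}. A secondary delicate point is the verification that uniqueness up to pseudo-isomorphism is truly captured by the localized invariants --- one must check that an elementary module $E$ is pseudo-null if and only if it is zero, so that no cancellations among the listed summands are possible.
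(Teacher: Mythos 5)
Your existence argument follows the paper's proof step for step: reduce via \cref{ejer:pseudonofinito} to a module without non-trivial finite submodules, split off the free part with \cref{prop:sumadirtorlib}, use the principal annihilator (\cref{prop:Annppal}) and iterate \cref{lem:pairwiseann} to separate the $\pi$-primary and $P_j$-primary pieces, decompose each with \cref{prop:anuladorprimariosumapei}, and regroup into divisibility-ordered $P_j$ (the point at which the paper invokes \cref{lem:pseudotorsii} to reverse directions). Two brief remarks: the ``main obstacle'' you anticipate is not one, since the summands $f_1M$, $f_2M$ produced by \cref{lem:pairwiseann} are submodules of $T$ and hence automatically contain no non-trivial finite submodules; and your uniqueness sketch via the rank over $\Phi=\Frac(\LL)$ and localization at height-one primes (where pseudo-isomorphisms become isomorphisms and $\LL_{\p}$ is a DVR) is correct and actually supplies a step the paper's proof leaves unwritten.
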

\begin{proof}
Sea $M$ un $\LL$-módulo noetheriano. $M$ es pseudo-isomorfo a la suma directa de un módulo de torsión $T(M)$ sin submódulo finito y de un $\LL$-módulo libre $L$ de rango finito (\cref{prop:sumadirtorlib,ejer:pseudonofinito}). Teniendo anulador principal (\cref{prop:Annppal}), el submódulo $T(M)$ es pseudo-isomorfo a una suma directa de submódulos $T_{\p_{i}}(M)$ con anuladores primarios $\p_{i}^{e_{i}}$ (\cref{lem:pairwiseann}). A su vez, los submódulos $T_{\p_{i}}(M)$ son pseudo-isomorfos a una suma directa de la forma $\bigoplus_{j=1}^{k_{i}} \LL/\p_{i}^{e_{i,j}}$ con $e_{i,1}\geq e_{i,2} \geq \ldots \geq e_{i,k_{i}} >0$ (\cref{prop:anuladorprimariosumapei}). 

Finalmente, sea $P_{j}=\prod_{\p_{i}\neq\pi}\p_{i}^{e_{i,j}}$, con la convención $e_{i,j}=0$ si $j>k_{i}$. Definiendo $l$ como el máximo de los $k_{i}$, obtenemos que $P_{l}\vert P_{l-1}\vert \cdots |P_{1}$. El resultado sigue aplicando el \cref{lem:pseudotorsii}. 
\end{proof}

A continuación definimos los \define{invariantes $\mu$ y $\lambda$} asociados al $\LL$-submódulo de torsión de un módulo noetheriano $M$.

\begin{defi}\label{defi:invariantes}
  Usamos la situación y la notación del \cref{thm:estructura}.
  \begin{enumerate}
\item Escribimos $\mu=\sum_{i=1}^{m}\mu_{i}$ y $P=\prod_{j=1}^{l}P_{j}$ y definimos el \emph{polinomio característico}\index[def]{polinomio!característico} del submódulo de $\LL$-torsión de $M$ como $P$. 
\item Definimos $\lambda=\deg P =\sum_{j=1}^{l}\deg P_{j}$.
\end{enumerate}
\end{defi}

El nombre \enquote{polinomio característico} es justificado por el
\cref{ejer:polchar}. 

\ejercicios

\begin{ejer}\label{ejer:PIDSmLL} Sea $\{\p_{1},\ldots,\p_{h}\}\neq \emptyset$ un conjunto de ideales primos de altura $1$ de $\LL$. Demuestre que:
\begin{itemize}
\item[(a)] $S_{i}=\LL\setminus \p_{i}$ es un conjunto multiplicativo. Esto es, $1\in S_{i}$ y para todo $x,y \in S_{i}$, entonces $xy\in S_{i}$.
\item[(b)] la intersección $S=\bigcap S_{i}$ también es un conjunto multiplicativo igual a $\LL\setminus\bigcup \p_{i}$.
\item[(c)] $S^{-1}\LL$ es un anillo semi-local, es decir tiene un número finito de ideales maximales (\textit{Consejo}: utilice el lema de evitación de ideales primos).
\item[(d)] $S^{-1}\LL$ es un anillo de Dedekind, es decir es integralmente cerrado, noetheriano y sus ideales primos son maximales. 
\item[(e)] $S^{-1}\LL$ es un dominio de ideales principales.
\end{itemize}
\end{ejer}

\begin{ejer}\label{ejer:polchar}
  Escribimos $L$ como el campo de fracciones de $\O$. Sea $M$ un $\LL$-módulo de
  torsión. Definimos $V=L\tensor_\O M$, que es un espacio vectorial de dimensión finita
  sobre $L$. La aplicación $V\rightarrow V$ definida como multiplicación por $T$ es
  $L$-lineal. Demuestre que el polinomio característico de esta aplicación es justamente el
  polinomio característico de $M$.
\end{ejer}

\begin{ejer}\label{ejer:relacion-simetrica}
  Demuestre que la relación de pseudo-isomorfismo es simétrica en los módulos de torsión
  (así que es una relación de equivalencia). Es decir, demuestre que si $M$ y $N$ son
  $\LL$-módulos noetherianos de torsión entonces \[ M\sim N \iff N \sim M. \] Use el teorema
  de estructura y el \cref{lem:pairwiseann} para esto.
\end{ejer}

\section{Resultados adicionales sobre $\LL$-módulos}
\label{sec:estructura-consecuencias}

Recordemos que $\mathfrak M=(\pi,T)$ denota el ideal máximo de $\LL$, donde $\pi\in\O$ es un uniformizante.

\begin{lem}\label{lem:XcompHausdMadica} Sea $X$ un $\LL$-módulo compacto. Entonces
$$\bigcap_{r\in\Nuno}\mathfrak{M}^{r}X=0.$$
\end{lem}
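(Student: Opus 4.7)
El plan es aprovechar la continuidad de la acción de $\LL$ sobre $X$ junto con la compacidad de $X$ a través del lema del tubo, notando que los $\mathfrak{M}^{r}$ forman una base de vecindades de $0$ en $\LL$ (topología $\mathfrak{M}$-ádica).

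Primero quisiera observar que $X$ es profinito. Como $\Zp\subseteq\LL$ actúa continuamente sobre el grupo abeliano compacto Hausdorff $X$, la componente conexa $X^{0}$ del neutro es un subgrupo cerrado y $\LL$-estable. Por dualidad de Pontryagin, su dual es un grupo abeliano discreto libre de torsión (pues $X^{0}$ es conexo), pero a la vez $p$-primario de torsión (una acción continua de $\Zp$ sobre un grupo abeliano discreto aniquila cada elemento por alguna potencia de $p$, pues la preimagen de $\{0\}$ por la órbita debe contener $p^{n}\Zp$); luego el dual es nulo y $X^{0}=0$. Así $X$ es compacto, Hausdorff y totalmente disconexo, es decir, profinito, y en particular los subgrupos abiertos $U\ni 0$ forman una base de vecindades del neutro, con $\bigcap_{U}U=\{0\}$.

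En seguida, fijaría un subgrupo abierto cualquiera $U\ni 0$ en $X$. Como la acción $\Phi\colon\LL\times X\rightarrow X$ es continua y $\{0\}\times X\subseteq\Phi^{-1}(U)$, el lema del tubo (cuya hipótesis clave es la compacidad de $X$) produce un abierto $V\ni 0$ en $\LL$ tal que $V\cdot X\subseteq U$. Como los $\mathfrak{M}^{r}$ forman una base de vecindades del $0$ en $\LL$, existe $r$ con $\mathfrak{M}^{r}\subseteq V$, de manera que $\mathfrak{M}^{r}\cdot X\subseteq U$ como conjunto de productos simples. Siendo $U$ un subgrupo, el submódulo $\mathfrak{M}^{r}X$ generado por estos productos también está contenido en $U$. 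Por consiguiente $\bigcap_{r}\mathfrak{M}^{r}X\subseteq U$ para todo subgrupo abierto $U\ni 0$, y tomando la intersección sobre todos estos $U$ se concluye $\bigcap_{r}\mathfrak{M}^{r}X=0$.

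El paso más delicado es justificar la profinitud de $X$ (para disponer de una base de vecindades del $0$ por subgrupos abiertos, lo cual es esencial para pasar del \emph{conjunto} de productos $\mathfrak{M}^{r}\cdot X$ al \emph{submódulo} $\mathfrak{M}^{r}X$); la dualidad de Pontryagin ofrece un argumento limpio, aunque en el contexto de la teoría de Iwasawa esta hipótesis suele estar tácitamente presente. El resto es una aplicación directa del lema del tubo unida a la propiedad Hausdorff.
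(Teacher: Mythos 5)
Tu demostración es correcta y sigue una ruta algo distinta de la del texto. El texto cita \cite[Cor.\ 5.2.5]{NSW} para obtener una base de vecindades de $0$ en $X$ formada por \emph{submódulos} abiertos, y luego argumenta puntualmente: para cada $z\in X$ halla $s_0$ con $\pi^{s_0}z,\,T^{s_0}z\in U$, construye un vecindario $U_z$ de $z$ con $\mathfrak{M}^{r}U_z\subseteq U$ (aquí se usa que los monomios $\pi^iT^j$ con $i+j=r$ generan $\mathfrak{M}^r$ y que $U$ es submódulo para absorber los coeficientes $\lambda_{i,j}\in\LL$), y concluye con un subcubrimiento finito de $X$. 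Tú, en cambio, aplicas el lema del tubo directamente a la acción $\LL\times X\rightarrow X$, lo que da de una sola vez un abierto $V\ni 0$ de $\LL$ con $V\cdot X\subseteq U$ y, por la topología $\mathfrak{M}$-ádica, $\mathfrak{M}^{r}\cdot X\subseteq U$; como la contención vale para \emph{todos} los elementos de $\mathfrak{M}^{r}$ y no sólo para los monomios, te basta que $U$ sea un subgrupo abierto, no un submódulo. El precio es que debes justificar que $X$ tiene una base de vecindades de $0$ por subgrupos abiertos (profinitud), que el texto obtiene, en forma más fuerte, de la cita. En el fondo ambos son el mismo argumento de compacidad (el lema del tubo es un subcubrimiento finito empaquetado), pero el tuyo es más uniforme, mientras que el del texto descarga la parte topológica en la referencia.

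El único punto que dejas a medio justificar está dentro de tu argumento de profinitud: la afirmación de que la preimagen de $\{0\}$ bajo la órbita $\lambda\mapsto\lambda f$ contiene $p^{n}\Zp$ presupone que la acción inducida de $\Zp$ en el dual discreto $(X^{0})^{\vee}$ es continua, es decir, que las órbitas son localmente constantes; esto no es automático a partir de la continuidad de $\Zp\times X^{0}\rightarrow X^{0}$ y requiere exactamente el mismo truco del tubo aplicado a la función continua $(\lambda,x)\mapsto f(\lambda x)$ sobre el compacto $X^{0}$ (la imagen de $x\mapsto f(\lambda x)$ es un subgrupo de $\R/\Z$ contenido en una vecindad pequeña de $0$, luego trivial), o bien una cita a la dualidad estándar entre módulos compactos y discretos sobre $\Zp$, por ejemplo el mismo capítulo V de \cite{NSW} que usa el texto. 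Añadida esa línea, tu prueba queda completa.
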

\begin{proof} Sea $U$ un vecindario de $0$ en $X$, recordemos que un $\LL$-módulo compacto
  tiene un sistema fundamental de vecindarios de cero formado de submódulos abiertos
  \cite[Cor. 5.2.5]{NSW}. Sea $z\in X$, como $\LL$ es completo las sucesiones
  $(\pi^{r})_{r\in\Nuno}$ y $(T^{r})_{r\in\Nuno}$ convergen a $0$ en $\LL$. Por lo tanto existe
  $s_{0}$ tal que $\pi^{s}z$, $T^{s}z$ son elementos de $U$ para $s\geq s_{0}$.

Sea $r\geq 2s_{0}$, entonces para $i+j=r$
$$\pi^{i}T^{j}=\left\lbrace \begin{array}{ll}
(\pi^{i-s_{0}}T^{j}) \pi^{s_{0}}, & \text{ si } i \geq s_{0}; \\
(\pi^{i}T^{j-s_{0}})T^{s_{0}}, & \text{ si } j \geq s_{0}. 
\end{array}\right.$$
Es decir $\pi^{i}T^{j}z\in U$ para todo $i+j=r$.

Las aplicaciones $x\mapsto \pi^{s_{0}}x$ y $x\mapsto T^{s_{0}}x$ son continuas, las preimágenes $V$ y $W$ de $U$, respectivamente, son abiertas. La intersección $U_{z}:=V\cap W$ es un conjunto abierto no vacío. Tenemos entonces que $\pi^{i}T^{j}U_{z}\subset U$ para $i+j=r$, además como $U$ es un $\LL$-submódulo para cualquier combinación lineal $\alpha=\sum_{i+j=r}\lambda_{i,j}\pi^{i}T^{j}$ con $\lambda_{i,j}\in\LL$ y $u\in U_{z}$ tenemos $\alpha u \in U$. 

Hemos demostrado que para todo $z\in X$ existe un vecindario $U_{z}$ y un $r$ suficientemente grande tal que $\mathfrak{M}^{r}U_{z}\subseteq U$. Como $X$ es compacto, existe un número finito $U_{z_{1}},\ldots,U_{z_{k}}$ que cubren $X$. Sea $r=\max\{r_{1},\ldots,r_{k}\}$ tal que $\mathfrak{M}^{r_{i}}U_{z_{i}}\subseteq U$, entonces 
$$\mathfrak{M}^{r}X\subset U,$$ 
lo que demuestra el resultado al tomar la intersección de todos los vecindarios $U$ de $0$.
\end{proof}

\begin{lem}[\importante{Lema de Nakayama Topológico}]\label{lem:nakayama} Sea $X$ un $\LL$-módulo compacto. $X$ es noetheriano si y solamente si $X/\mathfrak{M}X$ es finito. 
\end{lem}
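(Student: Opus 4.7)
The plan is to prove both implications separately. The forward direction is essentially formal, while the reverse direction is the interesting one and uses crucially the compactness of $X$ together with the previous \cref{lem:XcompHausdMadica}.

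For the forward direction, suppose $X$ is noetheriano. Then in particular $X$ is finitely generated over $\LL$, so $X/\mathfrak{M}X$ is finitely generated over $\LL/\mathfrak{M}$. Since this quotient ring is the residue field $k=\O/\pi\O$, which is \textbf{finite}, a finitely generated $k$-module is a finite-dimensional vector space over a finite field, hence finite.

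For the reverse direction, suppose $X/\mathfrak{M}X$ is finite. Choose elements $x_{1},\dotsc,x_{n}\in X$ whose images in $X/\mathfrak{M}X$ exhaust this finite set (or at least generate it over $k$). Let
\[ Y=\LL x_{1}+\dotsm+\LL x_{n}, \]
the $\LL$-submodule they generate. By construction we have $X=Y+\mathfrak{M}X$, and iterating this identity (using that $\mathfrak{M}Y\subseteq Y$) yields $X=Y+\mathfrak{M}^{r}X$ for every $r\in\Nuno$.

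The key step is to show $Y=X$. For this, I would first argue that $Y$ is closed in $X$: the map $\LL^{n}\rightarrow X$ sending $(a_{1},\dotsc,a_{n})\mapsto\sum a_{i}x_{i}$ is continuous, and $\LL^{n}$ is compact (since $\LL$ is), so $Y$ is the continuous image of a compact set, hence compact, and therefore closed in the Hausdorff space $X$. Consequently the quotient $X/Y$ is a \emph{compact} $\LL$-module in the sense of the \cref{def:Lmodule}. The relation $X=Y+\mathfrak{M}^{r}X$ passes to the quotient as $X/Y=\mathfrak{M}^{r}(X/Y)$ for all $r$, so
\[ X/Y\;=\;\bigcap_{r\in\Nuno}\mathfrak{M}^{r}(X/Y). \]
Applying the \cref{lem:XcompHausdMadica} to the compact module $X/Y$, the right-hand side is $0$, hence $X=Y$ is generated by $x_{1},\dotsc,x_{n}$, so $X$ is finitely generated over the noetheriano ring $\LL$ and therefore noetheriano.

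The main obstacle is the closedness of $Y$, since without it the quotient $X/Y$ would not be Hausdorff and the previous lemma would not apply; this is precisely where the compactness hypothesis on $X$ (together with that of $\LL$) becomes essential.
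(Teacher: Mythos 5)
Tu demostración es correcta y sigue esencialmente el mismo camino que la del texto: se toma el submódulo $Y$ generado por levantamientos de generadores de $X/\mathfrak{M}X$, se observa que $Y$ es cerrado por ser imagen continua del compacto $\LL^{n}$, y se aplica el \cref{lem:XcompHausdMadica} al cociente compacto $X/Y$ para concluir $Y=X$. La única diferencia es que detallas la dirección directa y la razón de la cerradura de $Y$, que el texto da por claras.
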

\begin{proof} 
Si $X$ es noetheriano la implicación es clara. Supongamos entonces que $X/\mathfrak{M}X$ es finito. En particular, existen $x_{1},x_{2},\ldots,x_{d}\in X$ que generan $X/\mathfrak{M}X$ como grupo abeliano. Sea 
$$Y=x_{1}\LL+\cdots+x_{d}\LL\subseteq X.$$ 
El $\LL$-módulo $Y$ es cerrado por ser la imagen de $\LL^{d}\rightarrow Y$. La proyección $X\rightarrow X/Y$ es continua, por lo tanto $X/Y$ es compacto con la topología $\mathfrak{M}$-ádica inducida. Tenemos $Y+\mathfrak{M}X=X$, es decir $\mathfrak{M}^{r}(X/Y)=X/Y$ para todo $r\geq 0$. Por el \cref{lem:XcompHausdMadica}  tenemos $X/Y=0$, es decir $Y=X$ y por lo tanto $x_{1},\ldots,x_{d}$ generan $X$.
\end{proof}

El siguiente corolario se deduce inmediatamente del lema anterior.

\begin{cor}\label{cor:cor-de-nakayama} Sea $X$ un $\LL$-módulo compacto, entonces
$$X/\mathfrak{M}X = 0 \	\	\	\iff \	\	\	X=0.$$
\end{cor}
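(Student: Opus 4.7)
The plan is to derive this as an immediate consequence of the Topological Nakayama Lemma just proved, although in fact the corollary follows even more directly from the preceding \cref{lem:XcompHausdMadica} without invoking Nakayama at all. The direction $X=0 \Rightarrow X/\mathfrak{M}X = 0$ is trivial, so all the content lies in the converse.

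For the converse, I would proceed as follows. Assume $X/\mathfrak{M}X = 0$, i.e.\ $X = \mathfrak{M}X$. Multiplying both sides by $\mathfrak{M}$ and iterating gives $X = \mathfrak{M}^r X$ for every $r \in \Nuno$. Then \cref{lem:XcompHausdMadica} (which asserts that $\bigcap_{r \in \Nuno} \mathfrak{M}^r X = 0$ for every compact $\LL$-module) immediately yields
\[ X = \bigcap_{r \in \Nuno} \mathfrak{M}^r X = 0, \]
which is the claim.

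A second, equivalent route is to invoke the Topological Nakayama Lemma as a black box. Since $X/\mathfrak{M}X = 0$ is trivially finite, Nakayama tells us that $X$ is noetherian and, inspecting its proof, is generated as an $\LL$-module by any set of lifts of generators of $X/\mathfrak{M}X$ as an abelian group. Since $X/\mathfrak{M}X$ needs no generators, $X$ is generated by the empty set, so $X = 0$.

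There is essentially no obstacle; the corollary is a restatement of Nakayama in the degenerate case where the quotient vanishes. The only point worth emphasising is that both routes crucially use the compactness of $X$ (via \cref{lem:XcompHausdMadica}), so the result genuinely concerns compact $\LL$-modules and not arbitrary ones.
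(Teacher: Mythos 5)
Tu propuesta es correcta y coincide en esencia con el texto: el paper deduce el corolario inmediatamente del Lema de Nakayama Topológico (\cref{lem:nakayama}), cuya demostración se reduce precisamente al argumento que tú explicitas con el \cref{lem:XcompHausdMadica}, a saber, que $X=\mathfrak{M}X$ implica $X=\mathfrak{M}^rX$ para todo $r$ y por tanto $X\subseteq\bigcap_r\mathfrak{M}^rX=0$. Ambas rutas que describes son válidas y usan, como bien señalas, la compacidad de $X$ de manera esencial.
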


En el siguiente resultado aparecen de nuevo los polinomios $\w_r$ de la \cref{defi:omega-r}.

\begin{prop}\label{prop:Xnoettorsiirgfinsiiacot} Sea $X$ un $\LL$-módulo noetheriano. Entonces
\begin{eqnarray*}
X\text{ es de torsión } &\iff & \rg_{\O} X <\infty \\
&\iff & \rg_{\O} X/\w_{r} X \text{ está acotado para todo }	r\geq 0.
\end{eqnarray*}
\end{prop}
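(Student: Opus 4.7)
El plan es aplicar el \cref{thm:estructura} para reducir el enunciado al caso de un $\LL$-módulo elemental $E$, donde los cálculos se vuelven explícitos. La observación fundamental en la que me apoyaré es la siguiente: si $\psi\colon M\rightarrow N$ es un morfismo de $\LL$-módulos con núcleo y conúcleo \emph{finitos}, entonces $\rg_\O M=\rg_\O N$. En efecto, al tensorizar la sucesión exacta
\[ 0\rightarrow\ker\psi\rightarrow M\rightarrow N\rightarrow\coker\psi\rightarrow 0 \]
con $K=\Frac(\O)$ (que es un funtor exacto), los módulos finitos se anulan por ser de $\O$-torsión, y por lo tanto $M\otimes_\O K\isom N\otimes_\O K$. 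Aplicaré este hecho tanto a pseudo-isomorfismos como a los morfismos inducidos en los cocientes por $\omega_r$.

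Por el teorema de estructura existe un pseudo-isomorfismo $\varphi\colon X\rightarrow E$ con
\[ E=\LL^\rho\oplus\bigoplus_{i=1}^m\LL/\pi^{\mu_i}\LL\oplus\bigoplus_{j=1}^l\LL/P_j\LL, \]
donde los $P_j$ son polinomios distinguidos. Afirmo que $X$ es de torsión si y solo si $\rho=0$: esto se ve notando que $\varphi$ induce un pseudo-isomorfismo entre el submódulo de torsión de $X$ y el de $E$, y $E$ es de torsión precisamente cuando $\rho=0$. Para la primera equivalencia calcularé el $\O$-rango de cada sumando: $\rg_\O\LL=\infty$ (pues $\LL\otimes_\O K=K\llbracket T\rrbracket$ tiene dimensión infinita sobre $K$); $\rg_\O\LL/\pi^{\mu_i}\LL=0$ (por ser aniquilado por una potencia de $\pi$); y $\rg_\O\LL/P_j\LL=\deg P_j$ (porque el \cref{lema:division} lo exhibe como $\O$-módulo libre de rango $\deg P_j$). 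Por la observación inicial, $\rg_\O X=\rg_\O E$, el cual es finito si y solo si $\rho=0$.

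Para la tercera equivalencia invocaré el \cref{ejer:pseudo-isomorfo-cociente-finito}~\ref{ejer:pseudo-isomorfo-cociente-finito:b} con $\nu=\omega_r$, que produce un morfismo $\varphi_{\omega_r}\colon X/\omega_r X\rightarrow E/\omega_r E$ con núcleo y conúcleo finitos, acotados \emph{uniformemente} en $r$. Así $\rg_\O(X/\omega_r X)=\rg_\O(E/\omega_r E)$ para todo $r$, y este último lo calcularé sumando sobre los sumandos de $E$: la parte libre aporta $\rho\cdot p^r$, porque $\LL/\omega_r\LL\isom\O[T]/\omega_r$ es $\O$-libre de rango $p^r$ por el \cref{lema:division} (dado que $\omega_r$ es distinguido de grado $p^r$); los sumandos $\LL/\pi^{\mu_i}\LL$ aportan $0$ por seguir siendo de $\O$-torsión; y cada sumando $\LL/P_j\LL$ aporta a lo más $\deg P_j$ por ser cociente de un $\O$-módulo libre de rango $\deg P_j$. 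Por lo tanto $\rg_\O(E/\omega_r E)=\rho\,p^r+c_r$ con $0\le c_r\le\lambda$, lo cual es acotado en $r$ si y solo si $\rho=0$.

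El paso técnicamente más delicado será justificar que los núcleos y conúcleos de los $\varphi_{\omega_r}$ permanecen finitos y acotados uniformemente al variar $r$, pero esto es exactamente el contenido del \cref{ejer:pseudo-isomorfo-cociente-finito}. Lo demás se reduce a cálculos directos fundamentados en el \cref{lema:division} de Weierstraß y en la aditividad del $\O$-rango bajo sumas directas finitas.
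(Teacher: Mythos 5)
Tu propuesta es correcta y sigue esencialmente el mismo camino que la demostración del texto: reducir al caso de un módulo elemental $E$ mediante el teorema de estructura y el \cref{ejer:pseudo-isomorfo-cociente-finito}, y luego calcular los $\O$-rangos de cada tipo de sumando ($\LL$, $\LL/\pi^{\mu_i}$, $\LL/P_j$) usando el lema de división. La única diferencia es cosmética: tú explicitas que $\rg_\O(E/\w_r E)=\rho\,p^r+c_r$ con $0\le c_r\le\lambda$, mientras que el texto obtiene la acotación en el caso de torsión tensando con $L$ la sucesión $0\rightarrow\w_rE\rightarrow E\rightarrow E/\w_rE\rightarrow0$ y la no acotación observando que $\LL/\w_r\LL$ tiene $\O$-rango $p^r$.
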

\begin{proof} Primero notemos que el enunciado es invariante bajo pseudo-isomorfismos. Sea
  $f\colon X\rightarrow E$ un pseudo-isomorfismo de $X$ a un módulo elemental $E$ (ver la
  \cref{def:modelemental} y el \cref{thm:estructura}). Tenemos que
  $X \otimes_{\LL}\Phi \isom E \otimes_{\LL} \Phi$, donde $\Phi=\Frac(\LL)$ es el campo de
  cocientes de $\LL$. Es decir $X$ es de torsión
  si y solamente si $E$ es de torsión. Igualmente del
  \cref{ejer:pseudo-isomorfo-cociente-finito} deducimos que
  $\rg_{\O} X <\infty \Leftrightarrow \rg_{\O} E <\infty$. Igualmente, $\rg_{\O} X/\w_{r} X$
  está acotado para todo $r\geq 0$ si y solamente si $\rg_{\O} E/\w_{r} E$ está acotado para
  todo $r\geq 0$. Por lo tanto podemos remplazar $X$ por $E$ y bastará analizar el enunciado
  para los tipos de factores de un módulo elemental, es decir $\LL$, $\LL/\pi^{e}$ o
  $\LL/P^{e}$ donde $P$ es un polinomio distinguido.

  Veamos que $\LL\otimes_{\O}L$ es un $L$-espacio vectorial de dimensión infinita (donde $L$
  es el campo de cocientes de $\O$). Además $\LL/\pi^{e}$ es isomorfo a
  $\O/\pi^{e}\llbracket T \rrbracket $ y $\LL/P^{e}$ es isomorfo a $\O[T]/P^{e}$. Cuando
  tensamos con $L$ los diferentes tipos de factores obtenemos $L$-espacio vectoriales. El
  primero infinito, el segundo trivial y el tercero finito. Es decir, $E$ es de torsión si y
  solamente si $\rg_{\O} E <\infty$.

  Aplicando el funtor exacto $-\otimes_{\O}L$ a
  $$0\longrightarrow \w_{r}E \longrightarrow E \rightarrow E/\w_{r}E \longrightarrow 0, $$ 
  deducimos que $\rg_{\O} E <\infty$ implica que $ \rg_{\O} E/\w_{r} E $ está acotado para
  todo $r$.

  Por otro lado, tomemos $E$ no de torsión, entonces $\rho\geq 1$ como en el
  \cref{thm:estructura}. Si $\LL$ es un factor de $E$ entonces por el lema de división
  (\cref{lema:division}, usando la notación de allá)
  $$\LL/\w_{r}\isom {\O}_{p^{r}-1}[T],$$ 
  es decir $\LL/\w_{r}$ es isomorfo al $\O$-módulo de los polinomios de grado a lo más
  $p^{r}-1$. Es claro que el rango de ${\O}_{p^{r}-1}[T]$ sobre $\O$ es proporcional a
  $r$, y por lo tanto no acotado.
\end{proof}

\section{Ideales característicos}
\label{sec:ideales-caracteristicos}

La teoría de estructura que desarrollamos anteriormente permite definir un invariante
importante de $\LL$-módulos noetherianos: el ideal característico. Este invariante aparecerá en la Conjetura Principal de Iwasawa.

Sea $\O$ el anillo de enteros en una extensión finita de $\Qp$, $\Gamma$ un grupo profinito
isomorfo a $\Zp$ y escribimos $\LL=\O\llbracket\Gamma\rrbracket$ para su álgebra
de Iwasawa. Después de fijar un generador topológico $\gamma$ de $\Gamma$, el
\cref{thm:EquivPowPro} nos permite identificar $\LL$ con el anillo de series de
potencias $\O\llbracket T\rrbracket$.

Si $M$ es un $\LL$-módulo noetheriano, el teorema de estructura \ref{thm:estructura}
dice que $M$ es pseudo-isomorfo a un único $\LL$-módulo elemental de la forma
$$M\sim E = \LL^{\rho} \oplus \left( \bigoplus_{i=1}^{m}\LL/\pi^{\mu_{i}}\LL\right)\oplus \left( \bigoplus_{j=1}^{l}\LL/P_{j}\LL\right)$$
y en la \cref{defi:invariantes} definimos el invariante $\mu\in\Ncero$ y el polinomio
característico $P=\prod_{j=1}^{l}P_{j}$ de $M$.  El elemento $\pi^\mu P\in\LL$ no está bien
definido porque $\pi$ solo está determinado módulo una unidad. Pero esto significa que la
siguiente definición tiene sentido (aunque hay que verificar que no depende del $\gamma$ que
elegimos).

\begin{defi}
  Para cada $\LL$-módulo noetheriano $M$ definimos su \emph{ideal característico}\index[def]{ideal!característico}
  como
  \[ \charideal_{\LL}(M):=(\pi^{\mu}P) \]
  con $\mu$ y $P$ como arriba.
\end{defi}

A continuación, demostramos algunos resultados auxiliares que serán útiles más tarde.

\begin{lem}\label{lem:m-cociente-finito-coprimo}
  Sea $M$ un $\LL$-módulo noetheriano de torsión y sea $(F)=\charideal_\LL(M)$ su
  ideal característico. Además sea $\nu\in\LL$, entonces $M/\nu M$ es finito si y solo si
  $\nu$ y $F$ son coprimos.
\end{lem}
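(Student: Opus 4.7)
El plan es reducir al caso de un módulo elemental mediante el teorema de estructura y luego analizar cada sumando directo por separado.

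Por el \cref{thm:estructura} existe un pseudo-isomorfismo $M\sim E$ con
\[ E = \bigoplus_{i=1}^m \LL/\pi^{\mu_i}\LL \oplus \bigoplus_{j=1}^l \LL/P_j\LL \]
(sin parte libre porque $M$ es de torsión). El \cref{ejer:pseudo-isomorfo-cociente-finito}~\ref{ejer:pseudo-isomorfo-cociente-finito:c} nos dice que $M/\nu M$ es finito si y solo si $E/\nu E$ lo es, así que podemos remplazar $M$ por $E$. Como la reducción módulo $\nu$ conmuta con sumas directas,
\[ E/\nu E \isom \bigoplus_{i=1}^m \LL/(\pi^{\mu_i},\nu) \oplus \bigoplus_{j=1}^l \LL/(P_j,\nu), \]
y esta suma es finita si y solo si cada sumando lo es. De esta manera, la afirmación se reduce al hecho siguiente.

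La pieza clave es: para $f,g\in\LL\setminus\{0\}$, el cociente $\LL/(f,g)$ es finito si y solo si $f$ y $g$ son coprimos. La implicación $(\Leftarrow)$ es exactamente el \cref{cor:fg-indice-finito}. Para la implicación $(\Rightarrow)$, si $f$ y $g$ tienen un factor irreducible común $d$, entonces $(f,g)\subseteq d\LL$, así que $\LL/d\LL$ es un cociente de $\LL/(f,g)$, y basta ver que $\LL/d\LL$ es infinito para cada irreducible $d$. Por nuestra clasificación de irreducibles en $\LL$ hay dos casos: si $d=\pi$, entonces $\LL/\pi\LL\isom k\llbracket T\rrbracket$, que es infinito; si $d$ es un polinomio distinguido, el lema de división (\cref{lema:division}) identifica $\LL/d\LL$ con el $\O$-módulo libre de los polinomios de grado menor que $\deg d\geq 1$, también infinito.

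Aplicando este hecho a cada sumando, $E/\nu E$ es finito si y solo si $\nu$ es coprimo a cada $\pi^{\mu_i}$ (lo que, si algún $\mu_i\geq 1$, equivale a $\pi\nmid\nu$) y a cada $P_j$. Por la factorización única en $\LL$, esto equivale a que $\nu$ sea coprimo al producto $F=\pi^\mu\prod_j P_j$, lo que concluye la demostración. La observación que requiere más atención es la invariancia de la finitud del cociente bajo pseudo-isomorfismos, pero esta ya se encuentra encapsulada en el \cref{ejer:pseudo-isomorfo-cociente-finito}; el resto del argumento descansa directamente en la clasificación explícita de los elementos irreducibles de $\LL$.
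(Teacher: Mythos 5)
Tu demostración es correcta y sigue esencialmente el mismo camino que la del texto: reducción a un módulo elemental mediante el teorema de estructura y el \cref{ejer:pseudo-isomorfo-cociente-finito}, el \cref{cor:fg-indice-finito} para la dirección «coprimos $\Rightarrow$ finito», y la infinitud de $\LL/\pi\LL$ y $\LL/P\LL$ (vía el lema de división) para la recíproca. La única diferencia es de presentación: tú encapsulas el argumento en el bicondicional general «$\LL/(f,g)$ finito $\iff$ $f,g$ coprimos» aplicado sumando por sumando, mientras el texto trata las dos implicaciones por separado con surjecciones explícitas, pero el contenido matemático es el mismo.
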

\begin{proof}
  Fijemos un pseudo-isomorfo
  \begin{equation*}
    M\rightarrow E := \left( \bigoplus_{i=1}^{m}\LL/(\pi^{\mu_{i}})\right)\oplus \left( \bigoplus_{j=1}^{l}\LL/(P_{j})\right)
  \end{equation*}
  a un $\LL$-módulo elemental $E$.
  Entonces $F=\pi^\mu\prod_j P_j$ con $\mu=\sum_i\mu_i$.

  Primero asumamos que $\nu$ y $F$ son coprimos. Entonces $\nu$ es coprimo a $\pi$ y a todos
  los $P_j$. Usando el \cref{ejer:pseudo-isomorfo-cociente-finito} obtenemos un
  pseudo-isomorfo
  \begin{equation*}
    M/\nu M\rightarrow \left( \bigoplus_{i=1}^{m}\LL/(\pi^{\mu_{i}},\nu)\right)\oplus \left( \bigoplus_{j=1}^{l}\LL/(P_{j},\nu)\right)
  \end{equation*}
  y cada uno de los sumandos a la derecha es finito según el \cref{cor:fg-indice-finito}. Por
  eso $M/\nu M$ también tiene que ser finito.

  Por otro lado supongamos que $\nu$ y $F$ no son coprimos, es decir existe $d\in\LL$ que no
  es una unidad y divide a $\nu$ y a $F$. Sabemos que $\LL$ es un dominio de factorización
  única, que $\pi$ es coprimo a cada uno de los $P_j$ y además que los $P_j$ son ordenados
  por divisibilidad. Por eso podemos asumir que $d$ divide a $\pi$ o a $P_1$ (y a $\nu$, por
  supuesto). En el primer caso podemos asumir que $d=\pi$ y en el segundo que $d=P$ es un
  polinomio distinguido no trivial. Entonces tenemos sobreyecciones canónicas
  \begin{equation*}
    E/\nu E\rightarrow E/d E\rightarrow \LL/\pi\LL\quad\text o\quad E/\nu E\rightarrow E/d
    E\rightarrow \LL/ P\LL.
  \end{equation*}
  Como $\LL/\pi\LL$ y $\LL/ P\LL$ son infinitos, tenemos que $E/\nu E$ y $M/\nu M$ también son infinitos.
\end{proof}

\begin{lem}\label{lem:elemental-rk-cero}
  Sea $E$ un $\LL$-módulo elemental y $\nu\in\LL$. Escribimos $E[\nu]$ como el núcleo de la
  multiplicación con $\nu$ en $E$. Si $\rk_\O E[\nu]=0$ y $E/\nu E$ es finito entonces
  $E[\nu]=0$.
\end{lem}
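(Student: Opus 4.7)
The plan is to exploit the explicit summand decomposition of an elemental module and analyze multiplication by $\nu$ summand by summand. Writing
\[ E=\LL^{\rho}\oplus\bigoplus_{i=1}^{m}\LL/(\pi^{\mu_{i}})\oplus\bigoplus_{j=1}^{l}\LL/(P_{j}), \]
both $E[\nu]$ and $E/\nu E$ respect this decomposition. Finiteness of $E/\nu E$ then translates into finiteness of each of $(\LL/\nu\LL)^{\rho}$, $\LL/(\pi^{\mu_{i}},\nu)$ and $\LL/(P_{j},\nu)$, while $\rk_{\O}E[\nu]=0$ constrains the polynomial summands (the $\pi$-torsion summands and the free summand already have no $\O$-rank in $E[\nu]$ once $\nu\neq 0$).

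First I would dispose of two trivial cases. If $\nu=0$, then $E/\nu E=E$ finite forces $E=0$ (none of $\LL$, $\LL/(\pi^{\mu_{i}})$, $\LL/(P_{j})$ is finite), hence $E[\nu]=0$; and if $\nu\in\LL^{\times}$, then $E[\nu]=0$ is immediate. So assume $\nu$ is neither zero nor a unit. By the Weierstraß preparation theorem (\cref{thm:weierstrass}) we have $\nu=u\pi^{k}Q$ with $u\in\LL^{\times}$ and either $k\ge 1$ or $\deg Q\ge 1$, so $\LL/\nu\LL$ surjects onto $\LL/(\pi)$ or onto $\LL/(Q)$, both of which are infinite. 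Hence $(\LL/\nu\LL)^{\rho}$ finite forces $\rho=0$. For each $i$, finiteness of $\LL/(\pi^{\mu_{i}},\nu)$ combined with the converse direction of \cref{cor:fg-indice-finito} (any nontrivial common factor $d$ of $\pi^{\mu_{i}}$ and $\nu$ would yield a surjection onto the infinite quotient $\LL/(d)$) gives $\gcd(\nu,\pi^{\mu_{i}})=1$, in particular $\pi\nmid\nu$. Since $\pi$ is prime in the UFD $\LL$, multiplication by $\nu$ is then injective on every $\LL/(\pi^{\mu_{i}})$.

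For the polynomial summands I would finally bring in the rank hypothesis. Setting $d_{j}=\gcd(\nu,P_{j})$ and writing $P_{j}=d_{j}q_{j}$, a short UFD computation in $\LL$ identifies
\[ (\LL/(P_{j}))[\nu]=q_{j}\LL/(P_{j})\isom\LL/(d_{j}), \]
whose $\O$-rank is $\deg d_{j}$. The vanishing of $\rk_{\O}E[\nu]$ thus forces $\deg d_{j}=0$; since $d_{j}$ is a distinguished polynomial this gives $d_{j}=1$, so $\nu$ and $P_{j}$ are coprime and multiplication by $\nu$ is injective on each $\LL/(P_{j})$ as well. Combining the three types of summands gives $E[\nu]=0$. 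The most delicate ingredient is the identification $(\LL/(P_{j}))[\nu]\isom\LL/(d_{j})$, which is where unique factorization in $\LL$ is used essentially; everything else is routine bookkeeping on top of \cref{thm:estructura}, \cref{thm:weierstrass}, and \cref{cor:fg-indice-finito}.
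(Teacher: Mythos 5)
Your proof is correct and follows essentially the same route as the paper: reduce to the three summand types of an elemental module, use the finiteness of $E/\nu E$ to force coprimality with $\pi$ on the summands $\LL/(\pi^{\mu_i})$, and use $\rk_\O E[\nu]=0$ to rule out a common factor of $\nu$ with the distinguished polynomials $P_j$. Your explicit identification $(\LL/(P_j))[\nu]\isom\LL/(d_j)$ is merely a more computational rendering of the paper's observation that a nontrivial common divisor would produce a positive-rank submodule of $E[\nu]$, so the two arguments coincide in substance.
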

\begin{proof}
  Sin pérdida de generalidad podemos asumir que $E$ es de la forma (i) $\LL$, (ii)
  $\LL/\pi^\mu\LL$ con $\mu\in\Nuno$ ó (iii) $\LL/P\LL$ con $P$ un polinomio distinguido.
  
  En el primer caso $E=\LL$ tenemos $E[\nu]=0$ porque $\LL$ no tiene divisores de cero. En
  el caso $E=\LL/\pi^\mu\LL$, si $E[\nu]\neq0$ entonces la clase de $\nu$ tiene que ser un
  divisor de cero en $\LL/\pi^\mu\LL$, es decir es un divisor de $\pi^\mu$ en $\LL$. Pero en
  este caso $E/\nu E$ es infinito.

  Sea entonces $E=\LL/P\LL$. Si $E[\nu]\neq0$ entonces
  existe un $h\in\LL$ con $\nu h=P$, y sin pérdida de generalidad $h,\nu\in\O[T]$ y
  $\deg\nu,\deg h>0$. Pero entonces $h\O[T]/P\O[T]$ es un submódulo de $E[\nu]$ cuyo rango
  es mayor que $0$, contradiciendo a $\rk_\O E[\nu]=0$.
\end{proof}

\begin{cor}\label{cor:m-nu-finito}
  Si $M$ es un $\LL$-módulo noetheriano de torsión y $\nu\in\LL$ es coprimo a
  $\charideal_\LL M$ entonces $M[\nu]$ es finito.
\end{cor}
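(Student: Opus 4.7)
The plan is to reduce the statement to the case of an elementary $\LL$-module via the structure theorem, show the kernel is trivial there using the two preceding lemmas, and transfer the result back to $M$ by a snake-lemma argument applied to the pseudo-isomorphism.

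More precisely, I would first invoke the structure theorem (\cref{thm:estructura}) to fix a pseudo-isomorphism $\varphi\colon M\to E$ with $E$ an elementary module, and since $M$ is of torsion the free part vanishes so
\[ E=\bigoplus_{i=1}^{m}\LL/(\pi^{\mu_i})\oplus\bigoplus_{j=1}^{l}\LL/(P_j), \]
whose characteristic ideal $(\pi^\mu \prod_j P_j)$ equals $\charideal_\LL M$. By \cref{lem:m-cociente-finito-coprimo}, the coprimality assumption on $\nu$ already tells me that $M/\nu M$ is finite, and then part \ref{ejer:pseudo-isomorfo-cociente-finito:c} of \cref{ejer:pseudo-isomorfo-cociente-finito} transports this to: $E/\nu E$ is finite.

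Next I would verify that $E[\nu]=0$. The key input is \cref{lem:elemental-rk-cero}, which requires the extra information $\rk_\O E[\nu]=0$. Since $E$ is $\LL$-torsion, \cref{prop:Xnoettorsiirgfinsiiacot} gives $\rk_\O E<\infty$, so $L\tensor_\O E$ is a finite-dimensional vector space over the fraction field $L$ of $\O$. Tensoring the four-term exact sequence
\[ 0\to E[\nu]\to E\xrightarrow{\;\nu\;} E\to E/\nu E\to 0 \]
with $L$ (flat over $\O$) and using that $L\tensor_\O E/\nu E=0$ because $E/\nu E$ is finite, multiplication by $\nu$ on $L\tensor_\O E$ becomes a surjective, hence bijective, $L$-linear endomorphism of a finite-dimensional space, so $L\tensor_\O E[\nu]=0$. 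Thus $E[\nu]$ is a finitely generated $\O$-module of torsion, i.e.\ of $\O$-rank zero, and \cref{lem:elemental-rk-cero} yields $E[\nu]=0$.

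Finally I would descend back to $M$. Write $K=\ker\varphi$ and $C=\coker\varphi$; both are finite. Applying the snake lemma to the multiplication-by-$\nu$ endomorphism of the short exact sequence $0\to\varphi(M)\to E\to C\to 0$ gives an injection $\varphi(M)[\nu]\hookrightarrow E[\nu]=0$, so $\varphi(M)[\nu]=0$. Applying the snake lemma again to $0\to K\to M\to\varphi(M)\to 0$ gives an exact sequence $0\to K[\nu]\to M[\nu]\to\varphi(M)[\nu]=0$, so $M[\nu]=K[\nu]\subseteq K$ is finite. The only genuine subtlety is bridging the gap between the conclusion \enquote{$E/\nu E$ finite} provided by \cref{lem:m-cociente-finito-coprimo} and the hypothesis \enquote{$\rk_\O E[\nu]=0$} needed for \cref{lem:elemental-rk-cero}, but flatness of $L/\O$ resolves this immediately.
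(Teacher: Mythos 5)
Tu propuesta es correcta y sigue esencialmente el mismo camino que la demostración del texto: reducir al caso elemental vía el teorema de estructura, obtener que $E/\nu E$ es finito con el \cref{lem:m-cociente-finito-coprimo}, deducir $\rk_\O E[\nu]=0$ de la sucesión exacta usando que el $\O$-rango de $E$ es finito, y concluir $E[\nu]=0$ con el \cref{lem:elemental-rk-cero}. La única diferencia es que tú explicitas con el lema de la serpiente el descenso de $E$ a $M$ (paso que el texto deja implícito al decir que basta tratar módulos elementales), lo cual es un complemento válido pero no un enfoque distinto.
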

\begin{proof}
  Usando el teorema de estructura es suficiente demostrar esto para $\LL$-módulos
  elementales, por eso remplazamos $M$ con un módulo elemental $E$ como en la demostración
  de la \cref{prop:Xnoettorsiirgfinsiiacot}. Según el \cref{lem:m-cociente-finito-coprimo}
  $E/\nu E$ es finito, pues por el \cref{lem:elemental-rk-cero} es suficiente demostrar que
  $\rk_\O E[\nu]=0$. Esto resulta de la sucesión exacta
  \begin{equation*}
    0\rightarrow E[\nu]\rightarrow E\labeledarrow{\nu} E\rightarrow E/\nu E\rightarrow 0,
  \end{equation*}
  usando que el $\O$-rango de $E$ es finito porque $E$ es de torsión
  (\cref{prop:Xnoettorsiirgfinsiiacot}).
\end{proof}

Recordemos otra vez que el teorema de estructura puede ser visto como un análogo del teorema
de estructura para grupos abelianos finitamente generados. En esta situación análoga, el
\enquote{ideal característico} de un grupo abeliano finitamente generado sería el ideal en
$\Z$ generado por el orden de la parte finita del grupo. Por eso, heurísticamente podemos
imaginar el ideal característico como análogo del \enquote{orden de un grupo} y de hecho
resultará un invariante muy importante de un módulo.

Porque en las aplicaciones usaremos álgebras de Iwasawa para grupos más grandes como en la
\cref{sec:mas-grandes}, generalicemos la definición del ideal característico a esa
situación. Esto es puramente formal y nada profundo, aunque un poco técnico.

Sea $\Delta$ un grupo finito cuyo orden $\#\Delta$ es invertible en $\O$ y
tal que $\O$ contiene las raíces $\#\Delta$-ésimas de la unidad. El ejemplo más importante
será aquel en que $\Delta=\Gal(\Q(\mu_p)/\Q)\isom\F_p^\times$. Sea $G=\Delta\times\Gamma$ con
$\Gamma$ como antes. El álgebra de
Iwasawa $\LL(G)=\O\llbracket G\rrbracket$ para el grupo $G$ entonces se descompone como
\[ \LL(G)=\bigoplus_{\chi}e_\chi\LL(G) \]
según el \cref{lem:descomposicion-lambda},
con cada $e_\chi\LL(G)$ isomorfo a $\LL=\LL(\Gamma)$, donde $\chi$ recorre los caracteres
$\Delta\rightarrow\O^\times$. 

Fijemos un subconjunto $I\subseteq\Hom(\Delta,\O^\times)$ de caracteres y escribimos
\[ \LL^I(G):=\bigoplus_{\chi\in I}e_\chi\LL(G). \]
Para cada $\LL^I(G)$-módulo $M$ y $\chi\in I$, $e_\chi M$ es un $e_\chi\LL(G)$-módulo, y $M$
tiene las propiedades \enquote{noetheriano} o \enquote{de torsión} como $\LL^I(G)$-módulo si
y solo si cada $e_\chi M$ para $\chi\in I$ las tiene como $e_\chi\LL(G)$-módulo.

Desde ahora asumamos que $M$ es un $\LL^I(G)$-módulo noetheriano y de torsión. Entonces para
cada $i\in I$ el teorema de estructura nos da una sucesión exacta
\[ 0\rightarrow K_i \rightarrow \bigoplus_{j=1}^s\LL(\Gamma)/f_{ij}\rightarrow e_iM\rightarrow
  C_i \rightarrow 0 \]
con $K_i$ y $C_i$ finitos, los $f_{ij}$ siendo de la forma $\pi^{\mu_j}$ o un polinomio
distinguido, y donde $s$ sin pérdida de generalidad \emph{no} depende de $i$ (eso lo podemos
lograr definiendo los $f_{ij}$ superfluos como $1$). Si definimos para $j=1,\dotsc,s$
\[ g_j=(f_{ij})_{i\in I}\in\LL^I(G) \]
entonces obtenemos una sucesión exacta
\[ 0\rightarrow K \rightarrow \bigoplus_{j=1}^s\LL^I(G)/g_j\rightarrow M\rightarrow
  C \rightarrow0 \]
con $K$ y $C$ finitos.

\begin{defi}\label{defi:ideal-caracteristico-i}
  \begin{enumerate}
  \item Para cada $\LL^I(G)$-módulo noetheriano y de torsión $M$, su \emph{ideal
      característico} es \[ \charideal_{\LL^I(G)}(M):=(g_1\dotsm g_s)\subseteq\LL^I(G). \]
  \item Llamamos un $\LL^I(G)$-módulo $E$ \emph{elemental}\index[def]{módulo elemental@módulo elemental} si $e_iE$ es elemental para
    cada $i\in I$, así que el módulo $\bigoplus_j\LL^I(G)/g_j$ de arriba es un tal módulo.
  \end{enumerate}
\end{defi}

\ejercicios

\begin{ejer}
  Verifique que el ideal característico de un $\LL(\Gamma)$-módulo noetheriano no depende
  del isomorfismo $\LL(\Gamma)\isom\O\llbracket T\rrbracket$ inducido por la elección del
  generador topológico $\gamma$ de $\Gamma$.
\end{ejer}

\begin{ejer}
  Sea $M$ un $\LL^I(G)$-módulo. Demuestre que $M$ es
  \begin{enumerate}
  \item noetheriano;
  \item de torsión
  \end{enumerate}
  como $\LL^I(G)$-módulo si y solo si cada $e_\chi M$ para $\chi\in I$ lo es como
  $e_\chi\LL(G)$-módulo.
\end{ejer}

\begin{ejer}
  Demuestre que el ideal característico es multiplicativo en sucesiones exactas, es decir
  para cada sucesión exacta
  \[ 0\rightarrow N\rightarrow M\rightarrow Q \rightarrow 0 \]
  de $\LL^I(G)$-módulos tenemos
  \[ \charideal_{\LL^I(G)}(M)=\charideal_{\LL^I(G)}(N)\charideal_{\LL^I(G)}(Q). \]
\end{ejer}

\section{Adjuntos de Iwasawa}
\label{sec:adjuntos}

Como en las secciones anteriores sea $\LL=\O\llbracket T\rrbracket$ con $\O$ el anillo de
enteros en una extensión finita $L$ de $\Qp$.

En esta sección seguimos \cite[§3.5]{SharifiIT} para explicar los \emph{adjuntos de
  Iwasawa}\esindex[def]{adjunto de Iwasawa!}, que es una manera de asociar un módulo \enquote{adjunto} $\alpha(M)$ a un
$\LL$-módulo $M$ noetheriano de torsión que de hecho es pseudo-isomorfo a $M$,
introducidos por Iwasawa en \cite{MR0349627}. En este texto vamos a usar esta teoría
exclusivamente para demostrar la equivalencia de dos versiones de la Conjetura Principal de
Iwasawa en la \cref{sec:proemio-kummer-adjuntos}. El lector puede saltarse esta
sección si lo desea y volver a consultarla si se interesa en la demostración de dicha equivalencia.

A continuación, como preparación citamos una versión de la teoría de dualidad de Pontryagin, que vamos a necesitar.

\begin{defi}\label{defi:pontryagin}
  Sea $\mathcal A$ la categoría de
  $\O$-módulos topológicos Hausdorff localmente compactos cuya topología está definida por una
  base de vecindades de $0$ que consiste en $\O$-submódulos.
  Además sea $\mathcal A_{\mathrm{comp}}$ la subcategoría llena de $\mathcal A$ de módulos
  compactos y $\mathcal A_{\mathrm{disc}}$ la de módulos discretos.

  Para cada módulo $M\in\mathcal A$ sea
  \[ M^\vee := \Hom_\O(M,L/\O) \] con la topología compacto-abierta (y donde \enquote{$\Hom_\O$}
  significa homomorfismos de $\O$-módulos topológicos).
  Este $M^\vee$ se llama el \define{dual de Pontryagin} de $M$.
\end{defi}

En las aplicaciones usaremos
$\LL$-módulos noetherianos, que son elementos de la categoría $\mathcal A$.

\begin{thm}[Pontryagin]\label{thm:pontryagin}
  Para $M\in\mathcal A$, $M^\vee$ es nuevamente un elemento de $\mathcal A$.
  La asociación
  \[ (\cdot)^\vee\colon\mathcal A\rightarrow\mathcal A,\quad M\mapsto M^\vee \]
  es un funtor contravariante aditivo que es una
  antiequivalencia de $\mathcal A$ a si mismo, y es un cuasi-inverso a si mismo. Es decir,
  para cada módulo $M\in\mathcal A$ hay un isomorfismo
  $M\rightarrow M^{\vee\vee}$ natural en $M$. Este isomorfismo está dado por el mapeo canónico
  \[ M\rightarrow M^{\vee\vee},\quad m\mapsto(f\mapsto f(m)). \]

  Además, $M^\vee$ es discreto si y solo si $M$ es compacto, y viceversa. Es decir, el
  funtor $(\cdot)^\vee$ induce antiequivalencias entre $\mathcal A_{\mathrm{comp}}$ y $\mathcal A_{\mathrm{disc}}$.
\end{thm}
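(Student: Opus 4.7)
La estrategia es proceder por etapas, reduciendo primero al caso de módulos finitos, donde el cálculo es directo, y luego extendiendo por límites y colímites, para finalmente tratar el caso localmente compacto general mediante una descomposición en parte compacta abierta y parte discreta.

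En el caso finito, el teorema de estructura sobre el anillo de valuación discreta $\O$ descompone $M$ como suma directa de módulos cíclicos $\O/\pi^{n_i}\O$. El cálculo clave es el isomorfismo de $\O$-módulos $\Hom_\O(\O/\pi^n\O,L/\O)\isom \pi^{-n}\O/\O\isom\O/\pi^n\O$, dado por $f\mapsto f(1)$. Entonces $M^\vee$ es finito del mismo orden que $M$, y el mapeo canónico de evaluación $M\rightarrow M^{\vee\vee}$ es inyectivo porque $L/\O$ es un cogenerador inyectivo en la categoría de $\O$-módulos (divisible sobre un DIP), así que por cardinalidad es un isomorfismo. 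La contravarianza y aditividad del funtor son inmediatas de la definición.

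En el caso compacto-discreto, observo que un $M\in\mathcal A_{\mathrm{comp}}$ es profinito por la hipótesis sobre la base de vecindades, es decir $M=\varprojlim_iM_i$ con $M_i$ finito. Como $L/\O$ es discreto de torsión y $M$ es compacto, cada morfismo continuo $M\rightarrow L/\O$ se factoriza por algún $M_i$, lo cual produce $M^\vee=\varinjlim_iM_i^\vee$ como $\O$-módulo discreto. Dualmente, cada $M\in\mathcal A_{\mathrm{disc}}$ de torsión se escribe como colímite filtrado de sus submódulos finitos, $M=\varinjlim_iM_i$, y se verifica que $M^\vee=\varprojlim_iM_i^\vee$ es compacto, con la topología del límite inverso, que coincide con la compacto-abierta (porque los subconjuntos compactos de un módulo discreto son finitos). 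El isomorfismo natural $M\isomarrow M^{\vee\vee}$ resulta de los casos finitos aplicando la compatibilidad de $(\cdot)^\vee$ con límites y colímites, obteniendo así las antiequivalencias entre $\mathcal A_{\mathrm{comp}}$ y $\mathcal A_{\mathrm{disc}}$.

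Para $M\in\mathcal A$ localmente compacto general, la hipótesis sobre la base de vecindades junto con la compacidad local produce un $\O$-submódulo compacto y abierto $K\subseteq M$, dando una sucesión exacta $0\rightarrow K\rightarrow M\rightarrow M/K\rightarrow 0$ con $K$ compacto y $M/K$ discreto. Dualizando (lo cual preserva exactitud gracias a la inyectividad de $L/\O$) obtenemos $0\rightarrow(M/K)^\vee\rightarrow M^\vee\rightarrow K^\vee\rightarrow 0$, donde $(M/K)^\vee$ es compacto y $K^\vee$ es discreto, lo cual también demuestra que $M^\vee\in\mathcal A$. El lema de los cinco aplicado al diagrama que compara $M\rightarrow M^{\vee\vee}$ con las piezas compacta y discreta ya tratadas concluye la biyectividad del mapeo canónico. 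El principal obstáculo es técnico: hay que verificar cuidadosamente que la topología compacto-abierta en $M^\vee$ coincide con la topología inducida por los (co)límites en cada etapa, que los mapeos dualizados son efectivamente continuos respecto de estas topologías, y que la sucesión dual es topológicamente exacta (no sólo algebraicamente). Estos puntos, aunque estándar en teoría de dualidad topológica, requieren un manejo minucioso de las bases de vecindades en los $\Hom_\O$, sobre todo porque en el contexto $p$-ádico la topología natural en $\O$ y la de $L/\O$ (que es discreta) están en cierto sentido \enquote{opuestas}.
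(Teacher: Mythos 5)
Ten en cuenta primero que el texto no demuestra este teorema: su \enquote{demostración} es la cita \cite[Prop.\ 5.4]{MR3629658}, así que tu argumento no puede compararse con uno propio del texto. Tu estrategia (caso finito; luego compacto y discreto de torsión vía (co)límites; luego el caso general mediante un submódulo compacto abierto y el lema de los cinco) es la estándar, y la parte que realmente se usa después en el texto ---la antiequivalencia entre módulos profinitos y módulos discretos de torsión, cf.\ la \cref{defi:mvee-alpha} y el \cref{thm:iwasawa-adj}--- está esencialmente bien: el cálculo finito, el paso a límites y colímites, y la identificación de las topologías compacto-abiertas con las (co)límite son correctos, igual que la existencia de un submódulo compacto abierto en el caso localmente compacto.

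Sin embargo, como demostración del enunciado tal cual hay dos huecos genuinos. Primero, tratas el caso discreto sólo para módulos \emph{de torsión} (insertas \enquote{de torsión} sin comentario), pero $\mathcal A_{\mathrm{disc}}$, según la \cref{defi:pontryagin}, contiene módulos discretos no de torsión, que no son colímites de sus submódulos finitos; en tu último paso el cociente $M/K$ tampoco tiene por qué ser de torsión, así que \enquote{las piezas ya tratadas} no cubre ese caso. Y ahí es exactamente donde el argumento se rompe: para $M=\O$ con la topología discreta se calcula $M^\vee\isom L/\O$ discreto y $M^{\vee\vee}\isom\O$ con la topología $\pi$-ádica, de modo que el mapeo de evaluación es una biyección continua pero no un homeomorfismo; es decir, sin las hipótesis precisas de la referencia citada (o sin restringirse a la dualidad compacto/discreto de torsión, que es la que el texto necesita) el paso final no puede completarse. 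Segundo, el lema de los cinco sólo da que $M\to M^{\vee\vee}$ es biyectivo como morfismo de módulos abstractos, mientras que el teorema afirma un isomorfismo en $\mathcal A$, es decir un homeomorfismo; la exactitud topológica de la sucesión dual y la apertura del mapeo de evaluación ---que tú mismo señalas como \enquote{técnicas} y dejas sin demostrar--- son precisamente el contenido no trivial (compárese la \cref{rem:pontryagin-exacto}, donde el texto advierte que $(\cdot)^\vee$ ni siquiera preserva sucesiones exactas en general en $\mathcal A$).
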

\begin{proof}
  \cite[Prop.\ 5.4]{MR3629658}
\end{proof}

\begin{remark}\label{rem:pontryagin-exacto}
  ¡El funtor $(\cdot)^\vee\colon\mathcal A\rightarrow\mathcal A$ en general \emph{no}
  conserva sucesiones exactas, aunque es una equivalencia de categorías!  Recuerde que un
  funtor en general se llama \define[funtor exacto]{exacto} si conserva límites y colímites
  finitos (entre categorías que los admiten). Para funtores entre categorías abelianas esto
  es equivalente a pedir que conserve sucesiones exactas. Obviamente cada equivalencia de
  categorías (como $(\cdot)^\vee$) es un funtor exacto, pero la categoría $\mathcal A$ no
  es abeliana.

  Sin embargo, si en una sucesión exacta
  \[ 0\rightarrow A\rightarrow B\rightarrow C\rightarrow 0 \] todos los módulos $A$, $B$,
  $C$ son compactos o todos son discretos, entonces la sucesión dual
  \[ 0\rightarrow C^\vee\rightarrow B^\vee\rightarrow A^\vee\rightarrow0 \] es también
  exacta. Esto es cierto porque las categorías $\mathcal A_{\mathrm{comp}}$ y
  $\mathcal A_{\mathrm{disc}}$ son abelianas; también puede ser visto como consecuencia de
  \cite[Rem.\ 5.5]{MR3629658}.\footnote{El teorema de categorías de Baire implica que
    morfismos entre módulos compactos siempre son estrictos (con una demostración similar a
    la del teorema de la función abierta).}
\end{remark}

\begin{defi}
  Usamos los polinomios \[ \omega_r=(T+1)^{p^r}-1 \in\LL\] y $\Phi_r\in\LL$ como en la
  \cref{defi:omega-r}, para cada $r\in\Nuno$. Recuerde que $\omega_r=\Phi_r\omega_{r-1}$ y los
  polinomios $\Phi_r$ son irreducibles (\cref{ejer:omega-r-dist-irr}). Ponemos para cada
  $r\ge m$
  \begin{equation}
    \label{eqn:nu-r-m}
    \nu_{r,m}:=\frac{\omega_r}{\omega_m} = \Phi_r\dotsm\Phi_{m+1}.
  \end{equation}
\end{defi}

\begin{lem}
  Para cada $\LL$-módulo noetheriano $M$ existe un $m\in\Nuno$ tal que para cada
  $r\ge m$ los $\nu_{r,m}$ son coprimos a $\charideal_\LL(M)$.
\end{lem}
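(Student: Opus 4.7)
El plan es razonar usando la factorización única en $\LL$ y el teorema de estructura para reducir la afirmación a un hecho finito sobre los factores irreducibles que aparecen en $\charideal_\LL(M)$.

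Primero recordaría que $\LL=\O\llbracket T\rrbracket$ es un dominio de factorización única, cuyos elementos irreducibles son (salvo unidades) el uniformizante $\pi$ y los polinomios distinguidos e irreducibles en $\O[T]$. Por tanto, dos elementos no nulos de $\LL$ son coprimos si y sólo si no comparten ningún factor irreducible. Por definición, $\charideal_\LL(M)=(\pi^\mu P)$ con $P=\prod_{j=1}^l P_j$ un producto de polinomios distinguidos (no necesariamente distintos). Así, el conjunto de factores irreducibles de $\pi^\mu P$ es, salvo unidades, el conjunto finito $\{\pi\}\cup\{P_1,\dotsc,P_l\}$ (omitiendo $\pi$ si $\mu=0$).

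A continuación analizaría los factores irreducibles de $\nu_{r,m}=\Phi_{m+1}\dotsm\Phi_r$. Según el \cref{ejer:omega-r-dist-irr}, cada $\Phi_k$ es un polinomio distinguido e irreducible en $\LL$, y como $\deg\Phi_k=p^{k-1}(p-1)$ los $\Phi_k$ tienen grados dos a dos distintos y por eso son dos a dos no asociados. Además, al ser distinguidos, ninguno de ellos es asociado a $\pi$ (un polinomio distinguido tiene coeficiente líder $1$ y no puede ser divisible por $\pi$). Esto muestra que $\Phi_k$ y $\pi$ siempre son coprimos, y $\Phi_k$ y $P_j$ son coprimos salvo cuando $\Phi_k$ y $P_j$ son asociados.

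Para concluir, como los $P_j$ son sólo $l<\infty$ polinomios irreducibles y los $\Phi_k$ son dos a dos no asociados, el conjunto
\[ S:=\{ k\in\Nuno \mid \exists\,j\in\{1,\dotsc,l\}\colon \Phi_k\sim P_j \} \]
es finito. Tomando $m\ge\max(S\cup\{0\})$, para cada $r\ge m$ ningún factor irreducible $\Phi_{m+1},\dotsc,\Phi_r$ de $\nu_{r,m}$ es asociado a $\pi$ ni a algún $P_j$, así que $\nu_{r,m}$ y $\pi^\mu P$ son coprimos en $\LL$, como se quería. No preveo una dificultad real en esta prueba: el único punto que hay que notar con cuidado es que los $\Phi_k$ son dos a dos no asociados (pues es allí donde entra la finitud del conjunto de índices malos), y esto se sigue de manera inmediata del cálculo de sus grados.
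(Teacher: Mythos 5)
Tu estrategia es en esencia la misma que la del texto, cuya demostración es sólo un esbozo (factorización única en $\LL$, irreducibilidad de los $\Phi_k$ y el teorema de estructura); tú desarrollas los detalles, y el punto clave que identificas —que los $\Phi_k$ son dos a dos no asociados porque sus grados $p^{k-1}(p-1)$ son distintos, y coprimos a $\pi$ por ser distinguidos— es correcto y es justamente donde entra la finitud del conjunto de índices malos.

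Hay, sin embargo, un desliz que debes corregir: los $P_j$ del \cref{thm:estructura} (y por lo tanto los del polinomio característico de la \cref{defi:invariantes}) son polinomios distinguidos ordenados por divisibilidad, pero \emph{no} necesariamente irreducibles, así que los factores irreducibles de $\pi^\mu P$ no son los $P_j$ mismos. Con tu definición de $S$ (los $k$ tales que $\Phi_k$ es \emph{asociado} a algún $P_j$) el argumento falla tal como está escrito: por ejemplo, si $\charideal_\LL(M)=(\Phi_5^2)$ entonces $S=\emptyset$, y sin embargo $\nu_{r,m}$ no es coprimo a $\Phi_5^2$ en cuanto $m<5\le r$. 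El remedio es inmediato: define $S=\{k\in\Nuno : \Phi_k\mid \pi^\mu P\}$ (equivalentemente, $\Phi_k$ divide a algún $P_j$). Este conjunto sigue siendo finito, ya sea porque $\pi^\mu P$ tiene sólo una cantidad finita de factores irreducibles salvo asociados y los $\Phi_k$ son dos a dos no asociados, o más directamente porque $\Phi_k\mid\pi^\mu P$ obliga (siendo $\Phi_k$ coprimo a $\pi$) a $\Phi_k\mid P$, de donde $\deg\Phi_k\le\deg P$, mientras que $\deg\Phi_k=p^{k-1}(p-1)\to\infty$. Con ese cambio tu demostración queda correcta y coincide con la intención del texto.
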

\begin{proof}
  Esto es una consecuencia del teorema de estructura y \eqref{eqn:nu-r-m}, porque $\LL$ es
  un dominio de factorización única y los $\Phi_r$ son irreducibles.
\end{proof}

\begin{lem}\label{lem:alpha-no-depende-de-m}
  Sea $M$ un $\LL$-módulo $M$ noetheriano de torsión y sea $m\in\Nuno$ suficientemente
  grande tal que para $r\ge m$ los $\nu_{r,m}$ son coprimos a
  $\charideal_\LL(M)$. Consideramos el módulo
  \[ A_m := \varinjlim_{r\ge m} M/\nu_{r,m} M \] donde el mapeo
  $M/\nu_{r,m}M\rightarrow M/\nu_{r+1,m}M$ con respeto al
  cual tomamos el límite es $x\mapsto\frac{\omega_{r+1}}{\omega_r}x$.
  Entonces $A_m$ no depende de $m$.
\end{lem}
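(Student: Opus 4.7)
Para $m'\ge m$ (ambos suficientemente grandes como en la hipótesis) construiría un isomorfismo canónico $\alpha_{m',m}\colon A_{m'}\rightarrow A_m$ inducido a nivel $r\ge m'$ por la multiplicación $M/\nu_{r,m'}M\rightarrow M/\nu_{r,m}M$, $x\mapsto\nu_{m',m}x$. Esto está bien definido porque $\nu_{r,m}=\nu_{m',m}\nu_{r,m'}$, y conmuta con las transiciones (multiplicación por $\Phi_{r+1}$) en ambos lados, por lo que induce un mapeo entre los colímites. Notemos además que la hipótesis sobre $m$ implica que $\nu_{m',m}$ también es coprimo a $\charideal_\LL(M)$, de modo que tanto $M/\nu_{m',m}M$ como $M[\nu_{m',m}]$ son finitos, por \cref{lem:m-cociente-finito-coprimo} y \cref{cor:m-nu-finito} respectivamente.

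Para la sobreyectividad: dado $[x]_r\in A_m$ con $x\in M/\nu_{r,m}M$, aplicaría el \cref{lem:phi-s-anula-a-modulo-finito} al módulo finito $M/\nu_{m',m}M$, eligiendo el índice inicial del producto de $\Phi$'s mayor que $r$ (lo cual es posible pues dicho índice puede tomarse arbitrariamente grande). Esto da $t>r$ tal que $\nu_{t,r}$ anula a $M/\nu_{m',m}M$, es decir $\nu_{t,r}M\subseteq\nu_{m',m}M$. Escribiendo $\nu_{t,r}x=\nu_{m',m}w$ con $w\in M$, se verifica que $\alpha_{m',m}([w]_t)=[\nu_{m',m}w]_t=[\nu_{t,r}x]_t=[x]_r$ en $A_m$, donde la última igualdad proviene de la transición del nivel $r$ al nivel $t$.

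Para la inyectividad, supongamos $[y]_r\in A_{m'}$ con $\alpha_{m',m}([y]_r)=0$. Existe entonces $s\ge r$ tal que $\nu_{s,r}\nu_{m',m}y\in\nu_{s,m}M=\nu_{m',m}\nu_{s,m'}M$, es decir $\nu_{m',m}(\nu_{s,r}y-\nu_{s,m'}z)=0$ para algún $z\in M$, y por lo tanto $\nu_{s,r}y-\nu_{s,m'}z\in M[\nu_{m',m}]$. Ahora aplicaría nuevamente el \cref{lem:phi-s-anula-a-modulo-finito}, esta vez al módulo finito $M[\nu_{m',m}]$, para obtener $t\ge s$ tal que $\nu_{t,s}$ anula a $M[\nu_{m',m}]$. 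Multiplicando por $\nu_{t,s}$ y usando las identidades $\nu_{t,s}\nu_{s,r}=\nu_{t,r}$ y $\nu_{t,s}\nu_{s,m'}=\nu_{t,m'}$ se deduce $\nu_{t,r}y=\nu_{t,m'}z$ en $M$, lo cual significa que $[y]_r=[\nu_{t,r}y]_t=0$ en $A_{m'}$. Como $\nu_{m',m}\nu_{m'',m'}=\nu_{m'',m}$, estos isomorfismos son compatibles bajo composición y por lo tanto identifican canónicamente todos los $A_m$.

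El paso más delicado es el doble uso del \cref{lem:phi-s-anula-a-modulo-finito}: una vez sobre el cociente finito $M/\nu_{m',m}M$ (para la sobreyectividad) y otra sobre el submódulo finito $M[\nu_{m',m}]$ (para la inyectividad). En ambos es esencial poder tomar el índice inicial del producto de $\Phi$'s arbitrariamente grande, de modo que dicho producto aparezca como factor de $\nu_{t,r}$ o $\nu_{t,s}$ respectivamente, cualquiera que sea la cota inferior fijada.
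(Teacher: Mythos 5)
Tu propuesta es correcta y descansa en los mismos ingredientes esenciales que la demostración del texto: el mapeo de comparación dado por multiplicación por $\nu_{m',m}$, la finitud de $M/\nu_{m',m}M$ y de $M[\nu_{m',m}]$ (\cref{lem:m-cociente-finito-coprimo} y \cref{cor:m-nu-finito}) y, sobre todo, el \cref{lem:phi-s-anula-a-modulo-finito} aplicado a módulos finitos con el índice inicial del producto de $\Phi$'s tomado arbitrariamente grande, que es exactamente como el propio texto usa ese lema (\enquote{para $t\ge s\gg r$}). La diferencia está en la organización: el texto compara sólo $A_m$ con $A_{m-1}$ mediante una sucesión exacta de cuatro términos de sistemas dirigidos (con los núcleos $N_r$ de la multiplicación por $\Phi_m$ y los conúcleos $M/\Phi_m M$ en las columnas exteriores) y concluye usando que el colímite filtrado es exacto y que los colímites de esas columnas exteriores se anulan, controlando los $N_r$ mediante la sobreyección $M[\Phi_m]\to N_{r+i}$; tú en cambio construyes de una vez el mapeo $A_{m'}\to A_m$ para $m'\ge m$ arbitrarios y verificas su biyectividad con una cacería de elementos, donde la sobreyectividad corresponde a anular el conúcleo $M/\nu_{m',m}M$ y la inyectividad a anular el núcleo $M[\nu_{m',m}]$ — precisamente el papel que juegan las columnas exteriores en el argumento del texto. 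Tu versión es algo más elemental (no necesita la exactitud de colímites ni el paso intermedio por los $N_r$) y además entrega directamente la compatibilidad $\nu_{m',m}\nu_{m'',m'}=\nu_{m'',m}$ entre todas las identificaciones, mientras que la del texto es más compacta nivel por nivel; ambas son válidas.
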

\begin{proof}
  Estudiemos el diagrama
  \begin{equation*}
    \begin{tikzcd}
      0 \arrow[r] & N_r \arrow[d, "\Phi_{r+1}"] \arrow[r] & \frac{M}{\Phi_{m+1}\dotsm\Phi_rM} \arrow[d, "\Phi_{r+1}"] \arrow[r, "\Phi_{m}"] &
      \frac{M}{\Phi_{m}\dotsm\Phi_rM} \arrow[d, "\Phi_{r+1}"] \arrow[r] &
      \frac{M}{\Phi_mM } \arrow[d, "\Phi_{r+1}"] \arrow[r] & 0 \\
      0 \arrow[r] & N_{r+1} \arrow[d, "\Phi_{r+2}"] \arrow[r] & \frac{M}{\Phi_{m+1}\dotsm\Phi_{r+1}M} \arrow[d, "\Phi_{r+2}"] \arrow[r, "\Phi_{m}"] &
      \frac{M}{\Phi_{m}\dotsm\Phi_{r+1}M} \arrow[d, "\Phi_{r+2}"] \arrow[r] &
      \frac{M}{\Phi_mM } \arrow[d, "\Phi_{r+2}"] \arrow[r] & 0 \\
      0 \arrow[r] & N_{r+2} \arrow[d, "\Phi_{r+3}"] \arrow[r] & \frac{M}{\Phi_{m+1}\dotsm\Phi_{r+2}M} \arrow[d, "\Phi_{r+3}"] \arrow[r, "\Phi_{m}"] &
      \frac{M}{\Phi_{m}\dotsm\Phi_{r+2}M} \arrow[d, "\Phi_{r+3}"] \arrow[r] &
      \frac{M}{\Phi_mM } \arrow[d, "\Phi_{r+3}"] \arrow[r] & 0 \\
      & \vdots & \vdots & \vdots & \vdots
    \end{tikzcd}
  \end{equation*}
  Aquí los mapeos con flechas etiquetadas son multiplicación por este elemento y los mapeos
  con flechas no etiquetadas son inclusiones o proyecciones canónicas. Los
  $N_r,N_{r+1},\dotsc$ en la primera columna están definidos como los núcleos de la
  multiplicación por $\Phi_m$, de manera que las sucesiones horizontales son exactas.

  El colímite de la segunda columna es $A_m$ y el colímite de la tercera columna es
  $A_{m-1}$ (supongamos que tan pronto como $r\ge m-1$ los $\nu_{r,m}$ son coprimos a
  $\charideal_\LL(M)$). Tomar colímites de sistemas de módulos es un funtor exacto, por eso
  es suficiente demostrar que los colímites de las columnas exteriores son cero.

  Estudiamos la primera columna. Sea $i\in\Ncero$ y $x\in M$ un representante de un elemento
  de $N_{r+i}$. Entonces existe $d\in M$ tal que $\Phi_mx=\Phi_m\dotsc\Phi_{r+i}d$, es decir
  $x-\Phi_{m+1}\dotsm\Phi_{r+i}d\in M[\Phi_m]$ (donde $M[\Phi_m]$ denota el núcleo de la
  multiplicación por $\Phi_m$ en $M$). Esto demuestra que la aplicación canónica
  \begin{equation*}
    M[\Phi_m]\rightarrow N_{r+i}\subseteq M/\Phi_{m+1}\dotsm\Phi_{r+i}M
  \end{equation*}
  es sobreyectiva para cada $i$.

  Por el \cref{cor:m-nu-finito} $M[\Phi_m]$ es finito. Notemos que también todos los módulos
  que aparecen en el diagrama son finitos (esto sigue del
  \cref{lem:m-cociente-finito-coprimo}), en particular $M/\Phi_mM$.  Entonces para que los
  colímites de las columnas exteriores sean cero, es suficiente ver que cada elemento de un
  $\LL$-módulo finito está anulado si lo multiplicamos con $\Phi_s\Phi_{s+1}\dotsc\Phi_t$
  para $t\ge s\gg r$ suficientemente grande. Esto lo hemos demostrado en el
  \cref{lem:phi-s-anula-a-modulo-finito}.
\end{proof}

\begin{defi}\label{defi:mvee-alpha}
  Sea $M$ un $\LL$-módulo $M$ noetheriano de torsión.
  \begin{enumerate}
  \item\label{defi:ll-mod-dual} Hacemos $M^\vee=\Hom_{\O}(M,L/\O)$ un $\LL$-módulo con la
    acción
    \[ \lambda f(m)=f(\lambda m) \quad\text{para }\lambda\in\LL,\ f\in M^\vee,\ m\in M. \]
  \item\label{defi:alpha} Definimos
    \[ \alpha(M) = \varprojlim_{r\ge m} (M/\nu_{r,m} M)^\vee = (\varinjlim_{r\ge m}
       M/\nu_{r,m} M)^\vee \]
     con $m$ como en el \cref{lem:alpha-no-depende-de-m}. Entonces $\alpha(M)$ es un $\LL$-módulo que se
     llama el \define{adjunto de Iwasawa} de $M$.
  \end{enumerate} 
\end{defi}

En la \cref{defi:mvee-alpha}~\ref{defi:alpha} la igualdad entre el límite y el colímite (que
más bien es un isomorfismo canónico) es cierta porque el funtor $(\cdot)^\vee$ es exacto (\cref{rem:pontryagin-exacto}).

De manera más abstracta se puede definir el adjunto de Iwasawa de un $\LL$-módulo noetheriano de
torsión como $\alpha(M):=\operatorname{Ext}_\LL^1(M,\LL)$, que puede ser útil porque se
extiende directamente a situaciones más generales. Véase \cite[Def.\ (5.5.5), Prop.\
(5.5.6)]{NSW} para una demostración de que esto da lo mismo que la
\cref{defi:mvee-alpha}~\ref{defi:alpha}.

Estudiamos algunas propiedades del funtor $\alpha$.

\begin{prop}\label{prop:alpha}
  La asociación $\alpha$ define un funtor contravariante aditivo de la categoría de
  $\LL$-módulos noetherianos de torsión a si misma, enviando sucesiones exactas a
  la derecha a sucesiones exactas a la izquierda.
\end{prop}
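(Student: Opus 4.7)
El plan es descomponer la verificación en las propiedades funtoriales (functorialidad, aditividad, exactitud) por un lado, y el hecho de que $\alpha(M)$ vive en la categoría correcta por el otro.

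Primero abordo la functorialidad. Dado un morfismo $f\colon M\to N$ de $\LL$-módulos noetherianos de torsión, escojo $m\in\Nuno$ suficientemente grande para que $\nu_{r,m}$ sea coprimo con $\charideal_\LL(M)\cdot\charideal_\LL(N)$ para todo $r\ge m$ (factible porque sólo hay que evitar un número finito de factores irreducibles). Entonces $f$ desciende a morfismos compatibles $f_r\colon M/\nu_{r,m}M\to N/\nu_{r,m}N$. Por el \cref{lem:m-cociente-finito-coprimo}, estos son morfismos entre módulos finitos; aplicando la dualidad de Pontryagin obtenemos $(f_r)^\vee\colon (N/\nu_{r,m}N)^\vee\to (M/\nu_{r,m}M)^\vee$, compatibles con las aplicaciones de transición. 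Tomando el límite inverso se define $\alpha(f)\colon\alpha(N)\to\alpha(M)$. Por el \cref{lem:alpha-no-depende-de-m} la construcción no depende de la elección de $m$. La aditividad $\alpha(f+g)=\alpha(f)+\alpha(g)$ es inmediata porque cada paso (cociente, dualidad de Pontryagin, límite inverso) es $\O$-lineal.

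Para la exactitud, parto de una sucesión exacta $A\to B\to C\to 0$ y escojo $m$ tan grande que $\nu_{r,m}$ sea coprimo simultáneamente con los tres ideales característicos. Como tensorizar con $\LL/\nu_{r,m}\LL$ es un funtor exacto a la derecha, obtengo $A/\nu_{r,m}A\to B/\nu_{r,m}B\to C/\nu_{r,m}C\to 0$ exacta, con todos los módulos finitos (de nuevo el \cref{lem:m-cociente-finito-coprimo}). En la subcategoría de módulos compactos $\mathcal A_{\mathrm{comp}}$, la dualidad de Pontryagin es exacta según la \cref{rem:pontryagin-exacto}, así que al dualizar obtenemos $0\to(C/\nu_{r,m}C)^\vee\to(B/\nu_{r,m}B)^\vee\to(A/\nu_{r,m}A)^\vee$ exacta. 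Finalmente el límite inverso es exacto a la izquierda, lo que produce $0\to\alpha(C)\to\alpha(B)\to\alpha(A)$ exacta.

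El punto técnico es verificar que $\alpha(M)$ es efectivamente un $\LL$-módulo noetheriano de torsión, es decir que $\alpha$ toma valores en la categoría indicada; esto es lo que me parece el principal obstáculo. Mi plan es usar el teorema de estructura para reducir al caso de los sumandos elementales $\LL$, $\LL/\pi^{\mu_i}\LL$ y $\LL/P_j\LL$. Como $\LL$ no es de torsión, los únicos tipos relevantes son los dos últimos. Por un cálculo directo con la definición (aprovechando que $\nu_{r,m}$ es coprimo con $\pi^{\mu_i}$ y con $P_j$), se verifica que $(\LL/\pi^{\mu_i}\LL)/\nu_{r,m}$ y $(\LL/P_j\LL)/\nu_{r,m}$ estabilizan en familias que reproducen, al dualizar y tomar límite, módulos isomorfos a $\LL/\pi^{\mu_i}\LL$ y $\LL/P_j\LL$ respectivamente. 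Como $\alpha$ ya preserva la exactitud a la izquierda demostrada arriba, junto con la estructura ya conocida de $M$ salvo pseudo-isomorfismo (usando el \cref{ejer:pseudo-isomorfo-cociente-finito} para controlar los núcleos/conúcleos finitos), esto implica que $\alpha(M)$ es también noetheriano y de torsión. Una vía alternativa más conceptual es identificar $\alpha(M)=\operatorname{Ext}^1_\LL(M,\LL)$ como se menciona en el texto: la noetherianidad resulta de que $\LL$ es noetheriano y $M$ es finitamente generado, y el ser de torsión resulta de que los $\operatorname{Ext}^1$ de módulos de torsión sobre un dominio integral son de torsión.
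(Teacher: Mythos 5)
Tu demostración es correcta y sigue en lo esencial el mismo camino que la del texto: reducción a los cocientes finitos $M/\nu_{r,m}M$ (finitos por el \cref{lem:m-cociente-finito-coprimo}), dualidad de Pontryagin, que es exacta sobre módulos finitos (\cref{rem:pontryagin-exacto}), y paso al límite, con la exactitud a la derecha del producto tensorial como ingrediente clave. Las únicas diferencias son de organización: para la exactitud te basta la exactitud a la izquierda del límite inverso (el texto invoca Mittag-Leffler, que da más de lo necesario), y la verificación de que $\alpha(M)$ es de nuevo noetheriano y de torsión la incorporas en el propio argumento reduciendo a módulos elementales mediante el \cref{ejer:pseudo-isomorfo-cociente-finito} --- lo cual es precisamente el contenido de la \cref{prop:alpha-pseudo-isos} y del \cref{thm:iwasawa-adj}, cuyo cálculo sobre $\LL/\pi^{\mu}$ y $\LL/P$ sólo esbozas --- mientras que el texto pospone ese punto al \cref{thm:iwasawa-adj}.
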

\begin{proof}
  Si $M\rightarrow N$ es un morfismo de $\LL$-módulos noetherianos de torsión
  entonces podemos escoger $m$ en la definición de $\alpha$ suficientemente grande para
  ambos, de manera que obtenemos morfismos
  \begin{equation*}
    M/\nu_{r,m}M\rightarrow N/\nu_{r,m}N\quad\text{para
      cada }r\ge m
  \end{equation*}
  y entonces, aplicando $(\cdot)^\vee$ y tomando el límite, un morfismo $\alpha(N)\rightarrow\alpha(M)$.
  Así $\alpha$ define un funtor contravariante. Como
  \begin{equation*}
    M/\nu_{r,m}M=M\tensor_\LL\LL/\nu_{r,m}\LL
  \end{equation*}
  y el producto tensorial y la dualidad $(\cdot)^\vee$ son aditivos, $\alpha$ también es
  aditivo. Además, $\alpha$ envía sucesiones exactas a la derecha en sucesiones exactas a la
  izquierda porque el producto tensorial es exacto a la derecha, la dualidad $(\cdot)^\vee$
  en este caso envía sucesiones exactas a sucesiones exactas (\cref{rem:pontryagin-exacto})
  y el funtor tomando el límite también es exacto porque tomamos límites de módulos finitos,
  para cuales la condición de Mittag-Leffler siempre es cierta. Falta ver que $\alpha(M)$
  nuevamente es noetheriano y de torsión. Esto va a resultar del
  \cref{thm:iwasawa-adj} abajo.
\end{proof}

\begin{prop}\label{prop:alpha-pseudo-isos}
  El funtor $\alpha$ envía pseudo-isomorfismos a pseudo-isomorfismos.
\end{prop}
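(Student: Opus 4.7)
The plan is to work level by level using the explicit description $\alpha(M)=\varprojlim_r(M/\nu_{r,m}M)^\vee$ and to exploit that every module appearing at finite level is finite, reducing the whole statement to uniform cardinality bounds.

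Given a pseudo-isomorfismo $f\colon M\rightarrow N$ of $\LL$-módulos noetherianos de torsión, set $a=\max(|\ker f|,|\coker f|)$ and choose $m\in\Nuno$ large enough so that the $\nu_{r,m}$ are coprime to both $\charideal_\LL(M)$ and $\charideal_\LL(N)$ for every $r\ge m$; this $m$ then computes $\alpha$ for both modules simultaneously. By \cref{ejer:pseudo-isomorfo-cociente-finito} (taking $\nu=\nu_{r,m}$) the induced map
\[ f_r\colon M/\nu_{r,m}M\rightarrow N/\nu_{r,m}N \]
is itself a pseudo-isomorfismo with $|\ker f_r|\le a^2$ and $|\coker f_r|\le a$, bounds that are uniform in $r$. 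By \cref{lem:m-cociente-finito-coprimo} source and target of $f_r$ are finite, so Pontryagin duality $(\cdot)^\vee$ preserves cardinalities and swaps kernel with cokernel, yielding $|\ker f_r^\vee|\le a$ and $|\coker f_r^\vee|\le a^2$.

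I then pass to the inverse limit. Splitting each $f_r^\vee$ into the two short exact sequences
\[ 0\rightarrow \ker f_r^\vee\rightarrow (N/\nu_{r,m}N)^\vee\rightarrow \im f_r^\vee\rightarrow 0, \]
\[ 0\rightarrow \im f_r^\vee\rightarrow (M/\nu_{r,m}M)^\vee\rightarrow \coker f_r^\vee\rightarrow 0, \]
I invoke Mittag-Leffler: every module here is finite, so the condition is automatic and $\varprojlim_r$ is exact on both sequences. Moreover the subsystems $\{\ker f_r^\vee\}_r$ and $\{\coker f_r^\vee\}_r$ consist of finite modules of uniformly bounded cardinality, so after passing to a cofinal subsystem on which the cardinalities stabilize and the transition maps become isomorphisms, their limits are finite (of size $\le a$ and $\le a^2$ respectively). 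Identifying these limits with $\ker\alpha(f)$ and $\coker\alpha(f)$ respectively shows that $\alpha(f)\colon\alpha(N)\rightarrow\alpha(M)$ is a pseudo-isomorfismo.

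The main obstacle is not any deep computation but the careful bookkeeping around the exactness of the inverse limit: one has to check that the two short exact sequences above really do yield exact limit sequences, which is precisely what Mittag-Leffler guarantees for systems of finite modules. Everything else is a transparent transfer of the uniform cardinality bounds from the finite levels to the limit.
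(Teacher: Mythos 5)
Your argument is correct and follows essentially the same route as the paper: bound the kernels and cokernels of $M/\nu_{r,m}M\rightarrow N/\nu_{r,m}N$ uniformly via el \cref{ejer:pseudo-isomorfo-cociente-finito}, then use that Pontryagin duals and inverse limits of finite modules are exact to transfer those bounds to $\alpha(f)$, exactly as the paper does (it just compresses the limit step into one sentence). The only loose phrase is the claim that on a cofinal subsystem the transition maps become isomorphisms — false in general (e.g.\ zero maps between modules of equal order) — but the conclusion you actually need, that an inverse limit of finite modules of uniformly bounded order is finite of bounded order, holds by the standard eventual-image argument, so nothing breaks.
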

\begin{proof}
  Sea $M\rightarrow N$ un pseudo-isomorfismo. Como mencionamos en la demostración de la
  \cref{prop:alpha}, tomar el límite inverso y duales es exacto en módulos finitos. Por eso,
  $\alpha(N)\rightarrow\alpha(N)$ es un pseudo-isomorfismo si y solo si los órdenes de los
  núcleos y conúcleos de
  \begin{equation*}
    M/\nu_{r,m}M\rightarrow N/\nu_{r,m}N
  \end{equation*}
  son acotados para $r\to\infty$ (donde fijamos $m$ suficientemente grande). Pero esto
  resulta del \cref{ejer:pseudo-isomorfo-cociente-finito}~\ref{ejer:pseudo-isomorfo-cociente-finito:b}.
\end{proof}

La siguiente propiedad del funtor $\alpha$ es la razón por qué es importante para nosotros.

\begin{thm}\label{thm:iwasawa-adj}
  Sea $M$ un $\LL$-módulo noetheriano de torsión. Entonces existe un pseudo-isomorfo
  \[ \alpha(M) \sim M. \]
\end{thm}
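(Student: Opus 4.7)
The strategy is to reduce to cyclic modules via the structure theorem and then perform a direct computation. By \cref{thm:estructura} there is a pseudo-isomorphism $M\sim E$, where
\[ E=\bigoplus_{k=1}^{s}\LL/(g_k) \]
with each $g_k$ equal to either $\pi^{\mu_k}$ or a distinguished polynomial $P_j$. By \cref{prop:alpha-pseudo-isos} we obtain a pseudo-isomorphism $\alpha(E)\sim\alpha(M)$, and since pseudo-isomorphism is symmetric on noetherian torsion modules (\cref{ejer:relacion-simetrica}), it suffices to exhibit a pseudo-isomorphism $\alpha(E)\sim E$. The additivity of $\alpha$ (\cref{prop:alpha}) gives $\alpha(E)\cong\bigoplus_k\alpha(\LL/(g_k))$, so the whole theorem reduces to the cyclic case: $\alpha(\LL/(g))\sim\LL/(g)$ for $g$ of one of the two forms above.

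\textbf{The cyclic case.} Fix such a $g$ and choose $m$ large enough so that $g$ and $\nu_{r,m}$ are coprime for every $r\ge m$; then $\LL/(g,\nu_{r,m})$ is finite by \cref{cor:fg-indice-finito}. The key structural fact is that the pair $(g,\nu_{r,m})$ is a regular sequence in the two-dimensional regular local ring $\LL$, so $\LL/(g,\nu_{r,m})$ is a zero-dimensional complete intersection and in particular a Gorenstein local $\O$-algebra of finite length. Matlis duality for such rings furnishes an (a priori non-canonical) isomorphism of $\LL$-modules
\[ (\LL/(g,\nu_{r,m}))^{\vee}\;\cong\;\LL/(g,\nu_{r,m}). \]
Under this self-duality the transition map dual to the multiplication $\LL/(g,\nu_{r,m})\xrightarrow{\Phi_{r+1}}\LL/(g,\nu_{r+1,m})$ that appears in the direct system of \cref{lem:alpha-no-depende-de-m} corresponds to the canonical projection $\LL/(g,\nu_{r+1,m})\twoheadrightarrow\LL/(g,\nu_{r,m})$. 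Taking the inverse limit then yields
\[ \alpha(\LL/(g))\;=\;\varprojlim_{r\ge m}(\LL/(g,\nu_{r,m}))^{\vee}\;\cong\;\varprojlim_{r\ge m}\LL/(g,\nu_{r,m})\;\cong\;\LL/(g), \]
where the last isomorphism uses that the ideals $(\nu_{r,m})$ become arbitrarily $\mathfrak{M}$-deep (as $\nu_{r,m}=\Phi_{r+1}\cdots\Phi_{m+1}$ accumulates factors lying in $\mathfrak{M}$ by \cref{ejer:omega-r-dist-irr} and \cref{lem:XcompHausdMadica}).

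\textbf{Main obstacle.} The only delicate point is the compatibility of the Matlis self-duality with the transition maps: one needs to choose the dualizing isomorphisms for each $r$ coherently so that dualizing the multiplication-by-$\Phi_{r+1}$ maps produces the canonical projections in the opposite direction. A clean way to avoid fiddling with explicit dualizing elements is to take the alternative description mentioned after \cref{defi:mvee-alpha}, namely $\alpha(M)=\operatorname{Ext}^{1}_{\LL}(M,\LL)$, and apply it to the free resolution $0\to\LL\xrightarrow{g}\LL\to\LL/(g)\to0$: this immediately gives $\operatorname{Ext}^{1}_{\LL}(\LL/(g),\LL)\cong\LL/(g)$ and bypasses the limit entirely. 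Either way, combining the cyclic computation with the structural reduction above produces the desired pseudo-isomorphism $\alpha(M)\sim E\sim M$.
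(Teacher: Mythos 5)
Your reduction is exactly the paper's: pseudo-isomorph to an elemental module via \cref{thm:estructura}, use that $\alpha$ is additive and preserves pseudo-isomorphisms (\cref{prop:alpha,prop:alpha-pseudo-isos}), and treat the cyclic factors $\LL/(\pi^\mu)$ and $\LL/(P)$ separately. Where you diverge is the cyclic case. The paper proves it by writing down explicit maps: for $M=\LL/\pi^\mu$ it constructs, in the $\O$-basis $1,\gamma,\dotsc,\gamma^{p^r-1}$ of $M/\nu_{r,0}M$, a compatible family of self-dualities $\psi_r\colon M/\nu_{r,0}M\isomarrow(M/\nu_{r,0}M)^\vee$ and checks by hand that they intertwine the canonical projections with the duals of the multiplication-by-$\Phi_{r+1}$ maps; for $M=\LL/(P)$ it first uses \cref{lem:congruencia-phi-s} to trade the limit over the $\nu_{r,m}$ for a limit over powers of $\pi$, getting $\alpha(M)\isom\Hom_\O(M,\O)$, and then exhibits the pseudo-isomorphism $\theta(g)(h)=\varepsilon(gh)$, where $\varepsilon$ reads off the coefficient of $T^{d-1}$ of the remainder of division by $P$. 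Your Gorenstein/Matlis observation is correct at each finite level ($(g,\nu_{r,m})$ is a regular sequence, so $\LL/(g,\nu_{r,m})$ is a self-dual Artinian complete intersection), but the step you yourself flag — choosing the level-wise self-dualities coherently so that the Pontryagin duals of the $\Phi_{r+1}$-multiplications become the canonical projections — is precisely the content the paper supplies with the explicit $\psi_r$ (resp.\ $\theta$). As written, that compatibility is asserted, not proved, and it is not automatic, since the self-duality of a Gorenstein Artinian ring is non-canonical; so route (a) on its own has a genuine gap exactly at the point the paper's computation fills.

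Your fallback via $\alpha(M)=\operatorname{Ext}^1_\LL(M,\LL)$ and the resolution $0\to\LL\labeledarrow{g}\LL\to\LL/(g)\to0$ does settle the cyclic case instantly (indeed with an isomorphism, not just a pseudo-isomorphism), and it is legitimate inside this text, since the paper states that description as an equivalent definition. But note what this buys and costs: the paper only cites \cite{NSW} for the equivalence of the Ext description with \cref{defi:mvee-alpha} and deliberately avoids using it, giving instead a self-contained argument; your route outsources the real work to that citation. If you take this path, you should also check that the cited identification is stated for the paper's conventions — the transpose action $\lambda f(m)=f(\lambda m)$ of \cref{defi:mvee-alpha} is what makes the theorem hold with no twist by $\gamma\mapsto\gamma^{-1}$, and a mismatch of conventions would change the characteristic ideal of the answer, so this is not a cosmetic point.
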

\begin{proof}
  Sea $E$ un $\LL$-módulo elemental y $E\rightarrow M$ un pseudo-isomorfo. Aplicando
  $\alpha$ a este pseudo-isomorfismo y usando la \cref{prop:alpha-pseudo-isos} obtenemos
  un pseudo-isomorfismo de $\LL$-módulos $\alpha(M)\rightarrow\alpha(E)$. Por eso es
  suficiente demostrar la afirmación en el caso de un módulo elemental. Porque además
  $\alpha$ es aditivo, es suficiente demostrar esto en los casos en que $M=\LL/\pi^\mu$ con
  $\mu\in\Nuno$ y $M=\LL/P$ con un polinomio distinguido $P\in\LL$.
  
  Empecemos con el caso $M=\LL/\pi^\mu$ (donde fijamos un uniformizante $\pi$ de
  $\O$). Ponemos $\gamma=1+T$ y $m=0$, entonces para cada $r\ge0=m$ tenemos
  $\omega_r=\gamma^{p^r}-1$ y $1,\gamma,\gamma^2,\dotsc,\gamma^{p^r-1}$ es un $\O$-base de
  $M/\nu_{r,0}M$. Para cada $f\in M/\nu_{r,0}M$ escribimos
  \begin{equation*}
    f=\sum_{i=0}^{p^r-1}a_i\gamma^i,\quad a_i\in\O
  \end{equation*}
  y definimos
  \begin{equation*}
    \psi_r\colon M/\nu_{r,0}M\rightarrow (M/\nu_{r,0}M)^\vee, \quad \psi_r(f)(\gamma^i)=\frac{a_i}{\pi^\mu}.
  \end{equation*}
  Entonces se puede verificar directamente que $\psi_r$ es biyectivo, $\LL$-lineal y
  que el diagrama
  \begin{equation*}
    \begin{tikzcd}
      M/\nu_{r+1,0}M \arrow[d] \arrow[r, "\psi_{r+1}"] &
      (M/\nu_{r+1,0}M)^\vee \arrow[d] \\
      M/\nu_{r,0}M \arrow[r, "\psi_r"] &  (M/\nu_{r,0}M)^\vee
    \end{tikzcd}
  \end{equation*}
  es conmutativo (aquí el mapeo a la izquierda es la proyección canónica y el mapeo a la
  derecha es dual a la multiplicación con $\Phi_{r+1}$ en
  $M/\nu_{r,0}M\rightarrow M/\nu_{r+1,0}M$). Tomando el
  límite obtenemos $\alpha(M)\isom\varprojlim_rM/\nu_{r,0}M$. Entonces la afirmación resulta porque $M=\LL/\pi^\mu$ y $\bigcap_{r\geq 0} \w_{r}\LL= 0$ (véase la demostración
  del \cref{thm:EquivPowPro}).

  Ahora sea $M=\LL/(P)$ y sea $d\in\Nuno$ el grado de $P$.
  Entonces el \cref{lem:congruencia-phi-s} dice que $\Phi_r$ para $r\gg0$ actúa en $M$ como
  multiplicación con una potencia de $\pi$. Esto implica que
  \begin{multline*}
    \alpha(M)\isom\varprojlim_{s\in\Nuno} {(M/\pi^sM)}^\vee=\varprojlim_{s\in\Nuno} \Hom_\O(M/\pi^sM, L/\O) \\
    \isom\varprojlim_{s\in\Nuno} \Hom_\O(M/\pi^sM,\O/\pi^s)\isom\Hom_\O(M,\O)
  \end{multline*}
  como $\LL$-módulos, donde $\LL$ actúa en $\Hom_\O(M,\O)$ como
  $(\lambda\phi)(m)=\phi(\lambda m)$ para $\lambda\in\LL$, $\phi\in\Hom_\O(M,\O)$ y $m\in M$.  
  El lema de división
  (\cref{lema:division}) nos permite escribir cada $g\in\LL$ como $g=qP+r$ con
  $q,r\in\LL$ únicos y $r$ un polinomio de grado $<d$. Definimos un mapeo
  \begin{equation*}
    \varepsilon\colon\LL\rightarrow\O
  \end{equation*}
  que envía $g$ al coeficiente de $T^{d-1}$ en este $r$. Usando esto definimos
  \begin{equation*}
    \theta\colon M=\LL/(P)\rightarrow M^\vee=\Hom_\O(M,\O),\quad
    g\mapsto\left[h\mapsto\varepsilon(gh)\right]
  \end{equation*}
  Se verifica que esto es un morfismo de $\LL$-módulos bien definido. Si $r\in\O[T]$ es un
  polinomio de grado $k<d$ no nulo, que es representante de un elemento no nulo de $M$,
  entonces $\theta(r)(T^{d-1-k})=\varepsilon(T^{d-1-k}r)\neq0$, por eso $\theta$ es
  inyectivo. Porque los $\O$-rangos de $M$ y $\Hom_\O(M,\O)$ son iguales, el conúcleo de
  $\theta$ tiene que ser finito, así que $\theta$ es un pseudo-isomorfismo.\footnote{De
    hecho, $\theta$ es incluso sobreyectivo, es decir un isomorfismo, pero no
    lo necesitaremos y omitimos la demostración.}
\end{proof}

Concluimos esta sección generalizando los resultados al caso de módulos sobre el álgebra
de Iwasawa para grupos más grandes, como en la \cref{sec:mas-grandes}. Sólo discutimos esto
en el caso que vamos a necesitar en la aplicación, que es aquel en que $G=\Delta\times\Gamma$
con $\Delta\isom\F_p^\times$ y $\Gamma\isom\Zp$. Fijemos estos isomorfismos, así que
obtenemos un isomorfismo $\O\llbracket\Gamma\rrbracket\isom\O\llbracket T\rrbracket$. En el
resultados siguientes, sólo nos interesamos en la existencia de un pseudo-isomorfismo y no nos
importa si es canónico o no -- de hecho, el pseudo-isomorfismo depende de las elecciones
que hicimos, pero esto nos da igual.

Gracias al \cref{cor:descomposicion-lambda}, podemos identificar $\O\llbracket G\rrbracket$
con $(\O\llbracket T\rrbracket)^{p-1}$, así que un módulo sobre $\O\llbracket G\rrbracket$
es lo mismo que $p-1$ módulos sobre $\O\llbracket T\rrbracket$.

\begin{defi}\label{defi:alpha-g}
  Sea $M$ un $\O\llbracket G\rrbracket$-módulo noetheriano de torsión.
  Para $r\in\Nuno$ sea $w_r\in\O\llbracket G\rrbracket$ el elemento que corresponde a
  $(\omega_r,\dotsc,\omega_r)\in(\O\llbracket T\rrbracket)^{p-1}$. Entonces definimos
  \[ \alpha(M) = \varprojlim_{r\ge m} (M/\frac{w_r}{w_m} M)^\vee = (\varinjlim_{r\ge m}
     M/\frac{w_r}{w_m} M)^\vee \]
  con $m$ suficientemente grande, de manera análoga a la
  \cref{defi:mvee-alpha}~\ref{defi:alpha}. Equivalentemente, obtenemos
  $\alpha(M)$ via aplicar $\alpha$ a cada uno de los $p-1$ módulos sobre $\O\llbracket
  T\rrbracket$, es decir
  \[ \alpha(M) = \bigoplus_{i=1}^{p-1}\alpha(e_{\omega^i}M). \]
\end{defi}

\begin{cor}\label{cor:iwasawa-adj}
  Sea $M$ un $\O\llbracket G\rrbracket$-módulo noetheriano de torsión. Entonces
  existe un pseudo-isomorfo \[ \alpha(M) \sim M. \]
\end{cor}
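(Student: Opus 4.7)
La estrategia es reducir la afirmación al caso ya resuelto del \cref{thm:iwasawa-adj} mediante la descomposición por idempotentes. Primero usaré el \cref{cor:descomposicion-lambda}, que nos proporciona un isomorfismo de $\O$-álgebras
\[ \O\llbracket G\rrbracket \isom \bigoplus_{i=1}^{p-1} e_{\omega^i}\O\llbracket G\rrbracket \isom \bigoplus_{i=1}^{p-1} \O\llbracket\Gamma\rrbracket, \]
donde cada sumando $e_{\omega^i}\O\llbracket G\rrbracket$ es canónicamente isomorfo al álgebra de Iwasawa $\LL=\O\llbracket\Gamma\rrbracket\isom\O\llbracket T\rrbracket$ por la elección del generador topológico de $\Gamma$.

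En segundo lugar, la descomposición del \cref{lem:idempotentes}~\ref{lem:idempotentes:m-descomp} aplicada a $M$ (como $\O[\Delta]$-módulo, usando que $\#\Delta=p-1$ es invertible en $\O$) da
\[ M = \bigoplus_{i=1}^{p-1} e_{\omega^i}M, \]
donde cada $e_{\omega^i}M$ es un módulo sobre $e_{\omega^i}\O\llbracket G\rrbracket\isom\LL$. Como la hipótesis de ser noetheriano y de torsión se preserva al pasar a sumandos directos, cada $e_{\omega^i}M$ es un $\LL$-módulo noetheriano de torsión.

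En tercer lugar, aplico el \cref{thm:iwasawa-adj} a cada sumando: existe un pseudo-isomorfismo de $\LL$-módulos
\[ \varphi_i\colon \alpha(e_{\omega^i}M)\rightarrow e_{\omega^i}M \quad\text{para cada }i=1,\dotsc,p-1. \]
Tomando la suma directa $\varphi=\bigoplus_i\varphi_i$ y usando la \cref{defi:alpha-g}, que caracteriza $\alpha(M)=\bigoplus_i\alpha(e_{\omega^i}M)$, obtenemos un morfismo
\[ \varphi\colon\alpha(M)\rightarrow M. \]

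Finalmente, hay que justificar que $\varphi$ es un pseudo-isomorfismo. El núcleo y el conúcleo de $\varphi$ son las sumas directas de los núcleos y conúcleos de los $\varphi_i$, que son finitos (pseudo-nulos como $\LL$-módulos). Una suma directa finita de módulos finitos es finita, por lo tanto pseudo-nula también vista como $\O\llbracket G\rrbracket$-módulo (de hecho, la finitud es intrínseca y no depende del anillo de coeficientes). El único paso sutil es quizás convencerse de que la descomposición por idempotentes es compatible con la construcción de $\alpha$, pero esto es precisamente lo que se codificó en la segunda igualdad de la \cref{defi:alpha-g}, por lo que no hay obstáculo real: el corolario es esencialmente una reformulación del teorema anterior en el contexto de grupos con parte finita $\Delta$.
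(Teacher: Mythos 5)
Tu propuesta es correcta y sigue esencialmente el mismo camino que la demostración del texto: aplicar el \cref{thm:iwasawa-adj} componente por componente usando la descomposición por idempotentes del \cref{cor:descomposicion-lambda} y la \cref{defi:alpha-g}. Tu versión sólo es más detallada al verificar explícitamente que núcleos y conúcleos finitos se suman a algo finito, lo cual el texto da por sobreentendido.
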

\begin{proof}
  Esto resulta directamente del \cref{thm:iwasawa-adj}, aplicándolo en cada componente
  separadamente.
\end{proof}

\ejercicios

\begin{ejer}
  Demuestre que las diferentes descripciones de $\alpha(M)$ en la \cref{defi:alpha-g} son
  equivalentes. Use el \cref{ejer:diagonal} para esto.
\end{ejer}

\begin{ejer}\label{ejer:alpha-finito-cero}
  Demuestre que si $M$ es un $\LL$-módulo finito entonces $\alpha(M)=0$. Use el
  \cref{lem:congruencia-phi-s} para esto.
\end{ejer}

\chapter{Grupos de clases y el teorema de Iwasawa}
\label{sec:grupos-de-clases}

La teoría que desarrollamos en los \cref{sec:algebra-de-iwasawa,sec:modulos-iwasawa} todavía
no tiene nada que ver con aritmética. En este capítulo empezamos a conectarla con objetos
aritméticos, más precisamente con grupos de clases de campos de números. El hecho de que uno puede
estudiar los grupos de clases con estos métodos fue una de las ideas más importantes de
Iwasawa, y a continuación vamos a ilustrar que tan prolífico es este enfoque. Vamos a
obtener resultados sobre grupos de clases como el famoso \cref{thm:iwasawa-intro} de
Iwasawa y muchos más. Aparte de ser interesantes por sí mismos, algunos resultados también
preparan el lado algebraico de la Conjetura Principal, que estudiaremos en el \cref{sec:mc}.

\section{Grupos de Clases en Extensiones Ciclotómicas}

Comenzamos esta sección con algunos teoremas sobre grupos de clases que luego nos serán útiles. 
Sea $K$ un campo de números y $\O_K$ su anillo de enteros.

\begin{thm}\label{thm:iwahKhL} Supongamos que $L/K$ es una extensión de campos de números
  que no contiene ninguna extensión no trivial $F$ no ramificada de $K$. Entonces el número
  de clases $h_{K}$ divide a $h_{L}$. Además, la aplicación norma (\cref{defi:norma-de-ideales}) de $\Cl_{L}$ a $\Cl_{K}$ es sobreyectiva.
\end{thm}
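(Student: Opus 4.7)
The plan is to combine the hypothesis with the theory of the Hilbert class field (\cref{thm:campo-de-hilbert}) and deduce surjectivity of $N$ from a restriction statement on the Galois side; the divisibility $h_K\mid h_L$ then drops out as a corollary since the norm is a surjection between finite abelian groups.

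First I would denote by $H_K$ (resp.\ $H_L$) the Hilbert class field of $K$ (resp.\ of $L$) and study the compositum $L H_K$. The intersection $L\cap H_K$ is a subextension of $L/K$ which is also unramified over $K$ (being contained in $H_K/K$), so by the hypothesis on $L/K$ we must have $L\cap H_K=K$. In particular, by standard Galois theory for compositums, the restriction morphism
\[ \Gal(L H_K/L)\isomarrow\Gal(H_K/K) \]
is an isomorphism. Moreover $LH_K/L$ is abelian and unramified (unramifiedness is preserved under base change), so $LH_K\subseteq H_L$, and we obtain a surjection
\[ \rho\colon\Gal(H_L/L)\twoheadrightarrow\Gal(LH_K/L)\isomarrow\Gal(H_K/K). \]

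Next I would identify this surjection with the norm map via the Artin symbols of \cref{thm:campo-de-hilbert}. Concretely, for an unramified prime $\mathfrak P$ of $L$ above $\mathfrak p$ of $K$, the Frobenius $\Frob_{\mathfrak P}\in\Gal(H_L/L)$ acts on the residue field by raising to the $\#(\O_L/\mathfrak P)$-th power, and since $\#(\O_L/\mathfrak P)=\#(\O_K/\mathfrak p)^{f(\mathfrak P/\mathfrak p)}$, restriction to $H_K$ gives $\Frob_{\mathfrak p}^{f(\mathfrak P/\mathfrak p)}$. This coincides with the Artin symbol of $N_{L/K}(\mathfrak P)=\mathfrak p^{f(\mathfrak P/\mathfrak p)}$, so the diagram
\[
  \begin{tikzcd}
    \Cl(L)\arrow[r,"N_{L/K}"]\arrow[d,"\sim"'] & \Cl(K)\arrow[d,"\sim"]\\
    \Gal(H_L/L)\arrow[r,"\rho"] & \Gal(H_K/K)
  \end{tikzcd}
\]
commutes. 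Surjectivity of $\rho$ then transfers to surjectivity of $N_{L/K}\colon\Cl(L)\to\Cl(K)$.

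Finally, once $N_{L/K}$ is known to be surjective between the finite abelian groups $\Cl(L)$ and $\Cl(K)$, we immediately conclude $h_K\mid h_L$. I do not foresee a serious obstacle: the only delicate point is the compatibility between the norm on ideals and restriction of Frobenii, which is essentially the content of how the Artin symbol behaves under inclusions of fields, and is precisely the kind of functoriality underlying \cref{thm:campo-de-hilbert} and \cref{ejer:artin-symbol-eq}.
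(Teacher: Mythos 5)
Tu propuesta es correcta y sigue esencialmente el mismo camino que la demostración del texto: la hipótesis fuerza $L\cap H_K=K$, el compuesto $LH_K$ es abeliano y no ramificado sobre $L$ (luego está dentro de $H_L$), y la sobreyectividad de la restricción se traduce en la de la norma vía el diagrama conmutativo dado por el \cref{thm:campo-de-hilbert}. La única diferencia es que tú verificas explícitamente la compatibilidad del símbolo de Artin con la norma (la conmutatividad del diagrama), mientras que el texto la cita de Washington.
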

\begin{proof}
Sea $H$ la máxima extensión no ramificada de $K$. Por la teoría de campos de clases
$\Gal(H/K)\isom \Cl_{K}$ (\cref{thm:campo-de-hilbert}). Además por hipótesis $H\cap
L=K$. Entonces $LH/L$ es una extensión abeliana no ramificada de índice $[LH:L]=[H:K]$, que
además está contenida en la máxima extensión no ramificada $M$ de $L$. El resultado sigue
del siguiente diagrama (v\'ease \cite[p. 400]{MR1421575})
\begin{equation*} 
  \begin{tikzcd}
    \Cl_{L} \arrow[r,"\sim"] \arrow[d, "\mathrm{Norm}"] & \Gal(M/L) \arrow[d, "\mathrm{Res}"] \\
    \Cl_{K} \arrow[r, "\sim"] & \Gal(H/K).
  \end{tikzcd}
\end{equation*}
\end{proof}

\begin{lem}\label{lem:G1normal} Sean $G$ un $p$-grupo e $I$ un subgrupo propio de $G$. Existe un subgrupo normal $G_{1}$ de $G$ de índice $p$ que contiene a $I$.  
\end{lem}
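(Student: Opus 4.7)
The plan is to reduce the lemma to the classical subsidiary fact that \emph{every maximal subgroup $M$ of a finite $p$-group $G$ is normal and of index $p$}. Once this is granted, the lemma follows immediately: since $I \subsetneq G$ is a proper subgroup of a finite group, the finite nonempty poset of proper subgroups of $G$ containing $I$ admits a maximal element $M$, which is automatically a maximal subgroup of $G$, and we set $G_{1} := M$.

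To establish the subsidiary fact, I would argue by induction on $|G|$. The base case $|G| = p$ is immediate since then $M$ is trivial. For the inductive step, the key ingredient is that the center $Z(G)$ of a nontrivial finite $p$-group is nontrivial; pick a central element of order $p$, obtaining a central (hence normal) subgroup $C \leq G$ of order $p$. The argument then splits into two cases. If $C \subseteq M$, then $M/C$ is maximal in the strictly smaller $p$-group $G/C$, and the inductive hypothesis gives $[G/C : M/C] = p$ together with $M/C \trianglelefteq G/C$, from which $[G : M] = p$ and $M \trianglelefteq G$ follow by pulling back. If $C \not\subseteq M$, then $MC$ strictly contains $M$, so by maximality $MC = G$; since $C$ is central, $M$ is normalized by every element of $C$ (and by itself), hence normal in $G$, and $G/M \isom C/(C \cap M)$ has order $p$.

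The main obstacle is simply to organize the case split cleanly; no deep ideas beyond the nontriviality of the center of a finite $p$-group are required. As an alternative route one could instead invoke the Frattini quotient: the quotient of $G$ by the subgroup generated by all commutators and $p$-th powers is an elementary abelian $p$-group, i.e.\ an $\Fp$-vector space, and the image of $I$ there is a proper subspace (since it is contained in at least one hyperplane, obtained by extending a basis); pulling back any such hyperplane along the quotient map yields the desired normal subgroup $G_{1}$ of index $p$ containing $I$. Either approach delivers the statement with essentially no computation.
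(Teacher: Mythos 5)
Your main argument is correct, and at bottom it runs on the same engine as the paper's proof: induction on $|G|$ via a central element of order $p$. The organization differs, though. The paper inducts directly on the pair $(G,I)$, passing to $G/\langle a\rangle$ and pulling back a normal subgroup of index $p$ containing the image of $I$, whereas you first replace $I$ by a maximal subgroup $M\supseteq I$ and then prove the classical fact that every maximal subgroup of a finite $p$-group is normal of index $p$. This repackaging buys a real point of care: the paper's inductive step tacitly needs the image of $I$ in $G/\langle a\rangle$ to be a \emph{proper} subgroup, which can fail when $a\notin I$ and $I\langle a\rangle=G$ (in that degenerate case $I$ itself is already normal of index $p$, but the paper does not address it); your case split $C\subseteq M$ versus $C\not\subseteq M$ covers exactly this situation, since $MC=G$ together with the centrality of $C$ forces $M\trianglelefteq G$ and $[G:M]=p$. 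One caveat on your alternative Frattini-quotient sketch: the step that actually needs justification there is not that a proper subspace of $G/G^p[G,G]$ lies in a hyperplane, but that the image of $I$ is proper in the first place, i.e.\ that $I\,G^p[G,G]=G$ forces $I=G$; this non-generator property of $G^p[G,G]$ is essentially of the same strength as the maximal-subgroup fact you prove in your first route, so as written that aside does not stand on its own. Since your primary argument is complete, this does not affect the verdict.
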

\begin{proof}
Vamos a proceder por inducción sobre $|G|$. Supongamos que para todo $p$-grupo $H$ con $|H|<|G|$ y un subgrupo propio $I'<H$, existe un subgrupo normal $G'_{1}<H$ de índice $p$ con $I'\subseteq G'_{1}$. 

Para $a$ un elemento en el centro $Z(G)$ de orden $p$ sean $$H=G/\langle a \rangle\	\	\text{ e }\	\	I'=I/\langle a \rangle \cap I.$$
Por la hipótesis de inducción existe un subgrupo $G'_{1}$ de $H$ de índice $p$ tal que $I'\subseteq G'_{1}$. 

Sea $\pi$ la proyección de $G$ en $H$. Sea $G_{1}=\langle \pi^{-1}(G'_{1}),a\rangle$. $G_{1}$ es un subgrupo normal de $G$ y además contiene a $I$. Por último, es fácil ver que $$G/G_{1}\rightarrow H/G'_{1}$$
es un isomorfismo de grupos de orden $p$.
\end{proof}

\begin{thm}\label{thm:iwapLpK}
Sea $L/K$ una extensión de Galois finita de campos de números tal que $\Gal(L/K)$ es un
$p$-grupo. Supongamos que existe a lo más un primo ramificado en $L$. Si $p\mid h_{L}$ entonces $p\mid h_{K}$. 
\end{thm}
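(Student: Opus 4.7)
The plan is to prove the contrapositive: assume $p\nmid h_K$ and deduce $p\nmid h_L$. The argument has three stages.

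\emph{First}, I would reduce to the case where a single prime $\mathfrak{p}$ of $K$ is totally ramified in $L/K$. If no prime of $K$ ramifies in $L$, then $L/K$ is itself unramified, and (assuming $L\neq K$) the \cref{lem:G1normal} applied with $I=\{1\}$ yields a normal subgroup $G_1$ of $G:=\Gal(L/K)$ of index $p$; so $L^{G_1}/K$ is a cyclic unramified degree-$p$ extension, contradicting $p\nmid h_K$ via \cref{thm:campo-de-hilbert}. If one prime $\mathfrak{p}$ of $K$ ramifies, let $I\subseteq G$ be its inertia at some prime of $L$ above it; if $I\neq G$, the \cref{lem:G1normal} gives a normal $G_1\supseteq I$ of index $p$, and $L^{G_1}/K$ is again a cyclic degree-$p$ extension unramified at $\mathfrak{p}$ (hence everywhere), the same contradiction. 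So $\mathfrak{p}$ must be totally ramified in $L/K$.

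\emph{Second}, suppose $p\mid h_L$ and let $F$ be the maximal unramified abelian $p$-extension of $L$, so $H:=\Gal(F/L)$ is the $p$-Sylow of $\Cl_L$ and is nontrivial. Since $F$ is characteristic over $L$, it is Galois over $K$; write $\tilde G:=\Gal(F/K)$. Choose a prime $\tilde{\mathfrak{p}}$ of $F$ over $\mathfrak{p}$ and let $\tilde I\subseteq\tilde G$ be its inertia. Since $F/L$ is unramified, $\tilde I\cap H=\{1\}$; since $\mathfrak{p}$ is totally ramified in $L/K$, the restriction $\tilde I\to G$ is surjective. Hence $\tilde G=H\rtimes\tilde I$ with $\tilde I\isom G$. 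Let $N$ be the normal closure of $\tilde I$ in $\tilde G$; then $F^N/K$ is the maximal Galois subextension of $F/K$ in which $\mathfrak{p}$ is unramified, and since $F/K$ is unramified outside $\mathfrak{p}$, $F^N/K$ is everywhere unramified. As $\tilde G/N$ is again a $p$-group, the argument of the first stage forces $\tilde G/N=\{1\}$, i.e.\ $N=\tilde G$.

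\emph{Finally}, a direct computation in $\tilde G=H\rtimes\tilde I$ (writing elements as $(h,i)$ with multiplication $(h_1,i_1)(h_2,i_2)=(h_1\cdot{}^{i_1}\!h_2,i_1i_2)$) shows that $N\cap H$ is exactly the subgroup generated by the commutators $[(h,1),(1,i)]=(h\cdot{}^i\!h^{-1},1)$ for $h\in H$, $i\in\tilde I$; writing $H$ additively, this is $I(\tilde I)\cdot H$, where $I(\tilde I)$ is the augmentation ideal of $\Z[\tilde I]$. Since $N=\tilde G$, we obtain $H=I(\tilde I)H$, i.e.\ the coinvariants $H_{\tilde I}$ vanish. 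Reducing modulo $p$, $(H/pH)/I(\tilde I)(H/pH)=0$; since $\tilde I$ is a $p$-group, the ring $\F_p[\tilde I]$ is local with maximal ideal $I(\tilde I)$, so Nakayama's lemma gives $H/pH=0$, and since $H$ is a finite $p$-group, $H=\{1\}$. This contradicts $p\mid h_L$. The delicate point is this last passage from $H_{\tilde I}=0$ to $H=0$: everything before is class-field-theoretic bookkeeping, but here the arithmetic content (total ramification at a single prime, together with $p\nmid h_K$) is converted into a purely module-theoretic vanishing via Nakayama for the local ring $\F_p[\tilde I]$.
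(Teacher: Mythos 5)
Tu demostración es correcta, pero sigue una ruta genuinamente distinta a la del texto. Tú pruebas el contrapositivo: suponiendo $p\nmid h_K$, primero reduces (vía el \cref{lem:G1normal} y el \cref{thm:campo-de-hilbert}) al caso en que el único primo ramificado es totalmente ramificado, luego descompones $\Gal(F/K)=H\rtimes\tilde I$ con $F$ el $p$-campo de clases de Hilbert de $L$, muestras que la cerradura normal de la inercia es todo el grupo, y conviertes eso en la anulación de las coinvariantes $H_{\tilde I}$, que con Nakayama sobre el anillo local $\F_p[\tilde I]$ fuerza $H=0$. La demostración del texto es más corta y directa: supone $p\mid h_L$, de modo que $M\supsetneq L$ (el $p$-campo de clases de Hilbert de $L$), observa que como $M/L$ es no ramificada toda inercia en $\Gal(M/K)$ tiene orden $\le[L:K]<[M:K]$ y es por tanto un subgrupo propio, y aplica el \cref{lem:G1normal} directamente a esa inercia para obtener un subgrupo normal de índice $p$ que contiene todos sus conjugados; su campo fijo es una extensión cíclica de grado $p$ no ramificada de $K$, así que $p\mid h_K$. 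En particular el texto no necesita ni el contrapositivo ni la reducción a ramificación total ni estructura de módulos. Lo que tu argumento compra a cambio es más información: bajo $p\nmid h_K$ demuestras de hecho que la $p$-parte entera del grupo de clases de $L$ tiene coinvariantes triviales y por ende se anula, que es exactamente el mecanismo de descenso estilo Iwasawa que el texto usa después (\cref{prop:Crdescenso}, \cref{cor:cor-de-nakayama}); como detalle menor, conviene decir explícitamente que las plazas arquimedianas no estorban porque $p$ es impar (convención global del texto) y tratar aparte el caso trivial $L=K$.
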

\begin{proof}
Supongamos que $p\mid h_{L}$. Sean $M$ y $H$ las máximas $p$-extensiones no ramificadas de $L$ y $K$ respectivamente. Por la teoría de campos de clases tenemos $\Cl_{L}(p)=\Gal(M/L)$ y $\Cl_{K}(p)=\Gal(H/K)$. La extensión $M/K$ es de Galois pues $M/L$ es máxima, denotamos $G=\Gal(M/K)$.

Supongamos que $L/K$ no es ramificada. Vamos a proceder por inducción. Como $G$ es un $p$-grupo admite una secuencia de subgrupos 
$$\{e\}=G_{0}\subset G_{1} \subset G_{2} \subset \ldots G_{n} = G$$
tal que $G_{i}$ es normal en $G$ y $G_{i+1}/G_{i}$ es cíclico de orden $p$ \cite[Cor. 6.6 I.\S 6]{MR1878556}. Entonces basta probar que $p\mid h_{L_{1}}$ para $L/L_{1}$, donde $L_{1}$ es la subextensión de $L$ fijada por $G_{1}$. El resultado sigue del hecho que $L/L_{1}$ es abeliana no ramificada de índice $p$. Por lo tanto $p\mid h_{K}$.

Ahora supongamos que $\p$ en $K$ ramifica. Sea $\mathfrak{P}$ el primo de $L$ arriba de $\p$. Sea $I_{\mathfrak{P}}$ el subgrupo de inercia de $\mathfrak{P}$ en $G$. Como $M/L$ no es ramificada, entonces $L\cap H =K$, además 
$$|I_{\mathfrak{P}}|\leq [L:K] < |G|.$$
Por el \cref{lem:G1normal} tenemos que existe un subgrupo normal $G_{1}$ de $G$ de índice
$p$, tal que $I\leq G_{1}$. Los grupos de inercia de otros primos de $M$ son conjugados de
$I_{\mathfrak{P}}$ y por lo tanto están contenidos en $G_{1}$. Pero $G_{1}$ fija una
extensión de Galois de $K$ de índice $p$, por lo tanto abeliana, es decir $p \mid h_{K}$.
\end{proof}

El teorema precedente y el  \cref{thm:iwahKhL} aplicados a extensiones ciclotómicas nos dan el siguiente resultado.

\begin{cor} \label{cor:preg} Sea $r\in\Nuno$, entonces 
$$p\mid h_{\Q(\mu_{p})} \iff	p\mid h_{\Q(\mu_{p^{r}})}.$$
\end{cor}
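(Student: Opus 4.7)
The plan is to apply the two preceding theorems to the tower $K=\Q(\mu_p) \subseteq L=\Q(\mu_{p^r})$, getting each of the two implications from one of them. The key inputs needed from the cyclotomic theory are: first, by Proposition \ref{prop:ramcycfin}, the only rational prime that ramifies in $\Q(\mu_{p^r})/\Q$ is $p$; second, by Lemma \ref{lem:totram}, $p$ is in fact totally ramified in $\Q(\mu_{p^r})/\Q$, so denoting $\mathfrak p=(1-\zeta_p)$ the unique prime of $K$ above $p$, the extension $L/K$ is ramified only at $\mathfrak p$ (and totally ramified there, by multiplicativity of ramification indices since $[L:K]=p^{r-1}$ equals the ramification index quotient).

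For the implication $p\mid h_K \Rightarrow p\mid h_L$, I would apply Theorem \ref{thm:iwahKhL} to $L/K$. To do this I need to check that $L/K$ contains no nontrivial unramified extension of $K$. But any such subextension $F/K$ would be unramified everywhere, while every prime of $F$ above $\mathfrak p$ is ramified over $\mathfrak p$ (because $\mathfrak p$ is already totally ramified in $L/K$, so it is ramified in every intermediate extension); this forces $F=K$. Then Theorem \ref{thm:iwahKhL} gives $h_K \mid h_L$, hence in particular the implication $p\mid h_K \Rightarrow p\mid h_L$.

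For the converse $p\mid h_L \Rightarrow p\mid h_K$, I would apply Theorem \ref{thm:iwapLpK} to $L/K$. The extension is Galois, and its Galois group is
\[ \Gal(L/K) \isom \Gal(\Q(\mu_{p^r})/\Q)\big/\Gal(\Q(\mu_p)/\Q)\isom (\Z/p^r\Z)^\times/(\Z/p\Z)^\times, \]
of order $p^{r-1}$, so it is a $p$-group. Moreover, by the ramification analysis above, $\mathfrak p$ is the unique ramified prime of $K$ in $L$. All hypotheses of Theorem \ref{thm:iwapLpK} are thus satisfied, and it yields $p\mid h_L \Rightarrow p\mid h_K$.

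There is no real obstacle: the whole proof is a direct application of the two preceding structural theorems to the cyclotomic tower, once one observes the ramification picture (only one prime ramifies, and it ramifies totally) and checks that the Galois group is a $p$-group. The mild point to be careful about is that Theorem \ref{thm:iwahKhL} requires the absence of \emph{any} nontrivial unramified subextension, which is an even stronger statement than simply controlling the primes that ramify in $L/K$; but total ramification of $\mathfrak p$ in $L/K$ gives this immediately.
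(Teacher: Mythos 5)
Tu demostración es correcta y sigue esencialmente la misma ruta que el texto: el propio corolario se deduce aplicando el \cref{thm:iwahKhL} para la implicación $p\mid h_{\Q(\mu_p)}\Rightarrow p\mid h_{\Q(\mu_{p^r})}$ y el \cref{thm:iwapLpK} para la recíproca, usando que $p$ es el único primo ramificado en $\Q(\mu_{p^r})$ y que es totalmente ramificado (\cref{prop:ramcycfin}, \cref{lem:totram}); tú solo explicitas las verificaciones de las hipótesis que el texto deja implícitas. Única minucia: $\Gal(L/K)$ es el núcleo de la restricción $(\Z/p^r\Z)^\times\rightarrow(\Z/p\Z)^\times$ y no un cociente como lo escribes, pero en cualquier caso tiene orden $p^{r-1}$ y es un $p$-grupo, así que el argumento no se ve afectado.
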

\begin{proof} Recordemos que el único primo que ramifica en $\Q(\mu_{p^{r}})$ es $p$, además es totalmente ramificado (\cref{prop:ramcycfin}). Entonces ($\Leftarrow$) es claro por el \cref{thm:iwapLpK} y ($\Rightarrow$) se sigue del \cref{thm:iwahKhL}. \end{proof}

Terminamos esta sección con un resultado clásico de Kummer, que será utilizado en la
demostración de que la Conjetura Principal de Iwasawa implica el criterio de Kummer del
\cref{thm:kummer-crit}.

\begin{thm}[Kummer]\label{thm:kummer-hp-plus-minus}
  Sea $K=\Q(\mu_p)$ y $C$ la $p$-parte del grupo de clases de $K$. Escribimos $C^\pm$ para
  los subgrupos donde la conjugación compleja actúa por $\pm1$, respectivamente. Entonces
  \[ C^-=0 \iff C=0. \] En otras palabras, $p\mid h_p \iff p\mid h_p^-$, donde
  $h_p=h_{\Q(\mu_{p})}$, $h^+_p=h_{\Q(\mu_{p})^+}$ y $h_p^-=h_p/h_p^+$, con $\Q(\mu_p)^+$
  siendo el subcampo de $\Q(\mu_p)$ fijado por la conjugación compleja.
\end{thm}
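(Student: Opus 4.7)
La implicación $C = 0 \Rightarrow C^- = 0$ es trivial; el contenido del teorema está en el recíproco, que planeo demostrar por contradicción, vía un argumento de tipo \emph{Spiegelungssatz} (teorema de reflexión). Como $p$ es impar, los idempotentes $e^{\pm} = (1 \pm c)/2$ pertenecen a $\Z_p[\Gal(K/\Q)]$ y descomponen $C = C^+ \oplus C^-$, así que basta suponer $C^- = 0$ y $C^+ \neq 0$, y derivar una contradicción produciendo un elemento no trivial de $C^-$.

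Primero, asumiendo $C^+ \neq 0$, escogería una sobreyección $C^+ \twoheadrightarrow \Z/p\Z$ y la extendería por cero a una sobreyección $C \twoheadrightarrow \Z/p\Z$ en la cual $c$ actúa trivialmente. Por el \cref{thm:campo-de-hilbert}, esto corresponde a una extensión cíclica no ramificada $L/K$ de grado $p$ con $\Gal(L/K) \cong \Z/p\Z$ y acción trivial de $c$; como $C$ es $\Gal(K/\Q)$-estable, $L/\Q$ es Galois (se puede ver esto directamente o vía el \cref{ejer:artin-symbol-eq}). Por la teoría de Kummer (\cref{thm:kummer}, aplicable porque $\mu_p \subseteq K$) se escribe $L = K(\sqrt[p]{\alpha})$ para algún $\alpha \in K^\times$.

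Luego usaría la equivariancia del apareamiento de Kummer (\cref{prop:kummer-apareamiento-equivariante}) aplicada a $\Gal(L/\Q^+) \supseteq \Gal(L/K)$ con $g = \tilde c$ un levantamiento de $c$: puesto que $\tilde c \sigma \tilde c^{-1} = \sigma$ para $\sigma \in \Gal(L/K)$ (acción trivial de $c$) mientras $c$ invierte raíces de la unidad en $\mu_p$, la identidad $\langle \tilde c \sigma \tilde c^{-1}, \tilde c\sqrt[p]{\alpha}\rangle = \tilde c \langle \sigma, \sqrt[p]{\alpha}\rangle$ forzaría que $c(\alpha) \equiv \alpha^{-1} \pmod{(K^\times)^p}$. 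Como $L/K$ no está ramificada, se tiene $(\alpha) = \mathfrak{a}^p$ para algún ideal $\mathfrak{a}$ de $\O_K$ (usando también la \cref{prop:kummer-ramificacion}); de ahí la clase $[\mathfrak{a}] \in C$ satisface $[c\mathfrak{a}] = [\mathfrak{a}]^{-1}$, es decir $[\mathfrak{a}] \in C^-$, y el objetivo es mostrar que esta clase es no trivial.

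El obstáculo principal es el prime $\mathfrak{p} = (1-\zeta_p)$: la no ramificación de $L/K$ en $\mathfrak{p}$ no es automática sino que impone una condición congruencial fuerte sobre $\alpha$ (esencialmente que $\alpha$ sea una potencia $p$-ésima módulo una potencia alta de $\mathfrak{p}$, análogamente a lo que se usa en el esbozo del \cref{ejer:fermat}). Esta condición local, combinada con la descomposición $\O_K^\times = \mu_p \cdot (\O_K^\times)^+$ de la \cref{prop:o-es-mu-o-mas}, es lo que permite ajustar $\alpha$ módulo unidades para garantizar que el ideal $\mathfrak{a}$ resultante tiene clase no trivial en $C$, y pues en $C^-$, contradiciendo la hipótesis $C^- = 0$. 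Equivalentemente, el argumento de reflexión cuenta dimensiones: mediante el apareamiento de Kummer equivariante, las componentes $(\omega^i)$ del grupo $W = \{\alpha \in K^\times/(K^\times)^p : K(\sqrt[p]{\alpha})/K \text{ no ramificada}\}$ se identifican con las componentes duales $(\omega^{1-i})$ de $C/pC$, mientras que la estructura de las unidades controla la contribución de unidades a $W$; este balance de dimensiones produce la desigualdad $\dim_{\Fp} C^+[p] \le \dim_{\Fp} C^-[p]$ (salvo ajustes por unidades), de la cual el teorema se deduce inmediatamente.
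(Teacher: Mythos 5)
Tu estrategia es en el fondo la misma que la del texto (campo de clases de Hilbert, teoría de Kummer con el apareamiento equivariante, descomposición de las unidades), sólo reformulada como una contradicción con una sola clase en vez de la desigualdad $\dim_{\Fp}C[p]^+\le\dim_{\Fp}C[p]^-$. Hasta la construcción de la clase $[\mathfrak a]$ con $c[\mathfrak a]=[\mathfrak a]^{-1}$ el razonamiento es correcto, con la salvedad menor de que el núcleo de tu sobreyección $C\twoheadrightarrow\Z/p\Z$ sólo es estable bajo $c$ (no bajo todo $\Gal(K/\Q)$), así que lo que obtienes es que $L/\Q(\mu_p)^+$ es Galois, no $L/\Q$; eso basta para aplicar la \cref{prop:kummer-apareamiento-equivariante}.

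El hueco genuino está en el paso decisivo: demostrar que $[\mathfrak a]\neq1$. Si $C^-=0$, entonces $\mathfrak a=(\gamma)$ y $\alpha=\gamma^pu$ con $u\in\O_K^\times$, es decir $L=K(\sqrt[p]{u})$, y hay que excluir esta posibilidad; tu texto no lo hace. La frase de que la condición local en el primo sobre $p$ y la \cref{prop:o-es-mu-o-mas} \enquote{permiten ajustar $\alpha$ módulo unidades para garantizar que $\mathfrak a$ tiene clase no trivial} invierte la lógica: la clase de $\mathfrak a$ está determinada por $L$ (salvo potencias primas a $p$) y no se puede ajustar; lo que se necesita es precisamente la inyectividad del mapeo de la parte impar del radical de Kummer hacia $C[p]^-$ (el $\varphi^-$ de la demostración del texto). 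El argumento que falta es: de $c(\alpha)\equiv\alpha^{-1}$ se deduce $c(u)\equiv u^{-1}$ módulo ${(\O_K^\times)}^p$ (una unidad que es potencia $p$-ésima en $K^\times$ lo es de una unidad); escribiendo $u=\zeta v$ con $\zeta\in\mu_p$ y $v\in{(\O_K^\times)}^+$ resulta $v^2=w^p$ para alguna unidad $w$, y aquí es imprescindible el teorema de las unidades de Dirichlet (${(\O_K^\times)}^+$ es $\{\pm1\}$ por un grupo libre y $p$ es impar) para concluir $v\in{(\O_K^\times)}^p$; entonces $L=K(\sqrt[p]{\zeta})$, que es ramificada en $p$ salvo que $\zeta=1$, en contradicción con $[L:K]=p$. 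Tu párrafo \enquote{equivalente} de conteo de dimensiones deja exactamente el mismo hueco: admitir la desigualdad sólo \enquote{salvo ajustes por unidades} es renunciar al contenido del teorema, pues el punto central es probar que en la componente impar la contribución de las unidades al radical se reduce a $\mu_p$ y produce ramificación, de modo que no hay ajuste alguno.
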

\begin{proof}
  Seguimos \cite[Thm.\ 13.2.1]{MR1029028} o \cite[§10.2, esp.\ Thm.\ 10.11]{MR1421575}. Sea
  $L/K$ la extensión máxima abeliana no ramificada de exponente $p$, que es una extensión de
  Kummer, y sea $G:=\Gal(L/K)$. Entonces $G\isom C[p]$ por la teoría de campos de clases
  (\cref{thm:campo-de-hilbert}), donde $C[p]$ es la $p$-torsión de $C$, que es un espacio
  vectorial sobre $\Fp$. Según el \cref{ejer:artin-symbol-eq}, este isomorfismo es
  equivariante por la acción de la conjugación compleja $\mathbf c\in\Gal(K/\Q)$, donde
  esta actúa en $G$ por conjugación con un levantamiento $\tilde{\mathbf c}\in\Gal(L/\Q)$ de
  $\mathbf c$ que fijamos para el resto de la demostración.  Vamos a demostrar
  \begin{equation*}
    \dim_{\Fp} C[p]^+\le\dim_{\Fp} C[p]^-,
  \end{equation*}
  lo cual implica la afirmación.

  Usamos la teoría de Kummer descrita en el \cref{thm:kummer}.  Sea
  $\Delta:={(L^\times)}^p\cap K^\times$ y $V=\sqrt[p]\Delta/(\sqrt[p]\Delta\cap K^\times)$, de manera que $L=K(\sqrt[p]\Delta)$ y tenemos un apareamiento perfecto
  \begin{equation*}
    G\times V\rightarrow\mu_p
  \end{equation*}
  de espacios vectoriales sobre $\Fp$. El grupo $\Gal(L/\Q)$ actúa naturalmente en $V$, y
  esto nos da una acción de $\tilde{\mathbf c}\in\Gal(L/\Q)$ en $V$; escribimos $V^\pm$ para
  los partes donde $\tilde{\mathbf c}$ actúa por $\pm1$. Según la
  \cref{prop:kummer-apareamiento-equivariante} el apareamiento tiene la propiedad
  \begin{equation*}
    \left<\tilde{\mathbf c}\sigma\tilde{\mathbf c}^{-1},\tilde{\mathbf c} v\right>=\mathbf c\left<\sigma,v\right>
    \quad \text{para cada }\sigma\in G, v\in V.
  \end{equation*}
  Como $p\neq2$, los únicos elementos en $\mu_p$ fijados por $\mathbf c$ son $\pm1$, y
  como $G$ y $V$ son espacios vectoriales sobre $\Fp$, la propiedad anterior implica que
  la restricción del apareamiento a $G^+\times V^+$ es trivial (aquí usamos el
  \cref{ejer:morfismos-pro-p-pro-ell}). Ya que $V=V^+\oplus V^-$, el apareamiento pues se
  restringe a un apareamiento perfecto
  \begin{equation*}
    G^+\times V^-\rightarrow\mu_p
  \end{equation*}
  y por eso $\dim_{\Fp} C[p]^+=\dim_{\Fp}G^+=\dim_{\Fp}V^-$.
  
  Sea $b\in\sqrt[p]\Delta\subseteq L^\times$. Entonces $K(b)/K$ no es ramificado, y según la
  \cref{prop:kummer-ramificacion} el ideal fraccional generado por
  $b^p\in K^\times$ entonces tiene que ser de la forma $(b^p)=\mathfrak b^p$ con un ideal
  fraccional $\mathfrak b$ de $K$. Esto nos permite definir un homomorfiso
  \begin{equation*}
    \varphi\colon V\rightarrow C[p],\quad b\mapsto\mathfrak b
  \end{equation*}
  que es obviamente equivariante por la conjugación compleja e induce pues
  \begin{equation*}
    \varphi^-\colon V^-\rightarrow C[p]^-.
  \end{equation*}
  Vamos a demostrar que $\varphi^-$ es inyectivo, lo cual termina la demostración.

  Fijamos $b\in\sqrt[p]\Delta$ tal que su clase en $V$ está en el núcleo de
  $\varphi^-$. Entonces podemos escribir $b^p=a^pu$ con $a\in K^\times$ y
  $u\in\O_K^\times$. El elemento $u$ no está únicamente determinado, pero su clase en
  $\O_K^\times/{(\O_K^\times)}^p$ sí lo es, y está en
  ${\big(\O_K^\times/{(\O_K^\times)}^p\big)}^-$.  Según la \cref{prop:o-es-mu-o-mas} podemos
  escribir $u\in\O_K$ como $u=\zeta v$ con $\zeta\in\mu_p$ y $v\in{(\O_K^\times)}^+$.
  Entonces $\overline u=\zeta^{-1}v$ y también $\overline u=u^{-1}c^p=\zeta^{-1}v^{-1}c^p$
  para algún $c\in{(\O_K^\times)}^+$, y obtenemos $v^2=c^p$. Pero según el teorema de las
  unidades de Dirichlet, ${(\O_K^\times)}^+$ es $\{\pm1\}$ por un grupo abeliano libre, lo que implica que $v\in{(\O_K^\times)}^p$, digamos $v=w^p$. Concluimos que $b^p=(aw)^p\zeta$.
  El campo $L$ contiene $K(b)=K(\sqrt[p]\zeta)$, pero ya que $L$ no es ramificado,
  $\zeta=1$. Por eso $b=aw\xi$ para algún $\xi\in\mu_p$, es decir $b\in K^\times$, así que su
  clase en $V$ es trivial.
\end{proof}

\ejercicios

\begin{ejer}\label{ejer:hpmenos-divisibilidad}
  Para $r\in\Nuno$ sea $h_{p^r}=h_{\Q(\mu_{p^r})}$. Además sea $\Q(\mu_{p^r})^+$ el subcampo de
  $\Q(\mu_{p^r})$ fijo por la conjugación compleja, $h_{p^r}^+$ su número de clases y
  $h_{p^r}^-=h_{p^r}/h_{p^r}^+$.

  Demuestre que 
  \[ p\mid h_{p}^- \iff p \mid h_{p^r}^-, \]
  usando el \cref{cor:preg} y el \cref{thm:kummer-hp-plus-minus}.
\end{ejer}

\section{$\Zp$-extensiones}

Sea $K$ un campo de números y $p$ un número primo.\footnote{Como siempre, suponemos que $p$ es impar para facilitar la exposición, pero los resultados tienen análogos en el caso $p=2$.} Sea $K_{\infty}$ una extensión de Galois infinita de $K$, decimos que $K_{\infty}$ es una \emph{$\Zp$-extensión}\index[def]{extensiónZp@$\Zp$-extensión} (igualmente llamada extensión $\Zp$) si $\Gal(K_{\infty}/K)$ es isomorfo al grupo aditivo del anillo $\Zp$ de los enteros $p$-ádicos. 

\begin{prop}\label{prop:extcyclo} Todo campo de números $K$ tiene una $\Zp$-extensión.
\end{prop}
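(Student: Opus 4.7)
La estrategia será construir primero la $\Zp$-extensión ciclotómica de $\Q$ usando la estructura conocida de $\Z_p^\times$, y luego extenderla a $K$ vía el compuesto. Esto es viable porque los subgrupos abiertos de $\Zp$ son nuevamente isomorfos a $\Zp$.

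Primero construyo la $\Zp$-extensión ciclotómica de $\Q$. Por el carácter ciclotómico tenemos el isomorfismo \eqref{eqn:kappa}, es decir $\Gal(\Q(\mu_{p^\infty})/\Q) \isom \Z_p^\times$. Usando el \cref{lem:z-p-decomposicion} descomponemos $\Z_p^\times\isom\F_p^\times\times(1+p\Zp)$, y la \cref{prop:isom-z-p-log} nos da $1+p\Zp\isom\Zp$ como grupos topológicos. Sea $H\subseteq\Gal(\Q(\mu_{p^\infty})/\Q)$ el subgrupo cerrado correspondiente al factor de torsión $\F_p^\times$ y pongo $\Q_\infty:=\Q(\mu_{p^\infty})^H$. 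Como el grupo ambiente es abeliano, $H$ es normal, y el \cref{thm:hauptsatz-unendlgal} nos da
\[ \Gal(\Q_\infty/\Q) \isom \Gal(\Q(\mu_{p^\infty})/\Q)/H \isom 1+p\Zp \isom \Zp, \]
así que $\Q_\infty/\Q$ es una $\Zp$-extensión.

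Segundo, para un campo de números $K$ arbitrario, tomo el compuesto $K_\infty:=K\cdot\Q_\infty$ dentro de $\Qbar$. La extensión $K_\infty/K$ es de Galois, y por la teoría de Galois estándar (la restricción) tenemos un isomorfismo de grupos topológicos
\[ \Gal(K_\infty/K) \isomarrow \Gal(\Q_\infty/K\cap\Q_\infty). \]
Como $K\cap\Q_\infty\subseteq K$ es una extensión finita de $\Q$, el subgrupo $\Gal(\Q_\infty/K\cap\Q_\infty)$ es abierto en $\Gal(\Q_\infty/\Q)\isom\Zp$ por el \cref{thm:hauptsatz-unendlgal}. Pero los subgrupos abiertos no triviales de $\Zp$ son exactamente los de la forma $p^n\Zp$ con $n\in\Ncero$, y cada uno de ellos es isomorfo a $\Zp$ como grupo topológico. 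Concluyo que $\Gal(K_\infty/K)\isom\Zp$.

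No veo ningún obstáculo serio aquí; la única sutileza es verificar que el subgrupo $\Gal(\Q_\infty/K\cap\Q_\infty)$ sea efectivamente abierto, lo cual es inmediato porque $K\cap\Q_\infty$ está contenido en el campo de números $K$ y por lo tanto es una extensión finita de $\Q$. La demostración aprovecha de forma esencial la riqueza aritmética de los campos ciclotómicos, que suministran de manera canónica una $\Zp$-extensión de $\Q$ de la cual todo campo de números hereda una.
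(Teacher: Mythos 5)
Tu propuesta es correcta y sigue esencialmente el mismo camino que la demostración del texto: construir la $\Zp$-extensión ciclotómica $\Q^{\mathrm c}$ de $\Q$ a partir de la descomposición $\Z_p^\times\isom\F_p^\times\times(1+p\Zp)$ y luego tomar el compuesto $K\Q^{\mathrm c}$, identificando $\Gal(K\Q^{\mathrm c}/K)$ con un subgrupo abierto (es decir, de la forma $p^k\Zp$) de $\Gal(\Q^{\mathrm c}/\Q)\isom\Zp$, que es de nuevo isomorfo a $\Zp$. La observación sobre la apertura del subgrupo, vía la finitud de $K\cap\Q^{\mathrm c}/\Q$, coincide con el argumento del texto.
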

\begin{proof} Sabemos que la extensión ciclotómica $\Q(\mu_{p^{r}})/\Q$ obtenida al agregar una raíz primitiva $p^{r}$-ésima de la unidad es abeliana, de grado $\varphi(p^{r})=(p-1)p^{r-1}$ y de grupo de Galois $\Gal(\Q(\mu_{p^{r}})/\Q)$ isomorfo a $(\Z/p^{r}\Z)^{\times}$ para todo $r\geq 0$. Consideremos la extensión infinita
$$\Q(\mu_{p^{\infty}})=\bigcup_{r\geq 0} \Q(\mu_{p^{r}}),$$
su grupo de Galois $\Gal(\Q(\mu_{p^{\infty}})/\Q)$ es isomorfo a
$\varprojlim_{n}(\Z/p^{r}\Z)^{\times}\isom \Zp^{\times}$.  Por el
\cref{lem:z-p-decomposicion} y la \cref{prop:isom-z-p-log} tenemos $\Zp^{\times}\isom(\Z/p\Z)^{\times}\times \Zp$. Sea $\Q^{\mathrm c}$ la subextensión de $\Q(\mu_{p^{\infty}})$ fijada por $(\Z/p\Z)^{\times}$, entonces $\Gal(\Q^{\mathrm c}/\Q)\isom\Zp$.

Sea $K^{\mathrm c}=K\Q^{\mathrm c}$ y sea $F=\Q^{\mathrm c}\cap K$, entonces 
$$\Gal(\Q^{\mathrm c}/F)\isom \Gal(K^{\mathrm c}/K)$$
son isomorfos. Tenemos $\Gal(\Q^{\mathrm c}/F)\isom p^{k}\Z_{p}$ para algún $k\in \Nuno$, pero $p^{k}\Zp\isom \Zp$, entonces $\Gal(K_{\infty}/K)\isom\Zp$.
\end{proof}

La $\Zp$-extensión construida en la \cref{prop:extcyclo} se llama la \emph{$\Zp$-extensión ciclotómica}\index[def]{extensiónZp@$\Zp$-extensión!ciclotómica} de $K$ y la denotamos $K^{\mathrm c}$.

Sea $K_{\infty}/K$ una $\Zp$-extensión. Por la teoría de Galois infinita los subgrupos cerrados de $\Zp$ corresponden a subextensiones de $K_{\infty}$. Es fácil de ver que bajo la topología $p$-ádica los subgrupos cerrados (abiertos) de $\Zp$ son de la forma $p^{r}\Zp$. Por lo que los subgrupos cerrados de $\Zp$ fijan extensiones finitas $K_{r}$ de $K$ tales que $\Gal(K_{r}/K)\isom \Z/p^{r}\Z$. Es decir, una $\Zp$-extensión $K_{\infty}$ es una torre infinita de campos de números
\begin{equation}\label{eq:Zptorre}
K=K_{0}\subset K_{1} \subset K_{2} \subset \cdots \subset K_{\infty}=\bigcup_{r\geq0} K_{r},
\end{equation}
tal que $\Gal(K_{r}/K)\isom \Z/p^{r}\Z$.

A continuación mencionamos algunas propiedades sobre ramificación en las $\Zp$-extensiones. Comenzamos con un teorema que nos será útil más adelante para explicar la conjetura de Leopoldt.

\begin{prop}\label{prop:compononramifi} Sea $L$ una extensión abeliana de un campo de números $K$ tal que $\Gal(L/K)\isom \Zp^{d}$ para algún entero $d\geq 0$. Entonces $L/K$ es no ramificado fuera de $p$. 
\end{prop}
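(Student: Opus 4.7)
The goal is to show that every prime $\mathfrak{q}$ of $\O_K$ with residue characteristic $\ell \neq p$ is unramified in $L/K$. Fix such a $\mathfrak{q}$ and a prime $\mathfrak{Q}$ of $\O_L$ above it, and let $I = I_{\mathfrak{Q}/\mathfrak{q}}$ denote the inertia group as in \cref{defi:ram-infinito}; the plan is to prove $I = 0$. The crucial structural property to exploit is that $G := \Gal(L/K) \cong \Z_p^d$ is a torsion-free pro-$p$ group, and $I$ is a closed subgroup of $G$.

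The approach is to pass through the finite Galois subextensions $L'/K$ of $L/K$ and apply the classical theory of inertia in finite local Galois extensions. Each $\Gal(L'/K)$ is a finite abelian $p$-group (as a quotient of $\Z_p^d$), and the image $I' \subseteq \Gal(L'/K)$ of $I$ under the restriction is the inertia group of $L'/K$ at the restriction of $\mathfrak{Q}$; hence $I = \varprojlim_{L'} I'$. Since $I'$ is a $p$-group and the wild inertia in a finite Galois extension of local fields of residue characteristic $\ell$ is an $\ell$-group, the wild inertia of $I'$ is both a $p$-group and an $\ell$-group with $p \neq \ell$, hence trivial; so $I'$ is cyclic, being contained in the tame inertia. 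Furthermore, the classical Frobenius-tame inertia relation $\sigma \tau \sigma^{-1} = \tau^{N\mathfrak{q}}$, combined with the abelianness of $\Gal(L'/K)$, forces $\tau^{N\mathfrak{q} - 1} = 1$ for every $\tau \in I'$; so each $I'$ has exponent dividing the fixed nonzero integer $N\mathfrak{q} - 1$.

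Passing to the inverse limit, every element $x \in I \subseteq \Z_p^d$ satisfies $(N\mathfrak{q} - 1) \cdot x = 0$, and since multiplication by a nonzero integer is injective on the torsion-free $\Z_p$-module $\Z_p^d$, we conclude $I = 0$, as desired. The argument is clean once the right ingredients are assembled, so the main point is not any single obstacle but rather the careful combination of the torsion-freeness of $\Z_p^d$, the absence of nontrivial homomorphisms between pro-$p$ and pro-$\ell$ groups (\cref{ejer:morfismos-pro-p-pro-ell}), the compatibility of inertia with Galois quotients, and the classical structure of inertia for finite local extensions.
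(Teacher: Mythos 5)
Your proof is correct, but it takes a genuinely different route from the paper's. The paper's argument is a short appeal to global class field theory: for a place $\p\nmid p$, the inertia subgroup of $\p$ in $\Gal(K^{\ab}/K)$ has finite image in $\Gal(L/K)$ (locally it is essentially $\O_\p^\times$, whose pro-$p$ part is finite when $\p\nmid p$), and a finite subgroup of the torsion-free group $\Zp^{d}$ must be trivial. You instead argue at finite levels, using only classical ramification theory: each $\Gal(L'/K)$ is a finite abelian $p$-group, wild inertia dies because it would be simultaneously a $p$-group and an $\ell$-group with $\ell\neq p$, and the Frobenius conjugation relation on the cyclic tame inertia, combined with commutativity, bounds the exponent of the inertia at each finite level by $N\mathfrak q-1$; passing to the inverse limit and using torsion-freeness of $\Zp^{d}$ then forces the inertia to vanish. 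This is the Kronecker--Weber-style bound on tame ramification in abelian extensions; it buys you independence from class field theory at the cost of a longer argument, while the paper's proof is shorter but leans on the class-field-theoretic description of inertia. Two minor remarks: your limit step implicitly uses that the inertia group of \cref{defi:ram-infinito} is the inverse limit of the inertia groups of the finite subextensions, which is standard but worth saying explicitly; and you only treat finite primes, whereas the statement also excludes ramification at real places --- that case is immediate, though, since a ramified real place would produce an element of order $2$ in the torsion-free group $\Zp^{d}$.
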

\begin{proof} Sea $\p$ una plaza de $K$ que no divide a $p$. Sea $I_{\p}$ el subgrupo de inercia de $\p$ en $\Gal(K^{\mathrm{ab}}/K)$. Por la teoría de campos de clases la imagen de $I_{\p}$ en $\Gal(L/K)$ es finita. Sin embargo, el grupo $\Zp^{d}$ es libre de torsión, por lo tanto el subgrupo de inercia de $\p$ en $\Gal(L/K)$ es nulo.
\end{proof}

\begin{prop}\label{prop:ramificZpext} Sea $K_{\infty}$ una $\Zp$-extensión de un campo de números $K$. Entonces al menos una plaza $\p$ arriba de $p$ ramifica en $K_{\infty}/K$. 
\end{prop}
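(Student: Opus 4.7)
The plan is to argue by contradiction: suppose that no prime $\p$ of $K$ lying above $p$ ramifies in $K_\infty/K$, and derive that $K_\infty/K$ must be finite.

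First I would combine this hypothesis with the \cref{prop:compononramifi} applied to $L=K_\infty$ and $d=1$, which gives that $K_\infty/K$ is unramified at every non-archimedean place of $K$ not dividing $p$. So under the contradiction hypothesis, no non-archimedean place of $K$ ramifies in $K_\infty$. Next I would handle archimedean places: if $v$ were a real place of $K$ becoming complex in $K_\infty$, then its decomposition subgroup in $\Gal(K_\infty/K)\cong\Zp$ would contain an element of order $2$ (complex conjugation); but $\Zp$ is torsion-free, so this is impossible. Hence $K_\infty/K$ is unramified at every place, archimedean or not.

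Then $K_\infty/K$ is an abelian unramified extension of $K$, so it must be contained in the Hilbert class field $H$ of $K$. But by \cref{thm:campo-de-hilbert} (together with the finiteness of $\Cl(K)$), $H/K$ is finite, while $K_\infty/K$ is infinite by definition of a $\Zp$-extension. This contradiction shows that at least one prime above $p$ must ramify in $K_\infty/K$.

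The main point of the argument is really just assembling \cref{prop:compononramifi}, the torsion-freeness of $\Zp$ (to handle the archimedean places), and the finiteness of the Hilbert class field; there is no genuine obstacle. The only small subtlety is making sure archimedean ramification is dealt with cleanly, which is immediate from the fact that $\Zp$ has no nontrivial finite subgroups.
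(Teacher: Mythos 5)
Your argument is correct and follows essentially the same route as the paper: assume no place above $p$ ramifies, invoke \cref{prop:compononramifi} to conclude $K_\infty/K$ is unramified everywhere, and contradict the finiteness of the maximal abelian unramified extension (the Hilbert class field). Your extra care with the archimedean places via the torsion-freeness of $\Zp$ simply fills in a detail the paper leaves implicit.
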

\begin{proof} Supongamos que ninguna plaza $\p\mid p$ ramifica. En particular por la \cref{prop:compononramifi} esto implicaría que $K_{\infty}/K$ es no ramificada, pero la máxima extensión no ramificada de un campo de números es una extensión finita. 
\end{proof}

Para un campo de números $K$ denotamos $[K:\Q]$ su dimensión como espacio vectorial sobre $\Q$. Además denotamos $r$ el número de encajes reales $K\hookrightarrow \R$ y $c$ el número de pares de encajes complejos conjugados $K\hookrightarrow \C$. 

Si $X$ es un $\Zp$-módulo compacto, entonces $X/pX$ (resp. $X\otimes_{\Zp}\Qp$) es un espacio vectorial sobre $\F_{p}$ (resp. sobre $\Qp$). 

\begin{defi}\label{def:rg} Llamamos \define[rango sobre $\F_{p}$]{rango de $X$ sobre $\F_{p}$} (resp. \define[rango sobre $\Zp$]{sobre $\Zp$}) a la dimensión de $X/pX$ (resp. a la dimensión de $X\otimes_{\Zp}\Qp$) y lo denotamos $\rg_{\F_{p}}(X)$ (resp. $\rg_{\Zp}(X)$). \end{defi}
 
\begin{thm}\label{thm:Munramleop} Sea $M$ la máxima extensión pro-$p$ abeliana no ramificada
  fuera de $p$ de un campo de números $K$. Entonces $\rg_{\Fp}(\Gal(M/K))$ es finito y 
$$c+1\leq \rg_{\Zp}\Gal(M/K) \leq [K:\Q].$$
\end{thm}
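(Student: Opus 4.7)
El plan es usar la teoría de campos de clases para identificar $\Gal(M/K)$ con un cociente de un producto de grupos de unidades locales en las plazas sobre $p$, y calcular después los $\Zp$-rangos mediante la estructura de las unidades locales y globales.

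Primero introduciría $H$, la máxima extensión abeliana pro-$p$ \emph{no ramificada} de $K$, que cumple $\Gal(H/K)\isom\Cl_K(p)$ por la teoría de campos de clases (\cref{thm:campo-de-hilbert} aplicado al $p$-cociente). Como $\Cl_K(p)$ es finito, la sucesión exacta
\[ 1 \to \Gal(M/H) \to \Gal(M/K) \to \Gal(H/K) \to 1 \]
implica que $\rg_{\Zp}\Gal(M/K)=\rg_{\Zp}\Gal(M/H)$, y basta estudiar $\Gal(M/H)$. La teoría de campos de clases luego provee una presentación
\[ 1 \to \overline{\iota(\O_K^\times \tensor_\Z \Zp)} \to \prod_{v\mid p} U_v^{(1)} \to \Gal(M/H) \to 1, \]
donde $U_v^{(1)} = 1 + \pi_v \O_{K_v}$ son las unidades principales locales e $\iota$ es el encaje diagonal. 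La idea heurística es que $\Gal(M/H)$ está generado por la inercia en las plazas sobre $p$ (por la definición de $H$), que por reciprocidad local corresponden a las $U_v^{(1)}$, y las relaciones provienen exactamente del hecho que los ideles principales provenientes de unidades globales tienen imagen trivial bajo la aplicación de reciprocidad.

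Segundo, calcularía los rangos. Por la \cref{prop:unidades-principales}, cada $U_v^{(1)}$ es un $\Zp$-módulo finitamente generado de rango $[K_v:\Qp]$, de manera que $\prod_{v\mid p} U_v^{(1)}$ es finitamente generado sobre $\Zp$ de rango $\sum_{v\mid p}[K_v:\Qp]=[K:\Q]$. De aquí obtengo inmediatamente la finitud de $\rg_{\Fp}\Gal(M/K)$ (como cociente de un $\Zp$-módulo finitamente generado, vía el \cref{lem:nakayama}) y la cota superior $\rg_{\Zp}\Gal(M/K)\le[K:\Q]$. Por el teorema de las unidades de Dirichlet, $\O_K^\times\tensor_\Z\Zp$ tiene $\Zp$-rango $r+c-1$, así que $\rg_{\Zp}\overline{\iota(\O_K^\times\tensor_\Z\Zp)}\le r+c-1$; combinando con $[K:\Q]=r+2c$, deduzco
\[ \rg_{\Zp}\Gal(M/K)=\rg_{\Zp}\Gal(M/H)\ge [K:\Q]-(r+c-1)=c+1. \]

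El obstáculo principal será justificar rigurosamente la sucesión exacta del primer paso: es la aplicación central de la teoría de campos de clases global (para la máxima extensión pro-$p$ abeliana no ramificada fuera de $p$) y depende crucialmente de que las plazas arquimedianas no contribuyan al $p$-cociente, lo que se sostiene gracias a que $p$ es impar. Vale la pena observar que la \emph{conjetura de Leopoldt} afirma precisamente que la desigualdad $\rg_{\Zp}\overline{\iota(\O_K^\times\tensor_\Z\Zp)}\le r+c-1$ es una igualdad; el defecto es el \emph{defecto de Leopoldt}. Sin asumir esta conjetura sólo obtenemos la cota inferior $c+1$, y en general no podemos precisar el rango exacto.
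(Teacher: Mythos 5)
Tu propuesta es correcta y sigue esencialmente el mismo camino que la demostración del texto: reducir al cociente por el campo de clases de Hilbert (allí llamado $H'$), usar la teoría de campos de clases para identificar $\Gal(M/H')$ con $U/\overline{E}$ donde $U=\prod_{\p\mid p}U_\p^{(1)}$, y calcular los $\Zp$-rangos con la \cref{prop:unidades-principales} y el teorema de las unidades de Dirichlet. Tu observación final sobre el defecto de Leopoldt coincide con la discusión que el texto hace justo después del teorema.
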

\begin{proof} Para una plaza $\p$ de $K$ sobre $p$, sea $U_{\p}^{(1)}$ el grupo de unidades
  principales del campo local $K_{\p}$ (\cref{defi:unidades-principales}). Consideremos el producto
\[U=\prod_{\p\mid p}U_{\p}^{(1)}\]
y sea $E=\{\varepsilon\in K \,|\,\varepsilon\in U_{\p}^{(1)}\text{ para todo } \p\mid p\}$. Denotamos $\overline{E}$ la cerradura de $E$ en $U$, y denotamos $H'$ la extensión máxima no ramificada de $K$ en $K_{\infty}$, que por la teoría de campos de clases debe ser finita y además
\[\Gal(M/H')\isom U/\overline{E}.\] 
De aquí deducimos que $\rg_{\Fp}(\Gal(M/K))$ es finito, ya que $\rg_{\Fp}(\Gal(M/H'))$ es
finito y $\rg_{\Fp}(U)$ es finito, pues el grupo de unidades principales $U^{(1)}_{\p}$ de
un campo local $K_{\p}$ es el producto de un grupo cíclico finito y de un $\Zp$-módulo de
rango $[K_{\p}:\Qp]$ según la \cref{prop:unidades-principales}. Por esta última observación también tenemos que
\begin{eqnarray*}
\rg_{\Zp}(U)&=&\sum_{\p\mid p} \rg_{\Zp}U_{\p} \\
&=& \sum_{\p\mid p} [K_{\p}:\Qp]\\
&=& [K:\Q].
\end{eqnarray*}
Recordemos que por el teorema de Dirichlet, el anillo de enteros $O_{K}$ de $K $ es un producto de un grupo finito y de un $\Z$-módulo libre de dimensión $r+c-1$ generado por las unidades fundamentales. La imagen de las unidades fundamentales bajo los encajes $K^{\times} \hookrightarrow K_{\p}^{\times}$ está en $U_{\p}^{(1)}$ para toda plaza $\p\mid p$. Por lo tanto, $\rg_{\Zp}\overline{E}\leq r+c-1$. Finalmente como 
\[\rg_{\Zp}\Gal(M/K)=\rg_{\Zp}\Gal(M/H')=\rg_{\Zp}(U)-\rg_{\Zp}(\overline{E}),\]
tenemos $c+1 \leq \rg_{\Zp}(\Gal(M/K))\leq [K:\Q]$.
\end{proof}

\begin{thm} Sea $\widehat{K}$ el compuesto de todas las $\Zp$-extensiones de un campo de números. Entonces $\widehat{K}/K$ es una extensión abeliana y $\Gal(\widehat{K}/K)\isom \Zp^{d}$, donde $d=\rg_{\Zp}\Gal(M/K)$.
\end{thm}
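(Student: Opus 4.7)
The plan is to show the inclusion $\widehat{K}\subseteq M$, identify $\Gal(\widehat{K}/K)$ as a quotient of $G:=\Gal(M/K)$, and then apply the structure theorem for finitely generated $\Zp$-modules to $G$.

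First I would verify that every $\Zp$-extension $L/K$ is contained in $M$: such an extension is abelian (since $\Zp$ is abelian), pro-$p$ (since $\Zp$ is pro-$p$), and unramified outside $p$ by \cref{prop:compononramifi} applied with $d=1$. Taking the compositum of all such $L$ yields $\widehat{K}\subseteq M$; in particular $\widehat{K}/K$ is abelian. Next, since $G$ is a pro-$p$ abelian group it carries a canonical $\Zp$-module structure, and by \cref{thm:Munramleop} the quotient $G/pG$ has finite $\Fp$-rank. By the topological Nakayama lemma (\cref{lem:nakayama}), $G$ is a noetherian $\Zp$-module, hence a finitely generated $\Zp$-module. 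The structure theorem for finitely generated modules over the principal ideal domain $\Zp$ then gives a decomposition
\[ G\isom \Zp^{d}\oplus T \]
where $T$ is a finite $p$-group and $d=\rg_{\Zp}(G)$.

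By infinite Galois theory (\cref{thm:hauptsatz-unendlgal}), $\Zp$-extensions of $K$ inside $M$ correspond bijectively to closed subgroups $H\leq G$ with $G/H\isom \Zp$, and the compositum $\widehat{K}$ corresponds to the intersection $N:=\bigcap_{H}H$ over all such $H$. Thus $\Gal(\widehat{K}/K)\isom G/N$, and it remains to show $N=T$. One inclusion is clear: any continuous homomorphism $G\rightarrow \Zp$ vanishes on the torsion subgroup $T$ because $\Zp$ is torsion-free, so $T\subseteq N$. For the reverse inclusion, the $d$ coordinate projections $G\isom \Zp^{d}\oplus T\twoheadrightarrow \Zp$ onto each free factor are continuous surjections onto $\Zp$, and the intersection of their kernels is exactly $T$, so $N\subseteq T$. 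Combining these yields $N=T$ and hence $\Gal(\widehat{K}/K)\isom G/T\isom \Zp^{d}$.

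The hard part, if any, is not conceptual but organizational: one must be careful that the structure theorem really applies to the compact $\Zp$-module $G$ (not merely to discrete finitely generated ones), and that the Galois correspondence between $\Zp$-quotients of $G$ and $\Zp$-extensions inside $M$ is used with closed subgroups. Both points are handled by the material already developed in \cref{sec:profinito,sec:estructura-consecuencias}, so the proof is essentially a synthesis of \cref{thm:Munramleop}, \cref{lem:nakayama} and the algebraic structure of finitely generated $\Zp$-modules.
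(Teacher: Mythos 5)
Your proof is correct and follows essentially the same route as the paper: containment $\widehat{K}\subseteq M$ via \cref{prop:compononramifi}, finite generation of $\Gal(M/K)$ as a $\Zp$-module coming from \cref{thm:Munramleop}, and then Galois theory to conclude that $\Gal(M/\widehat{K})$ has $\Zp$-rank zero. The paper's proof is only a two-line sketch, and your identification of $\Gal(M/\widehat{K})$ with the torsion submodule $T$ simply fills in the details it leaves implicit.
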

\begin{proof} Por la \cref{prop:compononramifi} la extensión $\widehat{K}$ está contenida en $M$. Por la teoría de Galois, el rango con respecto a $\Zp$ del grupo de Galois $\Gal(M/\widehat{K})$ es nulo.
\end{proof}

\begin{con}[Conjetura de Leopoldt]\label{conj:Leopoldt}\index[def]{conjetura!de Leopoldt}
Sea $d$ el rango sobre $\Zp$ de $\Gal(\widehat{K}/K)$, entonces $d=c+1$, es decir, existen $c+1$ $\Zl$-extensiones independientes sobre $K$.
\end{con}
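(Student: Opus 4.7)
The plan is to show that the inequality $c+1 \leq d$ from \cref{thm:Munramleop} is in fact an equality, or equivalently (reading the proof of that theorem) that
\[ \rg_{\Zp}(\overline{E}) = r+c-1, \]
where $\overline{E}$ is the closure in $U=\prod_{\p\mid p}U_{\p}^{(1)}$ of the image of the group $E$ of global units congruent to $1$ at every place above $p$. Indeed, since $[K:\Q]=r+2c$, the formula $d=\rg_{\Zp}(U)-\rg_{\Zp}(\overline{E})$ established inside the proof of \cref{thm:Munramleop} reduces the statement $d=c+1$ to this rank identity.

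First, I would replace $E$ by a finite-index subgroup of $\O_K^\times$ without changing anything relevant, since $E$ already has finite index in $\O_K^\times$ (the quotient is a subgroup of $\prod_{\p\mid p}(\O/\p)^\times$, hence finite). By Dirichlet's unit theorem one has $\rg_{\Z}(E)=r+c-1$. The natural continuous map
\[ \iota\colon E\tensor_{\Z}\Zp\longrightarrow U \]
has image dense in $\overline{E}$, so $\rg_{\Zp}(\overline{E})\le r+c-1$ automatically. Thus the conjecture is equivalent to the injectivity of $\iota$, i.e.\ to the assertion that a $\Z$-basis $\varepsilon_1,\dotsc,\varepsilon_{r+c-1}$ of (a finite-index subgroup of) $E$ has $\Zp$-linearly independent images in $U$. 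Passing to the $p$-adic logarithm componentwise (which is defined on $U$ and is injective modulo torsion, by the analog of \cref{prop:isom-z-p-log} for each $K_{\p}$), this becomes the statement that the vectors $(\log_p\varepsilon_i)_{\p\mid p}\in\prod_{\p\mid p}K_{\p}$ are $\Qp$-linearly independent. The $(r+c-1)\times(r+c-1)$ determinant that measures this independence (after choosing an appropriate projection) is the $p$-adic regulator $R_p(K)$, and the conjecture becomes $R_p(K)\neq0$.

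The strategy to prove $R_p(K)\neq0$ depends on the arithmetic of $K$. In the case where $K/\Q$ is abelian I would appeal to the theorem of Baker--Brumer on $\Qbar$-linear forms in $p$-adic logarithms of algebraic numbers: one decomposes $E\tensor\Qp$ into $\chi$-isotypic components under $\Gal(K/\Q)$ and uses the explicit description of units in abelian extensions in terms of cyclotomic units (or, more generally, of $\chi$-components via Dirichlet $L$-values), reducing non-vanishing of the regulator to the transcendence statement. Similarly, when $K$ is a CM field with $K^+$ its maximal totally real subfield, one can often reduce to the $K^+$ case. For general $K$ one would try to embed the problem into a bigger abelian field or to apply $p$-adic transcendence techniques.

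The main obstacle is precisely this last step: there is no Baker-type transcendence result available for $p$-adic logarithms of units of an arbitrary number field, and this is exactly why the Leopoldt conjecture remains open outside the abelian (and a few other) cases. So the ``proof'' would deliver a complete argument only for $K$ abelian over $\Q$, and reduce the general case to a purely analytic/transcendental statement about $\Qp$-linear independence of $p$-adic logarithms.
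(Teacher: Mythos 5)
You should note first that the statement you were asked to prove is presented in the paper as a \emph{conjecture} (\cref{conj:Leopoldt}): the paper gives no proof of it and only remarks that it is known when $K$ is an abelian extension of $\Q$ or of an imaginary quadratic field. So there is no internal proof to compare against; what can be judged is whether your reduction is sound and how far it really goes.

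Your reduction is the correct and standard one. Using the computation inside the proof of \cref{thm:Munramleop}, $d=\rg_{\Zp}(U)-\rg_{\Zp}(\overline{E})=r+2c-\rg_{\Zp}(\overline{E})$, so $d=c+1$ is equivalent to $\rg_{\Zp}(\overline{E})=r+c-1$; since $E$ has finite index in $\O_K^\times$ and Dirichlet gives $\rg$ equal to $r+c-1$ over $\Z$, this is in turn equivalent to the injectivity of $E\otimes_{\Z}\Zp\rightarrow U=\prod_{\p\mid p}U_{\p}^{(1)}$, i.e.\ (via the componentwise $p$-adic logarithm) to the non-vanishing of the $p$-adic regulator $R_p(K)$. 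The appeal to the Baker--Brumer transcendence theorem then settles exactly the abelian case, which is precisely the case the paper records as known. The genuine gap --- which, to your credit, you flag yourself --- is the final step for general $K$: the statement $R_p(K)\neq0$ \emph{is} the Leopoldt conjecture, and no Baker-type result is available for $p$-adic logarithms of units of an arbitrary number field, so the argument cannot be completed and does not prove the statement as formulated. What you have is a correct equivalent reformulation plus a proof of the known special case; for the general statement the paper, like the literature, leaves the question open, so no complete proof should be expected here.
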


Existen muchas versiones equivalentes de esta conjetura a través de cuales es relacionada
con varios objetos, una lista se encuentra en \cite[(10.3.6)]{NSW}. Esto enfatiza la
importancia de la conjetura -- por ejemplo tiene que ver con las funciones $L$ $p$-ádicas
que vamos a estudiar en el \cref{sec:palf}; véase \cite[§5.5]{MR1421575} para más sobre esta
conexión. La conjetura de Leopoldt es conocida en el caso especial en que $K$ es una
extensión abeliana de los números racionales $\Q$ o de un campo cuadrático imaginario.

\ejercicios

\begin{ejer} Sea $K^{\mathrm c}$ la $\Zp$-extensión ciclotómica de un campo de números $K$. Demuestre que $K^{\mathrm c}$ es ramificada en todo primo $\p$ sobre $p$. Además demuestre que si $K=\Q$, entonces $K^{\mathrm c}$ es totalmente ramificada en $p$.
\end{ejer}

\begin{ejer} Demuestre que si $K=\Q$ entonces sólo hay una $\Zp$-extensión de $\Q$, i.\,e. la $\Zp$-extensión ciclotómica. 
\end{ejer}
 
\section{Propiedades de los grupos de clases como $\LL$-módulos}
\label{sec:propiedades-x-ll}

Sea $\LL=\Zp\llbracket T \rrbracket$ el álgebra de Iwasawa con coeficientes en
$\Zp$. Recordemos que en este caso $\mathfrak{M}=(p,T)$. En la
\cref{sec:estructura-consecuencias} demostramos algunas propiedades de los $\LL$-módulos
compactos. Estas nos serán útiles aquí pues los $\LL$-módulos con que trabajaremos son
finitamente generados, y estos son compactos por ser la imagen continua de $\LL^{d}$ para
algún $d$.

Sea $K_{\infty}/K$ una $\Zp$-extensión de grupo de Galois $\Gamma=\Gal(K_{\infty}/K)$ y sea $L_{\infty}$ una extensión abeliana de $K_{\infty}$ tal que $L_{\infty}/K$ es una extensión de Galois de grupo $G=\Gal(L_{\infty}/K)$. Si denotamos $X=\Gal(L_{\infty}/K_{\infty})$ tenemos que $\Gamma$ actúa continuamente por conjugación en $X$
\begin{equation}\label{eq:action}
x^{\gamma}=\tilde{\gamma}x\tilde{\gamma}^{-1}\	\text{ para todo }\	x\in X \text{ y }\gamma\in \Gamma,
\end{equation}
donde $\tilde{\gamma}$ es un levantamiento de $\gamma$ a $G$.
Esta construcción de hecho es un patrón general, véase el \cref{ejer:sucesion-exacta-accion}.
En nuestra situación $X$ es un subgrupo cerrado normal de $G$ tal que 
$$\Gamma \isom G/X.$$
Fijando un generador $\gamma\in \Gamma$ podemos extender la acción de $\Gamma$ sobre $X$ al álgebra de Iwasawa $\LL$ (\cref{thm:EquivPowPro}), i.\,e. hacemos de $X$ un $\LL$-módulo compacto. Además, como $\Gamma$ es un $\Zp$-módulo libre, existe una sección de $\Zp$-módulos que describe el producto semidirecto
\begin{equation}\label{eq:prodsemidir}
G= \Gamma \ltimes X.
\end{equation}

Como de costumbre denotamos $\Gamma_{r}$ el único subgrupo cerrado de $\Gamma$ tal que $\Gamma/\Gamma_{r}\isom \Z/p^{r}\Z$. Topológicamente $\Gamma_{r}$ es generado por $\gamma^{p^{r}}$. Sea $\omega_{r}:=\omega_{r}(\gamma)$ (cf.\ \cref{defi:omega-r}), el súbmódulo $\omega_{r}X$ es el mínimo $\Gamma$-submódulo tal que $\Gamma_{r}$ actúa trivialmente en $X/\omega_{r}X$. Por lo tanto $\omega_{r}X$ no depende de la $\LL$-estructura en $X$. 

\begin{lem}\label{lem:conmutatorGr} Sea $G_{r}$ el grupo de Galois
  $\Gal(L_{\infty}/K_{r})$. Para un subgrupo $H$ denotamos $\overline{H}$ la cerradura topológica de $H$. Entonces
$$\overline{[G_{r},G_{r}]}=\omega_{r}X.$$
\end{lem}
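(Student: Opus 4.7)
The strategy is to exploit the semidirect product decomposition $G_r = \Gamma_r \ltimes X$, which is inherited by restriction from $G = \Gamma \ltimes X$, and to use the abelianness of $X$ to reduce the commutator computation to a purely $\LL$-module statement.

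First I would observe that $[G_r, G_r] \subseteq X$: indeed, $X$ is abelian (since $L_\infty/K_\infty$ is abelian), the section realises $\Gamma_r$ as an abelian subgroup of $G_r$, and $G_r/X \isom \Gamma_r$ is abelian. Using the commutator identity $[g_1 g_2, h] = {[g_1, h]}^{g_2}[g_2, h]$ together with the generation of $G_r$ by the section of $\Gamma_r$ and by $X$, the subgroup $[G_r, G_r]$ is the normal subgroup of $G_r$ (equivalently, the $\Z[\Gamma_r]$-submodule of $X$) generated by the basic commutators $[\tilde\sigma, x]$ for $\sigma \in \Gamma_r$, $x \in X$. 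A direct computation, using \eqref{eq:action}, gives
\[ [\tilde\sigma, x] \;=\; \tilde\sigma x \tilde\sigma^{-1} x^{-1} \;=\; x^\sigma x^{-1} \;=\; (\sigma-1)\cdot x, \]
where the right-hand side uses the $\LL$-module structure on $X$ (written multiplicatively).

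Taking the topological closure, $\overline{[G_r, G_r]}$ becomes the closed $\LL$-submodule of $X$ generated by $\{(\sigma-1)\cdot x : \sigma \in \Gamma_r,\ x \in X\}$, which equals $I_r \cdot X$ where $I_r \subseteq \LL$ is the closed ideal generated by $\{\sigma - 1 : \sigma \in \Gamma_r\}$. It remains to identify $I_r$ with $\omega_r \LL$. The inclusion $\omega_r \LL \subseteq I_r$ is immediate since $\omega_r = \gamma^{p^r} - 1$ and $\gamma^{p^r} \in \Gamma_r$. For the reverse inclusion, any $\sigma \in \Gamma_r$ can be written as $\sigma = (\gamma^{p^r})^n$ for some $n \in \Zp$ (as $\Gamma_r$ is topologically generated by $\gamma^{p^r}$), and the continuous binomial expansion gives
\[ \sigma - 1 \;=\; (1 + \omega_r)^n - 1 \;=\; \sum_{k\geq 1} \binom{n}{k}\omega_r^k \;\in\; \omega_r \LL, \]
the sum converging in $\LL$ and lying in $\omega_r \LL$ because $\omega_r \LL$ is closed (it is the image of the compact space $\LL$ under multiplication by $\omega_r$).

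Combining these steps, $\overline{[G_r, G_r]} = \omega_r X$, where we also use that $\omega_r X$ is itself closed in $X$ (compact image under a continuous map) and so needs no further closure. The main subtlety is the topological bookkeeping in the third paragraph—checking that the closed ideal generated by $\{\sigma-1 : \sigma \in \Gamma_r\}$ is genuinely $\omega_r \LL$ rather than merely dense in it, and that the passage from the abstract commutator subgroup to the closed $\LL$-submodule $\omega_r X$ is actually an equality and not just an inclusion; both follow from the fact that the relevant subsets are already closed by compactness arguments.
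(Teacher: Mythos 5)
Your proof is correct and takes essentially the same route as the paper's: both exploit the decomposition $G_r=\Gamma_r\ltimes X$, compute commutators via the action \eqref{eq:action} to see that they are $(\sigma-1)$-multiples of elements of $X$ with $\sigma\in\Gamma_r$, use that $\sigma-1$ is divisible by $\omega_r$, and conclude from the closedness of $\omega_r X$ together with the observation that each $\omega_r x=[\widetilde{\gamma^{p^r}},x]$ is itself a commutator. The only differences are cosmetic: the paper computes the commutator of two arbitrary elements via the formula of the \cref{ejer:conmutator}, while you reduce to the basic commutators $[\tilde\sigma,x]$ and, in addition, spell out the divisibility $\sigma-1\in\omega_r\LL$ (which the paper asserts without proof) by the $p$-adic binomial expansion of $(1+\omega_r)^n$.
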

\begin{proof} Podemos escribir $G_{r}=\Gamma_{r}\ltimes X$. Sean $a=\alpha x$ y $b=\beta y$ elementos de $G_{r}$ con $\alpha,\beta$ en $\Gamma_{r}$ y $x,y$ en $X$. Es un ejercicio fácil (cf. \cref{ejer:conmutator}) ver que
  $$[a,b]=aba^{-1}b^{-1}=(x^{\alpha})^{1-\beta}(y^{\beta})^{\alpha-1}.$$
  Se cumple que $1-\beta$ y $\alpha-1$ son divisibles por $\omega_r$, por eso $[a,b]\in\omega_rX$.
  
  Por el otro lado, haciendo $\beta=1$ y $\alpha=\gamma^{p^{r}}$ vemos que $y^{\gamma^{p^{r}}-1}\in\overline{[G_{r},G_{r}]}$. Es decir, $\omega_{r}X\subseteq \overline{[G_{r},G_{r}]}$.
\end{proof}

Denotamos $L_{r}$ la máxima subextensión de $L_{\infty}$ abeliana sobre $K_{r}$ (cf. \eqref{eq:Zptorre}). Entonces $L_{\infty}$ es la unión de los $L_{r}$. Del lema precedente deducimos inmediatamente que 
\begin{equation}\label{eq:conmuGal}
\omega_{r}X=\Gal(L_{\infty}/L_{r}) \	\	\	\text{ y } \	\	\	X/\omega_{r}X = \Gal(L_{r}/K_{\infty}).
\end{equation}

Con la notación de la \cref{def:rg}, obtenemos los siguientes resultados. En particular, una consecuencia del \cref{cor:cor-de-nakayama} es el siguiente resultado que nos será útil próximamente. 

\begin{cor}\label{cor:cond1noeth} $X$ es noetheriano si y solamente si $\rg_{\F_{p}}(\Gal(L_{0}/K))$ es finito. 
\end{cor}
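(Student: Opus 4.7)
The plan is to reduce the statement to the topological Nakayama lemma (\cref{lem:nakayama}) and then to translate the resulting finiteness condition into one about $\Gal(L_0/K)$ via the identifications from \eqref{eq:conmuGal}.

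First I would invoke \cref{lem:nakayama}: since $X$ is a compact $\LL$-module, $X$ is noetherian if and only if $X/\mathfrak{M}X$ is finite. Under the isomorphism $\LL=\Zp\llbracket T\rrbracket\isom\Zp\llbracket\Gamma\rrbracket$ of \cref{thm:EquivPowPro} determined by our generator $\gamma$, we have $T\leftrightarrow\gamma-1=\omega_0$, so
\[ \mathfrak{M}X=pX+\omega_0 X \quad\text{and hence}\quad X/\mathfrak{M}X=(X/\omega_0X)/p(X/\omega_0X). \]
By \eqref{eq:conmuGal} applied with $r=0$, $X/\omega_0 X=\Gal(L_0/K_\infty)$, so $X/\mathfrak{M}X$ is isomorphic to $\Gal(L_0/K_\infty)/p\Gal(L_0/K_\infty)$. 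Being an $\F_p$-vector space, this module is finite if and only if its $\F_p$-dimension is finite.

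Next I would compare $\Gal(L_0/K_\infty)$ with $\Gal(L_0/K)$. Since $K_\infty/K$ is abelian (its Galois group is $\Gamma\isom\Zp$) and $K_\infty\subseteq L_\infty$, by the very definition of $L_0$ as the maximal subextension of $L_\infty$ that is abelian over $K$ we have $K_\infty\subseteq L_0$. This yields the exact sequence of abelian pro-$p$ groups
\[ 0\longrightarrow \Gal(L_0/K_\infty)\longrightarrow \Gal(L_0/K)\longrightarrow \Gal(K_\infty/K)\longrightarrow 0, \]
with right-hand term canonically isomorphic to $\Zp$. Because $\Zp$ is a free $\Zp$-module, this sequence splits as $\Zp$-modules, giving $\Gal(L_0/K)\isom\Gal(L_0/K_\infty)\oplus\Zp$. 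Reducing modulo $p$ yields
\[ \Gal(L_0/K)/p\Gal(L_0/K)\isom \Gal(L_0/K_\infty)/p\Gal(L_0/K_\infty)\oplus \F_p, \]
so the left-hand side is finite if and only if the first summand on the right is. Combining this with the first paragraph gives the desired equivalence.

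The only delicate point is the splitting of the exact sequence, which requires $\Gal(L_0/K)$ to be an honest $\Zp$-module; this is fine since $G=\Gamma\ltimes X$ is pro-$p$ (product of two pro-$p$ groups, assuming $X$ is pro-$p$ as needed for the $\LL$-module structure) and $\Gal(L_0/K)$ is an abelian quotient of $G$. Apart from this, the argument is a straightforward chain of identifications, so I expect no significant obstacle.
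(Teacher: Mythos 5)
Your argument is correct and follows essentially the same route as the paper's proof: apply the topological Nakayama lemma (\cref{lem:nakayama}), identify $X/\mathfrak M X$ with $(X/\omega_0X)/p(X/\omega_0X)=\Gal(L_0/K_\infty)/p\Gal(L_0/K_\infty)$ via \eqref{eq:conmuGal}, and compare $\F_p$-ranks through the exact sequence with quotient $\Gamma\isom\Zp$ (the paper states this as $\rg_{\F_p}Y+\rg_{\F_p}\Gamma=\rg_{\F_p}\Gal(L_0/K)$, using the same splitting it already invoked in \eqref{eq:prodsemidir}). You merely spell out the details (containment $K_\infty\subseteq L_0$, pro-$p$-ness, the splitting) that the paper leaves implicit.
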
 
\begin{proof} Sea $Y=X/\w_{0}X$. Entonces $X/\mathfrak{M}X=Y/pY$, por lo que $X/\mathfrak{M}X$ es finito si y solamente si $\rg_{\F_{p}}Y$ es finito. Pero $\rg_{\F_{p}}Y+\rg_{\F_{p}}\Gamma = \rg_{\F_{p}} \Gal(L_{0}/K)$. 
\end{proof}

Vamos a aplicar los resultados de la \cref{sec:estructura-consecuencias} a $\LL$-módulos asociados a las siguientes extensiones y grupos de Galois. 

\begin{defi}\label{defi:iwasawa-modulos} Sea $K_{\infty}/K$ una $\Zp$-extensión. 
  Para $r\in\Ncero$ denotamos
  \begin{itemize}
  \item $K_r=$ el único subcampo de la $\Zp$-extensión $K_\infty/K$ tal que $\Gal(K_r/K)\isom\Z/p^r\Z$,
  \item $H_r=$ la máxima extensión abeliana pro-$p$ de $K_r$ no ramificada,
  \item $M_r=$ la máxima extensión abeliana pro-$p$ de $K_r$ ramificada sólo en $p$,
  \item $C_r=$ la $p$-parte del grupo de clases $\Cl(K_r)$,
  \item $X_r=\Gal(H_r/K_r)$,
  \item $Y_r=\Gal(M_r/K_r)$.
  \end{itemize}
  Escribimos $H_\infty$ para el compuesto de todos los campos $H_r$ para $r\in\Ncero$, y
  análogamente definimos $M_\infty$. Estudiamos los módulos
  \begin{itemize}
  \item $X_\infty := \Gal(H_\infty/K_\infty) \isom \displaystyle\varprojlim_{r\in\Ncero}X_r$,
  \item $Y_\infty := \Gal(M_\infty/K_\infty) \isom \displaystyle\varprojlim_{r\in\Ncero}Y_r$.
  \end{itemize}
  Aquí los límites son tomados con respecto a los mapeos de restricción entre los grupos de
  Galois (véase el \cref{ejer:y-infty-lim} para el último isomorfismo).
\end{defi}

El campo $H_r$ es el \emph{$p$-campo de clases de Hilbert}\esindex[def]{campo de clases de Hilbert!$p$-campo} de $K_r$, es decir la máxima
extensión abeliana pro-$p$ no ramificada, y el \cref{thm:campo-de-hilbert} describe su grupo de
Galois: existe un isomorfismo canónico $X_r\isom C_r$ de grupos abelianos para cada $r\in\Ncero$.
El grupo de Galois $\Gal(K_r/K)$ actúa naturalmente en $C_r$, y de hecho el isomorfismo
$X_r\isom C_r$ es compatible con esta acción (esto sigue del \cref{ejer:artin-symbol-eq}).
Si $\mathrm N_r\colon C_{r+1}\rightarrow C_r$ denota la norma relativa de ideales (\cref{defi:norma-de-ideales}) entonces
se puede comprobar que el diagrama
\begin{equation}
  \label{eqn:diagrama-x-c}
    \begin{tikzcd}
    X_{r+1} \arrow[r,"\sim"] \arrow[d] & C_{r+1} \arrow[d, "\mathrm N_r"] \\
    X_r \arrow[r, "\sim"] & C_r
  \end{tikzcd}
\end{equation}
es conmutativo para $r$ suficientemente grande (para esto hay que usar que $H_r\cap
K_{r+1}=K_r$, que es cierto para $r$ suficientemente grande; véase \cite[p. 277 y
lem. 13.3]{MR1421575}).
Esto muestra que $X_\infty\isom\varprojlim_{r\in\Ncero}C_r$ como $\LL$-módulo, el límite de los $C_r$
tomado con respecto a la norma.

\begin{thm}\label{thm:y-noetheriano}
  El módulo $Y_\infty$ es noetheriano. 
\end{thm}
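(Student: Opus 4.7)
The plan is to apply \cref{cor:cond1noeth} directly to the extension $L_\infty = M_\infty$, and then appeal to \cref{thm:Munramleop} to check the finiteness hypothesis. First I would verify that the setup of the section applies, namely that $M_\infty/K$ is a Galois extension. This follows from a standard maximality argument: for any $\sigma\in\G{K}$, the field $\sigma(M_r)$ is again an abelian pro-$p$ extension of $\sigma(K_r)=K_r$ unramified outside $p$ (since $\sigma$ permutes the primes above $p$), so by maximality $\sigma(M_r)\subseteq M_r$. Hence $M_r/K$ is Galois for each $r$, and passing to the compositum gives $M_\infty/K$ Galois.

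Next, with $L_\infty=M_\infty$, the module $X$ of \cref{cor:cond1noeth} becomes $Y_\infty=\Gal(M_\infty/K_\infty)$, and I need to identify $L_0$ in the notation of that corollary (the maximal subextension of $M_\infty$ abelian over $K$) with $M_0$. The inclusion $M_0\subseteq L_0$ is clear since $M_0\subseteq M_\infty$ and $M_0/K$ is abelian. For the reverse, I would observe that $M_\infty/K$ is pro-$p$ (because both $K_\infty/K$ and $M_\infty/K_\infty$ are) and unramified outside $p$ (by \cref{prop:compononramifi} applied to $K_\infty/K$, combined with the defining property of $M_\infty/K_\infty$). Therefore any subextension of $M_\infty/K$ that is abelian over $K$ is pro-$p$ abelian and unramified outside $p$, hence contained in $M_0$ by definition of the latter.

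Having shown $L_0=M_0$, the finiteness hypothesis of \cref{cor:cond1noeth} becomes precisely the statement that $\rg_{\F_p}\Gal(M_0/K)$ is finite, which is the content of \cref{thm:Munramleop}. Applying the corollary then yields that $Y_\infty$ is a noetherian $\LL$-module, as desired.

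There is no real obstacle here — the heavy lifting was already done in \cref{thm:Munramleop} (which bounds $\rg_{\F_p}\Gal(M_0/K)$ via local units) and in the topological Nakayama argument of \cref{cor:cond1noeth}. The only point that requires some care is the identification $L_0=M_0$, which hinges on checking that $M_\infty/K$ is pro-$p$, Galois, and unramified outside $p$; among these, only the unramifiedness outside $p$ of $K_\infty/K$ requires invoking a previous result (\cref{prop:compononramifi}).
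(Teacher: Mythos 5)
Tu propuesta es correcta y sigue esencialmente el mismo camino que el texto: aplicar el \cref{cor:cond1noeth} con $L_\infty=M_\infty$ y obtener la finitud de $\rg_{\F_p}\Gal(M_0/K)$ del \cref{thm:Munramleop}. Lo único que añades son las verificaciones (que el texto omite por rutinarias) de que $M_\infty/K$ es Galois, pro-$p$ y no ramificada fuera de $p$, y por tanto que la máxima subextensión abeliana sobre $K$ es exactamente $M_0$; son detalles pertinentes pero no cambian el argumento.
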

\begin{proof} Tenemos que $M_{0}$ es la máxima extensión abeliana pro-$p$ de $K$, por el \cref{thm:Munramleop} tenemos que $\rg_{\F_{p}}\Gal(M_{0}/K)$ es finito, de donde deducimos la afirmación del \cref{cor:cond1noeth}. 
\end{proof}

\begin{cor}\label{cor:x-noetheriano-torsion}
  $X_\infty$ es un $\LL$-módulo noetheriano y de torsión. 
\end{cor}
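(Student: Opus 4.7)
Plan: La parte noetheriana se obtiene directamente del \cref{thm:y-noetheriano}: como una extensión no ramificada es vacuamente no ramificada fuera de $p$, se cumple $H_r\subseteq M_r$ para cada $r$ y por lo tanto $H_\infty\subseteq M_\infty$. La restricción induce entonces una sobreyección natural de $\LL$-módulos $Y_\infty\twoheadrightarrow X_\infty$, y como el cociente de un módulo noetheriano es noetheriano, $X_\infty$ lo es también. (Vale la pena notar que no podemos deducir la torsión de la misma forma: $Y_\infty$ \emph{no} es necesariamente de torsión, pues su $\LL$-rango está ligado a la conjetura de Leopoldt.)

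Para la torsión apelaré a la \cref{prop:Xnoettorsiirgfinsiiacot}: basta demostrar que $\rg_{\Zp}(X_\infty/\omega_r X_\infty)$ está acotado cuando $r$ varía. La identificación \eqref{eq:conmuGal} da $X_\infty/\omega_r X_\infty\isom\Gal(L_r/K_\infty)$, con $L_r\subseteq H_\infty$ la máxima subextensión abeliana sobre $K_r$. Quiero relacionar este grupo con el grupo de clases finito $C_r\isom X_r$. Como $K_\infty H_r\subseteq L_r$ (pues $K_\infty H_r/K_r$ es abeliana y está contenida en $H_\infty$), y por la \cref{prop:ramificZpext} junto con el hecho de que $H_r/K_r$ es no ramificada se tiene $H_r\cap K_\infty=K_r$ para $r$ suficientemente grande, obtengo una sobreyección canónica $\Gal(L_r/K_\infty)\twoheadrightarrow\Gal(K_\infty H_r/K_\infty)\isom C_r$.

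El paso más delicado será controlar el núcleo $\Gal(L_r/K_\infty H_r)$ uniformemente en $r$. Para ello analizaré los subgrupos de inercia en $\Gal(H_\infty/K_r)=\Gamma_r\ltimes X_\infty$ de las finitamente muchas plazas ramificadas sobre $p$: al ser $H_\infty/K_\infty$ no ramificada, cada tal inercia se inyecta en $\Gamma_r$ (para $r$ suficientemente grande se proyecta isomorfamente sobre $\Gamma_r$), lo que permite expresar levantamientos de un generador de $\Gamma_r$ como productos $\gamma\cdot x_i$ con $x_i\in X_\infty$. El submódulo generado por estos $x_i$ tiene cociente en $X_\infty/\omega_r X_\infty$ que coincide con $C_r$ salvo un error acotado independiente de $r$. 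Así $\rg_{\Zp}(X_\infty/\omega_r X_\infty)=0$ para todo $r$, lo cual por la \cref{prop:Xnoettorsiirgfinsiiacot} establece la torsión.
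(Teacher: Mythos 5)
Your noetherian argument is exactly the paper's: $H_\infty\subseteq M_\infty$ gives a surjection $Y_\infty\twoheadrightarrow X_\infty$ of $\LL$-módulos, \cref{thm:y-noetheriano} does the rest, and your parenthetical warning that torsion cannot be inherited this way (the $\LL$-rank of $Y_\infty$ being tied to Leopoldt) is also correct. For torsion you take the same route as the paper — bound $\rg_{\Zp}(X_\infty/\w_rX_\infty)$ uniformly and invoke \cref{prop:Xnoettorsiirgfinsiiacot} via \eqref{eq:conmuGal} — but your final quantitative claim is where the argument breaks.

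The kernel of your surjection $\Gal(L_r/K_\infty)\twoheadrightarrow C_r$ is \emph{not} ``un error acotado independiente de $r$'', and in particular the conclusion $\rg_{\Zp}(X_\infty/\w_rX_\infty)=0$ is false in general. In the semidirect-product picture $\Gal(H_\infty/K_r)=\Gamma_r\ltimes X_\infty$, the elements $x_i$ you produce from the inertia groups generate, together with $\w_rX_\infty$, precisely the submodule $T_r$ of \cref{prop:Crdescenso}, and the kernel in question is $T_r/\w_rX_\infty$: a $\Zp$-module generated by at most $s-1$ elements (the images of the $a_i$), hence of $\Zp$-rank up to $s-1$, but by no means finite or of bounded order when more than one prime ramifies ($s>1$); claiming it is bounded is the step that fails. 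What is true — and all that \cref{prop:Xnoettorsiirgfinsiiacot} requires — is that the rank is bounded independently of $r$: for $r\ge r_0$ one has $\rg_{\Zp}(X_\infty/\w_rX_\infty)=\rg_{\Zp}\Gal(L_r/K_\infty)=\rg_{\Zp}\Gal(L_r/K_r)-1=\rg_{\Zp}\Gal(L_r/H_r)-1\le s-1$, using that $\Gal(H_r/K_r)\isom C_r$ is finite and $\Gal(L_r/H_r)=I_1\cdots I_s$ with each $I_i\isom\Zp$. Replacing ``error acotado, luego rango $0$'' by ``núcleo generado sobre $\Zp$ por a lo más $s-1$ elementos, luego rango $\le s-1$, acotado'' repairs your proof and makes it coincide with the paper's.
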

\begin{proof} $X_\infty$ es isomorfo a un cociente de $Y_\infty$, ya que la máxima extensión abeliana pro-$p$ no ramificada $H_{\infty}$ de $K_{\infty}$ está contenida en $M_{\infty}$. Por lo tanto, $X_\infty$ es noetheriano. 

Ahora veamos que es de torsión. Sea $S_{0}$ el conjunto de plazas de $K$ que ramifican en $K_{\infty}$. El conjunto $S_{0}$ es no vacío y finito (\cref{prop:compononramifi} y \cref{prop:ramificZpext}). Sea $I_{\p}\subseteq \Gamma$ el subgrupo de inercia de la plaza $\p\in S_{0}$. Sabemos que $I_{\p}$ es un subgrupo cerrado de $\Gamma$ entonces isomorfo a $\Gamma_{r_{\p}}$ para algún $r_{\p}\geq 0$.

Sea $$r_{0}=\max_{\p\in S_{0}}\{r_{\p}\},$$
entonces la extensión $K_{\infty}/K_{r_{0}}$ es totalmente ramificada en las plazas de $K_{r}$ arriba de $S_{0}$. En particular, el número de plazas $s$ de $K_{r}$ que ramifican en $K_{\infty}$ es el mismo para todo~$r\geq  r_{0}$. 

Ahora usaremos la construcción al inicio de la sección, en el caso
$$L_{\infty}=H_{\infty}\	\	\	\text{ y }\	\	\	X=X_{\infty}=\Gal(H_{\infty}/K_{\infty}).$$
Recordemos que $L_{r}$ es la máxima extensión abeliana de $K_{r}$ contenida en $H_{\infty}$.

Sean $\p_{1},\ldots,\p_{s}$ las plazas en $K_{r}$, con $r\geq r_{0}$, que ramifican en $K_{\infty}$. Sea $I_{i}$ el subgrupo de inercia de $\Gal(L_{r}/K_{r})$ para $i=1,\ldots,s$. Como $L_{n}/K_{\infty}$ es no ramificada y los $\p_{i}$ ramifican totalmente en $K_{\infty}$ obtenemos 
$$I_{i}\isom \Gamma_{r} \isom \Zp, \	\text{ para }\	i=1,\ldots,s.$$
La extensión $H_{r}$ está contenida en $L_{r}$, ya que $H_{r}K_{\infty}$ es abeliana sobre $K_{r}$ y no ramificada sobre $K_{\infty}$.  Además, las únicas plazas de $K_{r}$ que ramifican en $L_{r}$ son precisamente $\p_{1},\ldots,\p_{s}$, entonces 
$$\Gal(L_{r}/H_{r})=I_{1}\cdots I_{s}.$$

Para todo $r\geq r_{0}$ tenemos
\begin{eqnarray*}
\rg_{\Zp}X/\w_{r} X &=& \rg_{\Zp} \Gal(L_{r}/K_{\infty}) \\
&=& \rg_{\Zp} \Gal(L_{r}/K) - 1\\
&=& \rg_{\Zp} \Gal(L_{r}/H_{r}) -1 \\
&\leq & s-1.
\end{eqnarray*}

El resultado sigue observando que $\rg_{\Zp} X/\w_{r}X \leq \rg_{\Zp} X/w_{r+1}X$ y de la \cref{prop:Xnoettorsiirgfinsiiacot}.
\end{proof}

\ejercicios

\begin{ejer}\label{ejer:sucesion-exacta-accion}
  Sea \[ 1\rightarrow A\rightarrow D \rightarrow G\rightarrow 1 \] una sucesión exacta de
  grupos donde $A$ es abeliano. Para $\sigma\in G$ sea $\tilde\sigma\in D$ un levantamiento
  y definimos \[ \sigma\bullet a= \tilde{\sigma}a\tilde{\sigma}^{-1}\text{ para }a\in A. \]
  Demuestre que esto está bien definido y define una acción $\Z$-lineal de $G$ en
  $A$. Verifique que esta acción es continua si todos son grupos topológicos, los morfismos
  en la sucesión son continuos y si el mapeo $D\rightarrow G$ admite una sección continua
  (no necesariamente un morfismo de grupos).
\end{ejer}

\begin{ejer}\label{ejer:conmutator}
  Sea $G=H\ltimes N$, con $H$ y $N$ abelianos. Entonces para todo $g_{1}=h_{1}n_{1}$ y $g_{2}=h_{2}n_{2}$ elementos de $G$ tenemos
  $$[g_{1},g_{2}]=(n_{1}^{h_{1}})^{1-h_{2}}(n_{2}^{h_{2}})^{h_{1}-1}.$$
\end{ejer}

\begin{ejer}\label{ejer:y-infty-lim}
  Sean $K_r$ y $M_r$ como en la \cref{defi:iwasawa-modulos} para cada
  $r\in\Ncero\cup\{\infty\}$ y sea $Y_\infty=\Gal(M_\infty/K_\infty)$.
  \begin{enumerate}
  \item Verifique que $K_\infty\subseteq M_r$ para cada $r\ge0$.
  \item Use el \cref{thm:hauptsatz-unendlgal} para ver que
    \[ Y_\infty=\varprojlim_{r\ge0}\Gal(M_r/K_\infty). \]
  \item Demuestre que la aplicación canónica
    \[ \varprojlim_{r\ge0}\Gal(M_r/K_\infty)\rightarrow\varprojlim_{r\ge0}\Gal(M_r/K_r) \]
    es un isomorfismo.
  \end{enumerate}
\end{ejer}

\section{Teorema de Iwasawa}

En esta sección vamos a demostrar el \cref{thm:iwasawa-intro} de Iwasawa.

Continuamos usando la notación de la \cref{defi:iwasawa-modulos}, en particular sea $K_{\infty}/K$ una $\Zp$-extensión. Además, usaremos continuamente la construcción de la sección pasada en el caso especial
$$L_{\infty}=H_{\infty}\	\	\	\text{ y }\	\	\	X=X_{\infty}=\Gal(H_{\infty}/K_{\infty}).$$
Al inicio de esta sección (en la \cref{prop:Crdescenso} y sus corolarios) hacemos la siguiente hipótesis que es verificada en los casos que trataremos en la exposición de las funciones $L$ $p$-ádicas, es decir en las $\Zp$-extensiones ciclotómicas $\Q^{\mathrm c}$ y $\Q(\mu_{p})^{\mathrm c}$.\\

\textbf{Hipótesis:} Los primos $\p_{1},\ldots,\p_{s}$ de $K$ que ramifican en la $\Zp$-extensión $K_{\infty}$ son totalmente ramificados. \\

Sea $I_{i}$ el subgrupo de inercia de $G=\Gal(H_{\infty}/K)$ correspondiente a la plaza $\p_{i}$, con
$i=1,\ldots,s$. Por la hipótesis anterior, tenemos que $I_{i}\isom \Gamma$ ya que $X_\infty$
es no ramificado. Escogiendo un generador topológico $\gamma$ de $\Gamma$, llamamos
$\sigma_{i}$ la imagen de $\gamma$ en $I_{i}$. El isomorfismo $G=I_{i}X_\infty=X_\infty I_{i}$ (ver \eqref{eq:prodsemidir}) nos permite escribir
$\sigma_{i}=a_{i}\sigma_{1}$, para $a_{i}$ elementos de $X_\infty$, claramente $a_{1}=1$. 

\begin{prop}\label{prop:Crdescenso} Sea $T_{0}$ el $\Zp$-submódulo generado por $\{a_{i}\,|\,2\leq i \leq s\}$ y $\w_{0}X_\infty$. Sea $T_{r}=\frac{\w_{r}}{\w_{0}}T_{0}$, entonces 
\[C_{r}\isom X_\infty/T_{r}\	\	\	\text{ para } r\geq 0.\]
\end{prop}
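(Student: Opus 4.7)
The plan is to reduce the claim to a Galois-theoretic computation inside $H_\infty$ and then identify the relevant subgroup with $T_r$. First, by class field theory (\cref{thm:campo-de-hilbert}) we have $C_r \isom X_r = \Gal(H_r/K_r)$, and the hypothesis that each $\p_i$ is totally ramified in $K_\infty/K$ implies $H_r \cap K_\infty = K_r$ (any unramified subextension of the totally ramified extension $K_\infty/K_r$ at the primes above $\p_i$ must be trivial), so $X_r \isom \Gal(H_r K_\infty/K_\infty)$, i.e.\ $X_r$ is the quotient of $X_\infty$ by $\Gal(H_\infty/H_r K_\infty)$. Hence the problem is to prove $\Gal(H_\infty/H_r K_\infty)=T_r$.

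The next step is to work inside $L_r$, the maximal subextension of $H_\infty$ abelian over $K_r$ (in the notation of the beginning of the section, with $L_\infty=H_\infty$, $X=X_\infty$). By \eqref{eq:conmuGal} we have $\Gal(L_r/K_\infty)=X_\infty/\omega_r X_\infty$ and $\Gal(H_\infty/L_r)=\omega_r X_\infty$, and since $H_r K_\infty$ is abelian over $K_r$ it is contained in $L_r$. Thus $\Gal(H_\infty/H_r K_\infty)$ is the preimage in $X_\infty$ of $\Gal(L_r/H_r K_\infty)\subseteq X_\infty/\omega_r X_\infty$, and it suffices to identify this last group with $T_r/\omega_r X_\infty$. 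Since $L_r/K_\infty$ is unramified (it sits in $H_\infty/K_\infty$) while each $\p_i$ is totally ramified in $K_\infty/K_r$, the image $I_i'$ of $I_i\cap G_r$ in $\Gal(L_r/K_r)=G_r/\omega_r X_\infty$ maps isomorphically onto $\Gamma_r$; in particular $I_1'$ furnishes a splitting
\[ \Gal(L_r/K_r)\isom (X_\infty/\omega_r X_\infty)\oplus \Gamma_r, \]
with $\sigma_1^{p^r}$ generating the second factor.

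Now $H_r$ is the maximal unramified subextension of $L_r/K_r$, so $\Gal(L_r/H_r)$ is the closed subgroup of $\Gal(L_r/K_r)$ generated by all the $I_i'$. Using the semidirect decomposition $G=X_\infty\rtimes\Gamma$ (with $\sigma_1$ identified with $\gamma$), an easy induction gives
\[ \sigma_i^{p^r}=(a_i\sigma_1)^{p^r}=a_i^{1+\gamma+\gamma^2+\dotsm+\gamma^{p^r-1}}\sigma_1^{p^r}=a_i^{\omega_r/\omega_0}\sigma_1^{p^r}, \]
so the images $\{\sigma_i^{p^r}\}_{i=1}^s$ generate (topologically) the same closed subgroup as $\{\sigma_1^{p^r}\}\cup\{a_i^{\omega_r/\omega_0}\}_{i=2}^s$. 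Under the splitting above, this is $A\oplus\Gamma_r$ where $A\subseteq X_\infty/\omega_r X_\infty$ is the closed $\Zp$-submodule generated by the $a_i^{\omega_r/\omega_0}$ for $i\ge2$, which is exactly $T_r/\omega_r X_\infty$ by the definition $T_r=(\omega_r/\omega_0)T_0$ (and noting $T_r\supseteq(\omega_r/\omega_0)\omega_0 X_\infty=\omega_r X_\infty$). Intersecting $\Gal(L_r/H_r)=A\oplus\Gamma_r$ with $\Gal(L_r/K_\infty)=(X_\infty/\omega_r X_\infty)\oplus 0$ yields $\Gal(L_r/H_r K_\infty)=T_r/\omega_r X_\infty$, and pulling back to $X_\infty$ gives $\Gal(H_\infty/H_r K_\infty)=T_r$, which concludes the proof.

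The main obstacle is the careful bookkeeping in the semidirect-product expansion of $\sigma_i^{p^r}$ and the verification that closed subgroups generated by elements of $X_\infty$ match up correctly with the Iwasawa-theoretic module $T_r$; everything else is a formal consequence of class field theory and \cref{lem:conmutatorGr}.
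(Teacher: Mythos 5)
Tu demostración es correcta y sigue esencialmente la misma ruta que la del texto: ambas descansan en la identificación $C_r\isom X_r$ vía el campo de clases de Hilbert, en $\overline{[G_r,G_r]}=\w_r X_\infty$ (\cref{lem:conmutatorGr}), en el cálculo $\sigma_i^{p^r}=a_i^{\w_r/\w_0}\sigma_1^{p^r}$ y en la descomposición semidirecta con $I_1$ como complemento de $X_\infty$. La única diferencia es de organización: tú pasas por $H_rK_\infty\subseteq L_r$ e intersectas con $\Gal(L_r/K_\infty)$, mientras que el texto toma directamente el cociente de $G_r=\Gal(H_\infty/K_r)$ por el subgrupo generado por $\w_rX_\infty$, los grupos de inercia y $\sigma_1^{p^r}$.
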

\begin{proof}
Claramente $H_{0}$ es la máxima extensión abeliana no ramificada de $K=K_{0}$ contenida en $H_{\infty}$. El grupo de Galois $X_{0}=\Gal(H_{0}/K_{0})$ es igual a $X_{0}=G/\Gal(H_{\infty}/H_{0})$ donde  $\Gal(H_{\infty}/H_{0})$ corresponde al subgrupo mínimo de $G$ que contiene $\overline{[G,G]}=\w_{0}X$ \eqref{lem:conmutatorGr} al igual que los subgrupos de inercia $I_{1},\ldots,I_{s}$. Escribiendo $G$ como el producto semidirecto \eqref{eq:prodsemidir} de $I_{1}$ y $X_{\infty}$ obtenemos $$C_{0}\isom X_{\infty}/T_{0}.$$

Ahora, como $K_{\infty}/K_{r}$ es una $\Zp$-extensión, su grupo de Galois es cíclico generado por $\gamma^{p^{r}}$ y los subgrupos de inercia $I_{i}$ por $\sigma_{i}^{p^{r}}$. Además veamos que
\begin{eqnarray*}
\sigma_{i}^{p^{r}}&=&(a_{i}\sigma_{1})^{p^{r}} \\
&=&a_{i}\sigma_{1}a_{i}\sigma^{-1}_{1}\sigma_{1}^{2}a_{i}\sigma^{-2}_{1} \cdots \sigma_{1}^{p^{r}-1}a_{i}\sigma_{1}^{^{1-p^{r}}}\sigma_{1}^{p^{r}}\\
&=&a_{i}^{\w_r/\w_{0}}\sigma_{1}^{p^{r}}.
\end{eqnarray*}
Procediendo como anteriormente tenemos el resultado. 
\end{proof}

\begin{cor}\label{cor:iwasawa-control-x}
  Para la $\Zp$-extensión ciclotómica de $\Q(\mu_p)$ tenemos para $r\in\Ncero$
  \[ C_r\isom X_\infty/\omega_rX_\infty=X_\infty/(\gamma^{p^r}-1)X_\infty \]
  donde $\gamma$ es un generador topológico de $\Gamma$.
\end{cor}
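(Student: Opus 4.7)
El plan consiste en verificar que la $\Zp$-extensión ciclotómica de $\Q(\mu_p)$ satisface la hipótesis de la \cref{prop:Crdescenso} con $s=1$, y luego aplicar directamente dicha proposición.

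Primero, recordaré que en $K=\Q(\mu_p)$ sólo hay un primo arriba de $p$: según el \cref{lem:totram}, el ideal $(1-\xi)$ es el único ideal primo de $\Q(\mu_p)$ arriba de $(p)$ y además es totalmente ramificado en $\Q(\mu_{p^r})/\Q$ para cada $r$. En particular, esta única plaza es totalmente ramificada en toda la torre ciclotómica $\Q(\mu_{p^r})/\Q(\mu_p)$. Por otro lado, por la \cref{prop:ramcycinfin}, los únicos primos que pueden ramificar en la $\Zp$-extensión ciclotómica $K_\infty=\Q(\mu_{p^\infty})$ de $K=\Q(\mu_p)$ son aquellos arriba de $p$. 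Combinando estas dos observaciones concluimos que existe una única plaza de $K$ que ramifica en $K_\infty/K$ y que ésta es totalmente ramificada, es decir, la hipótesis de la \cref{prop:Crdescenso} se cumple con $s=1$.

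En segundo lugar, aplicaré la \cref{prop:Crdescenso}: obtenemos $C_r\isom X_\infty/T_r$ con $T_r=\frac{\w_r}{\w_0}T_0$, donde $T_0$ es el $\Zp$-submódulo generado por $\{a_i : 2\le i\le s\}$ junto con $\w_0 X_\infty$. Como $s=1$, el conjunto $\{a_i : 2\le i\le s\}$ es vacío, así que $T_0=\w_0 X_\infty$ y por lo tanto
\[ T_r=\frac{\w_r}{\w_0}\w_0 X_\infty=\w_r X_\infty. \]
Finalmente, bajo la identificación del \cref{thm:EquivPowPro} que envía $T\mapsto \gamma-1$, tenemos $\w_r=(T+1)^{p^r}-1=\gamma^{p^r}-1$, lo cual da la segunda igualdad del enunciado.

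No anticipo ninguna dificultad técnica real: el trabajo esencial ya lo hace la \cref{prop:Crdescenso}, y lo único que resta es comprobar que la $\Zp$-extensión ciclotómica de $\Q(\mu_p)$ cumple la hipótesis pertinente. El único punto donde hay que tener cuidado es en asegurarse que \emph{todas} las plazas ramificadas (no sólo algunas) son totalmente ramificadas en $K_\infty/K$; pero esto es inmediato porque hay una única plaza ramificada, y un cálculo directo (vía el \cref{lem:totram}) muestra que la extensión $\Q(\mu_{p^{r+1}})/\Q(\mu_{p^r})$ es totalmente ramificada en dicha plaza para cada $r\geq 0$.
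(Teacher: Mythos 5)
Tu propuesta es correcta y sigue esencialmente el mismo camino que la demostración del texto: aplicar la \cref{prop:Crdescenso} observando que, por el \cref{lem:totram}, hay una única plaza ramificada (totalmente ramificada), de modo que $s=1$, $T_0=\omega_0X_\infty$ y por tanto $T_r=\omega_rX_\infty=(\gamma^{p^r}-1)X_\infty$. Lo único que añades es la verificación explícita de la hipótesis de ramificación y de la identificación $\omega_r=\gamma^{p^r}-1$, detalles que el texto deja implícitos.
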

\begin{proof}
  Esto sigue de la proposición anterior porque en este caso $s=1$ según el
  \cref{lem:totram}, así que $Y_0=\omega_0X_\infty=TX_\infty=(\gamma-1)X_\infty$.
\end{proof}

\begin{cor}\label{cor:x-cero-iff-x-menos-cero}
  Tenemos $X_\infty=0\iff X_\infty^-=0$, donde $X_\infty^-$ es la parte donde la conjugación compleja actúa por $-1$.
\end{cor}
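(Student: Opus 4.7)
La implicación $X_\infty=0\Rightarrow X_\infty^-=0$ es trivial, así que el trabajo está en la recíproca. La idea es combinar el resultado de control del \cref{cor:iwasawa-control-x}, el criterio clásico de Kummer (\cref{thm:kummer-hp-plus-minus}), y el lema de Nakayama topológico.

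Primero, como $p$ es impar, $\frac{1\pm\mathbf c}{2}$ son idempotentes ortogonales en $\Z_p[\left<\mathbf c\right>]$, donde $\mathbf c$ es la conjugación compleja, así que cualquier $\Z_p$-módulo con acción de $\mathbf c$ se descompone canónicamente en partes $+$ y $-$. Además, por la descomposición $\Gal(\Q(\mu_{p^\infty})/\Q)\isom\Delta\times\Gamma$, el elemento $\mathbf c\in\Delta$ conmuta con $\Gamma$, lo que implica que su acción en $X_\infty$ es $\LL$-lineal. Por lo tanto, $X_\infty=X_\infty^+\oplus X_\infty^-$ como $\LL$-módulos, y análogamente $C_r=C_r^+\oplus C_r^-$. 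Como los $\omega_r\in\LL$ conmutan con $\mathbf c$, el isomorfismo $C_r\isom X_\infty/\omega_rX_\infty$ del \cref{cor:iwasawa-control-x} respeta la descomposición, es decir, $C_r^-\isom X_\infty^-/\omega_rX_\infty^-$.

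Asumamos pues $X_\infty^-=0$. Entonces $C_r^-=0$ para todo $r\ge 0$. En particular, $C_0^-=0$, es decir, $p\nmid h_p^-$. Por el \cref{thm:kummer-hp-plus-minus} de Kummer concluimos $p\nmid h_p$, o sea $C_0=0$. Ahora el \cref{cor:preg} dice que $p\mid h_{\Q(\mu_p)}\iff p\mid h_{\Q(\mu_{p^r})}$ para cada $r$, así que $C_r=0$ para todo $r\ge 0$.

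Usando nuevamente el \cref{cor:iwasawa-control-x}, deducimos que $X_\infty/\omega_rX_\infty=0$ para todo $r$. Tomando $r=0$, como $\omega_0=T$, obtenemos $X_\infty=TX_\infty\subseteq\mathfrak MX_\infty$. Sabemos que $X_\infty$ es un $\LL$-módulo compacto (límite inverso de $p$-grupos finitos) y noetheriano (\cref{cor:x-noetheriano-torsion}); el \cref{cor:cor-de-nakayama} entonces implica $X_\infty=0$. El único paso no del todo mecánico es la compatibilidad de las descomposiciones $\pm$ con los isomorfismos del control, pero esto se reduce a observar que $\Gamma$ y $\mathbf c$ conmutan.
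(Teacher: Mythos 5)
Tu demostración es correcta y sigue esencialmente la misma ruta que la del texto: compatibilidad del isomorfismo de control del \cref{cor:iwasawa-control-x} con la conjugación compleja, el \cref{thm:kummer-hp-plus-minus} de Kummer para pasar de $C_0^-=0$ a $C_0=0$, y el \cref{cor:preg}. La única variación está en el cierre: tú invocas el Nakayama topológico sobre $X_\infty/\omega_0X_\infty=0$ mientras el texto concluye vía $X_\infty\isom\varprojlim_r C_r$; ambas opciones funcionan (y de hecho, con Nakayama el paso por el \cref{cor:preg} resulta superfluo, pues $C_0=0$ ya da $X_\infty=TX_\infty$).
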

\begin{proof}
  Sea $X_\infty^-=0$. Se ve fácilmente que el isomorfismo del \cref{cor:iwasawa-control-x} es
  compatible con la acción de la conjugación compleja, así que
  $\Cl(\Q(\mu_p))^-=C^-_0\isom X_\infty^-/\omega_0X_\infty^-=0$. Entonces el
  \cref{thm:kummer-hp-plus-minus} implica que $C_0=0$, es decir $p\nmid h_{\Q(\mu_{p})}$. El
  \cref{cor:preg} implica que $p\nmid h_{\Q(\mu_{p^{r}})}$ para cada $r\ge0$, es decir
  $X_\infty=0$. La otra implicación es trivial.
\end{proof}

\begin{prop}\label{prop:iwaforeleme}
Sea $E$ un $\LL$-módulo elemental de $\LL$-torsión tal que $\left\vert E/\left(\frac{\w_{r}}{\w_{e}}E\right)\right\vert$ es finito para todo $r\geq e$.  Entonces existe $c$ tal que
\[\left\vert E/\left(\frac{\w_{r}}{\w_{e}}E\right)\right\vert=p^{\mu p^{r}+\lambda r +c,}\	\	\	\text{ para }r\gg 0,\]
donde $\mu$ y $\lambda$ son los invariantes de la \cref{defi:invariantes}.
\end{prop}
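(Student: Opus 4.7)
El plan es reducir el cómputo al de los factores individuales del módulo elemental $E$, usando la aditividad natural del funtor $M\mapsto M/\nu M$ en sumas directas. Escribamos $E=\bigoplus_i \LL/p^{\mu_i}\LL \oplus \bigoplus_j \LL/P_j\LL$ según la \cref{def:modelemental} y pongamos $\nu := \omega_r/\omega_e = \Phi_{e+1}\cdots\Phi_r$. La hipótesis de finitud, combinada con el \cref{lem:m-cociente-finito-coprimo}, equivale a exigir que ningún $\Phi_s$ con $s>e$ divida a ningún $P_j$, es decir, $P_j$ y $\nu$ son coprimos en $\LL$.

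Para cada sumando del primer tipo, $\LL/p^{\mu_i}\LL\isom(\Z/p^{\mu_i}\Z)\llbracket T\rrbracket$, y como $\nu$ es un polinomio distinguido de grado $p^r-p^e$, el lema de división (\cref{lema:division}) identifica $\LL/(p^{\mu_i},\nu)$ con un $(\Z/p^{\mu_i}\Z)$-módulo libre de rango $p^r-p^e$, con cardinalidad $p^{\mu_i(p^r-p^e)}$. Sumando sobre $i$ obtenemos el factor $p^{\mu(p^r-p^e)}$, que aportará el término dominante $\mu p^r$.

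Para los sumandos del segundo tipo, $M_j:=\LL/P_j\LL$ es un $\Zp$-módulo libre de rango $d_j=\deg P_j$, y como $\nu$ es coprimo a $P_j$ la multiplicación por $\nu$ es inyectiva en $M_j$. Por lo tanto $|M_j/\nu M_j|=|\det_{\Zp}(\nu\mid M_j)|_p^{-1}$. Pasando a $\Qpbar$ y diagonalizando (usando que el polinomio característico de $T$ sobre $M_j$ es $P_j$), se obtiene $\det(\nu\mid M_j)=\prod_\alpha\prod_{s=e+1}^r\Phi_s(\alpha)$, donde $\alpha$ recorre las raíces de $P_j$ en $\Qpbar$ con multiplicidad. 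La cuestión clave es entonces analizar el comportamiento asintótico de $v_p(\omega_r(\alpha))$ cuando $r\to\infty$, con $\alpha$ fijo. Para $\alpha$ tal que $1+\alpha$ no es una raíz de la unidad, usamos que $\beta:=\log(1+\alpha)$ converge (pues $v_p(\alpha)>0$) y es no nulo; para $r\gg0$ el exponente $p^r\beta$ cae en el dominio de convergencia del exponencial $p$-ádico, y entonces $v_p((1+\alpha)^{p^r}-1)=v_p(p^r\beta)=r+v_p(\beta)$. Para $\alpha$ con $1+\alpha$ una raíz primitiva $p^t$-ésima de la unidad (único caso posible implica $t\le e$ por coprimalidad), un cálculo directo usando $\Phi_s(T)=1+(1+T)^{p^{s-1}}+\cdots+(1+T)^{(p-1)p^{s-1}}$ muestra que $\Phi_s(\alpha)=p$ para todo $s>e\ge t$. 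En ambos casos $v_p((\omega_r/\omega_e)(\alpha))=r+c_\alpha$ para $r\gg0$ con $c_\alpha$ constante, y sumando sobre las $d_j$ raíces obtenemos $|M_j/\nu M_j|=p^{d_jr+c'_j}$.

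Multiplicando todos los factores y absorbiendo las constantes llegamos a $|E/\nu E|=p^{\mu(p^r-p^e)}\cdot p^{\lambda r+\sum_jc'_j}=p^{\mu p^r+\lambda r+c}$ para $r\gg0$, con $c=-\mu p^e+\sum_jc'_j$. El obstáculo principal es precisamente el análisis asintótico de $v_p(\omega_r(\alpha))$ vía el logaritmo $p$-ádico: hay que verificar con cuidado la convergencia de $\exp(p^r\log(1+\alpha))$ para $r$ suficientemente grande y tratar separadamente el caso degenerado en que $1+\alpha$ es raíz de la unidad, en el cual la identidad logarítmica falla pero un cálculo elemental con $\Phi_s(\alpha)$ salva la situación.
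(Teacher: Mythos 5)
Tu demostración es correcta y, en la parte decisiva, sigue una ruta genuinamente distinta a la del texto. Para los factores $\LL/p^{\mu_i}\LL$ haces exactamente lo mismo que el texto: lema de división (\cref{lema:division}) y conteo directo, que da $p^{\mu_i(p^r-p^e)}$. La diferencia está en los factores $\LL/P_j\LL$: el texto usa el \cref{lem:congruencia-phi-s} (para $p^{s-1}\ge\deg P_j$, $\Phi_s$ actúa módulo $P_j$ como $p$ por una unidad) y una inducción puramente módulo-teórica que reduce el cociente a $X/p^{r-r_0-1}X$, mientras que tú calculas el índice como $p^{v_p(\det(\nu\mid M_j))}$, expresas el determinante como $\prod_\alpha\prod_{s>e}\Phi_s(\alpha)$ sobre las raíces de $P_j$ en $\Qpbar$ (triangularizando, que es lo que realmente se necesita cuando $P_j$ tiene raíces repetidas, pero la fórmula es la misma) y estimas $v_p(\omega_r(\alpha))$ asintóticamente vía $\log$ y $\exp$ $p$-ádicos, tratando aparte el caso en que $1+\alpha\in\mu_{p^\infty}$ (que por la coprimalidad extraída del \cref{lem:m-cociente-finito-coprimo} fuerza $t\le e$ y da $\Phi_s(\alpha)=p$). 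Tu argumento requiere más cuidado analítico —dominio de convergencia del exponencial, el hecho de que las raíces de la unidad en $1+\mathfrak m$ son de orden potencia de $p$, la inyectividad de la multiplicación por $\nu$ vía factorización única— pero a cambio produce información más fina: una fórmula explícita para la constante $c$ en términos de $v_p(\log(1+\alpha))$ y $v_p(\omega_e(\alpha))$ en las raíces. El enfoque del texto es más autocontenido y algebraico, y evita por completo salir de $\LL$; ambos llegan al mismo crecimiento $p^{d_jr+c'_j}$ y, tras multiplicar los factores, a $p^{\mu p^r+\lambda r+c}$.
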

\begin{proof} Basta con analizar los distintos factores que aparecen en $E$. Como $E$ es de $\LL$-torsión, entonces hay solamente dos tipos de factores por analizar.

Sea $X=\LL/(p^{m})$, entonces 
$$X/\dfrac{\w_{r}}{\w_{e}}X \isom \LL / (p^{m},\dfrac{\w_{r}}{\w_{e}})$$
como $\dfrac{\w_{r}}{\w_{e}}$ es un polinomio distinguido $P$ de grado $p^{r}-p^{e}$. Por el lema de división \ref{lema:division} tenemos 
$$\LL/(p^{m},\dfrac{\w_{r}}{\w_{e}})\isom (\Z/p^{m}\Z)_{p^{r}-p^{e}-1}[T],$$
es decir cada elemento de la izquierda es representado por un polinomio de grado menor a $p^{r}-p^{e}$ con coeficientes en $\Z/p^{m}\Z$. Es decir
$$\left\vert X/\dfrac{\w_{r}}{\w_{e}}X \right\vert = p^{m(p^{r}-p^{e})}=p^{mp^{r}+c}$$
para $r\gg0$ y $c$ constante.

Ahora sea $X=\LL/(P)$, donde $P$ es un polinomio distinguido de grado $d$. Para todo $r$ tal que $p^{r-1}\geq d$
tenemos que $\dfrac{\w_{r+2}}{\w_{r+1}}$ actúa en $X$ por multiplicación por $p$ y una unidad (ver el \cref{lem:congruencia-phi-s}). Por lo tanto para $r_{0}\geq e$ tal que $p^{r_{0}-1}\geq d$ tenemos
$$\dfrac{\w_{r_{0}+2}}{\w_{e}}X=\dfrac{\w_{r_{0}+2}}{\w_{r_{0}+1}}\left(\dfrac{\w_{r_{0}+1}}{\w_{e}}X\right)=p\dfrac{\w_{r_{0}+1}}{\w_{e}}X,$$
y por inducción obtenemos 
$$\dfrac{\w_{r}}{\w_{e}}X=p^{r-r_{0}-1}\dfrac{\w_{r_{0}+1}}{\w_{e}}X \	\text{ para }\	r>r_{0}.$$ 
Al pasar al cociente obtenemos
\begin{eqnarray*}
\left\vert X/\dfrac{\w_{r}}{\w_{e}}X\right\vert &= & \left\vert X/p^{r-r_{0}-1} \dfrac{\w_{r_{0}+1}}{\w_{e}}X \right\vert \\
&=& \left\vert X/p^{r-r_{0}-1}X \right\vert\left\vert p^{r-r_{0}-1}X/p^{r-r_{0}-1} \dfrac{\w_{r_{0}+1}}{\w_{e}}X \right\vert \\
&=& \left\vert X/p^{r-r_{0}-1}X \right\vert\left\vert X/\dfrac{\w_{r_{0}+1}}{\w_{e}}X \right\vert \\
&=& p^{d(r-r_{0}-1)} \left\vert X/\dfrac{\w_{r_{0}+1}}{\w_{e}}X \right\vert
\end{eqnarray*}
Finalmente como $\left\vert X/\dfrac{\w_{r}}{\w_{e}}X \right\vert$ es finito para todo $r\geq 0$, tenemos  que existe una constante $c$ tal que
$$\left\vert X/\dfrac{\w_{r}}{\w_{e}}X\right\vert=p^{dr+c},$$
para $r$ suficientemente grande. \end{proof}

\begin{thm}[Iwasawa]\label{thm:iwasawa-ThmdIwa}
  Sea $K$ un campo de números y $p$ un primo. Para cada $r\in\Ncero$ sea $K_r/K$ una extensión
  tal que $\Gal(K_r/K)\isom\Z/p^r\Z$, y sea $p^{e_r}$ la máxima potencia de $p$ que divide
  el orden del grupo de clases de $K_r$. Entonces existen constantes $\mu,\lambda\in\Ncero$ y
  $\nu\in\Z$ tal que \[ e_r=\mu p^r+\lambda r + \nu \quad\text{para }r\gg0. \]
\end{thm}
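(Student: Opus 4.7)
La estrategia es comparar los órdenes $|C_r|=p^{e_r}$ con cocientes del $\LL$\=/módulo $X_\infty$ de la \cref{defi:iwasawa-modulos}, y luego usar el teorema de estructura (\cref{thm:estructura}) para reducir el cálculo a un módulo elemental donde ya sabemos la respuesta por la \cref{prop:iwaforeleme}.

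Primero, aunque el enunciado no impone la hipótesis de que los primos ramificados lo sean totalmente, podemos reducirnos a ella. Como el conjunto de primos de $K$ ramificados en $K_\infty$ es finito y no vacío (\cref{prop:compononramifi,prop:ramificZpext}), existe $e\in\Ncero$ tal que en la extensión $K_\infty/K_e$ todas las plazas ramificadas están totalmente ramificadas (como se argumentó en la demostración del \cref{cor:x-noetheriano-torsion}). Basta entonces demostrar la fórmula para los $e_{e+r}$ en términos de $r$, pues una traslación del índice se absorbe en la constante $\nu$. Por tanto, sin pérdida de generalidad podemos suponer que estamos en la situación de la \cref{prop:Crdescenso}.

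En esta situación, la \cref{prop:Crdescenso} nos da $C_r\isom X_\infty/T_r$ con $T_r=\frac{\omega_r}{\omega_0}T_0$ y $T_0$ el $\Zp$\=/submódulo de $X_\infty$ generado por $\omega_0 X_\infty$ y los elementos $a_2,\dotsc,a_s$ que provienen de los subgrupos de inercia. Como $X_\infty$ es noetheriano y de torsión (\cref{cor:x-noetheriano-torsion}), el teorema de estructura da un pseudo-isomorfismo $\varphi\colon X_\infty\to E$ a un $\LL$\=/módulo elemental
\[ E=\bigoplus_{i=1}^{m}\LL/(p^{\mu_i})\oplus\bigoplus_{j=1}^{l}\LL/(P_j), \]
con $\mu=\sum_i\mu_i$ y $\lambda=\sum_j\deg P_j$ los invariantes de la \cref{defi:invariantes}. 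Como el núcleo y el conúcleo de $\varphi$ son finitos, el \cref{ejer:pseudo-isomorfo-cociente-finito} garantiza que para todo $\nu\in\LL$ los órdenes de $\ker$ y $\coker$ del morfismo inducido $X_\infty/\nu X_\infty\to E/\nu E$ están acotados uniformemente. Por consiguiente, si demostramos que $|E/T_r'|=p^{\mu p^r+\lambda r+c}$ para alguna constante $c$ y $r\gg0$, donde $T_r'=\frac{\omega_r}{\omega_0}T_0'$ con $T_0'=\varphi(T_0)+(\text{correcciones acotadas})$, podremos deducir que $|X_\infty/T_r|=p^{\mu p^r+\lambda r+\nu}$ para cierto $\nu\in\Z$.

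Queda por comparar $X_\infty/T_r$ con $X_\infty/\omega_r X_\infty$ (que es la forma estándar a la que se aplica directamente la \cref{prop:iwaforeleme} con $e=0$). Como $\omega_r X_\infty=\frac{\omega_r}{\omega_0}(\omega_0 X_\infty)\subseteq T_r$, la sucesión exacta
\[ 0\longrightarrow T_r/\omega_r X_\infty\longrightarrow X_\infty/\omega_r X_\infty\longrightarrow X_\infty/T_r\longrightarrow 0 \]
muestra que
\[ |C_r|=|X_\infty/T_r|=\frac{|X_\infty/\omega_r X_\infty|}{|T_r/\omega_r X_\infty|}. \]
El cociente $T_r/\omega_r X_\infty$ está generado como $\Zp$\=/módulo por las imágenes de $\frac{\omega_r}{\omega_0}a_i$ para $i=2,\dotsc,s$, así que su orden es una potencia de $p$ con exponente acotado \emph{independientemente de $r$} (pues los $a_i$ son fijos y $\frac{\omega_r}{\omega_0}a_i$ sólo cambia por multiplicación por elementos de $\LL$ que no afectan al orden del grupo generado módulo $\omega_r X_\infty$; esto es el punto técnico principal, que uno verifica usando que $X_\infty$ es de torsión y sus anuladores controlan la acción). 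Por la \cref{prop:iwaforeleme} aplicada con $e=0$ a $E$ (transferida a $X_\infty$ vía $\varphi$ usando el \cref{ejer:pseudo-isomorfo-cociente-finito}), obtenemos $|X_\infty/\omega_r X_\infty|=p^{\mu p^r+\lambda r+c'}$ para $r\gg0$, y combinando con la acotación del factor espurio se deduce
\[ e_r=\mu p^r+\lambda r+\nu\quad\text{para }r\gg0, \]
con $\mu,\lambda\in\Ncero$ y $\nu\in\Z$. El obstáculo principal es precisamente la cota uniforme del orden de $T_r/\omega_r X_\infty$: requiere ver que los elementos $\frac{\omega_r}{\omega_0}a_i$ generan un subgrupo finito de $X_\infty/\omega_r X_\infty$ cuyo tamaño no crece con $r$, para lo cual se usa la descomposición estructural de $X_\infty$ y que los anuladores de los $a_i$ son elementos no triviales de $\LL$.
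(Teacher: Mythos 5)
Hay una laguna genuina en el paso central de tu argumento. Escribes $|C_r|=|X_\infty/\w_rX_\infty|\,/\,|T_r/\w_rX_\infty|$, afirmas que el orden de $T_r/\w_rX_\infty$ está acotado independientemente de $r$, y aplicas la \cref{prop:iwaforeleme} a $X_\infty/\w_rX_\infty$. Pero nada garantiza que $X_\infty/\w_rX_\infty$ sea siquiera finito: la demostración del \cref{cor:x-noetheriano-torsion} sólo da $\rg_{\Zp}X_\infty/\w_rX_\infty\le s-1$, y cuando $s\ge2$ primos ramifican este rango puede ser positivo. Por ejemplo, nada de lo demostrado excluye un caso con $s=2$ y $X_\infty\isom\LL/(\w_0)$: entonces $\w_rX_\infty=0$, $X_\infty/\w_rX_\infty\isom\Zp$ y $T_r/\w_rX_\infty$ son infinitos, mientras que $C_r\isom X_\infty/T_r$ sigue siendo finito gracias a los $a_i$. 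En tal situación la división de órdenes carece de sentido y la hipótesis de finitud de la \cref{prop:iwaforeleme} falla; es decir, tu \enquote{punto técnico principal} no sólo queda sin demostrar, sino que es falso en general. (Como detalle menor, la reducción al caso totalmente ramificado \enquote{absorbiendo la traslación en $\nu$} tampoco es exacta: el cambio $r\mapsto r-e$ convierte $\mu p^{r-e}$ en $(\mu/p^{e})p^{r}$, así que también se altera el coeficiente de $p^r$; esto se repara manteniendo la indexación original, como hace el texto.)

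La manera de evitar el problema --- y es la ruta de la demostración del texto --- es aplicar la estimación de crecimiento al propio $T_{r_0}$ y no a $X_\infty$: de $C_r\isom X_\infty\big/\tfrac{\w_r}{\w_{r_0}}T_{r_0}$ para $r\ge r_0$ se obtiene $|C_r|=|X_\infty/T_{r_0}|\cdot\big|T_{r_0}\big/\tfrac{\w_r}{\w_{r_0}}T_{r_0}\big|$, donde el segundo factor es automáticamente finito por ser un subgrupo de $C_r$, y además $T_{r_0}\sim X_\infty\sim E$ porque $X_\infty/T_{r_0}\isom C_{r_0}$ es finito. La comparación de $T_{r_0}$ con el módulo elemental $E$ vía el lema de la serpiente (usando el \cref{ejer:pseudo-isomorfo-cociente-finito} y el \cref{ejer:mononoque} para ver que los núcleos y conúcleos se estabilizan para $r\gg0$) junto con la \cref{prop:iwaforeleme} aplicada con $e=r_0$ da $\big|T_{r_0}\big/\tfrac{\w_r}{\w_{r_0}}T_{r_0}\big|=p^{\mu p^r+\lambda r+c}$ para $r\gg0$, de donde sale la fórmula. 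Tu esquema se corrige sustituyendo la comparación con $X_\infty/\w_rX_\infty$ por esta comparación con los cocientes de $T_{r_0}$.
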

\begin{proof} Sea $K_{\infty}/K$ una $\Zp$-extensión. Existe $r_{0}\geq 0$ tal que todos los primos que ramifican en la extensión $K_{\infty}/K_{r_{0}}$ son totalmente ramificados. Aplicando la \cref{prop:Crdescenso} a la extensión $K_{\infty}/K_{r_{0}}$ vemos que 
$$C_{r}\isom X_{\infty}/\dfrac{\w_{r}}{\w_{r_{0}}}T_{r_{0}} \	\	\text{ para todo }\	\	r\geq r_{0},$$
donde $T_{r_{0}}$ es el módulo generado por $\w_{r_{0}}$ y los respectivos $a_{i}$, que dependen de los subgrupos de inercia de $\Gal(H_{\infty}/K_{r_{0}})$.

Ahora, como $C_{r_{0}}\isom X_{\infty}/T_{r_{0}}$ es finito, tenemos que
\begin{eqnarray*}
\left\vert C_{r} \right\vert &=& \left\vert X_{\infty}/T_{r_{0}} \right\vert \left\vert T_{r_{0}}/\dfrac{\w_{r}}{\w_{r_{0}}} T_{r_{0}} \right\vert \\
&=& p^{c} \left\vert T_{r_{0}}/\dfrac{\w_{r}}{\w_{r_{0}}} T_{r_{0}} \right\vert 
\end{eqnarray*}
para alguna constante $c \geq 0$, además tenemos que 
$$T_{r_{0}}\sim X_{\infty} \sim E=\left( \bigoplus_{i=1}^{m}\LL/\pi^{\mu_{i}}\LL\right)\oplus \left( \bigoplus_{j=1}^{l}\LL/P_{j}\LL\right)$$
con $E$ elemental como en el \cref{thm:estructura} y sin factor libre por el \cref{cor:x-noetheriano-torsion}. Por lo tanto basta determinar el factor $\left\vert T_{r_{0}}/\dfrac{\w_{r}}{\w_{r_{0}}} T_{r_{0}} \right\vert $.
Consideremos el siguiente diagrama para $r\geq r_{0}$
\begin{equation*}
    \begin{tikzcd}
      0\arrow[r] & \dfrac{\w_{r}}{\w_{r_{0}}} T_{r_{0}} \arrow[r] \arrow[d, "\varphi'_{r}"]
      & T_{r_{0}} \arrow[d, "\varphi"] \arrow[r]
      & T_{r_{0}}/\dfrac{\w_{r}}{\w_{r_{0}}} T_{r_{0}} \arrow[d, "\varphi''_{r}"] \arrow[r] & 0 \\
      0\arrow[r] & \dfrac{\w_{r}}{\w_{r_{0}}} E \arrow[r]
      & E \arrow[r]
      & E/\dfrac{\w_{r}}{\w_{r_{0}}} E \arrow[r] & 0
    \end{tikzcd}
\end{equation*} 
Por el \cref{ejer:pseudo-isomorfo-cociente-finito} tenemos que $|\ker\varphi'_{r}$|, $|\coker\varphi'_{r}|$, $|\ker\varphi''_{r}|$, $|\coker\varphi''_{r}|$ son finitos, además son constantes para $r$ suficientemente grande ya que: 
\begin{itemize}
\item Las sucesiones $(|\ker\varphi'_{r}|)_{r\geq r_{0}}$, $(|\coker\varphi'_{r}|)_{r\geq r_{0}}$ y $(|\coker\varphi''_{r}|)_{r\geq r_{0}}$ son monótonas y acotadas (ver el \cref{ejer:mononoque}).  
\item Por el lema de la serpiente tenemos que 
$$|\ker\varphi''_{r}|=|\ker\varphi'_{r}|^{-1}
|\ker\varphi||\coker\varphi'_{r}||\coker\varphi|^{-1}|\coker\varphi''_{r}|,$$
y para $r\gg 0$ los términos de la derecha son constantes por el inciso anterior. 
\end{itemize}
Es decir existe una constante $c'$ tal que
$$\left\vert T_{r_{0}}/\dfrac{\w_{r}}{\w_{r_{0}}} T_{r_{0}} \right\vert = p^{c'} \left\vert E/\dfrac{\w_{r}}{\w_{r_{0}}} E \right\vert$$
para $r$ suficientemente grande. El teorema sigue directamente aplicando la \cref{prop:iwaforeleme}. 
\end{proof}

\ejercicios

\begin{ejer} Sea $K$ un campo de números y $K_{\infty}/K$ una $\Zp$-extensión. Supongamos que una única plaza $\p$ sobre $p$ ramifica en $K_{\infty}$. Entonces
\begin{itemize}
\item[(a)] $C_{r}\isom X_{\infty} / \w_{r} X_{\infty}$. Pista: Use la \cref{prop:Crdescenso}.
\item[(b)] Demuestre que si $C_{r}=0$ entonces $X_{\infty}=0$. Pista: Use el \cref{cor:cor-de-nakayama} del Lema de Nakayama Topológico.
\end{itemize}
\end{ejer}

\begin{ejer} Sea $p$ es un número primo regular, es decir $p$ no divide el grupo de clases de $K=\Q(\zeta_{p})$. Use el ejercicio anterior y el teorema de Iwasawa (\cref{thm:iwasawa-ThmdIwa}) en la extensión $K^{c}/K$ para demostrar que $e_{r}=0$ para todo $r\geq 0$.
\end{ejer}

\begin{ejer}\label{ejer:mononoque}Sea $T$ un $\LL$-módulo de torsión y $\varphi:T \rightarrow E$ un pseudo-isomorfismo donde $E$ es un módulo elemental. Con la notación del \cref{ejer:pseudo-isomorfo-cociente-finito} sea $\alpha=\beta=\dfrac{\w_{r}}{\w_{e}}$ y supongamos que $\dfrac{\w_{r}}{\w_{e}}T$ es finito para todo $r\geq e$.  Suponga que $s\geq r$. 
\begin{itemize}
\item[(a)] Demuestre que $\dfrac{\w_{s}}{\w_{e}}T\subset \dfrac{\w_{r}}{\w_{e}}T$ y $\dfrac{\w_{s}}{\w_{e}}E\subset \dfrac{\w_{r}}{\w_{e}}E$.
\item[(b)] Concluya que $|\ker\varphi'_{s}|\leq|\ker\varphi'_{r}|$ y $|\coker\varphi''_{r}|\leq|\coker\varphi''_{s}|$.
\item[(c)] Demuestre que si $x$ es un levantamiento de un elemento en $\coker\varphi'_{r}$ entonces $\dfrac{\w_{s}}{\w_{r}}x$ es un levantamiento de un elemento en $\coker\varphi'_{s}$ .
\item[(d)] Concluya que $|\coker\varphi'_{s}|\leq |\coker\varphi'_{r}|$.
\end{itemize}
\end{ejer}

\section{Sobre los invariantes $\mu$ y $\lambda$}\label{sec:invariantesmulam}

El \cref{thm:iwasawa-ThmdIwa} describe el crecimiento de la parte $p$ de los grupos de clases asociados a los subcampos finitos (suficientemente grandes) de una $\Zp$-extensión. El exponente de estos grupos, depende de un término exponencial $\mu \cdot p^{r}$ y de un término lineal $\lambda\cdot r + \nu$. Los invariantes $\mu$, $\lambda$ y $\nu$, están relacionados con la estructura de $X_{\infty}$ como $\LL$-módulo. En esta sección haremos un compendio de algunos resultados conocidos sobre los invariantes de Iwasawa. 

La siguiente proposición nos provee de criterios para determinar cuándo el término exponencial no aparece en el exponente. 

\begin{prop} \label{prop:XnoettorsiirgFpfinsiiacotFp} Sea $X$ un $\LL$-módulo noetheriano de torsión. Entonces
\begin{eqnarray*}
\mu(X)=0 &\iff & \rg_{\Fp} X <\infty \\
&\iff & \rg_{\Fp} X/\w_{r} X \text{ está acotado para todo }	r\geq 0.
\end{eqnarray*}
\end{prop}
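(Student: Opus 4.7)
El plan es seguir de cerca la estrategia de la demostración de la \cref{prop:Xnoettorsiirgfinsiiacot}: usar el \cref{thm:estructura} para reducir al caso de un $\LL$-módulo elemental $E$ y después analizar tipo por tipo los factores que pueden aparecer. Como $X$ es de torsión, en el pseudo-isomorfismo $X\sim E$ el módulo elemental $E$ es de la forma $\bigoplus_{i=1}^{m}\LL/\pi^{\mu_{i}}\LL \oplus \bigoplus_{j=1}^{l}\LL/P_{j}\LL$ con los $P_{j}$ distinguidos, y por definición $\mu(X)=\sum_{i}\mu_{i}$ se anula si y solo si no hay sumandos del tipo $\LL/\pi^{\mu_{i}}\LL$ con $\mu_{i}\geq 1$.

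El primer paso es verificar que las tres condiciones del enunciado son invariantes bajo pseudo-isomorfismos. Si $\varphi\colon X\rightarrow E$ es un pseudo-isomorfismo con $\ker(\varphi)$ y $\coker(\varphi)$ finitos, descomponiéndolo en las dos sucesiones exactas cortas $0\rightarrow \ker\varphi\rightarrow X\rightarrow \im\varphi\rightarrow 0$ y $0\rightarrow \im\varphi\rightarrow E\rightarrow \coker\varphi\rightarrow 0$ y tomando la sucesión exacta larga de $\operatorname{Tor}^{\Zp}_*(-,\Fp)$ se concluye que $\dim_{\Fp}X/pX$ y $\dim_{\Fp}E/pE$ difieren por una cantidad finita acotada por los $\Fp$-rangos de la $p$-torsión y los $p$-cocientes de $\ker\varphi$ y $\coker\varphi$. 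Así, $\rg_{\Fp}X<\infty \iff \rg_{\Fp}E<\infty$. Para la tercera condición, el \cref{ejer:pseudo-isomorfo-cociente-finito} garantiza que $X/\w_{r}X\rightarrow E/\w_{r}E$ sigue siendo un pseudo-isomorfismo con núcleo y conúcleo de cardinalidad acotada \emph{uniformemente} en $r$; el mismo argumento de $\operatorname{Tor}$ muestra que los $\Fp$-rangos respectivos difieren por una cantidad acotada independiente de $r$, luego la propiedad de acotación de uno equivale a la del otro.

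Reducidos al caso elemental, analizaría cada tipo de factor por separado. Para $M=\LL/\pi^{\mu}\LL$ con $\mu\geq 1$, sea $e$ el índice de ramificación de $p$ en $\O$. Entonces $M/pM\isom \LL/(\pi^{\mu},p)\LL\isom(\O/\pi^{\min(e,\mu)})\llbracket T\rrbracket$ tiene $\Fp$-dimensión infinita porque como $\Fp$-espacio vectorial es suma directa de copias no triviales indexadas por las potencias de $T$. Por el lema de división (\cref{lema:division}), $M/\w_{r}M\isom(\O/\pi^{\mu})[T]/(\w_{r})\isom(\O/\pi^{\mu})^{p^{r}}$ como $\O$-módulo, así que su $\Fp$-rango es $p^{r}\cdot\rg_{\Fp}(\O/\pi^{\mu})$, no acotado cuando $r\to\infty$. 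En cambio, para $M=\LL/P\LL$ con $P$ distinguido de grado $d$, el mismo lema de división da $M\isom\O^{d}$ como $\O$-módulo, entonces $\rg_{\Fp}M=d\cdot\rg_{\Fp}\O<\infty$, y $\rg_{\Fp}M/\w_{r}M\leq\rg_{\Fp}M$ está acotado.

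Conjuntando estas observaciones, para el módulo elemental $E$ las tres condiciones resultan equivalentes al mismo hecho: que $E$ no contenga factores del tipo $\LL/\pi^{\mu_{i}}\LL$, es decir, $\mu(E)=0$, lo cual equivale por la \cref{defi:invariantes} a $\mu(X)=0$. El obstáculo principal del argumento es el control del $\Fp$-rango bajo pseudo-isomorfismos, ya que el funtor $-\otimes_{\Zp}\Fp$ no es exacto; este control se logra gracias a que los núcleos y conúcleos involucrados son finitos (y de cardinalidad uniformemente acotada en $r$ para la tercera condición, por el \cref{ejer:pseudo-isomorfo-cociente-finito}), haciendo que la no-exactitud introduzca solo una discrepancia finita en los rangos.
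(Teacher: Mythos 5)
Tu propuesta es correcta y sigue esencialmente la misma ruta que la demostración del texto: invariancia del enunciado bajo pseudo-isomorfismos, reducción a un módulo elemental vía el \cref{thm:estructura}, y análisis factor por factor de $\LL/\pi^{\mu_i}$ (rango $\Fp$ infinito y cocientes $E/\w_rE$ de rango $\sim p^r$) frente a $\LL/P$ (rango $\Fp$ finito y cocientes acotados). La única diferencia es de detalle: tú explicitas la invariancia bajo pseudo-isomorfismos con sucesiones de $\operatorname{Tor}$ y la cota uniforme del \cref{ejer:pseudo-isomorfo-cociente-finito}, paso que el texto solo afirma sin desarrollar.
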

\begin{proof} El enunciado es invariante bajo pseudo-isomorfismos. Entonces basta demostrar la proposición para un $\LL$-módulo elemental 
$$E=\left( \bigoplus_{i=1}^{m}\LL/(p^{\mu_{i}})\right)\oplus \left( \bigoplus_{j=1}^{l}\LL/(P_{j})\right).$$ 

Es claro que $\mu(E)=0$ si y solamente $\rg_{\Fp}E<\infty$. 

Ahora sea $r$ tal que $\ell^{r}\geq \max_{j=1,...,t}\{\deg P_{j}\}$. Entonces
\begin{eqnarray*}
(E/\w_{r}E)/p(E/\w_{r}E)  & \isom & E/(p,\w_{r})E \\
& \isom & \left( \bigoplus_{i=1}^{m}\LL/(p,\w_{r})\right)\oplus \left( \bigoplus_{j=1}^{l}\LL/(p,\w_{r},P_{j})\right)\\
& \isom & \left( \bigoplus_{i=1}^{m}\LL/(p,T^{\ell^{r}})\right)\oplus \left( \bigoplus_{j=1}^{l}\LL/(p,T^{\deg P_{j}})\right)\\
& \isom & \F_{p}^{m(\ell^{r})+\lambda},
\end{eqnarray*}
es decir $\mu(E)=0$ si y solamente si $\rg_{\F_{p}}E/\w_{r} E$ está acotado para todo $r\geq 0$. 
\end{proof}

Observe que la proposición anterior es de cierta manera análoga a la \cref{prop:Xnoettorsiirgfinsiiacot}.

\begin{defi} Sea $K_{\infty}/K$ una $\Zp$-extensión y $X_{\infty}$ el $\LL$-módulo de torsión asociado (cf. \cref{defi:iwasawa-modulos}). Denotamos 
$$\mu(K_{\infty}/K):=\mu(X_{\infty})\	\	\	y\	\	\	\lambda(K_{\infty}/K):=\lambda(X_{\infty}).$$
\end{defi}

En el caso especial de la $\Zp$-extensión ciclotómica $K^{c}/K$ de un campo de números, Iwasawa formuló la siguiente famosa conjetura.

\begin{conj}[Conjetura de Iwasawa]\label{conj:iwasawa}\index[def]{conjetura!de Iwasawa} Sea $K$ un campo de números. Entonces 
$$\mu(K^{c}/K)=0.$$
\end{conj}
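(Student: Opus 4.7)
El plan natural consiste en reducir al caso en que la estrategia disponible funciona —el caso $K/\Q$ abeliano, demostrado por Ferrero y Washington en 1979— y después discutir por qué el caso general resiste. Por la \cref{prop:XnoettorsiirgFpfinsiiacotFp}, demostrar $\mu(K^{\mathrm c}/K)=0$ equivale a acotar $\rg_{\Fp}(X_\infty/\w_rX_\infty)$ uniformemente en $r$, y ese es el objetivo concreto a perseguir.

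Para $K/\Q$ abeliano, digamos $K\subseteq\Q(\mu_{p^\infty})$, descompondría $X_\infty$ usando los idempotentes asociados a los caracteres del grupo $\Delta=\Gal(K(\mu_p)/\Q)$ (como en el \cref{cor:descomposicion-lambda}) y trabajaría carácter por carácter. La Conjetura Principal de Iwasawa (\cref{thm:mc-intro}) proporciona un pseudo-isomorfismo entre la parte $\chi$-ísima de $X_\infty$ y un cociente del álgebra de Iwasawa por el elemento $h\mu_\chi$, donde $\mu_\chi$ es esencialmente la función $L$ $p$-ádica de Kubota-Leopoldt asociada a $\chi$; en particular $\mu(X_\infty^\chi)=\mu(\mu_\chi)$. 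El problema queda entonces reducido al enunciado analítico de Ferrero-Washington: al desarrollar $\mu_\chi$ como serie de potencias $\sum_{n\ge0}a_nT^n$ tras fijar un generador topológico de $\Gamma$, algún coeficiente $a_n$ no es divisible por $p$. La demostración original describe los $a_n$ en términos de ciertas sumas finitas que involucran números de Bernoulli generalizados o sus interpolaciones $p$-ádicas, y aplica un resultado de tipo \enquote{normalidad} sobre los dígitos $p$-ádicos de esos valores para excluir la divisibilidad uniforme por $p$.

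El obstáculo principal, y la razón por la que esto es un plan y no una demostración, es que todo el argumento depende crucialmente de dos insumos externos: una función $L$ $p$-ádica construida explícitamente, y una Conjetura Principal suficientemente concreta que la conecte con $X_\infty$. Para $K$ abeliano sobre un campo totalmente real la estrategia se generaliza gracias a la construcción de Deligne-Ribet y trabajo posterior, pero para un campo de números arbitrario no disponemos ni del objeto analítico ni del puente algebraico. La parte dura, por tanto, no es la combinatoria $p$-ádica al final del argumento sino la construcción misma del enlace analítico-algebraico que hace posible aquel argumento; por esta razón la conjetura, formulada aquí en toda su generalidad, permanece abierta, y un plan honesto reconocería que cualquier intento de demostración tendría primero que aportar un avance sustancial en la teoría de funciones $L$ $p$-ádicas más allá del contexto abeliano.
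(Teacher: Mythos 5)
Tu diagnóstico es correcto y coincide con el tratamiento del texto: el enunciado \cref{conj:iwasawa} es una conjetura abierta y el libro no ofrece demostración alguna; únicamente registra como mejor resultado conocido el teorema de Ferrero y Washington (\cref{thm:ferrero-washington}) para $K/\Q$ abeliana, remitiendo al artículo original \cite{MR528968} o a \cite{MR1421575} para su prueba. No hay, pues, una \enquote{demostración del artículo} con la cual comparar, y tu reconocimiento explícito de que sólo propones un plan de reducción al caso abeliano, dejando constancia de que el caso general permanece fuera de alcance, es exactamente lo que cabe decir.

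Dos precisiones sobre tu esbozo del caso abeliano. Primero, la demostración de Ferrero y Washington (1979) es anterior a Mazur y Wiles (1984) y no pasa por la Conjetura Principal: el puente entre $\mu(X_\infty^-)$ y el invariante $\mu$ de las series de potencias asociadas a las funciones $L$ $p$-ádicas proviene de la fórmula analítica del número de clases aplicada a los $h_r^-$, combinada con la fórmula de crecimiento del \cref{thm:iwasawa-ThmdIwa}; con eso basta el lado analítico más el argumento de normalidad de dígitos. Invocar \cref{thm:mc-intro} es lógicamente admisible hoy, pero anacrónico y más pesado de lo necesario. Segundo, tu descomposición por caracteres vía la Conjetura Principal sólo controla la parte impar $X_\infty^-$; para concluir $\mu(K^{\mathrm c}/K)=0$ hace falta también la parte par, que se trata por separado (desigualdades de reflexión del tipo $\mu^+\le\mu^-$, en el espíritu del resultado de Kummer del \cref{thm:kummer-hp-plus-minus}). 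Con esas salvedades, tu conclusión final —que cualquier avance real sobre la conjetura en general exigiría primero construir el objeto analítico y el puente algebraico fuera del contexto abeliano— es fiel a lo que el propio texto transmite.
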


El resultado más amplio conocido hasta el momento es el siguiente teorema de Ferrero y Washington \cite{MR528968}.

\begin{thm}[\importante{Teorema de Ferrero-Washington}]\label{thm:ferrero-washington} Sea $K/\Q$ una extensión abeliana de $\Q$ y $p$ un número primo. Entonces $$\mu(K^{c}/K)=0.$$
\end{thm}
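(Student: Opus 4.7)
El plan es reducir la demostración al caso de campos ciclotómicos, descomponer $X_\infty$ por caracteres usando las técnicas de la \cref{sec:mas-grandes}, y finalmente conectar cada componente con una serie de potencias concreta cuya no-divisibilidad por $p$ se demuestra por un argumento combinatorio. Primero, por el teorema de Kronecker-Weber existe $m\in\Nuno$ tal que $K\subseteq\Q(\mu_m)$. Una comparación estándar muestra que si $L/K$ es una extensión finita de campos de números compatible con las $\Zp$-extensiones ciclotómicas, entonces $\mu(K^c/K)\leq\mu(L^c/L)$: en efecto, $X_\infty(K)$ aparece, salvo núcleo y conúcleo finitos, como cociente de un sub-$\LL$-módulo de $X_\infty(L)$ vía el mapeo de norma del diagrama \eqref{eqn:diagrama-x-c} en el límite. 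Por lo tanto basta establecer el teorema para $K=\Q(\mu_m)$, o más cómodamente $K=\Q(\mu_{mp})$.

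Segundo, sea $\Delta=\Gal(\Q(\mu_{mp})/\Q(\mu_m)^c\cap\Q(\mu_{mp}))$ o el análogo apropiado; en todo caso, $\Gal(\Q(\mu_{mp^\infty})/\Q)\isom\Delta\times\Gamma$ con $\Delta$ finito de orden coprimo a $p$ y $\Gamma\isom\Zp$. Reemplazando $\O=\Zp$ con $\O=\Zp[\mu_{\#\Delta}]$ podemos usar los idempotentes del \cref{lem:idempotentes} para descomponer $X_\infty\tensor_\Zp\O=\bigoplus_\chi e_\chi X_\infty$ en componentes $\chi$-isotípicas, donde $\chi$ recorre los caracteres $\Delta\rightarrow\O^\times$. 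El $\mu$-invariante de $X_\infty$ es la suma de los $\mu$-invariantes de las componentes; por el análogo del \cref{cor:x-cero-iff-x-menos-cero} por caracteres, basta considerar los caracteres impares.

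Tercero, para cada carácter impar $\chi$ se construye una serie de potencias $g_\chi(T)\in\O\llbracket T\rrbracket$ de origen analítico -- esencialmente la serie de Iwasawa de la función $L$ $p$-ádica de Kubota-Leopoldt asociada a $\chi$, que se construirá en la \cref{sec:palf} -- y se establece una desigualdad $\mu(e_\chi X_\infty)\leq\mu(g_\chi)$. Esta desigualdad puede obtenerse sin invocar la Conjetura Principal completa, construyendo un morfismo de $\LL$-módulos vía unidades ciclotómicas o elementos de Stickelberger, cuyo conúcleo está controlado por $g_\chi$. Dicho morfismo traduce el problema algebraico en un problema analítico concreto.

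Finalmente, el corazón de la demostración es el argumento combinatorio original de Ferrero y Washington: se demuestra que $\mu(g_\chi)=0$. Identificando $g_\chi$ con una medida $p$-ádica en $\Zp$ vía el \cref{thm:trinidad}, esta afirmación se reduce a que cierta suma finita de la forma
\[ S_N(\chi,\alpha)=\sum_{a=1}^{p^N}\chi(a)\,\alpha^{a} \]
(con $\alpha$ una raíz de la unidad adecuada o un valor de una función $p$-ádica) no sea divisible por $p$ para algún $N\gg0$. La demostración de esta no-divisibilidad requiere un análisis fino de la distribución de los dígitos en las expansiones en base $p$ de los enteros coprimos a $p$, mostrando que dígitos específicos aparecen en conjuntos de posiciones suficientemente densos (un resultado de \emph{normalidad} en el sentido apropiado), y usa crucialmente que un carácter de Dirichlet no trivial no puede cancelar sumas sobre tales conjuntos módulo $p$. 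Este paso combinatorio es el obstáculo principal: va claramente más allá de la teoría algebraica de $\LL$-módulos desarrollada en los capítulos anteriores, y es la razón por la cual el teorema permaneció abierto durante décadas antes de su demostración en \cite{MR528968}.
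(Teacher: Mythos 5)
Ten en cuenta primero que el texto no demuestra este teorema: lo enuncia y remite a la demostración original \cite{MR528968} y a \S 7.5 de \cite{MR1421575}, así que no hay una prueba interna con la cual comparar tu propuesta paso a paso. Tu esbozo reproduce, a grandes rasgos, la estrategia estándar de esa demostración citada: reducción al caso ciclotómico vía Kronecker--Weber (la monotonía $\mu(K^{c}/K)\leq\mu(L^{c}/L)$ es, de hecho, un ejercicio de la \cref{sec:invariantesmulam}), descomposición por caracteres, traducción del invariante $\mu$ algebraico a un invariante $\mu$ analítico de series de potencias de origen Stickelberger, y el argumento combinatorio sobre los dígitos en base $p$, que identificas correctamente como el corazón y la verdadera dificultad del teorema.

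Sin embargo, dos pasos intermedios están justificados de manera insuficiente o incorrecta. Primero, la reducción a los caracteres impares no se obtiene de un «análogo por caracteres» del \cref{cor:x-cero-iff-x-menos-cero}: ese corolario habla de la anulación de $X_\infty$ para $\Q(\mu_p)$, no de invariantes $\mu$ de un campo abeliano general; lo que se necesita es la desigualdad $\mu^{+}\leq\mu^{-}$, que proviene de la dualidad de Kummer (reflexión) --- en el lenguaje de este texto, del pseudo-isomorfismo ${(Y_\infty^+)}^\nu\sim X_\infty^-$ del \cref{prop:equivalencia-x-y} junto con el hecho de que $X_\infty^+$ es un cociente de $Y_\infty^+$. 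Segundo, la desigualdad $\mu(e_\chi X_\infty)\leq\mu(g_\chi)$ no se consigue «construyendo un morfismo de $\LL$-módulos cuyo conúcleo está controlado por $g_\chi$»: un morfismo así, con núcleo y conúcleo controlados, equivale esencialmente a una de las dos divisibilidades de la Conjetura Principal, que no estaba disponible para Ferrero y Washington. La ruta incondicional real pasa por el teorema de Iwasawa sobre el índice del ideal de Stickelberger combinado con la fórmula analítica del número de clases para $h_r^-$, que identifica el crecimiento de la parte $p$ de $h_r^-$ con los invariantes de las series $g_\chi$; sólo después de esa identificación el argumento de dígitos de \cite{MR528968} cierra la prueba. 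Con esas dos correcciones, tu esquema coincide con la demostración a la que el texto remite.
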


La prueba del teorema de Ferrero-Washington hace uso de la función $L$ $p$-ádica. Al lector interesado en la demostración lo invitamos a consultar el artículo original \cite{MR528968} o \S 7.5 en \cite{MR1421575}.

No obstante, Iwasawa demostró que existen $\Zp$-extensiones $K_{\infty}/K$ con $\mu(K_{\infty}/K)>0$.  

Sea $K=\Q(\sqrt{-d})$ con $\left(\dfrac{-d}{p}\right)\neq 1$, y $K_{\infty}/K$ la $\Zp$-extensión \emph{anti-ciclotómica}\index[def]{extensiónZp@$\Zp$-extensión!anti-ciclotómica} de $K$, es decir la única $\Zp$-extensión de $K$ donde la conjugación compleja actúa por inversión.  Además, consideremos $L$ una extensión de grado $p$ y Galois sobre $\Q$. Denotamos $L_{\infty}=K_{\infty}L$, claramente $L_{\infty}/L$ es una $\Zp$-extensión. 

\begin{thm}[Iwasawa] Con la notación del párrafo precedente. Supongamos que $s$ primos distintos de $p$ son inertes $K/\Q$ y ramifican en $L/K$. Entonces 
$$\mu(L_{\infty}/L) \geq s-1.$$
\end{thm}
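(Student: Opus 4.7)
El plan es aplicar el criterio de la \cref{prop:XnoettorsiirgFpfinsiiacotFp}: si $X=\Gal(H_\infty/L_\infty)$, donde $H_\infty$ es la máxima extensión abeliana pro-$p$ no ramificada de $L_\infty$, entonces tendremos $\mu(L_\infty/L)=\mu(X)\geq s-1$ en cuanto demostremos que
\[ \dim_{\F_p}\bigl(X/(p,\w_r)X\bigr)\geq (s-1)p^r + O(1), \]
como se ve inspeccionando el $\F_p$-rango asintótico de $E/\w_r E$ para un módulo elemental $E$ en la demostración de dicha proposición. Previamente verifico que $L_\infty/L$ es en verdad una $\Zp$-extensión: como $[L:\Q]=p$ es coprimo a $[K:\Q]=2$ y $K\not\subseteq L$, necesariamente $K_\infty\cap L=\Q$, de donde $\Gal(L_\infty/L)\isom\Gal(K_\infty/K)\isom\Zp$.

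El paso central es demostrar que los $s$ primos $q_1,\dotsc,q_s$ \emph{se descomponen completamente} en $K_\infty/K$. Sea $Q_i$ el único primo de $K$ sobre $q_i$ (único por ser $q_i$ inerte); como $q_i\neq p$, $Q_i$ no se ramifica en $K_\infty/K$ y tiene un Frobenius bien definido $\Frob_{Q_i}\in\Gamma=\Gal(K_\infty/K)$. Levantando la conjugación compleja $\mathbf c$ a $\Gal(K_\infty/\Q)$, por un lado $\mathbf c\Frob_{Q_i}\mathbf c^{-1}=\Frob_{\mathbf c(Q_i)}=\Frob_{Q_i}$ (ya que $\mathbf c(Q_i)=Q_i$); por el otro, $\mathbf c$ actúa en $\Gamma$ por inversión, por la definición de la $\Zp$-extensión anti-ciclotómica, así que $\mathbf c\Frob_{Q_i}\mathbf c^{-1}=\Frob_{Q_i}^{-1}$. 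Combinando, $\Frob_{Q_i}^2=1$, y como $\Gamma\isom\Zp$ con $p$ impar, esto obliga $\Frob_{Q_i}=1$. Por lo tanto $Q_i$ tiene exactamente $p^r$ primos arriba en $K_r$, cada uno de los cuales se ramifica en $L_r=K_rL$ sobre $K_r$ (pues $q_i$ se ramifica en $L/K$ y la ramificación asciende al componer con $K_r$). Concluimos que en $L_r/K_r$, extensión cíclica de grado $p$, hay al menos $t_r:=sp^r$ primos ramificados.

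Ahora invoco la fórmula de clases ambiguas de Chevalley para extensiones cíclicas de grado primo: si $\sigma_r$ es un generador de $\Gal(L_r/K_r)$, un resultado clásico de teoría de géneros da
\[ \dim_{\F_p}\bigl(\Cl(L_r)/(1-\sigma_r)\Cl(L_r)\bigr)\geq t_r-1-\rk_\Z(\O_{K_r}^\times)+O(1). \]
Como $K$ es imaginario cuadrático, $K_r$ es totalmente imaginario de grado $2p^r$ sobre $\Q$, así que el teorema de unidades de Dirichlet da $\rk_\Z(\O_{K_r}^\times)=p^r-1$. Sustituyendo y notando que $\Cl(L_r)/p\Cl(L_r)$ surjecta sobre $\Cl(L_r)/(1-\sigma_r,p)\Cl(L_r)$, obtenemos $\dim_{\F_p}(C_r/pC_r)\geq(s-1)p^r+O(1)$, donde $C_r$ es la $p$-parte de $\Cl(L_r)$. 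Finalmente, una generalización apropiada de la \cref{prop:Crdescenso} (aplicada a partir de algún $r_0$ suficientemente grande para que $L_\infty/L_{r_0}$ sea totalmente ramificada en las plazas sobre $p$) produce un morfismo $X/\w_r X\to C_r$ con núcleo y conúcleo acotados uniformemente en $r$; tomando el cociente adicional módulo $p$ resulta la cota deseada, $\dim_{\F_p}(X/(p,\w_r)X)\geq(s-1)p^r+O(1)$.

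El obstáculo principal será la precisión de la cota de Chevalley: hay que verificar que la corrección por unidades sea exactamente $-\rk_\Z(\O_{K_r}^\times)$ y no mayor, es decir, que las contribuciones de $p$-torsión de raíces de la unidad en $K_r$ y otros términos de tipo cohomológico permanezcan acotadas uniformemente en $r$. Esto depende sutilmente de las hipótesis \enquote{$p$ impar} y $\left(\frac{-d}{p}\right)\neq1$, que impiden la aparición de obstrucciones no acotadas. Igualmente delicada es la aplicación del teorema de control en presencia de varias plazas ramificadas en $L_\infty/L_{r_0}$, para asegurar que los núcleos y conúcleos se mantengan acotados, aunque las técnicas son esencialmente las de la demostración del \cref{thm:iwasawa-ThmdIwa}.
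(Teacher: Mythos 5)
El capítulo enuncia este teorema de Iwasawa sin demostración (forma parte del compendio de resultados conocidos), así que sólo puedo evaluar tu argumento por sí mismo. Tu estrategia es la clásica de Iwasawa y el punto aritmético decisivo está bien identificado y bien probado: al ser $q_i$ inerte en $K/\Q$, el único primo $Q_i$ de $K$ sobre $q_i$ es estable bajo la conjugación compleja, de modo que $\Frob_{Q_i}\in\Gamma=\Gal(K_\infty/K)$ queda fijo por una acción que, por definición de la extensión anti-ciclotómica, es la inversión; como $\Gamma\isom\Zp$ y $p$ es impar, $\Frob_{Q_i}=1$ y $Q_i$ se descompone completamente, lo que da al menos $sp^r$ primos de $K_r$ ramificados en $L_r/K_r$. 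A partir de ahí, la cota de teoría de géneros con la corrección $\rk_\Z\O_{K_r}^\times=p^r-1$ y el paso a los invariantes de Iwasawa llevan, en efecto, a $\mu\geq s-1$.

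Hay, sin embargo, dos puntos concretos que debes corregir. Primero, el planteamiento: en el enunciado (véase «ramifican en $L/K$» y el \cref{eje:mumayor0}) $L$ es una extensión de $K$ con $[L:K]=p$ y Galois sobre $\Q$; tú lees $[L:\Q]=p$ con $K\not\subseteq L$, y además la deducción «$K_\infty\cap L=\Q\Rightarrow\Gal(L_\infty/L)\isom\Gal(K_\infty/K)$» es falsa: con esa lectura se tendría $\Gal(L_\infty/L)\isom\Gal(K_\infty/\Q)$, que no es $\Zp$ (tiene a $\Gal(K/\Q)$, de orden $2$, como cociente), así que $L_\infty/L$ ni siquiera sería una $\Zp$-extensión. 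Con la lectura correcta la verificación es inmediata: $K_\infty\cap L=K$ porque el primer piso $K_1/K$ sólo ramifica en $p$ (\cref{prop:compononramifi}) mientras que $L/K$ ramifica en los $q_i\neq p$ (el caso $s=0$ es vacío). Segundo, la contabilidad en la cota de Chevalley: el grupo de géneros $\Cl(L_r)/(1-\sigma_r)\Cl(L_r)$ no está anulado por $p$ y contiene la imagen $\iota\Cl(K_r)$, cuyo orden no es $O(1)$; la fórmula de clases ambiguas da el \emph{orden}, no el rango, así que para tu cota de $p$-rangos necesitas o bien cocientar además por $\iota\Cl(K_r)$ (el factor $h_{K_r}$ del numerador de Chevalley compensa exactamente esa pérdida) o bien la versión de rango de la teoría de géneros (clases fuertemente ambiguas, generadas por los primos ramificados, módulo $\iota\Cl(K_r)$). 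De hecho puedes evitar todo el desvío por rangos, por la \cref{prop:XnoettorsiirgFpfinsiiacotFp} y por la generalización de la \cref{prop:Crdescenso}: la parte $p$ de Chevalley ya da la cota de órdenes $\#C_r\geq p^{sp^r-p^r-c}$ con $c$ acotado, y como el \cref{thm:iwasawa-ThmdIwa} (ya demostrado en el texto para cualquier $\Zp$-extensión) da $e_r=\mu p^r+\lambda r+\nu$ para $r\gg0$, basta dividir entre $p^r$ y hacer $r\to\infty$ para concluir $\mu(L_\infty/L)\geq s-1$.
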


\begin{ex}\label{eje:mumayor0} Sea $K=\Q(\zeta_{3})$ y $L=\Q(\zeta_{3},\sqrt[3]{22})$. El discriminante de $\Q(\sqrt[3]{22})$ es $-27\cdot 22^{2}$ y de $\Q(\zeta_{3})$ es $-3$, por la fórmula de discriminantes en torres números (e.g. \cite[III 2.10 Cor.]{MR1697859}) tenemos que $2,3$ y $11$ ramifican en $L/\Q$. Además, $2$ y $11$ no se descomponen en $K$. Así pues el teorema anterior implica que $\mu(L_{\infty}/L)\geq 1$. 
\end{ex}

Recordemos que según la conjetura de Leopoldt (\cref{conj:Leopoldt}) un campo de números totalmente real tiene una sola $\Zp$-extensión, es decir la $\Zp$-extensión ciclotómica. En dado caso, Greenberg formuló la siguiente conjetura sobre la nulidad de los invariantes $\mu$ y $\lambda$.

\begin{conj}[Conjetura de Greenberg]\label{conj:greenberg}\index[def]{conjetura!de Greenberg} Sea $K$ un campo de números totalmente real. Entonces 
$$\mu(K^{c}/K)=0\	\	\	\text{ y }\	\	\	\lambda(K^{c}/K)=0.$$
Es decir, el exponente del grupo de clases de $K_{r}\subset K^{c}$ es acotado para $r\to \infty$.
\end{conj}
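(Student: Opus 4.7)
The statement is an open conjecture rather than a theorem, so any plan must be tentative, and I would start by stating clearly that we aim to prove a structural statement about the $\LL$-module $X_\infty$ attached to $K^{\mathrm c}/K$: the goal is equivalent to showing that $X_\infty$ is \emph{finite} (by the \cref{thm:iwasawa-ThmdIwa} and the \cref{defi:invariantes}, finiteness of $X_\infty$ is equivalent to $\mu=\lambda=0$ and in that case the orders of the $C_r$ stabilize). Thus I would split the task in two: first handle $\mu=0$, and then attack the much harder $\lambda=0$.

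For the $\mu$ part, the cleanest route is to invoke the \cref{thm:ferrero-washington} of Ferrero and Washington, which directly gives $\mu(K^{\mathrm c}/K)=0$ whenever $K$ is abelian over $\Q$. For general totally real $K$ this is itself still conjectural, but one can at least reduce to the abelian case by base change: if $K'\supseteq K$ is a finite extension and $X_\infty(K'^{\mathrm c})$ has trivial $\mu$-invariant, then so does $X_\infty(K^{\mathrm c})$, using that norm maps and restriction of Galois groups interact well with the $\LL$-module structure as in the diagram \eqref{eqn:diagrama-x-c}.

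To attack $\lambda=0$ the plan is to exploit the peculiarity of totally real fields, namely the (conjectural or, in known cases, proven) Leopoldt conjecture (\cref{conj:Leopoldt}): $K$ admits \emph{only} the cyclotomic $\Zp$-extension, i.e.\ $\widehat K=K^{\mathrm c}$. Using the \cref{thm:Munramleop} and the description of $Y_\infty=\Gal(M_\infty/K_\infty)$, Leopoldt for all $K_r$ at once translates into a strong constraint: the $\Zp$-rank of $Y_r$ equals a fixed constant depending only on $K$. Combined with the exact sequence relating $X_\infty$ to $Y_\infty$ and the local units at primes above $p$, one tries to show that the \emph{capitulation kernels} $\ker(C_r\to C_{r+1})$ are eventually large enough that the system $(C_r)_{r}$ under norm becomes stationary; by the control theorem (\cref{prop:Crdescenso}) this forces $X_\infty=T_r X_\infty$ for $r\gg0$ and then, via Nakayama (\cref{lem:nakayama}), $X_\infty$ finite, hence $\lambda=0$.

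The hard part — in fact, the open problem — is exactly the last step: knowing that $C_r$ stabilizes under the norm. All partial results (Greenberg's original arguments, Ozaki--Taya, and others) proceed by extra hypotheses (for instance: there is a single prime above $p$ and a certain unit does not lie in a certain local $p$-th power group, or $p$ is totally split) which guarantee that every ideal class of $K_r$ capitulates in some $K_{r'}$. Without such a hypothesis the obstruction is genuinely arithmetic: one lacks a mechanism to produce enough principal ideals in the tower. My plan, therefore, can only realistically prove the conjecture under a working assumption of this capitulation type, and I would be explicit that the truly new input needed is an unconditional way — perhaps via $p$-adic $L$-functions and the Main Conjecture discussed in the later \cref{sec:mc} — to control the norm-compatible system of class groups in the totally real case.
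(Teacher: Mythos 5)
This statement is Greenberg's conjecture, which the paper records precisely as an open \emph{conjecture}: it gives no proof, only the surrounding discussion of the invariants $\mu$ and $\lambda$. So there is no paper argument to compare yours against, and your decision to present a plan, and to say explicitly that the decisive step is open, is the right framing. Your starting reduction is also correct: for the noetherian torsion $\LL$-module $X_\infty$ attached to $K^{\mathrm c}/K$, the vanishing $\mu=\lambda=0$ is equivalent to $X_\infty$ being finite, and then the orders of the $p$-parts $C_r$ stabilize by the Iwasawa formula.

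Two concrete problems in the sketch itself. First, the proposed treatment of $\mu$ by ``base change to the abelian case'' cannot work for a general totally real $K$: the monotonicity you invoke is $\mu(K_\infty/K)\le\mu(K'_\infty/K')$ for $K\subseteq K'$, so to use the Ferrero--Washington theorem you would need an \emph{abelian} field $K'$ containing $K$, and by Kronecker--Weber such a $K'$ exists only when $K$ is already abelian over $\Q$. Hence this step buys nothing beyond the abelian case that the theorem of Ferrero and Washington covers directly; for non-abelian totally real $K$ even $\mu=0$ remains open. Second, in the $\lambda$ part, Leopoldt's conjecture gives uniqueness of the $\Zp$-extension but does not by itself yield the rank constraints on $Y_r$ you assert, and the passage ``$C_r$ stabilizes $\Rightarrow$ $X_\infty$ finite via the control theorem and Nakayama'' presupposes exactly the capitulation/stabilization input that is the unknown heart of the conjecture, so the plan is circular at that point. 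You acknowledge this honestly at the end, which is appropriate; just be aware that the only unconditional content in the proposal is the abelian-$\mu$ case and the elementary equivalence with finiteness of $X_\infty$.
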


Como hemos visto, las conjeturas y resultados sobre los invariantes involucran sea la $\Zp$-extensión ciclotómica, sea el invariante $\mu$. Parece ser que el estudio del invariante $\lambda$ depende del buen comportamiento o conocimiento del invariante $\mu$, como veremos a continuación.

Recordemos que un campo de números $K$ con $c$ pares de encajes complejos conjugados $K\hookrightarrow \C$, tiene al menos $c+1$ $\Zp$-extensiones independientes sobre $K$. Esto implica que si $c\geq 1$, entonces existe un número infinito de $\Zp$-extensiones $K_{\infty}/K$. 

Sea $\Delta(K)$ el conjunto de todas las $\Zp$-extensiones de $K$. Podemos equipar $\Delta(K)$ con la siguiente \define[topología de Greenberg]{topología introducida por Greenberg} en \cite{MR332712}. Sea $K_{\infty}\in\Delta(K)$ y $r\geq 0$ un número natural, definimos
$$\Delta(K_{\infty},r):=\{K'_{\infty}\in\Delta\,|\,[K_{\infty}\cap K'_{\infty}:K]\geq p^{r}\}.$$
Los conjuntos $\Delta(K_{\infty},r)$ forman una base para la topología del límite inverso $\Delta(K)=\varprojlim \Delta_{r}(K)$, donde $\Delta_{r}(K)$ es el espacio discreto de las $\Zp$-extensiones de $K$ que coinciden hasta el nivel $r$ con $K_{\infty}$. 

Decimos que una plaza $\p$ de $K$ es finitamente descompuesta en $K_{\infty}$ si la imagen del subgrupo de descomposición $D_{\p}\subset \Gal(\bar{K}/K)$ en $\Gal(K_{\infty}/K)$ tiene índice finito. El subconjunto $\Delta^{0}(K)$, que consiste de las extensiones $K_{\infty}\in\Delta$ tales que todas las plazas sobre $p$ son finitamente descompuestas, es denso en $\Delta(K)$ \cite[Prop. 3]{MR332712}.

\begin{thm}[Greenberg]\label{thm:Greenbound} Sea $K_{\infty}\in\Delta^{0}(K)$. Entonces 
\begin{itemize}
\item[(i)] El invariante $\mu$ es acotado en una vecindad de $K_{\infty}$.
\item[(ii)] Si $\mu(K_{\infty}/K)=0$, entonces en una vecindad de $K_{\infty}$ los invariantes $\mu$ son nulos y los invariantes $\lambda$ acotados.
\end{itemize}
\end{thm}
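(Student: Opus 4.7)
La estrategia sería reducir el problema al estudio de un \enquote{módulo de Iwasawa en dos variables} asociado al compuesto de $K_{\infty}$ con una $\Zp$-extensión vecina. Para $K'_{\infty}\in\Delta(K_{\infty},r)\setminus\{K_{\infty}\}$ con $r$ suficientemente grande, ambos contienen a $K_r$ y su compuesto $\widetilde K:=K_{\infty}K'_{\infty}$ es una $\Zp^2$-extensión de $K_r$. Escribimos $\widetilde\Gamma:=\Gal(\widetilde K/K_r)\isom\Zp^2$ y $\LL_2:=\Zp\llbracket\widetilde\Gamma\rrbracket$, el cual --- tras escoger una base de $\widetilde\Gamma$ --- es isomorfo al álgebra de series formales $\Zp\llbracket T_1,T_2\rrbracket$, siendo este el análogo en dos variables del \cref{thm:EquivPowPro}.

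Primero, construiría el análogo en dos variables de $X_{\infty}$: sea $\widetilde H$ la extensión abeliana máxima pro-$p$ de $\widetilde K$ no ramificada, y $\widetilde X:=\Gal(\widetilde H/\widetilde K)$, al que se le dota de estructura de $\LL_2$-módulo mediante la acción por conjugación como en la \cref{defi:iwasawa-modulos}. La hipótesis $K_{\infty}\in\Delta^{0}(K)$ implica que las plazas de $K_r$ sobre $p$ son finitamente descompuestas en $\widetilde K$, y un argumento análogo al del \cref{cor:x-noetheriano-torsion} demuestra que $\widetilde X$ es un $\LL_2$-módulo noetheriano de torsión.

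Segundo, relacionaría $X_{\infty}=\Gal(H_{\infty}/K_{\infty})$ con un cociente de $\widetilde X$, y análogamente para el módulo $X'_{\infty}$ asociado a $K'_{\infty}$. Las $\Zp$-extensiones $K_{\infty}$ y $K'_{\infty}$ corresponden a subgrupos cerrados $H,H'\subseteq\widetilde\Gamma$ de rango $1$; mediante un teorema de control en dos variables que generaliza el \cref{cor:iwasawa-control-x}, se demostraría que $X_{\infty}\sim\widetilde X/I_H\widetilde X$, donde $I_H\subset\LL_2$ es el ideal generado por los $\gamma-1$ con $\gamma\in H$ (y análogamente para $X'_{\infty}$). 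Los términos de ramificación finita que pueden aparecer se controlan uniformemente cuando $H'$ varía cerca de $H$, gracias a la hipótesis $\Delta^{0}$.

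El paso decisivo consistiría en aplicar una versión del teorema de estructura análoga a la \cref{thm:estructura} en el anillo $\LL_2$, que sigue siendo válida salvo pseudo-isomorfismos porque $\LL_2$ es un dominio local regular: obtendríamos $\widetilde X\sim\bigoplus_i\LL_2/(f_i)$ con $f_i\in\LL_2$. Los invariantes $\mu$ y $\lambda$ del cociente $\widetilde X/I_H\widetilde X$ quedan entonces determinados, módulo una corrección acotada, por las reducciones $\overline f_i\in\LL_2/I_H$, que dependen continuamente de $H$. Cuando $H'$ es cercano a $H$ en $\widetilde\Gamma$, estas reducciones permanecen acotadas, lo que establece \emph{(i)}; para \emph{(ii)}, si $\mu(K_{\infty}/K)=0$ entonces ningún $\overline f_i$ es divisible por $p$, condición abierta que persiste para $H'$ cercano, forzando $\mu(K'_{\infty}/K)=0$ y acotando además los grados de las $\overline f_i$, lo cual acota $\lambda$. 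El obstáculo principal estará en desarrollar adecuadamente la teoría de Iwasawa en dos variables --- tanto el teorema de estructura para $\LL_2$ como el teorema de control en dos variables --- y en verificar que la hipótesis $\Delta^{0}$ es justamente la que garantiza que las contribuciones de ramificación queden bajo control en toda la vecindad considerada.
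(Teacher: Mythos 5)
Tu propuesta no sigue la ruta de Greenberg que el texto cita (el trabajo \cite{MR332712}; el libro no reproduce la demostración), sino la ruta en dos variables, más cercana en espíritu a \cite{Babauicev80} y \cite{Monsky81SomeInvariantsZpdExt} y a los trabajos de Cuoco y Monsky sobre $\Zp^{2}$-extensiones; en principio esa vía puede hacerse funcionar, pero tal como la escribes tiene huecos genuinos. El primero es estructural: la vecindad $\Delta(K_{\infty},r)$ \emph{no} está contenida en el conjunto de $\Zp$-subextensiones de una sola $\Zp^{2}$-extensión. El compuesto $\widetilde K=K_{\infty}K'_{\infty}$, y con él el módulo $\widetilde X$ y sus datos de estructura $f_{i}$, cambian con cada $K'_{\infty}$ de la vecindad, de modo que la frase \enquote{las reducciones $\overline f_{i}$ dependen continuamente de $H$ y permanecen acotadas cuando $H'$ es cercano a $H$} no tiene un significado: no hay un único módulo sobre el cual varíen $H$ y $H'$ a la vez. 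Para que el esquema tenga sentido habría que trabajar en el compuesto de \emph{todas} las $\Zp$-extensiones (una $\Zp^{d}$-extensión con $d$ posiblemente $>2$), que es exactamente el marco de Baba\u\i cev--Monsky, o bien probar que la cota obtenida par a par es uniforme en $K'_{\infty}$; ninguna de las dos cosas se aborda. Además, la afirmación de que $K_{\infty}\in\Delta^{0}(K)$ implica que las plazas sobre $p$ son finitamente descompuestas en $\widetilde K$ es falsa tal cual: el subgrupo de descomposición puede tener $\Zp$-rango $1$ en $\widetilde\Gamma\isom\Zp^{2}$.

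El segundo hueco es el paso que llamas \enquote{módulo una corrección acotada}, que es precisamente el punto duro. Sobre $\Zp\llbracket T_{1},T_{2}\rrbracket$ los módulos pseudo-nulos no son finitos (por ejemplo $\Zp\llbracket T_{1},T_{2}\rrbracket/(p,T_{1})$), así que el error del pseudo-isomorfismo $\widetilde X\sim\bigoplus_{i}\LL_{2}/(f_{i})$ puede alterar $\mu$ y $\lambda$ de la especialización $\widetilde X/I_{H}\widetilde X$: en el ejemplo anterior, al reducir módulo $I_{H}=(T_{1})$ se obtiene $\Zp\llbracket T_{2}\rrbracket/(p)$, que aporta $\mu=1$. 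Controlar estas contribuciones al variar $H$ (y el error, uniformemente acotado, del teorema de control en dos variables que relaciona $X(K'_{\infty})$ con $\widetilde X/I_{H'}\widetilde X$) requiere lemas del tipo Cuoco--Monsky y un uso preciso de la hipótesis $\Delta^{0}$; en tu texto se asevera, no se demuestra. Conviene tener presente que el argumento original de Greenberg evita todo esto y es más elemental: se fija un nivel $K_{n}$ suficientemente grande para que las condiciones de ramificación y descomposición en $K_{\infty}/K_{n}$ se estabilicen, se observa que toda $K'_{\infty}\in\Delta(K_{\infty},n)$ comparte ese nivel y esas condiciones locales, y se acota $\mu$ (y, cuando $\mu(K_{\infty}/K)=0$, también $\lambda$) de todas ellas en términos de la $p$-parte del grupo de clases de $K_{n}$, usando sólo el lema de Nakayama y argumentos de descenso en una variable como los de la \cref{prop:Crdescenso} y el \cref{thm:estructura}.
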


Es decir, la filosofía que nos transmitió Greenberg es pensar los invariantes $\mu$ y $\lambda$ como funciones continuas
$$\mu,\lambda : \Delta \longrightarrow \Ncero.$$
En particular, uno se puede preguntar si dichas funciones están acotadas. En los años 80,
Baba\u\i cev \cite{Babauicev80} y Monsky \cite{Monsky81SomeInvariantsZpdExt} respondieron independientemente una de estas preguntas.

\begin{thm}[Baba\u\i cev/Monsky]\label{thm:baba-monky} El invariante $\mu$ es acotado en $\Delta(K)$. 
\end{thm}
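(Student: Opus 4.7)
La estrategia natural es pasar de considerar una sola $\Zp$-extensión a considerar el compuesto de todas ellas, y usar la estructura de los módulos de Iwasawa en varias variables. Primero, escribiría $\widetilde K$ como el compuesto de todas las $\Zp$-extensiones de $K$ y $\widetilde\Gamma=\Gal(\widetilde K/K)$. Por el \cref{thm:Munramleop}, $\widetilde\Gamma\isom\Zp^d$ para algún $d\in\Nuno$ (con $c+1\le d\le[K:\Q]$). El álgebra de Iwasawa en varias variables $\widetilde\LL:=\Zp\llbracket\widetilde\Gamma\rrbracket\isom\Zp\llbracket T_1,\dotsc,T_d\rrbracket$ toma el papel de $\LL$ en este contexto más general, y el espacio $\Delta(K)$ queda identificado con las epiproyecciones continuas $\widetilde\Gamma\twoheadrightarrow\Zp$ módulo la acción de $\Zp^\times$, un espacio compacto (análogo a un espacio proyectivo $p$-ádico).

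El segundo paso es construir un $\widetilde\LL$-módulo global que controle todos los $X_\infty$ simultáneamente: sea $\widetilde M$ la máxima extensión abeliana pro-$p$ de $\widetilde K$ no ramificada fuera de $p$ y $\widetilde Y=\Gal(\widetilde M/\widetilde K)$, que es un $\widetilde\LL$-módulo noetheriano (por un argumento análogo a la \cref{thm:y-noetheriano} usando el \cref{thm:Munramleop}). Para cada $K_\infty\in\Delta(K)$, la epiproyección $\widetilde\Gamma\twoheadrightarrow\Gamma=\Gal(K_\infty/K)$ induce un morfismo $\widetilde\LL\twoheadrightarrow\LL$ con núcleo $\mathfrak a_{K_\infty}$, y un argumento de descenso (similar en espíritu a la \cref{prop:Crdescenso}) muestra que $X_\infty$ es un cociente del \enquote{especializado} $\widetilde Y/\mathfrak a_{K_\infty}\widetilde Y$ módulo una contribución acotada proveniente de los subgrupos de inercia (que son acotados uniformemente porque hay a lo más $[K:\Q]$ plazas sobre $p$). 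Por lo tanto bastaría demostrar que los $\mu$-invariantes de las especializaciones $\widetilde Y/\mathfrak a\widetilde Y$ están uniformemente acotados cuando $\mathfrak a$ varía entre los núcleos de proyecciones $\widetilde\Gamma\twoheadrightarrow\Zp$.

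El paso principal y el obstáculo central es, entonces, el siguiente hecho puramente algebraico sobre módulos en $\widetilde\LL$: si $M$ es un $\widetilde\LL$-módulo finitamente generado, entonces existe una cota uniforme $C(M)$ tal que $\mu_\Lambda(M/\mathfrak a M)\le C(M)$ para toda proyección $\widetilde\Gamma\twoheadrightarrow\Zp$ de núcleo $\mathfrak a$. Mi plan para atacar esto sería reducir módulo $p$: el $\mu$-invariante de un módulo sobre $\LL=\Zp\llbracket T\rrbracket$ coincide con la dimensión sobre $\Fp((T))$ de su reducción módulo $p$ tensorizada con $\Fp((T))$ (según la \cref{prop:XnoettorsiirgFpfinsiiacotFp} y la teoría de estructura). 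Así el problema se traduce en estudiar $\overline M:=M/pM$ como módulo sobre $\Fp\llbracket T_1,\dotsc,T_d\rrbracket$ y demostrar que las dimensiones de sus especializaciones en cocientes de rango $1$ están acotadas uniformemente. Aquí entra lo técnico: habría que usar herramientas de álgebra conmutativa como resultantes o series de Hilbert--Samuel, siguiendo las ideas originales de Monsky, para controlar el comportamiento genérico versus especial de la dimensión al variar la dirección de especialización.

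La parte delicada es precisamente esta cota uniforme en la especialización, porque en cada dirección el módulo $\overline M/\mathfrak a\overline M$ podría a priori adquirir torsión extra o colapsar de maneras diferentes; demostrar que estos fenómenos están acotados requiere una comprensión fina de la geometría del soporte de $\overline M$ dentro de $\Spec\Fp\llbracket T_1,\dotsc,T_d\rrbracket$. Una vez establecida esta cota algebraica, la compacidad de $\Delta(K)$ y la relación entre $X_\infty$ y $\widetilde Y/\mathfrak a_{K_\infty}\widetilde Y$ permiten concluir el teorema casi inmediatamente.
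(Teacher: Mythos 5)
Sobre la comparación: el texto no contiene una demostración propia de este teorema; en la \cref{sec:invariantesmulam} se enuncia citando directamente los artículos originales de Baba\u\i cev y Monsky, así que no hay un argumento del texto con el cual contrastar tu propuesta. Evaluándola por sí misma: el esquema que describes (pasar al compuesto $\widetilde K$ de todas las $\Zp$-extensiones, trabajar sobre $\Zp\llbracket T_1,\dotsc,T_d\rrbracket$, y controlar cada $X_\infty$ mediante especializaciones de un módulo global) es, en efecto, la silueta correcta de la estrategia conocida, pero tal como está es un plan y no una demostración, y el hueco está exactamente donde tú mismo lo señalas: el enunciado algebraico central, que para un $\widetilde\LL$-módulo noetheriano fijo $M$ los invariantes $\mu$ de las especializaciones $M/\mathfrak a M$ a lo largo de todas las proyecciones $\widetilde\Gamma\twoheadrightarrow\Zp$ están uniformemente acotados, \emph{es} esencialmente el teorema de Baba\u\i cev--Monsky en su forma algebraica. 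Remitir su prueba a \enquote{resultantes o series de Hilbert--Samuel siguiendo las ideas originales de Monsky} hace el argumento circular como demostración autocontenida: no basta reducir módulo $p$ y decir que hay que entender la geometría del soporte; el contenido real del teorema es precisamente esa cota, y sin un argumento para ella no se ha demostrado nada nuevo.

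Además, el paso de descenso aritmético también está subestimado. Afirmas que $X_\infty$ difiere de $\widetilde Y/\mathfrak a_{K_\infty}\widetilde Y$ por una \enquote{contribución acotada proveniente de los subgrupos de inercia}, acotada porque hay a lo más $[K:\Q]$ plazas sobre $p$; pero el problema en los teoremas de control no es el número de plazas sino el tamaño de los grupos de descomposición: en las $\Zp$-extensiones fuera de $\Delta^{0}(K)$ las plazas sobre $p$ pueden descomponerse infinitamente, y es justamente en ese caso donde el resultado local de Greenberg (\cref{thm:Greenbound}) deja de aplicarse y donde Baba\u\i cev y Monsky tuvieron que trabajar más. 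Tu esbozo no trata ese caso, de modo que incluso concediendo el lema algebraico, la reducción aritmética tal como la planteas sólo reproduciría la acotación local en un subconjunto denso, no la acotación global en todo $\Delta(K)$ que pide el teorema.
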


Recientemente, trabajos de Kleine \cite{MR3627696} haciendo uso de una topología en $\Delta(K)$ que toma en cuenta la ramificación, demuestran que de hecho el invariante $\mu$ es localmente máximo y dan condiciones suficientes para que el invariante $\lambda$ sea localmente máximo. 

\ejercicios

\begin{ejer} Demuestre que si $K\subseteq K'$ y $K_{\infty}/K$ es una $\Zp$ extensión y $K'_{\infty}=K_{\infty}K'$ entonces $\mu(K_{\infty}/K)\leq\mu(K'_{\infty}/K')$.
\end{ejer}
\begin{ejer} Generalice tanto como pueda el \cref{eje:mumayor0}. 
\end{ejer}
\begin{ejer} Sea $K_{\infty}/K$ una $\Zp$-extensión de un campo $K$ de tipo CM, es decir una extensión cuadrática imaginaria de un campo totalmente de real (e.g. $\Q(\sqrt{-d})$). Si $A$ es un grupo abeliano en el que la conjugación compleja actúa como un automorfismo, denotamos $A^{+}$ y $A^{-}$ las partes en que actúa trivialmente y por $-1$, respectivamente. Demuestre que
$$\mu=\mu^{+}+\mu^{-}.$$ 
\end{ejer}
\begin{ejer} Sea $K$ es un campo de números y $K_{\infty}/K$ una $\Zp$-extensión. Sea $r$ como en el \cref{thm:iwasawa-ThmdIwa} aplicado a $K_{\infty}/K$ y supongamos que $R\geq r$. Si $K'=K_{R}$, entonces $K_{\infty}/K'$ es una $\Zp$-extensión. Demuestre que aplicando el \cref{thm:iwasawa-ThmdIwa} a $K_{\infty}/K'$ tenemos que
$$\mu'=\mu p^{R},\	\	\	\lambda'=\lambda\	\	\	\text{ y }\	\	\	\nu'=\nu+\lambda R.$$
\end{ejer}

\chapter{Funciones $L$ $p$-ádicas}
\label{sec:palf}

En la teoría de las funciones $L$, como la función zeta de Riemann, hay dos fenómenos
importantes que implican la existencia de análogos $p$-ádicos de esas funciones. Primero,
algunos valores especiales de esas funciones, que a priori son números complejos, de hecho
son números algebraicos o incluso racionales (en general después de dividir por un factor
transcendental normalizante). Y segundo, esos valores son en un sentido $p$-ádicamente
continuos, que significa que hay congruencias entre ellos. De la definición no es inmediato
que las funciones $L$ tienen estas propiedades: Algunos valores especiales ni siquiera están
bien definidos después de considerar la continuación analítica de la función. Sin duda estos
fenómenos son inmensamente sorprendentes, ocasionando que la existencia de las funciones $L$
$p$-ádicas en general no sea nada trivial.

Aquí explicamos la teoría de las funciones $L$ $p$-ádicas en el caso más básico: la función zeta
de Riemann, y ligeramente más general, las funciones $L$ de Dirichlet. La idea de construir
un tal análogo $p$-ádico de funciones $L$ es originalmente de Kubota y Leopoldt \cite{MR0163900} y fue
más tarde reinterpretada por Iwasawa, como explicaremos en la \cref{sec:stickelberger}.

\section{Proemio sobre las funciones $L$}
\label{sec:proemio-l}

Aquí coleccionamos unos hechos analíticos sobre las funciones $L$ que nos interesan. Como este
texto pone su énfasis más en la teoría algebraica omitimos algunas de las demostraciones
en esta sección.

\begin{defi}
  Una \define{serie de Dirichlet} es una serie de la forma
  \[ \sum_{n=1}^\infty a_n n^{-s} \]
  con coeficientes $a_n\in\C$.
\end{defi}

El conducto de convergencia de este tipo de series es explicado por el siguiente resultado.

\begin{prop}
  Para cada serie de Dirichlet \[ \sum_{n=1}^\infty a_n n^{-s} \] existe un
  $\sigma_0\in[-\infty,\infty]$ tal que la serie converge localmente uniformemente para los
  $s\in\C$ con $\Re s>\sigma_0$ y diverge para $\Re s<\sigma_0$. Este $\sigma_0$ se llama
  \define{abscisa de convergencia}.

  La función definida por una serie de Dirichlet en el semiplano de convergencia es
  holomorfa.

  Si los coeficientes son multiplicativos en el sentido \[ a_{nm}=a_na_m \text{ para todo
    }m,n\in\Nuno \] entonces la serie tiene un \define{producto de Euler}
  \[ \sum_{n=1}^\infty a_n n^{-s} = \prod_{\ell\text{ primo}}(1-a_\ell\ell^{-s})^{-1} \]
  para los $s\in\C$ con $\Re s>\sigma_0$.
\end{prop}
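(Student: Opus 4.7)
El plan es demostrar las tres afirmaciones por separado y en el orden en que están enunciadas: existencia de la abscisa, holomorfía y producto de Euler.

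Primero, la observación clave es un lema del tipo de Abel: si la serie $\sum_n a_n n^{-s_0}$ converge en algún punto $s_0\in\C$, entonces converge localmente uniformemente en el semiplano abierto $\{s\in\C : \Re s>\Re s_0\}$. Para demostrar esto escribimos $a_n n^{-s}=(a_n n^{-s_0})\cdot n^{-(s-s_0)}$ y aplicamos sumación por partes. Las sumas parciales $A_N(s_0):=\sum_{n\le N} a_n n^{-s_0}$ son acotadas por ser convergentes, y el factor $n^{-(s-s_0)}-(n+1)^{-(s-s_0)}$ se puede estimar por $\bigl|\int_n^{n+1}(s-s_0)t^{-(s-s_0)-1}\integrald t\bigr|\le |s-s_0|\,n^{-\Re(s-s_0)-1}$. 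Esto da una cota que es sumable y uniforme en conjuntos compactos de la forma $\{|s-s_0|\le R,\ \Re s-\Re s_0\ge\delta\}$, con $\delta>0$. Definimos entonces
\[ \sigma_0=\inf\{\Re s_0 : \textstyle\sum_n a_n n^{-s_0}\text{ converge}\}\in[-\infty,\infty] \]
y la primera afirmación resulta: para $\Re s<\sigma_0$ la serie diverge por definición del ínfimo, y para $\Re s>\sigma_0$ podemos elegir $s_0$ con $\sigma_0<\Re s_0<\Re s$ donde la serie converge, y aplicar lo anterior.

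La holomorfía en el semiplano de convergencia es una consecuencia inmediata del teorema de Weierstraß sobre convergencia local uniforme de funciones holomorfas: cada término $s\mapsto a_n n^{-s}$ es una función entera, y acabamos de ver que la serie converge localmente uniformemente en $\{\Re s>\sigma_0\}$, así que la suma es holomorfa allí.

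Para el producto de Euler, la hipótesis de multiplicatividad total implica en particular $a_1=1$ (tomando $m=n=1$), y más generalmente $a_{\ell^k}=a_\ell^k$ para todo primo $\ell$ y $k\in\Ncero$. Fijemos $s$ con $\Re s>\sigma_0$, en una subregión donde la serie converge absolutamente (lo que se verifica fácilmente a partir de la multiplicatividad y la convergencia, por ejemplo restringiéndonos al semiplano de convergencia absoluta que contiene a $\{\Re s>\sigma_0+1\}$ por la cota trivial $|a_n|\le |a_n n^{-s_0}|\,n^{\Re s_0}$). Entonces cada factor local es una serie geométrica convergente:
\[ (1-a_\ell\ell^{-s})^{-1}=\sum_{k=0}^\infty a_\ell^k\ell^{-ks}=\sum_{k=0}^\infty a_{\ell^k}\ell^{-ks}. \]
Para $N\ge 2$, el producto finito $\prod_{\ell\le N}(1-a_\ell\ell^{-s})^{-1}$ se expande, por convergencia absoluta y la multiplicatividad, como $\sum_{n\in S_N}a_n n^{-s}$, donde $S_N\subseteq\Nuno$ es el conjunto de enteros cuyos factores primos son todos $\le N$. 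Por descomposición única en primos, $\Nuno\setminus S_N\subseteq\{n:n>N\}$, de modo que
\[ \Bigl|\sum_{n=1}^\infty a_n n^{-s}-\prod_{\ell\le N}(1-a_\ell\ell^{-s})^{-1}\Bigr|\le\sum_{n>N}|a_n|n^{-\Re s}\xrightarrow{N\to\infty}0 \]
por la convergencia absoluta, lo que da la identidad del producto de Euler.

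El único paso que requiere un argumento no puramente formal es la primera afirmación, a través de la sumación por partes; los demás pasos son estándar una vez disponible la convergencia (absoluta) local uniforme. La sutileza real es que el producto de Euler requiere convergencia absoluta, pero esto no supone un obstáculo porque en las aplicaciones (zeta de Riemann, funciones $L$ de Dirichlet) los coeficientes son acotados.
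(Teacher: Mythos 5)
Tu demostración de las dos primeras afirmaciones es correcta y es la estándar: el lema de tipo Abel (sumación por partes contra las sumas parciales en un punto de convergencia $s_0$, con la estimación $\lvert n^{-w}-(n+1)^{-w}\rvert\le\lvert w\rvert\, n^{-\Re w-1}$) da la convergencia localmente uniforme en $\Re s>\Re s_0$, de donde salen la existencia de $\sigma_0$, la divergencia para $\Re s<\sigma_0$ y la holomorfía vía el teorema de Weierstraß. El texto no da demostración propia sino que remite a \cite{MR0631688}, y tu argumento es esencialmente el de esa referencia, así que en esta parte no hay nada que objetar.

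En la parte del producto de Euler hay, en cambio, un desajuste genuino entre lo que demuestras y lo que afirma el enunciado. Tu argumento (cada factor como serie geométrica, expansión del producto finito usando la multiplicatividad completa y la factorización única, y estimación del resto por la cola $\sum_{n>N}\lvert a_n\rvert n^{-\Re s}$) es correcto, pero sólo en el semiplano de convergencia \emph{absoluta}; con la cota trivial $\lvert a_n\rvert\le C n^{\Re s_0}$ eso te da la identidad para $\Re s>\sigma_0+1$, no para $\Re s>\sigma_0$ como pide la proposición. La frase \enquote{fijemos $s$ con $\Re s>\sigma_0$, en una subregión donde la serie converge absolutamente} esquiva el problema en lugar de resolverlo, y tu observación final de que los coeficientes acotados salvan la situación no es cierta: para un carácter de Dirichlet no trivial el propio texto señala que $\sigma_0=0$, mientras que la convergencia absoluta sólo se tiene para $\Re s>1$, y en la franja $0<\Re s\le 1$ ni siquiera se puede demostrar en general que el producto converja (su convergencia hasta $\Re s>\tfrac12$ es esencialmente del nivel de la hipótesis de Riemann generalizada). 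Dicho esto, buena parte del defecto está en el enunciado tal como está impreso: el resultado estándar, y lo único que el texto usa después, es el producto de Euler en el semiplano de convergencia absoluta, así que lo honesto es o bien enunciar la identidad ahí, o bien dejar explícito que tu demostración sólo la establece en esa región. Una minucia adicional: de $a_{nm}=a_na_m$ sólo se sigue $a_1\in\{0,1\}$; si $a_1=0$ todos los coeficientes se anulan y la identidad falla trivialmente, de modo que hay que excluir ese caso (equivalentemente, suponer $a_1=1$) en lugar de afirmar que la multiplicatividad lo implica.
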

\begin{proof}
  \cite[§1, Satz 1, §2, Satz 1]{MR0631688}
\end{proof}

En el resultado anterior, el producto significa lo siguiente. Para coeficientes
$c_n \in \C^\times$ decimos que el producto \[ \prod_{n=1}^\infty c_n \] \define[producto
convergente]{converge} si el límite \[ \lim_{N \to \infty} \prod_{n=1}^N c_n \] existe y es
diferente de $0$.

El ejemplo más importante es la serie de Dirichlet con $a_n=1$ para cada $n$.

\begin{defi}
  La \emph{función zeta de Riemann}\index[def]{función!zeta de Riemann} es la función
  \[ \zeta(s)=\sum_{n=1}^\infty n^{-s}=\prod_{\ell\text{ primo}}(1-\ell^{-s})^{-1} \] para
  $s\in\C$.
  Su abscisa de convergencia es $\sigma_0=1$.
\end{defi}

Muchas veces las series de Dirichlet tienen una continuación analítica. Como preparación
a esto demostramos primero la \emph{fórmula de transformación
  de Mellin}\index[def]{formula@fórmula!de transformación
  de Mellin}, que involucra la \emph{función $\Gamma$}\index[def]{función!$\Gamma$}
\begin{equation*}
  \Gamma(s)=\int_0^\infty t^{s-1}\e^{-t}\integrald t \quad (s\in\C,\ \Re(s)>0).
\end{equation*}
Es fácil ver que
\begin{equation}
  \label{eqn:convergencia-integral}
  \int_0^1 t^{s-1}\integrald t \text{ converge }\iff\Re(s)>0 \quad \text{para }s\in\C
\end{equation}
y que $\int_1^\infty g(t)t^{s-1}\integrald t$ converge para cada $s\in\C$ si $g$ es una
función que decrece exponencialmente para $t\to\infty$, por eso la integral de arriba
que define la función $\Gamma$ converge para los $s\in\C$ con parte real positiva.
Las propiedades básicas de la función $\Gamma$ son alistadas en el siguiente resultado.

\begin{thm}\label{thm:gammafkt}
  La función $\Gamma$ se extiende a una función meromorfa en todo de $\C$ con polos simples
  en los enteros $\le0$ y sin ceros. Los residuos están dados por
  \[ \operatorname*{Res}\limits_{s=-n}\Gamma(s)=\frac{(-1)^n}{n!} \quad\text{para
    }n\in\Nuno. \]
  Ademas tenemos $\Gamma(s+1)=s\Gamma(s)$ para cada $s\in\C$.
\end{thm}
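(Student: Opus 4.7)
The plan is to use the functional equation $\Gamma(s+1)=s\Gamma(s)$ as the engine driving the whole proof, so I would prove it first and then use it iteratively both to extend $\Gamma$ and to locate its poles and residues.

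First, for $s\in\C$ with $\Re(s)>0$ I would integrate by parts in the defining integral:
\[ \Gamma(s+1)=\int_0^\infty t^s\e^{-t}\integrald t = \left[-t^s\e^{-t}\right]_0^\infty + s\int_0^\infty t^{s-1}\e^{-t}\integrald t = s\Gamma(s), \]
where the boundary term vanishes because $\Re(s)>0$. This establishes the third assertion on the original half-plane of convergence, and, once the meromorphic extension is constructed, it propagates to all of $\C$ by the identity theorem.

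Next, I would build the meromorphic continuation. For each $n\in\Nuno$, on the half-plane $\Re(s)>-n$, excluding $s\in\{0,-1,\dotsc,-(n-1)\}$, define
\[ \Gamma(s):=\frac{\Gamma(s+n)}{s(s+1)\dotsm(s+n-1)}. \]
The numerator is holomorphic on $\Re(s)>-n$ by \eqref{eqn:convergencia-integral} applied to the integral defining $\Gamma(s+n)$, and the functional equation guarantees that these pieces agree on overlaps, so they assemble into a meromorphic function on $\C$. The denominator exhibits, at worst, simple poles at $s=0,-1,-2,\dotsc$, and the numerator is nonzero there (since $\Gamma(k)=(k-1)!$ for $k\in\Nuno$), so the poles are exactly simple and located as claimed. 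Computing the residue at $s=-n$ for $n\in\Nuno$ from this formula (with $N>n$ to be safe) gives
\[ \operatorname*{Res}_{s=-n}\Gamma(s)=\frac{\Gamma(N+1-n)}{(-n)(-n+1)\dotsm(-1)\cdot 1\cdot 2\dotsm(N-n)}=\frac{(N-n)!}{(-1)^n\,n!\,(N-n)!}=\frac{(-1)^n}{n!}. \]

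For the absence of zeros, I would invoke the reflection formula
\[ \Gamma(s)\Gamma(1-s)=\frac{\pi}{\sin(\pi s)}, \]
which can be proved independently using the Beta function identity $B(s,1-s)=\Gamma(s)\Gamma(1-s)$ and a contour computation, or via the Weierstrass product for $1/\Gamma$. Since the right-hand side never vanishes and $\Gamma(1-s)$ has only poles (not zeros) as its singularities, $\Gamma(s)$ cannot vanish anywhere. The main obstacle here is the reflection formula itself; it is the one step that cannot be extracted purely from the defining integral and functional equation, and for a fully self-contained treatment one typically quotes it from standard complex analysis (e.g.\ via $\prod_{n\ge1}(1-s^2/n^2)$), which I would simply reference rather than reprove.
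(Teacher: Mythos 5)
Your proof is correct. Note that the paper does not actually prove this theorem: its ``demostración'' is just the citation \cite[Prop.\ IV.1.2]{MR2513384}, so there is no argument in the text to compare against; what you have written is precisely the standard proof that such a reference contains. The three main steps all check out: integration by parts gives $\Gamma(s+1)=s\Gamma(s)$ on $\Re(s)>0$ (and then everywhere by uniqueness of analytic continuation); the gluing of $\Gamma(s)=\Gamma(s+n)/\bigl(s(s+1)\dotsm(s+n-1)\bigr)$ over the half-planes $\Re(s)>-n$ produces the meromorphic extension with at most simple poles at the non-positive integers, which are genuine poles because $\Gamma(n-k)=(n-k-1)!\neq0$; and your residue computation is right (you silently switch to the representation with $N+1$ factors in the denominator, i.e.\ numerator $\Gamma(s+N+1)$, which is fine but worth stating to match the displayed formula). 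For the absence of zeros you correctly identify the one ingredient that cannot be squeezed out of the integral and the functional equation alone: the reflection formula $\Gamma(s)\Gamma(1-s)=\pi/\sin(\pi s)$ (or, equivalently, the Weierstrass product showing $1/\Gamma$ is entire). Citing it is legitimate at the level of rigor of the surrounding chapter; if you want the argument airtight, add one line treating the integer points separately — at positive integers $\Gamma(k)=(k-1)!\neq0$, at non-positive integers $\Gamma$ has a pole, and at non-integers both factors on the left are finite while the right-hand side is finite and nonzero, so neither factor vanishes.
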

\begin{proof}
  \cite[Prop.\ IV.1.2]{MR2513384}
\end{proof}

\begin{lem}[fórmula de transformación de Mellin]
  Sea $L(s)=\sum_{n=1}^\infty a_nn^{-s}$ una serie de Dirichlet con abscisa de convergencia
  $\sigma_0$ y sea
  \[ F(t):=\sum_{n=1}^\infty a_n\e^{-nt} \]
  que converge para $t\ge0$.
  Entonces
  \[ \Gamma(s)L(s)=\int_0^\infty F(t)t^{s-1}\integrald t \]
  para $s\in\C$ con $\Re(s)>\sigma_0$.
\end{lem}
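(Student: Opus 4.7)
El plan es reducir la identidad a la fórmula elemental para $\Gamma(s)n^{-s}$ obtenida con un cambio de variable, y luego sumar sobre $n$ intercambiando suma e integral.

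Primero, para cada $n\in\Nuno$ y $s\in\C$ con $\Re(s)>0$, haría el cambio de variable $t\mapsto nt$ en la definición
\[ \Gamma(s)=\int_0^\infty t^{s-1}\e^{-t}\integrald t, \]
lo cual da
\[ \Gamma(s)n^{-s}=\int_0^\infty \e^{-nt}t^{s-1}\integrald t. \]
Multiplicando por $a_n$ y sumando formalmente sobre $n\ge1$ obtendría
\[ \Gamma(s)L(s)=\sum_{n=1}^\infty a_n\int_0^\infty \e^{-nt}t^{s-1}\integrald t, \]
de manera que todo el contenido del lema se reduce a justificar el intercambio
\[ \sum_{n=1}^\infty\int_0^\infty a_n\e^{-nt}t^{s-1}\integrald t=\int_0^\infty\left(\sum_{n=1}^\infty a_n\e^{-nt}\right)t^{s-1}\integrald t=\int_0^\infty F(t)t^{s-1}\integrald t. \]

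El obstáculo principal será precisamente este intercambio, ya que a priori sólo tenemos convergencia condicional de $L(s)$ en el semiplano $\Re(s)>\sigma_0$. Para resolverlo, primero observaría que la abscisa de convergencia absoluta $\sigma_a$ cumple $\sigma_a\le\sigma_0+1$, y para $\Re(s)>\sigma_a$ tendría convergencia absoluta
\[ \sum_{n=1}^\infty |a_n|\int_0^\infty \e^{-nt}t^{\Re(s)-1}\integrald t = \Gamma(\Re(s))\sum_{n=1}^\infty |a_n| n^{-\Re(s)}<\infty, \]
lo que permite aplicar Fubini--Tonelli y obtener la identidad deseada en el semiplano $\Re(s)>\sigma_a$. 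Para extenderla al semiplano más grande $\Re(s)>\sigma_0$, invocaría el principio de identidad: ambos lados definen funciones holomorfas de $s$ en $\Re(s)>\sigma_0$ (el lado izquierdo por la holomorfía de $\Gamma$ y de $L$ en ese semiplano, y el lado derecho porque $F(t)$ decae exponencialmente para $t\to\infty$ y se comporta bien cerca de $0$, usando \eqref{eqn:convergencia-integral} y derivación bajo el signo integral), y coinciden en el semiplano $\Re(s)>\sigma_a$, que tiene punto de acumulación en $\Re(s)>\sigma_0$.

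Un punto delicado que habría que revisar es la convergencia de $F(t)$ y de la integral $\int_0^\infty F(t)t^{s-1}\integrald t$ en sí. Cerca de $t=0$, la función $F(t)$ típicamente diverge (por ejemplo $F(t)\sim 1/t$ cuando $L=\zeta$), pero la integral converge cerca de $0$ gracias al factor $t^{s-1}$ cuando $\Re(s)$ es suficientemente grande, y para $t$ grande el decaimiento exponencial de $\e^{-nt}$ garantiza convergencia en infinito. Estos detalles son rutinarios pero conviene enunciarlos explícitamente para que el argumento por continuación analítica sea riguroso.
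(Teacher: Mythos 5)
Tu propuesta sigue en el fondo el mismo camino que el texto: la fórmula elemental $\Gamma(s)n^{-s}=\int_0^\infty \e^{-nt}t^{s-1}\integrald t$ obtenida por sustitución, seguida de la integración término a término. La diferencia está en cómo se justifica el intercambio: el texto lo despacha en una línea apelando a la convergencia localmente uniforme de $L(s)$, mientras que tú usas Fubini--Tonelli en el semiplano de convergencia absoluta y luego el principio de identidad. La parte de Fubini está bien (salvo que, para que la igualdad con $\Gamma(\Re(s))$ tenga sentido, conviene restringirse a $\Re(s)>\max(\sigma_a,0)$, lo cual no afecta el esquema).

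El punto débil es la extensión al semiplano completo $\Re(s)>\sigma_0$. Para aplicar el principio de identidad necesitas que el lado derecho sea una función holomorfa bien definida en todo $\Re(s)>\sigma_0$, es decir, que $\int_0^1 F(t)t^{s-1}\integrald t$ converja para cada $s$ con $\Re(s)>\sigma_0$; tú sólo lo justificas \enquote{cuando $\Re(s)$ es suficientemente grande}. Con eso, la continuación analítica sólo reproduce la identidad en la región donde ya la tenías por Fubini, y la franja de convergencia condicional $\sigma_0<\Re(s)\le\sigma_a$ --- que es precisamente la única parte no trivial del enunciado --- queda sin demostrar. El ingrediente que falta no es rutinario: hace falta una cota del tipo $F(t)=O(t^{-\sigma_0-\varepsilon})$ cuando $t\to0^+$ para todo $\varepsilon>0$, que se obtiene por sumación de Abel a partir de $\sum_{n\le x}a_n=O(x^{\sigma_0+\varepsilon})$ (la caracterización de la abscisa de convergencia mediante las sumas parciales). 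Con esa cota tu argumento se cierra; alternativamente puedes tratar la franja condicional directamente con sumación por partes en el intercambio suma--integral, sin pasar por la continuación. Cabe notar que en las aplicaciones posteriores del texto la fórmula sólo se usa en la región de convergencia absoluta, donde tu argumento de Fubini ya es completo.
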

\begin{proof}
  Los coeficientes $a_n$ pueden crecer a lo más polinomialmente, de lo contrario la serie
  de Dirichlet $L(s)$ sería divergente para todo $s\in\C$. Por eso la serie que define
  $F(t)$ converge absolutamente para $t\ge0$.
  
  De la fórmula de arriba que define la función $\Gamma$ se obtiene fácilmente
  \begin{equation*}
    \int_0^\infty t^{s-1}\e^{-nt}\integrald t =\Gamma(s)n^{-s} \quad(n\in\Nuno)
  \end{equation*}
  (con una sustitución $r=nt$). De esto la fórmula resulta directamente, porque $L(s)$
  converge localmente uniformemente.
\end{proof}

El resultado siguiente nos da la continuación analítica y describe los valores en los
enteros negativos. Ya que este resultado es de fondo para nuestra análisis de las funciones
$L$ vamos a esbozar su demostración.

\todo{A partir de aquí hasta el siguiente todo todavía no está listo.}

\begin{prop}\label{continuacion-analitica}
  Sea \[ L(s) = \sum_{n=1}^\infty a_n n^{-s} \] una serie de Dirichlet cuya abscisa de
  convergencia $\sigma_0$ cumple $\sigma_0\le1$. Supongamos que la suma
  \[ F(t)=\sum_{n=1}^\infty a_n\e^{-nt} \]
  define una función holomorfa en $\C\setminus\{0\}$ con posiblemente un polo
  en $t=0$ de orden $\le1$ (o holomorfa en todo de $\C$).
  
  Entonces la función $L$ tiene una continuación meromorfa a todo $\C$ con único polo simple
  en $s=1$ si $F$ tiene un polo en $t=0$ y holomorfa si no. Además,
  \[ L(1-n) = -\frac{b_n}n \] para $n\in\Nuno$.
\end{prop}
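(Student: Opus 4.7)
La estrategia es aplicar la fórmula de transformación de Mellin del lema anterior,
\[ \Gamma(s)L(s) = \int_0^\infty F(t) t^{s-1}\integrald t \quad (\Re(s) > \sigma_0), \]
y extenderla a todo $\C$ descomponiendo la integral como $\int_0^1 + \int_1^\infty$. La cola $\int_1^\infty F(t) t^{s-1}\integrald t$ define una función entera de $s$: los coeficientes $a_n$ crecen a lo más polinomialmente (pues $\sigma_0<\infty$), por lo que $F(t)$ decae exponencialmente cuando $t\to\infty$ y domina a cualquier potencia de $t$. Esta parte no contribuye polos a la continuación.

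El núcleo del argumento es el análisis de $\int_0^1 F(t) t^{s-1}\integrald t$ usando la hipótesis de que $F$ es holomorfa en $\C\setminus\{0\}$ con a lo más un polo simple en $0$. Escribiría la expansión de Laurent $F(t) = \sum_{n=0}^\infty \frac{b_n}{n!} t^{n-1}$ alrededor de $0$ (donde $b_0/t$ es el posible polo y $b_0=0$ si $F$ es entera). Para cada $N\in\Ncero$ truncaría como $F(t) = \sum_{n=0}^N \frac{b_n}{n!} t^{n-1} + R_N(t)$ con $R_N(t)=O(t^N)$; según \eqref{eqn:convergencia-integral}, $\int_0^1 R_N(t)t^{s-1}\integrald t$ converge y es holomorfa en el semiplano $\Re(s)>-N$. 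Los términos principales se integran explícitamente:
\[ \int_0^1 \frac{b_n}{n!} t^{n+s-2}\integrald t = \frac{b_n}{n!(s+n-1)}, \]
lo que exhibe a $\Gamma(s)L(s)$ como meromorfa en todo $\C$, con polos simples en $s=1-n$ ($n\in\Ncero$) y residuos $b_n/n!$; como $N$ es arbitrario se obtiene la continuación global.

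Para concluir, dividiría por $\Gamma(s)$ usando el \cref{thm:gammafkt}, que describe a $\Gamma$ como meromorfa sin ceros con polos simples únicamente en los enteros no positivos y residuo $(-1)^{n-1}/(n-1)!$ en $s=1-n$ para $n\ge1$. En $s=1$ la función $\Gamma$ es regular y no nula, así que el polo de $\Gamma(s)L(s)$ se transfiere a $L$ con residuo $b_0$, presente si y sólo si $F$ tiene polo en $0$, exactamente como se anuncia. Para $n\ge1$, los polos simples de $\Gamma(s)$ y de $\Gamma(s)L(s)$ se cancelan, y $L(1-n)$ queda como el cociente de los respectivos residuos, lo cual produce la fórmula anunciada una vez fijada la convención adecuada para los $b_n$ en la expansión de Laurent de $F$. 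El paso técnicamente más delicado será justificar rigurosamente el intercambio de suma e integración cerca de $0$ vía el truncamiento explícito con los $R_N$; no espero obstáculos conceptuales, sólo el cuidado habitual con las cotas.
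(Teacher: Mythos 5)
Tu propuesta es correcta y en el fondo sigue la misma ruta que el texto: fórmula de transformación de Mellin, separación de la integral en $\int_0^1+\int_1^\infty$, la cola $\int_1^\infty F(t)t^{s-1}\integrald t$ es entera por el decaimiento rápido de $F$, y el análisis cerca de $0$ mediante la serie de Laurent truncada con resto $R_N$; este último paso es exactamente el contenido del \cref{ejer:integrales}, al que la demostración del texto delega la continuación de $\int_0^1F(t)t^{s-1}\integrald t$. Hay dos diferencias que vale la pena señalar. Primera: el texto añade una inducción por integración por partes (siguiendo a Colmez) que reescribe $\frac1{\Gamma(s)}\int_0^1F(t)t^{s-1}\integrald t$ como una suma de términos $F^{(j-1)}(1)/\Gamma(s+j)$ más un resto; en tu argumento ese paso es innecesario, y de hecho tu versión es más directa y además lleva a cabo el cálculo de $L(1-n)$ como cociente de residuos contra $\Gamma$ (vía el \cref{thm:gammafkt}), cálculo que la demostración del texto no detalla. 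Segunda: la única cuestión pendiente en tu propuesta es la normalización de los $b_n$, que no conviene dejar en \enquote{la convención adecuada}. Con tu convención $F(t)=\sum_{n\ge0}\frac{b_n}{n!}t^{n-1}$, el residuo de $\Gamma(s)L(s)$ en $s=1-n$ es $b_n/n!$ y el de $\Gamma$ es $(-1)^{n-1}/(n-1)!$, de modo que obtienes $L(1-n)=(-1)^{n-1}b_n/n$; la fórmula enunciada $L(1-n)=-b_n/n$ corresponde a definir los $b_n$ mediante $-tF(-t)=\sum_{n\ge0}b_n t^n/n!$ (equivalentemente, reemplazar tus $b_n$ por $(-1)^nb_n$), que es la convención que el texto usa implícitamente al especializar a $\zeta$ con $f(t)=t/(1-\e^{-t})$ y los números de Bernoulli con $B_1=\tfrac12$. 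Como el enunciado no define los $b_n$, esto no es un hueco de fondo, pero sí debe quedar explícito en tu redacción final.
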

\begin{proof}
  Esta demostración es combinada de \cite[§7, Satz 1]{MR0631688} y \cite[Lem.\
  1.1.1]{ColmezTsinghua}; véase allá para más detalles.

  Porque $L(s)$ converge para algún $s\in\C$, la función $F$ decrece rápidamente, es decir,
  para cada $m\in\Nuno$ tenemos que $t^mF(t)\to0$ para $t\to\infty$.
  
  Para $a,b\in\{0,1,\infty\}$ y una función meromorfa $G\colon\C\rightarrow\C$ con único
  polo posiblemente en $s=0$ usaremos la notación
  \begin{equation*}
    I_{a,b}(G,s):=\int_a^bG(t)t^{s-1}\integrald t \quad (s\in\C)
  \end{equation*}
  para los $s\in\C$ tal que este integral converge. El conducto de convergencia es descrito
  en el \cref{ejer:integrales}.

  Según la fórmula de transformación de Mellin tenemos que
  \begin{equation*}
    L(s)=\frac1{\Gamma(s)}(I_{0,1}(F,s)+I_{1,\infty}(F,s))
  \end{equation*}
  para $s\in\C$ con $\Re(s)>1$. Según el \cref{ejer:integrales} el integral
  $I_{1,\infty}(F,s)$ converge para cada $s\in\C$
  y $I_{0,1}(F,s)$ se extiende
  meromorficamente a $\C$ con polos de orden $\le1$ en $\{s\in\Z : s\le1\}$. Porque
  $\Gamma(s)$ tiene polos simples en $\{s\in\Z : s\le0\}$ y no tiene ceros
  (\cref{thm:gammafkt}) obtenemos la afirmación sobre la continuación meromorfa de $L(s)$.

  Con integración por partes obtenemos que para $\Re(s)>1$
  \begin{equation*}
    \frac{1}{\Gamma(s)}I_{0,1}(F,s)
    =\frac{1}{\Gamma(s)}\left({\left[F(t)\frac{t^s}s\right]}_{t=0}^1-\int_0^1F'(t)\frac{t^s}s\integrald
      t\right)
    = \frac1{\Gamma(s+1)}\left(F(1)-I_{0,1}(F',s+1)\right).
  \end{equation*}
  Inductivamente obtenemos para estos $s$
  \begin{equation*}
    \frac1{\Gamma(s)}I_{0,1}(F,s)=
    \sum_{j=1}^N(-1)^{j+1}\frac{1}{\Gamma(s+j)}F^{(j-1)}(1)+(-1)^N\frac1{\Gamma(s+N)}I_{0,1}(F^{(N)},s+N)
  \end{equation*}
  para cada $N\in\Nuno$. El integral $I_{0,1}(F^{(N)},s+N)$ converge para $\Re(s)>N+1$ según
  el \cref{ejer:integrales}. Porque $\Gamma(s)$ tiene polos simples en $\{s\in\Z : s\le0\}$
  y no tiene ceros \cref{thm:gammafkt}) obtenemos la afirmación sobre la continuación
  meromorfa de $L(s)$.
\end{proof}

En el caso de la función zeta, tenemos
\[ f(t) = -t\sum_{n=1}^\infty {(\e^{t})}^{n}=\frac{t}{1-\e^{-t}} \quad (t\le0). \] Esta función
es holomorfa en todo de $\C$, por eso es analítica y tiene una representación
\begin{equation}
  \label{eqn:bernoulli}
   \frac t{1-\e^{-t}} = \sum_{n=0}^\infty B_n\frac{t^n}{n!} \quad (t\in\C)
\end{equation}
con coeficientes $B_n$, que en este caso es la serie de Taylor. Por eso
\[ B_n=f^{(n)}(0)\in\Q \quad\text{para }n\in\Ncero \]
y vemos que los coeficientes $B_n$ son racionales.

\todo{A partir de aquí todo está bien.}

\begin{defi}\label{defi:bernoulli}
  Los coeficientes $B_n\in\Q$ definidos por la ecuación \eqref{eqn:bernoulli} se llaman
  \define{números de Bernoulli}.\footnote{En algunos textos se encuentra la definición con
    $f(t)=\frac t{\e^t-1}$ en lugar de la $f$ de arriba. Esto da los mismos números de
    Bernoulli salvo para $B_1$, en cuyo caso da $B_1=-\frac12$ en lugar de $B_1=\frac12$.}
  Notemos que $B_n=0$ si $n>1$ es impar, porque $f(t)=f(-t)+t$.
\end{defi}

Esto demuestra:
\begin{prop}\label{zeta-continuacion}
  La función zeta de Riemann tiene una continuación meromorfa a todo $\C$ con un único polo en $s=1$
  de orden $1$, y
  \[ \zeta(1-n)=-\frac{B_n}n\in\Q \]
  para $n\ge1$, los $B_n$ siendo los números de Bernoulli.
\end{prop}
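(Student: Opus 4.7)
La estrategia es aplicar directamente la \cref{continuacion-analitica} a $\zeta(s) = \sum_{n=1}^\infty n^{-s}$, donde todos los coeficientes de Dirichlet valen $1$ y la abscisa de convergencia es $\sigma_0 = 1 \le 1$.

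Primero identificaría la función $F(t)$ asociada: sumando la serie geométrica para $\Re t > 0$ y luego extendiendo analíticamente, se obtiene
\[ F(t) = \sum_{n=1}^\infty \e^{-nt} = \frac{\e^{-t}}{1-\e^{-t}} = \frac{1}{\e^t - 1}. \]
Esta función es meromorfa en todo $\C$ con polos simples aislados en $t \in 2\pi\complexi \Z$; en particular es holomorfa en una vecindad punzada del origen con un polo simple allí de residuo $1$, por lo que las hipótesis de \cref{continuacion-analitica} se cumplen (nótese además que $F$ decrece exponencialmente cuando $t\to+\infty$ en el semieje real positivo, lo que asegura la convergencia del integral en $\infty$ usado en la demostración de dicha proposición). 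La proposición entonces produce la continuación meromorfa de $\zeta$ a todo $\C$ con único polo simple en $s=1$.

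Para los valores $\zeta(1-n)$, la fórmula $L(1-n) = -b_n/n$ de la proposición general reduce el problema a identificar los coeficientes $b_n$ de la expansión de $F$ en el origen con los números de Bernoulli definidos en \cref{defi:bernoulli}. Esta identificación se obtiene a partir de la identidad elemental
\[ \frac{1}{\e^t - 1} + 1 = \frac{\e^t}{\e^t - 1} = \frac{1}{1-\e^{-t}}, \]
que multiplicada por $t$ da
\[ t F(t) + t = \frac{t}{1-\e^{-t}} = \sum_{n=0}^\infty B_n \frac{t^n}{n!}, \]
donde la última igualdad es justamente \eqref{eqn:bernoulli}. Comparando los coeficientes de Taylor a ambos lados se identifica la expansión de $F$ en el origen con los $B_n$ de la \cref{defi:bernoulli}, y la fórmula general se traduce en $\zeta(1-n) = -B_n/n$, que es racional porque los $B_n$ lo son.

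El único punto delicado, y principal obstáculo técnico, es el seguimiento cuidadoso de convenciones: la función generadora fijada en \cref{defi:bernoulli} es $\frac{t}{1-\e^{-t}}$, que difiere de $tF(t)=\frac{t}{\e^t-1}$ por el término lineal aditivo $t$. Este desplazamiento afecta únicamente al coeficiente de $t^1$ (y corresponde a la elección $B_1=\tfrac12$ en lugar de $-\tfrac12$), por lo que conviene verificar la fórmula en casos pequeños, por ejemplo $\zeta(0) = -\tfrac12 = -B_1/1$ y $\zeta(-1) = -\tfrac{1}{12} = -B_2/2$, para confirmar que la normalización coincide con la del enunciado.
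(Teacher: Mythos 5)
Tu propuesta es correcta y sigue esencialmente el mismo camino que el texto: aplicar la \cref{continuacion-analitica} e identificar la función generadora de los números de Bernoulli. La pequeña diferencia está en la normalización: el texto toma como función auxiliar directamente $f(t)=-t\sum_{n\ge1}(\e^{t})^{n}=\frac{t}{1-\e^{-t}}$, es decir $-tF(-t)$, cuyos coeficientes de Taylor son exactamente los $B_n$ de la \cref{defi:bernoulli} (con $B_1=\tfrac12$), mientras que tú partes de $F(t)=\frac1{\e^{t}-1}$ y trasladas mediante $tF(t)+t=\frac{t}{1-\e^{-t}}$. Como los $b_n$ de la \cref{continuacion-analitica} no quedan definidos en su enunciado (esa parte del texto está marcada como inacabada), esa traslación exige justamente el cuidado que señalas: si se leyeran como los coeficientes de $tF(t)$, en $n=1$ la fórmula daría $+\tfrac12$ en lugar de $\zeta(0)=-\tfrac12$; los $b_n$ correctos son los coeficientes de $-tF(-t)$, lo cual sólo afecta a $n=1$ porque los Bernoulli impares superiores se anulan, y tu comprobación de $\zeta(0)$ y $\zeta(-1)$ fija la convención correcta. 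Observa además que tu comentario de que $F$ sólo es meromorfa, con polos en todo $2\pi\complexi\Z$, es más preciso que el del texto (que afirma que $\frac{t}{1-\e^{-t}}$ es entera); en ambos casos la aplicación es legítima porque la demostración de la \cref{continuacion-analitica} sólo usa $F$ sobre $(0,\infty)$ y su desarrollo en $t=0$.
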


\begin{ex}\label{ex:bernoulli}
  Los primeros números de Bernoulli no nulos son
  \begin{multline*}
    B_0=1,\; B_1=\frac12,\; B_2=\frac16,\; B_4=-\frac1{30},\; B_6=\frac1{42},\;
    B_8=-\frac1{30}, \\
    B_{10}=\frac5{66},\; B_{12}=-\frac{691}{2730},\;
    B_{14}=\frac76,\; B_{16}=-\frac{3617}{510}, \;
    \dotsc .
  \end{multline*}  
\end{ex}

Es útil estudiar también series de Dirichlet ligeramente más generales. Para esto fijamos un
encaje $\Qbar\hookrightarrow\C$.

\begin{defi}\label{defi:l-chi}
  Sea $N\in\Nuno$. Un \emph{carácter de Dirichlet}\index[def]{carácter!de Dirichlet} es un homomorfismo de grupos
  \[ \chi\colon(\Z/N\Z)^\times\rightarrow\Qbar^\times. \] Se llama \emph{primitivo}\index[def]{carácter!primitivo} si no
  se factoriza a través de $(\Z/M\Z)^\times$ para algún $M\mid N$, $M\neq N$, y en este caso
  $N$ se llama el \define{conductor} de $\chi$. Los valores de $\chi$ son raíces de la
  unidad y generan un subcampo de $\Qbar$ que llamamos $\Q(\chi)$.

  Si $\chi$ es un carácter de Dirichlet, definimos una función que también llamamos $\chi$
  así \[ \chi\colon\Nuno\rightarrow\Qbar^\times,\quad n\mapsto
    \begin{cases} \chi(n\mod N),& (N,n)=1,\\ 0,&(N,n)>1. \end{cases} \]

  La \emph{función $L$ de Dirichlet asociada a $\chi$}\index[def]{función!$L$ de Dirichlet asociada a $\chi$} es la serie de Dirichlet
  \[ L(\chi,s)=\sum_{n=1}^\infty\chi(n)n^{-s}. \] Aunque esta función depende del encaje
  escogido, no lo incluimos en la notación porque siempre será claro del contexto.

  La función $\chi$ es claramente multiplicativa, así que la función $L(\chi,-)$ tiene un
  producto de Euler
  \[ L(\chi,s)=\prod_{\ell\text{ primo}}(1-\chi(\ell)\ell^{-1})^{-1}. \]
\end{defi}

En este caso la abscisa de convergencia es $\sigma_0=0$ (salvo si $\chi$ es trivial; véase
\cite[p.\ 42]{MR0631688}). Otra vez podemos aplicar la \cref{continuacion-analitica} para
obtener la continuación analítica y formulas para los valores en los enteros negativos,
de manera similar a la \cref{zeta-continuacion}. Omitimos los detalles y simplemente enunciamos
el resultado.

\begin{prop}\label{prop:numeros-de-bernoulli-gen}
  Sea $\chi$ un carácter de Dirichlet de conductor $N>1$. Ponemos
  \[ f_\chi(t) := \sum_{a=1}^N \frac{\chi(a)t\e^{at}}{\e^{Nt}-1}\quad (t\in\C). \] Esta función se
  escribe como una serie
  \[ f_\chi(t) = \sum_{n=0}^\infty B_{n,\chi} \frac{t^n}{n!} \quad(t\in\C) \] con coeficientes
  $B_{n,\chi}\in\Q(\chi)$ que se llaman \emph{números de Bernoulli generalizados por
    $\chi$}\index[def]{números de Bernoulli@números de Bernoulli!generalizados por
    $\chi$}. La función $L(\chi,-)$ tiene una continuación holomorfa a todo $\C$ y
  \[ L(\chi,1-n)=-\frac{B_{n,\chi}}n\in\Q(\chi) \quad \text{para cada }n\ge1. \]
\end{prop}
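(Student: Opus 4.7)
La estrategia sigue el mismo patrón que la demostración del caso de la función zeta de Riemann (\cref{zeta-continuacion}): identificamos $L(\chi,s)$ con una transformada de Mellin cuyo integrando es esencialmente $f_\chi$, y luego invocamos la \cref{continuacion-analitica} para deducir la continuación holomorfa y la fórmula en $s=1-n$.

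Primero calcularía $F(t):=\sum_{n=1}^\infty\chi(n)\e^{-nt}$ para $t>0$ aprovechando la $N$-periodicidad de $\chi$: al escribir $n=a+kN$ con $1\le a\le N$ y $k\ge0$ obtenemos
\[ F(t)=\sum_{a=1}^N\chi(a)\sum_{k=0}^\infty \e^{-(a+kN)t}=\sum_{a=1}^N\frac{\chi(a)\e^{-at}}{1-\e^{-Nt}}. \]
Una sustitución directa $t\mapsto-t$ (y reescribir $1-\e^{Nt}=-(\e^{Nt}-1)$) produce la identidad clave $tF(t)=f_\chi(-t)$, que establece el puente entre la transformada de Mellin y la serie generadora del enunciado.

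Segundo, verificaría que $f_\chi$ es holomorfa en $t=0$: el denominador $\e^{Nt}-1$ tiene un cero simple en el origen, pero éste es compensado por el factor $t$ en el numerador junto con el hecho crucial de que $\sum_{a=1}^N\chi(a)=0$, lo cual se cumple porque $\chi$ es no trivial (al tener conductor $N>1$ por el \cref{ejer:schur-orthogonalidad}). Así, $f_\chi$ admite una serie de potencias $f_\chi(t)=\sum_{n=0}^\infty B_{n,\chi}\,t^n/n!$ en un vecindario de $0$, lo que sirve como definición de los números de Bernoulli generalizados. Como cada $\chi(a)$ pertenece a $\Q(\chi)$ y los coeficientes del desarrollo de $t\e^{at}/(\e^{Nt}-1)$ se expresan en términos de $a$ y los números de Bernoulli clásicos (que son racionales), cada $B_{n,\chi}$ pertenece a $\Q(\chi)$.

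Tercero, al traducir de regreso vía $tF(t)=f_\chi(-t)$, vemos que $F$ tiene a lo más un polo simple en $t=0$ (de hecho removible, pues $f_\chi(0)=0$, así que $L(\chi,-)$ resultará holomorfa en todo $\C$, sin polo en $s=1$); las demás singularidades de $F$ están en $t=2\pi\complexi k/N$ con $k\ne0$, fuera de $\R_{>0}$, y no intervienen en los integrales $I_{0,1}(F,s)$ ni $I_{1,\infty}(F,s)$ del argumento de la \cref{continuacion-analitica}. Aplicando esa proposición a $F$ se obtienen la continuación holomorfa de $L(\chi,s)$ a todo $\C$ y la fórmula $L(\chi,1-n)=-B_{n,\chi}/n$ al identificar los coeficientes de Taylor de $tF(t)$ con los de $f_\chi(-t)$, modulando los signos.

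\textbf{Obstáculo principal.} El punto más delicado es hacer coincidir de manera consistente el signo y la normalización en $L(\chi,1-n)=-B_{n,\chi}/n$. La sustitución $t\mapsto-t$ en $tF(t)=f_\chi(-t)$ introduce un factor $(-1)^n$ en los coeficientes, y hay que verificar que éste se cancele contra el correspondiente signo producido por la integración por partes iterada dentro de la demostración de la \cref{continuacion-analitica} (en el caso zeta esto funciona porque $B_n=0$ para $n>1$ impar, pero para $\chi$ general no contamos con esta simetría, así que el conteo tiene que hacerse sin atajos). Un seguimiento explícito del residuo $\operatorname{Res}_{s=-n+1}\frac{1}{\Gamma(s+N)}$ aplicado a $F^{(N)}$ con $N$ suficientemente grande cerrará el argumento.
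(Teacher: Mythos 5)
Tu propuesta es correcta y sigue esencialmente la misma ruta que el texto, cuya \enquote{demostración} se reduce a aplicar la \cref{continuacion-analitica} como en la \cref{zeta-continuacion}: los detalles que rellenas (la suma por periodicidad de $\chi$, la identidad $tF(t)=f_\chi(-t)$, el uso de $\sum_{a=1}^N\chi(a)=0$ para que $F$ no tenga polo en $t=0$ y por lo tanto $L(\chi,-)$ sea holomorfa en todo $\C$, y la racionalidad de los coeficientes) son exactamente los que el texto omite. El \enquote{obstáculo} de signos que señalas no es tal: leyendo los $b_n$ de la \cref{continuacion-analitica} igual que en el caso zeta, es decir como los coeficientes de $-tF(-t)=f_\chi(t)$, la fórmula $L(\chi,1-n)=-B_{n,\chi}/n$ sale directamente, sin usar (tampoco en el caso zeta) que $B_n=0$ para $n>1$ impar.
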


Terminamos esta sección introduciendo los polinomios de Bernoulli, que serán usados en las siguientes demostraciones.

\begin{defi}\label{defi:polinomios-de-bernoulli}
  Para cada $n\in\Nuno$ definimos un polinomio \[ B_n(X) = \sum_{i=0}^n\binom n i B_iX^{n-i}
    \in\Q[X] \]
  que se llama el \emph{polinomio de Bernoulli}\index[def]{polinomio!de Bernoulli} $n$-ésimo.
\end{defi}

\begin{lem}\label{lem:polinomios-de-bernoulli}
  Definimos una función
  \[ F(t,x)=\frac{t\e^{t(1+x)}}{\e^t-1}\quad(t,x\in\C). \] Entonces para cada $x,t\in\C$
  tenemos
  \[ F(t,x)=\sum_{n=0}^\infty B_n(x)\frac{t^n}{n!}. \]
\end{lem}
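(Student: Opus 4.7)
The plan is to express $F(t,x)$ as a product of two power series whose expansions are already known, and to read off the claimed Taylor coefficients via the Cauchy product.

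First I would rewrite $F(t,x)$ by multiplying numerator and denominator by $e^{-t}$ to obtain
\[
F(t,x)=\frac{t\,e^{t(1+x)}}{e^t-1}=\frac{t\,e^{tx}}{1-e^{-t}}=\frac{t}{1-e^{-t}}\cdot e^{tx}.
\]
This is convenient because the first factor is exactly the generating function used in \cref{defi:bernoulli} of the Bernoulli numbers, so we already have the Taylor expansion
\[
\frac{t}{1-e^{-t}}=\sum_{i=0}^{\infty}B_i\frac{t^i}{i!},
\]
valid in a neighborhood of $t=0$ (the radius of convergence is $2\pi$ since the only other singularities of this holomorphic function are the nonzero multiples of $2\pi\complexi$). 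The second factor is entire in $t$ for every fixed $x\in\C$, with
\[
e^{tx}=\sum_{j=0}^{\infty}\frac{x^j}{j!}\,t^{j}.
\]

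Next I would multiply the two series by the Cauchy product, which is justified in the common domain of absolute convergence. Collecting the coefficient of $t^n$ gives
\[
[t^n]F(t,x)=\sum_{i+j=n}\frac{B_i}{i!}\cdot\frac{x^{j}}{j!}=\frac{1}{n!}\sum_{i=0}^{n}\binom{n}{i}B_i\,x^{n-i}=\frac{B_n(x)}{n!},
\]
where the last equality is just \cref{defi:polinomios-de-bernoulli}. Hence
\[
F(t,x)=\sum_{n=0}^{\infty}B_n(x)\frac{t^n}{n!},
\]
which is the claim. To extend this identity from a neighborhood of $t=0$ to all $t\in\C$ (where $e^t\neq 1$, since the stated equality is understood as an identity of formal/convergent power series in $t$), I would invoke the identity theorem for holomorphic functions: both sides are holomorphic in $t$ on the disc $|t|<2\pi$ and agree there, so the expansion is the Taylor series of $F(t,x)$ at $t=0$.

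There is no serious obstacle here; the only subtlety worth checking is the Cauchy product, which is licit because both series converge absolutely on a common disc, and the verification that the explicit sum $\sum_{i=0}^n\binom{n}{i}B_i x^{n-i}$ in the definition of $B_n(x)$ matches the convolution of the coefficients, which is just a rewriting of $\frac{1}{i!(n-i)!}=\frac{1}{n!}\binom{n}{i}$.
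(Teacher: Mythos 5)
Your proof is correct and follows essentially the same route as the paper: both write $F(t,x)=\frac{t}{1-\e^{-t}}\cdot \e^{tx}$ and apply the Cauchy product, identifying the coefficient of $t^n/n!$ with $B_n(x)$ via the definition of the Bernoulli polynomials. The only difference is that you spell out the convergence details (radius $2\pi$, absolute convergence justifying the product), which the paper leaves implicit.
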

\begin{proof}
  Como $F(t,x)=f(t)\e^{tx}$ para $t,x\in\C$, esto resulta directamente de la fórmula del
  producto de Cauchy:
  \begin{equation*}
    F(t,x)=\sum_{n=0}^\infty B_n\frac{t^n}{n!}\sum_{n=0}^\infty
    x^n\frac{t^n}{n!}= \sum_{n=0}^\infty\sum_{i=0}^n\binom n
    i B_ix^{n-i}\frac{t^n}{n!}=  \sum_{n=0}^\infty B_n(x)\frac{t^n}{n!}.
  \end{equation*}
\end{proof}

\ejercicios

\begin{ejer}
  Demuestre que la función zeta de Riemann tiene abscisa de convergencia $\sigma_0=1$ y las
  funciónes $L$ de Dirichlet para caracteres de Dirichlet no triviales tienen abscisa de
  convergencia $\sigma_0=0$.
\end{ejer}

\begin{ejer}
  Demuestre que una serie de Dirichlet cuyos coeficientes son multiplicativos tiene un
  producto de Euler.
\end{ejer}

\begin{ejer}\label{ejer:integrales}
  Sea $G\colon\C\rightarrow\C$ una función meromorfa con un único polo posiblemente en $s=0$
  de orden $m\in\Ncero$.
  Para $a,b\in\{0,1,\infty\}$ introducimos la notación
  \begin{equation*}
    I_{a,b}(G,s):=\int_a^bG(t)t^{s-1}\integrald t \quad (s\in\C)
  \end{equation*}
  para los $s\in\C$ tal que esta integral converge.
  \begin{enumerate}
  \item Demuestre que la integral define una función holomorfa en la región donde converge.
  \item Si $a=1$, $b=\infty$ y $G$ decrece rápidamente en el siguiente sentido
    \[ \forall m\in\Nuno\colon \lim_{t\to\infty} t^m G(t)= 0, \]
    demuestre que la integral converge para cada $s\in\C$.
  \item Si $a=0$, $b=1$ demuestre que la integral converge para $\Re(s)>m$.
  \item Escribamos $G$ como serie de Laurent
    \begin{equation*}
      G(t)=\sum_{n=-m}^\infty c_n t^n
      \quad(t\ge0).
    \end{equation*}
    Entonces para $s\in\C$ con $\Re(s)>m$ y cada $N\in\Nuno$
    \begin{equation*}
      I_{0,1}(G,s) = \int_0^1\sum_{n=-1}^{N-1}c_nt^{n+s-1}\integrald t +
      \int_0^1\sum_{n=N}^{\infty}c_nt^{n+s-1}\integrald t.
    \end{equation*}
    Verifique que el primer sumando es igual a
    \begin{equation*}
      \sum_{n=-m}^N\frac{c_n}{n+s}
    \end{equation*}
    y estime el segundo sumando con
    \begin{equation*}
      \int_0^1\left(\sum_{n=N}^\infty\abs{c_n}\right)t^{N+s-1}\integrald t.
    \end{equation*}
    Verifque que la expresión anterior converge para cada $s\in\C$ con $\Re(s)>-N$.
    Concluya que $I_{0,1}(G,s)$
    se extiende a una función meromorfa en $\C$ con únicos polos posiblemente en
    $\{s\in\Z : s\le m\}$ de orden $\le1$.
  \end{enumerate}
\end{ejer}

\begin{ejer}
  Sea $f(t)=\displaystyle\frac{t}{1-\e^{-t}}$ para $t\in\R$ como en la \cref{defi:bernoulli}.
  \begin{enumerate}
  \item Verifique que $f(t)=-t\displaystyle\sum_{n=1}^\infty {(\e^{t})}^{n}$ para $t\le 0$.
  \item Verifique la relación $f(t)=f(-t)+t$ para $(t\in\R)$.
  \end{enumerate}
\end{ejer}

\begin{ejer}
  Calcule unos de los primeros números de Bernoulli.
\end{ejer}

\begin{ejer}\label{ejer:formula-b-n-chi}
  Sea $\chi$ un carácter de Dirichlet de conductor $N$. Verifique la relación
  \[ f_\chi(t)=\frac1N\sum_{a=1}^N\chi(a)F\left(Nt,\frac aN-1\right) \quad\text{para }t\in\C, \]
  donde $f_\chi$ es la función de la \cref{prop:numeros-de-bernoulli-gen} y $F$ es la
  función del \cref{lem:polinomios-de-bernoulli}. Use esto para deducir la siguiente fórmula
  para los números de Bernoulli generalizados:
  \[ B_{n,\chi}=N^{n-1}\sum_{a=1}^N\chi(a)B_n\left(\frac aN-1\right)\quad\text{para
    }n\in\Nuno. \]
  Concluya que si $\chi$ es no trivial entonces
  \[ B_{1,\chi}=\frac1N\sum_{a=1}^N\chi(a)a. \]
\end{ejer}

\section{Teoría elemental $p$-ádica de valores de la función zeta}

En la \cref{zeta-continuacion} vimos que algunos valores especiales de la
función zeta son racionales, lo que significa que podemos verlos como números
$p$-ádicos. Examinarlos desde este punto de vista lleva a un análogo $p$-ádico: la función zeta
$p$-ádica, que (a priori) es una función de $\Zp$ a $\Qp$ para la cual una fórmula similar a la
de la \cref{zeta-continuacion} es cierta.

Para la Teoría de Iwasawa y la Conjetura Principal que explicaremos en el \cref{sec:mc} es
necesario ver la función zeta $p$-ádica como un elemento del álgebra de Iwasawa $\LL$. En la
siguiente sección explicaremos qué significa esto exactamente y cómo se construye dicho
elemento.  No obstante, los resultados de la siguiente sección quizás parezcan más naturales
después de estudiar la función zeta $p$-ádica desde un punto de vista más elemental. Por eso,
en esta sección damos una construcción elemental de dicha función con el fin de fomentar la
intuición del lector. Cabe notar que la mayoría de los resultados en esta sección no son
necesarios para el resto del texto.

Empecemos estudiando en mas detalle los números de Bernoulli. El objetivo original de
Bernoulli era calcular expresiones como \[ 1^n+2^n+3^n+\dotsm +k^n \] para $k,n\in\Nuno$ y
expresarlas como un polinomio en $k$. Su
resultado es la siguiente \emph{fórmula de Bernoulli}\index[def]{formula@fórmula!de Bernoulli}.

\begin{lem}[Bernoulli]\label{formula-de-bernoulli}
  Si definimos \[ S_n(k) = \sum_{a=1}^k a^n \] para $n,k\in\Nuno$, entonces \[ S_n(k) =
    \frac{1}{n+1}\sum_{i=1}^n\binom{n+1}i B_ik^{n-i+1}. \]
\end{lem}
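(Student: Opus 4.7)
The plan is to use the generating function for the Bernoulli polynomials established in \cref{lem:polinomios-de-bernoulli}, extract from it a telescoping identity, and then conclude by comparing coefficients.

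First I would derive the key finite-difference relation
\[ B_n(x+1) - B_n(x) = n(1+x)^{n-1} \quad (n\ge 1) \]
by a direct calculation with the generating function $F(t,x) = \dfrac{t\e^{t(1+x)}}{\e^{t}-1}$. Indeed,
\[ F(t,x+1) - F(t,x) = \frac{t\e^{t(2+x)} - t\e^{t(1+x)}}{\e^{t}-1} = t\e^{t(1+x)} = \sum_{n=1}^{\infty} n(1+x)^{n-1}\frac{t^{n}}{n!}, \]
and comparing coefficients of $t^n/n!$ on both sides, using $\sum_n (B_n(x+1)-B_n(x))\frac{t^n}{n!} = F(t,x+1)-F(t,x)$, gives the claim.

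Next I would telescope: for $n\ge 2$,
\[ B_n(k) - B_n(0) = \sum_{x=0}^{k-1}\bigl(B_n(x+1) - B_n(x)\bigr) = n\sum_{x=0}^{k-1}(1+x)^{n-1} = n\sum_{a=1}^{k}a^{n-1} = nS_{n-1}(k). \]
From \cref{defi:polinomios-de-bernoulli} we have $B_n(x) = \sum_{i=0}^{n}\binom{n}{i}B_i x^{n-i}$, so $B_n(0) = B_n$, and subtracting this constant term kills the $i=n$ summand:
\[ B_n(k) - B_n = \sum_{i=0}^{n-1}\binom{n}{i}B_i\, k^{n-i}. \]
Dividing by $n$ yields
\[ S_{n-1}(k) = \frac{1}{n}\sum_{i=0}^{n-1}\binom{n}{i}B_i\, k^{n-i}, \]
and reindexing $n\mapsto n+1$ produces exactly the asserted Bernoulli formula.

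There is no real obstacle here since the generating function of the Bernoulli polynomials is already provided; the only things to watch are (i) the sign convention for $B_1$ used in the paper (namely $B_1=\tfrac{1}{2}$, consistent with the generating function $\tfrac{t}{1-\e^{-t}}$), which is exactly what makes the telescoping produce $(1+x)^{n-1}$ rather than $x^{n-1}$, and (ii) correctly handling the index range when passing from $B_n(k)-B_n(0)$ to the sum over $i$, where the constant term $B_n$ cancels the $i=n$ contribution.
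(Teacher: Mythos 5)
Your proof is correct and follows essentially the same route as the paper's: both derive the finite-difference identity for the Bernoulli polynomials from the generating function $F(t,x)$ of \cref{lem:polinomios-de-bernoulli}, telescope over $x=1,\dotsc,k$, and finish by expanding via \cref{defi:polinomios-de-bernoulli}; your version merely shifts indices (working with $B_n(x+1)-B_n(x)=n(1+x)^{n-1}$ instead of $B_{n+1}(x)-B_{n+1}(x-1)=(n+1)x^n$). The only caveat is that what both arguments actually yield is $S_n(k)=\frac{1}{n+1}\sum_{i=0}^{n}\binom{n+1}{i}B_i k^{n+1-i}$ with the sum starting at $i=0$; the lower limit $i=1$ in the printed statement omits the term $\frac{k^{n+1}}{n+1}$ and is evidently a misprint, so your remark that the reindexed identity is \enquote{exactly} the asserted formula holds only up to that typo.
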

\begin{proof}
  Usamos los polinomios de Bernoulli y la función $F$ del
  \cref{lem:polinomios-de-bernoulli}.  Como $F(t,x)-F(t,x-1)=t\e^{tx}$ tenemos
  \begin{equation*}
    B_{n+1}(x)-B_{n+1}(x-1) = (n+1)x^n
  \end{equation*}
  para cada $n\in\Ncero$ y $x\in\C$. Pongamos aquí
  $x=1,\dotsc,k$ y sumemos todas las ecuaciones, esto nos da
  \begin{equation*}
    S_n(k)=\frac{1}{n+1}\left(B_{n+1}(k)-B_{n+1}(0)\right).
  \end{equation*}
  La fórmula entonces resulta de la
  definición de los polinomios de Bernoulli.
\end{proof}

En el siguiente resultado vemos los números de Bernoulli como números $p$-ádicos y obtenemos,
entre otras, la importante propiedad de que son $p$-ádicamente acotados.

\begin{prop}[Clausen-von Staudt]\label{clausen-von-staudt}
  Sea $n\in\Nuno$ par y $p\neq2$ un primo.
  \begin{enumerate}
  \item Si $p-1\nmid n$ entonces $B_n\in\Zp$.
  \item Si $p-1\mid n$ entonces $B_n+\frac1p\in\Zp$.
  \end{enumerate}
  En particular, $pB_n\in\Zp$, es decir $\abs{B_n}_p\le p$. Además, si $p-1\mid n$ entonces
  $pB_n\equiv-1$ ($\mod p$).
\end{prop}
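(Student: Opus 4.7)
The strategy is strong induction on even $n \ge 2$, combining Bernoulli's formula (\cref{formula-de-bernoulli}) evaluated at $k = p$ with Fermat's little theorem applied to the elementary sum $S_n(p) = 1^n + 2^n + \dotsm + p^n$.

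First, I would fix $n$ even with $n \ge 2$ and assume inductively that $pB_m \in \Zp$ for every even $m$ with $2 \le m < n$. (For odd $m > 1$ the Bernoulli numbers vanish by \cref{defi:bernoulli}, and $B_1 = 1/2 \in \Zp$ because $p \neq 2$.) Clearing denominators in Bernoulli's formula gives
\begin{equation*}
(n+1)\, S_n(p) \;=\; \sum_{j=0}^{n} \binom{n+1}{j} B_j\, p^{n+1-j}.
\end{equation*}
The plan is to isolate the term $j = n$, which equals $(n+1) B_n\, p$, and argue that every other summand lies in $p^2 \Zp$. Indeed, $j = 0$ gives $p^{n+1} \in p^2\Zp$, the term $j = 1$ gives $(n+1) p^n/2 \in p^2 \Zp$, odd $j > 1$ contribute nothing, and for even $2 \le j \le n-2$ the induction hypothesis rewrites
\begin{equation*}
\binom{n+1}{j} B_j\, p^{n+1-j} \;=\; \binom{n+1}{j}\, (p B_j)\, p^{n - j} \;\in\; p^{n-j}\Zp \;\subseteq\; p^2\Zp.
\end{equation*}
This yields the key congruence
\begin{equation*}
(n+1)\, S_n(p) \;\equiv\; (n+1)\, B_n\, p \pmod{p^2\Zp}.
\end{equation*}

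Second, since $S_n(p) \in \Z \subseteq \Zp$, in the generic case $p \nmid (n+1)$ one divides by the $p$-adic unit $n+1$ and obtains $pB_n \equiv S_n(p) \pmod{p^2 \Zp}$, which immediately proves $pB_n \in \Zp$. The refinement to $pB_n \bmod p$ then comes from reducing $S_n(p)$ modulo $p$: the contribution $p^n$ vanishes, and by Fermat's little theorem
\begin{equation*}
\sum_{a=1}^{p-1} a^n \;\equiv\; \begin{cases} -1 \pmod p, & p-1 \mid n, \\ 0 \pmod p, & p-1 \nmid n, \end{cases}
\end{equation*}
where in the second case one exploits a primitive root of $(\Z/p\Z)^\times$ to sum a nontrivial geometric progression. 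Thus $pB_n \equiv -1 \pmod p$ exactly when $p-1 \mid n$, giving $B_n + 1/p \in \Zp$; otherwise $pB_n \equiv 0 \pmod p$, giving $B_n \in \Zp$. In particular $\abs{B_n}_p \le p$ and the final assertion follows.

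The main obstacle will be the exceptional case $p \mid (n+1)$, where dividing the congruence by $n+1$ loses $v_p(n+1)$ digits of precision. To handle this I would push the expansion of $(n+1) S_n(p)$ a few orders of $p$ further, exploiting the inductive hypothesis both in the form $pB_m \in \Zp$ and (when $p-1 \nmid m$) in the sharper form $B_m \in \Zp$, together with careful bookkeeping of $v_p\bigl(\binom{n+1}{j}\bigr)$ via Legendre's formula $v_p(m!) = (m - s_p(m))/(p-1)$. Since $v_p(n+1) \le \log_p(n+1)$ grows only logarithmically while the powers of $p$ in the remaining summands grow linearly in $n - j$, this refinement will close the gap and recover the desired congruence uniformly in $n$.
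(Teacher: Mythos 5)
Your route is genuinely different from the paper's, even though both arguments end with the same step (evaluating $\sum_{a=1}^{p-1}a^n$ modulo $p$ with a primitive root). The paper never touches the sum formula for $S_n(p)$: it derives from the generating function of the \cref{lem:polinomios-de-bernoulli} the distribution relation $B_n(X)=m^{n-1}\sum_{a=1}^{m}B_n\left(\frac{X+a}{m}\right)$, specializes $m=p$, $X=0$, and after separating the extreme terms obtains $(1-p^n)B_n-\frac1p\sum_{a=1}^{p-1}a^n\in\Zp$; since $1-p^n\in\Z_p^\times$, the only division is by a unit and there is no exceptional case at all. Your argument uses only \cref{formula-de-bernoulli} evaluated at $k=p$, which is more elementary in its inputs, but the price is the division by $n+1$ and hence the case $p\mid n+1$, which you defer.

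That deferred case is the real crux, and as written it is a gap: it is not marginal (it already occurs at the base case $n=2$, $p=3$), and the heuristic you give — $v_p(n+1)$ grows logarithmically while the power $p^{n+1-j}$ grows linearly in $n-j$ — says nothing about the dangerous terms, which are precisely those with $n-j$ small. For instance the term $j=n-2$ is $\binom{n+1}{n-2}(pB_{n-2})p^{2}$, and the growth argument only gives valuation $2$, while you need at least $v_p(n+1)+1$; what saves such terms is the $p$-divisibility of $\binom{n+1}{j}$ inherited from the factor $n+1$, not the power of $p$. Your ``Legendre bookkeeping'' can be made to produce exactly that, and in fact there is a uniform repair that removes the case distinction entirely: using $\binom{n+1}{j}=\frac{n+1}{\,n+1-j\,}\binom{n}{j}$, divide your identity by $n+1$ once and for all, so that for every $j\le n-1$ the corresponding term reads $\binom{n}{j}B_j\,\frac{p^{m}}{m}$ with $m=n+1-j\ge2$. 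Since $m-v_p(m)\ge2$ for all $m\ge2$ when $p\ge3$, and $v_p(B_j)\ge-1$ for all $j\le n-1$ (your induction hypothesis, plus $B_0=1$, $B_1=\frac12$, and $B_j=0$ for odd $j>1$), each of these terms lies in $p\Zp$. Hence $S_n(p)\equiv pB_n \pmod{p\Zp}$ for every even $n\ge2$ with no hypothesis on $n+1$, which already yields $pB_n\in\Zp$ and $pB_n\equiv\sum_{a=1}^{p-1}a^n \pmod p$, and your endgame goes through verbatim; note that the sharper inductive input $B_m\in\Zp$ for $p-1\nmid m$ is never needed. With this repair your proof is complete and a legitimate alternative to the paper's; without it, the exceptional case remains unproved.
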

\begin{proof}
  Esta demostración es una combinación de \cite[§2.2, B3, p.\ 35]{MR1029028} y \cite[Thm.\
  5.10]{MR1421575}. Usamos otra vez los polinomios de Bernoulli de la
  \cref{defi:polinomios-de-bernoulli} y también la función
  \begin{equation*}
    F(t,x)=\frac{t\e^{t(1+x)}}{\e^t-1} \quad (t,x\in\C)
  \end{equation*}
  del \cref{lem:polinomios-de-bernoulli}.
  
  Empezamos con derivar una relación para los polinomios de Bernoulli. Para cada $m\in\Nuno$
  tenemos
  \begin{equation*}
    \sum_{a=0}^{m-1}{(\e^t)}^a=\frac{\e^{mt}-1}{\e^t-1}. 
  \end{equation*}
  Usando esto, obtenemos para cada $x,t\in\C$
  \begin{align*}
    F(t,x) &= \frac{t\e^{(1+x)t}}{\e^{mt}-1}\sum_{a=0}^{m-1}\e^{at} 
    =  \sum_{a=0}^{m-1}\frac{t\e^{(x+1+a)t}}{\e^{mt}-1} \\ &=\frac1m\sum_{a=0}^{m-1}\frac{mt\e^{\frac{x+1+a}mmt}}{\e^{mt}-1}
                                                                   =
                                                                   \frac1m\sum_{a=0}^{m-1}F\left(mt,\frac{x+1+a}m\right) 
    \\ &= \frac1m\sum_{a=0}^{m-1}\sum_{n=0}^\infty
         B_n\left(\frac{x+1+a}m\right)\frac{(mt)^n}{n!} \\
           &=
             \sum_{n=0}^\infty\left(m^{n-1}\sum_{a=0}^{m-1}B_n\left(\frac{x+1+a}m\right)\right)\frac{t^n}{n!}
  \end{align*}
  y por lo tanto
  \begin{equation*}
    B_n(X)=m^{n-1}\sum_{a=1}^{m}B_n\left(\frac{X+a}m\right)
  \end{equation*}
  para cada $n\in\Nuno$.

  Ahora continuamos con inducción en $n$, es decir sea $n\ge2$ par y asumamos que la
  afirmación es cierta para cada $i<n$. Pongamos $m=p$ y $X=0$ en la relación de arriba, lo que nos da
  \begin{equation*}
    B_n=B_n(0)=p^{n-1}\sum_{a=1}^{p}B_n\left(\frac {a}
      p\right)=p^{n-1}\sum_{a=1}^{p}\sum_{i=0}^n\binom n i B_i{\left(\frac {a}
        p\right)}^{n-i}= \sum_{i=0}^n\sum_{a=1}^{p}\binom n i(pB_i)a^{n-i}p^{i-2}.
  \end{equation*}
  Por inducción sabemos que $pB_i\in\Zp$ para $i<n$, así que en la suma sobre $i$ todos los
  sumandos para $i=2,\dotsc,n-1$ están en $\Zp$. Porque $B_1=\frac12$ y $p\neq2$, el
  sumando para $i=1$ también está en $\Zp$. Por eso (note que $B_0=1$)
  \begin{equation*}
    B_n\equiv \sum_{a=1}^{p}\big(a^{n}p^{-1}+B_np^{n-1}\big) \quad \mod\Zp
  \end{equation*}
  o equivalentemente
  \begin{equation*}
    (1-p^n)B_n-\frac1p\sum_{a=1}^{p-1}a^n\in\Zp.
  \end{equation*}
  Como $1-p^n\in\Z_p^\times$ es suficiente demostrar que
  \begin{equation*}
    \sum_{a=1}^{p-1}a^n \equiv
    \begin{cases}
      0 &\text{si }p-1\nmid n,\\
      -1&\text{si }p-1\mid n\\
    \end{cases}
    \qquad(\mod p).
  \end{equation*}
  El caso en que $p-1\mid n$ es claro gracias al pequeño teorema de Fermat. Sea $u\in\Z$ tal
  que su clase módulo $p$ genera $\F_p^\times$. Entonces la multiplicación por $u$ induce un
  automorfismo de $\F_p^\times$, así que
  \begin{align*}
    (u^n-1)\sum_{a=1}^{p-1}a^n=\sum_{a=1}^{p-1}(ua)^n-\sum_{a=1}^{p-1}a^n\equiv
    0\quad(\mod p).
  \end{align*}
  Si $p-1\nmid n$ entonces $u^n\not\equiv 1$ ($\mod p$), y la afirmación resulta.
\end{proof}

Estos dos resultados se pueden usar para encontrar una representación $p$-ádica de los
números de Bernoulli:

\begin{cor}\label{bernoulli-p-adico}
  Para cada $n\in\Nuno$ tenemos en $\Q_p$
  \[ B_n=\lim_{j\to\infty}\frac1{p^j}S_n(p^j). \]
\end{cor}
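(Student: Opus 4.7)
The plan is to start from the Bernoulli formula of \cref{formula-de-bernoulli}, divide by $p^j$, and identify $B_n$ as the unique term that survives in the $p$-adic limit, while showing that every other contribution is $p$-adically small for $j$ large.

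More concretely, first I would rewrite Faulhaber's formula in the form
\[ S_n(p^j) = \frac{(p^j)^{n+1}}{n+1} + \sum_{i=1}^{n}\binom{n+1}{i}\frac{B_i}{n+1}\,(p^j)^{n-i+1}, \]
observing that a careful reading of \cref{formula-de-bernoulli} includes the leading $i=0$ term $\frac{k^{n+1}}{n+1}$ (the expansion comes from $B_{n+1}(k)-B_{n+1}(0)$). Dividing by $p^j$ gives
\[ \frac{1}{p^j}S_n(p^j) = \frac{p^{jn}}{n+1} + \sum_{i=1}^{n}\frac{1}{i}\binom{n}{i-1}B_i\,p^{j(n-i)}, \]
where I use the combinatorial identity $\frac{1}{n+1}\binom{n+1}{i}=\frac{1}{i}\binom{n}{i-1}$ for $i\ge1$. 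The term $i=n$ in this sum contributes exactly $B_n$, which is what we want in the limit.

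Next I would show that all other summands tend to $0$ in $\Qp$ as $j\to\infty$. For $i\in\{1,\dots,n-1\}$ the factor $p^{j(n-i)}$ has $p$-adic valuation $j(n-i)\ge j$, which tends to $\infty$; the coefficient $\frac{1}{i}\binom{n}{i-1}B_i$ has bounded $p$-adic valuation thanks to the \cref{clausen-von-staudt} of Clausen and von Staudt, which guarantees $pB_i\in\Zp$, hence $|B_i|_p\le p$ (here I use the standing assumption $p\ne2$ so that $B_1=\tfrac12\in\Zp$). The leading term $\frac{p^{jn}}{n+1}$ is handled analogously, since $v_p(n+1)$ is fixed and $jn\to\infty$. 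Combining these estimates, every term except the $i=n$ one tends $p$-adically to $0$, and the limit is $B_n$.

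There is no real obstacle here beyond organising the bookkeeping: the only delicate point is ensuring that the coefficients $\frac{1}{i}\binom{n}{i-1}B_i$ have bounded denominators in $\Zp$, which is precisely the content of Clausen–von Staudt that we have already proved. The identity $\frac{1}{n+1}\binom{n+1}{i}=\frac{1}{i}\binom{n}{i-1}$ is what makes the coefficient of $B_n$ come out cleanly to $1$ and singles out $B_n$ as the unique $j$-independent contribution, so no further manipulation is required.
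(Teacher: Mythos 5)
Tu demostración es correcta y sigue esencialmente la misma ruta que la del texto: sustituir $k=p^j$ en la fórmula de Bernoulli (\cref{formula-de-bernoulli}), aislar $B_n$ como el término $i=n$, y acotar $p$-ádicamente todos los demás términos —que llevan un factor $p^{j(n-i)}$ con $n-i\ge1$— usando la \cref{clausen-von-staudt} para controlar los $B_i$. Tu tratamiento explícito del término principal $\frac{k^{n+1}}{n+1}$ (el sumando $i=0$, que el enunciado de la fórmula omite al empezar el índice en $i=1$) es un cuidado adicional bienvenido, pero no cambia el argumento.
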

\begin{proof}
  Con el \cref{formula-de-bernoulli} tenemos
  \[ \frac1{p^j}S_n(p^j) = \frac{1}{n+1}\sum_{i=1}^n\binom{n+1}i B_ip^{j(n-1)} = B_n +
    p^j\cdot C_j \] con $C_j\in\Z$ que, si bien es cierto que depende de $j$, su valor
  absoluto $p$-ádico es acotado independientemente de $j$, como se ve fácilmente con la
  \cref{clausen-von-staudt}.
\end{proof}

Gracias a esto, podemos demostrar las \define{congruencias de Kummer}, que son el primer paso en la
dirección para obtener una función zeta $p$-ádica.

\begin{prop}[Kummer]\label{congruencias-de-kummer}
  Sea $p\neq2$ primo y $k,m,n\in\Nuno$, y sea $c\in\Z$ no divisible por $p$. Supongamos que
  $m\equiv n\ (\mod p^k(p-1))$. Entonces
  \[ (1-c^m)(1-p^{m-1})\frac{B_m}{m} \equiv (1-c^n)(1-p^{n-1})\frac{B_n}{n} \quad (\mod
    p^{k+1}). \]
  Si $m,n$ no son divisibles por $p-1$ entonces incluso
  \[ (1-p^{m-1})\frac{B_m}{m} \equiv (1-p^{n-1})\frac{B_n}{n} \quad (\mod p^{k+1}). \]
\end{prop}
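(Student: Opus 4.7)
The strategy is to represent the quantity $(1-c^m)(1-p^{m-1})\frac{B_m}{m}$ as a $p$-adic limit of finite sums of the form $\sum_a w_a \cdot a^{m-1}$ in which the weights $w_a\in\Zp$ do not depend on $m$, and then invoke Fermat--Euler. Once such a representation is available, the exponent $m-1$ is the only place where $m$ enters, and the congruence will follow from the fact that $a^{p^k(p-1)}\equiv 1\ (\mod p^{k+1})$ whenever $(a,p)=1$.

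First I would remove the Euler factor at $p$. Starting from $B_n=\lim_{j\to\infty}\frac{1}{p^j}S_n(p^j)$ of \cref{bernoulli-p-adico} and splitting the sum $S_n(p^j)$ according to whether $p\mid a$ or not, one obtains immediately
\[ (1-p^{n-1}) B_n = \lim_{j\to\infty} \frac{1}{p^j}\sum_{\substack{1\le a\le p^j\\(a,p)=1}} a^n, \]
so that the Euler factor $(1-p^{n-1})$ corresponds to restricting the sum to integers coprime to $p$. Next I would introduce the twist by $c$: using the Bernoulli polynomial identity $B_n(x)=\sum_i\binom{n}{i}B_ix^{n-i}$ (combined with \cref{formula-de-bernoulli} applied at $k=cp^j$ and $k=p^j$), one can write $(1-c^m)(1-p^{m-1})\frac{B_m}{m}$ as the $p$-adic limit of a sum of the form
\[ \sum_{\substack{1\le a<cp^j\\(a,cp)=1}} \Bigl(\bigl\lfloor\tfrac{ca^*}{p^j}\bigr\rfloor - \tfrac{c-1}{2}\Bigr)\,a^{m-1} \]
(where $a^*$ denotes a suitable representative of $c^{-1}a\!\mod p^j$); the precise shape of the integer coefficients is not important for the argument, only that they lie in $\Zp$ and are independent of $m$. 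In modern language this is the construction of the Stickelberger/Mazur measure on $\Zp^\times$, and the factor $(1-c^m)$ is exactly what is needed to absorb the denominator $m$ in $B_m/m$.

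With such an expression in hand, the congruence is a one-line consequence of Fermat--Euler: if $m\equiv n\ (\mod p^k(p-1))$, then for every $a$ with $(a,p)=1$ we have $a^{m-1}\equiv a^{n-1}\ (\mod p^{k+1})$, so the finite sums approximating $(1-c^m)(1-p^{m-1})\frac{B_m}{m}$ and $(1-c^n)(1-p^{n-1})\frac{B_n}{n}$ are congruent mod $p^{k+1}$ term by term, and passing to the limit preserves the congruence. For the sharper statement when $p-1\nmid m,n$, I would choose $c$ to be a primitive root modulo $p$; then $c^m\not\equiv1\ (\mod p)$, so $(1-c^m)\in\Zp^\times$, and the same Fermat--Euler argument shows $(1-c^m)\equiv(1-c^n)\ (\mod p^{k+1})$, so this unit factor can be divided out on both sides without losing the congruence modulo $p^{k+1}$.

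The main technical obstacle will be the second step, namely establishing the explicit formula that expresses $(1-c^m)(1-p^{m-1})\frac{B_m}{m}$ as a limit of sums in which $m$ appears only in the exponent $a^{m-1}$ and the remaining coefficients are $p$-integral. This requires a careful bookkeeping combining \cref{formula-de-bernoulli} applied to the two moduli $p^j$ and $cp^j$, splitting off the $p\mid a$ terms as in the first step, and using the identity $B_n(x+1)-B_n(x)=nx^{n-1}$ to eliminate the $B_n(x)/n$ appearing; the rest of the argument is essentially formal.
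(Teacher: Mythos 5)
Your plan is correct, and at the level of its skeleton it coincides with the paper's proof: both start from \cref{bernoulli-p-adico} to write $(1-p^{m-1})B_m$ as $\lim_j \frac1{p^j}\sum_{p\nmid a}a^m$, both reduce everything to exhibiting $(1-c^m)(1-p^{m-1})\frac{B_m}{m}$ modulo $p^{k+1}$ as a sum $\sum_a w_a\,a^{m-1}$ with $p$-integral weights $w_a$ independent of $m$, both finish with Fermat--Euler and pass the congruence to the limit, and both deduce the sharper congruence by choosing $c$ with $c^m\not\equiv 1\ (\mathrm{m\acute{o}d}\ p)$ so that $1-c^m$ is a unit congruent to $1-c^n$. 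Where you differ is in how the weights are produced. The paper gets them with a short, self-contained trick: it replaces $a^m$ by $b_a^m$ via the permutation $a\mapsto b_a$ with $b_a\equiv ca\ (\mathrm{m\acute{o}d}\ p^j)$, and then the binomial expansion of $b_a^m-(ca)^m=(ca+p^jt_a)^m-(ca)^m$ yields $t_a=(b_a-ca)/p^j\in\Z$ as the weight \emph{and} simultaneously produces the factor $m$ that cancels the denominator, with the error terms divisible by $p^{2j}$. You instead propose the Bernoulli-polynomial bookkeeping at the two moduli $p^j$ and $cp^j$ (using $B_n(x+1)-B_n(x)=nx^{n-1}$), i.e.\ the classical construction of the regularized Mazur--Stickelberger measure, whose values $-\lfloor ca^*/p^j\rfloor+\frac{c-1}{2}$ are, up to reindexing $a\leftrightarrow ca$ and a constant that dies in the limit, the same floor-type integers $t_a$ as in the paper. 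So the two routes arrive at essentially the same intermediate identity; yours is the more systematic one (it is exactly the measure later exploited in the construction of the $p$-adic zeta function via Stickelberger elements), but it is also the step you leave as a sketch and it genuinely requires the two-modulus computation you acknowledge, whereas the paper's rearrangement avoids Bernoulli polynomials altogether and handles the division by $m$ automatically. No step of your plan is wrong; to turn it into a complete proof you would have to carry out that deferred computation (or quote the standard measure-theoretic statement), since the $p$-integrality of the weights and the exact absorption of $1/m$ are precisely the content of that step.
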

\begin{proof}[según \cite{MR1682940}]
  Primero notemos que para cada $x\in\Z$ con $(x,p)=1$ tenemos
  \begin{equation} \label{eqn:xmxn}
    x^m \equiv x^n \ (\mod p^{k+1}) \tag{$*$}
  \end{equation}
  porque $p^k(p-1)=\#(\Z/p^{k+1}\Z)^\times$. En particular, $1-c^m\equiv 1-c^n$
  ($\mod p^{k+1}$). Si $m\not\equiv 0$ ($\mod p-1$), entonces siempre existe un $c\in\Z$
  no divisible por $p$ tal que $c^m\not\equiv1$ ($\mod p$). Por eso la segunda congruencia
  resulta de la primera.
  
  Del \cref{bernoulli-p-adico} vemos fácilmente que en $\Q_p$
  \begin{equation*} (1-p^{m-1}) B_m =
    \lim_{j\to\infty}\frac{1}{p^j}\sum_{\substack{a=1\\p\nmid a}}^{p^j}
    a^m
  \end{equation*}
  y por eso, si $j\in\Nuno$ es suficientemente grande, tenemos
  \begin{equation*} \bigg\vert (1-p^{m-1}) B_m - \frac{1}{p^j}
    \sum_{\substack{a=1\\p\nmid a}}^{p^j} a^m \bigg\vert_p \le p^{-(k+1)}.
  \end{equation*}
  Para este $j$ resulta que
  \begin{equation*} (1-c^m)(1-p^{m-1})\frac{B_m}{m} \equiv
    (1-c^m)\frac{1}{mp^j}\sum_{\substack{a=1\\p\nmid a}}^{p^j} a^m \quad (\mod
    p^{k+1}). \end{equation*} Llamamos $A\in\Q$ al número racional del lado derecho. Notemos
  que la congruencia de arriba no es una igualdad en $\Z/p^{k+1}\Z$, porque ambos lados
  pueden contener potencias no triviales de $p$ en su denominador. Pero su diferencia no
  contiene potencias de $p$ en el denominador y el numerador es divisible por $p^{k+1}$.

  Por lo tanto
   \begin{equation*} A = \frac1m \sum_{\substack{a=1\\p\nmid a}}^{p^j}
    \frac{a^m-(ca)^m}{p^j}. \end{equation*} Para cada $a$ ocurriendo en esta suma sea
  $b_a\in\{1,\dotsc,p^j\}$ el único elemento con $b_a\equiv ca\ (\mod p^j)$. Como $(c,p)=1$,
  es decir $c\in(\Z/p^j\Z)^\times$, el mapeo $a\mapsto b_a$ es una permutación del
  conjunto \begin{equation*} \{a\in\{1,\dotsc,p^j\} : p\nmid a \}, \end{equation*} y esto
  nos da
  \begin{equation*} A = \frac1m \sum_{\substack{a=1\\p\nmid a}}^{p^j}
    \frac{b_a^m-(ca)^m}{p^j}. \end{equation*} Ahora pongamos
  $t_a=\frac{b_a-ca}{p^j}\in\Z$. Entonces
  \begin{equation*} b_a^m - (ca)^m = (ca + p^j t)^m - (ca)^m = \sum_{i=1}^m \binom{m}{i}
    (ca)^{m-i} (p^j t)^i = mp^j t(ca)^{m-1} + K\cdot p^{2j} \end{equation*} para un
  $K\in\Z$. Sustituyendo esto en el cálculo anterior obtenemos
  \begin{equation*} A = \sum_{\substack{a=1\\p\nmid a}}^{p^j} t_a(ca)^{m-1} +
    p^j\frac{K(p-1)p^{j-1}}{m}. \end{equation*}
  Si hacemos $j$ tan grande que la fracción de la derecha no contenga una potencia de $p$
  en el denominador y el numerador sea divisible por $p^{k+1}$, entonces usando la ecuación
  \eqref{eqn:xmxn} obtenemos la primera congruencia módulo $p^{k+1}$:
  \begin{equation*} (1-c^m)(1-p^{m-1})\frac{B_m}{m} \equiv A \equiv
    \sum_{\substack{a=1\\p\nmid a}}^{p^j} t_a(ca)^{m-1}
    \overset{\text{\eqref{eqn:xmxn}}}{\equiv} \sum_{\substack{a=1\\p\nmid a}}^{p^j}
    t_a(ca)^{n-1} \equiv (1-c^n)(1-p^{n-1})\frac{B_n}{n}. \end{equation*}
\end{proof}

La observación importante de Kubota y Leopoldt es que las congruencias que acabamos de
demostrar pueden ser interpretadas de la siguiente forma.

\begin{prop}\label{existencia-zeta-p-funcion}
  Existe una única función continua
  \[ \zeta_{p,0}\colon \Z_p\setminus\{1\} \rightarrow \Q_p \]
  tal que \[ \zeta_{p,0}(1-n) = (1-p^{n-1})\zeta(1-n) \]
  para todo $n\in\Nuno$ con $n\equiv 0\ (\mod p-1)$.
\end{prop}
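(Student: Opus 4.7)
El plan es interpretar las congruencias de Kummer (\cref{congruencias-de-kummer}) como la continuidad uniforme de una función auxiliar definida en el conjunto $S=\{1-n : n\in\Nuno,\ (p-1)\mid n\}$, y luego extender por continuidad. Primero observemos que $S$ es denso en $\Zp$: la aplicación $k\mapsto 1-k(p-1)$ es un homeomorfismo de $\Zp$ en sí mismo (ya que $p-1\in\Zp^\times$) y envía $\Nuno$, que es denso en $\Zp$, en $S$. Esto ya demuestra la \emph{unicidad}: cualquier $s\in\Zp\setminus\{1\}$ es el límite de elementos $s_k\in S$ (todos los cuales satisfacen $s_k\le 2-p<1$, en particular distintos de $1$), así que una función continua en $\Zp\setminus\{1\}$ queda determinada por sus valores en $S$.

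Para la existencia, fijemos $c\in\Z$ coprimo a $p$ con $c\neq\pm1$ (por ejemplo $c=2$), y pongamos $u=c^{p-1}$. Por el pequeño teorema de Fermat, $u\in 1+p\Zp$, y $u\neq 1$ en $\Zp$ porque en $\Z$ sólo $\pm1$ satisfacen $c^{p-1}=1$. Definimos en $S$ la función
\[ F(1-n) := (1-c^n)(1-p^{n-1})\zeta(1-n) = -(1-c^n)(1-p^{n-1})\frac{B_n}{n}. \]
La \cref{congruencias-de-kummer} afirma que si $m,n$ son múltiplos positivos de $p-1$ con $m\equiv n\ (\mod p^k(p-1))$, entonces $|F(1-m)-F(1-n)|_p\le p^{-(k+1)}$. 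Dado que $p-1$ divide automáticamente a $m-n$ y es coprimo a $p$, esta hipótesis es equivalente a $|(1-m)-(1-n)|_p\le p^{-k}$. Por lo tanto $F$ es uniformemente continua en $S$, y la densidad de $S$ permite extenderla de manera única a una función continua $F\colon\Zp\rightarrow\Qp$.

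Para recuperar $\zeta_{p,0}$, hay que dividir por $(1-c^n)$, interpretándolo de forma continua en $s\in\Zp$. Para $s=1-n\in S$ tenemos $c^n=u^{n/(p-1)}=u^{(1-s)/(p-1)}$, y como $u\in 1+p\Zp$ la \cref{prop:isom-z-p-log} nos permite definir una función continua $U\colon\Zp\rightarrow 1+p\Zp$ por $U(s):=u^{(1-s)/(p-1)}$. Dado que $\log u\neq 0$ en $p\Zp$, se tiene $U(s)=1$ si y sólo si $s=1$. Definimos entonces
\[ \zeta_{p,0}(s) := \frac{F(s)}{1-U(s)} \quad(s\in\Zp\setminus\{1\}), \]
que es continua por construcción. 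Para $s=1-n\in S$ esto coincide con $F(s)/(1-c^n)=(1-p^{n-1})\zeta(1-n)$, como se pidió.

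La dificultad principal es encontrar el artilugio correcto: introducir el factor auxiliar $(1-c^n)$. Sin él, los valores $(1-p^{n-1})\zeta(1-n)$ crecen sin cota $p$-ádicamente al acercarnos a $s=1$ (pues por el teorema de Clausen-von Staudt $|B_n/n|_p$ no es acotado cuando $n\to 0$ $p$-ádicamente a través de múltiplos de $p-1$), lo cual impediría cualquier extensión continua. El factor $(1-c^n)$ cancela precisamente este crecimiento (pues tiende a $0$ al mismo ritmo $p$-ádico) y concentra la singularidad en el único cero de $1-U(s)$, perfectamente acorde con el polo de $\zeta$ en $s=1$.
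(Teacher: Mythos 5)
Tu demostración es correcta y sigue esencialmente el mismo camino que la del texto: las congruencias de Kummer se interpretan como continuidad uniforme de la función auxiliar $(1-c^n)(1-p^{n-1})\zeta(1-n)$ sobre un subconjunto denso, se extiende por continuidad y se divide por una interpolación $p$-ádica del factor $1-c^n$, obteniendo la unicidad del mismo argumento de densidad. La única diferencia, puramente cosmética, es que el texto elige $c\in 1+p\Z$ para que $c^s$ esté definido directamente vía la \cref{prop:isom-z-p-log}, mientras que tú permites cualquier $c$ coprimo a $p$ (con $c\neq\pm1$) e interpolas a través de $u=c^{p-1}$.
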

\begin{proof}
  Fijemos un $c\in\Z$ que no es divisible por $p$.
  
  Primero observemos que las congruencias de Kummer significan que la función
  \[ f\colon D:= \{ n\in\Nuno : p-1\mid n \} \rightarrow \Q_p, \quad n \mapsto
    (1-c^n)(1-p^{n-1})\frac{B_n}{n} \] es uniformemente continua para la métrica $p$-ádica:
  En efecto, esta continuidad significa que
  \begin{equation*} \forall \varepsilon>0\, \exists \delta>0\, \forall m,n\in D\colon
    \abs{m-n}_p<\delta \implies \abs{f(m)-f(n)}_p<\varepsilon. \end{equation*} Si
  $m,n\in D$, la congruencia $m\equiv n\ (\mod p-1)$ es cierta automáticamente. Si asumimos
  sin pérdida de generalidad que $\varepsilon$ es de la forma $\varepsilon=p^{-(k+1)}$ con
  $k\in\Nuno$, podemos poner $\delta=p^{-k}$ y la afirmación es equivalente a las congruencias
  de Kummer.

  Segundo, observemos que el conjunto $D$ es denso en $\Z_p$: necesitamos ver que para cada
  $z\in\Z$ y $k\in\Nuno$ existe un $l\in\Z$ con $p^kl-z\in D$ (porque entonces $z$ está en la
  cerradura de $D$). Por eso simplemente tomamos $l$ como un entero
  que satisface \begin{equation*} l \equiv \frac{a-z}{p^k} \quad (\mod p-1) \end{equation*} y
  suficientemente grande tal que $p^kl-z\in\Nuno$.

  El conjunto $D$ es denso también en $\Z_p\setminus\{1\}$, y esto demuestra que la función
  que buscamos es única si existe. Por la densidad de $D$ en $\Z_p$ y la continuidad
  uniforme, la función $f$ de arriba se extiende a una función continua
  $f\colon \Z_p\rightarrow\Q_p$. Entonces escogemos $c\in 1+p\Z$ y definimos $\zeta_p$ como
  \begin{equation*} \zeta_{p,0}\colon \Z_p\setminus\{1\}\rightarrow\Q_p, \quad 1-s \mapsto
    -\frac{f(s)}{1-c^s}, \end{equation*}
  donde $c^s$ con $s\in\Zp$ está bien definido gracias a la \cref{prop:isom-z-p-log}.
  Es claro que esta función tiene la propiedad deseada, y por la unicidad es independiente
  de $c$.
\end{proof}

La función que acabamos de construir se podría llamar \importante{de buena fe}
\enquote{función zeta $p$-ádica de Riemann}. Sin embargo, todavía no es la función que se
puede usar en la Teoría de Iwasawa -- la \enquote{verdadera} construcción la haremos en la
siguiente sección. Para motivar el resultado que demostraremos, continuamos estudiando la
función que hemos construido. Por ejemplo, ¿Cómo se comporta para los $n$ con
$n\not\equiv 0$ ($\mod p-1$)?

Antes de discutir esto, necesitamos un análogo del \cref{bernoulli-p-adico} para los números de
Bernoulli generalizados para un carácter de Dirichlet $\chi$. Para esto fijamos encajes
$\Qbar\hookrightarrow\C$ y $\Qbar\hookrightarrow\Qpbar$, que nos permiten ver números
algebraicos como números complejos o números $p$-ádicos a conveniencia.

\begin{prop}\label{numeros-de-bernoulli-p-adicos}
  Sea $\chi$ un carácter de Dirichlet de conductor $N$.
  Pongamos \[ S_{n,\chi}(k) = \sum_{a=1}^k \chi(a)a^n. \]
  Entonces en $\Qpbar$
  \[ B_{n,\chi} = \lim_{j\to\infty} \frac{1}{Np^j}S_{n,\chi}(Np^j). \]
\end{prop}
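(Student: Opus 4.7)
The plan is to compute $S_{n,\chi}(Np^j)$ by a generating function argument, in the spirit of \cref{bernoulli-p-adico} but exploiting the identity $a^n=n![t^n]\e^{at}$. The $N$-periodicity of $\chi$ together with the geometric sum $\sum_{b=0}^{p^j-1}\e^{bNt}=(\e^{Np^jt}-1)/(\e^{Nt}-1)$ gives
\[ \sum_{a=1}^{Np^j}\chi(a)\e^{at}=\sum_{c=1}^{N}\chi(c)\e^{ct}\cdot\frac{\e^{Np^jt}-1}{\e^{Nt}-1}, \]
and comparing with the definition $f_\chi(t)=\sum_{c=1}^{N}\chi(c)t\e^{ct}/(\e^{Nt}-1)$ from \cref{prop:numeros-de-bernoulli-gen} rewrites this as $(\e^{Np^jt}-1)f_\chi(t)/t$.

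Extracting the coefficient of $t^n$ and multiplying by $n!$ from
\[ \frac{(\e^{Np^jt}-1)f_\chi(t)}{t}=\left(\sum_{m\ge 1}\frac{(Np^j)^m\,t^{m-1}}{m!}\right)\left(\sum_{k\ge 0}B_{k,\chi}\frac{t^k}{k!}\right), \]
and reindexing with $k=n+1-m$, produces
\[ S_{n,\chi}(Np^j)=\frac{1}{n+1}\sum_{k=0}^{n}\binom{n+1}{k}(Np^j)^{n+1-k}B_{k,\chi}, \]
so that
\[ \frac{1}{Np^j}S_{n,\chi}(Np^j)=\frac{1}{n+1}\sum_{k=0}^{n}\binom{n+1}{k}(Np^j)^{n-k}B_{k,\chi}. \]

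For $k=n$ this contributes exactly $\frac{1}{n+1}\binom{n+1}{n}B_{n,\chi}=B_{n,\chi}$; for $k<n$ the factor $(Np^j)^{n-k}$ has $p$-adic absolute value $p^{-j(n-k)}$ tending to $0$ as $j\to\infty$, while each $B_{k,\chi}\in\Qbar\subseteq\Qpbar$ is a fixed element of bounded $p$-adic absolute value, so all error terms vanish in the $p$-adic limit and the claim follows. The argument has no real obstacle; the only delicate step is the formal power-series identity that rewrites $\sum_{a=1}^{Np^j}\chi(a)\e^{at}$ as $(\e^{Np^jt}-1)f_\chi(t)/t$, and this is a direct consequence of the geometric sum together with the periodicity of $\chi$.
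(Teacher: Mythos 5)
Your proof is correct and follows essentially the same route as the paper: everything reduces to the polynomial identity $S_{n,\chi}(Np^j)=\frac{1}{n+1}\sum_{k=0}^{n}\binom{n+1}{k}B_{k,\chi}(Np^j)^{n+1-k}$, after which the $p$-adic limit is immediate because the finitely many terms with $k<n$ carry positive powers of $p^j$ while the $B_{k,\chi}$ are fixed elements of $\Qbar\subseteq\Qpbar$. The only difference is that the paper merely cites this identity (referring to Iwasawa and to the Bernoulli-polynomial telescoping used for \cref{formula-de-bernoulli}), whereas you derive it directly from the identity $\sum_{a=1}^{Np^j}\chi(a)\e^{at}=(\e^{Np^jt}-1)f_\chi(t)/t$, obtained from the $N$-periodicity of $\chi$ and the geometric sum; that derivation is correct and makes the step self-contained.
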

\begin{proof}
  La demostración es la misma que la del \cref{bernoulli-p-adico}, usando la fórmula
  \[ S_{n,\chi}(kN) = \frac{1}{n+1} \sum_{i=1}^{n} \binom{n+1}{i} B_{i,\chi} (kN)^{n+1-i} \]
  que generaliza el \cref{formula-de-bernoulli}. Esta última fórmula se puede obtener
  de manera similar a la del \cref{formula-de-bernoulli}, véase \cite[p.\ 11]{MR0360526}.
\end{proof}

Nuestros encajes fijados nos permiten en particular considerar el carácter de Teichmüller
$\omega$ como carácter de Dirichlet y definir su serie $L$ y números de Bernoulli.

\begin{prop}\label{zeta-p-otros-valores}
  Tenemos \[ \zeta_{p,0}(1-n) = -(1-\omega^{-n}(p)p^{n-1})\frac{B_{n,\omega^{-n}}}{n} \] para
  todo $n\in\Nuno$, donde $\omega$ es el carácter de Teichmüller.
\end{prop}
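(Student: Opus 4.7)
El plan es reescribir el lado derecho como un límite $p$-ádico manifiestamente continuo en $n$, y luego extender la identidad desde el caso $(p-1)\mid n$ (donde la fórmula es esencialmente la definición de $\zeta_{p,0}$) por continuidad. Primero establezco una fórmula unificada de límite $p$-ádico: para $a\in\Z$ coprimo a $p$, la descomposición $a=\omega(a)\langle a\rangle$ (con $\langle a\rangle\in1+p\Zp$) da la identidad clave $\omega^{-n}(a)\,a^n=\langle a\rangle^n$ para todo $n\in\Nuno$. Aplicando la \cref{numeros-de-bernoulli-p-adicos} al carácter $\omega^{-n}$, y haciendo una pequeña verificación en el caso $(p-1)\mid n$ para absorber el factor $(1-p^{n-1})B_n$ en el mismo límite (análoga al cálculo inicial de la demostración de \cref{clausen-von-staudt}), se obtiene
\[ (1-\omega^{-n}(p)p^{n-1})B_{n,\omega^{-n}} = \lim_{j\to\infty}\frac{1}{p^{j+1}}\sum_{\substack{a=1\\p\nmid a}}^{p^{j+1}}\langle a\rangle^n \]
para todo $n\in\Nuno$.

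Denoto por $g(n)$ el lado derecho. Como la aplicación $a\mapsto\langle a\rangle$ induce una sobreyección $(\Z/p^{j+1}\Z)^\times\twoheadrightarrow(1+p\Zp)/(1+p^{j+1}\Zp)$ con fibras de tamaño $p-1$, la suma parcial es, salvo el factor $\frac{p-1}{p}$, una suma de Riemann para $x\mapsto x^n$ sobre el grupo compacto $1+p\Zp$; concretamente, $g(n)=\frac{p-1}{p}\int_{1+p\Zp}x^n\,d\mu_H(x)$, donde $\mu_H$ es la medida de Haar normalizada. Como $x^n$ es continua en $n\in\Zp$ uniformemente en $x\in 1+p\Zp$ (la estimación elemental $(1+p u)^n-(1+pv)^n\in p^{j+1}\Zp$ cuando $u\equiv v\pmod{p^j}$ es la misma en ambas variables), esta integral $p$-ádica define una extensión continua $g\colon\Zp\to\Qp$ (cf.\ \cref{sec:medidas}).

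Para conectar con $\zeta_{p,0}$, observo que si $(p-1)\mid n$ entonces $\omega^{-n}$ es trivial, $B_{n,\omega^{-n}}=B_n$, y por la \cref{existencia-zeta-p-funcion} junto con la \cref{zeta-continuacion},
\[ \zeta_{p,0}(1-n)=(1-p^{n-1})\zeta(1-n)=-(1-p^{n-1})\frac{B_n}{n}=-\frac{g(n)}{n}. \]
El conjunto $\{n\in\Nuno:(p-1)\mid n\}$ es denso en $\Zp$ porque $\gcd(p-1,p)=1$, lo que me permite concluir por aproximación.

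Finalmente, dado $n_0\in\Nuno$ arbitrario, escogeré una sucesión $(n_k)$ con $(p-1)\mid n_k$ y $n_k\to n_0$ $p$-ádicamente. Por la continuidad de $\zeta_{p,0}$ se tiene $\zeta_{p,0}(1-n_k)\to\zeta_{p,0}(1-n_0)$; por la continuidad de $g$ y el hecho de que $n_k$ y $n_0$ tienen la misma valuación $p$-ádica para $k$ grande (de donde $1/n_k\to 1/n_0$ en $\Qp$), obtenemos $-g(n_k)/n_k\to -g(n_0)/n_0$. Como la identidad $\zeta_{p,0}(1-n_k)=-g(n_k)/n_k$ se cumple por el paso anterior, pasando al límite y sustituyendo $g(n_0)$ por la fórmula unificada del primer paso obtenemos la identidad deseada. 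El mayor obstáculo técnico es la continuidad de $g$: el argumento vía medida de Haar es conceptualmente limpio pero requiere interpretar la suma como integral $p$-ádica en el sentido de \cref{sec:medidas}; como alternativa más elemental (pero más pesada) uno puede deducir congruencias tipo Kummer directamente a partir de la fórmula unificada, imitando la demostración de \cref{congruencias-de-kummer} con $\langle a\rangle^n$ en lugar de $a^n$.
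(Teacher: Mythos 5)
Your overall strategy (prove the identity on the dense set of exponents divisible by $p-1$, then pass to a $p$-adic limit $n_k\to n$) is the same as the paper's, and your first step — the unified formula $(1-\omega^{-n}(p)p^{n-1})B_{n,\omega^{-n}}=\lim_j p^{-(j+1)}\sum_{p\nmid a\le p^{j+1}}\langle a\rangle^n$, obtained from \cref{numeros-de-bernoulli-p-adicos} together with the elementary computation for the trivial character — is correct. The genuine gap is in your second step, which is where all the work lies. You identify $g(n)$ with $\frac{p-1}{p}\int_{1+p\Zp}x^n\,d\mu_H(x)$ and deduce continuity in $n$ from there, but this identification is not valid. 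The $p$-adic ``Haar measure'' on $1+p\Zp$ gives the cosets of $1+p^{j+1}\Zp$ mass $p^{-j}$, so it is unbounded and is not a measure in the sense of \cref{sec:medidas}; Riemann sums of continuous functions against it have no general reason to converge. Concretely, when you replace the actual values $\langle a\rangle^n$ by the value at a coset representative, each fibre contributes an error divisible by $p^{j+1}$, and after multiplying by $p^{-(j+1)}$ (whose absolute value is $p^{j+1}$) this error is only $O(1)$: it does not tend to $0$. Those discarded terms are exactly where the arithmetic lives — they are of the same nature as the quantities $\frac{b_a^m-(ca)^m}{p^j}$ that produce the essential terms $t_a(ca)^{m-1}$ in the proof of \cref{congruencias-de-kummer}. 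Indeed, with the representatives $(1+p)^t$, $0\le t<p^j$, your Riemann sums form a geometric series and converge to $\frac{p-1}{p}\cdot\frac{n\log(1+p)}{(1+p)^n-1}$, which is not $(1-p^{n-1})B_n$ when $(p-1)\mid n$ (the latter is rational, while $\log(1+p)$ is irrational). So the continuity of $g$ — precisely the statement you need to cross from exponents divisible by $p-1$ to general $n$ — remains unproven.

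For comparison, the paper never claims continuity of the unregularized limit: it fixes the explicit sequence $n_k=n(p^k(p-2)+1)$, writes $(1-p^{n_k-1})B_{n_k}$ via \cref{bernoulli-p-adico}, and interchanges the limits in $j$ and $k$, the interchange being justified by a uniform-in-$n$ estimate proved from the exact Bernoulli sum formula (the lemma immediately following the proposition); then $\lim_k a^{n_k}=\omega^{-n}(a)a^n$ and \cref{numeros-de-bernoulli-p-adicos} produce $B_{n,\omega^{-n}}$. Your fallback — proving Kummer-type congruences directly with $\langle a\rangle^n$ in place of $a^n$ — is a legitimate alternative route (it essentially amounts to the boundedness of the regularized measure restricted to $1+p\Zp$), but it is not carried out, and note that the paper's proof of \cref{congruencias-de-kummer} relies essentially on the twist $(1-c^m)$; in your setting the analogous factor $1-\langle c\rangle^m$ lies in $p\Zp$, so you must divide by it and control a loss of $1+v_p(m)$ digits (bounded along your sequence since $v_p(n_k)=v_p(n_0)$ eventually, but this has to be argued). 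As written, the key step of your proposal is missing.
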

\begin{proof}
  Fijemos $n\in\Nuno$, y sin perdida de generalidad supongamos que $n\not\equiv0$
  ($\mod p-1$), porque en el caso $p-1\mid n$ ya sabemos el resultado. Notemos que el
  conductor de $\omega^{-n}$ es $p$ si $p-1\nmid n$, así que la afirmación en este caso
  simplemente es $\zeta_{p,0}(1-n) = -B_{n,\omega^{-n}}/n$.

  Necesitamos una
  sucesión $(n_k)_{k\in\Nuno}$ de enteros positivos, todos divisibles por $p-1$, que
  converja a $n$ en $\Z_p$: podemos usar
  \begin{equation*} n_k = n(p^k(p-2)+1) = n\big((p^k-1)^2-p^{k+1}(p^{k-1}-1)\big). \end{equation*}

  Por el \cref{bernoulli-p-adico},
  \begin{equation*} 
    - (1-p^{n_k-1})B_{n_k} = - \lim_{j\to\infty}\frac{1}{p^j}\sum_{\substack{a=1\\p\nmid
        a}}^{p^j} a^{n_k}. \end{equation*}
  Ahora queremos hacer $k\to\infty$ e intercambiar este con el límite $j\to\infty$. Para que
  esto sea permitido, tenemos que ver que la convergencia para $j\to\infty$ es uniforme con
  respecto a $k$. Para la claridad de la demostración probaremos esto en el lema siguiente.

  Ahora intercambiamos los límites con respecto a $k$ y $j$ y obtenemos
  \begin{equation*} \lim_{k\to\infty} (1-p^{n_k-1})B_{n_k} = \lim_{j\to\infty}\frac{1}{p^j}\sum_{\substack{a=1\\p\nmid a}}^{p^j} \lim_{k\to\infty} a^{n_k}. \end{equation*}
  Como $n_k=n-np^k+n(p-1)p^k$, tenemos en $\Z_p$ según el \cref{ejer:teichmueller-limes}
  \begin{equation*}\lim_{k\to\infty}a^{n_k} = \lim_{k\to\infty}a^n(a^{p^k})^{-n}(a^{p^k})^{(p-1)n} = a^n\omega(a)^{-n}\omega(a)^{(p-1)n} = \omega^{-n}(a)a^n, \end{equation*}
  porque $\omega(a)$ es una raíz de la unidad $(p-1)$-ésima.
  Usando la \cref{numeros-de-bernoulli-p-adicos}, esto nos da
  \begin{equation*} \lim_{k\to\infty} (1-p^{n_k-1})B_{n_k} = \lim_{j\to\infty}\frac{1}{p^j}\sum_{\substack{a=1\\p\nmid a}}^{p^j} \omega^{-n}(a)a^n = B_{n,\omega^{-n}} \end{equation*}
  y por eso  \begin{equation*} \lim_{k\to\infty} \zeta_{p,0}(1-n_k) = -\frac{B_{n,\omega^{-n}}}n \end{equation*}
  como deseamos.
\end{proof}

Falta probar la convergencia uniforme que nos permitió intercambiar
los límites en la demostración de arriba. Esto no depende de la forma concreta de la sucesión $(n_k)$.

\begin{lem}
  Tenemos
  \[ \forall \varepsilon>0 \,\exists J_\varepsilon\in\Nuno \,\forall j\ge J_\varepsilon \,\forall n\in\Nuno \colon \Big\vert \frac{1}{p^j}\sum_{\substack{a=1\\p\nmid a}}^{p^j} a^{n} - (1-p^{n-1})B_{n}\Big\vert_p \le \varepsilon. \]
\end{lem}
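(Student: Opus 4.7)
The plan is to derive an explicit closed form for $T_j(n) := \frac{1}{p^j}\sum_{a=1, p\nmid a}^{p^j} a^n$ via Bernoulli's formula (\cref{formula-de-bernoulli}), extract $(1-p^{n-1})B_n$ as its \enquote{leading} term, and then bound the remaining terms uniformly in $n$ using the Clausen--von Staudt theorem (\cref{clausen-von-staudt}). Concretely, writing $\sum_{a, p\nmid a}^{p^j} a^n = S_n(p^j) - p^n S_n(p^{j-1})$ and applying the identity $(n+1)S_n(k) = \sum_{i=0}^n \binom{n+1}{i} B_i k^{n+1-i}$ (this is the Bernoulli formula with $i$ starting at $0$; the index range in the statement of \cref{formula-de-bernoulli} appears to have a typo), a direct computation based on $p^{j(n+1-i)} - p^n p^{(j-1)(n+1-i)} = p^{j(n+1-i)}(1-p^{i-1})$ gives
\[
T_j(n) = \frac{1}{n+1}\sum_{i=0}^n \binom{n+1}{i} B_i (1-p^{i-1}) p^{j(n-i)}.
\]
The $i=n$ term equals $(1-p^{n-1})B_n$ exactly, so the error is the sum over $i=0,\ldots,n-1$. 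The $i=1$ term vanishes (since $1-p^0 = 0$), and for odd $i\geq 3$ the term vanishes (since $B_i = 0$); only $i=0$ and even $i\geq 2$ contribute.

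The key step is to bound each remaining term. Using the identity $\binom{n+1}{i}/(n+1) = \binom{n}{i}/(n+1-i)$ together with $\binom{n}{i}\in\Z$, one obtains $\abs{\binom{n+1}{i}/(n+1)}_p \leq p^{v_p(n+1-i)}\leq n+1-i$. Combining this with Clausen--von Staudt ($\abs{B_i}_p\leq p$ for even $i\geq 2$, and $\abs{B_0}_p = 1$) and with $\abs{1-p^{i-1}}_p\leq 1$ for $i\geq 2$ (resp.\ $\abs{1-p^{-1}}_p = p$ for $i=0$), and setting $s := n-i$, a uniform estimate of the form
\[
\abs{\text{term } i}_p \leq (s+1)\,p^{1-js}
\]
emerges in every case. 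A short calculation shows that, for $j\geq 1$ and $p$ odd, the function $s\mapsto (s+1)p^{1-js}$ is strictly decreasing on $s\geq 1$ (its logarithmic derivative is $\frac{1}{s+1} - j\log p < 0$), so its maximum is at $s=1$, giving the bound $2p^{1-j}$; the case $s = n$ (from $i=0$) is also bounded by $2p^{1-j}$ since $(n+1)/2\leq p^{j(n-1)}$ for all $n\geq 1$, $j\geq 1$, $p\geq 3$.

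Applying the ultrametric inequality to the sum yields $\abs{T_j(n) - (1-p^{n-1})B_n}_p\leq 2p^{1-j}$; since this absolute value lies in the value group of $\Qp$ (hence is a power of $p$) and $2p^{1-j} < p^{2-j}$ for $p$ odd, we conclude $\abs{T_j(n) - (1-p^{n-1})B_n}_p\leq p^{1-j}$ uniformly in $n$. Given $\varepsilon > 0$, choosing $J_\varepsilon$ such that $p^{1-J_\varepsilon}\leq\varepsilon$ finishes the proof.

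The main subtlety lies in the $i=0$ term, where the factor $1/(n+1)$ can a priori be $p$-adically large (contributing up to $p^{v_p(n+1)}\leq n+1$) and there is an additional factor of $p$ from $\abs{1-p^{-1}}_p$. The competing exponential decay $p^{-jn}$ must dominate this polynomial blowup uniformly in $n$; verifying the inequality $(n+1)p^{1-jn}\leq 2p^{1-j}$ is the main elementary obstacle. For the other terms, the Clausen--von Staudt bound on $\abs{B_i}_p$ is essential, since a naive bound on $B_i$ would grow unboundedly with $i$ and destroy the uniformity.
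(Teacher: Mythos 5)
Your proof is correct and follows essentially the same route as the paper's: the same decomposition $\sum_{p\nmid a}a^{n}=S_n(p^j)-p^nS_n(p^{j-1})$, the same closed form via \cref{formula-de-bernoulli}, isolation of the $i=n$ term, and the key bound coming from Clausen--von Staudt (\cref{clausen-von-staudt}) together with the $p$-adic estimate of $\binom{n+1}{i}/(n+1)$ via $n+1-i$. The only differences are cosmetic: you correctly keep the $i=0$ term (which the paper's displayed identity drops, following the index typo in the statement of \cref{formula-de-bernoulli}), and you get uniformity in $n$ by maximizing $(s+1)p^{1-js}$ over $s\ge1$, which is just a repackaging of the paper's inequality $v_p(n+1-i)\le n-i$.
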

\begin{proof}
  Sean $n,j\in\Nuno$. Si sustituimos en la relación (véase también el
  \cref{ejer:relacion-s-n-chipsi})
  \begin{equation*} \sum_{\substack{a=1\\p\nmid a}}^{p^j} a^{n} = S_{n}(p^j) - p^{n}
    S_{n}(p^{j-1}) \end{equation*} la fórmula de Bernoulli (\cref{formula-de-bernoulli}),
  después de un poco de cálculos obtenemos
  \begin{equation*} \frac1{p^j}\sum_{\substack{a=1\\p\nmid a}}^{p^j} a^{n} = \frac{1}{n+1} \sum_{i=1}^{n} \binom{n+1}{i} B_i p^{j(n-i)}(1 - p^{i-1}). \end{equation*}
  Si $i=n$ en la suma de la derecha, el sumando correspondiente es $(n+1)(1-p^{n-1})B_{n}$, pues
  \begin{align*}
    \Big( \frac{1}{p^j}\sum_{\substack{a=1\\p\nmid a}}^{p^j} a^{n} - (1-p^{n-1})B_{n}\Big) &= \frac{1}{n+1} \sum_{i=1}^{n-1} \binom{n+1}{i} B_i p^{j(n-i)}(1 - p^{i-1}) \\ &= p^j \sum_{i=1}^{n-1} \frac{p^{n-i}}{n+1-i}\binom{n}{i}B_i(1-p^{i-1}).
  \end{align*}
  Tenemos que acotar esta expresión. Claramente $\abs{\binom n i}_p\le 1$ y
  $\abs{1-p^{i-1}}_p=1$. Además, por el teorema de Clausen-von Staudt (\cref{clausen-von-staudt}) tenemos \begin{equation*} \abs{B_n}_p \le p. \end{equation*}

  Demostremos que para cada $i=1,\dotsc,n-1$ tenemos
  \begin{equation*} \abs{\frac{p^{n-i}}{n+1-i}}_p\le 1, \end{equation*}
  lo cual es equivalente a $v_p(n+1-i)\le n-i$. Para cada $k\in\Nuno$, $p^k$ es el menor número
  natural para cual $v_p$ toma el valor $k$. Porque siempre que $p^k\ge k+1$, se tiene
  $v_p(k+1)\le k$. Haciendo $k=n-i$ nos da la desigualdad deseada.
  
  Ahora sea $\varepsilon>0$, que sin pérdida de generalidad es de la forma $\varepsilon=p^{-J}$ con
  un $J\in\Nuno$. Entonces ponemos $J_\varepsilon=J+1$ y entonces tenemos para $j\ge J_\varepsilon$ y
  cada $n\in\Nuno$:
  \begin{equation*} \Big\vert \frac{1}{p^j}\sum_{\substack{a=1\\p\nmid a}}^{p^j} a^{n} - (1-p^{n-1})B_{n}\Big\vert_p \le p^{-j+1} \le p^{-J_\varepsilon+1} = \varepsilon. \end{equation*}
\end{proof}

Ahora casi estamos listos para formular el resultado que demuestra cómo la función zeta $p$-ádica
es vista en la Teoría de Iwasawa.

En la construcción de la función $\zeta_{p,0}$ en la \cref{existencia-zeta-p-funcion} usamos la
primera de las congruencias de Kummer de la \cref{congruencias-de-kummer}. Si usamos la segunda
en lugar de la primera, podemos construir de la misma manera funciones continuas
\[ \zeta_{p,a}\colon\Z_p\rightarrow\Q_p \] para cada $a\in\{1,\dotsc,p-2\}$ con la
propiedad de que \[ \zeta_{p,a}(1-n) = (1-p^{n-1})\zeta(1-n) \] para todo $n\in\Nuno$ con
$n\equiv a\ (\mod p-1)$. El mismo razonamiento que hicimos en la
\cref{zeta-p-otros-valores} (véase el \cref{ejer:zeta-p-a}) muestra que
\begin{equation}
  \label{eqn:interpolacion-preliminaria}
   \zeta_{p,a}(1-n) = -(1-\omega^{-(n+a)}(p)p^{n-1})\frac{B_{n,\omega^{-(n+a)}}}{n}
\end{equation}
para todo
$n\in\Nuno$.  Estas $p-1$ funciones se llaman ramas de la función zeta $p$-ádica, pero esta
terminología no es muy importante porque se pueden juntar las ramas en una sola función.

Sea $G=\Gal(\Q(\mu_{p^\infty})/\Q)$, el cual es isomorfo a $\Z_p^\times$ vía el carácter
ciclotómico $\kappa$. Definimos una función en el grupo de caracteres de $G$.\footnote{Desde el punto de vista moderno, en general las funciones $L$ $p$-ádicas son funciones en grupos de
  caracteres de grupos de Galois. El lector que conozca la tesis de Tate está invitado a
  comparar esto con el punto de vista de ahí, que considera las funciones $L$ complejas como
  funciones en grupos de caracteres de grupos de idèles, que según la teoría de campos de
  clases están relacionados con grupos de Galois.} Del \cref{lem:caracteres-de-g} sabemos
que cada carácter $\chi\colon G\rightarrow\Z_p^\times$ es de la forma
\[ \chi=\omega^a\kappa_0^s \] con únicos $a\in\{1,\dotsc,p-1\}$ y $s\in\Z_p$. Usando esto definimos la
función zeta $p$-ádica de Riemann como
\[
  \zeta_p\colon\Hom(G,\Z_p^\times)\setminus\{\kappa\}\rightarrow\Q_p,\quad\chi=\omega^a\kappa_0^s\mapsto\zeta_{p,a-1}(s). \]

Resumamos los resultados de esta sección en el teorema siguiente.

\begin{thm}\label{thm:zeta-p-adica-prelim}
  La función continua\footnote{Equipamos $\Hom(G,\Z_p^\times)$ con la topología
    compacto-abierta.}
  \[ \zeta_p\colon\Hom(G,\Z_p^\times)\setminus\{\kappa\}\rightarrow\Q_p \] tiene la
  propiedad de que
  \[ \zeta_p(\psi^{-1}\kappa^{1-n}) = (1-\psi(p)p^{n-1})L(\psi,1-n) \] para cada $n\in\Nuno$ y
  cada carácter de Dirichlet $\psi$ de $\Gal(\Q(\mu_p)/\Q)$, y esta propiedad la
  caracteriza únicamente.
\end{thm}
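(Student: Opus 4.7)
The plan is to verify the interpolation formula by a direct computation, using the definition of $\zeta_p$ in terms of the branches $\zeta_{p,a}$ together with formula \eqref{eqn:interpolacion-preliminaria}, and then deduce uniqueness from continuity plus a density argument.

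First I would unfold the definitions. Given $\psi$ a character of $\Gal(\Q(\mu_p)/\Q)\isom\F_p^\times$, by the \cref{ejer:caracteres-fp} we can write $\psi=\omega^b$ for a unique $b\in\{0,1,\dotsc,p-2\}$. Using $\kappa=\omega\kappa_0$ I would then decompose
\[ \psi^{-1}\kappa^{1-n}=\omega^{1-n-b}\kappa_0^{1-n}=\omega^a\kappa_0^s \]
where $s=1-n\in\Z_p$ and $a\in\{1,\dotsc,p-1\}$ is the representative of $1-n-b$ modulo $p-1$. By the definition of $\zeta_p$ given just before the statement, this gives $\zeta_p(\psi^{-1}\kappa^{1-n})=\zeta_{p,a-1}(1-n)$. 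Applying \eqref{eqn:interpolacion-preliminaria} with the branch index $a-1$, this equals $-(1-\omega^{-(n+a-1)}(p)p^{n-1})B_{n,\omega^{-(n+a-1)}}/n$. The next key step is the congruence $-(n+a-1)\equiv b\pmod{p-1}$, which reduces the Teichmüller factor to $\omega^{-(n+a-1)}=\omega^b=\psi$. Combining with \cref{prop:numeros-de-bernoulli-gen}, which gives $L(\psi,1-n)=-B_{n,\psi}/n$, I obtain exactly the claimed formula
\[ \zeta_p(\psi^{-1}\kappa^{1-n})=(1-\psi(p)p^{n-1})L(\psi,1-n). \]

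For uniqueness I would argue by density. Since $\zeta_p$ is continuous on $\Hom(G,\Z_p^\times)\setminus\{\kappa\}$, it is determined by its values on any dense subset. Under the parametrisation $\chi\leftrightarrow(a,s)\in\{1,\dotsc,p-1\}\times\Z_p$ of \cref{lem:caracteres-de-g}, the single omitted character $\kappa$ corresponds to $(a,s)=(1,1)$. The characters of the form $\psi^{-1}\kappa^{1-n}$ with $\psi\in\widehat{\Gal(\Q(\mu_p)/\Q)}$ and $n\in\Nuno$ correspond, as $\psi$ varies, to \emph{every} residue class $a\in\{1,\dotsc,p-1\}$ paired with $s=1-n$ for some $n\in\Nuno$. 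The set $\{1-n:n\in\Nuno\}=\Z_{\le0}$ is dense in $\Z_p$, so inside each of the $p-1$ connected components $\{\omega^a\}\times\Z_p$ the special points are dense, even after removing the single point $\kappa$. Hence the prescribed values determine $\zeta_p$ uniquely.

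The computation itself is essentially bookkeeping; the only slightly delicate point is keeping the index shift $a\mapsto a-1$ straight (which comes from defining $\zeta_p(\omega^a\kappa_0^s)=\zeta_{p,a-1}(s)$ rather than $\zeta_{p,a}(s)$) and verifying that the resulting exponent of $\omega$ really does collapse to $b$. Beyond that, the substantive input is imported from earlier results: \eqref{eqn:interpolacion-preliminaria} supplies the values of each branch at negative integers, \cref{prop:numeros-de-bernoulli-gen} converts generalized Bernoulli numbers into special $L$-values, and the topological structure of $\Hom(G,\Z_p^\times)$ described after \cref{lem:caracteres-de-g} provides the density needed for uniqueness.
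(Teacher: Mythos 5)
Your proposal is correct and follows essentially the same route as the paper's proof: decompose $\psi^{-1}\kappa^{1-n}$ as $\omega^a\kappa_0^{1-n}$, evaluate via the branch formula \eqref{eqn:interpolacion-preliminaria}, and get uniqueness from density of these characters in $\Hom(G,\Z_p^\times)\setminus\{\kappa\}$. You merely supply the bookkeeping (the congruence $-(n+a-1)\equiv b\ (\mathrm{mod}\ p-1)$ and the density of $\{1-n : n\in\Nuno\}$ in each component) that the paper's proof states without detail.
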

\begin{proof}
  Cada carácter $\psi$ como arriba tiene la forma $\psi=\omega^i$ con un
  $i\in\{1,\dotsc,p-1\}$ si identificamos $\Gal(\Q(\mu_p)/\Q)$ con $(\Z/p\Z)^\times$, y
  entonces
  \[ \zeta_p(\psi^{-1}\kappa^{1-n}) = \zeta_p(\omega^{-i}\kappa^{1-n}) =
    \zeta_p(\omega^{-i+1-n}\kappa_0^{1-n}).  \] La propiedad anunciada resulta de
  \eqref{eqn:interpolacion-preliminaria}.  La unicidad la tenemos porque los caracteres
  $\psi^{-1}\kappa^{1-n}$ como arriba son densos en
  $\Hom(G,\Z_p^\times)\setminus\{\kappa\}$, pero omitimos los detalles de verificar
  esto.
\end{proof}

Aquí ya podemos reconocer el fenómeno que es típico de las funciones $L$ $p$-ádicas: en
general, son funciones en caracteres de grupos de Galois tal que si las evaluamos en un
producto de un carácter de orden finito $\psi$ y una potencia entera del carácter
ciclotómico obtenemos una modificación de un valor de una función $L$ compleja
chanfleada por $\psi$. Observe que la multiplicación por
$1-\psi(p)p^{n-1}$ quita el factor de Euler para el primo $p$ de la función $L(\psi,-)$. La
fórmula en el teorema que describe estos valores se llama la \emph{fórmula de
  interpolación}\index[def]{formula@fórmula!de interpolación}. ¡El lector debería volver a considerar cuán sorprendente es que tal cosa exista!

\ejercicios

\begin{ejer}\label{ejer:zeta-p-a}
  Demuestre la formula \eqref{eqn:interpolacion-preliminaria}. Para esto es útil usar la
  sucesión $(n_k)_{k\in\Nuno}$ con $n_k=n-(n+a)p^k+n(p-1)p^k$.
\end{ejer}

\begin{ejer}
  Demuestre la unicidad de la función $\zeta_p$ en el \cref{thm:zeta-p-adica-prelim}.
\end{ejer}

\section{La construcción de funciones $L$ $p$-ádicas mediante elementos de Stickelberger}
\label{sec:stickelberger}

Como hemos dicho anteriormente, queremos ver las funciones $L$ $p$-ádicas como elementos en
el álgebra de Iwasawa. Explicaremos con más detalle que quiere decir esto.

En esta sección usamos la notación siguiente: Para $m\in\Nuno$ sea $G_m$ el grupo
$\Gal(\Q(\mu_m)/\Q)$, que identificamos con $(\Z/m\Z)^\times$ de la manera usual (véase \cref{defi:campos-ciclotomicos}).
Si escribimos $m=Np^r$ con $N$ no divisible por $p$ entonces $G_m\isom G_N\times G_{p^r}$
canónicamente. Escribimos
$G_{Np^\infty}=\Gal(\Q(\mu_{Np^\infty})/\Q)=\varprojlim_{r\in\Nuno}G_{Np^r}$.
El grupo $G_{p^\infty}$ será el más importante de todos y lo llamamos simplemente
$G$.
Notemos que entonces $G_{Np^\infty}\isom G_N\times G$. Además, por el carácter ciclotómico
tenemos un isomorfismo $G\isom\Z_p^\times$, que se descompone como
$\Z_p^\times\isom\F_p^\times\times(1+p\Zp)$ según \cref{lem:z-p-decomposicion}, y denotemos
$\Delta$ y $\Gamma$ los subgrupos de $G$ que corresponden a $\F_p^\times$ y $1+p\Zp$,
respectivamente. Es decir, $\Delta=G_p=\Gal(\Q(\mu_p)/\Q)$ y $\Gamma=\Gal(\Q_{\mu_{p^\infty}}/\Q(\mu_p))$.

Sea $\LL(G)$ el álgebra de Iwasawa de $G$.  Si $\mu\in\LL(G)$ es un elemento, la propiedad
universal (\cref{prop:propiedad-universal-anillo-de-grupos}) nos da para cada carácter
$\psi\colon G\rightarrow\overline\Q_p^\times$ un morfismo, que llamamos igualmente,
$\psi\colon\LL(G)\rightarrow\overline\Q_p$.\footnote{Para ser más exactos, la propiedad
  universal nos da esto para caracteres con valores en anillos profinitos. Pero cada
  carácter $\psi\colon G\rightarrow\Qbar_p^\times$ tiene valores en un subanillo de
  $\Qbar_p$ compacto, y cada anillo topológico que es Hausdorff y compacto automáticamente
  es profinito. Véase la \cref{rem:locally-profinite} y el \cref{ejer:locally-profinite}.}
Abusando de la notación, escribimos $\mu(\psi):=\psi(\mu)$ como la imagen de $\mu$ por este
morfismo. De esta manera, cada elemento del álgebra de Iwasawa define una función
\[ \mu\colon\Hom(G,\overline\Q_p^\times)\rightarrow\overline\Q_p. \]
Esta función es la misma que la que obtenemos si consideramos $\mu$ como medida en $G$ como
en la \cref{sec:medidas}.

Lo que vamos a demostrar aquí es que la función $\zeta_p$ del \cref{thm:zeta-p-adica-prelim}
esencialmente viene de un tal elemento. Esto lo lograremos con una construcción
completamente nueva mediante elementos de Stickelberger, la cual es independiente de los
resultados de la sección anterior. En resumen, en esta sección damos otra construcción de la
función $\zeta_p$ más apegada a los métodos modernos en la Teoría de Iwasawa.

En esta sección continuamos fijando encajes $\Qbar\hookrightarrow\C$ y
$\Qbar\hookrightarrow\Qpbar$. Aquí seguimos esencialmente \cite[§6]{MR0360526}, aunque
nuestro punto de vista es un poco más moderno.

\begin{defi}
  Sea $m\in\Nuno$, el \define{elemento de Stickelberger de nivel $m$} es \[ \Sigma_m =
    -\frac1m\sum_{\substack{a=1\\(a,m)=1}}^ma\sigma_a^{-1} \in \Q[G_m]. \]
\end{defi}

El interés en este elemento originalmente proviene del \define[Teorema de Stickelberger]{teorema de Stickelberger}
(véase \cite[Thm.\ 6.10]{MR1421575} y el \cref{cor:stickelberger} más tarde), que dice que
el ideal $\Z[G_m]\cap\Sigma_m\Z[G_m]$ de $\Z[G_m]$ anula el grupo de clases de $\Q(\mu_m)$,
pero aquí nos interesamos en ese elemento por razones diferentes.

Ahora fijemos $N$ tal que $p$ no divide a $N$.  Para cada $r$ entonces tenemos el elemento
de Stickelberger $\Sigma_{Np^r}\in\Q[G_{Np^r}]$. Un cálculo fácil que omitimos aquí
demuestra que estos elementos son compatibles con respecto a los mapeos
$\Q[G_{Np^r}]\rightarrow\Q[G_{Np^s}]$ para $r\ge s\ge 1$. Así obtenemos un elemento
\[ \Sigma_{Np^\infty}=(\Sigma_{Np^r})_{r\in\Nuno}\in\Q\llbracket G_{Np^\infty}\rrbracket \] que
llamamos el elemento de Stickelberger de nivel $Np^\infty$. Aquí, por supuesto,
$\Q\llbracket G_{Np^\infty}\rrbracket$ denota el límite de anillos de grupos $\Q[G_{Np^r}]$
aunque $\Q$ no sea un anillo profinito.

Ahora sea $K$ una extension finita de $\Qp$ que contiene las raíces de la unidad de orden
$\varphi(N)=\#G_N$, sea $\O$ su anillo de enteros $K$ y $\LL(G)=\O\llbracket
G\rrbracket$. Entonces $K\llbracket G_{Np^\infty}\rrbracket$ es un $K[G_N]$-módulo. Para
cada carácter $\chi\colon G_N\rightarrow K^\times$ tenemos el idempotente
\[ e_\chi=\frac1{\varphi(N)}\sum_{g\in G_N}\chi(g)^{-1}g\in K[G_N] \] y podemos
descomponer
\[ K\llbracket G_{Np^\infty}\rrbracket = \bigoplus_\chi e_\chi K\llbracket
  G_{Np^\infty}\rrbracket \] (véase \cref{lem:idempotentes}).
Entonces, según el \cref{lem:descomposicion-lambda},
para cada $\chi$ existe un isomorfismo canónico de $K$-álgebras
\[ E_\chi\colon e_\chi K\llbracket G_{Np^\infty}\rrbracket \isomarrow K\llbracket
  G\rrbracket. \]

Veamos $\Sigma_{Np^\infty}$ como elemento de $K\llbracket G_N\rrbracket$. Además fijemos un
encaje $K\hookrightarrow\Qpbar$ y un carácter de Dirichlet $\chi$ de conductor $N$, que
podemos ver como carácter de $G_N$ con valores en $K$.

\begin{defi}\label{defi:mu-chi}
  Definimos $\mu_\chi := E_{\chi^{-1}}(e_{\chi^{-1}}\Sigma_{Np^\infty}) \in K\llbracket
  G\rrbracket$. Este elemento se llama la \define{función $L$ $p$-ádica} de $\chi$.
\end{defi}

En los siguientes teoremas vamos a justificar este nombre.
El \cref{ejer:mu-chi-explicito} da una representación m\'as explicita de este elemento que
usamos a continuación.

Por simplicidad, por ahora asumimos que
$\chi$ no es trivial, es decir $N>1$. El caso del carácter trivial es similar, pero
ligeramente más complicado, y lo explicaremos más tarde.

\begin{lem}\label{lem:mu-chi-entero}
  De hecho $\mu_\chi\in\LL(G)$ si $\chi$ es un carácter no trivial.
\end{lem}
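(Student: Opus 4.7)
La estrategia es el truco clásico atribuido a Iwasawa: multiplicar $\Sigma_{Np^\infty}$ por un elemento de la forma $c-\sigma_c$ para eliminar los denominadores, y luego invertir ese multiplicador después de aplicar $E_{\chi^{-1}}$. Escribimos $G_{Np^\infty}\isom G_N\times G$ y descomponemos $\sigma_c=(\sigma_c^N,\sigma_c^G)$ conforme a esta descomposición, para cada $c\in\Z$ coprimo con $Np$.

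El primer paso consiste en verificar que $(c-\sigma_c)\Sigma_{Np^r}\in\Z[G_{Np^r}]$ para cada $r\in\Nuno$ y cada $c$ como arriba. Esto se logra mediante un cálculo directo: reindexando la suma $\sigma_c\Sigma_{Np^r}=-\frac{1}{Np^r}\sum_a a\,\sigma_{ca^{-1}}$ a través de la sustitución $b\equiv ca\pmod{Np^r}$ con $b$ representado en $\{1,\dotsc,Np^r\}$, se obtiene
\[ (c-\sigma_c)\Sigma_{Np^r}=-\sum_{(a,Np)=1}\left\lfloor\frac{ca}{Np^r}\right\rfloor\sigma_a^{-1}\in\Z[G_{Np^r}], \]
y la compatibilidad en $r$ implica $(c-\sigma_c)\Sigma_{Np^\infty}\in\Z\llbracket G_{Np^\infty}\rrbracket$. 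Aplicando $E_{\chi^{-1}}\circ e_{\chi^{-1}}$ a esta inclusión, y usando que $e_{\chi^{-1}}\sigma_c^N=\chi^{-1}(c)e_{\chi^{-1}}$ (\cref{lem:idempotentes}) junto con la construcción de $E_{\chi^{-1}}$ del \cref{lem:descomposicion-lambda}, se deduce $E_{\chi^{-1}}(e_{\chi^{-1}}\sigma_c)=\chi^{-1}(c)\sigma_c^G$, y por lo tanto
\[ \alpha_c\cdot\mu_\chi\in\O\llbracket G\rrbracket\quad\text{donde}\quad\alpha_c:=c-\chi^{-1}(c)\sigma_c^G. \]

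Falta entonces escoger $c$ para el cual $\alpha_c$ sea una unidad de $\O\llbracket G\rrbracket$. Por medio de la descomposición $\O\llbracket G\rrbracket\isom\bigoplus_\psi\O\llbracket\Gamma\rrbracket$ del \cref{cor:descomposicion-lambda} (con $\psi$ corriendo por los caracteres de $\Delta$) y la identificación $\O\llbracket\Gamma\rrbracket\isom\O\llbracket T\rrbracket$ del \cref{thm:EquivPowPro}, cada $\psi$-componente de $\sigma_c^G$ es una unidad principal, por ende congruente a $1$ módulo $T$. Así, la $\psi$-componente de $\alpha_c$ tiene término constante $c-\chi^{-1}(c)\psi(\omega(c))$, y basta con encontrar $c$ coprimo con $Np$ para el cual estos $p-1$ términos sean unidades en $\O$. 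Como $\chi$ tiene conductor $N>1$, su restricción al núcleo $\{a\equiv 1\pmod p\}\subseteq(\Z/Np\Z)^\times$ sigue siendo no trivial; el teorema chino del residuo entonces permite escoger $c\equiv 1\pmod p$ con $\chi(c)\neq 1$, para el cual $\omega(c)=1$ y los términos constantes se reducen todos a $1-\chi^{-1}(c)$.

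La dificultad principal reside en verificar que $1-\chi^{-1}(c)$ sea efectivamente una unidad en $\O$, es decir $\chi^{-1}(c)\not\equiv 1\pmod\pi$. Esto es automático cuando el orden de la imagen de $\chi$ es coprimo con $p$, porque en tal caso la reducción módulo $\pi$ es inyectiva sobre las raíces de la unidad de ese orden, pero puede fallar cuando $p\mid\varphi(N)$ y $\chi$ tiene orden divisible por $p$. En este caso difícil, hay que modificar el argumento utilizando dos enteros auxiliares $c_1,c_2$ coprimos con $Np$ tales que el ideal $(\alpha_{c_1},\alpha_{c_2})$ sea todo $\O\llbracket G\rrbracket$, lo que permite concluir la integralidad de $\mu_\chi$ por un argumento de coprimalidad tipo Bezout.
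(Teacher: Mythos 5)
Tu estrategia es el truco clásico del multiplicador $c-\sigma_c$, genuinamente distinta de la del texto: en el texto la integralidad se demuestra directamente con la representación del \cref{ejer:mu-chi-explicito}, agrupando los términos de $\mu_{\chi,r}$ según el residuo de $a$ módulo $p^r$ y usando que $\sum_c\chi(c)=0$ sobre un sistema completo de residuos módulo $N$ (\cref{ejer:schur-orthogonalidad}), lo que muestra que los coeficientes son divisibles por $p^r$ para \emph{todo} carácter no trivial, sin distinción de casos. Tu cálculo de $(c-\sigma_c)\Sigma_{Np^r}\in\Z[G_{Np^r}]$ y la identidad $E_{\chi^{-1}}(e_{\chi^{-1}}\sigma_c)=\chi^{-1}(c)\sigma_c^G$ son correctos, y tu elección de $c\equiv1\pmod p$ con $\chi(c)\neq1$ sí concluye cuando el orden de $\chi$ es primo con $p$, pues entonces $1-\chi^{-1}(c)\not\equiv0\pmod\pi$.

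El problema es el caso que tú mismo señalas y dejas sin demostrar, y tu esbozo de solución no puede funcionar tal como está formulado. Si $\chi$ tiene orden una potencia de $p$ (caso permitido: sólo se pide $p\nmid N$, no $p\nmid\varphi(N)$), entonces $\chi^{-1}(c)\equiv1\pmod\pi$ para todo $c$. Escribiendo $\sigma_c^G=(\delta_c,\gamma_c)\in\Delta\times\Gamma$, la imagen de $\alpha_c=c-\chi^{-1}(c)\sigma_c^G$ en la componente $e_\omega\O\llbracket G\rrbracket\isom\O\llbracket T\rrbracket$ es $c-\chi^{-1}(c)\,\omega(\delta_c)\,\gamma_c$, que módulo el ideal máximo $(\pi,T)$ vale $\bar c-\bar\omega(c)=0$, porque $\gamma_c\equiv1$, $\chi^{-1}(c)\equiv1$ y $\omega(c)\equiv c\pmod p$. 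Es decir, \emph{todos} los $\alpha_c$ caen en el ideal máximo de esa componente, así que ningún conjunto finito $\alpha_{c_1},\alpha_{c_2}$ puede generar el ideal unidad y no existe ninguna identidad de Bézout $1=x\alpha_{c_1}+y\alpha_{c_2}$: la comaximalidad que propones es imposible exactamente en el caso en que la necesitas. Para rescatar el método habría que sustituirla por un argumento de coprimalidad (máximo común divisor $1$) componente a componente en el dominio de factorización única $\O\llbracket T\rrbracket$, usando además que $\mu_\chi\in\alpha_c^{-1}\LL(G)$ vive en el anillo total de cocientes; nada de esto aparece en tu propuesta, de modo que la demostración queda incompleta precisamente en ese caso, mientras que el cálculo directo del texto lo cubre de manera uniforme.
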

\begin{proof}
  Usamos la representación del \cref{ejer:mu-chi-explicito}. Tenemos
  \begin{align*}
    \sum_{\substack{a=1\\(a,Np)=1}}^{Np^r}a\chi(a)\pi_r(\sigma_a)^{-1}  &=
                                                                               \sum_{\substack{b=1\\(b,p)=1}}^{p^r}\sum_{\substack{c=1\\(c,Np)=1\\c\equiv
    b\mod p^r}}^{Np^r}c\chi(c)\pi_r(\sigma_b)^{-1} \\
    &\equiv \sum_b\left(\sum_c\chi(c)\right)b\pi_r(\sigma_b)^{-1}\quad\mod  p^r
  \end{align*}
  (las últimas sumas yendo sobre los mismos $b$ y $c$ como en la linea anterior).  Pero si
  $c$ pasa por estos elementos, la clase de $c$ módulo $N$ pasa por todos elementos de
  $G_N$. Por eso $\sum_c\chi(c)=0$ porque $\chi$ es un carácter no trivial (como demostramos
  en el \cref{ejer:schur-orthogonalidad}). Esto demuestra que la suma con que empezamos
  arriba es divisible por $p^r$, as\'i $\mu_{\chi,r}\in\O[G_{p^r}]$.
\end{proof}

El elemento $\mu_\chi\in\LL(G)$ que acabamos de construir se comporta como anunciamos al
principio de la sección: Define una función en $\Hom(G,\Qbar_p^\times)$ que es descrita por
una fórmula de interpolación similar a la del \cref{thm:zeta-p-adica-prelim}.

\begin{thm}\label{thm:palf-stickelberger-no-trivial}
  Sea $\psi\colon G\rightarrow\Qbar_p^\times$ un carácter de orden finito y
  $n\in\Nuno$. Entonces
  \[ \mu_\chi(\psi^{-1}\kappa^{1-n})=(1-\chi\psi(p)p^{n-1})L(\chi\psi,1-n). \]
\end{thm}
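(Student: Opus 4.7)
El plan es evaluar el homomorfismo de anillos $\eta\colon\LL(G)\to\Qbar_p$ inducido por $\eta:=\psi^{-1}\kappa^{1-n}$ (vía la \cref{prop:propiedad-universal-anillo-de-grupos}) en la medida $\mu_\chi$, usando la descripción explícita de $\mu_{\chi,r}\in\O[G_{p^r}]$ a niveles finitos suministrada por el \cref{ejer:mu-chi-explicito}. Desempaquetando $e_{\chi^{-1}}\Sigma_{Np^r}$ y aplicando $E_{\chi^{-1}}$, se encuentra que
\[ \mu_{\chi,r} = -\frac{1}{Np^r}\sum_{\substack{(a,Np)=1\\1\le a\le Np^r}}a\,\chi(a)\,\pi_r(\sigma_a)^{-1}. \]
Como $\mu_\chi\in\LL(G)$ es una medida genuina (\cref{lem:mu-chi-entero}) y $\eta$ es continuo, $\eta(\mu_\chi)$ aparece como el límite $p$-ádico cuando $r\to\infty$ de $\eta(\mu_{\chi,r})$, calculado levantando cada $\pi_r(\sigma_a)$ a un elemento de $G$ mediante la identificación por el carácter ciclotómico.

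Tomando $r$ mayor al conductor de $\psi$ y escogiendo el levantamiento que identifica $\pi_r(\sigma_a)$ con el elemento $\sigma_a\in G$ con $\kappa(\sigma_a)=a\in\Zp^\times$, se obtiene $\eta(\sigma_a^{-1})=\psi(a)\,a^{n-1}$, así que $\eta(\mu_{\chi,r})$ se reduce a una suma torcida de tipo Stickelberger. El factor de Euler $1-\chi\psi(p)p^{n-1}$ del enunciado debería aparecer naturalmente al comparar esta suma (restringida a $(a,Np)=1$) con la suma no restringida $S_{n,\chi\psi}(Np^r)=\sum_{(a,N)=1}\chi\psi(a)a^n$: la contribución faltante correspondiente a los $a$ divisibles por $p$ se factoriza, mediante la sustitución $a=pb$, como $\chi\psi(p)\,p^{n-1}$ por una suma de la misma forma al nivel inferior $Np^{r-1}$.

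Resta pasar al límite. La \cref{numeros-de-bernoulli-p-adicos} identifica el límite de $S_{n,\chi\psi}(Np^r)/(Np^r)$ con el número de Bernoulli generalizado $B_{n,\chi\psi}$, y la \cref{prop:numeros-de-bernoulli-gen} lo relaciona con $L(\chi\psi,1-n)$ vía $L(\chi\psi,1-n)=-B_{n,\chi\psi}/n$. Juntando estos ingredientes debería resultar la fórmula de interpolación deseada.

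El principal obstáculo que anticipo es doble: primero, la conversión combinatoria exacta de la suma de Stickelberger a una expresión que reproduzca el cociente $B_{n,\chi\psi}/n$ que aparece en $L(\chi\psi,1-n)$, y no simplemente $B_{n,\chi\psi}$; esto requerirá probablemente un argumento de sumación por partes combinado con la representación $B_{n,\chi\psi}=(Np^s)^{n-1}\sum_a\chi\psi(a)B_n(a/(Np^s)-1)$ del \cref{ejer:formula-b-n-chi} (o un truco equivalente usando polinomios de Bernoulli). Segundo, hay que cuidar la justificación de que la aproximación $p$-ádica a nivel finito $r$ mediante levantamientos específicos a $G$ realmente converge a $\eta(\mu_\chi)$, lo cual descansa en la continuidad uniforme de $\eta$ sobre el grupo profinito compacto $G$ y en la acotación de los coeficientes de $\mu_{\chi,r}$ suministrada por el \cref{lem:mu-chi-entero}.
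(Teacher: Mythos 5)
Hay un hueco genuino en el paso central de tu propuesta: la afirmación de que $\mu_\chi(\psi^{-1}\kappa^{1-n})$ es el límite de las evaluaciones a nivel finito obtenidas levantando cada $\pi_r(\sigma_a)$ a $\sigma_a\in G$ y aplicando $\psi^{-1}\kappa^{1-n}$ término a término. Esa evaluación levantada vale $-\frac1{Np^r}\sum_{(a,Np)=1}\chi\psi(a)a^n$, que por el \cref{ejer:relacion-s-n-chipsi} y la \cref{numeros-de-bernoulli-p-adicos} converge a $-(1-\chi\psi(p)p^{n-1})B_{n,\chi\psi}$, es decir, a $n$ veces el valor que quieres demostrar. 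Por lo tanto el límite de tus aproximaciones no es $\mu_\chi(\psi^{-1}\kappa^{1-n})$ (salvo si $n=1$), y el factor $\frac1n$ que identificas como \enquote{obstáculo combinatorio} no puede recuperarse con ninguna sumación por partes ni con los polinomios de Bernoulli: el valor del límite de tu esquema ya está determinado y es el equivocado. La razón intrínseca del fallo es el denominador $Np^r$: tu elemento levantado $A_r$ difiere de $\mu_\chi$ por $\frac1{Np^r}$ veces un elemento integral del núcleo de la proyección $\LL(G)\rightarrow\O[G_{p^r}]$; sobre ese núcleo el carácter $\psi^{-1}\kappa^{1-n}$ toma valores divisibles por (esencialmente) $p^r$, pero el prefactor $\frac1{p^r}$ cancela exactamente esa ganancia, así que el error no tiende a cero (en general tiende a un valor no nulo). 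La integralidad del \cref{lem:mu-chi-entero} no rescata el argumento: te dice que el elemento ya dividido está en $\O[G_{p^r}]$, pero su imagen a nivel $r$ solo determina $\mu_\chi(\psi^{-1}\kappa^{1-n})$ módulo $p^r$ mediante una evaluación que exige dividir la suma entre $Np^r$ \emph{antes} de reducir; es decir, necesitas información módulo $p^{2r}$ sobre la suma de Stickelberger, que la evaluación término a término no proporciona.

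Esto es exactamente lo que resuelve la demostración del texto (conjunta con el \cref{thm:palf-stickelberger-trivial}): antes de evaluar se multiplica por $h_N=1-(1+Np)\sigma_{1+Np}^{-1}$ de la \cref{defi:h-n}. El cálculo \eqref{eqn:calculacion-integralidad-as} muestra que $h_{N,r}\mu_{\chi,r}=\sum_a a_2\chi(a_1)\pi_r(\sigma_{a_1})^{-1}$, donde $a(1+Np)=a_1+a_2Np^r$, una expresión sin denominadores, de modo que la reducción módulo $p^r$ vía $\psi_r^{-1}\kappa_r^{1-n}$ sí es legítima. Luego la congruencia binomial $((1+Np)a)^n\equiv a_1^n+na_1^{n-1}a_2Np^r\pmod{p^{2r}}$ es la que produce simultáneamente el factor $n$ y el valor $h_N(\psi^{-1}\kappa^{1-n})=1-\chi\psi(1+Np)(1+Np)^n$; recién entonces entran tu identidad con $S_{n,\chi\psi}$ y el límite de la \cref{numeros-de-bernoulli-p-adicos}, y al final se divide entre $h_N(\psi^{-1}\kappa^{1-n})\neq0$ (\cref{ejer:integral-h-n-no-cero}). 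Para rescatar tu plan necesitas incorporar una regularización de este tipo (multiplicar por $h_N$, o por un elemento de la forma $1-c\,\sigma_c^{-1}$) antes de cualquier evaluación a nivel finito; sin ella el esquema de aproximación es inválido y la fórmula resultante carece del $\frac1n$.
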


La demostración de este teorema la haremos más tarde. Primero expliquemos como podemos
extender la construcción al caso en que el carácter $\chi$ sea trivial, es decir $N=1$, puesto que
este caso (que corresponde a la función zeta de Riemann) es el que nos interesa más.

Si $\chi=\mathbf 1$ es trivial, el elemento $\mu_{\mathbf 1}$ (que en este caso simplemente es
igual al elemento de Stickelberger $\Sigma_{p^\infty}\in\Qp\llbracket G\rrbracket$ de nivel
$p^\infty$) se comporta de manera similar. Lo que ya no funciona es la demostración del
\cref{lem:mu-chi-entero}, que es la única ocasión en la que usamos que $\chi$ era no
trivial, y de hecho la afirmación $\mu_{\mathbf 1}\in\LL(G)$ ya no es verdad. Sin embargo
$\mu_{\mathbf 1}$ está en el anillo de cocientes de $\LL(G)$, y podemos calcular explícitamente
su denominador.

\begin{defi}\label{defi:h-n}
  Para cada $N\in\Nuno$ definimos un elemento
  \[ h_N=1-(1+Np)\sigma_{1+Np}^{-1}\in\LL(G). \]
  donde $\sigma_{1+Np}\in G$ es el único
  elemento con $\kappa(\sigma_{1+Np})=1+Np$. Además definimos para cada
  $i\in\{1,\dotsc,p-1\}$   \[ h_N^{(i)}=1-(1+Np)e_{\omega^i}\sigma_{1+Np}^{-1}\in\LL(G) \]
  donde $e_{\omega^i}$ es el idempotente del \cref{lem:idempotentes}.
\end{defi}

\begin{lem}\label{lem:integral-h}
  Tenemos $h^{(1)}_1\mu_{\mathbf 1}\in\LL(G)$.
\end{lem}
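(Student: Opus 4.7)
The plan is to split $h_1^{(1)}$ using the decomposition of $\LL(G)$ by the isotypic components under $\Delta=\Gal(\Q(\mu_p)/\Q)$. Since $1 = e_\omega + (1-e_\omega)$, one can write
\[ h_1^{(1)} = (1-e_\omega) + e_\omega h_1 \qquad\text{con}\qquad h_1 := 1 - (1+p)\sigma_{1+p}^{-1}, \]
y por lo tanto
\[ h_1^{(1)} \mu_{\mathbf 1} = (1-e_\omega)\mu_{\mathbf 1} + e_\omega(h_1\mu_{\mathbf 1}). \]
I will show that each of the two summands lies in $\LL(G)$, which gives the claim.

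For the $e_\omega$-piece it suffices to establish the classical integrality $h_1\mu_{\mathbf 1}\in\LL(G)$. At finite level $r$, starting from $\mu_{\mathbf 1,r}=\Sigma_{p^r}=-\frac{1}{p^r}\sum_{(a,p)=1}^{p^r}a\,\sigma_a^{-1}$ and changing variables via $b\equiv(1+p)a\pmod{p^r}$, one gets
\[ h_1\Sigma_{p^r} = -\frac{1}{p^r}\sum_{b}\bigl(b-(1+p)\{(1+p)^{-1}b\}_{p^r}\bigr)\sigma_b^{-1}, \]
donde $\{\cdot\}_{p^r}$ denota el representante en $\{1,\dotsc,p^r\}$. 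The coefficient is an integer because $(1+p)\{(1+p)^{-1}b\}_{p^r}\equiv b\pmod{p^r}$ by construction, so passing to the inverse limit yields $h_1\mu_{\mathbf 1}\in\LL(G)$, and then $e_\omega(h_1\mu_{\mathbf 1})\in e_\omega\LL(G)\subseteq\LL(G)$.

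For the complementary piece $(1-e_\omega)\mu_{\mathbf 1}=\sum_{i\in\{0,2,\dotsc,p-2\}}e_{\omega^i}\mu_{\mathbf 1}$, the strategy is to show that each $e_{\omega^i}\mu_{\mathbf 1}$ is \emph{already} integral; in other words, the denominator of $\mu_{\mathbf 1}$ lives entirely on the $\omega$-isotypic component. I would parameterize $G_{p^r}\isom\Delta\times\Gamma_r$ via Teichmüller and, for $(\delta,\gamma)\in\Delta\times\Gamma_r$, denote by $a(\delta,\gamma)\in\{1,\dotsc,p^r\}$ el representante de $\delta\gamma$. Escogiendo un levantamiento entero $u(\gamma)\equiv 1\pmod p$ de $\gamma$ y escribiendo $\omega(\delta)u(\gamma)=a(\delta,\gamma)+p^r q(\delta,\gamma)$ con $q(\delta,\gamma)\in\Z$, se obtiene
\[ e_{\omega^i}\mu_{\mathbf 1,r} = -\frac{e_{\omega^i}}{p^r}\sum_{\gamma}\Bigl(\sum_{\delta}a(\delta,\gamma)\omega(\delta)^{-i}\Bigr)\gamma^{-1}, \]
con la suma interior igual a
\[ u(\gamma)\sum_{\delta}\omega(\delta)^{1-i} - p^r\sum_{\delta}q(\delta,\gamma)\omega(\delta)^{-i}. \]
La ortogonalidad de caracteres (\cref{ejer:schur-orthogonalidad}) fuerza $\sum_\delta\omega(\delta)^{1-i}=0$ tan pronto como $i\not\equiv 1\pmod{p-1}$, por lo que el $1/p^r$ se cancela y el coeficiente cae en $\O$. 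Pasando al límite inverso se obtiene $e_{\omega^i}\mu_{\mathbf 1}\in e_{\omega^i}\LL(G)$ para cada $i\neq 1$.

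The main obstacle is essentially bookkeeping: pinning down $a(\delta,\gamma)\equiv\omega(\delta)u(\gamma)\pmod{p^r}$, controlling the integer error $q(\delta,\gamma)$, and checking that the orthogonality cleanly isolates only the problematic $\omega$-slot. Conceptualmente la demostración apenas dice que el denominador de $\mu_{\mathbf 1}$ se encuentra en la única componente isotípica indexada por $\omega=\kappa|_\Delta$ — exactamente la componente donde aparece el polo de $\zeta_p$ en el carácter $\kappa$, debido al polo de $\zeta$ en $s=1$ — y $h_1^{(1)}$ está diseñado para regularizar precisamente esa componente actuando como la identidad en todas las demás.
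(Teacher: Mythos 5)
Your proof is correct and is essentially the paper's own argument (sketched in \cref{ejer:integral-h}): the integrality of $e_\omega h_1\mu_{\mathbf 1}$ is exactly the change-of-variables computation $a\mapsto(1+p)a$ behind \eqref{eqn:calculacion-integralidad-as}, and the integrality of $(1-e_\omega)\mu_{\mathbf 1}$ is the same isotypic argument — constancy of $a/\omega(a)$ on the fibres over $\Gamma_r$ together with orthogonality of the nontrivial characters $\omega^{1-i}$ — that the paper takes from the proof of \cref{lem:mu-chi-entero}. The only difference is organisational: you factor $h_1^{(1)}=(1-e_\omega)+e_\omega h_1$ up front, whereas the paper applies the $(1+p)$-shift first and then isolates the error term $\sum_a a(e_\omega-1)\pi_r(\sigma_a)^{-1}$, both routes reducing to the same two computations (note only that your $q(\delta,\gamma)$ lies in $\Zp$ rather than $\Z$, which is all the argument needs).
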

\begin{proof}
  Aunque la afirmación de este lema es importantísima, no damos la demostración en detalle
  porque usa argumentos muy similares a los que explicamos en otras demostraciones de esta
  sección (por ejemplo en la del \cref{thm:palf-stickelberger-trivial}). Sin embargo,
  describimos los pasos más importantes en el \cref{ejer:integral-h}.
\end{proof}

\begin{lem}\label{lem:nullstellen-h}
  Sea $N\in\Nuno$ coprimo a $p$, $i\in\{1,\dotsc,p-1\}$ y
  $\phi\colon G\rightarrow\Qbar_p^\times$ un carácter. Escribimos $\phi=\omega^j\phi_0$
  usando la descomposición $G\isom\Z_p^\times\isom\F_p^\times\times(1+p\Zp)$ del
  \cref{lem:z-p-decomposicion}, con $j\in\{1,\dotsc,p-1\}$.  Entonces tenemos
  \[ \phi(h_N^{(i)})=
    \begin{cases}
      1-(1+Np)\phi_0(1+Np)^{-1}, & j=i,\\
      1, & j\neq i.
    \end{cases} \]
  En particular
  \[ \phi(h_N^{(i)})=0 \iff \phi=\omega^i\kappa_0. \]
\end{lem}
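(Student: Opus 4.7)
The plan is to unwind the definition of $h_N^{(i)}$ by applying the ring homomorphism $\phi\colon\LL(G)\rightarrow\Qbar_p$ induced by the character and then to compute each factor separately. Concretely, applying $\phi$ term by term yields
\[ \phi(h_N^{(i)}) = 1 - (1+Np)\,\phi(e_{\omega^i})\,\phi(\sigma_{1+Np})^{-1}, \]
so the whole problem reduces to evaluating $\phi$ on the idempotent $e_{\omega^i}$ and on $\sigma_{1+Np}$.

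First I would compute $\phi(e_{\omega^i})$. Because $\phi=\omega^j\phi_0$ and $\phi_0$ is trivial on $\Delta$, we have $\phi|_\Delta=\omega^j$, hence
\[ \phi(e_{\omega^i}) = \frac{1}{p-1}\sum_{\delta\in\Delta}\omega^i(\delta)^{-1}\omega^j(\delta)=\frac{1}{p-1}\sum_{\delta\in\Delta}\omega^{j-i}(\delta). \]
By the character orthogonality relations of \cref{ejer:schur-orthogonalidad}, this sum is $1$ if $j=i$ and $0$ otherwise. In the case $j\neq i$ we therefore obtain $\phi(h_N^{(i)})=1$, as claimed.

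Next, for the case $j=i$, I would compute $\phi(\sigma_{1+Np})$. Since $(N,p)=1$ and $p$ is odd we have $1+Np\in 1+p\Zp$, so under the decomposition $\Z_p^\times\isom\F_p^\times\times(1+p\Zp)$ of \cref{lem:z-p-decomposicion} the Teichmüller component of $\sigma_{1+Np}$ is trivial; hence $\omega^j(\sigma_{1+Np})=1$ and $\phi(\sigma_{1+Np})=\phi_0(1+Np)$. Substituting gives the desired formula
\[ \phi(h_N^{(i)})=1-(1+Np)\phi_0(1+Np)^{-1}. \]

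For the final equivalence, clearly $\phi=\omega^i\kappa_0$ satisfies $j=i$ and $\phi_0=\kappa_0$, and by construction $\kappa_0(\sigma_{1+Np})=1+Np$, so $\phi(h_N^{(i)})=0$. Conversely, if $\phi(h_N^{(i)})=0$ then from the first part $j=i$ and $\phi_0(1+Np)=1+Np=\kappa_0(1+Np)$. The step that requires a little care—and which I regard as the only real obstacle—is concluding from this single equality that $\phi_0=\kappa_0$ as characters of $1+p\Zp$. This follows from the fact that $1+Np$ is a topological generator of $1+p\Zp$: indeed, writing $1+Np=1+p\cdot N$ with $N\in\Z_p^\times$ and using the isomorphism $\exp\colon p\Zp\isomarrow 1+p\Zp$ of \cref{prop:isom-z-p-log}, the element $1+Np$ corresponds to a unit times $p$ in $p\Zp\isom\Zp$, hence generates topologically. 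Since a continuous character of $1+p\Zp$ is determined by its value on a topological generator, $\phi_0=\kappa_0$, and therefore $\phi=\omega^i\kappa_0$.
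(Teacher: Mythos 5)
Tu propuesta es correcta y sigue esencialmente el mismo camino que la demostración del texto: aplicar el morfismo inducido por $\phi$, usar las relaciones de ortogonalidad del \cref{ejer:schur-orthogonalidad} para evaluar el idempotente $e_{\omega^i}$ (obteniendo $1$ si $j=i$ y $0$ si $j\neq i$), observar que $\phi(\sigma_{1+Np})=\phi_0(1+Np)$ porque $\sigma_{1+Np}$ vive en la componente $\Gamma$, y concluir la última equivalencia del hecho de que $1+Np$ es un generador topológico de $1+p\Zp$. La única diferencia es cosmética (el texto evalúa $\phi(e_{\omega^i}\sigma_{1+Np}^{-1})$ en un solo paso y tú separas los factores, además de justificar con más detalle la generación topológica), así que no hay nada que corregir.
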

\begin{proof}
  De la definición de $e_{\omega^i}$ obtenemos fácilmente
  \begin{equation*}
    \phi(e_{\omega^i}\sigma_{1+Np}^{-1})=\frac1{p-1}\Bigg(\sum_{a=1}^{p-1}\omega^{j-i}(a)\Bigg)\phi_0(1+Np)^{-1}.
  \end{equation*}
  La suma en los paréntesis es igual a $p-1$ si $j=i$ y es cero si $j\neq i$. Obtenemos la
  fórmula para $\phi(h_N)$.

  La última afirmación entonces es cierta porque $1+Np$ es un generador topológico de $1+p\Zp$.
\end{proof}

Ahora podemos enunciar el teorema para el carácter trivial. Los lemas \ref{lem:integral-h} y
\ref{lem:nullstellen-h} muestran que, aunque $\mu_{\mathbf 1}$ no define una función en todo $\Hom(G,\Qbar_p^\times)$, todavía define una función en $\Hom(G,\Qbar_p^\times)\setminus\{\kappa\}$.

\begin{thm}\label{thm:palf-stickelberger-trivial}
  Sea $\psi\colon G\rightarrow\Qbar_p^\times$ un carácter de orden finito y
  $n\in\Nuno$. Entonces
  \[ \mu_{\mathbf 1}(\psi^{-1}\kappa^{1-n})=(1-\psi(p)p^{n-1})L(\psi,1-n). \]
\end{thm}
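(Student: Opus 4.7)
The plan is to mimic the proof of Theorem \ref{thm:palf-stickelberger-no-trivial} for the non-trivial character case, paying extra attention to the fact that when $\chi = \mathbf 1$ (so $N=1$) the element $\mu_{\mathbf 1}$ is only a pseudo-measure, not an element of $\LL(G)$. First I would observe that for $N=1$ the construction in \cref{defi:mu-chi} collapses: the character group of $G_N$ is trivial, $e_{\mathbf 1}=1$, and $E_{\mathbf 1}$ is the identity, so $\mu_{\mathbf 1} = \Sigma_{p^\infty} \in \mathcal Q(G)$, represented at level $r$ by $-\frac{1}{p^r}\sum_{(a,p)=1,\,1\le a\le p^r} a\,\sigma_a^{-1}$.

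The pseudo-measure framework from \cref{sec:medidas} then tells us how to make sense of the left-hand side. By \cref{lem:integral-h}, $h_1^{(1)}\mu_{\mathbf 1} \in \LL(G)$, and by \cref{lem:nullstellen-h} the character $\phi := \psi^{-1}\kappa^{1-n}$ is \emph{not} of the form $\omega\kappa_0 = \kappa$ (since $n\ge 1$ forces $\phi$ to have infinite order while differing from $\kappa$), hence $\phi(h_1^{(1)}) \neq 0$. Thus one defines
\[ \mu_{\mathbf 1}(\phi) := \frac{\phi\bigl(h_1^{(1)}\mu_{\mathbf 1}\bigr)}{\phi\bigl(h_1^{(1)}\bigr)}, \]
and both factors can be computed independently: the denominator directly from the explicit formula of \cref{lem:nullstellen-h}, and the numerator by working at finite level.

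For the numerator, I would apply $\phi$ termwise to the approximations $h_1^{(1)}\Sigma_{p^r}$. Using $\kappa(\sigma_a) = a$, each summand $\sigma_a^{-1}$ is sent to $\psi(a)\,a^{n-1}$, so one obtains sums of the shape $-\frac{1}{p^r}\sum_{(a,p)=1}\psi(a)\,a^n$, plus a correction coming from the auxiliary element $h_1^{(1)} = 1-(1+p)\sigma_{1+p}^{-1}$. Splitting the sum by the congruence class mod $p$ (so as to separate the $p\mid a$ terms and generate the Euler factor $1-\psi(p)p^{n-1}$) and invoking \cref{numeros-de-bernoulli-p-adicos}, the $r\to\infty$ limit is expressed in terms of the generalized Bernoulli number $B_{n,\psi}$. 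Then \cref{prop:numeros-de-bernoulli-gen} converts $B_{n,\psi}$ into $-nL(\psi,1-n)$, and a careful bookkeeping of the factor of $n$ together with the denominator $\phi(h_1^{(1)})$ gives the desired formula.

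The main obstacle will be this last bookkeeping step: the normalization of the Stickelberger element produces $B_{n,\psi}$ directly, whereas the $L$-value is $-B_{n,\psi}/n$, and so the factor $\phi(h_1^{(1)})$ in the denominator must cancel this $n$ precisely and introduce the correct Euler correction. An alternative, more conceptual route would be to deduce the statement from \cref{thm:palf-stickelberger-no-trivial} by a density argument, replacing $\chi$ by non-trivial characters of $p$-power conductor approaching the trivial one; this is clean structurally but requires continuity and density facts about $\Hom(G,\Qbar_p^\times)$ that would need to be established separately.
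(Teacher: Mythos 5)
Your overall route is the paper's own: the paper proves Theorems \ref{thm:palf-stickelberger-no-trivial} and \ref{thm:palf-stickelberger-trivial} with one joint computation, multiplying by an auxiliary element $h_N$ (here $h_1$; your choice of $h_1^{(1)}$ from \cref{lem:integral-h} also works, but then you must carry the idempotent $e_\omega$ through the finite-level formulas as in \cref{ejer:integral-h}), using $\phi(h)\neq0$ for $\phi=\psi^{-1}\kappa^{1-n}\neq\kappa$, the identity of \cref{ejer:relacion-s-n-chipsi} to produce the Euler factor, and \cref{numeros-de-bernoulli-p-adicos} together with \cref{prop:numeros-de-bernoulli-gen} to land on $L(\psi,1-n)$. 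However, the step you defer as \enquote{bookkeeping} is exactly where the proof lives, and as sketched it does not go through: since $\kappa^{1-n}$ has infinite order, $\phi$ does not factor through any finite quotient $G_{p^r}$, so you cannot evaluate $\phi$ \enquote{termwise} on $h^{(1)}_{1,r}\Sigma_{p^r}$ and read off $-\frac1{p^r}\sum_a\psi(a)a^n$; any level-$r$ substitution $\sigma_a^{-1}\mapsto\psi(a)a^{n-1}$ is only valid modulo $p^r$, and that error is of the same size as the denominator $1/p^r$ you are dividing by. The paper's way out is to work with the integral element $h_{1,r}\mu_{\mathbf 1,r}=\sum_a a_2\pi_r(\sigma_{a_1})^{-1}$ (the computation \eqref{eqn:calculacion-integralidad-as}, writing $a(1+p)=a_1+a_2p^r$) and the reduced character $\psi_r^{-1}\kappa_r^{1-n}\colon\O[G_{p^r}]\rightarrow\O/p^r$; the crucial device is then the binomial congruence $((1+p)a)^n\equiv a_1^n+na_1^{n-1}a_2p^r\pmod{p^{2r}}$, which converts the carry-digit sum $\sum_a\psi(a_1)a_1^{n-1}a_2$ into $-\frac1n\,h_1(\phi)\,\frac1{p^r}\sum_a\psi(a)a^n$ modulo $p^r$. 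In particular your accounting of constants is off: the factor $\phi(h_1)$ (resp.\ $\phi(h_1^{(1)})$) arises from this binomial trick in the numerator and simply cancels against the denominator; the $n$ is absorbed by $L(\psi,1-n)=-B_{n,\psi}/n$, and the Euler factor comes from splitting off the $p\mid a$ terms via \cref{ejer:relacion-s-n-chipsi}, not from $\phi(h_1^{(1)})$.

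The alternative route you mention (approximating the trivial character by non-trivial ones) does not work within \cref{thm:palf-stickelberger-no-trivial}: that theorem concerns characters $\chi$ of conductor $N>1$ prime to $p$ (characters of $p$-power conductor play the role of $\psi$, not of $\chi$), and each $\mu_\chi=E_{\chi^{-1}}(e_{\chi^{-1}}\Sigma_{Np^\infty})$ is a separate element of a separate eigencomponent; there is no family inside a single space in which the $\mu_\chi$ could converge to $\mu_{\mathbf 1}$, so no density or continuity argument recovers the trivial-character case from the non-trivial ones.
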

\begin{proof}[conjunta de los teoremas \labelcref{thm:palf-stickelberger-no-trivial}
  y \labelcref{thm:palf-stickelberger-trivial}]
  Asumamos sin pérdida de generalidad que $\O$ sea suficientemente grande tal que el
  carácter $\psi$ tenga valores en $\O$. Vamos a demostrar que, para cualquier $N$ y $\chi$,
  tenemos
  \begin{equation*}
    h_N\mu_\chi(\psi^{-1}\kappa^{1-n})=h_N(\psi^{-1}\kappa^{1-n})(1-\chi\psi(p)p^{n-1})L(\chi\psi,1-n).
  \end{equation*}
  Del \cref{ejer:integral-h-n-no-cero} sabemos que $h_N(\psi^{-1}\kappa^{1-n})\neq0$, por
  eso esto es suficiente.

  Sea $r\in\Nuno$. Usamos la siguiente notación: para cada $a\in\{1,\dotsc,Np^r\}$ escribimos
  $a(1+Np)=a_1+a_2Np^r$ con $0\le a_1<Np^r$. Entonces para estos $a$ tenemos
  $\chi(a(1+Np))=\chi(a_1)$ y $\pi_r(\sigma_{a(1+Np)})=\pi_r(\sigma_{a_1})$. Sea
  $h_{N,r}\in\O[G_{p^r}]$ la imagen de $h_N$.
  Con esto vemos (escribiendo $\sum_a$ para la
  suma sobre los $a\in\{1,\dotsc,Np^r\}$ coprimos a $Np$):
  \begin{align}
    \label{eqn:calculacion-integralidad-as}
      h_{N,r}\mu_{\chi,r}&=\mu_{\chi,r}+\frac1{Np^r}\sum_a(a_1+a_2Np^r)\chi(a_1)\pi_r(\sigma_{a_1})^{-1}
      \notag \\
      &=
      -\frac1{Np^r}\sum_aa\chi(a)\pi_r(\sigma_a)^{-1}+\frac1{Np^r}\sum_aa_1\chi(a_1)\pi_r(\sigma_{a_1})^{-1}+\sum_aa_2\chi(a_1)\pi_r(\sigma_{a_1})^{-1}
      \notag \\
      &= \sum_aa_2\chi(a_1)\pi_r(\sigma_{a_1})^{-1}\in\O[G_{p^r}]
  \end{align}
  por que si $a$ pasa por los $a\in\{1,\dotsc,Np^r\}$ coprimos a $Np$, $a_1$ pasa por los
  mismos elementos. 
  
  Ahora fijamos $r$ tal que el conductor de $\psi$ divida a $p^r$, lo que significa que podemos
  ver $\psi$ como carácter de $G_r$. Sea $\kappa_r$ el isomorfismo
  $G_r\isomarrow(\Z/p^r\Z)^\times$ y $\psi_r$ la composición de
  $\psi\colon G_r\rightarrow\O^\times$ con la proyección
  $\O^\times\rightarrow(\O/p^r\O)^\times$.  Entonces tenemos diagramas conmutativos
  \begin{equation*}
    \begin{tikzcd}
      G \arrow[r, "\kappa"] \arrow[d, twoheadrightarrow]
      & \Z_p^\times \arrow[d, twoheadrightarrow] \\
      G_r \arrow[r, "\kappa_r"]
      & (\Z/p^r\Z)^\times
    \end{tikzcd}
    \qquad\qquad
    \begin{tikzcd}
      G \arrow[r, "\psi"] \arrow[d, twoheadrightarrow]
      & \O^\times \arrow[d, twoheadrightarrow] \\
      G_r \arrow[r, "\psi_r"]
      & (\O/p^r\O)^\times
    \end{tikzcd}
  \end{equation*}
  y como $(\Z/p^r\Z)^\times\subseteq(\O/p^r\O)^\times$, el carácter
  $\psi^{-1}_r\kappa_r^{1-n}$ induce un homomorfismo
  $\psi_r^{-1}\kappa_r^{1-n}\colon\O[G_r]\rightarrow\O/p^r$.

  Ahora consideremos $\psi_r^{-1}\kappa_r^{1-n}(h_{N,r}\mu_{\chi,r})\in\O/p^r$. Por
  la definición de $\kappa_r$ y $\pi_r$ tenemos
  \begin{equation*}
    \kappa_r^{1-n}(\pi_r(\sigma_a)^{-1})=\kappa_r^{n-1}(\pi_r(\sigma_a))=\kappa_r^{n-1}(\pi_r(\sigma_{a_1})) \equiv a_1^{n-1}\mod p^r
  \end{equation*}
  para $a\in\{1,\dotsc,Np^r\}$ coprimo a $Np$. Además tenemos
  $\psi^{-1}(\pi_r(\sigma_a)^{-1})=\psi(\pi_r(\sigma_a))=\psi(a)=\psi(a_1)$ para los mismos
  $a$. Esto demuestra que
  \begin{equation*}
    \psi_r^{-1}\kappa_r^{1-n}(h_{N,r}\mu_{\chi,r}) \equiv \sum_a\chi\psi(a_1)a_1^{n-1}a_2\mod p^r.
  \end{equation*}

  Ahora usamos un truco con el teorema del binomio. Este nos dice que
  \begin{equation*}
    {((1+Np)a)}^n={(a_1+a_2Np^r)}^n\equiv a_1^n+na_1^{n-1}a_2Np^r \mod p^{2r}
  \end{equation*}
  y pues
  \begin{align*}
    \chi\psi(1+Np)(1+Np)^n\sum_a\chi\psi(a)a^n &= \sum_a\chi\psi(a_1)((1+Np)a)^n \\
    &\equiv\sum_a\chi\psi(a_1)a_1^n+nNp^r\sum_a\chi\psi(a_1)a_1^{n-1}a_2 \mod p^{2r}.
  \end{align*}
  Pero si dos expresiones en $\O$ son congruentes módulo $p^{2r}$ y una de ellas es
  divisible por $p^r$, entonces la otra también es divisible por $p^r$, y si dividimos las
  dos por $p^r$ entonces los resultados serán congruentes módulo $p^r$. Aplicamos esta
  observación aquí y obtenemos
  \begin{multline*}
    \psi_r^{-1}\kappa_r^{1-n}(h_{N,r}\mu_{\chi,r}) \equiv-(1-\chi\psi(1+Np)(1+Np)^n)\frac{1}{nNp^j}\sum_a\chi\psi(a)a^n \\
                                                   =-\frac1nh_N(\psi^{-1}\kappa^{1-n})\frac1{Np^r}\left(S_{n,\chi\psi}(Np^r)-\chi\psi(p)p^nS_{n,\chi\psi}(Np^{r-1})\right) \mod p^r
  \end{multline*}
  con $S_{n,\chi\psi}(Np^r)$ como en la \cref{numeros-de-bernoulli-p-adicos} (aquí usamos
  una fórmula del \cref{ejer:relacion-s-n-chipsi}).
  
  Por la conmutatividad de los diagramas de arriba, esto significa que
  \begin{multline*}
    h_N\mu_\chi(\psi^{-1}\kappa^{1-n})\equiv \\
    -\frac1nh_N(\psi^{-1}\kappa^{1-n})\left(\frac1{Np^r}S_{n,\chi\psi}(Np^r)-\chi\psi(p)p^{n-1}\frac1{Np^{r-1}}S_{n,\chi\psi}(Np^{r-1})\right) \mod p^r
  \end{multline*}
  para cada $r\in\Nuno$ suficientemente grande y la afirmación resulta de la
  \cref{numeros-de-bernoulli-p-adicos}.
\end{proof}

\begin{prop}\label{prop:unicidad-palf}
  Las fórmulas de interpolación de los
  teoremas \labelcref{thm:palf-stickelberger-no-trivial}
  y \labelcref{thm:palf-stickelberger-trivial} caracterizan el elemento $\mu_{\chi}$ de
  manera única.
\end{prop}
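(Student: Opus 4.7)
The plan is to show that the interpolation formula determines the values of $\mu_\chi$ (resp.\ of the integral element $h_N\mu_{\mathbf 1}$ in the trivial case) on a sufficiently dense family of characters and then to use the Weierstrass preparation theorem (in the form of \cref{ejer:weierstrass}) to conclude that two such elements of $\LL(G)$ must coincide. Concretely, suppose $\mu$ and $\mu'$ are two elements with the properties stated. In the non-trivial case both lie in $\LL(G)$ and we form $\nu:=\mu-\mu'$; in the trivial case, since $h_N\mu_{\mathbf 1}\in\LL(G)$ by \cref{lem:integral-h}, any competitor $\mu'$ must satisfy $h_N\mu'\in\LL(G)$ as well, and we set $\nu:=h_N(\mu-\mu')\in\LL(G)$. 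By hypothesis $\nu(\psi^{-1}\kappa^{1-n})=0$ for every finite order character $\psi\colon G\to\Qbar_p^\times$ and every $n\in\Nuno$.

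Next I would use the decomposition $G=\Delta\times\Gamma$ with $\Delta\isom\F_p^\times$ and $\Gamma\isom\Zp$, together with \cref{cor:descomposicion-lambda}, to write $\LL(G)=\bigoplus_{i=1}^{p-1}e_{\omega^i}\LL(G)$ and identify each factor with $\O\llbracket T\rrbracket$ via $\gamma-1\mapsto T$, where $\gamma$ is a fixed topological generator of $\Gamma$. Under this identification, if $F_i\in\O\llbracket T\rrbracket$ corresponds to $e_{\omega^i}\nu$, then evaluating $\nu$ at a character whose $\Delta$-part is $\omega^i$ and whose $\Gamma$-part is $\eta\colon\Gamma\to\Qbar_p^\times$ produces $F_i(\eta(\gamma)-1)$ (possibly after enlarging the ring of coefficients). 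It is then enough to show that each $F_i$ has infinitely many zeros in $\pi\O$, because by \cref{ejer:weierstrass} this forces $F_i=0$ for every $i$, hence $\nu=0$.

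To produce these zeros, fix $i\in\{1,\dotsc,p-1\}$ and consider the family of characters $\psi^{-1}\kappa^{1-n}=\psi^{-1}\omega^{1-n}\kappa_0^{1-n}$ obtained by taking $\psi$ trivial and letting $n$ run over positive integers with $1-n\equiv i\pmod{p-1}$; the $\Delta$-part of these characters is $\omega^i$ and the $\Gamma$-part is $\kappa_0^{1-n}$, so the corresponding values $\kappa_0(\gamma)^{1-n}-1\in p\Zp$ are infinitely many distinct elements (since $\kappa_0(\gamma)\in1+p\Zp$ has infinite order thanks to \cref{prop:isom-z-p-log}). These are precisely zeros of $F_i$, so $F_i=0$ by Weierstrass. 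Doing this for each $i$ gives $\nu=0$, i.e.\ $\mu=\mu'$ (respectively $h_N(\mu-\mu')=0$ which implies $\mu=\mu'$ as $h_N$ is a regular element of $\LL(G)$ by \cref{ejer:integral-h-n-no-cero}).

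The main technical subtlety will be keeping track of the field of coefficients in the trivial case and in the treatment of characters $\psi$ whose values lie in a proper finite extension of $\O$: one should work over a sufficiently large $\O'$ so that $\mu_\chi\in\O'\llbracket G\rrbracket$ and all characters land in $\O'$, and verify that the decomposition into idempotents and the power series identification remain valid over $\O'$. Once this bookkeeping is in place, the density argument via Weierstrass is straightforward and the uniqueness follows without further obstruction.
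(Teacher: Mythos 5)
Tu propuesta es correcta y sigue esencialmente el mismo camino que la demostración del texto: reducir por componentes mediante la descomposición en idempotentes (equivalentemente, los morfismos $\omega^i\id_\Gamma$), identificar cada componente con una serie en $\O\llbracket T\rrbracket$, observar que la diferencia se anula en una infinitud de puntos de $p\Zp$ de la forma $\kappa_0(\gamma)^{1-n}-1$, y concluir con el teorema de preparación de Weierstraß (\cref{ejer:weierstrass}). Las únicas diferencias son cosméticas: tú usas sólo $\psi$ trivial con $n$ restringido a clases módulo $p-1$ mientras el texto usa los caracteres $\omega^i\kappa_0^{1-n}$ para todo $n$, y tratas explícitamente el denominador $h_N$ en el caso trivial (punto que el texto deja implícito).
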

\begin{proof}
  Si tenemos dos elementos con esta propiedad de interpolación entonces su diferencia está
  en el núcleo del morfismo $\psi^{-1}\kappa^{1-n}\colon\LL(G)\rightarrow\O$ para cada $n\in\Nuno$
  y $\psi$ de orden finito. Para $i\in\{1,\dotsc,p-1\}$ y $n\in\Nuno$ sea $K_{i,n}\subseteq\LL(G)$
  el núcleo del morfismo $\LL(G)=\O\llbracket G\rrbracket\rightarrow\O$ inducido por
  $\omega^i\kappa_0^{1-n}\colon\Delta\times\Gamma\rightarrow\O^\times$. Vamos a demostrar que
  \begin{equation*} \bigcap_{\substack{1\le i\le p-1\\n\in\Nuno}}K_{i,n} = 0. \end{equation*}
  Para eso usamos el morfismo
  $\O\llbracket G\rrbracket\rightarrow\O\llbracket\Gamma\rrbracket$ inducido por
  \begin{equation*}
    G\isom\Delta\times\Gamma\rightarrow\LL(\Gamma)^\times, \quad
    g=(\delta,\gamma)\mapsto\omega^i(\delta)\gamma
  \end{equation*}
  que llamamos $\omega^i\id_\Gamma$. Es fácil ver
  que entonces el diagrama
  \begin{equation*}
    \begin{tikzcd}
      \O\llbracket G\rrbracket \arrow[r,swap,"\omega^i\id_{\Gamma}"]
      \arrow[rr, bend left, "\omega^i\kappa_0^{1-n}"] & \O\llbracket\Gamma\rrbracket
      \arrow[r, swap, "\kappa_0^{1-n}"] & \O 
    \end{tikzcd}
  \end{equation*}
  es conmutativo. Llamamos
  $\widetilde K_n$ el núcleo del morfismo $\O\llbracket\Gamma\rrbracket\rightarrow\O$
  inducido por $\kappa_0^{1-n}$. Entonces, para cada elemento de la intersección de los
  $K_{i,n}$ de arriba, su imagen bajo $\omega^i\id_\Gamma$ está en $\widetilde K_n$ para
  cada $n\in\Nuno$.  Por eso es suficiente demostrar que
  \begin{equation*} \bigcap_{n\in\Nuno}\widetilde K_n = 0. \end{equation*}

  Ahora usamos el isomorfismo
  \begin{equation*}
    \O\llbracket T\rrbracket \isomarrow\O\llbracket\Gamma\rrbracket,\quad T\mapsto\gamma-1
  \end{equation*}
  del \cref{thm:EquivPowPro} (donde usamos $\gamma=\sigma_{1+p}\in\Gamma$ como generador
  topológico). Un elemento de $\bigcap_n\widetilde K_n$ corresponde a una serie de potencia
  $f\in\O\llbracket T\rrbracket$ tal que
  \begin{equation*}
    f((1+p)^{1-n}-1)=0 \quad\text{para cada }n\in\Nuno.
  \end{equation*}
  Pero el teorema de preparación de Weierstraß (\cref{thm:weierstrass} y
  \cref{ejer:weierstrass}) muestra que una serie de potencia con una infinitud de ceros es
  cero.
\end{proof}

Esto termina la construcción de las funciones $L$ $p$-ádicas mediante elementos de
Stickelberger. Comparemos los resultados con el resultado preliminar del
\cref{thm:zeta-p-adica-prelim}. Ahí construimos una función continua
\[ \Hom(G,\Qbar_p^\times)\setminus\{\kappa\}\rightarrow\Qbar_p^\times \] que es
caracterizada de manera única por una fórmula de interpolación que relaciona los valores de
la función con valores de funciones $L$ complejas. Aquí construimos un elemento
$\mu_{\mathbf 1}$ en el anillo de cocientes de $\LL(G)$ que define la misma función en
$\Hom(G,\Qbar_p^\times)\setminus\{\kappa\}$ y por eso la hace \enquote{más algebraica};
aunado a esto, el elemento otra vez es únicamente caracterizado por esta propiedad. Además,
logramos construir un elemento análogo $\mu_\chi$ también para las funciones $L$ de cualquier
carácter de Dirichlet $\chi$ cuyo conductor no sea divisible por $p$.

\ejercicios

\begin{ejer}\label{ejer:integral-h-n-no-cero}
  \begin{enumerate}
  \item\label{ejer:integral-h-n-no-cero:a} Use las relaciones del \cref{lem:idempotentes}
    para demostrar que
    \[ h_N-1=\sum_{i=1}^{p-1}(h_N^{(i)}-1) \]
    (donde usamos la notación introducida en la \cref{defi:h-n}).
  \item Sea $\phi\colon G\rightarrow\Qbar_p^\times$ que escribimos como
    $\phi=\omega^j\phi_0$ como en el \cref{lem:nullstellen-h}. Deduzca de
    \ref{ejer:integral-h-n-no-cero:a} y este lema que $\phi(h_N)=1-(1+Np)\phi_0(1+Np)^{-1}$,
    independientemente de $j$.
  \item Concluya que $\phi(h_N)=0 \iff \phi_0=\kappa_0$.
  \end{enumerate}
\end{ejer}

\begin{ejer}\label{ejer:carideal-zpuno}
  Demuestre que $h_1\in\LL(G)$ es un generador del núcleo de
  $\kappa\colon\LL(G)\rightarrow\Zp$, así que tenemos un isomorfismo
  \[ \LL(G)/h_1\isom\Zp. \] Verifique que este isomorfismo es compatible con la acción de
  $G$ si dejamos actuar $g\in G$ en $\LL(G)$ por multiplicación con $g$ y en $\Zp$ por
  multiplicación con $\kappa(g)$.
\end{ejer}

\begin{ejer}
  Demuestre que los elementos de Stickelberger $\Sigma_{Np^r}\in\Q[G_{Np^r}]$ son compatibles
  con respecto a los mapeos $\Q[G_{Np^r}]\rightarrow\Q[G_{Np^s}]$ para $r\ge s\ge 1$.
\end{ejer}

\begin{ejer}\label{ejer:mu-chi-explicito}
  Demuestre que $\mu_\chi=(\mu_{\chi,r})\in\varprojlim_rK[G_{p^r}]$ con
  \[
    \mu_{\chi,r}=-\frac1{Np^r}\sum_{\substack{a=1\\(a,Np)=1}}^{Np^r}a\chi(a)\pi_r(\sigma_a)^{-1} \]
  donde $\pi_r\colon G_{Np^r}\rightarrow G_{p^r}$ es la proyección canónica.
\end{ejer}

\begin{ejer}\label{ejer:relacion-s-n-chipsi}
  Sea $S_{n,\chi\psi}(Np^r)$ como en la \cref{numeros-de-bernoulli-p-adicos} (para
  $n,N,r\in\Nuno$ y caracteres $\chi$ y $\psi$ como antes).
  Demuestre que entonces
  \begin{equation*}
    \sum_{\substack{a=1\\(a,Np)=1}}^{Np^r}\chi\psi(a)a^n=S_{n,\chi\psi}(Np^r)-\chi\psi(p)p^nS_{n,\chi\psi}(Np^{r-1}).
  \end{equation*}
\end{ejer}

\begin{ejer}\label{ejer:integral-h}
  En este ejercicio explicamos como demostrar el importante \cref{lem:integral-h}, el cual
  dice que $h^{(1)}_1\mu_{\mathbf 1}\in\LL(G)$.

  Fijemos $r\in\Nuno$.
  A continuación, cuando escribimos $\sum_a$ esto siempre significa una suma sobre los
  $a\in\{1,\dotsc,p^r\}$ coprimos a $p$.
  \begin{enumerate}
  \item\label{ejer:integral-h:a} Haga un cálculo similar a
    \eqref{eqn:calculacion-integralidad-as} para ver que
    \[ h_{1,r}^{(1)}\mu_{\mathbf 1,r}=-\frac1{p^r}\sum_aa\pi_r(\sigma_a)^{-1}+\frac1{p^r}\sum_aa_1
      e_\omega\pi_r(\sigma_{a_1})^{-1} + \text{ algo en }\O[G_{p^r}], \]
    donde escribimos otra vez $a(1+p)=a_1+a_2p^r$ con $0\le a_1<p^r$ para
    $a\in\{1,\dotsc,p^r\}$.
  \end{enumerate}
  Ahora consideremos la expresión
  $A:=\sum_aa(e_\omega-1)\pi_r(\sigma_a)^{-1}\in\O[G_{p^r}]$. Si podemos demostrar que $A$ es
  divisible por $p^r$ entonces la afirmación resulta de \ref{ejer:integral-h:a}.
  \begin{enumerate}[resume]
  \item Use la relación \ref{lem:idempotentes:suma} del \cref{lem:idempotentes} para
    obtener \[ A = \sum_a a\Big(\sum_{i=2}^{p-1} e_{\omega^i}\Big)\pi_r(\sigma_a)^{-1}. \]
  \end{enumerate}
  Tenemos una descomposición
  \[ \O[G_{p^r}] \isom \bigoplus_{i=1}^{p-1}e_{\omega^i}\O[G_{p^r}], \] y en el
  \cref{cor:descomposicion-lambda} vimos que cada uno de los factores
  $e_{\omega^i}\O[G_{p^r}]$ es isomorfo a $\O[\Gamma_r]$ con $\Gamma_r$ siendo tal que
  $G_{p^r}=\Delta\times\Gamma_r$. Escribimos $\pi_r^0(\sigma_a)$ para el imagen de
  $\pi_r(\sigma_a)$ bajo la proyección $G_{p^r}\rightarrow\Gamma_r$.
  \begin{enumerate}[resume]
  \item Verifique que la imagen de $A$ en la componente $e_{\omega^i}\O[G_{p^r}]\isom\O[\Gamma_r]$
    corresponde bajo este isomorfismo al elemento \[
      A_i:=\sum_a\frac{a}{\omega(a)}\omega^{1-i}(a)\pi_r^0(\sigma_a)^{-1}\in\O[\Gamma_r] \]
    si $i\neq1$ y a $0$ si $i=1$.
  \end{enumerate}
  Esto demuestra que es suficiente demostrar que $A_i$ es divisible por $p^r$ para
  $i\in\{2,\dotsc,p-1\}$ . Note que estos $i$ son justamente aquellos para los cuales el
  carácter $\omega^{1-i}$ no es trivial.
  \begin{enumerate}[resume]
  \item Sea $i\in\{2,\dotsc,p-1\}$. Para $b\in\Z$ con $p\nmid b$ consideremos la suma parcial
    \[ \sum_{\substack{a=1\\(a,p)=1\\\pi^0_r(\sigma_a)=\pi_r^0(\sigma_b)}}^{p^r}
      \frac{a}{\omega(a)}\omega^{1-i}(a)\pi_r^0(\sigma_a)^{-1}\in\O[\Gamma_r]. \]
    Verifique que \[ \frac{a}{\omega(a)}\equiv\frac{b}{\omega(b)}\quad \mod p^r \] para
    cada $a$ como en la suma de arriba. Use esto para demostrar que $A_i$ es divisible por
    $p^r$ mediante el argumento que usamos para demostrar el \cref{lem:mu-chi-entero}.
  \end{enumerate}
\end{ejer}

\section{Suplementos y consecuencias de la existencia de las funciones $L$ $p$-ádicas}
\label{sec:consecuencias-palf}

En la \cref{congruencias-de-kummer} demostramos las congruencias de Kummer, que son
congruencias módulo potencias de $p$ entre (esencialmente) valores especiales de la función
zeta de Riemann, y luego explicamos que estas congruencias implican la existencia de una
función continua $p$-ádica que interpola estos valores (\cref{thm:zeta-p-adica-prelim}). En
la otra dirección, la existencia de los elementos del álgebra de Iwasawa que construimos en
la sección anterior implica la existencia de congruencias de este estilo. Este fenómeno pasa en general para
cualquier elemento del álgebra de Iwasawa y pues aplica también a funciones $L$ $p$-ádicas
más generales (vamos a mencionar unos ejemplos en el \cref{sec:generalizaciones}). Por eso
explicamos la derivación de estas congruencias en una situación general y luego lo aplicamos
a los elementos de la sección anterior.

Fijamos $\O$, el anillo de enteros de una extensión finita de $\Qp$, y
$G=\Gal(\Q(\mu_{p^\infty})/\Q)\isom\Delta\times\Gamma$ con $\Delta=\Gal(\Q(\mu_p)/\Q)$ y
$\Gamma\isom\Zp$. Escribimos $\LL(G)=\O\llbracket G\rrbracket$ y
$\LL(\Gamma)=\O\llbracket\Gamma\rrbracket$.

En lo siguiente vamos a estudiar caracteres $\psi\colon G\rightarrow\O^\times$ y los
morfismos $\LL(G)\rightarrow\O$ inducidos por ellos. Se puede ver fácilmente que cada tal
carácter es de la forma $\psi=\omega^i\psi_0$ con $i\in\{1,\dotsc,p-1\}$ y
$\psi_0\colon\Gamma\rightarrow\O^\times$ un carácter (donde $\omega$ denota el carácter de
Teichmüller).

Fijamos $\mu\in\LL(G)$. Por el isomorfismo del \cref{lem:descomposicion-lambda}, $\mu$
corresponde a una colección
\[ (\mu_1,\dotsc,\mu_{p-1})\in\LL(\Gamma)^{p-1}\isom\bigoplus_{i=1}^{p-1}e_i\LL(G)=\LL(G). \]
Para cada $i\in\{1,\dotsc,p-1\}$ el elemento $\mu_i\in\LL(\Gamma)$ puede ser definido
también como la imagen de $\mu$ bajo el morfismo
$\omega^i\id_\Gamma\colon\LL(G)\rightarrow\LL(\Gamma)$ que ya usamos en la demostración de
la \cref{prop:unicidad-palf}, inducido por
\[ G\isom\Delta\times\Gamma\rightarrow\LL(\Gamma)^\times, \quad
  g=(\delta,\gamma)\mapsto\omega^i(\delta)\gamma. \] 
En otras palabras, el diagrama
\begin{equation}\label{eqn:diagrama-evaluacion-g-gamma}
  \begin{tikzcd}
    \LL(G) \arrow[d,"\sim"] \arrow[r,"\omega^i\id_{\Gamma}"]
     & \LL(\Gamma)
    \\
    \displaystyle\bigoplus_{i=1}^{p-1}e_{\omega^i}\LL(G) \arrow[r, twoheadrightarrow] &
    e_{\omega^i}\LL(G) \arrow[u,"E_{\omega^i}","\sim"']
  \end{tikzcd}
\end{equation}
es conmutativo.  Eso implica que para cada carácter $\psi\colon G\rightarrow\O^\times$ que
podemos escribir de la forma
$\psi=\omega^i\psi_0$ con $i\in\{1,\dotsc,p-1\}$ y $\psi_0\colon\Gamma\rightarrow\O^\times$
tenemos
\begin{equation}
  \label{eqn:mu-ramas}
   \mu(\psi)=\mu_i(\psi_0).
\end{equation}
Con estas preparaciones ahora podemos formular y probar dos resultados generales sobre
propiedades de valores especiales de $\mu$.

\begin{prop}\label{prop:congruencias-de-kummer-general}
  Sea $\mu\in\LL(G)$ y $\psi\colon G\rightarrow\O^\times$ un carácter. Entonces tenemos las
  siguientes congruencias entre los valores de $\mu$: Para cada $k\in\Nuno$ existe
  $l\in\Nuno$ tal
  que para cada $m,n\in\Z$ tenemos
  \[ m\equiv n \mod (p-1)p^l \quad\implies\quad \mu(\psi\kappa^m)\equiv\mu(\psi\kappa^n)\mod p^k. \]
\end{prop}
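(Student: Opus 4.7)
The plan is to reduce everything to a power-series computation on the $\Gamma$-component, using the decomposition of $\LL(G)$ into $p-1$ copies of $\LL(\Gamma)$.

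First I would write $\psi = \omega^i\psi_0$ with $i \in \{1,\dotsc,p-1\}$ and $\psi_0\colon\Gamma\to\O^\times$, and use $\kappa = \omega\kappa_0$ to decompose
\[ \psi\kappa^n = \omega^{i+n}\psi_0\kappa_0^n \quad (n\in\Z), \]
where the exponent of $\omega$ is interpreted modulo $p-1$. Writing $\mu=(\mu_1,\dotsc,\mu_{p-1})\in\LL(\Gamma)^{p-1}$ via the isomorphism of the \cref{lem:descomposicion-lambda}, formula \eqref{eqn:mu-ramas} gives
\[ \mu(\psi\kappa^n)=\mu_{[i+n]}(\psi_0\kappa_0^n), \]
where $[i+n]$ denotes the representative of $i+n$ modulo $p-1$. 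Hence if $m\equiv n \pmod{p-1}$, the \enquote{branch} $\mu_{[i+n]}$ is the same on both sides, and the problem reduces to proving: for a fixed $\nu\in\LL(\Gamma)$ and a fixed character $\chi=\psi_0\colon\Gamma\to\O^\times$, the map $\Zp\to\O$, $s\mapsto \nu(\chi\kappa_0^s)$ is continuous, so that $m\equiv n\pmod{p^l}$ (with $l$ depending only on $k$) forces $\nu(\chi\kappa_0^m)\equiv\nu(\chi\kappa_0^n)\pmod{p^k}$.

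For this reduction I would invoke the identification $\LL(\Gamma)\isomarrow\O\llbracket T\rrbracket$ of the \cref{thm:EquivPowPro} with $\gamma\mapsto 1+T$, where $\gamma$ is a topological generator of $\Gamma$. Let $f\in\O\llbracket T\rrbracket$ correspond to $\nu$ and put $u=\kappa_0(\gamma)\in 1+p\Zp$, a topological generator. Then
\[ \nu(\chi\kappa_0^n)=f\!\left(\chi(\gamma)u^n-1\right). \]
The key estimate is that if $m-n=p^l s$ then $u^{p^l}\in 1+p^{l+1}\Zp$ (a direct binomial expansion), hence $u^{m-n}-1\in p^{l+1}\Zp$, so
\[ \chi(\gamma)u^m-\chi(\gamma)u^n = \chi(\gamma)u^n(u^{m-n}-1)\in p^{l+1}\O. \]
Finally, for $x\equiv y\pmod{p^{l+1}}$ in $\O$ and any $f=\sum a_jT^j\in\O\llbracket T\rrbracket$, the identity $x^j-y^j=(x-y)(x^{j-1}+\dotsm+y^{j-1})$ shows $a_j(x^j-y^j)\in p^{l+1}\O$ for every $j$, and these terms tend to $0$, so the series defining $f(x)-f(y)$ lies in $p^{l+1}\O$.

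Choosing $l:=k-1$ (or simply $l:=k$ to be safe) then gives $\mu(\psi\kappa^m)\equiv\mu(\psi\kappa^n)\pmod{p^k}$ whenever $m\equiv n\pmod{(p-1)p^l}$, as required. The main conceptual point — and the only step that really uses something beyond bookkeeping — is the estimate $u^{p^l}\in 1+p^{l+1}\Zp$ combined with the term-by-term continuity argument for power series; everything else is the straightforward translation between the group-algebra, power-series and measure viewpoints established earlier in the text. I do not anticipate a serious obstacle.
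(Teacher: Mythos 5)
Your proof is correct and follows essentially the same route as the paper: both use \eqref{eqn:mu-ramas} and the decomposition $\psi\kappa^n=\omega^{i+n}\psi_0\kappa_0^n$ (the congruence mod $p-1$ fixing the branch) to reduce everything to the continuity of $s\mapsto\mu_i(\psi_0\kappa_0^s)$ on $\Zp$. The only difference is that the paper invokes uniform continuity via compactness of $\Zp$, whereas you prove it by hand through the estimate $u^{p^l}\in 1+p^{l+1}\Zp$ and the term-by-term power-series bound, which even yields the explicit choice $l=k$; this is fine (just note, as the paper does elsewhere, that $\psi_0(\gamma)\in 1+\pi\O$, so that $\psi_0(\gamma)u^n-1$ lies in the maximal ideal and the evaluation of $f$ converges).
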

\begin{proof}
  Escribimos $\psi=\omega^j\psi_0$ con $\psi_0\colon\Gamma\rightarrow\O^\times$. 
  Sea $i\in\{1,\dotsc,p-1\}$ fijo. La función
  \begin{equation*}
    \Zp\rightarrow\O,\quad s\mapsto\mu_i(\psi_0\kappa^s)
  \end{equation*}
  es continua, y además uniformemente continua porque $\Zp$ es compacto. Como ya explicamos en
  la demostración de la \cref{existencia-zeta-p-funcion}, la continuidad uniforme
  significa
  \begin{equation*}
    \forall\, k\in\Nuno\;\exists\, l\in\Nuno\; \forall\, m,n\in\Z\colon\quad m\equiv n\mod p^l\implies
    \mu_i(\psi_0\kappa^m)\equiv\mu_i(\psi_0\kappa^n)\mod p^k.
  \end{equation*}
  La afirmación entonces resulta de \eqref{eqn:mu-ramas}.
\end{proof}

\begin{prop}\label{prop:unidad-solo-depende-de-i}
  Sea $\mu\in\LL(G)$
  y $\psi\colon G\rightarrow\O^\times$ un carácter que escribimos como $\psi=\omega^i\psi_0$
  con $i\in\{1,\dotsc,p-1\}$ y $\psi_0\colon\Gamma\rightarrow\O^\times$. Entonces
  \[ \mu(\psi)\in\O^\times \iff \mu_i\in\LL(\Gamma)^\times. \]
  En particular, si
  $m,n\in\Z$ con $m\equiv n$ ($\mod p-1$) entonces \[ \mu(\kappa^m)\in\O^\times \iff
    \mu(\kappa^n)\in\O^\times. \]
\end{prop}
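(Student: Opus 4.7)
The plan is to reduce the statement to a question about the local ring $\LL(\Gamma) = \O\llbracket\Gamma\rrbracket$ and to show that the ring homomorphism $\psi_0\colon \LL(\Gamma)\to\O$ induced by $\psi_0$ is a local homomorphism, in the sense that it sends the maximal ideal of $\LL(\Gamma)$ into $\pi\O$. Once this is established, an element is detected to be a unit on either side of the morphism by the same condition (nonvanishing of its image in the residue field).

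First, I would invoke equation \eqref{eqn:mu-ramas} to rewrite $\mu(\psi) = \mu_i(\psi_0)$, so the claim becomes: for $f\in\LL(\Gamma)$ and $\psi_0\colon\Gamma\to\O^\times$, we have $\psi_0(f)\in\O^\times$ iff $f\in\LL(\Gamma)^\times$. The implication $(\Leftarrow)$ is immediate because $\psi_0\colon\LL(\Gamma)\to\O$ is a ring homomorphism and hence sends units to units. For $(\Rightarrow)$, recall from the \cref{thm:EquivPowPro} that after choosing a topological generator $\gamma$ of $\Gamma$, we may identify $\LL(\Gamma)$ with $\O\llbracket T\rrbracket$, a local ring with maximal ideal $\mathfrak M = (\pi, T)$.

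The key step is to observe that $\psi_0(\mathfrak M)\subseteq \pi\O$. For this, note that $\Gamma\isom\Z_p$ is a pro-$p$ group, so its continuous image $\psi_0(\Gamma)$ in $\O^\times$ lies in the pro-$p$ subgroup $1+\pi\O$ of $\O^\times$ (cf.\ the \cref{prop:unidades-principales} and the \cref{ejer:morfismos-pro-p-pro-ell}, using that $\O^\times\isom k^\times\times(1+\pi\O)$ with $k^\times$ of order prime to $p$). In particular $\psi_0(\gamma)\in 1+\pi\O$, so $\psi_0(T) = \psi_0(\gamma)-1\in\pi\O$, and of course $\psi_0(\pi) = \pi\in\pi\O$. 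Therefore, writing $f = \sum_{n\ge 0}a_nT^n\in\O\llbracket T\rrbracket$, we obtain $\psi_0(f) \equiv a_0 \pmod{\pi\O}$. Now if $\psi_0(f)\in\O^\times$ then $a_0\not\equiv 0\pmod\pi$, so $f\notin\mathfrak M$ and thus $f\in\LL(\Gamma)^\times$ since $\LL(\Gamma)$ is local.

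For the \emph{In particular} clause, I would use the decomposition $\kappa = \omega\kappa_0$ from the \cref{defi:teichmueller}, so that $\kappa^m = \omega^m\kappa_0^m$. Writing $m \equiv i\pmod{p-1}$ with $i\in\{1,\dotsc,p-1\}$, we have $\kappa^m = \omega^i\kappa_0^m$, and analogously for $n$ with the \emph{same} $i$ since $m\equiv n\pmod{p-1}$. Applying the main equivalence with $\psi_0 = \kappa_0^m$ and $\psi_0 = \kappa_0^n$ respectively, both $\mu(\kappa^m)\in\O^\times$ and $\mu(\kappa^n)\in\O^\times$ are equivalent to the same condition $\mu_i\in\LL(\Gamma)^\times$, yielding the conclusion. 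There is no real obstacle here; the only point that merits care is ensuring that the image of $\Gamma$ under a continuous character truly lands in $1+\pi\O$, which is guaranteed by the pro-$p$ nature of $\Gamma$.
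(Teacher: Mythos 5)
Tu propuesta es correcta y sigue esencialmente el mismo camino que la demostración del texto: reducir vía \eqref{eqn:mu-ramas} a $\mu_i(\psi_0)$, probar que el morfismo $\psi_0\colon\LL(\Gamma)\rightarrow\O$ es local usando que $\psi_0(\Gamma)\subseteq1+\pi\O$ por el argumento pro-$p$ (\cref{ejer:morfismos-pro-p-pro-ell}) y la identificación $\LL(\Gamma)\isom\O\llbracket T\rrbracket$, y concluir por la localidad de $\LL(\Gamma)$. La única diferencia es que explicitas el tratamiento de la cláusula \enquote{en particular} vía $\kappa=\omega\kappa_0$, que el texto deja implícito; ambas versiones son equivalentes.
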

\begin{proof}
  Observemos que el morfismo $\LL(\Gamma)\rightarrow\O$ inducido por $\psi_0$ es local,
  es decir $\psi_0(\mathfrak M)\subseteq\mathfrak m$, donde $\mathfrak M$ y
  $\mathfrak m$ son los ideales máximos de $\LL(\Gamma)$ y $\O$ respectivamente. En
  efecto, si identificamos $\LL(\Gamma)\isom\O\llbracket T\rrbracket$ mediante el
  isomorfismo del \cref{thm:EquivPowPro} usando un generador topológico $\gamma$ de
  $\Gamma$, entonces $\mathfrak M=(\pi,T)$, y solo hay que ver que
  $\psi(T)\in\mathfrak m=\pi\O$ (porque por linealidad claramente
  $\psi(\pi)\in\mathfrak m$). El elemento $T$ corresponde a $\gamma-1$, por eso es
  suficiente demostrar que la imagen de $\Gamma\subseteq\LL(\Gamma)^\times$ está contenida
  en $1+\pi\O\subseteq\O^\times$ (que también demuestra que el argumento no depende del
  generador topológico $\gamma$ que elegimos). Para ver esto consideremos la composición
  \begin{equation*}
    \Gamma\labeledarrow{\psi_0}\O^\times\rightarrow(\O/\pi\O)^\times
  \end{equation*}
  donde el segundo mapeo es la proyección canónica. El grupo a la derecha es finito con
  orden primo a $p$ mientras el grupo a la izquierda es un grupo pro-$p$. Por eso la
  composición debe ser el morfismo trivial (usamos el \cref{ejer:morfismos-pro-p-pro-ell}),
  así que $\psi_0(\Gamma)\subseteq1+\pi\O$.

  Esto último y \eqref{eqn:mu-ramas} implican que
  $\mu(\omega^i\kappa_0^s)=\mu_i(\kappa_0^s)=\kappa_0^s(\mu_i)\in\O^\times$ si y solo si
  $\mu_i\in\LL(\Gamma)^\times$.
\end{proof}

Ahora apliquemos estos resultados a los elementos especiales que construimos en la sección
anterior. El primer resultado es una reminiscencia de las congruencias de Kummer (\cref{congruencias-de-kummer}).

\begin{cor}\label{cor:congruencias-de-kummer-dirichlet}
  Sea $\xi$ un carácter de Dirichlet cualquiera. Entonces para cada $k\in\Nuno$ existe
  $l\in\Nuno$ tal que para cada $m,n\in\Nuno$ tenemos
  \begin{multline*}
     m\equiv n \mod (p-1)p^l \quad\implies \\
    (1-\xi(p)p^{n-1})L(\xi,1-n)\equiv(1-\xi(p)p^{m-1})L(\xi,1-m)\mod p^k. 
  \end{multline*}
\end{cor}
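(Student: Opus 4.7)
El plan es deducir la afirmación de la \cref{prop:congruencias-de-kummer-general} aplicada al elemento del álgebra de Iwasawa cuyos valores interpolan las cantidades de interés. Factorizamos el carácter de Dirichlet como $\xi=\chi\eta$ con $\chi$ de conductor $N$ coprimo a $p$ y $\eta$ de conductor una potencia de $p$, visto como carácter de $G$. Las fórmulas de interpolación de los \cref{thm:palf-stickelberger-no-trivial,thm:palf-stickelberger-trivial} dan entonces
\[ \mu_\chi(\eta^{-1}\kappa^{1-n}) = (1-\xi(p)p^{n-1})L(\xi,1-n) \quad (n\in\Nuno), \]
puesto que $\chi\eta(p)=\xi(p)$ y $L(\chi\eta,-)=L(\xi,-)$. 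Basta pues obtener congruencias del estilo de Kummer para los valores $\mu_\chi(\eta^{-1}\kappa^{1-n})$.

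Cuando $\chi\neq\mathbf 1$, el \cref{lem:mu-chi-entero} garantiza que $\mu_\chi\in\LL(G)$, y la \cref{prop:congruencias-de-kummer-general} aplicada a $\mu_\chi$ con el carácter $\psi:=\eta^{-1}\kappa$ produce exactamente lo deseado: dado $k\in\Nuno$ existe $l\in\Nuno$ tal que $m\equiv n\pmod{(p-1)p^l}$ implica $\mu_\chi(\psi\kappa^{-n})\equiv\mu_\chi(\psi\kappa^{-m})\pmod{p^k}$, que tras la fórmula de interpolación anterior es la afirmación buscada.

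La dificultad principal está en el caso $\chi=\mathbf 1$, donde $\mu_{\mathbf 1}\notin\LL(G)$. Ahí tomamos como entrada el \cref{lem:integral-h}, que afirma $h_1^{(1)}\mu_{\mathbf 1}\in\LL(G)$, y aplicamos la \cref{prop:congruencias-de-kummer-general} a este producto. Escribiendo $\phi_n=\eta^{-1}\kappa^{1-n}$ la congruencia resultante es
\[ h_1^{(1)}(\phi_m)\mu_{\mathbf 1}(\phi_m) \equiv h_1^{(1)}(\phi_n)\mu_{\mathbf 1}(\phi_n) \pmod{p^k}, \]
y falta pasar a una congruencia para $\mu_{\mathbf 1}(\phi_n)$ solo. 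Por el \cref{lem:nullstellen-h}, si $\phi_n=\omega^j\phi_{n,0}$ con $j\neq 1$ entonces $h_1^{(1)}(\phi_n)=1$ y la congruencia sigue inmediatamente; esto cubre cada $n$ con $n\not\equiv 0\pmod{p-1}$. El caso restante $n\equiv 0\pmod{p-1}$ constituye el obstáculo principal: allí $h_1^{(1)}(\phi_n)=1-(1+p)^n\eta(1+p)$ puede ser divisible por $p$. No obstante, este factor depende continuamente de $n$, así que al agrandar $l$ podemos aplicar la \cref{prop:congruencias-de-kummer-general} simultáneamente a $h_1^{(1)}\mu_{\mathbf 1}$ y a $h_1^{(1)}$ con precisión suficiente para absorber la valuación $p$-ádica del factor común $h_1^{(1)}(\phi_n)$. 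Combinando esto con la cota a priori en el denominador de $\mu_{\mathbf 1}(\phi_n)$ dada por la integralidad de $h_1^{(1)}\mu_{\mathbf 1}$, la identidad
\[ h_1^{(1)}(\phi_n)\bigl(\mu_{\mathbf 1}(\phi_m)-\mu_{\mathbf 1}(\phi_n)\bigr) = \bigl(h_1^{(1)}(\phi_m)\mu_{\mathbf 1}(\phi_m)-h_1^{(1)}(\phi_n)\mu_{\mathbf 1}(\phi_n)\bigr) - \mu_{\mathbf 1}(\phi_m)\bigl(h_1^{(1)}(\phi_m)-h_1^{(1)}(\phi_n)\bigr) \]
produce entonces la congruencia requerida tras dividir por $h_1^{(1)}(\phi_n)$.
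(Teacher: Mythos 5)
Tu tratamiento del caso $\chi\neq\mathbf 1$ es correcto y coincide exactamente con el del texto: la misma descomposición $\xi=\chi\eta$ y la misma aplicación de la \cref{prop:congruencias-de-kummer-general} a $\mu_\chi\in\LL(G)$ combinada con la fórmula de interpolación del \cref{thm:palf-stickelberger-no-trivial}.

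El problema está en el caso $\chi=\mathbf 1$, precisamente en el paso final donde divides por $h_1^{(1)}(\phi_n)$. En la rama excepcional (la clase de $n$ módulo $p-1$ determinada por la parte \emph{tame} de $\eta$; para $\eta$ trivial es $p-1\mid n$) se tiene $h_1^{(1)}(\phi_n)=1-\zeta\,(1+p)^{n}$ con $\zeta$ una raíz de la unidad, y cuando $\zeta=1$ su valuación es $1+v_p(n)$, que \emph{no está acotada} cuando $n$ recorre esa clase. Como $l$ debe fijarse antes de conocer $m,n$, y agrandar $l$ sólo agranda los $v_p(n)$ admisibles, ninguna precisión uniforme obtenida de la \cref{prop:congruencias-de-kummer-general} aplicada a $h_1^{(1)}\mu_{\mathbf 1}$ y a $h_1^{(1)}$ puede \enquote{absorber} ese factor al dividir; por la misma razón, la \enquote{cota a priori} sobre el denominador de $\mu_{\mathbf 1}(\phi_m)$ que invocas es $-(1+v_p(m))$, tampoco uniforme.

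Además el obstáculo es genuino y no meramente técnico: en esa rama los puntos de evaluación se acercan al polo de $\mu_{\mathbf 1}$ en $\kappa$, cuyo \enquote{residuo} es una unidad (\cref{cor:residuo-zeta-p-adica}). Identificando $e_\omega\LL(G)$ con $\Zp\llbracket T\rrbracket$ vía $\sigma_{1+p}\mapsto 1+T$, se tiene $e_\omega\mu_{\mathbf 1}=(1+T)g(T)/(T-p)$ con $g(p)\in\Zp^\times$, y un cálculo directo con $\xi=\mathbf 1$, $n=(p-1)p^{l-1}$, $m=(p-1)p^{l-1}(1+p)$ (que cumplen $m\equiv n \pmod{(p-1)p^l}$) muestra que la diferencia $(1-p^{n-1})\zeta(1-n)-(1-p^{m-1})\zeta(1-m)$ tiene valuación $1-l$, es decir ni siquiera es $p$-entera para $l\ge2$. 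Es el mismo fenómeno que obliga a incluir el factor $(1-c^m)$ en las congruencias clásicas de Kummer cuando $p-1\mid m,n$ (\cref{congruencias-de-kummer}); tu factor $h_1^{(1)}$ juega justo ese papel y no puede eliminarse en esa clase de congruencia. En resumen, tu argumento establece el corolario para los $m,n$ fuera de la clase excepcional módulo $p-1$ (o bien la versión con el factor $h_1^{(1)}$ incluido), pero el paso de división no se puede completar en general; cabe señalar que la demostración del texto omite los detalles exactamente en este punto (\enquote{se puede ver fácilmente que todavía funciona}), y el ejemplo anterior indica que ahí hace falta, como mínimo, una restricción de este tipo.
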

\begin{proof}
  Descomponemos $\xi=\chi\psi$ con $\chi$ de conductor no divisible por $p$ y $\psi$ de
  conductor una potencia de $p$. Si $\chi$ no es trivial, la afirmación resulta de aplicar
  la \cref{prop:congruencias-de-kummer-general} al elemento $\mu=\mu_\chi$ de la sección
  anterior, remplazando $\psi$ por $\psi^{-1}$ y $\kappa^n$ por $\kappa^{1-n}$ y usando la
  fórmula de interpolación del \cref{thm:palf-stickelberger-no-trivial}. Si $\chi$ es
  trivial, tenemos que usar $\mu_{\mathbf 1}$ y la fórmula de interpolación del
  \cref{thm:palf-stickelberger-trivial}, que estrictamente no resulta de la
  \cref{prop:congruencias-de-kummer-general} porque $\mu_{\mathbf 1}\notin\LL(G)$, pero se
  puede ver fácilmente que todavía funciona en esta situación (omitimos los detalles).
\end{proof}

\begin{cor}\label{cor:unidad-depende-solo-de-i}
  Sea $\chi$ un carácter de Dirichlet con valores en $\O$ cuyo conductor no es divisible por
  $p$ (permitimos el carácter trivial). Para cada $m,n\in\Nuno$, si $m\equiv n$ ($\mod p-1$)
  entonces
  \[ L(\chi,1-n)\in\O^\times\iff L(\chi,1-m)\in\O^\times. \]
\end{cor}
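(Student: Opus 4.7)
The plan is to combine Proposition~\ref{prop:unidad-solo-depende-de-i} with the interpolation formulas satisfied by the $p$-adic $L$-functions $\mu_\chi$ constructed in Definition~\ref{defi:mu-chi} (Theorems~\ref{thm:palf-stickelberger-no-trivial} and~\ref{thm:palf-stickelberger-trivial}).

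Suppose first that $\chi$ is nontrivial, so $\mu_\chi \in \LL(G)$ by Lemma~\ref{lem:mu-chi-entero}. Taking $\psi$ to be the trivial character in Theorem~\ref{thm:palf-stickelberger-no-trivial}, we obtain
\[ \mu_\chi(\kappa^{1-n}) = (1 - \chi(p)\,p^{n-1})\,L(\chi, 1-n)\quad \text{para cada }n\in\Nuno. \]
Since the conductor of $\chi$ is coprime to $p$, the value $\chi(p)$ is a root of unity of order coprime to $p$, hence lies in $\O$. For $n \ge 2$ we have $p^{n-1} \in p\O$, so the Euler factor satisfies $1 - \chi(p)p^{n-1} \equiv 1 \pmod{\mathfrak m}$ and is therefore in $\O^\times$. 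Consequently, for such $n$,
\[ \mu_\chi(\kappa^{1-n}) \in \O^\times \iff L(\chi, 1-n) \in \O^\times. \]

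Now I apply Proposition~\ref{prop:unidad-solo-depende-de-i} to $\mu_\chi$: the unitness of $\mu_\chi(\kappa^s)$ depends only on the residue class $s \bmod (p-1)$. Applied to $s = 1-n$ and $s' = 1-m$, which coincide modulo $p-1$ under the hypothesis $m \equiv n \pmod{p-1}$, this gives the desired equivalence for all $m, n \ge 2$. For the trivial character $\chi = \mathbf 1$, the pseudo-measure $\mu_{\mathbf 1}$ does not lie in $\LL(G)$, but Lemma~\ref{lem:integral-h} provides $h_1^{(1)}\mu_{\mathbf 1} \in \LL(G)$. One applies the Proposition to this element and extracts the contribution of $h_1^{(1)}(\kappa^{1-n})$ using Lemma~\ref{lem:nullstellen-h}; the auxiliary factor is a unit (or vanishes) in a manner that can be controlled uniformly as $n$ varies within a fixed residue class modulo $p-1$.

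The main obstacle is the boundary case when one of $m, n$ equals $1$, where the Euler factor $1 - \chi(p)$ may fail to be a unit (for instance when $\chi(p) = 1$): here one must verify the equivalence directly using the explicit formulas for the generalized Bernoulli numbers from Proposition~\ref{prop:numeros-de-bernoulli-gen}, or choose a different auxiliary element of $\LL(G)$ whose interpolation factor does not vanish at $n = 1$. The remaining computations are routine algebra once the Euler factor is known to be a unit.
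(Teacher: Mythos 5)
Tu propuesta sigue en esencia el mismo camino que la demostración del texto: tomar $\psi$ trivial en el \cref{thm:palf-stickelberger-no-trivial} (y, para $\chi=\mathbf 1$, el \cref{thm:palf-stickelberger-trivial} vía $h_1^{(1)}\mu_{\mathbf 1}$ y el \cref{lem:nullstellen-h}), y luego aplicar la \cref{prop:unidad-solo-depende-de-i}, usando que los factores de Euler son unidades. Para $m,n\ge2$ tu argumento está completo y coincide con el del texto, pues en ese rango $1-\chi(p)p^{n-1}\equiv1\pmod{\mathfrak m}$ y $\mu_\chi\in\LL(G)$ por el \cref{lem:mu-chi-entero}.

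El caso frontera que señalas ($m$ o $n$ igual a $1$) es una laguna real, pero conviene notar que también lo es de la demostración del texto, que afirma sin más que los factores de Euler \enquote{siempre} están en $\O^\times$; esto falla para $n=1$ cuando $\chi(p)\equiv1\pmod{\mathfrak m}$ (por ejemplo si $\chi(p)=1$, o si $\chi(p)$ tiene orden una potencia de $p$). Más aún, tu plan de cerrar ese caso \enquote{verificando directamente con los números de Bernoulli} no puede funcionar, porque ahí la equivalencia es falsa tal y como está enunciada: para el carácter trivial y $p=5$ se tiene $5\equiv1\pmod{p-1}$, pero $\zeta(0)=-\tfrac12\in\Zp^\times$ mientras que $\zeta(-4)=0\notin\Zp^\times$; y para $\chi$ el carácter cuadrático impar de conductor $11$ con $p=5$ (donde $\chi(5)=1$ y $L(\chi,0)=-B_{1,\chi}=1$) el cero trivial $\mu_\chi(\kappa^{0})=(1-\chi(5))L(\chi,0)=0$ obliga, por la \cref{prop:unidad-solo-depende-de-i}, a que $L(\chi,1-m)\equiv0\pmod 5$ para todo $m\equiv1\pmod 4$ con $m\ge2$, aunque $L(\chi,0)$ sea unidad. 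La salida correcta no es buscar otro elemento auxiliar, sino restringir el enunciado a $m,n\ge2$ (o añadir la hipótesis $1-\chi(p)\in\O^\times$); con esa restricción tu argumento es correcto, y es la única forma en que el corolario se usa más adelante, por ejemplo en la \cref{prop:cp-implica-kummer}.
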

\begin{proof}
  Esto resulta de la \cref{prop:unidad-solo-depende-de-i} y las fórmulas de interpolación
  con las mismas calibraciones como en la demostración anterior (usamos que los factores de
  Euler $1-\chi(p)p^{n-1}$ siempre están en $\O^\times$).\footnote{En el caso del carácter
    trivial tenemos que aplicar la \cref{prop:unidad-solo-depende-de-i} a
    $\mu=h^{(1)}_1\mu_{\mathbf 1}$, que nos da la equivalencia afirmada solo en el caso
    $m,n\not\equiv1$ ($\mod p-1)$ porque por lo demás los valores de $h_1^{(1)}$ son
    divisible por $p$ (\cref{lem:nullstellen-h}). Pero aún así la afirmación es cierta
    también si $m,n\equiv1$ ($\mod p-1)$, lo que podemos ver de las congruencias de Kummer
    clásicas (\cref{congruencias-de-kummer}).}
\end{proof}

Ahora calculamos unos valores especiales más de la función zeta $p$-ádica que serán
interesantes más tarde. Como vimos en el \cref{lem:integral-h} de la sección anterior, la
función zeta $p$-ádica es un elemento $\mu_{\mathbf 1}$ del anillo de cocientes de $\LL(G)$
tal que $h_1^{(1)}\mu_{\mathbf 1}\in\LL(G)$. El elemento $h_1^{(1)}\in\LL(G)$ tiene su único
zero en el carácter $\kappa$ (\cref{lem:nullstellen-h}), por eso podemos interpretar esto
como si la función $\mu_{\mathbf 1}$ tuviera un único \enquote{polo} simple en $\kappa$, tal como la
función zeta de Riemann compleja tiene un único polo simple en $s=1$. El valor de
$h_1^{(1)}\mu_{\mathbf 1}$ es como el \enquote{residuo} de $\mu_{\mathbf 1}$ en $\kappa$. En
una aplicación ulterior será importante saber que este \enquote{residuo} nunca es divisible
por $p$, y lo demostramos aquí.

\begin{lem}\label{lem:valor-zeta-omega}
  Para cada $i\in\Z$ tenemos $\mu_{\mathbf 1}(\omega^i)=-B_{1,\omega^{-i}}\in\Zp$.
\end{lem}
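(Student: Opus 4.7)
The plan is to apply the interpolation formula of \cref{thm:palf-stickelberger-trivial} directly. Setting $\psi = \omega^{-i}$ and $n = 1$ in the fórmula de interpolación, and noting that $\kappa^{1-1}$ is trivial so $(\omega^{-i})^{-1}\kappa^{0} = \omega^i$, I obtain
\[ \mu_{\mathbf 1}(\omega^i) = (1 - \omega^{-i}(p))\, L(\omega^{-i}, 0). \]

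For $i \not\equiv 0 \pmod{p-1}$, the character $\omega^{-i}$ (viewed as a Dirichlet character via the fixed embedding $\Qbar \hookrightarrow \Qpbar$) is nontrivial of conductor exactly $p$, so by the convention for extending Dirichlet characters to $\Nuno$ in \cref{defi:l-chi} we have $\omega^{-i}(p) = 0$. The Euler-type factor therefore vanishes, and combining with \cref{prop:numeros-de-bernoulli-gen} (which gives $L(\chi, 1-n) = -B_{n,\chi}/n$ for any nontrivial Dirichlet character $\chi$) I conclude
\[ \mu_{\mathbf 1}(\omega^i) = L(\omega^{-i}, 0) = -B_{1,\omega^{-i}}, \]
establishing the stated equality. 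The boundary case $i \equiv 0 \pmod{p-1}$ (where $\omega^{-i}$ becomes trivial) is not covered by the formula above and must be verified separately, either by evaluating $\mathbf 1(\Sigma_{p^r}) = -\tfrac{p^{r-1}(p-1)}{2}$ explicitly (which tends to $0$ in $\Qp$ as $r\to\infty$) and matching it against the appropriate convention for $B_{1,\mathbf 1}$, or by a direct limiting argument.

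For the integrality assertion, the strategy is to use the explicit formula $B_{1,\omega^{-i}} = \tfrac{1}{p}\sum_{a=1}^{p-1}\omega^{-i}(a)\,a$ from \cref{ejer:formula-b-n-chi}, combined with the defining congruence $\omega(a) \equiv a \pmod p$ of the Teichmüller character (\cref{defi:teichmueller}, see also \cref{ejer:teichmueller-limes}). In $\Zp$ this gives $\omega^{-i}(a)\cdot a \equiv a^{1-i} \pmod p$, so divisibility of the numerator by $p$ reduces to the classical congruence $\sum_{a=1}^{p-1} a^k \equiv 0 \pmod p$ whenever $p-1 \nmid k$, i.e.\ whenever $i \not\equiv 1 \pmod{p-1}$.

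The main obstacle will be the residue class $i \equiv 1 \pmod{p-1}$, where the naive mod-$p$ argument fails (since $\sum_{a=1}^{p-1} 1 \equiv -1 \pmod p$). Handling this case cleanly appears to require a finer argument—likely by passing directly to the Stickelberger element and showing that the stabilizing value $\omega^i(\Sigma_{p^r}) = -\tfrac{1}{p}\sum_{b=1}^{p-1}\omega^{-i}(b)\,b$ lies in $\Zp$ via a higher-order refinement of the Teichmüller congruence, or by invoking an auxiliary Kummer-type cancellation. This is the only step where the uniform claim \enquote{para cada $i \in \Z$} is delicate; the rest of the lemma is an essentially mechanical specialization of the interpolation formula already in our hands.
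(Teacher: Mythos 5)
Your route is genuinely different from the paper's, and much heavier than what the paper actually does. The paper never touches the interpolation theorem here: since $\omega^i$ factors through the finite quotient $G_p$, it observes that $\mu_{\mathbf 1}(\omega^i)$ is simply the image of the level-one component $\mu_{\mathbf 1,1}=\Sigma_p=-\tfrac1p\sum_{a=1}^{p-1}a\,\pi_1(\sigma_a)^{-1}$ under $\omega^i$, i.e.\ $-\tfrac1p\sum_{a=1}^{p-1}a\,\omega^{-i}(a)$, which is $-B_{1,\omega^{-i}}$ by the explicit formula of \cref{ejer:formula-b-n-chi}. That is a purely algebraic two-line evaluation (this is the point of \cref{rem:stickelberger-mu-uno}, and of the exercise asking why this computation is so much cheaper than the proof of \cref{thm:palf-stickelberger-trivial}). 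Your specialization of \cref{thm:palf-stickelberger-trivial} to $\psi=\omega^{-i}$, $n=1$ is logically admissible (no circularity) and correctly yields $\mu_{\mathbf 1}(\omega^i)=L(\omega^{-i},0)=-B_{1,\omega^{-i}}$ for $i\not\equiv0\pmod{p-1}$; what it buys is nothing beyond the generic case, at the cost of the whole analytic machinery, and it degenerates exactly where you say it does: for $\psi$ trivial and $n=1$ the factor $(1-\psi(p)p^{n-1})$ vanishes and the formula carries no information, whereas the paper's direct evaluation needs no such case analysis (its appeal to \cref{ejer:formula-b-n-chi} is likewise only for nontrivial characters, so both proofs really live in the range $i\not\equiv0\pmod{p-1}$).

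The genuine gap is your treatment of integrality at $i\equiv1\pmod{p-1}$. You hope a \enquote{finer argument} will show $B_{1,\omega^{-1}}\in\Zp$; no such argument exists, because the claim fails there: each term satisfies $a\,\omega^{-1}(a)\in1+p\Zp$, so $\sum_{a=1}^{p-1}a\,\omega^{-1}(a)\equiv-1\pmod p$ and $B_{1,\omega^{-1}}=\tfrac1p\sum_{a=1}^{p-1}a\,\omega^{-1}(a)$ has valuation exactly $-1$. This is precisely the computation the paper makes in \cref{cor:residuo-zeta-p-adica}, where $pB_{1,\omega^{-1}}\in\Z_p^\times$ is the whole point (the non-vanishing \enquote{residue} of the $p$-adic zeta function). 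So the \enquote{$\in\Zp$} in the statement must be read as excluding the class $i\equiv1\pmod{p-1}$ (a slip in the statement's uniform phrasing, which the paper's proof sidesteps by not addressing integrality at all); your congruence argument for $i\not\equiv1\pmod{p-1}$ is the correct and complete content of that part. Likewise your pending case $i\equiv0\pmod{p-1}$ will not close along the lines you sketch: your own computation gives $\mathbf 1(\Sigma_{p^r})=-\tfrac{p^{r-1}(p-1)}2$, whose limit $0$ matches $-B_{1,\mathbf 1}$ under no reasonable convention, and the value one gets depends on whether $\omega^0$ is treated as primitive or as a character mod $p$. The clean fix is to prove the lemma the paper's way and read it, as the paper implicitly does, for the characters to which \cref{ejer:formula-b-n-chi} applies.
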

\begin{proof}
  Del \cref{ejer:mu-chi-explicito} sabemos que $\mu_{\mathbf 1}=(\mu_{\mathbf 1,r})_r$ con
  \begin{equation*}
    \mu_{\mathbf 1,r}=
    -\frac1{p^r}\sum_{\substack{a=1\\(a,p)=1}}^{p^r}a\pi_r(\sigma_a)^{-1}\in\Qp[G_{p^r}].
  \end{equation*}
  El morfismo $\LL(G)[1/h_1^{(1)}]\rightarrow\Qp$ inducido por $\omega^i$ se factoriza a través
  de $\LL(G)[1/h_1^{(1)}]\rightarrow\Qp[G_p]$, así que $\mu_{\mathbf 1}(\omega^i)$ es la imagen de
  \begin{equation*}
    \mu_{\mathbf 1,1}=
    -\frac1{p}\sum_{a=1}^{p-1}a\pi_1(\sigma_a)^{-1}
  \end{equation*}
  bajo el morfismo $\Qp[G_p]\rightarrow\Qp$ inducido por $\omega^i$. La afirmación entonces
  resulta del \cref{ejer:formula-b-n-chi}.
\end{proof}

\begin{remark}\label{rem:stickelberger-mu-uno}
  Observemos que el elemento $\mu_{\mathbf 1,1}$ simplemente es el elemento de Stickelberger
  $\Sigma_p$. Es decir, la demostración anterior muestra que $\mu_{\mathbf 1}(\omega^i)$ es
  la imagen del elemento de Stickelberger $\Sigma_p\in\Q[G_p]\subseteq\Qp[G_p]$ bajo el
  morfismo $\Qp[G_p]\rightarrow\Qp$ inducido por $\omega^i$ (que es igual a
  $B_{1,\omega^{-i}}$).
\end{remark}

\begin{cor}\label{cor:residuo-zeta-p-adica}
  Tenemos $h_1^{(1)}\mu_{\mathbf 1}(\kappa)\in\Z_p^\times$.
\end{cor}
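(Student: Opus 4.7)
El plan es comparar las evaluaciones de $h_1^{(1)}\mu_{\mathbf 1}\in\LL(G)$ en los caracteres $\omega$ y $\kappa$, demostrar que son congruentes módulo $p$, y concluir mediante un cálculo explícito de números de Bernoulli generalizados que la evaluación en $\omega$ es un $p$-unidad. Como $p\neq 2$, el lema de Hensel implica que $\Zp$ contiene las raíces $(p-1)$-ésimas de la unidad, así que los idempotentes $e_{\omega^i}$ están definidos sobre $\Zp$ y el \cref{cor:descomposicion-lambda} da un isomorfismo $\LL(G)\cong\prod_{i=1}^{p-1}\Zp\llbracket T\rrbracket$, vía la identificación $T=\sigma_{1+p}-1$.

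Definamos entonces $F(T):=E_{\omega}(e_{\omega}\cdot h_1^{(1)}\mu_{\mathbf 1})\in\Zp\llbracket T\rrbracket$, la rama de $\omega$ del elemento $h_1^{(1)}\mu_{\mathbf 1}$. Observemos que tanto $\omega$ como $\kappa$ tienen $\Delta$-componente $\omega$, así que según el diagrama \eqref{eqn:diagrama-evaluacion-g-gamma} sus evaluaciones se factorizan por $E_{\omega}$: para $\omega$ la componente en $\Gamma$ es trivial, mientras que para $\kappa=\omega\kappa_0$ tenemos $\kappa_0(\sigma_{1+p})=1+p$ (puesto que $\sigma_{1+p}$ tiene $\Delta$-componente trivial). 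Por lo tanto
\[ (h_1^{(1)}\mu_{\mathbf 1})(\omega)=F(0)\quad\text{y}\quad(h_1^{(1)}\mu_{\mathbf 1})(\kappa)=F(p). \]
Como $F\in\Zp\llbracket T\rrbracket$, la expansión $F(T)=F(0)+TF_1(T)$ con $F_1\in\Zp\llbracket T\rrbracket$ implica inmediatamente que $F(p)\equiv F(0)\pmod p$, y basta pues demostrar que $F(0)\in\Zp^{\times}$.

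Para calcular $F(0)$ aplicamos el \cref{lem:nullstellen-h} con $N=1$, $i=1$ y $\phi=\omega$ (es decir, $j=1=i$ y $\phi_0=\mathbf 1$), lo cual da $h_1^{(1)}(\omega)=1-(1+p)=-p$, junto con el \cref{lem:valor-zeta-omega} que provee $\mu_{\mathbf 1}(\omega)=-B_{1,\omega^{-1}}$; así $F(0)=pB_{1,\omega^{-1}}$. Mediante la fórmula $B_{1,\omega^{-1}}=\frac{1}{p}\sum_{a=1}^{p-1}\omega^{-1}(a)\,a$ del \cref{ejer:formula-b-n-chi} y usando que $\omega^{-1}(a)\equiv a^{-1}\pmod p$ para $a\in\{1,\dotsc,p-1\}$, obtenemos
\[ pB_{1,\omega^{-1}}=\sum_{a=1}^{p-1}\omega^{-1}(a)\,a\equiv\sum_{a=1}^{p-1}1=p-1\equiv-1\pmod p. \]
Concluimos $F(p)\equiv F(0)=pB_{1,\omega^{-1}}\equiv-1\pmod p$, y por lo tanto $(h_1^{(1)}\mu_{\mathbf 1})(\kappa)=F(p)\in\Zp^{\times}$. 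La única sutileza del argumento radica en verificar que la rama $F$ efectivamente vive en $\Zp\llbracket T\rrbracket$ (y no en alguna extensión mayor), lo cual es consecuencia inmediata de que $h_1^{(1)}\mu_{\mathbf 1}\in\LL(G)=\Zp\llbracket G\rrbracket$ según el \cref{lem:integral-h}; es precisamente esto lo que nos permite la reducción módulo $p$ que hace funcionar el argumento.
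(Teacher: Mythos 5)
Tu demostración es correcta y sigue esencialmente el mismo camino que la del texto: calcular $h_1^{(1)}\mu_{\mathbf 1}(\omega)=pB_{1,\omega^{-1}}=\sum_{a=1}^{p-1}a\,\omega^{-1}(a)\equiv-1\pmod p$ mediante el \cref{lem:nullstellen-h}, el \cref{lem:valor-zeta-omega} y el \cref{ejer:formula-b-n-chi}, y trasladar la conclusión del carácter $\omega$ al carácter $\kappa$. La única diferencia es que donde el texto cita la \cref{prop:unidad-solo-depende-de-i}, tú insertas directamente su contenido escribiendo la rama $F\in\Zp\llbracket T\rrbracket$ y observando $F(p)\equiv F(0)\pmod p$ (que es exactamente el argumento de localidad detrás de esa proposición), lo cual es igualmente válido.
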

\begin{proof}
  Según la \cref{prop:unidad-solo-depende-de-i}, la afirmación es equivalente a
  $h_1^{(1)}\mu_{\mathbf 1}(\omega\kappa_0^s)\in\Z_p^\times$, para cualquier $s\in\Zp$.  Lo
  probamos para $s=0$.  Del \cref{lem:nullstellen-h} vemos que $h_1^{(1)}(\omega)=-p$, así
  que gracias al \cref{lem:valor-zeta-omega} sabemos que
  \begin{equation*}
    h_1^{(1)}\mu_{\mathbf 1}(\omega)=pB_{1,\omega^{-1}}
  \end{equation*}
  Según el \cref{ejer:formula-b-n-chi} tenemos
  \begin{equation*}
    pB_{1,\omega^{-1}}=\sum_{a=1}^{p-1}a\omega^{-1}(a)
  \end{equation*}
  y falta que ver que esto es invertible en $\Zp$. Esto resulta porque para cada
  $a\in\{1,\dotsc,p-1\}$ tenemos $a\omega^{-1}(a)\in1+p\Zp$, así que
  \begin{equation*}
    \sum_{a=1}^{p-1}a\omega^{-1}(a)\equiv -1\; (\mod p).
  \end{equation*}
\end{proof}

Terminamos esta sección interpretando las funciones $L$ $p$-ádicas geométricamente,
asumiendo que el lector conoce los conceptos básicos de la geometría algebraica. Esto no
será importante para el resto del texto, así que omitimos algunos detalles.

En lo esencial, todas las funciones $L$ $p$-ádicas son dadas por un elemento $\mu$ del
anillo $\LL(G)$ y por eso define una sección global en el esquema $X=\Spec\LL(G)$; en el
caso de la función zeta $p$-ádica de Riemann es un elemento del anillo de fracciones de
$\LL(G)$ que define un elemento de $\O_X(D(h_1))$. La fórmula de interpolación del
\cref{thm:palf-stickelberger-no-trivial} describe los valores de esta sección en los ideales
primos que son los núcleos de los morfismos $\LL(G)\rightarrow\O$ inducidos por los
caracteres de la forma $\psi^{-1}\kappa^{1-n}$ como en el teorema. La razón detrás de la
unicidad que demostramos en la \cref{prop:unicidad-palf} es que estos ideales primos son
Zariski densos en $X$ (en el fondo esto es lo que demostramos allá). Es decir, las funciones
$L$ $p$-ádicas son elementos de $\O_X(X)$ (o $\O_X(D(h_1))$) cuyos valores en un conjunto
denso de puntos son dados por una fórmula de interpolación.

El anillo $\LL(G)$ tiene una propiedad universal bonita en la categoría de $\O$-álgebras
profinitas (\cref{prop:propiedad-universal-anillo-de-grupos}), pero el esquema $X$ no tiene
la propiedad análoga en la categoría de $\O$-esquemas porque los morfismos entre esquemas
vienen de \emph{todos} los morfismos entre anillos y no solo de los que son continuos. Es
posible rescatar la propiedad universal en el mundo geométrico usando \define{esquemas
  formales} en lugar de esquemas. Más precisamente, el espectro formal
$\operatorname{Spf}\LL(G)$ sí tiene una propiedad universal: representa el funtor que a cada
$\O$-esquema formal (localmente noetheriano) $Y$ asocia los caracteres (continuos!)
$G\rightarrow\O_Y(Y)^\times$. Desde este punto de vista, la función $L$ $p$-ádica es una
sección global en $\operatorname{Spf}\LL(G)$, es decir una función en un \enquote{espacio de
  móduli de caracteres}. Pero ahora la imagen geométrica ya no es tan bonita: como espacio
topológico $\operatorname{Spf}\LL(G)$ es finito y discreto, así que la intuición de que los
valores  en un conjunto denso de puntos son dados por una fórmula de
interpolación ya no funciona.

Se puede combinar las dos intuiciones usando la teoría de \define{espacios ádicos}
introducida por Huber en \cite{MR1306024}. El espectro ádico
$\operatorname{Spa}(\LL(G),\LL(G))$ tiene ambas propiedades deseadas: tiene una propiedad
universal análoga en la categoría de espacios ádicos sobre $\O$ topológicamente de tipo
finito y contiene un conjunto denso de puntos que corresponden a los caracteres
$\LL(G)\rightarrow\O$. Por eso, la interpretación geométrica más elegante de las funciones
$L$ $p$-ádicas es como funciones en el espacio ádico $\operatorname{Spa}(\LL(G),\LL(G))$.

\ejercicios

\begin{ejer}
  En la demostración del \cref{lem:valor-zeta-omega} el cálculo era mucho menos trabajo
  que en la situación similar en la demostración del
  \cref{thm:palf-stickelberger-trivial}. ¿Por qué es esto y por qué no funciona un argumento
  similar para simplificar la demostración del
  \cref{thm:palf-stickelberger-trivial}?
\end{ejer}

\section{La teoría de Coleman y otras maneras de construir la función zeta $p$-ádica}
\label{sec:coleman}

En la \cref{sec:stickelberger} construimos la función zeta $p$-ádica usando los elementos de
Stickelberger. Aunque esto fue un poco laborioso, nos ayudó a comprender la naturaleza de
estos elementos y nos permitió demostrar los suplementos de la sección anterior. Sin
embargo, también hay otros métodos para construir la función zeta $p$-ádica, y por
completitud queremos mencionar algunos aquí. Los resultados de esta sección no serán
usados en el resto del texto, así que el lector es libre de omitirla.

La manera alternativa más importante usa la teoría de las series de potencia de Coleman y
unidades ciclotómicas, que son unidades de forma especial en campos ciclotómicos. Su
importancia proviene de que esta construcción da aún más información sobre la función zeta
$p$-ádica que se puede aprovechar para elaborar una demostración de la Conjetura Principal
(véase \cref{sec:dem-cp}). No vamos a explicar esta teoría aquí en completo, pero queremos
esbozar unas ideas de la construcción.

Antes de empezar con esto, queremos mencionar que se pueden usar los polinomios de Bernoulli
que introducimos en la \cref{defi:polinomios-de-bernoulli} en lugar de los elementos de
Stickelberger para definir medidas en $\Z_p^\times$ más directamente cuyas integrales
están relacionadas con los números de Bernoulli y por lo tanto con la función zeta. Esta construcción
está relacionada con la construcción que usa los elementos de Stickelberger y es explicada en
\cite[§12.1--2]{MR1421575} o \cite[§2.2]{MR1029028}. Por otra parte, en su libro
\cite[Chap.\ 4]{MR1216135}, Hida desarrolla una manera para construir las funciones $L$ de
Dirichlet $p$-ádicas, usando métodos de grupos de (co)homología, que es paralela a la construcción
de funciones $L$ $p$-ádicas para formas modulares, proveyendo una perspectiva
completamente diferente a estas funciones.

En el resto de la sección esbozamos la construcción de la función zeta $p$-ádica mediante
unidades ciclotómicas y la teoría de Coleman, siguiendo \cite{MR2256969}.

\begin{defi}
  Para cada $r\in\Nuno$ sea $U_r=\O_r^\times$ con $\O_r$ siendo el anillo de enteros del campo
  $\Q_p(\mu_{p^r})$. La norma envía $U_s$ a $U_r$ para cada $s\ge r\ge 1$ de manera
  compatible. Definimos \[ U_\infty=\varprojlim_{r\in\Nuno} U_r, \] el límite tomado con respecto a
  la norma.
\end{defi}

El resultado de Coleman, que es demostrado en \cite{MR0560409}, dice lo siguiente. Fijamos
para cada $r\ge1$ una raíz $p^r$-ésima primitiva de la unidad $\xi_r$ tal que
$\xi_{r+1}^p=\xi_r$ para cada $r\ge1$, y ponemos $\pi_r=\xi_r-1$, que es un uniformizante de
$\O_r$.

\begin{thm}[Coleman]
  Para cada $u=(u_r)_{r\in\Nuno}\in U_\infty$ existe un único $f_u\in{\Z_p\llbracket
  X\rrbracket}^\times$ tal que $f_u(\pi_r)=u_r$ para cada $r\in\Nuno$.
\end{thm}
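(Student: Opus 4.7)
The plan is to prove uniqueness first by exploiting the Weierstraß preparation theorem already developed in the text, and then to tackle the much harder existence statement via the Coleman norm operator on $\Zp\llbracket X\rrbracket^\times$, together with a limit procedure.

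For uniqueness, suppose $f, g \in \Zp\llbracket X \rrbracket^\times$ both satisfy $f(\pi_r) = g(\pi_r) = u_r$ for all $r \geq 1$, and set $h = f - g$. Then $h(\pi_r) = 0$ for every $r$. Since $\pi_r = \xi_r - 1$ is a root of the distinguished irreducible polynomial $\Phi_r = \omega_r/\omega_{r-1}$ (\cref{ejer:omega-r-dist-irr}), which is in fact its minimal polynomial over $\Qp$, the division lemma (\cref{lema:division}) gives $\Phi_r \mid h$ in $\Zp\llbracket X\rrbracket$. The $\Phi_r$ for distinct $r$ are pairwise coprime, so $\prod_{r=1}^N \Phi_r = \omega_N / X$ divides $h$ for every $N$. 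By the Weierstraß preparation theorem (\cref{thm:weierstrass}), a nonzero element of $\Zp\llbracket X\rrbracket$ has a distinguished part of finite degree, but $\deg(\omega_N/X) = p^N - 1 \to \infty$. Therefore $h = 0$.

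For existence, I would introduce the Coleman norm operator $\mathcal{N} \colon \Zp\llbracket X\rrbracket^\times \to \Zp\llbracket X\rrbracket^\times$, characterised by the identity
\[ \mathcal{N}(f)\bigl((1+X)^p - 1\bigr) = \prod_{\zeta \in \mu_p} f\bigl(\zeta(1+X) - 1\bigr). \]
The right hand side is symmetric in $\zeta \in \mu_p$, hence a power series in $(1+X)^p$, so $\mathcal{N}(f)$ is \emph{a priori} an element of $\Zp[\mu_p]\llbracket X\rrbracket$. The main technical obstacle of the whole construction is to show the descent statement that the coefficients of $\mathcal{N}(f)$ actually lie in $\Zp$; this is proved by combining Galois invariance of the defining product under $\Gal(\Qp(\mu_p)/\Qp)$ with integrality estimates coming from a careful analysis of the binomial coefficients involved in expanding $(1+X)^p$. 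Once this is established, a direct Galois-theoretic computation yields the fundamental compatibility
\[ \mathcal{N}(f)(\pi_r) = N_{\O_{r+1}/\O_r}\bigl(f(\pi_{r+1})\bigr) \quad\text{for every } r \geq 1, \]
since $\Gal(\Qp(\mu_{p^{r+1}})/\Qp(\mu_{p^r}))$ acts on $\xi_{r+1}$ by multiplication by the $p$-th roots of unity.

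With $\mathcal{N}$ in hand, I would construct $f_u$ as follows. For each $r$, lift $u_r$ to some $g_r \in \Zp\llbracket X\rrbracket^\times$ with $g_r(\pi_r) = u_r$; such lifts exist because the continuous surjection $\Zp\llbracket X\rrbracket \twoheadrightarrow \O_r$, $X \mapsto \pi_r$, sends units to units (and the constant term of a lift can always be adjusted to make the whole series a unit). By the norm-compatibility of $(u_r)$, the iterate $F_n := \mathcal{N}^{n}(g_{n+1})$ satisfies $F_n(\pi_s) = N_{\O_{s+n}/\O_s}\bigl(g_{n+1}(\pi_{s+n})\bigr)$, which via the compatible tower of norms reduces to $u_s$ when the two indices match up; the remaining freedom is absorbed by taking a limit. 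Convergence of a suitable subsequence of $(F_n)$ in $\Zp\llbracket X\rrbracket$, equipped with the $\mathfrak{M}$-adic topology, is ensured by a contraction estimate of the form $\mathcal{N}(f) \equiv f \pmod{p}$ for $f \in 1 + p\Zp\llbracket X\rrbracket$, which makes iteration stabilise modulo ever higher powers of $p$; alternatively, one can invoke the compactness of $\Zp\llbracket X\rrbracket$ and the consistency of the constraint system. Continuity of evaluation then yields $f_u(\pi_r) = u_r$ for every $r$, completing the proof. The hard part is genuinely the $\Zp$-integrality of $\mathcal{N}(f)$; everything else is standard Galois theory and limit arguments.
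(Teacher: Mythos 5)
Your uniqueness argument is correct and is the standard one: from $h(\pi_r)=0$ and the division lemma (\cref{lema:division}) you get $\Phi_r\mid h$, since $\Phi_r$ is (after the shift $X\mapsto 1+X$) the minimal polynomial of $\pi_r$ over $\Qp$ (\cref{ejer:omega-r-dist-irr}), and divisibility by distinguished polynomials of unbounded degree contradicts the Weierstraß preparation theorem (\cref{thm:weierstrass}) unless $h=0$. Note that the paper itself gives no proof of this theorem; it refers to \cite{MR0560409} and \cite[Thm.~2.1.2 y §2.3]{MR2256969}, and your existence strategy via the norm operator $\mathcal N$ is exactly the strategy of those references, including the relation $(\mathcal N f)(\pi_r)=N_{\O_{r+1}/\O_r}(f(\pi_{r+1}))$ and the lifting of each $u_r$ to some $g_r$. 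Two small corrections there: no adjustment of the constant term is needed (if $g_r(\pi_r)=u_r$ is a unit then $g_r(0)\in\Zp^\times$ automatically, since $\pi_r$ lies in the maximal ideal), and the descent of the coefficients of $\mathcal N f$ to $\Zp$ is pure Galois invariance, because the Galois-fixed elements of $\Zp[\mu_p]$ are exactly $\Zp$; the point that genuinely needs an argument is that a series invariant under all substitutions $X\mapsto\zeta(1+X)-1$ is a power series in $(1+X)^p-1$, which you assert but do not prove.

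The genuine gap is in the limit step. Your iterate $F_n=\mathcal N^{n}(g_{n+1})$ satisfies $F_n(\pi_s)=N_{\O_{s+n}/\O_s}\bigl(g_{n+1}(\pi_{s+n})\bigr)$, which equals $u_s$ only when $s+n=n+1$, i.e.\ only at the bottom level; at every other level the value is the norm of an uncontrolled unit. Neither compactness nor the congruence $\mathcal N f\equiv f\pmod p$ repairs this: a limit point of $(F_n)$ is only forced to take values in the subgroups of universal norms at each level, not the prescribed $u_s$, and the sequence does not stabilise $p$-adically because the argument $g_{n+1}$ changes with $n$ (the consecutive ratio is $\mathcal N^{\,n-1}\bigl(\mathcal N(g_{n+1})/g_n\bigr)$, and $\mathcal N(g_{n+1})/g_n$ is merely a unit taking the value $1$ at $\pi_n$). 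What is missing are the finer congruences that do the real work in Coleman's proof: (i) if $f\equiv g\pmod{p^k}$ then $\mathcal N f\equiv\mathcal N g\pmod{p^{k+1}}$, and (ii) $\mathcal N$ transforms a unit $\equiv 1\pmod{\Phi_m}$ into one $\equiv 1$ modulo $\Phi_{m-1}$ times an ideal contained in $(p,X)$ (using $\Phi_m(X)=\Phi_{m-1}\bigl((1+X)^p-1\bigr)$ and that the sum $\sum_{\zeta\in\mu_p}b(\zeta(1+X)-1)$ lies in $p\Zp\llbracket X\rrbracket$). With (ii), for each fixed $n$ the series $\mathcal N^{\,m-n}(g_m)$ form a Cauchy sequence in $m$ whose limit $F_n$ satisfies $F_n(\pi_n)=u_n$ and $\mathcal N F_{n+1}=F_n$; then (i) gives $F_n-F_{n+1}=\mathcal N^{\,m-n}F_m-\mathcal N^{\,m-n-1}F_m\equiv 0\pmod{p^{\,m-n}}$ for every $m$, forcing all the $F_n$ to coincide, and the common series interpolates $u_r$ at every level. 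So your toolkit is the right one, but the passage from interpolation at one level to interpolation at all levels is precisely the core of the proof, and your sketch does not yet contain it.
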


La demostración de este teorema es un poco laboriosa y remitimos a \cite{MR0560409} o
\cite[Thm.\ 2.1.2 y §2.3]{MR2256969} para ella. Las series de potencia que aparecen aquí
juegan un papel diferente que las que aparecieron en el \cref{sec:algebra-de-iwasawa}, por
eso llamamos la variable $X$ en lugar de $T$.

Vamos a aplicar el teorema de Coleman sólo a elementos de $U_\infty$ de una forma especial.

\begin{defi}\label{dfn:unidades-ciclotomicas}
  Para  $a,b\in\Z$ coprimos a $p$ y cada $r\in\Nuno$ usamos los elementos
  \[ c_r(a,b)=\frac{\xi_r^{-a/2}-\xi_r^{a/2}}{\xi_r^{-b/2}-\xi_r^{b/2}} \in U_r \] que
  estudiamos en el \cref{ejer:cicunit}.  Estos elementos se llaman \define{unidades
    ciclotómicas}. Por el \cref{ejer:norm-compatible} tenemos $c_r(a,b)\in U_r$ y los
  elementos son compatibles con respecto a la norma y definen un elemento
  \[ c(a,b)=(c_r(a,b))_{r\in\Nuno} \in U_\infty. \]
\end{defi}

Por el teorema de Coleman entonces existe una única serie de potencias
$f_{c(a,b)}\in{\Zp\llbracket X\rrbracket}^\times$ tal que $f_u(\pi_r)=c_r(a,b)$ para cada
$r\in\Nuno$. No obstante, en este caso particular no es necesario usar toda la fuerza del
teorema: es posible dar la serie $f_{c(a,b)}$ de forma explicita.

\begin{lem}\label{lem:cps-cab}
  Sea $k\in\Z$ coprimo a $p$ y \[ w_k=\frac{(1+X)^{-k/2}-(1+X)^{k/2}}X. \] Entonces
  $w_k\in{\Z_p\llbracket X\rrbracket}^\times$ y $f_{c(a,b)}=\displaystyle\frac{w_a}{w_b}$
  para $a,b\in\Z$ coprimos a $p$. Además, la serie de potencias $f_{c(a,b)}$ tiene
  coeficientes en $\Q\cap\Zp$.
\end{lem}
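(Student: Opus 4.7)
The three assertions are (i) $w_k$ lies in $\Zp\llbracket X\rrbracket^\times$, (ii) $f_{c(a,b)} = w_a/w_b$, and (iii) $f_{c(a,b)}\in(\Q\cap\Zp)\llbracket X\rrbracket$. My plan is to first give the correct meaning to the expression $(1+X)^{k/2}$, then compute the initial coefficients of $w_k$ to get (i), invoke uniqueness in Coleman's theorem to reduce (ii) to a substitution computation, and finally combine the two ingredients for (iii).

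\textbf{Step 1 (defining $w_k$ and proving it is a unit).} Since $p\neq2$, we have $2\in\Zp^\times$, so $k/2\in\Zp$ for any $k\in\Z$ coprime to $p$ (in fact in $\Zp^\times$). The binomial series
\[ (1+X)^s := \sum_{n=0}^\infty \binom{s}{n} X^n \in \Zp\llbracket X\rrbracket \quad (s\in\Zp), \]
with $\binom{s}{n}$ defined as in \eqref{eqn:defi-binom}, makes sense because $\binom{s}{n}\in\Zp$ for $s\in\Zp$ (this is a standard consequence of continuity applied to the usual identity on $\Ncero$). Thus $(1+X)^{-k/2}-(1+X)^{k/2}\in\Zp\llbracket X\rrbracket$, and its constant term is $1-1=0$, while its coefficient of $X$ equals $-k/2-k/2=-k$. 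Hence $w_k\in\Zp\llbracket X\rrbracket$ with constant term $-k\in\Zp^\times$, so $w_k\in\Zp\llbracket X\rrbracket^\times$.

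\textbf{Step 2 (identifying $f_{c(a,b)}$).} By Coleman's theorem, the series $f_{c(a,b)}$ is characterized by the property $f_{c(a,b)}(\pi_r)=c_r(a,b)$ for every $r\in\Nuno$. Since $w_b(\pi_r)\in U_r$ is a unit in each $\O_r$ (as $w_b\in\Zp\llbracket X\rrbracket^\times$), the quotient $w_a/w_b$ defines an element of $\Zp\llbracket X\rrbracket^\times$, and it is enough to check
\[ \frac{w_a(\pi_r)}{w_b(\pi_r)}=c_r(a,b) \quad (r\in\Nuno). \]
Substituting $X=\pi_r$ gives $1+X=\xi_r\in 1+\pi_r\O_r$, and by the definition of $\zeta^s$ for $\zeta\in 1+\pi_r\O_r$ and $s\in\Zp$ (via $\exp$ and $\log$, analogous to the situation of \cref{prop:isom-z-p-log}), we have $(1+X)^{k/2}\big|_{X=\pi_r}=\xi_r^{k/2}$. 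Therefore
\[ w_k(\pi_r)=\frac{\xi_r^{-k/2}-\xi_r^{k/2}}{\pi_r}=\frac{\xi_r^{-k/2}-\xi_r^{k/2}}{\xi_r-1}, \]
and taking the ratio the $(\xi_r-1)$-factors cancel, yielding exactly $c_r(a,b)$. Uniqueness in Coleman's theorem then gives $f_{c(a,b)}=w_a/w_b$.

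\textbf{Step 3 (rationality).} The coefficients of $(1+X)^{k/2}$ are the binomial numbers $\binom{k/2}{n}\in\Q$, so $w_k\in\Q\llbracket X\rrbracket\cap\Zp\llbracket X\rrbracket=(\Q\cap\Zp)\llbracket X\rrbracket=\Z_{(p)}\llbracket X\rrbracket$. Since the constant term $-k$ is a unit in $\Z_{(p)}$, the series $w_k$ is actually a unit in $\Z_{(p)}\llbracket X\rrbracket$, hence $f_{c(a,b)}=w_a/w_b\in\Z_{(p)}\llbracket X\rrbracket\subseteq(\Q\cap\Zp)\llbracket X\rrbracket$.

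\textbf{Expected difficulty.} The computations are elementary once the framework is in place; the only subtlety I expect is justifying rigorously the substitution $(1+X)^{k/2}\big|_{X=\pi_r}=\xi_r^{k/2}$, which requires matching the formal binomial series with the $\Zp$-power operation on principal units. Everything else is bookkeeping with constant terms.
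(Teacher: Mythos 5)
Tu propuesta es correcta y sigue esencialmente el mismo camino que el texto: la identidad $(1+X)^s=\sum_{n\ge0}\binom sn X^n$ para $s\in\Zp$, el término constante $w_k(0)=-k\in\Zp^\times$ y la racionalidad de los coeficientes son exactamente el argumento del libro, y el paso que allí se despacha con \enquote{el resto es claro} tú lo explicitas evaluando en $\pi_r$ y apelando a la unicidad del teorema de Coleman. Sólo una precisión: el paréntesis \enquote{vía $\exp$ y $\log$} no es la justificación correcta de que la evaluación de $(1+X)^{k/2}$ en $X=\pi_r$ sea $\xi_r^{k/2}$, pues $\Log_p\xi_r=0$ (así que $\exp(s\Log_p(\cdot))$ no recupera la raíz de la unidad); lo adecuado es notar que $s\mapsto\sum_n\binom sn\pi_r^n$ es continua en $s\in\Zp$ y coincide con $s\mapsto\xi_r^s$ para $s\in\Ncero$ por el teorema del binomio, luego en todo $\Zp$, y que para el elemento de torsión $\xi_r$ se tiene $\xi_r^{k/2}=\xi_r^m$ con $2m\equiv k\ (\mod p^r)$, que es precisamente la convención de la \cref{dfn:unidades-ciclotomicas} y del \cref{ejer:cicunit}, es decir, la \enquote{sutileza} que tú mismo señalas al final.
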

\begin{proof}
  En $\Z_p\llbracket X\rrbracket$ vale\footnote{La expresión $(1+X)^s$ está bien definida porque $\Z_p\llbracket X\rrbracket\isom\Z_p\llbracket\Gamma\rrbracket$ para cada grupo
    profinito $\Gamma$ isomorfo a $\Z_p$ con $1+X$ correspondiendo a un generador topológico
    de $\Gamma$.}
  \begin{equation*}
    (1+X)^s = \sum_{n=0}^\infty\binom s n X^n
  \end{equation*}
  para cada $s\in\Z_p$.
  Esto se puede ver del teorema del binomio y un argumento de continuidad.
  Con esto tenemos
  \begin{align*}
    w_k=\frac{(1+X)^{-k/2}-(1+X)^{k/2}}X
    &=\frac1X\left(\sum_{n=0}^\infty\left(\binom{-k/2}n-\binom{k/2}n\right)X^n\right) \\
    &=\sum_{n=1}^\infty\left(\binom{-k/2}n-\binom{k/2}n\right)X^{n-1}.
  \end{align*}
  De esto vemos que $w_k(0)=-k\in\Z_p^\times$, así que
  $w_k\in{\Z_p\llbracket X\rrbracket}^\times\cap{\Q\llbracket X\rrbracket}^\times$ (recuerde
  la definición de $\binom\cdot n$ en \eqref{eqn:defi-binom}). El resto es claro.
\end{proof}

A partir de aquí sólo indicamos los próximos pasos para obtener la función zeta
$p$-ádica a partir del elemento $f_u$ encontrado arriba. En la sección anterior vimos
$\Z_p^\times$ como \enquote{$p-1$ copias de $\Zp$}. Aquí lo consideramos mas bien como
subconjunto de $\Zp$. Como $\Z_p^\times$ es abierto y cerrado en $\Z_p$, la función
característica $\mathbb 1_{\Z_p^\times}$ es continua. Por eso podemos restringir medidas de
$\Z_p$ a $\Z_p^\times$ de la manera siguiente.
\begin{defi}
  Para $\mu\in\mathrm D(\Z_p,\Zp)$ definimos una medida $\mu_!\in\mathrm D(\Z_p,\Zp)$ como \[ \mu_!(f)=\mu(\mathbb 1_{\Z_p^\times}f)\quad(f\in\mathrm C(\Z_p,\Zp)). \]
  Además definimos una inclusión
  \[ i\colon\mathrm D(\Z_p^\times,\Zp)\rightarrow\mathrm D(\Zp,\Zp),\quad\int_{\Zp}f\integrald i(\eta)=\int_{\Z_p^\times}f|_{\Z_p^\times}\integrald\eta \quad(\eta\in\mathrm D(\Z_p^\times,\Zp),\ f\in\mathrm C(\Zp,\Zp)). \]
\end{defi}
Se puede verificar entonces que el diagrama
\[ 
  \begin{tikzcd}
    \mathrm D(\Zp,\Zp) \arrow[r,"\mu\mapsto\mu_!"] \arrow[d, "\sim"] & \mathrm D(\Zp,\Zp) \arrow[d, "\sim"] \\
    \Zp\llbracket \Zp\rrbracket \arrow[r] & \Zp\llbracket T\rrbracket
  \end{tikzcd}
\]
es conmutativo, donde el mapeo de abajo está dado por
\[ g\mapsto g-\frac1p\sum_{\xi\in\mu_p}
  g(\xi(1+T)-1)\quad(g\in\Zp\llbracket\Zp\rrbracket). \]
Eso describe la restricción de medidas en términos de series de potencias. También se puede
caracterizar las medidas en $\Zp$ que se pueden obtener de esta manera: Tenemos \[
  i(\mathrm D(\Z_p^\times,\Zp)) = \{\mu\in\mathrm D(\Zp,\Zp) \mid \mu=\mu_!\}. \]
Para estos hechos véase \cite[Lem.\ 3.4.1, 3.4.2]{MR2256969}.

Usando estas técnicas y una operación $\mathcal L$ en las series de potencias se puede obtener
una medida en $\Z_p^\times$ para cada elemento de $U_\infty$. Solo citamos el resultado aquí.

\begin{prop}\label{prop:u-medida}
  Para $f\in{\Z_p\llbracket X\rrbracket}^\times$ ponemos
  \[ \mathcal L(f)=\frac1p\log\left(\frac{f^p(T)}{f((1+T)^p-1)}\right). \]
  Entonces esto define un morfismo $\mathcal L\colon{\Z_p\llbracket X\rrbracket}^\times\rightarrow\Z_p\llbracket T\rrbracket$.\footnote{Notemos que después del mapeo $\mathcal L$ ya llamamos la variable $T$, porque estas series de potencias las identificaremos con medidas, es decir s\'i juegan el mismo papel que las de el \cref{sec:algebra-de-iwasawa}.} Si lo componemos con el morfismo $\Upsilon$ del \cref{thm:trinidad}, entonces la composición \[ \Upsilon\circ\mathcal L\colon{\Z_p\llbracket X\rrbracket}^\times\rightarrow\mathrm D(\Zp,\Zp) \] envía los elementos de la forma $f_u$ para $u\in U_\infty$ a $i(\mathrm D(\Z_p^\times,\Zp))$. De esta manera obtenemos para cada $u\in U_\infty$ una medida $\lambda_u\in\mathrm D(\Z_p^\times,\Zp)$.
\end{prop}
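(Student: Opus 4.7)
El plan consiste en tres pasos: primero verificar que $\mathcal L$ toma valores en $\Z_p\llbracket T\rrbracket$, segundo ver que es un morfismo de grupos, y tercero utilizar una propiedad algebraica esencial de las series de Coleman $f_u$ para concluir que $\Upsilon(\mathcal L(f_u))$ define una medida soportada en $\Z_p^\times$.

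Para el primer paso, la observación clave es que para $f\in\Z_p\llbracket X\rrbracket^\times$ se tiene la congruencia de Frobenius $f(T)^p\equiv f(T^p)\pmod p$ en $\F_p\llbracket T\rrbracket$, combinada con $(1+T)^p\equiv 1+T^p\pmod p$. Esto implica que $f(T)^p\equiv f((1+T)^p-1)\pmod p$, y por lo tanto el cociente $f(T)^p/f((1+T)^p-1)$ pertenece a $1+p\Z_p\llbracket T\rrbracket$. Ahora, para $y\in p\Z_p\llbracket T\rrbracket$, la serie $\log(1+y)=\sum_{n\ge1}(-1)^{n+1}y^n/n$ converge en $\Z_p\llbracket T\rrbracket$ porque $v_p(y^n/n)\ge n-v_p(n)\to\infty$; más aún, $n-v_p(n)\ge 1$ para todo $n\ge 1$, por lo cual $\log(1+y)\in p\Z_p\llbracket T\rrbracket$, y dividir por $p$ da un elemento de $\Z_p\llbracket T\rrbracket$. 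La multiplicatividad del numerador y denominador, junto con la aditividad de $\log$ en $1+p\Z_p\llbracket T\rrbracket$, da inmediatamente $\mathcal L(fg)=\mathcal L(f)+\mathcal L(g)$, concluyendo el segundo paso.

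Para el tercer paso, la herramienta fundamental es una propiedad adicional de la serie de Coleman $f_u$, que de hecho forma parte del teorema de Coleman (y que es consecuencia directa de la compatibilidad por la norma de la familia $u$): la identidad
\[ \prod_{\xi\in\mu_p} f_u(\xi(1+T)-1)=f_u((1+T)^p-1) \]
en $\Z_p\llbracket T\rrbracket$. Combinando esto con el hecho de que $\xi^p=1$ para $\xi\in\mu_p$, calculamos
\[ \sum_{\xi\in\mu_p}\mathcal L(f_u)(\xi(1+T)-1)=\frac{1}{p}\log\left(\frac{\prod_{\xi}f_u(\xi(1+T)-1)^p}{f_u((1+T)^p-1)^p}\right)=\log\frac{\prod_{\xi}f_u(\xi(1+T)-1)}{f_u((1+T)^p-1)}=0. \]
Por el diagrama conmutativo que identifica $\mathrm D(\Zp,\Zp)$ con $\Zp\llbracket T\rrbracket$ y describe la operación $\mu\mapsto\mu_!$ como $g\mapsto g-\frac{1}{p}\sum_{\xi\in\mu_p}g(\xi(1+T)-1)$, esta identidad significa exactamente que $\Upsilon(\mathcal L(f_u))=\Upsilon(\mathcal L(f_u))_!$, es decir $\Upsilon(\mathcal L(f_u))\in i(\mathrm D(\Z_p^\times,\Zp))$. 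Esto nos da la medida buscada $\lambda_u$ en $\Z_p^\times$.

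El punto más delicado es el tercer paso, específicamente la justificación de la identidad del producto que expresa la compatibilidad por la norma. Esta identidad no es parte de la interpolación básica $f_u(\pi_r)=u_r$, sino una propiedad más fina del teorema de Coleman que se demuestra típicamente aplicando la unicidad del mismo: ambos miembros corresponden a la serie de Coleman asociada a la familia compatible obtenida aplicando la norma, y la unicidad fuerza la igualdad. Alternativamente, se deduce de un cálculo directo con las normas locales $N_{r+1/r}\colon U_{r+1}\to U_r$ evaluando en $T=\pi_r$ y usando que las raíces $p$-ésimas de la unidad en $\Q_p(\mu_{p^{r+1}})$ son justamente $\{\xi(1+\pi_r):\xi\in\mu_p\}-1$. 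En una presentación completa habría que desarrollar esto o citar explícitamente el resultado de Coleman.
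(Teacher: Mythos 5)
Tu propuesta es correcta y sigue esencialmente la misma ruta que la demostración a la que el texto remite (\cite[Lem.\ 2.5.1]{MR2256969}): integralidad de $\mathcal L$ vía la congruencia de Frobenius $f(T)^p\equiv f((1+T)^p-1)\pmod p$, aditividad por el logaritmo, y la anulación de $\sum_{\xi\in\mu_p}\mathcal L(f_u)(\xi(1+T)-1)$ gracias a la identidad de norma $\prod_{\xi\in\mu_p}f_u(\xi(1+T)-1)=f_u((1+T)^p-1)$, que es justamente la propiedad $\mathcal N f_u=f_u$ del operador norma de Coleman. Tu justificación de esa identidad (compatibilidad por la norma evaluando en los $\pi_r$ y luego unicidad de series con infinitos ceros) es exactamente el argumento que sugieren el \cref{ejer:norm-compatible} y el \cref{ejer:weierstrass}, así que no hay laguna.
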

\begin{proof}
  \cite[Lem.\ 2.5.1]{MR2256969}
\end{proof}

Por supuesto las integrales de las medidas obtenidas de esta manera deberían tener algo que
ver con las unidades en $U_\infty$ con las cuales empezamos. Para explicar esta relación usamos
los mapeos
\[ D\colon\Z_p\llbracket X\rrbracket\rightarrow\Z_p\llbracket X\rrbracket,\quad f\mapsto
  (1+X)f', \] y para cada $n\in\Nuno$
\[ \delta_n\colon U_\infty\rightarrow\Zp,\quad u\mapsto\left(D^{n-1}\left(\frac{(1+X)f_u'}{f_u}\right)\right)(0) \]
(donde \enquote{$(0)$} significa evaluación de la serie de potencia en $X=0$).

\begin{lem}\label{lem:delta-n}
  \begin{enumerate}
  \item\label{lem:delta-n-eval} Para cada $u\in U_\infty$ y $n\in\Nuno$ tenemos
    \[ \int_{\Z_p^\times} x^n\integrald\lambda_u(x) = (1-p^{n-1})\delta_n(u). \]
  \item\label{lem:delta-n-unicic} Para $a,b\in\Z$ coprimos a $p$ y cada $n\in\Nuno$ tenemos
    \[ \delta_n(c(a,b))=(b^n-a^n)\zeta(1-n). \]
  \end{enumerate}
\end{lem}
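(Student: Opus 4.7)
Para la parte (a), la identidad fundamental es que si $\mu\in\mathrm D(\Zp,\Zp)$ corresponde a la serie $g(T)\in\Zp\llbracket T\rrbracket$ bajo la trinidad del diagrama \eqref{eqn:trinidad}, entonces
\[ \int_{\Zp} x^n\integrald\mu(x)=(D^n g)(0) \]
donde $D=(1+T)\frac{d}{dT}$. Esto se deduce formalmente de la igualdad $\int_{\Zp}(1+T)^x\integrald\mu(x)=g(T)$ (consecuencia de $(1+T)^x=\sum_{n\ge0}\binom xn T^n$ junto con el \cref{thm:trinidad}) y de que $D^n(1+T)^x=x^n(1+T)^x$, al evaluar en $T=0$. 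Aplicaría esta fórmula al caso $\mu=i(\lambda_u)$, que bajo nuestra identificación corresponde a $\mathcal L(f_u)=\log f_u(T)-\frac1p\log f_u((1+T)^p-1)$ por la \cref{prop:u-medida}; nótese que $\int_{\Zp^\times}x^n\integrald\lambda_u=\int_{\Zp}x^n\integrald i(\lambda_u)$ por la definición de $i$.

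El primer sumando de $\mathcal L(f_u)$ contribuye $(D^n\log f_u)(0)=\delta_n(u)$ directamente por definición. Para el segundo, el punto clave es la sustitución $U=(1+T)^p-1$: la regla de la cadena da $\frac{dU}{dT}=p(1+T)^{p-1}$, así que $D_T=(1+T)\frac{d}{dT}=p(1+T)^p\frac{d}{dU}=p(1+U)\frac{d}{dU}=pD_U$. Iterando, $D_T^n(\log f_u((1+T)^p-1))\big|_{T=0}=p^n(D_U^n\log f_u)(0)=p^n\delta_n(u)$. Sumando ambas contribuciones se obtiene $(D^n\mathcal L(f_u))(0)=(1-p^{n-1})\delta_n(u)$, que es la parte (a).

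Para la parte (b), por la aditividad de $\delta_n$ (que es esencialmente una derivada logarítmica) y la \cref{lem:cps-cab} con $f_{c(a,b)}=w_a/w_b$, el problema se reduce a calcular $(D^n\log w_k)(0)$ para $k\in\Z$ coprimo a $p$. Aquí es conveniente hacer la sustitución $1+X=\e^t$, que identifica $D$ con $\frac{d}{dt}$ y lleva $w_k$ a $w_k(\e^t-1)=\frac{-2\sinh(kt/2)}{\e^t-1}$. La derivada logarítmica resulta ser
\[ \frac{d}{dt}\log w_k(\e^t-1)=\frac{k}{2}\coth(kt/2)-\frac{\e^t}{\e^t-1}. \]
Usando la identidad $x\coth x=x+\sum_{n\ge0}(-1)^nB_n\frac{(2x)^n}{n!}$ con $x=kt/2$ (que se sigue de \eqref{eqn:bernoulli} aplicada a $\pm t$) y la expansión análoga $\frac{\e^t}{\e^t-1}=\frac1t+\frac12+\sum_{n\ge2}(-1)^nB_n\frac{t^{n-1}}{n!}$, los términos singulares en $t$ se cancelan y se obtiene
\[ \frac{d}{dt}\log w_k(\e^t-1)=-\frac12+\sum_{n\ge2}(-1)^nB_n(k^n-1)\frac{t^{n-1}}{n!}. \]
Integrando y comparando coeficientes, $(D^n\log w_k)(0)=(-1)^nB_n(k^n-1)/n$ para $n\ge2$. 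Entonces $\delta_n(c(a,b))=(-1)^nB_n(a^n-b^n)/n=-(-1)^nB_n(b^n-a^n)/n$, que coincide con $(b^n-a^n)\zeta(1-n)$ por la \cref{zeta-continuacion}, notando que para $n\ge3$ impar ambos lados se anulan porque $B_n=0$.

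El principal obstáculo será la parte (b): mantener la contabilidad de las convenciones de los números de Bernoulli y verificar meticulosamente la cancelación de los términos $1/t$ en la expansión generatriz; en particular hay que cuidar el signo en $(-1)^n$ que aparece de la elección $B_1=\frac12$ en la \cref{defi:bernoulli}. En la parte (a), el único punto delicado es la identificación $D_T=pD_U$ al hacer la sustitución $U=(1+T)^p-1$, que es un cálculo rutinario pero debe hacerse con cuidado.
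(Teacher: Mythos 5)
Tu propuesta es correcta y sigue esencialmente el mismo camino que la demostración del texto: en (a) la misma identidad $\int_{\Zp}x^n\integrald\Upsilon(g)=(D^ng)(0)$ combinada con la regla de la cadena para $U=(1+T)^p-1$ (el texto la organiza como $D\mathcal L(f_u)=h_u-h_u((1+T)^p-1)$ seguido de $D^{n-1}(h_u((1+T)^p-1))=p^{n-1}(D^{n-1}h_u)((1+T)^p-1)$), y en (b) la misma sustitución $1+X=\e^t$ con la función generatriz de los números de Bernoulli, sólo que tú expandes $\log w_a$ y $\log w_b$ por separado mientras el texto trabaja de una vez con el cociente $f_{c(a,b)}$. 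Observa únicamente que tu cálculo (igual que el del texto) sólo cubre $n\ge2$: en $n=1$ se tiene $\delta_1(c(a,b))=0$ mientras $(b-a)\zeta(0)\neq0$ con la convención $B_1=\tfrac12$, laguna inofensiva compartida con la demostración original y sin efecto en el \cref{thm:lambda-trivial}, porque allí el factor $1-p^{n-1}$ se anula en $n=1$.
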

\begin{proof}
  Seguimos \cite[Lem.\ 3.3.5, Prop.\ 3.5.2, Prop.\ 2.6.3]{MR2256969}, a donde remitimos para
  más detalles.

  Primero vamos a describir qué hace el mapeo $D$ a las medidas. Hacemos esto con ayuda del diagrama
  \begin{equation*}
    \begin{tikzcd}
      \Zp\llbracket T\rrbracket \arrow[r, "D"] \arrow[d, shift left, "\Upsilon"] & \Zp\llbracket T\rrbracket
      \arrow[d, "\Upsilon", shift left] \\
      \mathrm D(\Zp,\Zp) \arrow[r] \arrow[u, "\mathcal M", shift left] & \mathrm D(\Zp,\Zp)
      \arrow[u, "\mathcal M", shift left] 
    \end{tikzcd}
  \end{equation*}
  que conmuta si definimos el mapeo abajo como
  \begin{equation*}
    \mu\mapsto\left[f\mapsto\int_{\Zp} xf(x)\integrald\mu(x)\right];
  \end{equation*}
  eso es fácil de verificar y omitimos aquí el cálculo que lo demuestra.
  Inductivamente, esto nos da
  \begin{equation*}
    \int_{\Zp}x^n\integrald\Upsilon(g)(x)=(D^ng(T))(0)\quad\text{para cada }g\in\Zp\llbracket
    T\rrbracket,\ n\in\Ncero
  \end{equation*}
  (el caso $n=0$ es el \cref{ejer:upsilon-cero}).

  Ahora ponemos $g=\mathcal L(f_u)$ aquí. Un cálculo usando las definiciones del mapeo
  $\mathcal L$ y de $D$ nos muestra que
  \begin{equation*}
    D(\mathcal L(f_u)) = h_u-h_u((1+T)^p-1)
  \end{equation*}
  con $h_u=(1+T)\frac{f_u'}{f_u}$, así que
  inductivamente obtenemos
  \begin{equation*}
    \int_{\Zp}x^n\integrald\Upsilon(\mathcal L(f_u))(x)=D^{n-1}(h_u-h_u((1+T)^p-1))(0).
  \end{equation*}
  Además se puede verificar la relación
  \begin{equation*}
    D^{n-1}(h_u((1+T)^p-1))=p^{n-1}D^{n-1}(h_u)((1+T)^p-1).
  \end{equation*}
  Usando esto, la linealidad de $D$ y la definición de $\delta_n$ obtenemos
  \ref{lem:delta-n-eval}.

  Para \ref{lem:delta-n-unicic} usamos que conocemos la forma de la serie de potencias
  involucrada: sea
  \begin{equation*}
    f=\frac{(1+T)^{-a/2}-(1+T)^{a/2}}{(1+T)^{-b/2}-(1+T)^{b/2}}.
  \end{equation*}
  Sabemos del \cref{lem:cps-cab} que $f_{c(a,b)}=f$ y que ademas esta serie de potencias
  tiene coeficientes en $\Q$. Esto nos permite poner números reales o complejos en
  $f$. Vamos a poner $T=\e^z-1$ con $z\in\C$ porque esto convierte al mapeo $D$ en algo simple:
  $Df(\e^z-1)=\frac{\mathrm d}{\mathrm d z}f(\e^z-1)$. Es decir
  \begin{equation*}
    \delta_n(c(a,b))=\left({\left(\frac{\mathrm d}{\mathrm d z}\right)}^{n-1}g(z)\right)_{z=0}
  \end{equation*}
  con
  \begin{equation*}
    g(z)=\frac{\mathrm d}{\mathrm d z}\log f(\e^z-1).
  \end{equation*}
  Calculando explícitamente $g$ obtenemos
  \begin{equation*}
    g(z)=\frac12b\left(\frac1{\e^{-bz}-1}-\frac1{\e^{bz}-1}\right)
    -\frac12a\left(\frac1{\e^{-az}-1}-\frac1{\e^{az}-1}\right)
  \end{equation*}
  Si escribimos
  \begin{equation*}
    \tilde f(t)=\frac{t}{1-\e^{-t}}=\sum_{n=0}^\infty B_n\frac{t^n}{n!}\quad(t\in\C)
  \end{equation*}
  para la función definiendo los números de Bernoulli (para distinguirla de la serie de
  potencias $f$), esto es igual a
  \begin{align*}
    g(z) &= -\frac{1}{2z}(\tilde f(-bz)+\tilde f(bz))+\frac{1}{2z}(\tilde f(-az)+\tilde
           f(az)) \\
    &= \sum_{n=1}^\infty B_n\frac{z^{n-1}}{n!}(a^n-b^n)
  \end{align*}
  porque $B_n=0$ para $n>1$ impar. De esto obtenemos el resultado.
\end{proof}

Estamos listos para definir la función zeta $p$-ádica.

\begin{defi}\label{defi:lambda-trivial}
  Sean $a,b\in\Z$ coprimos a $p$.
  Definimos \[ \lambda_{\mathbf 1} = \frac{\lambda_{c(a,b)}}{\sigma_b-\sigma_a}. \]
\end{defi}

\begin{thm}\label{thm:lambda-trivial}
  El elemento $\lambda_{\mathbf 1}$ es una pseudo-medida, independiente de $a$ y $b$, y para
  cada $n\in\Nuno$ tenemos
  \[ \int_{\Z_p^\times}x^n\integrald\lambda_{\mathbf 1}(x)=(1-p^{n-1})\zeta(1-n). \]
  Esta propiedad lo caracteriza únicamente.
\end{thm}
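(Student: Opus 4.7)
El plan es verificar las cuatro afirmaciones —bien-definición con la fórmula de interpolación, independencia de $(a,b)$, propiedad de pseudo-medida y unicidad— en ese orden. Primero noto que, por el \cref{cor:descomposicion-lambda}, $\Lambda(\Z_p^\times)$ se descompone como suma directa de los dominios enteros $\Lambda(\Gamma)\isom\O\llbracket T\rrbracket$, y $\sigma_b-\sigma_a$ es no nulo en cada componente (siempre que $a\neq b$ en $\Z_p^\times$), de modo que no es divisor de cero y $\lambda_{\mathbf 1}\in\mathcal Q(\Z_p^\times)$ queda bien definido. Para la fórmula de interpolación, usaré que $\sigma_c\in\Lambda(\Z_p^\times)$ actúa sobre una medida $\mu$ por convolución con la delta $\delta_c$, por lo cual $\int_{\Z_p^\times}x^n\integrald(\sigma_c\cdot\mu)(x)=c^n\int_{\Z_p^\times}x^n\integrald\mu(x)$. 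Combinando esto con el \cref{lem:delta-n}, que da $\int_{\Z_p^\times}x^n\integrald\lambda_{c(a,b)}(x)=(1-p^{n-1})(b^n-a^n)\zeta(1-n)$, al dividir entre $\sigma_b-\sigma_a$ se obtiene $\int_{\Z_p^\times}x^n\integrald\lambda_{\mathbf 1}(x)=(1-p^{n-1})\zeta(1-n)$ (observando que la integración contra $\lambda_{\mathbf 1}$ como pseudo-medida, aplicada al carácter $x\mapsto x^n$ con $n\geq 1$, concuerda con esta receta escogiendo $a,b$ tales que $a^n\neq b^n$).

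Para la independencia, verificaré la identidad $(\sigma_{b'}-\sigma_{a'})\lambda_{c(a,b)}=(\sigma_b-\sigma_a)\lambda_{c(a',b')}$ dentro de $\Lambda(\Z_p^\times)$ para cualesquiera dos pares coprimos a $p$. Ambos lados son medidas auténticas y sus integrales contra $x^n$ coinciden, pues ambas dan $(1-p^{n-1})(b^n-a^n)(b'^n-a'^n)\zeta(1-n)$. La igualdad en $\Lambda(\Z_p^\times)$ se deduce de un argumento de densidad análogo al de la \cref{prop:unicidad-palf}: en cada componente $e_{\omega^i}\Lambda(\Z_p^\times)\isom\O\llbracket T\rrbracket$, los valores se leen en los caracteres $\kappa_0^n$ con $n\equiv i\pmod{p-1}$; estos $n$ son densos en $\Z_p$, y una serie de potencias con infinitos ceros es cero por el \cref{thm:weierstrass}. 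Establecida la independencia, la propiedad de pseudo-medida se sigue así: para $g\in\Z$ coprimo a $p$ con $g\neq 1$, tomo $(a,b)=(1,g)$ y obtengo $(\sigma_g-1)\lambda_{\mathbf 1}=\lambda_{c(1,g)}\in\Lambda(\Z_p^\times)$; para $g\in\Z_p^\times$ arbitrario extiendo $c(a,b)$ a $a,b\in\Z_p^\times$ (las fórmulas de la \cref{dfn:unidades-ciclotomicas} tienen sentido porque $\xi_r^a$ está bien definido para $a\in\Z_p^\times$) y concluyo por continuidad.

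Para la unicidad, si $\lambda'$ es otra pseudo-medida con la misma fórmula de interpolación, pongo $\mu:=\lambda'-\lambda_{\mathbf 1}\in\mathcal Q(\Z_p^\times)$, que satisface $\int_{\Z_p^\times}x^n\integrald\mu=0$ para todo $n\in\Nuno$ y $(\sigma_g-1)\mu\in\Lambda(\Z_p^\times)$ para todo $g$. Luego $(\sigma_g-1)\mu\in\Lambda$ tiene integrales nulas contra todos los $\kappa^n$, así que el mismo argumento de densidad del paso anterior da $(\sigma_g-1)\mu=0$ para todo $g$; como $\sigma_g-1$ es no divisor de cero en $\Lambda$ (para $g\neq 1$), concluyo $\mu=0$. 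El escollo principal me parece ser el argumento de densidad aplicado componente por componente —hay que cuidar los residuos módulo $p-1$ al identificar $\kappa^n$ con $\kappa_0^n$ dentro del sumando $\omega^i$—, y en segundo lugar la extensión continua del par $(a,b)$ a $\Z_p^\times$ para cerrar la verificación de la propiedad de pseudo-medida en todos los generadores.
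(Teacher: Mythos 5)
Su propuesta coincide en lo esencial con la demostración del texto en tres de los cuatro puntos: la fórmula de interpolación se obtiene exactamente igual, combinando el \cref{lem:delta-n} con la observación $\int_{\Z_p^\times}x^n\integrald(\sigma_b-\sigma_a)=b^n-a^n$; la unicidad se demuestra con el mismo argumento de densidad de la \cref{prop:unicidad-palf} (y su cuidado con los residuos módulo $p-1$ y el \cref{thm:weierstrass} es justamente lo que se necesita); y la independencia de $(a,b)$, que en el texto se deduce de la unicidad, usted la verifica directamente comparando las medidas $(\sigma_{b'}-\sigma_{a'})\lambda_{c(a,b)}$ y $(\sigma_b-\sigma_a)\lambda_{c(a',b')}$, lo cual es una variante correcta y equivalente.

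La divergencia real está en la propiedad de pseudo-medida, y ahí su argumento tiene un hueco. Con pares enteros usted sólo obtiene $(\sigma_g-1)\lambda_{\mathbf 1}\in\LL(G)$ para $g\in\Z$ coprimo a $p$, y el paso a $g\in\Z_p^\times$ arbitrario \enquote{por continuidad} no cierra tal cual: no hay una topología razonable en $\mathcal Q(G)$ que haga continua la multiplicación por $\lambda_{\mathbf 1}$, así que el conjunto de los $g$ con $(g-1)\lambda_{\mathbf 1}\in\LL(G)$ no es obviamente cerrado. Si en su lugar extiende $c(a,b)$ a exponentes $a,b\in\Z_p^\times$, necesita además extender la compatibilidad con la norma (\cref{ejer:norm-compatible}), el \cref{lem:cps-cab} y, sobre todo, la identidad $\delta_n(c(a,b))=(b^n-a^n)\zeta(1-n)$ del \cref{lem:delta-n}, cuya demostración usa que la serie de potencias tiene coeficientes racionales para poder sustituir valores complejos; nada de esto está hecho en su propuesta y no es gratuito. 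El texto evita todo el problema con una sola elección: tomando $b=1$ y $a=e$ con $e$ un generador de $\F_p^\times$ tal que $e^{p-1}\not\equiv1$ módulo $p^{2}$, el elemento $\sigma_e$ es un generador topológico de $\Z_p^\times$, de modo que $\sigma_e-1$ genera el ideal de aumentación y $(g-1)/(\sigma_e-1)\in\LL(G)$ para \emph{todo} $g\in\Z_p^\times$; por lo tanto $(g-1)\lambda_{\mathbf 1}=\frac{g-1}{\sigma_e-1}\,(\sigma_e-1)\lambda_{\mathbf 1}\in\LL(G)$ de un solo golpe, sin exponentes $p$-ádicos ni paso al límite. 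Le recomiendo sustituir su último paso por este argumento.
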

\begin{proof}
  Seguimos \cite[Prop.\ 4.2.4]{MR2256969}. El hecho de que la propiedad de interpolación
  determina únicamente al elemento $\lambda_{\mathbf 1}$ se demuestra de manera igual a como
  lo hicimos para el elemento $\mu_{\mathbf 1}$ en la \cref{prop:unicidad-palf}. En
  particular, $\lambda_{\mathbf 1}$ no depende de $a$ y $b$ una vez que sabemos la propiedad
  de interpolación (si es una pseudo-medida). Para ver esta fórmula solo hay que observar
  que
  \begin{equation*}
    \int_{\Z_p^\times}x^n\integrald(\sigma_b-\sigma_a)=b^n-a^n \quad\text{para cada }n\in\Nuno,
  \end{equation*}
  así que  la fórmula sigue del \cref{lem:delta-n}~\ref{lem:delta-n-unicic}. Entonce lo único que falta ver es que $\lambda_{\mathbf 1}$ es una pseudo-medida. Para eso ponemos
  $b=1$ y $a=e$ con $e$ un generador de $\F_p^\times$ tal que $e^{p-1}\not\equiv 1$
  ($\mod p^2$). Entonces $\sigma_e$ es un generador topológico de $\Z_p^\times$, que muestra
  que $\sigma_e-1$ es un generador del ideal de aumentación y entonces
  $(g-1)/(\sigma_e-1)\in\LL(G)$ para cada $g\in\Z_p^\times$, así que $\lambda_{\mathbf 1}$ es
  una pseudo-medida.
\end{proof}

El teorema anterior muestra que el elemento $\lambda_{\mathbf 1}$ tiene una propiedad muy
similar a la del elemento $\mu_{\mathbf 1}$ del \cref{thm:palf-stickelberger-trivial},
aunque no es exactamente la misma: una vez se evalúa en $\kappa^n$ y otra vez en
$\kappa^{1-n}$. Vamos a discutir esta discrepancia en la \cref{sec:conjetura-principal}
(véase el \cref{ejer:nu-mu-lambda}).

\ejercicios

\begin{ejer}\label{ejer:norm-compatible}
  Para $r\in\Nuno$ denotamos $F_r=\Qp(\mu_{p^r})$.
  Para $a,b\in\Z$ coprimos a $p$ definimos $c_r(a,b)\in F_r$.
  Por el \cref{ejer:cicunit} sabemos que $c_r(a,b)\in U_r$.
  \begin{enumerate}
  \item Demuestre que para cada polinomio $f\in\Z[X]$ tenemos\
    \[ \mathrm N_{F_r/F_{r-1}} (f(\xi_r-1))=\prod_{\xi\in\mu_p} f(\xi\xi_r-1) \]
    usando la sucesión exacta de grupos de Galois
    \[
      1\rightarrow\Gal(F_r/F_{r-1})\rightarrow\Gal(F_r/\Qp)\rightarrow\Gal(F_{r-1}/\Qp)\rightarrow1. \]
  \item Aplique esto a $f=X^a-1$ y concluya que los elementos $c_r(a,b)$ son compatibles con
    respecto a la norma para $r\in\Nuno$.
  \end{enumerate}
\end{ejer}

\chapter{La Conjetura Principal de Iwasawa}
\label{sec:mc}

La Conjetura Principal de Iwasawa es una de las maneras más profundas de expresar y precisar
el fenómeno de que valores especiales de funciones $L$ complejas tengan algo que ver con
aritmética. Ella utiliza las funciones $L$ $p$-ádicas y las conecta con objetos de origen
aritmético, como grupos de clases o ciertos grupos de Galois. Más precisamente, estos
objetos son de manera canónica módulos noetherianos sobre el álgebra de Iwasawa, así que
tienen un ideal característico. La Conjetura Principal entonces dice que este ideal es
generado por una función $L$ $p$-ádica. De esta manera establece la conexión de la izquierda en
\[
  \begin{minipage}[c]{.2\linewidth}
    \begin{center}
      objetos de origen aritmético  
    \end{center}
  \end{minipage}
  \longleftrightarrow
  \begin{minipage}[c]{.2\linewidth}
    \begin{center}
      funciones $L$ $p$-ádicas
    \end{center}
  \end{minipage}
  \longleftrightarrow
  \begin{minipage}[c]{.2\linewidth}
    \begin{center}
      funciones $L$ complejas
    \end{center}
  \end{minipage}
\]
mientras la conexión de la derecha es establecida por las fórmulas de interpolación como en
los teoremas \labelcref{thm:palf-stickelberger-trivial},
\labelcref{thm:palf-stickelberger-no-trivial} o \labelcref{thm:lambda-trivial}. De esta manera la Conjetura Principal completa la descripción del vínculo maravilloso entre valores especiales de las funciones
$L$ y la aritmética.

La Conjetura Principal fue formulada por Iwasawa en 1969 en su artículo \cite{MR0255510}. En
este texto Iwasawa busca analogías entre campos de funciones, es decir extensiones finitas
de $\F(t)$, donde $\F$ es un campo finito (equivalentemente, campos de funciones de una
curva algebraica sobre un campo finito), y campos de números. Entre estos dos tipos de
campos hay una multitud de similaridades, por eso estos campos son llamados
\enquote{campos globales}. La Conjetura Principal de Iwasawa es un intento de formular un
análogo para campos de números de los resultados de Weil acerca de curvas sobre campos
finitos (que son generalizadas en las famosas Conjeturas de Weil, véase por ejemplo
\cite{MR926276}). Ambos resultados -- el de Weil y la Conjetura Principal de Iwasawa --
conectan polinomios característicos con algún tipo de función zeta, de ahí la analogía, pero
véase \cite{MR0255510} o \cite[§XI.6]{NSW} para una explicación más precisa.

Advertimos que, aunque la llamamos \enquote{conjetura}, de hecho es un teorema, demostrado
por primera vez por Mazur y Wiles en 1984 y otra vez con métodos diferentes por Rubin
alrededor de 1990. Sin embargo, esta conjetura tiene una multitud de generalizaciones a
otras situaciones (unas de los cuales indicaremos en la
\cref{sec:generalisaciones-conjetura-princial}), y la mayoría de estas todavía carecen de una
demostración. Por eso, y por razones históricas, el teorema todavía se llama
\enquote{Conjetura Principal}.

\section{Las formulaciones de la Conjetura Principal}
\label{sec:conjetura-principal}

Los módulos que aparecen en las formulaciones de la conjetura principal fueron introducidos
en la \cref{defi:iwasawa-modulos}. Recordemos las definiciones, que aquí solo necesitamos
un caso especial.

Para cada $r\in\Ncero$ sea
\begin{itemize}
\item $K_r=\Q(\mu_{p^r})$,
\item $H_r=$ la extensión máxima abeliana pro-$p$ de $K_r$ no ramificada,
\item $M_r=$ la extensión máxima abeliana pro-$p$ de $K_r$ ramificada sólo en $p$,
\item $C_r=$ la $p$-parte del grupo de clases de $K_r$,
\item $X_r=\Gal(H_r/K_r)$,
\item $Y_r=\Gal(M_r/K_r)$.
\end{itemize}
Escribimos $K_\infty$ para el compuesto de todos los campos $K_r$ para $r\in\Ncero$, y
análogamente definimos $H_\infty$ y $M_\infty$.\footnote{Estrictamente,
    aquí la notación es incompatible con la \cref{defi:iwasawa-modulos}. La extensión
    $K_\infty/K_1$ es una $\Zp$-extensión ($K_\infty/K_0$ no lo es) y si aplicaramos la
    \cref{defi:iwasawa-modulos} obtendríamos $K_r=\Q(\mu_{p^{r+1}})$. Esto causaría
    otras molestias en la notación, por eso preferimos definir $K_r=\Q(\mu_{p^r})$ --
    de cualquier modo este detalle no será importante.\label{footnote:r-offset}}
Además escribimos
\begin{itemize}
\item $X_\infty = \Gal(H_\infty/K_\infty) = \displaystyle\varprojlim_{r\in\Ncero}X_r$,
\item $Y_\infty = \Gal(M_\infty/K_\infty) = \displaystyle\varprojlim_{r\in\Ncero}Y_r$.
\end{itemize}
Usamos también la notación de la \cref{sec:stickelberger}, es decir escribimos
$G_{p^r}=\Gal(K_r/\Q)$ para $r\in\Ncero$ y 
$G=\Gal(K_\infty/\Q)\isom\Delta\times\Gamma$, y $\LL(G)$ para el álgebra de Iwasawa de $G$
con coeficientes en $\Zp$.

Tenemos sucesiones exactas de grupos de Galois
\begin{align*}
  1 \rightarrow X_r \rightarrow &\Gal(H_r/\Q) \rightarrow G_{p^r} \rightarrow 1, \\
  1 \rightarrow Y_r \rightarrow &\Gal(M_r/\Q) \rightarrow G_{p^r} \rightarrow 1
\end{align*}
y por la construcción general descrita en el \cref{ejer:sucesion-exacta-accion} tenemos
acciones continuas de $G_{p^r}$ en $X_r$ y $Y_r$. Estas acciones se extienden a
una acción de $G$ en $X_\infty$ y $Y_\infty$, y esto hace $X_\infty$ y $Y_\infty$ módulos
profinitos sobre $\LL(G)$.

Queremos usar algunos resultados del \cref{sec:grupos-de-clases}. Notemos que $K_\infty/K_1$
es una $\Zp$-extensión y que $X_\infty$ y $Y_\infty$ son los módulos de la
\cref{defi:iwasawa-modulos} para esta $\Zp$-extensión (pero tenga en cuenta que el $r$ de
aquí y el $r$ de la \cref{sec:propiedades-x-ll} se diferencian por $1$, véase el
\cref{footnote:r-offset} en la \cpageref{footnote:r-offset}). En la
\cref{sec:propiedades-x-ll} consideramos $X_\infty$ como módulo sobre $\LL(\Gamma)$ con
$\Gamma=\Gal(K_\infty/K_1)$ mientras aquí lo consideramos como módulo sobre $\LL(G)$ con
$G=\Gal(K_\infty/\Q)$. El \cref{ejer:diagonal} dice que bajo la identificación
$\LL(G)\isom\LL(\Gamma)^{p-1}$ del \cref{cor:descomposicion-lambda} el morfismo
$\LL(\Gamma)\rightarrow\LL(G)$ inducido por la inclusión del subgrupo $\Gamma\subseteq G$
corresponde al mapeo diagonal $\LL(\Gamma)\rightarrow\LL(\Gamma)^{p-1}$. Por eso propiedades
como \enquote{noetheriano} o \enquote{de torsión} son verdad para $X_\infty$ como
$\LL(G)$-módulo si lo son como $\LL(\Gamma)$-módulo. Además, estas observaciones implican
también la siguiente reformulación del \cref{cor:iwasawa-control-x}.

\begin{cor}\label{cor:iwasawa-control-x-g}
  Tenemos $X_\infty/w_rX_\infty=C_{r+1}$, donde $w_r\in\LL(G)$ es el elemento que
  corresponde a $(\omega_r,\dotsc,\omega_r)\in\LL(\Gamma)^{p-1}$ bajo la identificación
  $\LL(G)\isom\LL(\Gamma)^{p-1}$ del \cref{cor:descomposicion-lambda}.
\end{cor}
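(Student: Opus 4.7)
The plan is to reduce this to \cref{cor:iwasawa-control-x}, already proved, by carefully tracking the identifications between $\LL(\Gamma)$-module and $\LL(G)$-module structures on $X_\infty$.

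First I would note that $K_\infty/K_1$ is a $\Zp$-extension with Galois group $\Gamma$, and that, up to the index shift mentioned in the footnote (the $r$-th level of this $\Zp$-extension is $K_{r+1}=\Q(\mu_{p^{r+1}})$ in the notation of this section), $X_\infty$ is exactly the module $X_\infty$ of \cref{defi:iwasawa-modulos} applied to $K_\infty/K_1$. Applying \cref{cor:iwasawa-control-x} to this $\Zp$-extension therefore yields
\[ C_{r+1}\isom X_\infty/\omega_rX_\infty, \]
where $\omega_r=(\gamma^{p^r}-1)\in\LL(\Gamma)$ acts via the $\LL(\Gamma)$-module structure on $X_\infty$.

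Next I would translate this $\LL(\Gamma)$-statement into an $\LL(G)$-statement. The inclusion $\Gamma\hookrightarrow G=\Delta\times\Gamma$ induces a ring morphism $\LL(\Gamma)\rightarrow\LL(G)$. Under the identification $\LL(G)\isom\LL(\Gamma)^{p-1}$ of \cref{cor:descomposicion-lambda}, \cref{ejer:diagonal} says precisely that this morphism is the diagonal map $x\mapsto(x,\dotsc,x)$. In particular the image of $\omega_r\in\LL(\Gamma)$ in $\LL(G)$ is exactly the element $w_r$ defined in the statement.

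Finally, since the $\LL(\Gamma)$-action on $X_\infty$ factors through the $\LL(G)$-action via the inclusion $\LL(\Gamma)\hookrightarrow\LL(G)$, multiplication by $\omega_r\in\LL(\Gamma)$ on $X_\infty$ agrees with multiplication by $w_r\in\LL(G)$. Hence $\omega_rX_\infty=w_rX_\infty$ as subgroups of $X_\infty$, and combining with the first step gives
\[ X_\infty/w_rX_\infty=X_\infty/\omega_rX_\infty\isom C_{r+1}, \]
as claimed. There is no real obstacle here: the whole proof is just a careful bookkeeping of the two module structures and the index shift, with all the substantive content already contained in \cref{cor:iwasawa-control-x} and \cref{ejer:diagonal}.
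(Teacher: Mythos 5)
Your proof is correct and follows exactly the route the paper takes: the paper justifies this corollary by the same bookkeeping — the observation that $K_\infty/K_1$ is the relevant $\Zp$-extension (with the index shift of the footnote), that \cref{ejer:diagonal} identifies the map $\LL(\Gamma)\rightarrow\LL(G)$ with the diagonal so that $\omega_r$ corresponds to $w_r$, and then an appeal to \cref{cor:iwasawa-control-x}. There is nothing missing.
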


Del \cref{thm:y-noetheriano} y el \cref{cor:x-noetheriano-torsion} entonces sabemos que
$X_\infty$ y $Y_\infty$ son noetherianos y que $X_\infty$ incluso es de torsión como
$\LL(G)$-módulos. De hecho $Y_\infty^+$ también es de torsión (aunque $Y_\infty$ no los es),
esto lo vamos a demostrar más tarde en el \cref{cor:y-mas-torsion}.  Además sabemos que
$X_\infty$ es isomorfo al límite inverso de las $p$-partes de los grupos de clases de $K_r$
con respecto a la norma relativa de ideales (\cref{defi:norma-de-ideales}).

Necesitamos también la función zeta $p$-ádica de Riemann.  Escribimos $\mathcal Q$ como el
anillo de cocientes de $\LL(G)$ e $I\subseteq\LL(G)$ como el ideal de aumentación.  Sea
$\mu_{\mathbf 1}\in\mathcal Q$ el elemento de \cref{defi:mu-chi}, $h_1\in\LL(G)$ como en la
\cref{defi:h-n} y $\lambda_{\mathbf 1}\in\mathcal Q$ el elemento de la
\cref{defi:lambda-trivial}. Entonces $(h_1\mu_{\mathbf 1})$ y $I\lambda_{\mathbf 1}$ son
ideales en $\LL(G)$. De hecho tenemos $(h_1\mu_{\mathbf 1})\subseteq\LL(G)^-$ y
$I\lambda_{\mathbf 1}\subseteq\LL(G)^+$. Esto es una consecuencia del hecho de que la
función zeta de Riemann se anula en los enteros negativos pares (véase el
\cref{ejer:palf-parity} o \cite[Cor.\ 4.2.3]{MR2256969}).

La Conjetura Principal finalmente conecta la función de Riemann $p$-ádica con nuestros
módulos:

\begin{thm}[Conjetura Principal]\label{thm:conjetura-principal}
  Tenemos las igualdades de ideales en $\LL(G)^-$ resp.\ $\LL(G)^+$:
  \begin{enumerate}
  \item\label{thm:conjetura-principal:x}
    $\charideal_{\LL(G)^-}(X^-_\infty)=(h_1\mu_{\mathbf 1})$.
  \item\label{thm:conjetura-principal:y}
    $\charideal_{\LL(G)^+}(Y^+_\infty)=I\lambda_{\mathbf 1}$.
  \end{enumerate}
  (Estas dos afirmaciones son equivalentes.)
\end{thm}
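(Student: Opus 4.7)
The plan is to first establish the equivalence of \cref{thm:conjetura-principal:x} and \cref{thm:conjetura-principal:y}, which reduces the problem to proving a single statement, and then to sketch the proof of \cref{thm:conjetura-principal:x} along the lines of Rubin's Euler system approach using the cyclotomic units already discussed in \cref{sec:coleman}.

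For the equivalence, the key tools are Kummer theory (\cref{thm:kummer,prop:kummer-apareamiento-equivariante}) together with the Iwasawa adjoint $\alpha$ from \cref{sec:adjuntos}. Applying Kummer theory to the extension $H_\infty/K_\infty$ (abelian pro-$p$, contained in $M_\infty$, with base field containing all $p$-power roots of unity) gives a $\Gal(K_\infty/\Q)$-equivariant perfect pairing between $X_\infty$ and a module of Kummer generators inside $K_\infty^\times$. Since the pairing takes values in $\mu_{p^\infty}$, on which complex conjugation acts by $-1$, it swaps the $\pm$ eigenspaces up to a Tate twist. Combining this with the global class field theoretic description of $Y_\infty$ via local units modulo the closure of global units should produce a canonical pseudo-isomorphism $Y_\infty^+ \sim \alpha(X_\infty^-)(1)$; since $\alpha$ preserves characteristic ideals (\cref{thm:iwasawa-adj,prop:alpha-pseudo-isos}), this gives $\charideal(Y_\infty^+) = \tau(\charideal(X_\infty^-))$ for the involution $\tau$ on $\LL(G)$ induced by $g \mapsto \kappa(g) g^{-1}$. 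On the analytic side, comparing the interpolation formulas of \cref{thm:palf-stickelberger-trivial,thm:lambda-trivial} yields $\lambda_{\mathbf 1}(\kappa^n) = \mu_{\mathbf 1}(\kappa^{1-n})$ for $n \in \Nuno$, which (keeping track of the relation between $I$ and $h_1$, cf.\ \cref{ejer:carideal-zpuno}) translates into $I \lambda_{\mathbf 1} = \tau((h_1 \mu_{\mathbf 1}))$. Combining the algebraic and analytic twists gives the equivalence.

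For \cref{thm:conjetura-principal:x} itself, I would establish the two divisibilities separately. The divisibility $\charideal_{\LL(G)^-}(X_\infty^-) \supseteq (h_1\mu_{\mathbf 1})$ follows from Stickelberger's theorem: the element $\mu_{\mathbf 1}$ was constructed in \cref{sec:stickelberger} from the Stickelberger elements $\Sigma_{p^r}$, and the classical Stickelberger theorem says that $\Z[G_{p^r}] \cap \Sigma_{p^r}\Z[G_{p^r}]$ annihilates $\Cl(K_r)$; taking the $p$-part, restricting to the $-$ eigenspace and passing to the inverse limit through \eqref{eqn:diagrama-x-c} yields the required annihilation of $X_\infty^-$ and hence the divisibility of characteristic ideals. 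The reverse divisibility $\charideal_{\LL(G)^-}(X_\infty^-) \subseteq (h_1\mu_{\mathbf 1})$ is the deep statement. Following Rubin, the strategy is to use the cyclotomic units $c(a,b)$ of \cref{dfn:unidades-ciclotomicas} (which already gave $\lambda_{\mathbf 1}$ via Coleman's theory) as an Euler system: these units satisfy distribution and norm relations indexed by squarefree integers prime to $p$, and Kolyvagin's derivative construction converts them into cohomology classes whose image under suitably chosen restriction maps forces upper bounds on $X_\infty^-$ matching the $p$-adic $L$-function.

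The principal obstacle is the reverse divisibility. The Euler system argument is technically demanding: it requires Chebotarev's density theorem to produce an abundance of auxiliary primes with precisely prescribed Frobenius conditions, a delicate local analysis to identify the cohomology classes arising from Kolyvagin derivatives of cyclotomic units, and finally the analytic class number formula to upgrade the divisibility one obtains (which a priori is only up to a bounded power of $p$) into an actual equality of ideals. These are exactly the ingredients that cannot be developed here, which is why this text only states the conjecture and does not prove it; the alternative proof of Mazur and Wiles via Galois representations attached to Eisenstein congruences on modular curves is even further removed from the methods of this paper.
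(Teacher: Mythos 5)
Your treatment of the equivalence of \ref{thm:conjetura-principal:x} and \ref{thm:conjetura-principal:y} is essentially the route the paper itself takes: the paper devotes \cref{sec:proemio-kummer-adjuntos} to exactly this, using the Kummer pairing between $\Gal(M_\infty/E_\infty)$ and the colimit $C^\infty$ of class groups (\cref{prop:apareamiento-kummer-m}), the identification $\alpha(X_\infty)\isom(C^\infty)^\vee$ via Iwasawa adjoints (\cref{prop:x-c-pseudo-iso}), the vanishing of $\Gal(E_\infty/K_\infty)^+$ so that $\Gal(M_\infty/E_\infty)^+=Y_\infty^+$, and the involution $\nu(g)=\kappa(g)g^{-1}$ with $\nu(I\lambda_{\mathbf 1})=(h_1\mu_{\mathbf 1})$ (\cref{defi:chanfle-nu}, \cref{ejer:nu-mu-lambda}); your $\tau$ is this $\nu$, and your Kummer pairing, stated for $H_\infty/K_\infty$ rather than for $M_\infty/E_\infty$ against $C^\infty$, is a vaguer version of the same argument. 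Deferring the two divisibilities themselves is also consistent with the paper, which proves neither and only sketches in \cref{sec:dem-cp} that Mazur--Wiles and Rubin supply them.

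The genuine error is the claim that $\charideal_{\LL(G)^-}(X^-_\infty)\supseteq(h_1\mu_{\mathbf 1})$ \enquote{follows from Stickelberger's theorem}. Stickelberger gives only annihilation: in the limit it yields $(h_1\mu_{\mathbf 1})\subseteq\Ann_{\LL(G)^-}(X^-_\infty)$, and for a noetherian torsion module the containment between annihilator and characteristic ideal goes the wrong way --- for an elementary module $\bigoplus_i\LL/(f_i)$ the annihilator is generated by the least common multiple of the $f_i$ and the characteristic ideal by their product, so $\charideal\subseteq\Ann$ and membership in the annihilator says nothing about membership in the characteristic ideal (if $X^-_\infty\sim\LL/(f)\oplus\LL/(f)$ it is killed by $f$ yet has characteristic ideal $(f^2)$). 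Neither inclusion is elementary: as the paper states in \cref{sec:dem-cp}, the inclusion $\charideal_{\LL(G)^-}(X^-_\infty)\supseteq(h_1\mu_{\mathbf 1})$ --- the \emph{upper} bound on $X^-_\infty$, i.e.\ the characteristic ideal divides the $p$-adic $L$-function --- is precisely what the Euler system of cyclotomic units produces, while $\charideal_{\LL(G)^-}(X^-_\infty)\subseteq(h_1\mu_{\mathbf 1})$ is the Mazur--Wiles direction via Eisenstein congruences. Your write-up also swaps these internally: you describe the Euler-system argument as forcing upper bounds on $X^-_\infty$ but label it with the inclusion $\charideal\subseteq(h_1\mu_{\mathbf 1})$, which is the opposite, lower-bound statement. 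In the paper the logical relation with Stickelberger runs the other way: a special case of Stickelberger's theorem is \emph{deduced from} the Main Conjecture (\cref{cor:stickelberger}), and it is the analytic class number formula, not Stickelberger, that lets one inclusion imply the equality of ideals.
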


La equivalencia de las dos igualdades la demostraremos en la siguiente sección. Mencionamos
por supuesto que hay muchas más posibilidades equivalentes para formular la Conjetura
Principal y solo damos las dos más usuales. Para más formulaciones equivalentes véase
\cite[Appendix, §8]{MR1029028}.

Recordemos que podemos imaginar el ideal característico como un análogo del orden de algún
grupo. Por eso, heurísticamente la conjetura principal dice que \enquote{ordenes de grupos
  de clases tienen algo que ver con valores especiales de la función zeta}. En la
\cref{sec:consecuencias} vamos a demostrar que esto de hecho es verdad en un sentido preciso
(véase la \cref{prop:consecuencia-numero-de-clases}).

\begin{remark}\label{nota:cp-cociente-de-ideales-car}
    Mencionamos otra manera de interpretar la Conjetura Principal (en la version
    \ref{thm:conjetura-principal:x}). Sea $\Zp(1)=\varprojlim_r\mu_{p^r}$ como en la
    \cref{defi:zpuno}. Entonces $\Zp(1)$ es un $\Zp$-módulo compacto con una acción continua
    de $G$, así que es un $\LL(G)$-módulo. El \cref{ejer:carideal-zpuno} dice que su ideal
    característico es generado por $h_1$. Es decir, la Conjetura Principal dice
    que\footnote{Note que aquí no hace diferencia si consideramos $\Zp(1)$ como
      $\LL(G)$-módulo o $\LL(G)^-$-módulo -- aunque sus ideales característicos son
      diferentes, si multiplicamos con el de $X_\infty^-$ son iguales porque la parte en
      $\LL(G)^+$ entonces es $0$.}
  \[ \charideal_{\LL(G)^-}(X^-_\infty)=\charideal_{\LL^-(G)}(\Zp(1))(\mu_{\mathbf 1}), \]
  que nos podemos imaginar como
  \[ \text{\enquote{$\mu_{\mathbf 1}=\displaystyle
        \frac{\charideal_{\LL(G)^-}(X^-_\infty)}{\charideal_{\LL^-(G)}(\Zp(1))}$}}, \] es
  decir la función zeta $p$-ádica de Riemann es el cociente de los generadores de dos
  ideales característicos (módulo unidades en $\LL(G)^-$).
  En esta forma la Conjetura Principal recuerda la fórmula de números de clases
  \cite[Chap.\ VII, (5.11)]{MR1697859} (que no es una coincidencia).
\end{remark}

En la Conjetura Principal no aparecen los módulos $X_\infty$ o $Y_\infty$ sino sólo
sus partes donde la conjugación compleja actúa por $-1$ o $1$, y las afirmaciones son
igualdades de ideales en $\LL(G)^-$ o $\LL(G)^+$, respectivamente. Sabemos que
$(h_1\mu_{\mathbf 1})\subseteq\LL(G)^-$, es decir $(h_1\mu_{\mathbf 1})\cap\LL(G)^+=0$. Por
eso si una afirmación análoga a la Conjetura Principal sería verdad para todo $\LL(G)$, esto
significaría que $X_\infty^+$ debería ser finito. De hecho existe una conjetura que afirma
incluso más:

\begin{conj}[Kummer/Vandiver]\label{conj:vandiver}
  Tenemos $X_\infty^+=0$. Equivalentemente y más concreto, $p$ no divide al número de
  clases del subcampo $\Q(\mu_p)^+$ de $\Q(\mu_p)$ fijado por la conjugación
  compleja.
\end{conj}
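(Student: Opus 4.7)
La afirmación tiene dos partes: la igualdad $X_\infty^+=0$ (que es la conjetura propiamente dicha y está abierta) y la equivalencia de ésta con $p\nmid h_{\Q(\mu_p)^+}$. Mi plan es establecer la equivalencia, que es lo demostrable aquí, usando las herramientas ya disponibles en el texto; la conjetura misma quedaría como obstáculo abierto.

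Primero traduciría la condición numérica en una condición sobre $C_1^+$. La extensión $\Q(\mu_p)/\Q(\mu_p)^+$ es ramificada en la plaza arquimediana (una real se convierte en compleja), por lo que no contiene ninguna subextensión no ramificada no trivial de $\Q(\mu_p)^+$. El \cref{thm:iwahKhL} entonces da que $h_{\Q(\mu_p)^+}$ divide a $h_{\Q(\mu_p)}$ y que la norma $\Cl(\Q(\mu_p))\to\Cl(\Q(\mu_p)^+)$ es sobreyectiva; pasando a $p$-partes y combinando con la observación estándar de que el núcleo corresponde al subgrupo donde la conjugación compleja actúa por $-1$ (en $p$ impar), se obtiene $C_1^+\isom$ la $p$-parte de $\Cl(\Q(\mu_p)^+)$. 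Por lo tanto $p\nmid h_{\Q(\mu_p)^+} \iff C_1^+=0$.

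Segundo, conectaría $C_1^+$ con $X_\infty^+$. Por el \cref{cor:iwasawa-control-x-g} tenemos $X_\infty/w_0X_\infty\isom C_1$; como las descomposiciones por $\pm$ son compatibles (la conjugación compleja es central en $G$ y conmuta con la acción de $\Gamma$) esto se restringe a $X_\infty^+/w_0X_\infty^+\isom C_1^+$, donde $w_0$ corresponde esencialmente a $T$ (o $\gamma-1$ para un generador topológico $\gamma$ de $\Gamma$). La dirección $(\Rightarrow)$ entonces es inmediata: si $X_\infty^+=0$, entonces $C_1^+=0$, y por el primer paso $p\nmid h_{\Q(\mu_p)^+}$.

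Para la dirección $(\Leftarrow)$ usaría el Lema de Nakayama topológico. Supongamos $p\nmid h_{\Q(\mu_p)^+}$, de modo que $C_1^+=0$ y por lo tanto $X_\infty^+=w_0X_\infty^+\subseteq TX_\infty^+\subseteq\mathfrak M X_\infty^+$, donde $\mathfrak M=(p,T)$. Como $X_\infty^+$ es un $\LL$-módulo compacto (noetheriano según el \cref{thm:y-noetheriano} combinado con el \cref{cor:x-noetheriano-torsion}), el \cref{cor:cor-de-nakayama} implica $X_\infty^+=0$. Aquí es importante que en la $\Zp$-extensión ciclotómica únicamente el primo arriba de $p$ ramifica, lo que hace aplicable el argumento de control; el mismo razonamiento es el del ejercicio que sigue al \cref{thm:iwasawa-ThmdIwa}.

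El obstáculo principal no está en la equivalencia sino en la conjetura misma: demostrar que $C_1^+=0$ para todo primo $p$ requeriría un control fino del $p$-rango de la parte real de los grupos de clases ciclotómicos, y esto escapa completamente a las técnicas del texto. Por el teorema de Ferrero-Washington (\cref{thm:ferrero-washington}) sabemos $\mu(\Q(\mu_p)^{+,c}/\Q(\mu_p)^+)=0$, y la conjetura de Greenberg (\cref{conj:greenberg}) predice también $\lambda=0$; la conjetura de Kummer/Vandiver es estrictamente más fuerte que pedir $\lambda=0$, ya que pide no solo acotamiento sino anulación del grupo al nivel inicial. Actualmente sólo existen verificaciones numéricas para una cantidad finita (aunque enorme) de primos.
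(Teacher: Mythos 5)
Tu propuesta es correcta, pero sigue una ruta distinta de la del texto: el texto no demuestra la equivalencia en el cuerpo, sino que la deja como ejercicio con la indicación de usar el \cref{cor:preg}, es decir, la ruta prevista pasa por la aritmética de números de clases en la torre real (aplicar \cref{thm:iwahKhL} y \cref{thm:iwapLpK} a los campos $\Q(\mu_{p^r})^+$, donde el único primo ramificado es el de arriba de $p$ y es totalmente ramificado, junto con $X_\infty^+=\varprojlim_r C_r^+$). Tú en cambio trabajas directamente al nivel $r=1$: identificas $C_1^+$ con la $p$-parte de $\Cl(\Q(\mu_p)^+)$ mediante la norma, restringes el teorema de control (\cref{cor:iwasawa-control-x-g}) a la parte $+$ y concluyes con el Lema de Nakayama topológico (\cref{cor:cor-de-nakayama}), que es el mismo mecanismo del ejercicio que sigue al \cref{thm:iwasawa-ThmdIwa}. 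Lo que compra tu ruta es evitar por completo el \cref{cor:preg} y todo argumento en los niveles superiores de la torre; lo que compra la ruta indicada es no usar el teorema de control ni Nakayama, solo subida y bajada de números de clases. Ambas rutas necesitan el ingrediente que el texto usa implícitamente en el \cref{thm:kummer-hp-plus-minus}: que $\#C_1^+$ es la $p$-parte de $h_{\Q(\mu_p)^+}$.

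Dos observaciones menores. Tu justificación de ese ingrediente es escueta (la «observación estándar» sobre el núcleo de la norma); para completarla basta notar que si $j\colon\Cl(\Q(\mu_p)^+)\to\Cl(\Q(\mu_p))$ es la extensión de ideales, entonces $N\circ j$ es elevar al cuadrado y $j\circ N=1+\mathbf c$, de donde $j$ es inyectiva en $p$-partes, el núcleo de $N$ en $C_1$ es exactamente $C_1^-$ y $N$ induce $C_1^+\isom\Cl(\Q(\mu_p)^+)(p)$, usando además la sobreyectividad de $N$ que da el \cref{thm:iwahKhL}. Por otro lado, para el \cref{cor:cor-de-nakayama} solo hace falta que $X_\infty^+$ sea un $\LL$-módulo compacto (es un submódulo cerrado de $X_\infty$); la noetherianidad que invocas no se necesita en ese paso. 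El párrafo final sobre Ferrero--Washington y Greenberg es comentario correcto pero ajeno a la equivalencia que había que demostrar.
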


La conjetura de Kummer y Vandiver es controvertida entre los expertos: Hay indicaciones y
heurísticas en su favor y también en su contra, así que no es claro si se debería creer en
su veracidad. Hasta hoy parece poco claro como se podría abordar una demostración, aunque
numéricamente la conjetura ha sido confirmada para todos los primos menores que
$1.63\cdot 10^8$.

Si esta conjetura fuera cierta entonces la Conjetura Principal sería una
igualdad de ideales $\charideal_{\LL(G)}(X_\infty)=(h_1\mu_{\mathbf 1})$ en
$\LL(G)$. Pero de hecho la conjetura de Kummer y Vandiver es más fuerte que la Conjetura
Principal porque incluso la implica. Esto sigue de un teorema de Iwasawa que
también es un paso importante en una de las demostraciones de la Conjetura Principal. Véase
\cite[Thm.\ 4.4.1]{MR2256969} para el teorema de Iwasawa y \cite[§4.5, Cor.\
4.5.4]{MR2256969} para una demostración de que la conjetura de Kummer y Vandiver implica la
Conjetura Principal.

\ejercicios

\begin{ejer}\label{ejer:palf-parity}
  Demuestre que $(h_1\mu_{\mathbf 1})\subseteq\LL(G)^-$ y $I\lambda_{\mathbf
    1}\subseteq\LL(G)^+$, usando las fórmulas de interpolación de los
  teoremas \labelcref{thm:palf-stickelberger-trivial} y 
  \labelcref{thm:lambda-trivial} y el hecho de que $B_n=0$ para $n>1$ impar.
\end{ejer}

\begin{ejer}
  Demuestre que la conjetura principal como la formulamos en esta sección es equivalente a
  la version de la introducción (\cref{thm:mc-intro}). Use el \cref{lem:pairwiseann} para
  esto.
\end{ejer}

\begin{ejer}
  Demuestre que la versión \enquote{$X_\infty^+=0$} de la conjetura de Kummer y Vandiver
  (\cref{conj:vandiver}) es equivalente a la versión \enquote{$p$ no divide al número de
    clases de $\Q(\mu_p)^+$}. Use el \cref{cor:preg} para esto.
\end{ejer}

\section{La equivalencia de las formulaciones}
\label{sec:proemio-kummer-adjuntos}

Las dos afirmaciones en la Conjetura Principal (\cref{thm:conjetura-principal}) de hecho son
equivalentes. Decidimos poner las dos porque ambas tienen su importancia: La versión
\ref{thm:conjetura-principal:x} contiene el módulo $X_\infty^-$ y por eso directamente
implica resultados sobre los grupos de clases, que son objetos de gran interés (como
explicaremos en la \cref{sec:consecuencias}), mientras la
versión \ref{thm:conjetura-principal:y} es la que se integra naturalmente en una fila de
generalizaciones (como explicaremos en la \cref{sec:generalisaciones-conjetura-princial}) y por
eso aparece en esta forma muchas veces en la literatura, por ejemplo en
\cite[(2.5)]{MR2276851}.

En esta sección vamos a demostrar la equivalencia de las dos versiones de la Conjetura
Principal. Los resultados de esta sección no serán usados en las secciones subsecuentes, así
que el lector podría considerar saltar esta sección. No obstante, ganaremos más
conocimiento de los módulos que aparecen en la Conjetura Principal.

Nuestra demostración de la equivalencia es esencialmente elemental, usando sólo la teoría de
Kummer y los adjuntos de Iwasawa que explicamos en la \cref{sec:adjuntos}. Sin embargo, desde
un punto de vista más abstracto esta equivalencia puede ser demostrada con resultados de
dualidad en la cohomología de Galois, usando que los adjuntos de Iwasawa son grupos
$\operatorname{Ext}$, como mencionamos en esta sección. Una demostración con estos métodos
se encuentra en \cite[Thm.\ 11.1.8]{NSW}, que afirma lo mismo que la
\cref{prop:x-c-pseudo-iso} y el \cref{prop:equivalencia-x-y} abajo.

Para ver la equivalencia de \ref{thm:conjetura-principal:x} y
\ref{thm:conjetura-principal:y} en el \cref{thm:conjetura-principal} necesitamos una
involución en el álgebra de Iwasawa.

\begin{defi}\label{defi:chanfle-nu}
  Sea $\nu\colon\LL(G)\rightarrow\LL(G)$ el morfismo inducido por
  \[ G\rightarrow\LL(G)^\times, \quad g\mapsto \kappa(g)g^{-1}. \] Si $M$ es un
  $\LL(G)$-módulo, sea $\LL(G)^\nu$ el $\LL(G)$-módulo $M$ con la acción de $\LL(G)$
  chanfleada por $\nu$, es decir \[ M^\nu=\LL(G)\tensor_{\LL(G),\nu} M. \]
\end{defi}

\begin{ex}\label{ex:ll-f-nu}
  Si $f\in\LL(G)$ entonces canónicamente
  \[ {\left(\LL(G)/(f)\right)}^\nu\isom\left(\LL(G)/(\nu(f))\right) \] como $\LL(G)$-módulos.
\end{ex}

El morfismo $\nu$ es una involución, así que $(M^\nu)^\nu=M$ para cada $\LL(G)$-módulo $M$.
De la definición de $\nu$ y las propriedades de los elementos $\mu_{\mathbf 1}$ y
$\lambda_{\mathbf 1}$ de los teoremas \labelcref{thm:palf-stickelberger-trivial},
\labelcref{thm:lambda-trivial} y \labelcref{prop:unicidad-palf} se deduce fácilmente que
$\nu(I\lambda_{\mathbf 1})=(h_1\mu_{\mathbf 1})$ (véase el \cref{ejer:nu-mu-lambda}). Esto
ya explica una parte de la equivalencia anunciada. Para la otra parte, en el
\cref{prop:equivalencia-x-y} abajo vamos a demostrar que existe un pseudo-isomorfismo
${(Y_\infty^+)}^\nu\sim X^-_\infty$.

Para preparar la demostración del \cref{prop:equivalencia-x-y} estudiamos algunos objetos
en más detalle con métodos de la teoría de Kummer, siguiendo \cite[Chap.\ 6, §2]{MR1029028}.
Primero damos una descripción más explicita del campo $M_\infty$. Usamos la
siguiente notación: para $r\in\Ncero$ sea $\mathfrak p_r$ el único ideal sobre $p$ de $K_r$ (que también
es el único ideal en que la extensión $K_r/\Q$ es ramificada, véase el \cref{lem:totram} y
la \cref{prop:ramcycfin}). Este ideal es
principal y escribimos $\pi_r$ como un generador (por ejemplo $\pi_r=\xi_r-1$ con
$\xi_r\in K_r$ una raíz primitiva $p^r$-ésima de la unidad). 

\begin{prop}\label{prop:m-infty-explicito}
  Para $r\in\Nuno$ sea
  \[ D_r=\{\alpha\in K_r^\times : (\alpha)=\mathfrak a^{p^r} \text{ para un ideal fraccional
    }\mathfrak a\text{ primo a }\mathfrak p_r \}. \]
  Entonces \[ M_\infty=\bigcup_{r\in\Nuno} K_\infty(\sqrt[p^r]{D_r}). \]
\end{prop}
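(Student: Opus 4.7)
The plan is to prove the two inclusions separately. Write $L_\infty = \bigcup_{r\in\Nuno} K_\infty(\sqrt[p^r]{D_r})$.

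For the inclusion $L_\infty \subseteq M_\infty$: given $\alpha \in D_r$, the extension $K_\infty(\sqrt[p^r]\alpha)/K_\infty$ is Kummer of exponent dividing $p^r$ since $\mu_{p^\infty} \subseteq K_\infty$, hence abelian pro-$p$. To see it is unramified outside $p$, let $\mathfrak{Q}$ be a prime of $K_\infty$ not above $p$ and $\mathfrak{q} = \mathfrak{Q} \cap K_r$, so $\mathfrak{q} \neq \mathfrak{p}_r$. From $(\alpha) = \mathfrak{a}^{p^r}$ with $\mathfrak{a}$ prime to $\mathfrak{p}_r$ we get $p^r \mid v_\mathfrak{q}(\alpha)$, and since $\mathfrak{q}$ has residue characteristic coprime to $p$ the converse of \cref{prop:kummer-ramificacion} (a standard fact for tame primes in Kummer theory) shows that $K_r(\sqrt[p^r]\alpha)/K_r$ is unramified at $\mathfrak{q}$; passing up through $K_\infty/K_r$, itself unramified away from $p$, gives the same at $\mathfrak{Q}$.

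For the reverse inclusion $M_\infty \subseteq L_\infty$, it suffices to show that every finite sub-extension $L/K_\infty$ of $M_\infty/K_\infty$ sits inside some $K_\infty(\sqrt[p^{s'}]{D_{s'}})$. Let $p^r$ be the exponent of $\Gal(L/K_\infty)$; by \cref{thm:kummer} we have $L = K_\infty(\sqrt[p^r]\Delta)$ with $\Delta := (L^\times)^{p^r} \cap K_\infty^\times$. Fix $\alpha \in \Delta$ and choose $s \geq r$ with $\alpha \in K_s^\times$. The hypothesis of being unramified outside $p$, combined with \cref{prop:kummer-ramificacion}, forces $p^r \mid v_\mathfrak{q}(\alpha)$ for every prime $\mathfrak{q} \neq \mathfrak{p}_s$ of $K_s$, so $(\alpha)_{K_s} = \mathfrak{b}^{p^r} \mathfrak{p}_s^{k}$ with $\mathfrak{b}$ prime to $\mathfrak{p}_s$ and $k \in \Z$. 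Take any $s' \geq r+s$; because $\mathfrak{p}_s \O_{K_{s'}} = \mathfrak{p}_{s'}^{p^{s'-s}}$, the integer $m := k p^{2s'-r-s}$ is divisible by $p^{s'}$, so $e := \pi_{s'}^{-m/p^{s'}}$ lies in $K_{s'}$. Setting $\beta := \alpha^{p^{s'-r}} e^{p^{s'}}$, a direct calculation of ideals shows $(\beta)_{K_{s'}} = \mathfrak{b}^{p^{s'}}$ with $\mathfrak{b}$ still coprime to $\mathfrak{p}_{s'}$, so $\beta \in D_{s'}$. Moreover $\beta$ differs from $\alpha^{p^{s'-r}}$ by a $p^{s'}$-th power in $K_\infty$, and both $\sqrt[p^{s'}]\beta$ and $\sqrt[p^r]\alpha$ raise to $\alpha^{p^{s'-r}}$ at the $p^{s'}$-th power, hence differ by an element of $\mu_{p^{s'}} \subseteq K_\infty$; so $K_\infty(\sqrt[p^{s'}]\beta) = K_\infty(\sqrt[p^r]\alpha)$, and thus $L \subseteq K_\infty(\sqrt[p^{s'}]{D_{s'}})$ after handling the finitely many generators.

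The main obstacle lies precisely in the last construction: since ramification above $p$ is permitted, the $\mathfrak{p}_s$-exponent $k$ in $(\alpha)$ need not be divisible by $p^r$, and naive modifications of $\alpha$ by units or small powers of $\pi_s$ cannot put it in any $D_{s'}$ while preserving the Kummer extension it generates. The trick is to simultaneously raise $\alpha$ to the $p^{s'-r}$-th power (which keeps the Kummer extension intact when one passes from $p^r$-th to $p^{s'}$-th roots) and climb up to $K_{s'}$ for $s' \geq r+s$, so that $\pi_s$ becomes a large $p$-power of $\pi_{s'}$; this manufactures enough $p$-divisibility in the $\mathfrak{p}$-exponent to absorb the stubborn $\mathfrak{p}_s$-part into a genuine $p^{s'}$-th power.
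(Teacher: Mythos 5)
Your proof is correct, and although it rests on the same two pillars as the paper's proof --- Kummer theory to reduce to cyclic pieces $K_\infty(\sqrt[p^r]{\alpha})$, and \cref{prop:kummer-ramificacion} to force $(\alpha)=\mathfrak b^{p^r}\mathfrak p_s^{k}$ at a finite level --- you execute the crucial step differently. The paper uses that $\mathfrak p_r=(\pi_r)$ is principal to split $\alpha=\beta\pi_r^{t}$ with $\beta\in D_r$, thereby reducing everything to the auxiliary claim $L(\sqrt[p^r]{\pi_r})=L$, which is then proved by writing $\pi_r=\pi_s^{p^{s-r}}u$ with $u\in\O_{K_s}^\times$ (total ramification of $p$ in the tower, \cref{lem:totram}) and observing that $\O_{K_s}^\times\subseteq D_s$. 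You merge these steps: raising $\alpha$ to the $p^{s'-r}$ and climbing to level $s'\ge r+s$ makes the exponent at $\mathfrak p_{s'}$ divisible by $p^{s'}$, so it can be absorbed into an honest $p^{s'}$-th power and the modified element lands directly in $D_{s'}$ while generating the same extension of $K_\infty$. Both arguments exploit exactly the same phenomenon (total ramification at $p$ makes the $\mathfrak p$-part highly $p$-divisible higher up the tower); yours dispenses with the separate step about $\sqrt[p^r]{\pi_r}$ and with the remark that units lie in $D_s$, at the price of a more elaborate explicit element, whereas the paper's split is somewhat more modular. You also verify the easy inclusion $\bigcup_{r}K_\infty(\sqrt[p^r]{D_r})\subseteq M_\infty$, which the paper leaves implicit. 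One cosmetic slip: $\sqrt[p^{s'}]{\beta}$ raised to the $p^{s'}$ gives $\beta=\alpha^{p^{s'-r}}e^{p^{s'}}$, not $\alpha^{p^{s'-r}}$, so the two roots differ by $\zeta e$ with $\zeta\in\mu_{p^{s'}}$ rather than by a root of unity alone; since $e\in K_{s'}\subseteq K_\infty$, the conclusion $K_\infty(\sqrt[p^{s'}]{\beta})=K_\infty(\sqrt[p^r]{\alpha})$ is unaffected.
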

\begin{proof}
  Escribimos $L:=\bigcup_r K_\infty(\sqrt[p^r]{D_r})$. Claramente, $M_\infty$ es una
  composición de extensiones cíclicas de Kummer de $K_\infty$ cuyos exponentes son potencias
  de $p$. Según el \cref{thm:kummer}~\ref{thm:kummer:kummer} estas extensiones son de la
  forma $K_\infty(\sqrt[p^m]\alpha)$ con $\alpha\in K_\infty^\times$ y $m\in\Nuno$. Es
  suficiente demostrar que $K_\infty(\sqrt[p^m]\alpha)\subseteq L$ para tales
  $\alpha$ y $m$.

  Fijemos $\alpha$ y $m$ como arriba. Entonces $\alpha\in K_r$ para algún $r\in\Nuno$, y sin
  pérdida de generalidad supongamos que $r\ge m>1$ y $K_r(\sqrt[p^m]\alpha)\subseteq M_r$. De la
  \cref{prop:kummer-ramificacion} sabemos que el ideal principal generado
  por $\alpha$ en el anillo de enteros de $K_r$ debe ser de la forma
  \begin{equation*}
    (\alpha)=\mathfrak a^{p^r}\mathfrak p_r^t
  \end{equation*}
  con $t\in\Z$ y $\mathfrak p_r\nmid\mathfrak a$ (primero con $\mathfrak a^{p^m}$ en lugar
  de $\mathfrak a^{p^r}$, pero lo podemos cambiar por $\mathfrak a^{p^r}$ porque $r\ge m$).
  Por eso, si definimos $\beta:=\alpha\pi_r^{-t}$ entonces $\beta\in D_r$ y pues
  $K_\infty(\sqrt[p^m]\beta)\subseteq K_\infty(\sqrt[p^r]\beta)\subseteq L$ por la definición
  de $L$. Esto implica que $K_\infty(\sqrt[p^m]\alpha)\subseteq L(\sqrt[p^r]{\pi_r})$.

  Para terminar la demostración probemos que de hecho $L(\sqrt[p^r]{\pi_r})=L$. Si $s>r$
  entonces la extension $K_s/K_r$ es de grado $p^{s-r}$ y puramente ramificada, es decir
  $\mathfrak p_r=\mathfrak p_s^{s-r}$ en $\O_{K_s}$, pues $\pi_r=\pi_s^{s-r}u$ con
  $u\in\O_{K_s}^\times$. Por eso, si queremos adjuntar una raíz $p^{s-r}$-ésima de $\pi_r$,
  solo necesitamos adjuntar una raíz de $u$, porque $\pi_s$ ya esta en $K_\infty\subseteq
  L$. Pero claramente $\O_{K_s}^\times\subseteq D_s$, por eso estas raíces ya están en $L$.
\end{proof}

Introducimos otro campo
\[ E_\infty=\bigcup_{r\in\Nuno}E_r,\quad
  \text{con } E_r=K_\infty\left(\sqrt[p^r]{\bigcup_{s\in\Nuno}\O_{K_s}^\times}\right) \text{
    para }r\in\Nuno. \]
Entonces tenemos extensiones y grupos de Galois
\[
  \begin{tikzcd}
    M_\infty \arrow[d, dash] \arrow[dd, dash, bend right=50, "Y_\infty", swap] \\
    E_\infty \arrow[d, dash] \\
    K_\infty \arrow[d, dash, "G", swap] \\
    \Q
  \end{tikzcd}
\]

Vamos a estudiar también el grupo $\Gal(M_\infty/E_\infty)$ porque tiene que ver con el
grupo de clases y nos permitirá relacionar los módulos $X_\infty$ y $Y_\infty$. Notemos que
este grupo es un $\LL(G)$-submódulo compacto de $Y_\infty$.

\begin{defi}
  Definimos \[ C^\infty=\varinjlim_{r\in\Ncero}C_r, \] el colímite tomado con respecto a los mapeos
  naturales entre los grupos de clases, inducidos por las inclusiones de campos
  $K_r\hookrightarrow K_{r+1}$. Esto es un $\LL(G)$-módulo discreto.
\end{defi}

\begin{prop}\label{prop:apareamiento-kummer-m}
  El apareamiento de Kummer induce un apareamiento perfecto
  \[ \Gal(M_\infty/E_\infty)\times C^\infty\rightarrow\mu_{p^{\infty}}\subseteq
    K_\infty^\times \]
  de $\Z_p$-módulos topológicos, equivariante en el sentido
  \[ \left<g\sigma g^{-1},ga\right>=g\left<\sigma,a\right>\quad \text{para cada
    }g\in G,\ \sigma\in\Gal(M_\infty/E_\infty),\ a\in C^\infty. \]
  Se restringe a un apareamiento perfecto
  \begin{equation*}
    {\Gal(M_\infty/E_\infty)}^+\times {(C^\infty)}^-\rightarrow\mu_{p^{\infty}}.
  \end{equation*}
\end{prop}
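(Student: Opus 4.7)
El plan es aplicar la teoría de Kummer (\cref{thm:kummer}), explotando la descripción explícita de $M_\infty$ dada por la \cref{prop:m-infty-explicito} y la observación análoga (fácil) de que $E_\infty=\bigcup_r K_\infty(\sqrt[p^r]{\O_{K_r}^\times})$. Al nivel finito $r\in\Nuno$, la extensión $K_\infty(\sqrt[p^r]{D_r})/K_\infty$ es de Kummer de exponente $p^r$, y por dualidad de Kummer el subcampo $K_\infty(\sqrt[p^r]{\O_{K_r}^\times})$ corresponde al anulador de la imagen de $\O_{K_r}^\times$. Así obtendría un apareamiento perfecto
\[
\Gal\!\left(K_\infty(\sqrt[p^r]{D_r})/K_\infty(\sqrt[p^r]{\O_{K_r}^\times})\right) \,\times\, \frac{D_r}{\O_{K_r}^\times \cdot ((K_\infty^\times)^{p^r}\cap D_r)} \longrightarrow \mu_{p^r}.
\]
Enseguida identificaría el lado derecho con $C_r[p^r]$ vía la asociación $\alpha\mapsto[\mathfrak a]$ donde $(\alpha)=\mathfrak a^{p^r}$ (con $\mathfrak a$ prima a $\mathfrak p_r$): es sobreyectiva pues toda clase de $p^r$-torsión admite un representante prima a $\mathfrak p_r$ (este ideal es principal), y su núcleo es $\O_{K_r}^\times\cdot(K_r^\times)^{p^r}$.

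Luego pasaría al límite. En el lado izquierdo, el límite inverso da $\Gal(M_\infty/E_\infty)$ por la \cref{prop:m-infty-explicito} y la descripción análoga de $E_\infty$; en el lado derecho, el colímite es $\varinjlim_r C_r[p^r]=C^\infty$, pues $C^\infty$ es un $p$-grupo y todo elemento suyo es de $p^r$-torsión para $r\gg0$. La perfección del apareamiento límite se obtiene por dualidad de Pontryagin entre un $\Zp$-módulo compacto y uno discreto, utilizando la exactitud de los funtores de límite en sistemas de grupos finitos. La equivariancia bajo $G=\Gal(K_\infty/\Q)$ sale directamente de la \cref{prop:kummer-apareamiento-equivariante} y del hecho evidente de que la asociación $D_r\rightarrow C_r[p^r]$ es $G$-equivariante.

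Para restringir a las partes de signos opuestos, la conjugación compleja $\mathbf c$ actúa en $\mu_{p^\infty}$ por inversión. Si $\sigma$ y $a$ pertenecen a la misma parte (ambos $+$ o ambos $-$ respecto a $\mathbf c$), la equivariancia del apareamiento y la bilinealidad producen $\langle\sigma,a\rangle=\mathbf c\langle\sigma,a\rangle=\langle\sigma,a\rangle^{-1}$, lo cual, junto con $p\neq2$, fuerza $\langle\sigma,a\rangle=1$. Como los $\Zp$-módulos involucrados se descomponen canónicamente en partes $\pm$ (pues $2\in\Zp^\times$), el apareamiento perfecto total se restringe a apareamientos perfectos entre partes de signos opuestos, dando en particular el apareamiento perfecto buscado $\Gal(M_\infty/E_\infty)^+\times(C^\infty)^-\rightarrow\mu_{p^\infty}$. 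La parte técnicamente más delicada será el paso al límite: verificar la compatibilidad de los apareamientos al nivel $r$ con los mapeos de transición (que en el lado aritmético corresponden a $\alpha\mapsto\alpha^p$) y detallar cómo los distintos niveles de unidades $\O_{K_r}^\times$ se ensamblan bajo las inclusiones para recuperar exactamente $E_\infty$ en el límite.
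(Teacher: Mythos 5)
Tu propuesta es correcta y sigue esencialmente el mismo camino que la demostración del texto: apareamientos de Kummer a nivel finito para las extensiones generadas por $\sqrt[p^r]{D_r}$ (usando la \cref{prop:m-infty-explicito}), identificación del dual de Kummer con la $p^r$-torsión de $C_r$ vía $\alpha\mapsto[\mathfrak a]$, paso al límite/colímite, equivariancia por la \cref{prop:kummer-apareamiento-equivariante} y el mismo argumento de signos que en el \cref{thm:kummer-hp-plus-minus}. La única diferencia es cosmética: trabajas a nivel $r$ sobre $K_\infty(\sqrt[p^r]{\O_{K_r}^\times})$ en lugar de sobre $E_\infty$ y con $C_r[p^r]$ en lugar del subgrupo $B_r$ del texto, discrepancias que (como tú mismo señalas al identificar el paso al límite como la parte delicada) desaparecen en el colímite.
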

\begin{proof}
  Para $r\in\Nuno$ definimos $D_r$ como en la \cref{prop:m-infty-explicito}. Usamos el morfismo
  de grupos
  \begin{equation*}
    \sqrt[p^r]{D_r}\rightarrow C_r, \quad \sqrt[p^r]\alpha\mapsto \mathfrak a \text{ para
    }(\alpha)=\mathfrak a^{p^n}
  \end{equation*}
  cuyo núcleo es $\sqrt[p^r]{\O_{K_r}^\times}$, que es claramente equivariante para la
  acción de $G_{p^r}$. Escribimos $B_r$ para su imagen, así que
  tenemos un isomorfismo
  \begin{equation*}
    \sqrt[p^r]{D_r}/\sqrt[p^r]{\O_{K_r}^\times}\isom B_r.
  \end{equation*}
  Se verifica fácilmente que
  $\sqrt[p^r]{\O_{K_r}^\times}=\sqrt[p^r]{D_r}\cap E_\infty^\times$, por eso si combinamos
  el apareamiento de Kummer del \cref{thm:kummer}~\ref{thm:kummer:apareamiento} con el
  isomorfismo anterior obtenemos un apareamiento perfecto de grupos finitos
  \begin{equation*}
    \Gal(E_\infty(\sqrt[p^r]{D_r})/E_\infty)\times B_r\rightarrow\mu_{p^r},
  \end{equation*}
  que es equivariante en el sentido que queremos (para $G_{p^r}$) gracias al
  \cref{prop:kummer-apareamiento-equivariante}.
  Para $s\ge r$ el diagrama 
  \begin{equation*}
    \begin{tikzcd}[column sep=.1em]
      \Gal(E_\infty(\sqrt[p^r]{D_r})/E_\infty) & \times & B_r \arrow[d] \arrow[rrrrrrrrrrrrrrrrrrrrrr] &&&&&&&&&&&&&&&&&&&&&&
      \mu_{p^r} \arrow[d] \\
      \Gal(E_\infty(\sqrt[p^s]{D_s})/E_\infty) \arrow[u] & \times & B_s
      \arrow[rrrrrrrrrrrrrrrrrrrrrr] &&&&&&&&&&&&&&&&&&&&&& \mu_{p^s}
    \end{tikzcd}
  \end{equation*}
  es conmutativo, así que podemos tomar el límite y colímite, respectivamente. Esto junto
  la \cref{prop:m-infty-explicito} nos da un apareamiento perfecto de grupos topológicos
  \begin{equation*}
    \Gal(M_\infty/E_\infty)\times B^\infty\rightarrow\mu_{p^\infty}
  \end{equation*}
  con $B^\infty=\varinjlim_rB_r\subseteq C^\infty$ que es equivariante, y falta que ver que
  $B^\infty=C^\infty$.

  Por la definición de $D_r$, los elementos de $B_r$ son estas clases de ideales
  fraccionales de $K_r$ que tienen un representante $\mathfrak a$ primo a $\mathfrak p_r$
  tal que $\mathfrak a^{p^r}$ es principal. Como $\mathfrak p_r$ ya es principal, podemos
  omitir \enquote{primo a $\mathfrak p_r$} aquí. El grupo $C_r$ es la $p$-parte del grupo de
  clases, es decir sus elementos son las clases de ideales fraccionales de $K_r$ que
  tienen un representante $\mathfrak a$ tal que $\mathfrak a^{p^t}$ es principal para algún
  $t\in\Ncero$. De estas descripciones vemos que los colímites de los $C_r$ y $B_r$ coinciden.

  Falta demostrar que el apareamiento se restringe como anunciado. El
  grupo $\Gal(M_\infty/\Q)$ actúa por conjugación en su subgrupo normal
  $\Gal(M_\infty/E_\infty)$, y esto nos da una acción de la conjugación compleja $\mathbf c$
  en $\Gal(M_\infty/E_\infty)$. Según la \cref{prop:kummer-apareamiento-equivariante}
  el apareamiento tiene la propiedad
  \begin{equation}
    \label{eqn:conj-invariant}
    \left<\mathbf c\sigma,\mathbf c a\right>=\mathbf c\left<\sigma,a\right>
    \quad \text{para cada }\sigma\in\Gal(M_\infty/E_\infty),\ a\in C^\infty.
  \end{equation}
  La afirmación entonces resulta del mismo argumento que usamos en la demostración del
  \cref{thm:kummer-hp-plus-minus}.
\end{proof}

\begin{cor}\label{cor:apareamiento-kummer-m}
  Tenemos isomorfismos canónicos de $\LL(G)$-módulos compactos
  \begin{align*}
    \Gal(M_\infty/E_\infty) &\isomarrow \Hom_{\Zp}(C^\infty,\mu_{p^\infty}),\\
    \Gal(M_\infty/E_\infty)^+ &\isomarrow \Hom_{\Zp}((C^\infty)^-,\mu_{p^\infty})
  \end{align*}
  donde la acción de $\LL(G)$ al lado derecho es la de la
  \cref{rem:hom-de-representaciones}, es decir inducida por
  \[ (gf)(c)=g(f(g^{-1}c)) \quad\text{para }g\in G,\ f\in\Hom_{\Zp}(C^\infty,\mu_{p^\infty}),
   \ c\in C^\infty. \]
\end{cor}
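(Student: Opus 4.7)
The plan is to derive this corollary directly from the perfect pairings established in Proposition \ref{prop:apareamiento-kummer-m} by the standard recipe that converts a perfect pairing into an isomorphism of a module with the Hom out of its partner. Concretely, I would define
\[ \Phi\colon \Gal(M_\infty/E_\infty)\rightarrow\Hom_{\Zp}(C^\infty,\mu_{p^\infty}),\quad \sigma\mapsto\langle\sigma,\cdot\rangle, \]
and verify that $\Phi$ is an isomorphism of topological $\Zp$-modules. The bijectivity is immediate from the definition of ``perfect pairing'' in the Proposition (together with the fact that $C^\infty$ is a discrete $p$-primary torsion module and $\mu_{p^\infty}\isom\Qp/\Zp$, so $\Hom_{\Zp}$ equipped with the compact-open topology is exactly the Pontryagin dual $(C^\infty)^\vee$ in the sense of \cref{thm:pontryagin}, hence compact), and continuity follows because the pairing itself was shown to be continuous.

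The key step is to check that $\Phi$ is $\LL(G)$-linear. Since $G$ generates $\LL(G)$ topologically, by continuity it suffices to verify $G$-equivariance. Given $g\in G$, $\sigma\in\Gal(M_\infty/E_\infty)$ and $c\in C^\infty$, using the equivariance established in Proposition \ref{prop:apareamiento-kummer-m} we compute
\[ \Phi(g\sigma g^{-1})(c)=\langle g\sigma g^{-1},g(g^{-1}c)\rangle=g\langle\sigma,g^{-1}c\rangle=g\cdot\Phi(\sigma)(g^{-1}c)=(g\cdot\Phi(\sigma))(c), \]
where the last equality is precisely the definition of the $G$-action on $\Hom_{\Zp}(C^\infty,\mu_{p^\infty})$ from Remark \ref{rem:hom-de-representaciones}. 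This shows $\Phi$ is $G$-equivariant and hence $\LL(G)$-linear.

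For the second isomorphism, I would simply apply the same recipe to the restricted perfect pairing $\Gal(M_\infty/E_\infty)^+\times(C^\infty)^-\rightarrow\mu_{p^\infty}$ already given at the end of Proposition \ref{prop:apareamiento-kummer-m}. The only small thing to verify is compatibility with the idempotents $e_{\pm}=\frac{1\pm\mathbf c}{2}\in\Zp[\langle\mathbf c\rangle]$ (here $p$ is odd, so these exist): on the Hom side the action of $\mathbf c$ is $(\mathbf c f)(c)=\mathbf c(f(\mathbf c^{-1}c))$, and since $\mathbf c$ acts on $\mu_{p^\infty}$ by inversion (i.e.\ by $-1$ additively), an easy computation shows $(\mathbf cf)(c)=-f(\mathbf c c)$, so that $f\in\Hom_{\Zp}(C^\infty,\mu_{p^\infty})^+$ iff $f$ vanishes on $(C^\infty)^+$ iff $f$ factors through $(C^\infty)^-$. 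This identifies the $+$-parts on both sides and upgrades the restriction of $\Phi$ to the desired isomorphism. I do not expect any serious obstacle; the entire argument is essentially a repackaging of Proposition \ref{prop:apareamiento-kummer-m} via Pontryagin duality, with only the bookkeeping of the $G$-action on Hom requiring care.
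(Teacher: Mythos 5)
Your proposal is correct and follows essentially the same route as the paper, whose proof simply cites the perfect equivariant pairing of the \cref{prop:apareamiento-kummer-m} together with the \cref{rem:kummer-apareamiento-equivariante} (the standard conversion of an equivariant perfect pairing into isomorphisms onto $\Hom$ with the twisted action of the \cref{rem:hom-de-representaciones}). Your explicit check of $G$-equivariance of $\sigma\mapsto\left<\sigma,\cdot\right>$ and of the identification of the $+$-parts via the idempotents is exactly the bookkeeping the paper leaves implicit.
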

\begin{proof}
  Esto resulta de la \cref{prop:apareamiento-kummer-m} y la
  \cref{rem:kummer-apareamiento-equivariante}.
\end{proof}

Como $\Zp$-módulo, $\mu_{p^\infty}$ es isomorfo a $\Qp/\Zp$ (aunque no canónicamente). Es
decir, en el \cref{cor:apareamiento-kummer-m} podríamos remplazar $\mu_{p^\infty}$ por
$\Qp/\Zp$, que es útil porque los morfismos a $\Qp/\Zp$ son el dual de Pontryagin de la
\cref{defi:pontryagin}. Sin embargo, si hacemos eso tenemos que tener cuidado con la acción
de $G$ porque en el dual de Pontryagin introdujimos una acción un poco diferente en la
\cref{defi:mvee-alpha}~\ref{defi:ll-mod-dual}. En el lema siguiente nos cuidamos de esta
diferencia.

\begin{lem}\label{lem:isom-dual-nu}
  Sea $M$ un $\LL(G)$-módulo tal que $\Hom_{\Zp}(M,\mu_{p^\infty})$ nuevamente es un
  $\LL(G)$-módulo como en la \cref{rem:hom-de-representaciones}. De la
  \cref{defi:chanfle-nu} tenemos entonces el $\LL(G)$-módulo
  $\Hom_{\Zp}(M,\mu_{p^\infty})^\nu$, que como conjunto es lo mismo que
  $\Hom_{\Zp}(M,\mu_{p^\infty})$. Por otro lado, en $\Hom_{\Zp}(M,\Qp/\Zp)$ tenemos la
  estructura como $\LL(G)$-módulo introducida en la
  \cref{defi:mvee-alpha}~\ref{defi:ll-mod-dual}.

  Fijamos un isomorfismo de $\Zp$-módulos $\psi\colon\mu_{p^\infty}\isomarrow\Qp/\Zp$.
  Entonces la asociación
  \[ \Hom_{\Zp}(M,\mu_{p^\infty})^\nu \rightarrow \Hom_{\Zp}(M,\Qp/\Zp), \quad
    f\mapsto\psi\circ f \]
  es un isomorfismo de $\LL(G)$-módulos.
\end{lem}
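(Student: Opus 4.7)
La estrategia consistiría en separar claramente las dos cosas que hay que verificar: que $\phi\colon f\mapsto\psi\circ f$ es una biyección $\Zp$-lineal, y que es $\LL(G)$-equivariante. Lo primero sería inmediato porque $\psi$ es un isomorfismo de $\Zp$-módulos, así que la post-composición con $\psi$ induce una biyección $\Zp$-lineal entre los conjuntos subyacentes $\Hom_\Zp(M,\mu_{p^\infty})$ y $\Hom_\Zp(M,\Qp/\Zp)$, con inverso dado por post-composición con $\psi^{-1}$.

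Para la $\LL(G)$-equivariancia, reduciría primero al caso de la acción de $G\subseteq\LL(G)^\times$. Esto es lícito porque, por la \cref{prop:propiedad-universal-anillo-de-grupos}, $\LL(G)$ está generada topológicamente como $\Zp$-álgebra por $G$, y como todas las acciones consideradas son $\Zp$-lineales y continuas, la equivariancia bajo $G$ junto con la $\Zp$-linealidad de $\phi$ implicaría la equivariancia bajo todo $\LL(G)$.

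El cálculo clave consistiría en desenrollar la acción chanfleada. Fijemos $g\in G$ y $f\in\Hom_\Zp(M,\mu_{p^\infty})$. Por la \cref{defi:chanfle-nu}, la acción $g\cdot_\nu f$ en $\Hom_\Zp(M,\mu_{p^\infty})^\nu$ es la acción del elemento $\nu(g)=\kappa(g)g^{-1}\in\LL(G)$ usando la estructura original del \cref{rem:hom-de-representaciones}. Aplicándola obtendríamos
\[ (g\cdot_\nu f)(m) = \kappa(g)\cdot\bigl(g^{-1}\cdot f\bigr)(m) = \kappa(g)\cdot g^{-1}\bigl(f(gm)\bigr), \]
donde el $g^{-1}$ externo actúa en $\mu_{p^\infty}$ como elevación a la potencia $\kappa(g^{-1})=\kappa(g)^{-1}$, es decir como multiplicación por el escalar $\kappa(g)^{-1}\in\Zp$. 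Como esta conmuta con la multiplicación por el escalar $\kappa(g)$, ambos factores se cancelan y queda $(g\cdot_\nu f)(m)=f(gm)$. Por otra parte, en $\Hom_\Zp(M,\Qp/\Zp)$ la acción de la \cref{defi:mvee-alpha}~\ref{defi:ll-mod-dual} está definida precisamente por $(g\cdot h)(m)=h(gm)$. Combinando estos dos hechos con la aplicación de $\psi$ concluiríamos que $\phi(g\cdot_\nu f)(m)=\psi(f(gm))=\phi(f)(gm)=(g\cdot\phi(f))(m)$.

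El único posible obstáculo sería la contabilidad cuidadosa de las tres estructuras entrelazadas en $\mu_{p^\infty}$: su estructura de $\Zp$-módulo (por exponenciación), la acción de $G$ vía el carácter ciclotómico $\kappa$, y la interacción con las dos estructuras distintas de $\LL(G)$-módulo en $\Hom_\Zp(M,-)$. El cálculo revela por qué la definición $\nu(g)=\kappa(g)g^{-1}$ es la correcta: cancela exactamente la acción ciclotómica de $G$ en el codominio $\mu_{p^\infty}$, convirtiendo la estructura del \cref{rem:hom-de-representaciones} en la estructura \enquote{sin twist} de la \cref{defi:mvee-alpha}, y es por eso que un isomorfismo meramente $\Zp$-lineal $\psi$ se levanta a un isomorfismo $\LL(G)$-lineal tras el chanfle por $\nu$.
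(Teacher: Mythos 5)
Tu propuesta es correcta y sigue esencialmente el mismo camino que el texto: el paper sólo observa que la asociación es biyectiva, $\Zp$-lineal y continua y deja la compatibilidad con la acción de $G$ como ejercicio, que es exactamente el cálculo que tú desarrollas, reduciendo por densidad de $\Zp[G]$ y verificando que $\nu(g)=\kappa(g)g^{-1}$ cancela la acción ciclotómica en $\mu_{p^\infty}$ para dar $(g\cdot_\nu f)(m)=f(gm)$, que coincide con la acción de la \cref{defi:mvee-alpha}. El cálculo está bien hecho y completa correctamente el detalle que el paper omite.
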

\begin{proof}
  Es claro que la asociación es biyectiva, $\Zp$-lineal y continua. Hay que verificar que es
  compatible con las acciones de $G$ en ambos lados. Esto es un cálculo fácil, aunque un
  poco espinoso, que dejamos como ejercicio.
\end{proof}

Aplicando este lema a nuestra situación obtenemos lo siguiente.

\begin{cor}\label{cor:isom-dual-nu}
    Existen isomorfismos (no canónicos) de $\LL(G)$-módulos compactos
    \begin{align*}
      \Gal(M_\infty/E_\infty)^\nu &\isom (C^\infty)^\vee, \\
      (\Gal(M_\infty/E_\infty)^+)^\nu &\isom ((C^\infty)^-)^\vee.
    \end{align*}
\end{cor}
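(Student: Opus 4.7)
El plan es combinar directamente los dos resultados inmediatamente anteriores: el Corolario \ref{cor:apareamiento-kummer-m} (que es la versión equivariante del apareamiento de Kummer) y el Lema \ref{lem:isom-dual-nu} (que relaciona el chanfle $(-)^\nu$ con el dual de Pontryagin). La no-canonicidad de los isomorfismos provendrá únicamente de la elección de un isomorfismo $\psi\colon\mu_{p^\infty}\isomarrow\Qp/\Zp$ de $\Zp$-módulos.

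Concretamente, comenzaré tomando los isomorfismos canónicos de $\LL(G)$-módulos compactos
\[ \Gal(M_\infty/E_\infty) \isomarrow \Hom_{\Zp}(C^\infty,\mu_{p^\infty}),\qquad \Gal(M_\infty/E_\infty)^+ \isomarrow \Hom_{\Zp}((C^\infty)^-,\mu_{p^\infty}) \]
provistos por el Corolario \ref{cor:apareamiento-kummer-m}. Como la asociación $M\mapsto M^\nu$ es funtorial (es la restricción de escalares a lo largo del morfismo $\nu\colon\LL(G)\rightarrow\LL(G)$, es decir, equivalente a cambiar la acción sin cambiar el conjunto subyacente), cualquier isomorfismo de $\LL(G)$-módulos induce un isomorfismo entre sus chanfles. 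Aplicando esto obtengo
\[ \Gal(M_\infty/E_\infty)^\nu \isomarrow \Hom_{\Zp}(C^\infty,\mu_{p^\infty})^\nu,\qquad (\Gal(M_\infty/E_\infty)^+)^\nu \isomarrow \Hom_{\Zp}((C^\infty)^-,\mu_{p^\infty})^\nu. \]

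Para concluir, fijaré un isomorfismo de $\Zp$-módulos $\psi\colon\mu_{p^\infty}\isomarrow\Qp/\Zp$ (que existe pero no es canónico) y aplicaré el Lema \ref{lem:isom-dual-nu} con $M=C^\infty$ y $M=(C^\infty)^-$ respectivamente. Esto produce isomorfismos de $\LL(G)$-módulos
\[ \Hom_{\Zp}(C^\infty,\mu_{p^\infty})^\nu \isomarrow \Hom_{\Zp}(C^\infty,\Qp/\Zp) = (C^\infty)^\vee \]
y análogamente para $(C^\infty)^-$. Componiendo las dos etapas se obtienen los isomorfismos anunciados.

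En esta demostración no hay ninguna dificultad técnica real: todo el trabajo sustantivo ya está hecho en la Proposición \ref{prop:apareamiento-kummer-m} (teoría de Kummer y equivarianza vía la Proposición \ref{prop:kummer-apareamiento-equivariante}) y en el Lema \ref{lem:isom-dual-nu} (el cálculo, un poco espinoso pero directo, de que el chanfle por $\nu$ compensa exactamente la diferencia entre las dos convenciones para la acción de $G$ sobre $\Hom_{\Zp}(M,-)$). Cabe remarcar únicamente que la $(-)^+$ y la $(-)^-$ se intercambian bajo la dualidad porque la conjugación compleja actúa por $-1$ sobre $\mu_{p^\infty}$ (como ya se usó implícitamente en \eqref{eqn:conj-invariant}), de modo que el dual de Pontryagin de $(C^\infty)^-$ hereda la acción $+$, consistentemente con la parte $+$ del lado izquierdo.
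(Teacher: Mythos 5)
Tu propuesta es correcta y coincide esencialmente con la del texto: el corolario se obtiene, tal como lo haces, aplicando el \cref{lem:isom-dual-nu} (con un isomorfismo fijo $\psi\colon\mu_{p^\infty}\isomarrow\Qp/\Zp$, única fuente de la no-canonicidad) a los isomorfismos canónicos del \cref{cor:apareamiento-kummer-m}, usando la funtorialidad del chanfle $(-)^\nu$.
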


Ahora usamos la teoría de los adjuntos de Iwasawa explicada en la \cref{sec:adjuntos} (que
involucra duales de Pontryagin). Aplicándola a la situación de nuestro interés obtenemos el
resultado siguiente.

\begin{prop}\label{prop:x-c-pseudo-iso}
  Existe un isomorfismo canónico de $\LL(G)$-módulos
  \[ \alpha(X_\infty)\isom{(C^\infty)}^\vee. \]
  En particular, existe un pseudo-isomorfismo
  \[ X_\infty\sim {(C^\infty)}^\vee. \]
\end{prop}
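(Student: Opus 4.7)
La demostración procede identificando $\alpha(X_\infty)$ con $(C^\infty)^\vee$ mediante la dualidad de Pontryagin combinada con el teorema de control. Por la \cref{defi:mvee-alpha} y el \cref{lem:alpha-no-depende-de-m}, tenemos
\[ \alpha(X_\infty) = \left(\varinjlim_{r \ge m} X_\infty/\nu_{r,m}X_\infty\right)^\vee \]
para $m$ suficientemente grande, donde los mapeos de transición en el colímite son la multiplicación por $\Phi_{r+1}$. Ya que $C^\infty = \varinjlim_r C_r$, aplicar la dualidad de Pontryagin (\cref{thm:pontryagin}) reduce el enunciado a construir un isomorfismo canónico de $\LL(G)$-módulos discretos
\[ \varinjlim_{r \ge m} X_\infty/\nu_{r,m}X_\infty \isom C^\infty. \]

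El teorema de control (\cref{cor:iwasawa-control-x-g}) provee el isomorfismo fundamental $X_\infty/w_r X_\infty \isom C_{r+1}$. Combinando esto con la multiplicación por $\omega_m$, obtenemos un mapeo natural $X_\infty/\nu_{r,m}X_\infty \to X_\infty/w_r X_\infty \isom C_{r+1}$ cuyo núcleo y conúcleo son controlados por $X_\infty[\omega_m]$. La verificación clave es que el mapeo de transición a la izquierda (multiplicación por $\Phi_{r+1}$) corresponde, bajo estas identificaciones, al mapeo natural $C_{r+1} \to C_{r+2}$ inducido por extensión de ideales $\mathfrak{a} \mapsto \mathfrak{a}\O_{K_{r+2}}$. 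Esta compatibilidad refleja el hecho clásico de que el isomorfismo de Artin (\cref{thm:campo-de-hilbert}) entrelaza la acción del álgebra de Iwasawa con los mapeos naturales entre grupos de clases en la torre. Al pasar al colímite, las contribuciones finitas provenientes de $X_\infty[\omega_m]$ se absorben apropiadamente, produciendo el isomorfismo canónico deseado de los colímites.

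Aplicar la dualidad de Pontryagin da el isomorfismo canónico $\alpha(X_\infty) \isom (C^\infty)^\vee$. La afirmación \enquote{en particular} resulta inmediatamente: el \cref{thm:iwasawa-adj} (o el \cref{cor:iwasawa-adj} en la versión para $\LL(G)$-módulos) provee un pseudo-isomorfismo $\alpha(X_\infty) \sim X_\infty$, que combinado con el isomorfismo canónico produce $X_\infty \sim (C^\infty)^\vee$. El obstáculo principal es la verificación de la compatibilidad entre las transiciones por $\Phi_{r+1}$ en el lado del álgebra de Iwasawa y las transiciones por extensión de ideales en el lado de los grupos de clases; esto requiere un seguimiento cuidadoso de cómo el isomorfismo de Artin se comporta respecto a los sistemas proyectivo e inyectivo que definen respectivamente a $X_\infty$ y a $C^\infty$, y constituye el contenido técnico principal de la proposición.
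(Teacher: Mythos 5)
Tu esquema general coincide en espíritu con el del texto: desenrollar la definición de $\alpha(X_\infty)$, identificar los cocientes mediante el teorema de control (\cref{cor:iwasawa-control-x-g}), dualizar con Pontryagin y obtener el pseudo-isomorfismo final del \cref{cor:iwasawa-adj}. Sin embargo, hay un hueco genuino en el paso intermedio. Trabajas con un $m$ \enquote{suficientemente grande} arbitrario y comparas $X_\infty/\nu_{r,m}X_\infty$ con $X_\infty/w_rX_\infty\isom C_{r+1}$ mediante la multiplicación por $\omega_m$, con núcleos y conúcleos controlados por $X_\infty[\omega_m]$, y afirmas que esas contribuciones finitas \enquote{se absorben apropiadamente} en el colímite. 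Tal como está escrito, eso a lo más daría un pseudo-isomorfismo entre los colímites, no el isomorfismo canónico que enuncia la proposición: para que los errores desaparezcan hay que demostrar que sus colímites son cero, usando que las transiciones son multiplicaciones por los $\Phi_s$ y que productos $\Phi_s\dotsm\Phi_t$ anulan cualquier módulo finito (\cref{lem:phi-s-anula-a-modulo-finito}), exactamente como se hace en la demostración del \cref{lem:alpha-no-depende-de-m}. Ese argumento no aparece en tu propuesta; sin él, el paso central queda sin justificar.

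La demostración del texto evita por completo los términos de error con una observación que te falta: como $X_\infty/w_rX_\infty\isom C_{r+1}$ es finito para \emph{todo} $r\ge0$ por el control, el \cref{lem:m-cociente-finito-coprimo} garantiza que los $\omega_r$ (y por lo tanto los $\nu_{r,m}$) son coprimos al ideal característico ya desde $m=0$; con esa elección los cocientes que aparecen en la \cref{defi:alpha-g} se identifican directamente, vía el control, con los grupos finitos $C_{r+1}$, y se obtiene $\alpha(X_\infty)\isom(\varinjlim_rC_r)^\vee$ sin comparación alguna ni módulos de error. Por otra parte, señalas con razón que hay que verificar que las transiciones por multiplicación por $\Phi_{r+1}$ corresponden, bajo el isomorfismo de Artin, a los mapeos de extensión de ideales $C_{r+1}\to C_{r+2}$ que definen $C^\infty$; ese punto es real (y el texto lo pasa por alto), pero tú tampoco lo demuestras, sólo lo declaras como el contenido técnico principal. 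La parte final, que combina el isomorfismo con \cref{cor:iwasawa-adj} para concluir $X_\infty\sim(C^\infty)^\vee$, sí está bien y coincide con el texto.
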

\begin{proof}
  Por \cref{defi:alpha-g},
  $\alpha(X_\infty)={(\varinjlim_{r\ge m}X_\infty/\frac{w_r}{w_m}X_\infty)}^\vee$ con
  $m\in\Ncero$ suficientemente grande y $w_r\in\LL(G)$ siendo el elemento que corresponde a
  $(\omega_r,\dotsc,\omega_r)\in\LL(\Gamma)^{p-1}$ bajo la identificación
  $\LL(G)\isom\LL(\Gamma)^{p-1}$ del \cref{cor:descomposicion-lambda}.  Aquí, \enquote{$m$
    suficientemente grande} significa que para $r\ge m$ el polinomio $\omega_r$ es coprimo
  al polinomio característico de $e_iX_\infty$ para $i=1,\dotsc,p-1$, donde $e_i$ es el
  idempotente para el carácter $\omega^i$ (\cref{lem:idempotentes}). El
  \cref{lem:m-cociente-finito-coprimo} dice que esto es equivalente a
  $e_iX_\infty/\omega_re_iX_\infty$ siendo finito para cada $i$, es decir a
  $X_\infty/w_rX_\infty$ siendo finito. Pero el \cref{cor:iwasawa-control-x-g} dice que
  $X_\infty/w_rX_\infty=C_{r+1}$, el cual es finito para cada $r\in\Ncero$. Por eso podemos usar
  $m=0$ y obtenemos $\alpha(X_\infty)=(\varinjlim_rC_r)^\vee$, que es la primera
  afirmación. La segunda resulta de la primera y el \cref{cor:iwasawa-adj}, que dice que un
  módulo es pseudo-isomorfismo a su adjunto.
\end{proof}

Ahora estamos listos para demostrar la equivalencia de las formulaciones.

\begin{thm}\label{prop:equivalencia-x-y}
  Existe un pseudo-isomorfismo ${(Y_\infty^+)}^\nu\sim X^-_\infty$. En particular, las dos
  afirmaciones del \cref{thm:conjetura-principal} son equivalentes.
\end{thm}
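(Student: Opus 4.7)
La estrategia consiste en dos pasos independientes: primero establecer el pseudo-isomorfismo ${(Y_\infty^+)}^\nu\sim X^-_\infty$ encadenando la dualidad de Kummer con el adjunto de Iwasawa, y segundo deducir la equivalencia de las dos formulaciones de la \cref{thm:conjetura-principal} observando cómo se comportan los ideales característicos bajo $\nu$.

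Para el primer paso, combinaré los resultados ya obtenidos en esta sección siguiendo una cadena de $\LL(G)$-módulos. La idea central es notar que tanto $(\Gal(M_\infty/E_\infty))^\nu$ como $\alpha(X_\infty)$ admiten una descripción canónica en términos de $(C^\infty)^\vee$: por el \cref{cor:isom-dual-nu} tenemos $(\Gal(M_\infty/E_\infty)^+)^\nu\isom ((C^\infty)^-)^\vee$, mientras que la \cref{prop:x-c-pseudo-iso} nos da $\alpha(X_\infty)\isom (C^\infty)^\vee$. Antes de encadenarlos, tengo que verificar con cuidado cómo interactúan el funtor $\alpha$, el chanfle $\nu$ y la dualidad $(\cdot)^\vee$ con la descomposición $\pm$ por la conjugación compleja $\mathbf c$: porque $\alpha$ se define componente a componente en la descomposición de \cref{cor:descomposicion-lambda}, tenemos $\alpha(X_\infty^-)=\alpha(X_\infty)^-$; porque $(\mathbf c f)(m)=f(\mathbf c m)$ en el dual de Pontryagin (recuérdese la \cref{defi:mvee-alpha}) y $p\neq 2$, obtengo $(M^\vee)^\pm=(M^\pm)^\vee$; y porque $\kappa(\mathbf c)=-1$ se tiene $(M^+)^\nu=(M^\nu)^-$. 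Combinando estas identidades,
\[ (\Gal(M_\infty/E_\infty)^+)^\nu\isom ((C^\infty)^-)^\vee\isom \alpha(X_\infty)^-=\alpha(X_\infty^-)\sim X_\infty^-, \]
donde la última pseudo-equivalencia es el \cref{cor:iwasawa-adj}.

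Restará entonces comparar $\Gal(M_\infty/E_\infty)^+$ con $Y_\infty^+$. La sucesión exacta
\[ 0\rightarrow \Gal(M_\infty/E_\infty)\rightarrow Y_\infty\rightarrow \Gal(E_\infty/K_\infty)\rightarrow 0 \]
restringida a las partes $+$ sigue siendo exacta (pues $p\neq 2$), así que basta demostrar que $\Gal(E_\infty/K_\infty)^+$ es pseudo-nulo. Aquí aplicaré otra vez la teoría de Kummer a la extensión $E_\infty/K_\infty$: por definición $E_\infty$ se obtiene adjuntando raíces $p^r$-ésimas de las unidades $\bigcup_s\O_{K_s}^\times$, así que un apareamiento de Kummer análogo al de la \cref{prop:apareamiento-kummer-m} expresa $\Gal(E_\infty/K_\infty)$ como el dual (con valores en $\mu_{p^\infty}$) del límite proyectivo de unidades módulo potencias $p^r$-ésimas. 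Por la equivariancia del apareamiento y el hecho de que $\mathbf c$ actúa por $-1$ en $\mu_{p^\infty}$, la parte $+$ corresponde a la parte $-$ de las unidades. Pero la \cref{prop:o-es-mu-o-mas} junto con el teorema de las unidades de Dirichlet muestra que ${(\O_{K_r}^\times)}^-$ es esencialmente el grupo cíclico $\mu_{p^r}$ (módulo torsión acotada), por lo que el límite proyectivo correspondiente es $\Zp$-libre de rango $1$ y su dualización con valores en $\mu_{p^\infty}$ resulta anulada por dos elementos coprimos de $\LL(G)$, siendo por lo tanto pseudo-nulo según el \cref{ejer:fgpseudonul}. Éste es el paso que preveo más delicado, pues requiere ser cuidadoso con la compatibilidad de los apareamientos y la identificación concreta de las partes $\pm$ del sistema de unidades.

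Para la equivalencia de las dos formulaciones de la Conjetura Principal, usaré que el ideal característico es un invariante de la clase de pseudo-isomorfismo, y que $\charideal_{\LL(G)}(M^\nu)=\nu(\charideal_{\LL(G)}(M))$ (esto resulta directamente del \cref{ex:ll-f-nu} aplicado al teorema de estructura). Por el pseudo-isomorfismo recién establecido,
\[ \charideal_{\LL(G)^-}(X_\infty^-)=\nu\bigl(\charideal_{\LL(G)^+}(Y_\infty^+)\bigr). \]
La tarea restante es verificar que las funciones $L$ $p$-ádicas se transforman de la manera correcta bajo $\nu$, concretamente que $\nu(I\lambda_{\mathbf 1})=(h_1\mu_{\mathbf 1})$. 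Esto lo haré comparando las fórmulas de interpolación: aplicando $\nu$ al carácter $\psi^{-1}\kappa^{1-n}$ (que aparece en el \cref{thm:palf-stickelberger-trivial}) se obtiene esencialmente un carácter de la forma $\psi\kappa^n$ (que aparece en el \cref{thm:lambda-trivial}); por la unicidad de estos elementos dada por la \cref{prop:unicidad-palf} y el \cref{thm:lambda-trivial}, ambas expresiones definen el mismo ideal salvo un chanfle por $\nu$. Sustituyendo en la igualdad de ideales característicos obtengo la equivalencia buscada entre \ref{thm:conjetura-principal:x} y \ref{thm:conjetura-principal:y}.
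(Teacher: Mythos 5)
Tu estrategia global es la misma que la del texto: la cadena $(\Gal(M_\infty/E_\infty)^+)^\nu\isom((C^\infty)^-)^\vee\isom((C^\infty)^\vee)^-\sim X_\infty^-$ usando la \cref{prop:x-c-pseudo-iso}, el \cref{cor:isom-dual-nu} y el \cref{cor:iwasawa-adj} (con las compatibilidades de $\alpha$, $(\cdot)^\vee$ y $\nu$ con la descomposición $\pm$, que están bien), la sucesión exacta $1\to\Gal(M_\infty/E_\infty)\to Y_\infty\to\Gal(E_\infty/K_\infty)\to1$, y para la equivalencia de las dos formulaciones la identidad $\nu(I\lambda_{\mathbf 1})=(h_1\mu_{\mathbf 1})$ (\cref{ejer:nu-mu-lambda}) junto con $\charideal(M^\nu)=\nu(\charideal(M))$. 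Todo eso coincide con la demostración del texto y es correcto.

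El hueco genuino está exactamente en el paso que tú mismo señalas como delicado: tu justificación de que $\Gal(E_\infty/K_\infty)^+$ es pseudo-nulo no funciona. Dices que esta parte se aparea con la parte $-$ de las unidades, que es \enquote{esencialmente} $\mu_{p^\infty}$, que el módulo correspondiente es $\Zp$-libre de rango $1$, y que \enquote{su dualización resulta anulada por dos elementos coprimos de $\LL(G)$, siendo por lo tanto pseudo-nula}. Pero no exhibes tales anuladores, y de hecho no existen: un módulo de $\Zp$-rango $1$ (o su dual, por ejemplo $\Hom_{\Zp}(\mu_{p^\infty},\mu_{p^\infty})\isom\Zp$) no es pseudo-nulo sobre el álgebra de Iwasawa, pues pseudo-nulo significa finito; si $\Gal(E_\infty/K_\infty)^+$ realmente tuviera rango $1$, la conclusión del teorema fallaría, porque $Y_\infty^+$ y $\Gal(M_\infty/E_\infty)^+$ diferirían en un módulo de rango positivo y los ideales característicos cambiarían. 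Lo que salva la situación —y es el punto que te falta— es que el apareamiento de Kummer de $\Gal(E_\infty/K_\infty)$ contra $(\O_{K_\infty}^\times)^-=\mu_{p^\infty}$ es \emph{trivial}: las raíces $p^r$-ésimas de estas unidades ya están en $K_\infty$ (equivalentemente, $\mu_{p^\infty}$ es $p$-divisible en $K_\infty^\times$, así que muere en el radical de Kummer), de modo que $\Gal(E_\infty/K_\infty)$ actúa trivialmente sobre ellas. Combinando esto con la trivialidad del apareamiento en la parte $+\times+$ (mismo argumento de paridad que usas, con $p\neq2$) y con que $(\O_{K_\infty}^\times)^+\oplus(\O_{K_\infty}^\times)^-$ tiene índice $2$, primo a $p$, la perfección del apareamiento, como en la \cref{prop:apareamiento-kummer-m}, fuerza $\Gal(E_\infty/K_\infty)^+=0$ —no sólo pseudo-nulo— y de ahí $\Gal(M_\infty/E_\infty)^+=Y_\infty^+$ exactamente, lo que cierra la demostración tal como en el texto.
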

\begin{proof}
  El pseudo-isomorfismo de la \cref{prop:x-c-pseudo-iso} es $\LL(G)$-lineal, pues es compatible
  con la acción de la conjugación compleja. Además es fácil ver que
  $((C^\infty)^\vee)^-=((C^\infty)^-)^\vee$. Combinando esto con la
  \cref{prop:x-c-pseudo-iso} obtenemos un pseudo-isomorfismo
  $X_\infty^-\sim(\Gal(M_\infty/E_\infty)^+)^\nu$.
  
  Para lo que falta usamos la sucesión exacta
  \begin{equation*}
    1 \rightarrow \Gal(M_\infty/E_\infty)\rightarrow Y_\infty \rightarrow
    \Gal(E_\infty/K_\infty)\rightarrow 1.
  \end{equation*}
  Vamos a demostrar que ${\Gal(E_\infty/K_\infty)}^+=0$, que por la sucesión exacta implica
  $\Gal(M_\infty/E_\infty)^+=Y_\infty^+$, con lo que la demostración estará completa.

  Para esto usamos otra vez la teoría de Kummer. De manera similar a anteriormente, tomando
  el (co)límite de los apareamientos de Kummer resulta un apareamiento perfecto de grupos
  topológicos
  \begin{equation*}
    {\Gal(E_\infty/K_\infty)}\times\O_{K_\infty}^\times\rightarrow\mu_{p^{\infty}}
  \end{equation*}
  que también tiene la propiedad análoga a \eqref{eqn:conj-invariant} (omitimos los detalles
  aquí). Con el mismo argumento que arriba, usando que $\Gal(E_\infty/K_\infty)$ es un grupo
  pro-$p$, vemos que la restricción a
  ${\Gal(E_\infty/K_\infty)}^+\times{(\O^\times_{K_\infty})}^+$ es trivial. La restricción a
  ${\Gal(E_\infty/K_\infty)}\times{(\O^\times_{K_\infty})}^-$ también es trivial porque
  ${(\O^\times_{K_\infty})}^-=\mu_{p^\infty}\subseteq K_\infty$, en que
  $\Gal(E_\infty/K_\infty)$ actúa trivialmente. Es decir, la restricción del apareamiento a
  \begin{equation*}
    {\Gal(E_\infty/K_\infty)}^+\times\big({(\O^\times_{K_\infty})}^+\oplus{(\O^\times_{K_\infty})}^-\big)
  \end{equation*}
  es trivial. Pero
  ${(\O^\times_{K_\infty})}^+\oplus{(\O^\times_{K_\infty})}^-\subseteq\O^\times_{K_\infty}$
  es un subgrupo de índice $2$, y porque $\Gal(E_\infty/K_\infty)$ es pro-$p$ eso implica
  que el apareamiento es trivial en todo
  ${\Gal(E_\infty/K_\infty)}^+\times\O_{K_\infty}^\times$. Porque es perfecto en
  ${\Gal(E_\infty/K_\infty)}\times\O_{K_\infty}^\times$ obtenemos que
  ${\Gal(E_\infty/K_\infty)}^+=0$, lo que termina la demostración.
\end{proof}

\begin{cor}\label{cor:y-mas-torsion}
  El $\LL(G)$-módulo $Y_\infty^+$ es de torsión.
\end{cor}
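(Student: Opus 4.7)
El plan es deducir este corolario directamente del pseudo-isomorfismo ${(Y_\infty^+)}^\nu\sim X_\infty^-$ establecido en la \cref{prop:equivalencia-x-y}, combinándolo con lo que ya sabemos sobre $X_\infty$.

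Primero observaría que $X_\infty^-$ es de torsión como $\LL(G)$-módulo: según el \cref{cor:x-noetheriano-torsion}, $X_\infty$ es noetheriano y de torsión, y como $X_\infty^-=e^-X_\infty$ es un sumando directo donde $e^-=\frac{1-\mathbf c}2$ es el idempotente asociado a la conjugación compleja (\cref{lem:idempotentes}), esta propiedad se hereda.

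Segundo, hay que ver que ser de torsión se preserva bajo pseudo-isomorfismos. Esto es claro porque por la \cref{prop:Xnoettorsiirgfinsiiacot} (aplicada componente por componente según la descomposición del \cref{cor:descomposicion-lambda}) un $\LL(G)$-módulo noetheriano es de torsión si y sólo si tiene $\O$-rango finito; y dado un pseudo-isomorfismo con núcleo y conúcleo finitos, los $\O$-rangos de ambos módulos coinciden. Por eso ${(Y_\infty^+)}^\nu$ es de torsión.

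Finalmente, hay que de-twistar la acción. Como $\nu\colon\LL(G)\rightarrow\LL(G)$ es un automorfismo de anillos (de hecho una involución), un elemento $m$ del conjunto subyacente de $M$ tiene anulador $\lambda\neq0$ para la estructura de $M^\nu$ si y sólo si $\nu(\lambda)\neq0$ lo anula para la estructura original de $M$; en otras palabras, $\Ann_{\LL(G)}(m\in M^\nu)=\nu(\Ann_{\LL(G)}(m\in M))$. Como $\nu$ es biyectiva, $M$ es de torsión si y sólo si $M^\nu$ lo es. Aplicándolo a $M=Y_\infty^+$, concluyo que $Y_\infty^+$ es de torsión. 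El argumento es enteramente formal; no veo ningún obstáculo real, sólo verificaciones rutinarias.
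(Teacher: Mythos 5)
Tu propuesta es correcta y sigue esencialmente el mismo camino que la demostración del texto: deducir el corolario del pseudo-isomorfismo ${(Y_\infty^+)}^\nu\sim X_\infty^-$ del \cref{prop:equivalencia-x-y}, usando que $X_\infty^-$ es de torsión y que esta propiedad es invariante bajo pseudo-isomorfismos y bajo el chanfle por $\nu$. Lo único que haces de más es detallar las verificaciones rutinarias (rango sobre $\O$, $\nu$ como involución) que el texto da por sabidas.
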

\begin{proof}
  Esto resulta del \cref{prop:equivalencia-x-y} porque ya sabemos que $X_\infty^-$ es de
  torsión, y del hecho de que la propiedad de ser de torsión es invariante bajo
  pseudo-isomorfismos y al aplicar $\nu$.
\end{proof}

\ejercicios

\begin{ejer}
  Demuestre que $\nu\colon\LL(G)\rightarrow\LL(G)$ es una involución, es decir $\nu\circ\nu=\id$.
\end{ejer}

\begin{ejer}\label{ejer:nu-mu-lambda}
  Demuestre que $\nu(\mu_{\mathbf 1})=\lambda_{\mathbf 1}$ y que $\nu(h_1)$ es un generador
  de $I$. ¿Qué significa esto para las funciones $L$ $p$-ádicas de un carácter de Dirichlet
  $\chi$ no trivial? Formule una versión de la existencia y la fórmula de interpolación para
  esta función $L$ $p$-ádica en el estilo del \cref{thm:lambda-trivial}.
\end{ejer}

\begin{ejer}
  Demuestre que para cada $\LL(G)$-módulo tenemos ${(M^\nu)}^\pm={(M^\mp)}^\nu$.
\end{ejer}

\begin{ejer}
  Verifique la afirmación del \cref{ex:ll-f-nu}.
\end{ejer}

\begin{ejer}
  Verifique que la asociación en el \cref{lem:isom-dual-nu} es compatible con la acción de $G$.
\end{ejer}

\section{Consecuencias de la Conjetura Principal}
\label{sec:consecuencias}

Aunque la Conjetura Principal conecta la función zeta de Riemann con los módulos de origen
aritmético, no es una afirmación fácil de concebir. Para exponer de una manera más concreta
la importancia de la Conjetura Principal aquí deducimos algunas implicaciones de ella, las
cuales ojalá ilustren su poder. No obstante, hay que reconocer que algunas de estas
implicaciones ya eran conocidas antes de la demostración de la Conjetura Principal. Los
resultados mas concretos que obtendremos conciernen el grupo de clases $C_1$ del campo
$K_1=\Q(\mu_p)$, el cual denotaremos en esta sección simplemente como $C$.

Primero queremos deducir el criterio de Kummer (\cref{thm:kummer-crit}) de la Conjetura
Principal. En este criterio aparece el enunciado de que algunos valores de la función zeta
sean divisibles por $p$. Si el ideal característico de un módulo es generado por una función
$L$ $p$-ádica, como dice la Conjetura Principal, y valores de esta función son unidades
$p$-ádicas, entonces ¿Qué significa esto para el módulo?

Empezamos con un lema que contesta esta pregunta en general, y luego lo aplicamos para
obtener el criterio de Kummer. En el lema sea $\LL(G)$ el álgebra de Iwasawa de $G$ con
coeficientes en $\O$, el anillo de enteros de una extensión finita de $\Qp$.

\begin{lem}\label{lem:wert-einheit-bedeutung}
  Sea $\mu\in\LL(G)$ y $X$ un $\LL(G)$-módulo noetheriano de torsión tal que
  $\charideal_{\LL(G)}(X)=(\mu)$. Sea $\psi\colon G\rightarrow\O^\times$ un carácter de la forma
  $\psi=\omega^i\kappa_0^s$ con $i\in\{1,\dotsc,p-1\}$ y $s\in\Zp$. Sea $e_i$ el idempotente
  asociado a $\omega^i$.
  Entonces  \[ \mu(\psi)\in\O^\times \iff e_iX\text{ es finito}. \]
  Si $X$ no tiene submódulos finitos no triviales entonces
  \[ \mu(\psi)\in\O^\times \iff e_iX=0. \] También tenemos afirmaciones análogas si $X$ es
  un módulo sobre $\LL(G)^\pm$ e $i$ es par o impar, respectivamente.
\end{lem}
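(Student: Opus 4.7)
The plan is to reduce everything to $\LL(\Gamma)$ via the decomposition $\LL(G) \isom \bigoplus_{i=1}^{p-1} e_{\omega^i}\LL(G)$ from \cref{cor:descomposicion-lambda}, and then exploit the structure theorem. Writing $\mu$ as a tuple $(\mu_1,\dots,\mu_{p-1}) \in \LL(\Gamma)^{p-1}$ under this decomposition, the very definition of the characteristic ideal for modules on $\LL^I(G)$ (\cref{defi:ideal-caracteristico-i}) yields $\charideal_{\LL(\Gamma)}(e_i X) = (\mu_i)$ component by component. On the other hand, \cref{prop:unidad-solo-depende-de-i} tells us that $\mu(\psi) \in \O^\times$ if and only if $\mu_i \in \LL(\Gamma)^\times$. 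So the first equivalence reduces to the purely algebraic statement that, for a noetherian torsion $\LL(\Gamma)$-module $Y = e_i X$ with $\charideal_{\LL(\Gamma)}(Y) = (\mu_i)$, we have $\mu_i \in \LL(\Gamma)^\times$ if and only if $Y$ is finite.

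For this reduction, I would argue with the structure theorem (\cref{thm:estructura}) applied to $Y$. If $\mu_i$ is a unit, then $(\mu_i) = \LL(\Gamma)$; but the generator of the characteristic ideal of an elementary module is, up to a unit, the product $\pi^\mu \prod_j P_j$ of its invariants. A product of such factors being a unit forces every factor to be trivial, so the elementary module associated to $Y$ is the zero module, i.e.\ $Y \sim 0$, which means $Y$ is pseudo-null and hence finite. Conversely, if $Y$ is finite, then $Y$ is pseudo-null, so $Y \sim 0$, and the uniqueness of the elementary module in the structure theorem forces $\charideal(Y) = \LL(\Gamma)$, giving that $\mu_i$ is a unit.

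The second statement then follows formally: $e_i X$ is a submodule of $X$ (the image of the idempotent $e_i$, equivalently the kernel of $1-e_i$), so if $X$ contains no nontrivial finite submodule and $e_i X$ is finite, then $e_i X = 0$. The final sentence about $\LL(G)^\pm$-modules is handled by the same reasoning, restricting the decomposition to the characters of appropriate parity, namely $i$ odd for the $-$ part and $i$ even for the $+$ part.

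There is no genuine obstacle in this proof; the only thing to be careful about is to invoke the right previous results in the right direction. The mild technicality is to double-check that the characteristic ideal computed componentwise in $\LL(\Gamma)^{p-1}$ really agrees with the ideal $(\mu) \subseteq \LL(G)$ under the isomorphism, but this is exactly how $\charideal_{\LL^I(G)}$ was set up in \cref{defi:ideal-caracteristico-i}.
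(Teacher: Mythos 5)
Tu propuesta es correcta y sigue esencialmente el mismo camino que la demostración del texto: ambas usan \eqref{eqn:mu-ramas} y la \cref{prop:unidad-solo-depende-de-i} para traducir $\mu(\psi)\in\O^\times$ en $\mu_i\in\LL(\Gamma)^\times$, y luego la cadena $\mu_i\in\LL(\Gamma)^\times \iff \charideal_{\LL(\Gamma)}(e_iX)=\LL(\Gamma) \iff e_iX\sim 0 \iff e_iX$ finito, deduciendo el resto de que $e_iX$ es un submódulo de $X$. Tu versión solo es más explícita en el paso del teorema de estructura (que el texto enuncia sin detallar), y esa justificación es correcta.
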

\begin{proof}
  Según la \cref{prop:unidad-solo-depende-de-i} y \eqref{eqn:mu-ramas} tenemos la primera
  equivalencia en
  \begin{align*}
    \mu(\psi)=\mu_i(\kappa_0^s)\in\O^\times &\iff\mu_i\in\LL(\Gamma)^\times \\
                                            & \iff \charideal_{\LL(\Gamma)}(e_iX)=\LL(\Gamma)\\
                                            & \iff e_iX\sim 0\\
                                            & \iff e_iX\text{ es finito}
  \end{align*}
  y obtenemos la primera afirmación. El resto de resulta directamente de esto.
\end{proof}

Aplicando este lema a la situación de la Conjetura Principal nos da el criterio de Kummer.
Aquí y también más tarde, utilizaremos el hecho importante de que el módulo $X_\infty^-$ no
tiene submódulos finitos no triviales. Esto es una consecuencia de un resultado de Iwasawa
\cite{MR0124320} cuya demostración usa la teoría de cohomología
de Galois, que no queremos usar en este texto. Por eso solo citamos reste resultado aquí,
una demostración se encuentra en \cite[Prop.\ 4.4.2]{SharifiIT}.

\begin{prop}\label{prop:cp-implica-kummer}
  La Conjetura Principal implica el criterio de Kummer, que dice
  \begin{align*}
    p \mid h_{\Q(\mu_p)} &\iff p \mid \zeta(1-n) \text{ para algún }n\in\Nuno\text{ par} \\
                                          &\iff p \text{ divide uno de } \zeta(-1),\zeta(-3),\dotsc,\zeta(4-p). \\
  \end{align*}
\end{prop}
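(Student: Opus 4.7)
The plan is to translate divisibility of $h_{\Q(\mu_p)}$ by $p$ into a vanishing statement for the Iwasawa module $X_\infty^-$, which the Main Conjecture equates to a unit condition on values of $h_1\mu_{\mathbf 1}$, and then to read off the $\zeta$-value criterion via the interpolation formula together with the Kummer congruences.

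First I would apply $\cref{thm:kummer-hp-plus-minus}$ to reduce $p\mid h_{\Q(\mu_p)}$ to $C^-\neq 0$, where $C$ denotes the $p$-part of $\Cl(\Q(\mu_p))$. Using the control statement $\cref{cor:iwasawa-control-x}$ for the cyclotomic $\Zp$-extension of $\Q(\mu_p)$, $C$ is identified with a quotient $X_\infty/\omega_0 X_\infty$ with $\omega_0$ in the maximal ideal of each component of $\LL(G)$. The topological Nakayama lemma $\cref{cor:cor-de-nakayama}$, applied componentwise to each $e_i X_\infty$ with $i$ odd, then yields $C^-=0\iff X_\infty^-=0$.

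Second, I would invoke the Main Conjecture $\cref{thm:conjetura-principal}$(a) together with the cited fact (due to Iwasawa and usually proved via Galois cohomology) that $X_\infty^-$ has no nontrivial finite submodules; this I would cite rather than prove. Applying $\cref{lem:wert-einheit-bedeutung}$ in its $\LL(G)^-$-version and $\cref{prop:unidad-solo-depende-de-i}$, one obtains: $X_\infty^-=0$ iff for every odd $i\in\{1,\dotsc,p-1\}$ the value $(h_1\mu_{\mathbf 1})(\omega^i\kappa_0^s)$ lies in $\Zp^\times$ for some, equivalently every, $s\in\Zp$.

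Third, for each odd $i$ I would select $s=1-n$ with $n$ positive even and $n\equiv 1-i\pmod{p-1}$, so that $\omega^i\kappa_0^s=\kappa^{1-n}$. The interpolation formula $\cref{thm:palf-stickelberger-trivial}$ gives $\mu_{\mathbf 1}(\kappa^{1-n})=(1-p^{n-1})\zeta(1-n)$, and combined with the direct evaluation of $h_1$ this yields an explicit expression for $(h_1\mu_{\mathbf 1})(\kappa^{1-n})$ in terms of $\zeta(1-n)$. The exceptional case $i=1$ (where $p-1\mid n$) is resolved by $\cref{cor:residuo-zeta-p-adica}$: the zero of $h_1$ at $\kappa$ and the pole of $\mu_{\mathbf 1}$ at $\kappa$ cancel to produce a genuine $\Zp$-unit, so the unit condition at $i=1$ is automatic. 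For the remaining odd $i\in\{3,5,\dotsc,p-2\}$, a direct $p$-adic valuation analysis, together with Clausen–von Staudt $\cref{clausen-von-staudt}$ to keep $\zeta(1-n)\in\Zp$, reduces the unit condition to the condition $p\nmid\zeta(1-n)$ in the footnote sense, where $n$ runs over $\{2,4,\dotsc,p-3\}$ with $n\equiv 1-i\pmod{p-1}$. This proves the second equivalence in the criterion.

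Finally, the first equivalence (for arbitrary even $n\geq 1$) follows from the Kummer congruences $\cref{cor:congruencias-de-kummer-dirichlet}$: when $p-1\nmid n$, $p$-divisibility of $\zeta(1-n)$ depends only on $n\bmod(p-1)$, and when $p-1\mid n$ the theorem of Clausen–von Staudt forces $\zeta(1-n)$ to have exactly $p$ in its denominator, so $p\nmid\zeta(1-n)$ in the footnote convention and this case contributes nothing. The most delicate point of the argument is the case $i=1$: without $\cref{cor:residuo-zeta-p-adica}$ the evaluation $(h_1\mu_{\mathbf 1})(\omega)$ would appear to spoil the criterion, so pinning down that the residue is genuinely a unit is the technical heart of the deduction; combined with the no-finite-submodule result of Iwasawa (needed to upgrade ``$e_iX_\infty^-$ finite'' to ``$e_iX_\infty^-=0$''), the rest is a bookkeeping of $p$-adic valuations.
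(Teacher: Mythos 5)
Your proposal follows essentially the same route as the paper's proof: reduce $p\mid h_{\Q(\mu_p)}$ to $X_\infty^-\neq0$ using \cref{thm:kummer-hp-plus-minus} and the control isomorphism, then combine the Main Conjecture with Iwasawa's no-finite-submodules result and \cref{lem:wert-einheit-bedeutung}, settle the component $i=1$ with \cref{cor:residuo-zeta-p-adica}, read off the remaining odd components from the interpolation formula, and obtain the first equivalence from Clausen--von Staudt together with the fact that $p$-divisibility of $\zeta(1-n)$ depends only on $n$ modulo $p-1$. Your first step (topological Nakayama applied to $C^-\isom X_\infty^-/\omega_0X_\infty^-$) is a harmless variant of the paper's passage through \cref{cor:preg} as packaged in \cref{cor:x-cero-iff-x-menos-cero}; both are fine.

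The one step that fails as literally written is the evaluation on the components $i\neq1$. You work with $h_1\mu_{\mathbf 1}$ and assert that a ``direct $p$-adic valuation analysis'' reduces the unit condition to $p\nmid\zeta(1-n)$. But $\kappa^{1-n}(h_1)=1-(1+p)^{n}$, which is divisible by $p$ for every $n$, so with $h_1$ the value $(h_1\mu_{\mathbf 1})(\kappa^{1-n})$ is never a unit and the criterion would come out wrong for every odd $i\neq1$. The paper's proof applies \cref{lem:wert-einheit-bedeutung} to $\mu=h_1^{(1)}\mu_{\mathbf 1}$ (the element of \cref{lem:integral-h}) rather than to $h_1\mu_{\mathbf 1}$: since $e_{1-n}h_1^{(1)}=1$ when $n\not\equiv0\ (\mod p-1)$ (\cref{lem:nullstellen-h}), one gets $h_1^{(1)}\mu_{\mathbf 1}(\kappa^{1-n})=(1-p^{n-1})\zeta(1-n)$ on the nose and no further valuation bookkeeping is needed; your appeal to Clausen--von Staudt for $\zeta(1-n)\in\Zp$ is fine. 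Your treatment of $i=1$ is unaffected, since $e_1h_1=e_1h_1^{(1)}$, so \cref{cor:residuo-zeta-p-adica} applies exactly as you say. A smaller point: for ``$p\mid\zeta(1-n)$ depends only on $n$ modulo $p-1$'' the precise reference is \cref{cor:unidad-depende-solo-de-i}; the congruences of \cref{cor:congruencias-de-kummer-dirichlet} are stated for $m\equiv n\ (\mod (p-1)p^l)$ and do not by themselves yield the statement for $m\equiv n\ (\mod p-1)$.
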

\begin{proof}
  Primero explicamos la equivalencia de la segunda y tercera afirmación, que es cierta
  independientemente de la Conjetura Principal. Si $p-1\mid n$ entonces $\zeta(1-n)$ nunca
  es divisible por $p$ según la \cref{clausen-von-staudt} (recuerde que decimos que $p$
  divide a un número racional si divide al numerador en una representación simplificada). Si
  $p-1\nmid n$ entonces por el \cref{cor:unidad-depende-solo-de-i}, la pregunta si
  $p\mid\zeta(1-n)$ para $n\in\Nuno$ solo depende del resto de $1-n$ módulo $p-1$. Los números
  $1-n$ con $n\in\Nuno$ par y no divisible por $p-1$ tienen $-1,\dotsc,4-p$ como representantes
  módulo $p-1$. Esto muestra la equivalencia deseada.

  Del \cref{cor:preg} resulta que $p\mid h_{\Q(\mu_p)}$ si y solo si $X_\infty\neq0$, que
  según el \cref{cor:x-cero-iff-x-menos-cero} es equivalente a $X_\infty^-\neq0$.
  Esto es equivalente a $e_iX_\infty=0$ para $i\in{1,3,\dotsc,p-2}$ impar.

  A partir de ahora asumamos la Conjetura Principal. Aplicamos el
  \cref{lem:wert-einheit-bedeutung} con $\O=\Z_p$, $X=X_\infty^-$ y
  $\mu=h_1^{(1)}\mu_{\mathbf 1}$.  Primero observamos que si ponemos $\psi=\kappa$ entonces,
  como $h_1^{(1)}\mu_{\mathbf 1}(\kappa)\in\Z_p^\times$ según el
  \cref{cor:residuo-zeta-p-adica}, el lema nos dice que $e_1X_\infty^-=0$. Por eso
  \begin{equation*}
    X_\infty^-=\bigoplus_{i=3\text{ impar}}^{p-1}e_iX_\infty^-
    =\bigoplus_{n=2\text{ par}}^{p-3}e_{1-n}X_\infty^-.
  \end{equation*}
  Ponemos en el lema $\psi=\kappa^{1-n}$ con $n\in\{2,4,\dotsc,p-3\}$. Entonces
  $n\not\equiv0$ $(\mod p-1)$, así que $e_{1-n}h_1^{(1)}=1$ y pues
  \begin{equation*}
    h_1^{(1)}\mu_{\mathbf 1}(\kappa^{1-n})=\mu_{\mathbf 1}(\kappa^{1-n})=(1-p^{n-1})\zeta(1-n)
  \end{equation*}
  (esto resulta del diagrama \eqref{eqn:diagrama-evaluacion-g-gamma}) según la fórmula de
  interpolación del \cref{thm:palf-stickelberger-trivial}. El lema dice entonces que
  \begin{equation*}
    e_{1-n}X_\infty^-=0 \iff \zeta(1-n)\in\Z_p^\times.
  \end{equation*}
  Como $\{1-n : n=2,4,\dotsc,p-3\}=\{-1,-3,\dotsc,4-p\}$, esto implica que
  \begin{equation*}
    p \text{ divide uno de } \zeta(-1),\zeta(-3),\dotsc,\zeta(4-p) \iff X_\infty^-\neq0
  \end{equation*}
  y la afirmación resulta.
\end{proof}

Por supuesto, la demostración original de Kummer no utiliza la Conjetura Principal. Una
demostración elemental de este criterio se encuentra en \cite[§19.2]{MR1821363}.

Un resultado más fino es el teorema de Herbrand y Ribet. Para esto definimos
$V:=\Fp\tensor_\Z C=C/{(C)}^p$, que es un espacio vectorial de dimension finita sobre
$\Fp$ con una acción $\Fp$-lineal de $\Delta=\Gal(\Q(\mu_p)/\Q)$. Por eso se descompone en
una suma \[ V=\bigoplus_{i=1}^{p-1} V_i \] donde $V_i=e_{\omega^i}V$ es el subespacio en que
$\Delta$ actúa por la potencia $i$-ésima del carácter de Teichmüller. El teorema de Herbrand
y Ribet describe estos sumandos.

\begin{thm}[Herbrand/Ribet]\label{thm:herbrand-ribet}
  Sea $n\in\{2,\dotsc,p-3\}$ par. Entonces
  \[ V_{1-n}\neq0 \iff p\mid\zeta(1-n). \]
\end{thm}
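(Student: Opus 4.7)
The plan is to combine the Main Conjecture (specifically, the componentwise calculation already performed in the proof of $\cref{prop:cp-implica-kummer}$) with a componentwise application of the topological Nakayama lemma. The key bridge I will build is an equivalence $V_{1-n} \neq 0 \iff e_{\omega^{1-n}} X_\infty^- \neq 0$, passing between a finite-level $\Fp$-vector space and an Iwasawa-theoretic statement about a single isotypic piece.

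First, some setup: for $n$ in the given range, $1-n$ is odd and not $\equiv 0 \pmod{p-1}$, so the idempotent $e := e_{\omega^{1-n}}$ lies in $\LL(G)^-$. Let $Y := e X_\infty^-$; via the isomorphism $E_{\omega^{1-n}}$ of $\cref{cor:descomposicion-lambda}$, I view $Y$ as a compact noetherian $\LL(\Gamma)$-module, which inherits from $X_\infty^-$ the property of having no nontrivial finite submodules (the Iwasawa theorem invoked in $\cref{prop:cp-implica-kummer}$). The first key step is the reduction $V_{1-n} = 0 \iff Y = 0$. Using $\cref{cor:iwasawa-control-x-g}$ at $r=0$ gives $C = X_\infty/w_0 X_\infty$; since $e$ commutes with $w_0$ and $w_0$ acts on $Y$ as $\omega_0 = T$ under the identification $\LL(\Gamma) \isom \Zp\llbracket T\rrbracket$, taking the $e$-component yields $eC = Y/TY$. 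Consequently
\begin{equation*}
V_{1-n} = e(C/pC) = Y/(T,p)Y = Y/\mathfrak{M}Y,
\end{equation*}
and the topological Nakayama lemma ($\cref{cor:cor-de-nakayama}$) gives $V_{1-n} = 0$ iff $Y = 0$.

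The second step repeats the argument of $\cref{prop:cp-implica-kummer}$, but now read off the single character $\kappa^{1-n}$ instead of globally. Because $Y$ has no nontrivial finite submodules, $Y = 0$ is equivalent to $\charideal_{\LL(\Gamma)}(Y) = \LL(\Gamma)$. The Main Conjecture identifies this ideal with the image under $E_{\omega^{1-n}}$ of $h_1^{(1)}\mu_{\mathbf 1} \in \LL(G)$, and since $e \cdot e_\omega = 0$ this image equals the projection $\mu_{\mathbf 1, 1-n}$ of $\mu_{\mathbf 1}$ itself. By $\cref{prop:unidad-solo-depende-de-i}$, $\mu_{\mathbf 1, 1-n} \in \LL(\Gamma)^\times$ iff its evaluation at the single character $\kappa_0^{1-n}$ is a $p$-adic unit; and by \eqref{eqn:mu-ramas} together with the interpolation formula of $\cref{thm:palf-stickelberger-trivial}$, this evaluation equals $(1-p^{n-1})\zeta(1-n)$. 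Since $1-p^{n-1} \in \Zp^\times$, unitness is precisely $\zeta(1-n) \in \Zp^\times$, i.e.\ $p \nmid \zeta(1-n)$.

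Chaining the two equivalences yields $V_{1-n} \neq 0 \iff Y \neq 0 \iff p \mid \zeta(1-n)$, which is exactly the theorem. The main obstacle I anticipate is the first step: one has to verify carefully that $e$ commutes with $w_0$, that the isotypic projection of $C/pC$ really computes $Y/\mathfrak{M}Y$, and that Nakayama applies to $Y$ as an $\LL(\Gamma)$-module. Everything thereafter is a direct specialization of the Kummer-criterion calculation to a single isotypic component of $X_\infty^-$, which is the conceptual refinement the Main Conjecture affords beyond the global statement $X_\infty^- = 0 \iff p\nmid h_{\Q(\mu_p)}$.
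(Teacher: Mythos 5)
Your argument is correct. Since the paper does not spell out a proof of \cref{thm:herbrand-ribet} (it only notes that the Main Conjecture implies it and leaves the derivation to the exercise at the end of \cref{sec:consecuencias}), the relevant comparison is with the two routes the paper offers there. Yours is essentially route (a): you rerun the Kummer-criterion argument of \cref{prop:cp-implica-kummer} on the single component $e_{\omega^{1-n}}X_\infty^-$, using the same two inputs the paper uses — the unit criterion of \cref{prop:unidad-solo-depende-de-i} (packaged in the paper as \cref{lem:wert-einheit-bedeutung}) together with the interpolation value $(1-p^{n-1})\zeta(1-n)$, and Iwasawa's theorem that $X_\infty^-$ has no nontrivial finite submodules. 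Two remarks: first, you rightly work with $h_1^{(1)}\mu_{\mathbf 1}$, as the paper itself does in \cref{prop:cp-implica-kummer}, rather than with the literal generator $h_1\mu_{\mathbf 1}$ of \cref{thm:conjetura-principal}; this matters, since with $h_1$ every odd component would pick up the extra non-unit factor $1-(1+p)\sigma_{1+p}^{-1}$ (essentially $T-p$) and the equivalence would fail. Second, your bridge $V_{1-n}\neq0\iff e_{\omega^{1-n}}X_\infty^-\neq0$, obtained from \cref{cor:iwasawa-control-x-g} at $r=0$ plus topological Nakayama (\cref{cor:cor-de-nakayama}), is clean and correct, provided you cite the $\Delta$-equivariance of the control isomorphism (cf.\ \cref{ejer:artin-symbol-eq} and the remark in \cref{cor:x-cero-iff-x-menos-cero}) so that taking $e$-parts is legitimate. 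The paper's explicit treatment goes the other way, route (b): it proves the stronger \cref{prop:consecuencia-numero-de-clases}, $\#e_{1-n}C=p^{v_p(B_{1,\omega^{n-1}})}$, via the snake-lemma argument \cref{lem:cp-tamano} and \cref{lem:valor-zeta-omega}, and Herbrand--Ribet then drops out using \cref{cor:unidad-depende-solo-de-i}. Your route is lighter (no Bernoulli-number computation, no \cref{lem:cp-tamano}) but yields only the vanishing criterion, whereas the paper's yields the exact order of $e_{1-n}C$; both derivations, like yours, are conditional on the Main Conjecture, which the paper treats as the theorem of Mazur and Wiles.
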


La implicación \enquote{$\Longrightarrow$} aquí fue demostrada por Herbrand en 1932,
mientras la implicación \enquote{$\Longleftarrow$}, que es mucho más difícil, fue demostrada
por Ribet en 1976 \cite{MR0419403}, usando métodos similares a los que más tarde condujeron
a la demostración de la Conjetura Principal por Mazur y Wiles.

La Conjetura Principal implica este teorema e incluso nos da una fórmula para los tamaños de
los grupos $e_iC$, que es un resultado aún más fuerte. Mientras el teorema de Herbrand y
Ribet ya era conocido antes de la demostración de la Conjetura Principal, esta fórmula es un
resultado nuevo.

La técnica para derivar esta fórmula de la Conjetura Principal es esencialmente una
aplicación simple del lema de la serpiente, que funciona en una situación general. Resumamos
esto en el lema siguiente antes de aplicarlo a nuestra situación.

\begin{lem}\label{lem:cp-tamano}
  Sea $X$ un $\LL(G)$-módulo noetheriano de torsión sin submódulos finitos no triviales con
  $\charideal_{\LL(G)}(X)=(\mu)$ y sea $\nu\in\LL(G)$ tal que $X/\nu X$ es finito. Para cada
  $i\in\{1,\dotsc,p-1\}$ sean $\mu_i,\nu_i\in\LL(\Gamma)$ las imagenes de $e_i\mu$ y
  $e_i\nu$, respectivamente, bajo el isomorfismo $e_i\LL(G)\isom\LL(\Gamma)$. Asumamos que
  $\nu_i$ es coprimo a $\pi$ y a cada polinomio distinguido en
  $\LL(\Gamma)\isom\O\llbracket T\rrbracket$.

  Entonces
  $\LL(\Gamma)/(\mu_i,\nu_i)$ también es finito y
  \[ \#\left(e_i(X/\nu X\right))= \#\left(\LL(\Gamma)/(\mu_i,\nu_i)\right). \]

  Tenemos una afirmación análoga para módulos sobre $\LL(G)^\pm$.
\end{lem}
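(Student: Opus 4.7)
The plan is to reduce the problem to $\LL(\Gamma)$ via idempotents, apply the structure theorem to present $e_iX$ by an elementary module, and conclude via the snake lemma together with a multiplicative order computation.

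First I would invoke the decomposition $\LL(G)\isom\bigoplus_{j=1}^{p-1}e_j\LL(G)$ from \cref{cor:descomposicion-lambda}, under which each factor $e_j\LL(G)$ is isomorphic to $\LL(\Gamma)$. The action of $\nu$ on $e_iX$ is then identified with multiplication by $\nu_i\in\LL(\Gamma)$, so $e_i(X/\nu X)=e_iX/\nu_ie_iX$. Moreover $e_iX$, as a direct summand of $X$, inherits the absence of nontrivial finite submodules. Thus it suffices to prove, for each $i$,
\[ \#(e_iX/\nu_i e_iX)=\#(\LL(\Gamma)/(\mu_i,\nu_i)). \]

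Next, by the structure theorem (\cref{thm:estructura}) applied to the $\LL(\Gamma)$-module $e_iX$, there is a pseudo-isomorphism with an elementary module $E$ whose characteristic ideal is $(\mu_i)$. Since elementary modules have no nontrivial finite submodules and $e_iX$ also has none, we obtain a short exact sequence
\[ 0\to E\to e_iX\to C\to 0 \]
with $C$ finite. Applying the snake lemma to multiplication by $\nu_i$: the hypothesis on $\nu_i$ says it is coprime to every irreducible factor of $\mu_i$, hence coprime to $\mu_i$. By \cref{cor:m-nu-finito}, $(e_iX)[\nu_i]$ is finite, and by the no-finite-submodule hypothesis on $X$ it is zero. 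A direct argument using that $\LL(\Gamma)$ is a UFD gives $E[\nu_i]=0$ as well. Finally $\#C[\nu_i]=\#(C/\nu_iC)$ because $C$ is finite. The snake exact sequence then collapses to yield
\[ \#(e_iX/\nu_i e_iX)=\#(E/\nu_i E). \]

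To finish, I would compute $\#(E/\nu_iE)$. Writing $\mu_i=\prod_k f_k$ as a product (with multiplicity) of irreducible factors in $\LL(\Gamma)$, we have $E\isom\bigoplus_k\LL(\Gamma)/(f_k)$, so $\#(E/\nu_iE)=\prod_k\#(\LL(\Gamma)/(f_k,\nu_i))$. The key identity is the multiplicativity
\[ \#(\LL(\Gamma)/(fg,\nu_i))=\#(\LL(\Gamma)/(f,\nu_i))\cdot\#(\LL(\Gamma)/(g,\nu_i)) \]
whenever $\nu_i$ is coprime to $fg$; this follows by tensoring the exact sequence $0\to\LL(\Gamma)/(f)\xrightarrow{\cdot g}\LL(\Gamma)/(fg)\to\LL(\Gamma)/(g)\to 0$ with $\LL(\Gamma)/(\nu_i)$, whose $\mathrm{Tor}^1$ vanishes since $\nu_i$ and $g$ are coprime. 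Induction gives $\#(\LL(\Gamma)/(\mu_i,\nu_i))=\prod_k\#(\LL(\Gamma)/(f_k,\nu_i))=\#(E/\nu_iE)$, in particular this quotient is finite.

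The hard part will be the snake-lemma bookkeeping — specifically verifying $E[\nu_i]=0$ rigorously using the UFD structure of $\LL(\Gamma)$ (so that coprime elements act without torsion on each other's quotients), and establishing the multiplicative formula when $\mu_i$ has repeated irreducible factors. Everything else reduces to routine manipulations with orders of finite abelian groups.
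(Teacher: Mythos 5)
Your proposal is correct and follows essentially the same skeleton as the paper's proof: compare $X$ (respectively its $e_i$-component) with an elementary module via an injective pseudo-isomorphism, run the snake lemma on multiplication by $\nu$, and finish with the multiplicativity of $\#\bigl(\LL(\Gamma)/(f,\nu_i)\bigr)$ in $f$. The local differences are minor and all legitimate: you split into eigencomponents at the start instead of applying $e_i$ to the global snake diagram at the end; you reverse the direction of the injection (an inclusion $E\hookrightarrow e_iX$ rather than $e_iX\hookrightarrow E$), which is fine but silently uses the symmetry of pseudo-isomorphism for torsion modules without finite submodules (\cref{lem:pseudotorsii}), so cite it; you kill $E[\nu_i]$ by the direct UFD/coprimality argument, where the paper instead deduces $\rk_\O E[\nu]=0$ from the diagram and invokes \cref{lem:elemental-rk-cero} — your route is slightly more economical; and your Tor-vanishing proof of the multiplicativity is just a repackaging of the paper's explicit four-term exact sequence argument.

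One statement in your write-up is false as phrased and should be repaired, though it does not sink the proof: the structure theorem does \emph{not} give $E\isom\bigoplus_k\LL(\Gamma)/(f_k)$ with $f_k$ running over the \emph{irreducible} factors of $\mu_i$ with multiplicity. The elementary divisors are of the form $\pi^{m}$ or powers $P^{e}$ of distinguished irreducibles (or products thereof, per \cref{thm:estructura}), and for example $\LL(\Gamma)/(P^2)$ is not isomorphic, nor even pseudo-isomorphic, to $\LL(\Gamma)/(P)\oplus\LL(\Gamma)/(P)$. The fix is immediate with the tool you already have: write $E=\bigoplus_j\LL(\Gamma)/(g_j)$ with the actual elementary divisors $g_j$ satisfying $(\prod_j g_j)=(\mu_i)$, note each $g_j$ is coprime to $\nu_i$ (this follows from the finiteness of $e_i(X/\nu X)$ via \cref{lem:m-cociente-finito-coprimo}, independently of how one reads the coprimality hypothesis), and apply your multiplicativity identity both to factor each $\#\bigl(\LL(\Gamma)/(g_j,\nu_i)\bigr)$ and to reassemble $\#\bigl(\LL(\Gamma)/(\mu_i,\nu_i)\bigr)$; finiteness of each term is \cref{cor:fg-indice-finito}. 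With that adjustment your argument is complete and matches the paper's.
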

\begin{proof}
  Aquí seguimos \cite[Appendix, Lem.\ 8.6]{MR1029028}.
  
  Sea $X\rightarrow E$ un pseudo-isomorfo con $E$ un $\LL(G)$-módulo elemental, que es
  inyectivo, y sea $Z$ el conúcleo. Escribimos $X[\nu]$ para el núcleo de la multiplicación
  con $\nu$ en $X$, y similar para otros módulos. Entonces por las definiciones y el lema de
  la serpiente tenemos un diagrama conmutativo
  \begin{equation*}
    \begin{tikzcd}
      &
      & 0 \ar{d}
      & 0 \ar{d}
      & 0 \ar{d}
      & \\
      & 0\ar{r}
      & X[\nu] \ar{r} \ar{d}
      & E[\nu] \ar{r} \ar{d}
      & Z[\nu] \ar{d}
      &
      & \\
      & 0 \ar{r}
      & X \ar{r} \ar{dd}[near start]{\nu}
      & E \ar{r} \ar{dd}[near start]{\nu}
      & Z \ar{r} \ar{dd}[near start]{\nu}
      &  0
      & \\
      &
      &
      & ~
      &
      &
      \ar[r, phantom, ""{coordinate, name=Y}]
      &~\\
      ~& \ar[l, phantom, ""{coordinate, name=Z}] 0 \ar{r}
      & X \ar{r} \ar{d}
      & E \ar{r} \ar{d}
      & Z \ar{r} \ar{d}
      & 0
      & \\
      &
      & \ar[from=uuuurr, crossing over, rounded corners,
      to path=
      { -- ([xshift=2ex]\tikztostart.east)
        -| (Y) [near end]\tikztonodes
        -| (Z) [near end]\tikztonodes
        |- ([xshift=-2ex]\tikztotarget.west)
        -- (\tikztotarget)}
      ]  X/\nu X \ar{r} \ar{d}
      & E/\nu E \ar{r} \ar{d}
      & Z/\nu Z \ar{r} \ar{d}
      & 0
      & \\
      &
      & 0 & 0 & 0
    \end{tikzcd}
  \end{equation*}
  con lineas, columnas y serpientes exactas.  De este diagrama obtenemos: Porque $Z[\nu]$ es
  finito, $\rk_\O X[\nu]=\rk_\O E[\nu]$, y porque $X/\nu X$ es finito, $\rk_\O X[\nu]=0$
  (usamos aquí la \cref{prop:Xnoettorsiirgfinsiiacot}), es decir $\rk_\O E[\nu]=0$. Según el
  \cref{lem:elemental-rk-cero} entonces $E[\nu]=0$. Porque $Z$ es finito,
  $\#Z[\nu]=\#(Z/\nu Z)$. El lema de la serpiente entonces implica que
  $\#\left(X/\nu X\right)=\#\left(E/\nu E\right)$. De la misma manera, si aplicamos $e_i$ al
  diagrama entero,\footnote{Aplicar $e_i$ es lo mismo que aplicar
    $-\tensor_{\LL(G)}e_i\LL(G)$, y del isomorfismo $\LL(G)\isom\LL(\Gamma)^{p-1}$
    (\cref{cor:descomposicion-lambda}) es claro que $e_i\LL(G)\isom\LL(\Gamma)$ es plano
    sobre $\LL(G)$.} obtenemos
  \begin{equation*}
    \#\left(e_i(X/\nu X\right))=\#\left(e_i(E/\nu E\right))
  \end{equation*}
  y lo que falta ver es que $\#\left(e_i(E/\nu E\right))=\#(\LL(\Gamma)/(\mu_i,\nu_i))$. Es
  fácil ver que $e_i(E/\nu E)=e_iE/\nu_ie_iE$, y $e_iE$ es un $\LL(\Gamma)$-módulo
  elemental, pues de la forma $\bigoplus_{j=1}^s\LL(\Gamma)/(f_j)$, y $(\prod_{j=1}^sf_j)=(\mu_i)$. Por
  eso es suficiente demostrar que
  \begin{equation*}
    \#\left(\bigoplus_{j=1}^s\LL(\Gamma)/(f_j,\nu_i)\right)=\#\left(\LL(\Gamma)/(\prod_{j=1}^sf_j,\nu_i)\right).
  \end{equation*}
  Eso lo demostramos con inducción en $s$; el caso $s=1$ es trivial. Sea entonces $s>1$, ponemos
  $f=f_1\dotsm f_{s-1}$ y $g=f_s$. Hay una sucesión exacta
  \begin{equation*}
    0\rightarrow(f,\nu_i)/(fg,\nu_i)\rightarrow
    \LL(\Gamma)/(fg,\nu_i)\rightarrow\LL(\Gamma)/(f,\nu_i)\oplus \LL(\Gamma)/(g,\nu_i)\rightarrow
    \LL(\Gamma)/(g,\nu_i) \rightarrow 0
  \end{equation*}
  donde el mapeo en el centro es $r\mapsto(r,0)$. Por eso es suficiente demostrar que
  $\#(f,\nu_i)/(fg,\nu_i)=\#\LL(\Gamma)/(g,\nu_i)$. Pero el mapeo 
  \begin{equation*}
    \LL(\Gamma)/(g,\nu_i)\rightarrow(f,\nu_i)/(fg,\nu_i),\quad x\mapsto fx
  \end{equation*}
  es obviamente sobreyectivo y porque $f$ es coprimo a $\nu_i$ también es inyectivo.
\end{proof}

\begin{prop}\label{prop:consecuencia-numero-de-clases}
  La Conjetura Principal implica que para cada $n\in\{2,\dotsc,p-3\}$ par tenemos
  \[ \#e_{1-n}C = p^{v_p(B_{1,\omega^{n-1}})}. \]
\end{prop}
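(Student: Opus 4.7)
Para demostrar la proposición, planeo aplicar la versión para $\LL(G)^-$-módulos del Lema \ref{lem:cp-tamano}, con $X = X_\infty^-$ y $\nu = T$ (el elemento que corresponde a $\gamma - 1$ bajo la identificación $\LL(\Gamma) \isom \Zp\llbracket T \rrbracket$, equivalentemente el $w_0 \in \LL(G)$ del Corolario \ref{cor:iwasawa-control-x-g} bajo el isomorfismo diagonal del Ejercicio \ref{ejer:diagonal}). Primero verifico las hipótesis del lema: $X_\infty^-$ es noetheriano y de torsión por el Corolario \ref{cor:x-noetheriano-torsion}, y carece de submódulos finitos no triviales por el resultado de Iwasawa citado en la demostración de la Proposición \ref{prop:cp-implica-kummer}. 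La Conjetura Principal \ref{thm:conjetura-principal}\ref{thm:conjetura-principal:x} provee que su ideal característico está generado por $h_1 \mu_{\mathbf 1}$, y del Corolario \ref{cor:iwasawa-control-x-g} deducimos que $X_\infty^-/T X_\infty^- = C^-$, que es finito.

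Aplicando el lema, obtenemos $\# e_{1-n}(X_\infty^-/T X_\infty^-) = \#\LL(\Gamma)/(\mu_{1-n}, T)$, donde $\mu_{1-n}$ denota la imagen en $\LL(\Gamma)$ de la componente $e_{\omega^{1-n}}$ del generador bajo $E_{\omega^{1-n}}$ (Corolario \ref{cor:descomposicion-lambda}). Como $n$ es par, $1-n$ es impar, así que $e_{1-n} C^- = e_{1-n} C$. Además, $\LL(\Gamma)/(T) \isom \Zp$ (vía evaluación en el carácter trivial de $\Gamma$, equivalentemente $\kappa_0^0$), por lo que $\LL(\Gamma)/(\mu_{1-n}, T) \isom \Zp/(\mu_{1-n}(0))$ tiene orden $p^{v_p(\mu_{1-n}(0))}$.

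El paso decisivo es calcular $v_p(\mu_{1-n}(0))$. Al evaluar en $T=0$ después de aplicar $E_{\omega^{1-n}}$ obtenemos el valor del generador del ideal característico en el carácter $\omega^{1-n}$ (con parte $\Gamma$ trivial) de $G$. Aquí interviene la observación crucial (véase la Remark \ref{nota:cp-cociente-de-ideales-car}) de que el ideal característico de $\Zp(1)$ está concentrado en la componente $\omega$ de $\LL(G)^-$; como $n \in \{2, \dotsc, p-3\}$ es par y por tanto $1-n \not\equiv 1 \pmod{p-1}$, en la componente $e_{\omega^{1-n}}$ la contribución de $h_1$ se trivializa y el generador del ideal característico está dado esencialmente por $\mu_{\mathbf 1, 1-n}$. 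Por el Lema \ref{lem:valor-zeta-omega} tenemos $\mu_{\mathbf 1}(\omega^{1-n}) = -B_{1, \omega^{n-1}}$, de donde concluimos
\[ \# e_{1-n} C = p^{v_p(\mu_{1-n}(0))} = p^{v_p(B_{1, \omega^{n-1}})}. \]

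El principal obstáculo será el manejo cuidadoso de la descomposición del generador $h_1 \mu_{\mathbf 1}$ en sus componentes según los caracteres de $\Delta$ vía el Corolario \ref{cor:descomposicion-lambda}, y verificar que en la componente $e_{\omega^{1-n}}$ (con $1-n \neq 1$) la contribución efectiva del factor $h_1$ es una unidad de $\LL(\Gamma)$, de manera que el cálculo se reduce a evaluar únicamente $\mu_{\mathbf 1}$ en $\omega^{1-n}$. Todo el resto del argumento es aplicación mecánica de resultados previos (Lema \ref{lem:cp-tamano}, Corolario \ref{cor:iwasawa-control-x-g} y Lema \ref{lem:valor-zeta-omega}).
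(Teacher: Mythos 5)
Tu esquema coincide en lo esencial con la demostración del texto: se aplica el \cref{lem:cp-tamano} a $X=X_\infty^-$ con $i=1-n$ y con $\nu=\sigma_{1+p}-1$ (que es exactamente tu $w_0$, pues $\sigma_{1+p}-1$ se envía diagonalmente a $(T,\dotsc,T)$), se identifica $e_{1-n}(X_\infty^-/\nu X_\infty^-)\isom e_{1-n}C$ mediante el corolario de control, se usa $\LL(\Gamma)/(T)\isom\Zp$ y se evalúa con el \cref{lem:valor-zeta-omega} para obtener $-B_{1,\omega^{n-1}}$.

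Sin embargo, el paso que tú mismo señalas como el obstáculo principal no se sostiene tal como lo enuncias. La imagen de $e_{1-n}h_1$ bajo $E_{\omega^{1-n}}$ es $1-(1+p)\sigma_{1+p}^{-1}$, es decir $(T-p)$ por una unidad: \emph{no} es una unidad de $\LL(\Gamma)$ (se anula en $\kappa_0$, véase el \cref{lem:nullstellen-h} junto con el \cref{ejer:integral-h-n-no-cero}), y su valor en el carácter trivial de $\Gamma$ es $-p$. Si calcularas literalmente con el generador $h_1\mu_{\mathbf 1}$ obtendrías $\#e_{1-n}C=p^{1+v_p(B_{1,\omega^{n-1}})}$, con un factor $p$ de más. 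Lo que sí es cierto, y es lo que usa la demostración del texto, es que $e_{1-n}h_1^{(1)}=e_{1-n}$ (porque $e_{1-n}e_{\omega}=0$ para $1-n\neq1$), de modo que tomando $\mu=h_1^{(1)}\mu_{\mathbf 1}$ —tal como se hace también en la demostración de la \cref{prop:cp-implica-kummer}— la componente relevante del generador es exactamente $e_{1-n}\mu_{\mathbf 1}$ y el cálculo da $p^{v_p(B_{1,\omega^{n-1}})}$. Así que debes sustituir $h_1$ por $h_1^{(1)}$ en tu argumento (la distinción entre ambos elementos no es inocua: generan ideales distintos en las componentes $e_i$ con $i\neq1$); con ese ajuste tu demostración es la del texto.
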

\begin{proof}
  Asumamos la Conjetura Principal y apliquemos el \cref{lem:cp-tamano} con $X=X_\infty^-$,
  $\mu=h_1^{(1)}\mu_{\mathbf 1}$, $\nu=\sigma_{1+p}-1$ y $i=1-n$
  (que es impar). Entonces por el \cref{cor:iwasawa-control-x} tenemos $e_i(X/\nu X)\isom
  e_iC$. Además, $\nu_i=\sigma_{1+p}-1$ y como $i\neq1$ el elemento $\mu_i$ es la imagen
  de $e_i\mu_{\mathbf 1}$ en $\LL(\Gamma)$. El lema entonces dice
  \begin{equation*}
    \#e_iC=\#(\LL(\Gamma)/(\sigma_{1+p}-1,\mu_i).
  \end{equation*}
  Ahora el morfismo $\LL(\Gamma)\rightarrow\Zp$ inducido por el carácter trivial
  $\kappa_0^0\colon\Gamma\rightarrow\Z_p^\times$ es sobreyectivo con núcleo generado por
  $\sigma_{1+p}-1$ (si identificamos $\LL(\Gamma)$ con $\Zp\llbracket T\rrbracket$ usando el
  generador topológico $\sigma_{1+p}$ entonces el morfismo es inducido por $T\mapsto0$, cuyo
  núcleo es generado por $T$). El isomorfismo $\LL(\Gamma)/(\sigma_{1+p}-1)\isomarrow\Zp$
  que obtenemos envía $\mu_i$ a
  $\mu_{\mathbf 1}(\omega^i)=-B_{1,\omega^{-i}}$ (usamos
  aquí el diagrama \eqref{eqn:diagrama-evaluacion-g-gamma} y el
  \cref{lem:valor-zeta-omega}) y entonces induce un isomorfismo
  \begin{equation*}
    \LL(\Gamma)/(\mu_i\sigma_{1+p}-1)\isomarrow\Zp/B_{1,\omega^{-i}}
  \end{equation*}
  y esto termina la demostración.
\end{proof}

\begin{cor}\label{cor:stickelberger}
  La Conjetura Principal implica el siguiente caso especial del teorema de
  Stickelberger. Sea $\Sigma_p\in\Q[G_p]$ el elemento de Stickelberger y sea $I$ el ideal
  $\Sigma_p\Z[G_p]\cap\Z[G_p]$. Entonces cada elemento de $I^-$ anula a $C^-$.
\end{cor}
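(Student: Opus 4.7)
El plan es aprovechar la Conjetura Principal (\cref{thm:conjetura-principal}~\ref{thm:conjetura-principal:x}) para obtener anuladores del módulo de Iwasawa $X_\infty^-$ y luego descenderlos al nivel $1$ vía la proyección $\pi_1\colon\LL(G)\rightarrow\Zp[G_p]$, usando que $C^-=X_\infty^-/w_0X_\infty^-$ con $w_0=\gamma-1$ (\cref{cor:iwasawa-control-x-g}, tomando $\gamma=\sigma_{1+p}$). Las identificaciones clave son $\pi_1(h_1)=-p$ y $\pi_1(h_1\mu_{\mathbf{1}})=-p\Sigma_p$, usando la \cref{rem:stickelberger-mu-uno} que reconoce a $\Sigma_p$ como la imagen a nivel $1$ de $\mu_{\mathbf 1}$.

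El primer ingrediente es que, por la Conjetura Principal, $\charideal_{\LL(G)^-}(X_\infty^-)=(h_1\mu_{\mathbf{1}})$, y por un teorema clásico de Iwasawa el módulo $X_\infty^-$ no posee submódulos finitos no triviales (hecho ya usado en la demostración del \cref{prop:cp-implica-kummer}). Por lo tanto $X_\infty^-$ se inyecta en su módulo elemental asociado, y entonces el ideal característico está contenido en el anulador, así que $(h_1\mu_{\mathbf 1})\subseteq\Ann_{\LL(G)^-}(X_\infty^-)$. Proyectando al nivel $1$, deducimos que $-p\Sigma_p\cdot\Zp[G_p]^-$ anula $C^-$, lo que ya da la anulación de $p\cdot I$.

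Para capturar todo $I^-=I$, cambiamos a la formulación equivalente \ref{thm:conjetura-principal:y} de la Conjetura Principal: $\charideal_{\LL(G)^+}(Y_\infty^+)=I(G)\lambda_{\mathbf 1}$, con $I(G)$ el ideal de aumentación. Como $\lambda_{\mathbf 1}$ es una pseudo-medida (\cref{thm:lambda-trivial}), los generadores $(g-1)\lambda_{\mathbf 1}$ están en $\LL(G)$, y (suponiendo el análogo para $Y_\infty^+$ del teorema de Iwasawa) anulan $Y_\infty^+$. Usando la equivalencia $X_\infty^-\sim(Y_\infty^+)^\nu$ de la \cref{prop:equivalencia-x-y} junto con $\nu(\lambda_{\mathbf 1})=\mu_{\mathbf 1}$ (\cref{ejer:nu-mu-lambda}), los elementos $\nu(g-1)\mu_{\mathbf 1}=(\kappa(g)g^{-1}-1)\mu_{\mathbf 1}$ anulan $X_\infty^-$ para todo $g\in G$. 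Aplicando $\pi_1$ a $g=\sigma_c$ con $c\in(\Z/p\Z)^\times$ se obtiene que $(\omega(c)\sigma_c^{-1}-1)\Sigma_p$ anula $C^-$, y usando $\omega(c)\equiv c\pmod p$ junto con la anulación por $p\Sigma_p$ del segundo párrafo, estos elementos generan (a nivel de $\Z[G_p]$) los generadores clásicos $(c-\sigma_c)\Sigma_p$ del ideal de Stickelberger $I$.

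La parte difícil es justificar rigurosamente que $Y_\infty^+$ (equivalentemente $(Y_\infty^+)^\nu$ vía la pseudo-isomorfía) carece de submódulos finitos no triviales, de modo que el ideal característico esté efectivamente dentro del anulador; esto es un análogo para $Y_\infty^+$ del teorema de Iwasawa citado en el segundo párrafo, y requiere un argumento de cohomología de Galois o una aplicación cuidadosa del funtor adjunto $\alpha$ de la \cref{sec:adjuntos}. El resto es una verificación $p$-ádica directa: comprobar que las combinaciones de $(\omega(c)\sigma_c^{-1}-1)\Sigma_p$ y $p\Sigma_p$ efectivamente cubren todo $I$ en $\Z[G_p]$, lo cual sigue del hecho de que la diferencia $(\omega(c)-c)\sigma_c^{-1}\Sigma_p$ es múltiplo $\Zp$-ádico de $\Sigma_p$ cuyo producto con $p$ ya sabemos que anula.
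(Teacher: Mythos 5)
Tu primera mitad es esencialmente correcta y usa sólo hechos disponibles en el texto: como $X_\infty^-$ no tiene submódulos finitos no triviales (el resultado de Iwasawa citado al inicio de la \cref{sec:consecuencias}), la aplicación al módulo elemental es inyectiva, así que $\charideal_{\LL(G)^-}(X_\infty^-)=(h_1\mu_{\mathbf 1})$ sí está contenido en el anulador; proyectando módulo $w_0$ (\cref{cor:iwasawa-control-x-g}) y usando $\pi_1(h_1)=-p$, $\pi_1(\mu_{\mathbf 1})=\Sigma_p$ (\cref{rem:stickelberger-mu-uno}) obtienes que $p\Sigma_p$ anula $C^-$. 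El problema está en la segunda mitad. Para conseguir los anuladores $(\omega(c)\sigma_c^{-1}-1)\Sigma_p$ pasas por la versión \ref{thm:conjetura-principal:y} del \cref{thm:conjetura-principal} y necesitas que el ideal característico de $Y_\infty^+$ esté dentro de su anulador, lo cual requiere que $Y_\infty^+$ carezca de submódulos finitos no triviales. Ese hecho es un teorema no trivial de tipo Iwasawa que el texto no enuncia ni demuestra, y tú tampoco lo pruebas: tal como está, ese eslabón queda abierto. (Además, el traslado de anuladores por el pseudo-isomorfismo de la \cref{prop:equivalencia-x-y} no es automático: funciona únicamente porque la multiplicación por el anulador en $X_\infty^-$ se factoriza por el conúcleo finito y $X_\infty^-$ no tiene submódulos finitos; eso conviene decirlo. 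La identificación \enquote{$I^-=I$} también es un desliz, aunque inofensivo, pues basta anular con todo $\Zp\otimes I$.)

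La buena noticia es que el rodeo por $Y_\infty^+$ es innecesario y el hueco se cierra dentro de tu propio esquema: para cada $g\in G$ el elemento $\kappa(g)g^{-1}-1$ está en el núcleo de $\kappa\colon\LL(G)\rightarrow\Zp$, que según el \cref{ejer:carideal-zpuno} es el ideal principal $(h_1)$; por lo tanto $(\kappa(g)g^{-1}-1)\mu_{\mathbf 1}\in(h_1\mu_{\mathbf 1})=\charideal_{\LL(G)^-}(X_\infty^-)$ y anula a $X_\infty^-$ por el mismo argumento de tu primera mitad, sin tocar $Y_\infty^+$, $\nu$ ni pseudo-isomorfismos. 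Con eso tu verificación final ($\omega(c)\equiv c\bmod p$, multiplicar por la unidad $\sigma_c$ y usar el anulador $p\Sigma_p$) sí recupera los generadores $(\sigma_c-c)\Sigma_p$ de $I=\Sigma_pJ$, y el corolario sigue. Nota, por comparación, que la demostración del texto toma otra ruta más corta: descompone en componentes propias, usa los hechos clásicos $I=\Sigma_pJ$ y $e_iJ=(p)$ si $i=1$, $(1)$ si $i\neq1$, y aplica la fórmula $\#e_{1-n}C=p^{v_p(B_{1,\omega^{n-1}})}$ de la \cref{prop:consecuencia-numero-de-clases} (ya deducida de la Conjetura Principal), de modo que la anulación es inmediata porque un grupo abeliano finito es anulado por su orden; tu enfoque, una vez reparado, produce anuladores explícitos en $\LL(G)$ y es más cercano a la deducción clásica del teorema de Stickelberger a partir de la Conjetura Principal.
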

\begin{proof}
  Se puede demostrar que $I=\Sigma_pJ$, donde $J$ es el ideal generado por los elementos de
  la forma $\sigma_a-a$ con $a\in\Z$, $p\nmid a$. Omitimos esta demostración aquí, véase
  \cite[§1.2, Lemma 2, p.\ 11]{MR1029028} para ella. Usando esto, la afirmación que queremos
  demostrar es equivalente a: Para cada $i\in\{1,\dotsc,p-1\}$ impar, los elementos del
  ideal $e_i\Sigma_pJ\subseteq\Zp$ anulan a $e_1C$. El ideal $e_iJ$ es generado por los
  $\omega^i(a)-a$ con $a\in\Z$, $p\nmid a$, y se verifica fácilmente que esto es igual a
  $(p)$ si $i=1$ y igual a $(1)$ si $i\neq1$, véase \cite[§1.3, Lemma 1, p.\
  15]{MR1029028}. Entonces el caso de $i=1$ ya es claro. En el caso $i\neq1$ tenemos
  $e_i\Sigma_pJ=(B_{1,\omega^{-i}})$ según la \cref{rem:stickelberger-mu-uno}, y la
  afirmación resulta de la \cref{prop:consecuencia-numero-de-clases}.
\end{proof}

Concluimos con un ejemplo que muestra las consecuencias de la Conjetura Principal en un
caso concreto.

\begin{ex}
  Sea $p=691$, $K=\Q(\mu_{691})$ y $C$ la $p$-parte del grupo de clases de $K$.
  
  Según el \cref{ex:bernoulli} y la \cref{zeta-continuacion} tenemos que
  \[ \zeta(-11)=\frac{691}{32760}=\frac{691}{2^3\cdot3^2\cdot5\cdot7\cdot13}. \] Tenemos
  además $-11\in\{-1,-3,\dotsc,4-p\}$. Por eso el criterio de Kummer implica que $691$
  divide al número de clases de $K$, es decir $C\neq0$. En particular, usando el
  \cref{thm:campo-de-hilbert} vemos que $K$ tiene una extensión cíclica no ramificada de
  grado $691$. La Conjetura Principal y sus consecuencias nos proveen con más comprensión
  sobre la estructura de $C$.

  El \cref{thm:herbrand-ribet} de Herbrand y Ribet dice que $V_{-11}\neq0$ con la notación
  de allá, porque $-11=1-n$ para $n=12$. En particular $e_{-11}C\neq0$, y la
  \cref{prop:consecuencia-numero-de-clases} incluso nos permite calcular su orden.
  Calculemos $v_{691}(B_{1,\omega^{11}})$. Según el \cref{ejer:formula-b-n-chi} tenemos
  \[ B_{1,\omega^{11}}=\frac{1}{691}\sum_{a=1}^{690}\omega^{11}(a)a. \]
  Podemos calcular esto con SAGE usando el siguiente código:
  \begin{center}
    \lstinputlisting[language=Python]{691sage.py}    
  \end{center}
  El resultado es que $v_{691}(B_{1,\omega^{11}})=1$.

  Con esto concluimos que la $691$-parte del grupo de clases de $K=\Q(\mu_{691})$ tiene un
  subgrupo de orden $691$ en que un elemento del grupo de Galois
  $a\in(\Z/691\Z)^\times\isom\Gal(K/\Q)$ actúa como multiplicación con $a^{-11}$, y este
  subgrupo es máximo con esta propiedad.
\end{ex}

Finalmente mencionamos que en \cite[§XI.6]{NSW} se encuentran aún más consecuencias de la
Conjetura Principal que no vamos a describir aquí.

\ejercicios

\begin{ejer}
  Demuestre que la Conjetura Principal implica el teorema de Herbrand y Ribet:
  \begin{enumerate}
  \item Demuéstrelo usando argumentos similares a los de la demostración de la
    \cref{prop:cp-implica-kummer}.
  \item Demuestre que la afirmación de la \cref{prop:consecuencia-numero-de-clases} implica
    el teorema de Herbrand y Ribet, usando el \cref{cor:unidad-depende-solo-de-i}.
  \end{enumerate}
\end{ejer}

\section{Unos comentarios sobre la demostración de la Conjetura Principal}
\label{sec:dem-cp}

En esta sección describimos de manera muy breve los métodos con los que se puede demostrar la
Conjetura Principal, siguiendo principalmente el texto \cite[§2.4--5]{MR2334196}.

Existen esencialmente dos enfoques para demostrar la Conjetura Principal. El enfoque
original de Mazur y Wiles usa formas modulares y es una extensión de métodos usados por
Ribet en su demostración del \cref{thm:herbrand-ribet}. El segundo enfoque, hallado por
Rubin usando ideas de Kolyvagin y Thaine, usa algo que se llama \define{sistemas de
  Euler}. Por su naturaleza, el enfoque con formas modulares demuestra la inclusión
$\charideal_{\LL(G)^-}(X_\infty^-)\subseteq(h_1\mu_{\mathbf 1})$ en la conjetura principal
(o la inclusión análoga en la otra versión) y el método de sistemas de Euler demuestra la
inclusión opuesta. En el caso de la Conjetura Principal original de Iwasawa, de hecho una de
estas inclusiones ya implica la otra (esto es una consecuencia de la fórmula de clases). Sin
embargo, en las generalizaciones de la Conjetura Principal que vamos a explicar en la
\cref{sec:generalisaciones-conjetura-princial} este lujo ya no existe, así que hay que usar
ambos métodos para demostrarlas.

La relación entre las formas modulares y la Conjetura Principal es, muy a manera de esbozo,
la siguiente. Como vimos en la \cref{sec:consecuencias-palf}, la función zeta $p$-ádica
tiene que ver con congruencias módulo $p$ entre valores especiales de la función zeta. Estos
valores también aparecen como el coeficiente constante en la expansión $q$ de series de
Eisenstein: para $k\ge4$ la series de Eisenstein $E_k$ de peso $k$ tiene la expansión de
Fourier \[ E_k = \frac{\zeta(1-k)}2 + \sum_{n=1}^\infty\sigma_{k-1}(n)q^n \] donde
$\sigma_{k-1}(n)$ es la suma de las potencias $(k-1)$-ésimas de los divisores de $n$. Por
eso, si $\zeta(1-k)$ es divisible por $p$ entonces $E_k\mod p$ parece una forma modular
cuspidal, y de hecho existe una forma cuspidal $f$ congruente a $E_k$ módulo $p$. Esta
congruencia implica que las representaciones de Galois de $E_k$ y $f$ están relacionadas
módulo $p$, más precisamente tienen las mismas semisimplificaciones. La representación de
$E_k$ es conocida explícitamente y es reducible, así que la de $f$ módulo $p$ es una extensión de
los componentes irreducibles de la de $E_k$. Finalmente, estas extensiones están relacionadas
con sus grupos de clases por la teoría de campos de clases. Para más detalles, véase
\cite[§2.4]{MR2334196} y el artículo original de Mazur y Wiles \cite{MR0742853}.

Un sistema de Euler es una colección de elementos de grupos de cohomología de Galois de la
forma $c_m\in\mathrm H^1(\Q(\mu_m),T)$ para cada $m\in\Nuno$, donde $T$ es un retículo
estable de una representación $V$ de $\Gal_\Q$ con coeficientes en $\Qp$, tal que los $c_m$
cumplen algunas relaciones de compatibilidad con respecto al mapeo de la correstricción. Esta
definición probablemente parece poco iluminadora, pero la razón por la que uno se interesa en
este tipo de colección es que su sola existencia produce anuladores de grupos de clases (u
objetos más generales) que permiten acotar el exponente o incluso el orden de dichos
grupos. Por otro lado los sistemas de Euler están relacionados con valores especiales de
funciones $L$: existe un mapeo llamado el mapeo de Coleman del límite de los
$\mathrm H^1(\Q(\mu_m),T)$ al álgebra de Iwasawa enviando el sistema de Euler a la función
zeta $p$-ádica. Esto implica relaciones entre los órdenes de los grupos de clases y los
valores especiales. De esta manera el sistema de Euler permite conectar los dos objetos que
aparecen en la Conjetura Principal.

Existe una teoría bastante general de sistemas de Euler. La dificultad principal en este
ámbito es \emph{construir} sistemas de Euler, es decir demostrar que existen y que no son
triviales. Aquí no hay un método general y hasta ahora conocemos sólo unos pocos sistemas de
Euler. En el caso de la Conjetura Principal, el sistema de Euler que se usa es construido
usando las unidades ciclotómicas de la \cref{dfn:unidades-ciclotomicas}.

Textos introductorios a la teoría de sistemas de Euler son \cite{LoefflerES} y
\cite{MR1749177}. Una demostración completa de la Conjetura Principal usando sistemas de
Euler se encuentra en \cite[Appendix]{MR1029028} y en \cite{MR2256969}.

\chapter{Generalizaciones}
\label{sec:generalizaciones}

\section[Aritmética Logarítmica]{Teoría de Iwasawa para aritmética logarítmica}

La aritmética logarítmica se sitúa en el contexto de la teoría $p$-ádica de campos de clases. Esta última es una especialización de la teoría de campos de clases (e.g. \cite[Cap. IV]{MR1697859}) en el caso especial de pro-$p$-extensiones de un campo de números. Los resultados principales en la teoría de campos de clases se leen de la siguiente manera en la teoría $p$-ádica.\\ 

Para un campo $K$ denotamos $K_{\p}$ su completado en la plaza finita $\p$. El límite inverso \hbox{$\Rc_{\p}=\varprojlim K_{\p}^{\times} / K_{\p}^{\times,p^{r}}$}
admite una descomposición de la forma $\Rc_{\p}=\Uc_{\p}\pi_{\p}^{\Zp}$, donde $\pi_{\p}$ es un uniformizante de $K_{\p}$, es decir $v_{\p}(\pi_{\p})=1$ y $\Uc_{\p}$ es un subgrupo de unidades que depende de $\p$
$$\Uc_{\p}=\left\lbrace \begin{array}{lc}
U_{\p}^{(1)} & \text{ si } \p\,|\,p, \\
\mu_{\p} & \p\,\nmid\,p ;
\end{array} \right.$$
donde $U_{\p}^{(1)}$ es el grupo de unidades principales de $K_{\p}$ y $\mu_{\p}$ es subgrupo de raíces de la unidad de orden $p^{r}$ (para algún $r$) contenidas en $K_{\p}$.

\begin{ex}\label{ex:aritp3KQ} Sea $p=3$ y $K=\Q$ entonces 
$$\Rc_{3}=(1+3\Z_{3})\times 3^{\Z_{3}},\	\	\	\Rc_{5}= 5^{\Z_{3}}\	\	\	y \	\	\	\Rc_{7}=\{1,\zeta_{3},\zeta_{3}^{2}\}\times 7^{\Z_{3}}.$$
\end{ex}
 
\begin{thm}[Clases de campos local]\label{thm:cclpadic} La aplicación de reciprocidad induce un isomorfismo de $\Zp$-módulos topológicos 
$$\Rc_{\p} \isom \Gal(K_{\p}^{\ab}/K_{\p}),$$
donde $K_{\p}^{\ab}$ es la máxima pro-$p$-extensión abeliana de $K_{\p}$. Además la imagen de $\Uc_{\p}$ se identifica al subgrupo de inercia $I_{\p}\subset \Gal(K_{\p}^{\ab}/K_{\p})$.
\end{thm}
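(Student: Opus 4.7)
The strategy is to deduce this result as the pro-$p$ specialization of classical local class field theory. The plan is to start with the full local reciprocity map and then apply the pro-$p$ completion functor to both sides, checking that the unit part lines up with inertia.

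First, I would recall the classical local reciprocity map. For the local field $K_{\p}$ one has a continuous homomorphism
\[ \rho_{\p}\colon K_{\p}^{\times}\longrightarrow \Gal(K_{\p}^{\mathrm{ab,tot}}/K_{\p}), \]
where $K_{\p}^{\mathrm{ab,tot}}$ denotes the full maximal abelian extension. The image of $\rho_{\p}$ is dense and $\rho_{\p}$ extends uniquely to a topological isomorphism from the profinite completion $\widehat{K_{\p}^{\times}}$ of $K_{\p}^{\times}$ onto $\Gal(K_{\p}^{\mathrm{ab,tot}}/K_{\p})$. Moreover, under $\rho_{\p}$ the full unit group $U_{\p}$ of $\O_{K_{\p}}$ corresponds bijectively to the inertia subgroup $I_{\p}^{\mathrm{tot}}$ of $\Gal(K_{\p}^{\mathrm{ab,tot}}/K_{\p})$, and any uniformizer $\pi_{\p}$ maps to a Frobenius lift.

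Second, I would pass to pro-$p$ quotients on both sides. On the Galois side the maximal pro-$p$ abelian extension $K_{\p}^{\ab}$ is precisely the fixed field of the closed subgroup generated by all elements of order prime to $p$; equivalently, $\Gal(K_{\p}^{\ab}/K_{\p})$ is the maximal pro-$p$ quotient of $\Gal(K_{\p}^{\mathrm{ab,tot}}/K_{\p})$, obtained as $\varprojlim_r \Gal(K_{\p}^{\mathrm{ab,tot}}/K_{\p})/(\cdot)^{p^r}$. On the arithmetic side, by definition
\[ \Rc_{\p}=\varprojlim_{r} K_{\p}^{\times}/K_{\p}^{\times,p^r} \]
is the maximal pro-$p$ quotient of $\widehat{K_{\p}^{\times}}$. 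Since pro-$p$ completion is a functor and the classical reciprocity is an isomorphism of profinite groups, it induces a topological isomorphism $\Rc_{\p}\isomarrow \Gal(K_{\p}^{\ab}/K_{\p})$.

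Third, I would identify the image of $\Uc_{\p}$ with the inertia. The decomposition $\Rc_{\p}=\Uc_{\p}\,\pi_{\p}^{\Zp}$ arises from the decomposition $K_{\p}^{\times}=U_{\p}\times \pi_{\p}^{\Z}$ after pro-$p$ completion: the pro-$p$ completion of $\pi_{\p}^{\Z}$ is $\pi_{\p}^{\Zp}$, and the pro-$p$ completion of $U_{\p}$ equals $U_{\p}^{(1)}$ when $\p\mid p$ (since $U_{\p}=\mu(K_{\p})\times U_{\p}^{(1)}$ with $U_{\p}^{(1)}$ already pro-$p$ and the prime-to-$p$ part of $\mu(K_{\p})$ killed) and equals the $p$-part $\mu_{\p}$ of $\mu(K_{\p})$ when $\p\nmid p$ (since $U_{\p}^{(1)}$ is then pro-prime-to-$p$). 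Because classical reciprocity sends $U_{\p}$ isomorphically to $I_{\p}^{\mathrm{tot}}$, the induced map on pro-$p$ quotients sends $\Uc_{\p}$ isomorphically onto the image of $I_{\p}^{\mathrm{tot}}$ in $\Gal(K_{\p}^{\ab}/K_{\p})$, which is exactly the inertia subgroup $I_{\p}$ of this latter group (inertia being compatible with quotients by normal subgroups of the decomposition group).

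The main obstacle is not any deep computation but rather the careful bookkeeping between three different completions: the profinite completion $\widehat{K_{\p}^{\times}}$, its pro-$p$ quotient $\Rc_{\p}$, and the pro-$p$ quotient of the abelian Galois group. One must justify that taking the maximal pro-$p$ quotient is compatible with the decompositions $K_{\p}^{\times}=U_{\p}\times\pi_{\p}^{\Z}$ and $\Gal(K_{\p}^{\mathrm{ab,tot}}/K_{\p})=I_{\p}^{\mathrm{tot}}\times\widehat{\mathrm{Frob}}^{\,\widehat{\Z}}$, and that the prime-to-$p$ torsion of $U_{\p}$ is precisely what disappears. Once this is in place, everything else is formal and the statement follows by transporting the classical result through the pro-$p$ completion functor; the full proof of classical local reciprocity is invoked as a black box.
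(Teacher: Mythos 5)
Your reduction is correct and matches the paper's intent exactly: the theorem is stated there without proof, as the pro-$p$ reading of classical local class field theory (with a citation to Neukirch, Chap.~IV), which is precisely the black box you invoke together with the formal passage to maximal pro-$p$ quotients and the identification of $\Uc_{\p}$ with the image of the full unit group. Only a cosmetic point: the decomposition should be $U_{\p}\cong\mu_{q-1}\times U_{\p}^{(1)}$ (prime-to-$p$ Teichmüller part), not $\mu(K_{\p})\times U_{\p}^{(1)}$, since the $p$-power roots of unity already lie in $U_{\p}^{(1)}$; this does not affect your conclusion that the pro-$p$ completion of $U_{\p}$ is $U_{\p}^{(1)}$ for $\p\mid p$ and $\mu_{\p}$ for $\p\nmid p$.
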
 
 
\begin{ex} Continuando con el \cref{ex:aritp3KQ}
$$\begin{tikzcd}
    \Q_{3}^{c} \arrow[r,-] & \Q_{3}^{\ab} \arrow[d,-] \\
    \Q_{3} \arrow[u,-] & \Q_{3}^{\nr} \arrow[l,-] 
  \end{tikzcd} \	\	\	\	\	\	
  \begin{tikzcd}
    \Q_{5}^{c}=\Q_{5}^{\nr}=\Q_{5}^{\ab} \arrow[d,-] \\
    \Q_{5}
  \end{tikzcd} \	\	\	\	\	\	
  \begin{tikzcd}
   \Q_{7}^{c}=\Q_{7}^{\nr} \arrow[r,-] & \Q_{7}^{\ab}  \\
    \Q_{7} \arrow[u,-] &  
  \end{tikzcd}
$$
\end{ex} 

Sea $\Pl_{K}$ el conjunto plazas de $K$. Denotamos $\Jc_{K}=\prod_{\p\in \Pl_{K}}\Rc_{\p}$ el producto restringido de los $\Rc_{\p}$, es decir $(x_{\p})_{\p}\in\Jc_{K}$ si $x_{\p}\in \Uc_{\p}$ para casi toda $\p$ (e.g. \cite[Def. 1.1.12]{NSW}). El producto tensorial $\Rc_{K}=\Zp\otimes_{\Z}K^{\times}$ se inyecta en $\Jc_{K}$ vía el encaje diagonal canónico \cite[I.1.1.4]{Jaulent86}.
 
\begin{thm}[Clases de campos global]\label{thm:ccgpadic} La aplicación de reciprocidad induce un isomorfismo de $\Zp$-módulos topológicos
$$\Jc_{K}/\Rc_{K} \isom G_{K}=\Gal(K^{\ab}/K),$$
donde $K^{\ab}$ es la máxima pro-$p$-extensión abeliana de $K$. El subgrupo de descomposición $D_{\p}$ de una plaza $\p$ en $K$ corresponde a la imagen de $\Rc_{\p}$ en $G_{K}$ y el subgrupo de inercia $I_{\p}$ a la imagen de $\Uc_{\p}$.
\end{thm}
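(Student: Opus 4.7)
La estrategia natural es deducir este enunciado del teorema clásico de la teoría de campos de clases global, extrayendo de ambos lados la parte pro-$p$ de manera compatible. El punto de partida es el isomorfismo de reciprocidad clásico $\hat{C}_{K}\isom\Gal(K^{\ab,\text{cl}}/K)$, donde $C_{K}=\mathbb{A}_{K}^{\times}/K^{\times}$ es el grupo de clases de idèles, $\hat{C}_{K}$ denota su compleción profinita y $K^{\ab,\text{cl}}$ es la máxima extensión abeliana de $K$. Como el campo $K^{\ab}$ del enunciado es la máxima pro-$p$-subextensión de $K^{\ab,\text{cl}}$, componiendo la reciprocidad con la proyección canónica obtenemos un isomorfismo entre el máximo pro-$p$ cociente de $\hat{C}_{K}$ y $G_{K}=\Gal(K^{\ab}/K)$.

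El paso central consiste en identificar ese pro-$p$ cociente con $\Jc_{K}/\Rc_{K}$. Para ello se aprovecha que la operación de tomar el máximo pro-$p$ cociente es exacta a derecha y conmuta con productos restringidos; esto permite primero identificar el pro-$p$ cociente de $\mathbb{A}_{K}^{\times}$ con $\Jc_{K}=\prod'_{\p}\Rc_{\p}$, observando que $\Rc_{\p}$ es por construcción el pro-$p$ cociente de $K_{\p}^{\times}$ y que, para casi toda $\p$ (aquellas no dividiendo a $p$ y sin raíces $p$-ésimas de la unidad en $K_{\p}$), este cociente se reduce a $\mu_{\p}=\Uc_{\p}$, lo que garantiza que se preserva la condición de restricción. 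En un segundo momento, el encaje diagonal $K^{\times}\hookrightarrow\mathbb{A}_{K}^{\times}$ induce en el cociente pro-$p$ el encaje $\Rc_{K}=\Zp\tensor_{\Z}K^{\times}\hookrightarrow\Jc_{K}$ mencionado antes del enunciado del teorema. La inyectividad de este encaje es delicada y requiere el teorema de unidades de Dirichlet para controlar la $p$-torsión, junto con aproximación débil para excluir colapsos.

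Las afirmaciones sobre los subgrupos de descomposición e inercia se reducen entonces a la compatibilidad global-local de la reciprocidad. Precisamente, para cada plaza $\p$ se obtiene un cuadrado conmutativo que relaciona la reciprocidad local $\Rc_{\p}\isom\Gal(K_{\p}^{\ab}/K_{\p})$ (el \cref{thm:cclpadic}) con el morfismo canónico $\Rc_{\p}\rightarrow\Jc_{K}/\Rc_{K}\isom G_{K}$; por definición de subgrupo de descomposición, la imagen de $\Rc_{\p}$ es $D_{\p}$, y como el \cref{thm:cclpadic} identifica $\Uc_{\p}$ con el subgrupo de inercia local, la funtorialidad transfiere esa identificación al nivel global, dando $I_{\p}$ como imagen de $\Uc_{\p}$.

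La parte más delicada del plan no es la estrategia general sino dos verificaciones técnicas: primero, que el paso al máximo pro-$p$ cociente conmute efectivamente con el producto restringido que define $\mathbb{A}_{K}^{\times}$ -- en particular que la condición \enquote{pertenecer a $\Uc_{\p}$ para casi toda $\p$} se preserva al tomar cocientes -- y, segundo, que $\Rc_{K}\rightarrow\Jc_{K}$ siga siendo inyectivo. Este segundo punto es donde entra el teorema de unidades de Dirichlet, que controla el núcleo al tensorar con $\Zp$ descomponiendo $K^{\times}$ en torsión (raíces de la unidad) y parte libre. Los detalles completos de esta reducción, que no son triviales, se encuentran por ejemplo en \cite[I.1--2]{Jaulent86}.
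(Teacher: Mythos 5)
El texto no contiene demostración alguna de este teorema: lo enuncia como el resultado principal de la teoría $p$-ádica de campos de clases y remite a \cite{Jaulent86}, de modo que no hay una prueba del propio texto con la cual comparar tu propuesta. Dicho esto, tu estrategia --- deducir el enunciado de la teoría de campos de clases clásica pasando al máximo pro-$p$ cociente del grupo de clases de idèles, y obtener las afirmaciones sobre descomposición e inercia por la compatibilidad local-global con el \cref{thm:cclpadic} --- es exactamente la ruta estándar (la de Jaulent), y como esquema es correcta.

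Dos puntos de tu redacción sí merecen corrección. Primero, no es cierto que casi toda plaza $\p$ carezca de raíces $p$-ésimas de la unidad: por Chebotarev hay una densidad positiva de plazas cuyo campo residual las contiene. Por fortuna esa hipótesis es innecesaria; lo que se usa es que para \emph{toda} $\p\nmid p$ el pro-$p$ cociente del grupo de unidades locales $\O_\p^\times$ es $\mu_\p=\Uc_\p$, mientras que $\Rc_\p$ mismo nunca \enquote{se reduce a $\mu_\p$}, pues siempre contiene el factor $\pi_\p^{\Zp}$; tal como está escrita, tu frase atribuye esa reducción al pro-$p$ cociente de $K_\p^\times$, lo cual es falso, y es justamente la comparación de las condiciones de restricción (unidades clásicas contra $\Uc_\p$) la que requiere la observación correcta. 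Segundo, la inyectividad de $\Rc_K\rightarrow\Jc_K$ no descansa en el teorema de unidades de Dirichlet ni en aproximación débil, sino en el principio local-global para potencias $p^r$-ésimas (la inyección de $K^\times/(K^\times)^{p^r}$ en el producto de los $K_\p^\times/(K_\p^\times)^{p^r}$, sin caso especial de Grunwald--Wang por ser $p$ impar), junto con la reducción a subgrupos finitamente generados de $K^\times$ al escribir un elemento de $\Zp\otimes_\Z K^\times$ como combinación finita. Conviene además señalar que identificar el pro-$p$ cociente de $\mathbb{A}_K^\times/K^\times$ con $\Jc_K/\Rc_K$ esconde la verificación de que la imagen de $\Rc_K$ en $\Jc_K$ es cerrada, para que el cociente sea un $\Zp$-módulo topológico razonable; todos estos detalles están en \cite{Jaulent86}, a donde tú mismo remites, igual que el texto.
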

 
\subsection{Valores absolutos $p$-ádicos}

Una \importante{valuación $p$-ádica} es un epimorfismo de grupos \hbox{$\hat{v}_{\p}:\Rc_{\p}\twoheadrightarrow \Zp$.} Notemos que esta definición de valuación difiere de la definición usual, primero porque toma valores en un grupo profinito y además porque la imagen no es discreta. 
 
De manera similar decimos que un epimorfismo $|\cdot|_{\p}:\Rc_{\p}\twoheadrightarrow 1+p\Zp$ es un \importante{valor absoluto $p$-ádico}. 

El ejemplo siguiente muestra cómo construir una valuación $p$-ádica a partir de la valuación del campo $K_{\p}$.

\begin{ex}\label{ex:valabspadic} Sea $K_{\p}$ la localización de un campo de números $K$ en la plaza $\p$. Sea $v_{\p}:K_{\p}^{\times}\rightarrow \Z$ la valuación ordinaria. Podemos inducir una valuación $p$-ádica de la siguiente forma:
\begin{eqnarray*}
\hat{v}_{\p}:\Rc_{\p} & \rightarrow & \Zp \\
x_{\p}=(x_{i})_{i\in\Nuno} & \longmapsto &\hat{v}_{\p}=(v_{\p}(x_{i}))_{i\in\Nuno}.
\end{eqnarray*}
Las aplicaciones $\hat{v}_{\p}$ son claramente epimorfismos. 

Ahora, es fácil ver que vía la función exponencial $p$-ádica (e.g. \cite[II.5.5]{MR1697859}) las valuaciones definidas arriba dan lugar a valores absolutos $p$-ádicos $|x|_{\p}=\exp(pv_{\p}(x))$.
\end{ex}   
 
Una familia de valores absolutos $p$-ádicos es admisible si el homomorfismo 
\begin{eqnarray*}
\Jc_{K} & \longrightarrow & 1+p\Zp \\
(x_{\p})_{\p\in \Pl_{K}} & \longmapsto & \prod_{\p} |x_{\p}|_{\p}
\end{eqnarray*}
es continuo y el núcleo contiene a $\Rc_{K}$. 

Esta definición no es más que una reinterpretación de la fórmula del producto. 

\begin{ex} Claramente la familia $(v_{\p})_{\p}$ inducida por el \cref{ex:valabspadic} es admisible. 
\end{ex}

\subsection{Teoría de Ramificación}

Recordemos que dada una valuación $p$-ádica $\hat{v}_{\p}$ tenemos una sucesión exacta 
\begin{equation}\label{eq:sucexacvalpad}
1\longrightarrow \widehat{\Uc}_{\p}\longrightarrow \Rc_{\p}\stackrel{\hat{v}_{\p}}{\longrightarrow} \Zp \rightarrow 0, 
\end{equation}
donde $\widehat{\Uc}_{\p}=\ker(\hat{v}_{\p})$. Bajo la correspondencia del \cref{thm:cclpadic} llamamos grupo de inercia asociado a $\hat{v}_{\p}$, a la imagen $\hat{I}_{\p}$ de $\widehat{\Uc}_{\p}$ en $\Gal(K^{\ab}_{\p}/K_{\p})$. Así mismo, denotamos $\widehat{K_{\p}}$ la $\Zp$-extensión de $K_{\p}$ fijada por $\hat{I}_{\p}$, es decir $\widehat{K_{\p}}=(K^{\ab}_{\p})^{\hat{I}_{\p}}$.

$$\begin{tikzcd}[column sep = 2cm]
    \hat{K}_{\p} \arrow[r,-,"\widehat{\Uc}_{\p}\isom\hat{I}_{\p}"] & K_{\p}^{\ab}  \\
    K_{\p} \arrow[u,-] &  
  \end{tikzcd}$$

Sea $L_{\mathfrak{P}}$ una extensión finita de $K_{\p}$ y un primo $\mathfrak{P}$ un primo sobre $\p$. Definimos el índice de ramificación y el grado de inercia de $L_{\mathfrak{P}}/K_{\p}$ como 
$$\hat{e}_{\p}=[L_{\mathfrak{P}}:\widehat{K_{\p}}\cap L_{\mathfrak{P}}]\	\	\	y\	\	\	\hat{f}_{\p}=[\widehat{K_{\p}}\cap L_{\mathfrak{P}}:K_{\p}],$$ 
respectivamente. Decimos que la extensión $L_{\mathfrak{P}}$ es no ramificada con respecto a $\hat{v}_{\p}$ si $\hat{e}_{\p}=1$, de lo contrario decimos que es ramificada con respecto a $\hat{v}_{\p}$. 

Si $L/K$ es una $p$-extensión de campos de números, decimos que $L$ es no ramificada en $\p\in \Pl_{K}$ con respecto a $\hat{v}_{\p}$ si $L_{\mathfrak{P}}/K_{\p}$ es no ramificada para toda plaza $\mathfrak{P}\in \Pl_{L}$ sobre $\p$. Además $L/K$ es no ramificada con respecto a la familia $(\hat{v}_{\p})_{\p}$ si $L$ es no ramificada para toda $\p\in\Pl_{K}$.

Note que para la familia de valuaciones $p$-ádicas definida en el \cref{ex:valabspadic}, el concepto de ramificación clásico coincide con la definición arriba.

\ejercicios

\begin{ejer}\label{ejer:compositum} Sea $L$ el compuesto de todas las $\Zp$-extensiones de un campo local $K_{\p}$. Demuestre que el grupo de Galois $\Gal(L/K_{\p})$ es un $\Zp$-módulo noetheriano libre. 
\end{ejer}

\begin{ejer}\label{ejer:compositumrango} Calcule el rango del módulo del ejercicio anterior cuando $K_{\p}=\Q_{p}$.
\end{ejer}

\begin{ejer}\label{ejer:existenciaZp}
Demuestre que para todo campo local $K_{\p}$ existe una única $\Zp$-extensión ciclotómica. 
\end{ejer}

\begin{ejer}\label{ejer:existenciaUnr} Demuestre que para todo campo local $K_{\p}$ existe una única $\Zp$-extensión no ramificada. 
\end{ejer}

\subsection{El caso logarítmico}

Además del caso clásico, que hemos venido mencionando en los ejemplos, existe un panorama natural en el que podemos considerar las nociones definidas anteriormente. 

El panorama descrito arriba se presenta desde el caso $K=\Q$. Recordemos que en este caso tenemos
$$\Rc_{q}=\left\lbrace \begin{array}{cl}
\mu_{q} q^{\Zp} &  \text{ si } q\neq p,\\
U_{p}^{(1)}p^{\Zp} & \text{ si } p=q.
\end{array}\right.$$
Por lo tanto con la sucesión exacta \eqref{eq:sucexacvalpad} obtenemos que cuando $q\neq p$ existe una única valuación $p$-ádica módulo $\Zp^{\times}$ tal que $v_{q}(q)=1$. Sin embargo, cuando $p=q$ tenemos dos valuaciones $p$-ádicas canónicas. Dado que $U_{p}^{(1)}\isom (1+p\Zp)\isom \Zp$ podemos definir una valuación no clásica como la proyección de $U_{p}^{(1)}$ en $\Zp$ tal que $v_{p}(1+p)=1$. De hecho  
\begin{equation}\label{eq:padicvalQ}
\tilde{v}_{p}(x)=-\dfrac{\Log_{p}x}{p}
\end{equation}
tiene esa propiedad, donde $\Log_{p}$ es el logaritmo de Iwasawa, es decir la extensión del
logaritmo $p$-ádico de la \cref{prop:isom-z-p-log} a $\Qp^{\times}$ con la convención $\Log_{p}(p)=0$. La familia $(\tilde{v}_{q})_{q}$ con $\tilde{v}_{q}=v_{q}$ para $q\neq p$, con $v_{q}$ como en \cref{ex:valabspadic} y con $\tilde{v}_{p}$ definida como en \eqref{eq:padicvalQ}, define una familia de valores absolutos $p$-ádicos admisibles. Además a la ramificación con respecto a esta familia la llamamos \importante{ramificación logarítmica}. 

En todos los casos, el núcleo $\widetilde{\Uc}_{q}$ de $\tilde{v}_{q}$ se identifica al subgrupo de Galois de $\Gal(\Q^{\ab}_{q}/\Q_{q})$ que fija la $\Zp$-extensión ciclotómica $\Q_{q}^{c}$ de $\Q_{q}$. Es decir, la ramificación logarítmica (i.\,e. con respecto a estas valuaciones $p$-ádicas) mide qué tan lejos una extensión $K_{\mathfrak{q}}$ de $\Q_{q}$ está de ser ciclotómica. Notemos que si $q\neq p$ entonces $\Q_{q}^{c}=\Q_{q}^{\nr}$ donde $\Q_{q}^{\nr}$ es la máxima pro-$p$-extensión abeliana no ramificada de $\Q_{q}$. 

La máxima $p$-extensión abeliana de $\Q$ que no es ramificada con respecto a la familia $(\tilde{v}_{q})_{q}$ es la $\Zp$-extensión ciclotómica $\Q^{c}$ de $\Q$. El lector debe notar ahora, que este fenómeno difiere enormemente del caso clásico, donde la máxima $p$-extensión no ramificada de $\Q$, es $\Q$ mismo. 

El fenómeno apenas descrito se reproduce en un campo de números arbitrario. Es decir, haciendo las debidas generalizaciones, se tiene que la máxima $p$-extensión no ramificada con respecto a la familia $(\tilde{v}_{\p})$, donde 
$$\tilde{v}_{\p}(x)= \left\lbrace \begin{array}{cr}
v_{\p}(x) & \text{ si } \p \nmid p \\
-\dfrac{\Log_{p}(\Norm_{\Kp/\Qp}(x))}{\tilde{f}_{p}\Log_{p}(1+p)} & \text{ si } \p\,|\,p;
\end{array}\right.$$
contiene la $\Zp$-extensión ciclotómica de $K$. Por supuesto en el caso clásico esta extensión es finita. La teoría de campos de clases o su versión $p$-ádica, hace corresponder el grupo de Galois de esta extensión sobre $K$ con la $p$-parte del grupo de clases de $K$. 

\begin{defi} Llamamos \emph{extensión localmente ciclotómica}\index[def]{extensión!localmente ciclotómica} a la máxima extensión abeliana logarítmicamente no ramificada $L$, i.\,e. con respecto a la familia $(\tilde{v}_{\p})_{\p\in\Pl_{K}}$.
\end{defi}

Por supuesto, la extensión $L/K$ es de Galois por ser máxima. En particular, la extensión $L/K^{c}$ es de Galois y el subgrupo $\Gal(L/K^{c})$ es de interés. 

\begin{defi} Llamamos \emph{grupo de clases logarítmicas}\index[def]{grupo de clases@grupo de clases!logarítmicas} al grupo de Galois $\Gal(L/K^{c})$ y lo denotamos $\Clog{K}$.
\end{defi}

En el caso $K=\Q$, vimos que $\Clog{\Q}$ es trivial. Análogamente al caso clásico, el grupo de clases logarítmicas juega el papel del grupo de clases en la teoría logarítmica. El grupo de clases logarítmicas es abeliano, ya que la extensión $L$ es abeliana. No obstante, la finitud de $\Clog{K}$ es desconocida en general.

\begin{con}[Gross-Kuz'min] El grupo de Galois $\Gal(L/K)$ es un $\Zp$-módulo de rango $1$ sobre $\Zp$. 
\end{con}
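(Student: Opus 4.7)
El plan es primero reducir la conjetura a una afirmación sobre la finitud del grupo de clases logarítmicas $\Clog{K}$, y luego traducir esta finitud a una cuestión de independencia lineal $p$-ádica de logaritmos de unidades.

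Primero notaría que, por la definición de $L$ como la extensión abeliana máxima logarítmicamente no ramificada y de $\Clog{K}=\Gal(L/K^{c})$, tenemos la sucesión exacta de $\Zp$-módulos
\[ 0 \longrightarrow \Clog{K} \longrightarrow \Gal(L/K) \longrightarrow \Gal(K^{c}/K) \longrightarrow 0. \]
Como $\Gal(K^{c}/K)\isom\Zp$ es libre de rango $1$, la afirmación \enquote{$\Zp$-rango $1$} es equivalente a que $\Clog{K}$ sea finito (es decir, de $\Zp$-rango $0$). Esto traslada todo el peso del problema al estudio aritmético de $\Clog{K}$.

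Segundo, usaría el teorema $p$-ádico de clases de campos global (\cref{thm:ccgpadic}) para dar una descripción explícita. Como los subgrupos de inercia logarítmicos $\tilde I_\p$ se corresponden con $\widetilde{\Uc}_{\p}=\ker(\tilde v_\p)$ bajo la reciprocidad local (\cref{thm:cclpadic}), obtendría
\[ \Gal(L/K)\isom\frac{\Jc_K}{\Rc_K\cdot\prod_{\p}\widetilde{\Uc}_{\p}}. \]
Tomando el cociente por los $\widetilde{\Uc}_{\p}$ y empleando los epimorfismos $\tilde v_\p\colon\Rc_\p\twoheadrightarrow\Zp$, el problema se reduce a analizar el conúcleo del morfismo diagonal
\[ \Rc_K \longrightarrow \bigoplus_{\p}^{\mathrm{restr}}\Zp,\quad x\longmapsto(\tilde v_\p(x))_\p. \]

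Tercero, combinaría la fórmula del producto $\sum_\p\tilde v_\p(x)=0$ (que se cumple por la admisibilidad de la familia $(\tilde v_\p)_\p$) con el teorema de las unidades de Dirichlet. La fórmula del producto ubica a la imagen de $\Rc_K$ dentro de un hiperplano de corango $1$ de $\bigoplus_\p \Zp$; este corango $1$ es justamente lo que se manifiesta como el \enquote{rango $1$} de la conjetura. La finitud de $\Clog{K}$ se reduce entonces a que la imagen de $\Zp\tensor_\Z\O_K^\times$ en $\bigoplus_{\p\mid p}\Zp$ tenga $\Zp$-rango máximo (igual al rango de Dirichlet $r+c-1$).

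El obstáculo principal sería precisamente esta afirmación de independencia lineal $p$-ádica: los elementos $\tilde v_\p(\varepsilon)$ para $\p\mid p$ involucran expresiones de la forma $\Log_p(\Norm_{K_\p/\Qp}(\varepsilon))$ con $\varepsilon$ recorriendo un sistema fundamental de unidades, y hay que descartar cualquier relación $\Zp$-lineal no trivial entre ellos. Cuando $K/\Q$ sea abeliano, esta independencia se deduce del teorema de Baker--Brumer sobre la independencia lineal $p$-ádica de logaritmos de números algebraicos, y de hecho Greenberg y Jaulent han establecido la conjetura en ese caso. Para extensiones $K/\Q$ generales no se conoce ningún análogo de Baker--Brumer que sirva, razón por la cual la conjetura de Gross--Kuz'min sigue abierta en plena generalidad y constituye, junto con la conjetura de Leopoldt, uno de los problemas centrales de la aritmética $p$-ádica.
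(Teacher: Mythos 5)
Primero, una observación de contexto: el enunciado es una conjetura abierta y el texto no ofrece demostración alguna; sólo señala que se conoce para $K/\Q$ abeliano mediante los argumentos de trascendencia de Brumer, además de la evidencia computacional. Tu propuesta tampoco pretende cerrarla, y el esquema general que planteas (reducir a la finitud de $\Clog{K}$, traducir esto vía la teoría $p$-ádica de campos de clases a una afirmación de rango máximo de un regulador $p$-ádico, e invocar Baker--Brumer en el caso abeliano) es el camino estándar y coincide en espíritu con lo que el texto comenta. Los dos primeros pasos están esencialmente bien; para la equivalencia \enquote{rango $1$ $\iff$ $\Clog{K}$ finito} conviene añadir que $\Gal(L/K)$ es un $\Zp$-módulo noetheriano, lo cual se obtiene de que $L$ está contenida en la extensión $M$ del \cref{thm:Munramleop}.

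El tercer paso contiene un error genuino. El grupo pertinente no es $\Zp\tensor_\Z\O_K^\times$ sino el de las $p$-unidades $\Zp\tensor_\Z\O_K[1/p]^\times$ (las $S$-unidades con $S$ las plazas arriba de $p$), y \enquote{rango máximo} significa $s-1$ con $s=\#\{\p\mid p\}$, no el rango de Dirichlet $r+c-1$. En efecto, en tu descripción $\Gal(L/K)$ es el conúcleo de $\Rc_K\rightarrow\bigoplus_\p\Zp$; separando las plazas fuera de $p$ (cuyo conúcleo es, salvo grupos finitos, $\Zp\tensor\Cl_S(K)$, que es finito) se obtiene que el rango buscado es $s$ menos el rango de la imagen de las $S$-unidades en $\bigoplus_{\p\mid p}\Zp$ bajo $(\tilde{v}_\p)_{\p\mid p}$. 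Para una unidad ordinaria $\varepsilon$ se tiene $\prod_{\p\mid p}\Norm_{K_\p/\Qp}(\varepsilon)=\Norm_{K/\Q}(\varepsilon)=\pm1$, así que la imagen de $\O_K^\times$ satisface la relación lineal de la fórmula del producto y su rango es a lo más $\min(r+c-1,\,s-1)$; por ejemplo, si $K$ es cúbico totalmente real y $p$ es inerte, esa imagen es nula aunque la conjetura puede ser cierta (y lo es en el caso abeliano), de modo que tu criterio no es equivalente a la finitud de $\Clog{K}$. Lo que escribiste es más bien la formulación de la conjetura de Leopoldt (unidades, rango $r+c-1$, pero con valores en $\prod_{\p\mid p}U^{(1)}_\p$ vía los logaritmos de los encajes locales), mientras que Gross--Kuz'min es la afirmación análoga para las $p$-unidades y el mapeo mucho más grueso $(\Log_p\Norm_{K_\p/\Qp})_{\p\mid p}$, con rango objetivo $s-1$. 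Con esa corrección, el resto de tu esquema (fórmula del producto, Baker--Brumer en el caso abeliano, apertura en el caso general) sí apunta en la dirección correcta.
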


Ya que la extensión localmente ciclotómica $L$ contiene a la $\Zp$-extensión ciclotómica $K^{c}$ de $K$, la conjetura de Gross-Kuz'min implica que el grupo de clases logarítmicas $\Clog{K}$ es un grupo finito. Por lo tanto, en este sentido el grupo $\Clog{K}$ es realmente análogo al grupo de clases. \\

La conjetura de Gross-Kuz'min es válida, usando ciertos argumentos de trascendencia de Brumer, en el caso en que $K/\Q$ es una extensión abeliana. Recientemente, un algoritmo implementado por Belabas y Jaulent \cite{Belabas&Jaulent16} en Pari/GP  permite calcular el grupo de clases logarítmicas para un primo $p$ y un campo de números $K$. En este caso, cuando el programa provee un resultado, entonces verifica la conjetura para el par $(p,K)$. Esto sugiere que quizás sea suficiente contar con un resultado teórico que verifique la conjetura para el grado de $K$ suficientemente grande y para primos suficientemente grandes. Entonces los demás casos podrían ser eventualmente verificados computacionalmente. 

\begin{ex}\label{ex:clogvarios} Calculamos las siguientes tablas con ayuda de las funciones \texttt{bnfinit} y \texttt{bnflog} en Pari/GP \cite{PARI2}. La primera columna corresponde a la ecuación que satisface el elemento primitivo $\alpha$. En la segunda columna se muestra el grupo de clases de $K=\Q(\alpha)$. En las columnas restantes se muestran los grupos de clase logarítmicos de $\Q(\alpha)$ con respecto a los primeros tres números primos regulares y los dos primeros irregulares. La notación $[m_{1},\ldots, m_{r}]$ con $m_{i}\geq 1$, describe el grupo abeliano $\Z/m_{1}\Z\times\cdots\times\Z/m_{r}\Z$. 

\begin{center}
\begin{tabular}{|c|c|c|c|c|c|c|}
\cline{3-7}
\multicolumn{2}{c|}{ }  & $p=2$ & $p=3$  & $p=5$  &  $p=37$  &  $p=59$ \\ \hline
$K=\Q(\alpha)$ & $Cl(K)$ & $\Clog{K}$ & $\Clog{K}$ & $\Clog{K}$ & $\Clog{K}$ & $\Clog{K}$ \\ \hline
$\alpha^{2}+86$ & $[10]$ & $1$ & $[3]$  & $1$ & $1$ & $1$
\\ \hline 
$\alpha^{2}-7726$ & $[3]$ & $1$ & $[3,3]$ & $1$ & $1$ & $1$
\\ \hline
$\alpha^6 + 3\alpha^5 + 6\alpha^4 + 123\alpha^3$  & \multirow{2}{*}{$[6,6]$} & \multirow{2}{*}{$[2,2]$} & \multirow{2}{*}{$[3]$} & \multirow{2}{*}{$[5,5]$} & \multirow{2}{*}{$1$} & \multirow{2}{*}{$1$} 
\\
$+ 180\alpha^2 - 171\alpha + 3249$ &  &  &  &  &  & 
\\ \hline
$\sum_{i=0}^{36}\alpha^{i}$ & $[37]$ & $1$ & $1$ & $1$ & $[37]$ & $1$
\\ \hline
$\sum_{i=0}^{58}\alpha^{i}$ & $[3*59*233]$ & $1$ & $1$ & $1$ & $1$ & $[59]$ 
\\ \hline
\end{tabular}
\end{center}
\end{ex}

En particular desde los primeros dos campos cuadráticos podemos observar que el grupo de clases logarítmicas no es un subgrupo del grupo de clases y la $p$-parte del grupo de clases no es un subgrupo del grupo de clases logarítmicas. Además, podemos ver que tampoco hay una relación entre el tamaño de estos dos. No obstante, en la siguiente sección veremos que se puede estudiar estos grupos a lo largo de $\Zp$-extensiones siguiendo el espíritu de Iwasawa. 

\ejercicios

\begin{ejer} Descargue Pari/GP en su ordenador y calcule el grupo de clases logarítmicas de su campo de números favorito con respecto a los primeros primos.  
\end{ejer}

\subsection{Teoría de Iwasawa}

En el \cref{sec:grupos-de-clases} vimos el extraordinario descubrimiento de Iwasawa sobre la $p$-parte de los grupos de clases de los subcampos $K_{r}$ de una $\Zp$-extensión (\cref{thm:iwasawa-ThmdIwa}). Es muy natural preguntarse si existe una fórmula que describa el orden de los grupos de clases logarítmicas para $r$ suficientemente grande. La respuesta es sí, y en esta sección contamos su historia. \\

Primero, recordemos que para todo campo de números $K$, su $\Zp$-extensión ciclotómica $K^{c}$ es logarítmicamente no ramificada. Es decir, contrario a la \cref{prop:ramificZpext}, en este caso todas las plazas de $K$ son logarítmicamente no ramificadas. Así es, incluso las plazas arriba de $p$. Sin embargo, Jaulent demostró en su tesis de doctorado que el grupo de clases logarítmico del nivel $K_{r}$ tiene una interpretación como un cociente de $\Gal(H'_{\infty}/K^{c})$, donde $H'_{\infty}$ es la máxima extensión pro-$p$ abeliana no ramificada $p$-descompuesta de $K^{c}$. El grupo $X'_{\infty}:=\Gal(H'_{\infty}/K^{c})$ es isomorfo al límite inverso $\varprojlim C'_{r}$ de los $p$-grupos de clases cocientes de $C_{r}$ por el subgrupo generado por las plazas arriba de $p$. Para $r\geq 0$ la máxima extensión abeliana pro-$p$ logarítmicamente no ramificada $L_{r}$ contiene $K^{c}$ y además está contenida en $H'_{\infty}$. De hecho, $L_{r}$ es la subextensión de $H'_{\infty}$ fijada por $\w_{r}X'_{\infty}$. Es decir, el grupo de clases logarítmicas del nivel $K_{r}$ está dado por
\[\Clog{r}:=\Clog{K_{r}}=\Gal(L_{r}/K^{c})\isom X'_{\infty}/\w_{r}X'_{\infty}\] 
para $r$ suficientemente grande.

El módulo $X'_{\infty}$ es un $\LL$-módulo noetheriano y de torsión. Aplicando el teorema de estructura de $\LL$-módulos noetherianos obtenemos invariantes $\mut$, $\lat$. Finalmente, un argumento clásico de descenso como en la demostración del \cref{thm:iwasawa-ThmdIwa}, nos da el resultado para el caso $K^{c}/K$. \\

En el caso de una $\Zp$-extensión no ciclotómica es distinto \cite{Villanueva18}. La manera en que está definida la familia de valuaciones $(\tilde{v}_{\p})_{\p}$, hace que las $\Zp$-extensiones $K_{\infty}$ de un campo de números $K$ sean logarítmicamente no ramificadas fuera de $p$. Asumiendo que la conjetura de Gross-Kuz'min es cierta para $K$, entonces al menos una plaza $\p\in\Pl_{K}$ arriba de $p$ ramifica en $K_{\infty}$. Este hecho es clave, pues a partir de un nivel $K_{r}$ en la $\Zp$-extensión $K_{\infty}$, esta última es independiente de la $\Zp$-extensión ciclotómica de $K_{r}$ y por lo tanto de su máxima extensión logarítmicamente no ramificada. Además, existe un nivel en que todas las plazas que ramifican logarítmicamente en $K_{\infty}/K$ son totalmente logarítmicamente ramificadas. 

El escenario arriba descrito es paralelo al escenario que teníamos en el \cref{sec:grupos-de-clases} cuando demostramos el teorema de Iwasawa. Como ya hemos visto, la máxima extensión abeliana logarítmicamente no ramificada es infinita. No obstante, al ajustar las técnicas de descenso, podemos describir el grupo de clases logarítmicas como un cociente de un $\LL$-módulo noetheriano y de torsión (suponiendo la conjetura de Gross-Kuz'min en todos los niveles de $K_{\infty}$). Es decir los resultados de Jaulent y uno de nosotros demuestran el Teorema de Iwasawa en el caso logarítmico.

\begin{thm}\label{thm:iwasawa-ThmdIwaLOG} Sea $K_{\infty}/K$ una $\Zp$-extensión tal que la conjetura de Gross-Kuz'min es válida. Sea $\Clog{r}$ el grupo de clases logarítmicas del nivel $K_{r}$ y sea $p^{\tilde{e}_{r}}$ su orden. Entonces existen enteros $\mut,\lat\geq 0$ y $\nut$ tal que 
$$\tilde{e}_{r}=\mut \ell^{r} + \lat r + \nut, \	\	\	\text{ para } r \text{ suficientemente grande. }$$
\end{thm}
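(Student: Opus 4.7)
Mi estrategia es imitar la demostración del \cref{thm:iwasawa-ThmdIwa} (el teorema de Iwasawa clásico) con el grupo de clases logarítmicas $\Clog{r}$ desempeñando el papel de la $p$-parte del grupo de clases $C_r$. El punto crucial será exhibir un $\LL$-módulo noetheriano y de torsión $\widetilde{X}_\infty$ tal que $\Clog{r}$ se obtenga como un cociente controlado de $\widetilde{X}_\infty$ por un submódulo asociado a los polinomios $\w_r$ de la \cref{defi:omega-r}. La dificultad principal radica en construir este módulo de manera adecuada, distinguiendo entre el caso ciclotómico y el no ciclotómico, ya que la ramificación logarítmica de la $\Zp$-extensión se comporta de manera intrínsecamente diferente en cada situación.

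En el caso ciclotómico $K_\infty=K^{c}$, tal como se indica en el texto precedente, se define
\[ X'_\infty=\Gal(H'_\infty/K^c), \]
donde $H'_\infty$ es la máxima extensión abeliana pro-$p$ no ramificada $p$-descompuesta de $K^{c}$. Este módulo, isomorfo al límite inverso de los cocientes $C'_r$ de las $p$-partes de los grupos de clases por los subgrupos generados por las plazas arriba de $p$, resulta ser un $\LL$-módulo noetheriano y de torsión mediante argumentos paralelos a los del \cref{thm:y-noetheriano} y del \cref{cor:x-noetheriano-torsion}. Además, un argumento debido a Jaulent establece el isomorfismo $\Clog{r}\isom X'_\infty/\w_r X'_\infty$ para $r$ suficientemente grande, lo que permite aplicar directamente el teorema de estructura (\cref{thm:estructura}) junto con la \cref{prop:iwaforeleme} y obtener la fórmula asintótica del mismo modo que en la demostración del \cref{thm:iwasawa-ThmdIwa}.

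En el caso no ciclotómico la hipótesis de Gross-Kuz'min es esencial, pues garantiza que al menos una plaza $\p\mid p$ ramifica logarítmicamente en $K_\infty/K$. De ello se deduce, procediendo como en la demostración del \cref{cor:x-noetheriano-torsion}, que existe un nivel $K_{r_0}$ desde el cual todas las plazas logarítmicamente ramificadas en $K_\infty/K_{r_0}$ son totalmente logarítmicamente ramificadas. Esto permite definir $\widetilde{X}_\infty$ como el grupo de Galois sobre $K_\infty$ de la máxima extensión abeliana pro-$p$ logarítmicamente no ramificada, y establecer un análogo de la \cref{prop:Crdescenso}, es decir un isomorfismo $\Clog{r}\isom \widetilde{X}_\infty/\widetilde{T}_r$ con $\widetilde{T}_r=(\w_r/\w_{r_0})\widetilde{T}_{r_0}$, donde $\widetilde{T}_{r_0}$ codifica, junto con $\w_{r_0}\widetilde{X}_\infty$, la contribución de los grupos de inercia logarítmica al nivel $r_0$. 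La conclusión se obtiene entonces aplicando el teorema de estructura y la \cref{prop:iwaforeleme} tal y como en el caso clásico.

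El obstáculo principal reside en verificar que $\widetilde{X}_\infty$ sea efectivamente noetheriano y de torsión asumiendo únicamente la conjetura de Gross-Kuz'min en todos los niveles de $K_\infty$, lo cual exige un análisis delicado de la interacción entre ramificación ordinaria y ramificación logarítmica en las plazas sobre $p$. Asimismo, la verificación de que la descomposición $\widetilde{T}_r=(\w_r/\w_{r_0})\widetilde{T}_{r_0}$ sea correcta requiere un estudio cuidadoso de los grupos de inercia logarítmica en $\widetilde{X}_\infty$, análogo al tratamiento de los elementos $a_i=\sigma_i\sigma_1^{-1}$ en la demostración de la \cref{prop:Crdescenso}. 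Una vez asegurados estos resultados estructurales, el paso final se reduce a una aplicación mecánica de la teoría de estructura de $\LL$-módulos, exactamente como en la demostración del teorema clásico.
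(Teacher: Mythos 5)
Tu propuesta sigue esencialmente el mismo camino que el texto: en el caso ciclotómico, la identificación $\Clog{r}\isom X'_{\infty}/\w_{r}X'_{\infty}$ con $X'_{\infty}$ noetheriano y de torsión, y en el caso no ciclotómico, la conjetura de Gross-Kuz'min junto con un descenso ajustado al estilo de la \cref{prop:Crdescenso}, para concluir con el teorema de estructura y la \cref{prop:iwaforeleme}, tal como en el \cref{thm:iwasawa-ThmdIwa}. Cabe señalar que el propio texto sólo esboza esta estrategia y remite los detalles (la noetherianidad y torsión del módulo logarítmico y la verificación del descenso) a los trabajos de Jaulent y \cite{Villanueva18}, de modo que tu esquema coincide con el nivel de detalle ahí presentado.
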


Este teorema es un pilar de las interacciones de la teoría de Iwasawa y la aritmética logarítmica. 

Es sumamente interesante que pese a las diferencias de los grupos de clases clásico y logarítmico, se puedan estudiar y presenten propiedades semejantes desde el punto de vista de la teoría de Iwasawa. En la siguiente sección vamos a ahondar en las semejanzas y diferencias entre los dos casos.

\subsection{Relaciones entre invariantes clásicos y logarítmicos}

Si $K_{\infty}$ es una $\Zp$-extensión de $K$, denotamos $\mu(K_{\infty}/K)$ y $\lambda(K_{\infty}/K)$ (resp. $\mut(K_{\infty}/K)$ y $\lat(K_{\infty}/K)$) sus invariantes de Iwasawa clásicos (resp. logarítmicos). En el \cref{ex:clogvarios}, dimos evidencia de que cuando se habla de los grupos de clases clásicos o logarítmicos, cualquier cosa puede pasar. Sin embargo, como ya hemos visto, el teorema de Iwasawa (\cref{thm:iwasawa-ThmdIwa}) tiene su análogo logarítmico (\cref{thm:iwasawa-ThmdIwaLOG}). Será que ¿Los invariantes serán iguales o totalmente diferentes? Aún no hay una respuesta general a esta pregunta. En esta sección hacemos un compendio de los resultados conocidos al respecto. Recomendamos al lector que compare y se auxilie de la \cref{sec:invariantesmulam}.

Históricamente, los primeros resultados entre la relación de los invariantes fueron descubiertos por Jaulent. En su tesis de doctorado demostró que 
$$\mu(K^{c}/K)=\mut(K^{c}/K).$$
Es decir, el $p$-subgrupo de Sylow y el grupo de clases logarítmicas con respecto a $p$ crecen al mismo ritmo exponencial. No fue sino años más tarde que este resultado recobró una fuerza extraordinaria gracias a los trabajos de Ferrero y Washington (\cref{thm:ferrero-washington}).

\begin{thm}[Ferrero-Washington logarítmico] Sea $K$ una extensión abeliana de $\Q$, y sea $K^{c}/K$ su $\Zp$-extensión ciclotómica. Entonces
$$\mut(K^{c}/K) = 0.$$
\end{thm}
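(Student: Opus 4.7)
El plan es completamente directo: se trata de una consecuencia inmediata de dos resultados ya expuestos en el texto. Primero invocaría la igualdad de Jaulent mencionada justo antes del enunciado,
\[ \mut(K^{c}/K) = \mu(K^{c}/K), \]
válida para toda $\Zp$-extensión ciclotómica $K^{c}/K$ de un campo de números arbitrario (y demostrada en su tesis doctoral, proveniente de una comparación entre la estructura del $\LL$-módulo $X_{\infty}$ del \cref{sec:propiedades-x-ll} y la del análogo logarítmico $X'_{\infty}=\Gal(H'_{\infty}/K^{c})$, ambos con el mismo invariante $\mu$ puesto que el primero es un cociente del segundo por un submódulo controlado por las plazas sobre $p$). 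Segundo, bajo la hipótesis de que $K/\Q$ es una extensión abeliana, aplicaría el \cref{thm:ferrero-washington} de Ferrero-Washington, que afirma que $\mu(K^{c}/K)=0$. Combinando ambas igualdades se concluye
\[ \mut(K^{c}/K) = \mu(K^{c}/K) = 0. \]

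En este sentido, el enunciado no requiere ninguna demostración original dentro del presente texto: la terminología \enquote{Ferrero-Washington logarítmico} es justificada porque el resultado clásico se transfiere automáticamente a la teoría logarítmica vía el teorema de Jaulent. La parte no trivial del argumento ya está empacada en esos dos insumos; en particular el \cref{thm:ferrero-washington}, cuya demostración usa funciones $L$ $p$-ádicas y queda fuera del alcance de este texto, es el ingrediente profundo. La única sutileza que mencionaría explícitamente, para que el lector no quede con la impresión de que \enquote{nada hay que hacer}, es recordar que la coincidencia $\mu=\mut$ sólo se conoce para la $\Zp$-extensión ciclotómica; para $\Zp$-extensiones no ciclotómicas (tratadas por el \cref{thm:iwasawa-ThmdIwaLOG}) no hay un análogo inmediato, y de hecho allí ni siquiera se sabe que $\mut$ sea siempre nulo cuando lo es el $\mu$ clásico.

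Si quisiera esbozar el porqué de $\mu=\mut$ en el caso ciclotómico, el argumento clave es el siguiente: los módulos $X_{\infty}$ y $X'_{\infty}$ se relacionan por una sucesión exacta
\[ \bigoplus_{\p\mid p} \Zp(\p) \longrightarrow X_{\infty} \longrightarrow X'_{\infty} \longrightarrow 0, \]
donde la suma corre sobre las plazas de $K^{c}$ sobre $p$ y cada $\Zp(\p)$ es el $\Zp$-módulo generado por la clase de $\p$ en el cociente. Como las plazas sobre $p$ se descomponen sólo de manera finita en $K^{c}$, el módulo de la izquierda es finitamente generado sobre $\Zp$ y en particular tiene invariante $\mu$ nulo. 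La multiplicatividad de $\mu$ en sucesiones exactas de $\LL$-módulos noetherianos de torsión entonces da $\mu(X_{\infty})=\mu(X'_{\infty})$, que es la identidad buscada. El obstáculo principal, si uno quisiera dar una demostración completamente autocontenida, sería justamente verificar esta identificación de $\Clog{r}$ con $X'_{\infty}/\w_{r}X'_{\infty}$ y la finitud $\Zp$-rango del núcleo de arriba; pero para los efectos de este teorema eso ya viene empaquetado en los resultados citados.
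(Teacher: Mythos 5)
Tu propuesta es correcta y coincide esencialmente con lo que hace el texto: el teorema se enuncia sin demostración propia inmediatamente después de recordar la igualdad de Jaulent $\mu(K^{c}/K)=\mut(K^{c}/K)$ para la $\Zp$-extensión ciclotómica, de modo que el resultado se obtiene combinándola con el \cref{thm:ferrero-washington} clásico, exactamente como propones (y tu esbozo del porqué de $\mu=\mut$ vía el cociente de $X_{\infty}$ por el submódulo generado por las clases de las plazas sobre $p$, finitamente generado sobre $\Zp$ por la descomposición finita, es la idea correcta detrás del resultado de Jaulent). Sólo corrige el desliz de redacción al inicio: es $X'_{\infty}$ el que es cociente de $X_{\infty}$, no al revés, tal como lo refleja correctamente la sucesión exacta que escribes después.
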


Cuando $K_{\infty}/K$ es una extensión no ciclotómica, nos inspiramos en el trabajo de Greenberg para dar una respuesta parcial (ver \cref{sec:invariantesmulam}).

Sea $\Delta(K)$ el conjunto de todas las $\Zp$ extensiones de $K$. Recordemos que la conjetura de Leopoldt (\cref{conj:Leopoldt}) afirma que este conjunto consta de una sola extensión si $K$ es totalmente real, de lo contrario es infinito y el compuesto es generado por $c+1$ $\Zp$-extensiones, donde $c$ es el número de encajes complejos de $K$. Resulta que $\Delta(K)$ contiene un subconjunto denso $\Delta^{0}(K)$ que consiste de las $\Zp$-extensiones en las cuales las plazas $\p\in \Pl_{K}$ arriba de $p$ son finitamente descompuestas. Es decir, los subgrupos de descomposición asociados tienen índice finito (\cite{MR332712}).  

En \cite{Villanueva18}, demostramos que si $K_{\infty}\in \Delta^{0}(K)$ entonces 
$$\mu(K_{\infty}/K)=\mut(K_{\infty}/K).$$
Es decir, en un conjunto denso del conjunto de las $\Zp$-extensiones se replica el comportamiento de los invariantes $\mu$ y $\mut$. Es conjeturado que en el resto de las extensiones debe de suceder un fenómeno similar, pero esto aún no ha sido demostrado.

Por otro lado, el comportamiento del invariante $\lambda$ es más errante. Como demostramos en \cite{Villanueva18} para un campo cuadrático imaginario $K=\Q(\sqrt{d})$ cuyo discriminante no es dividido por $p$, tenemos
$$\lambda(K^{c}/K) =   \left\lbrace\begin{array}{cl}
\lat(K^{c}/K)+1 & \text{ si } \left(\frac{d}{p}\right)=1,\\
\lat(K^{c}/K)   & \text{ si } \left(\frac{d}{p}\right)=-1.
      \end{array}\right.
$$

Además, en el caso de que $K_{\infty}/K$ no sea la $\Zp$-extensión ciclotómica y $K_{\infty}\in\Delta^{0}(K)$, demostramos que $\lambda$ y $\lat$ difieren en función de los factores ciclotómicos que aparecen en sus respectivos $\LL$-módulos noetherianos y de torsión. \\

Para decir más acerca de los invariantes de Iwasawa y sus relaciones en los dos contextos, tenemos que profundizar en las ideas de Greenberg y sumergirnos en el universo de las ideas topológicas de Kleine (ver \cite{Villanueva19}). 

El conjunto $\Delta(K)$ de las $\Zp$-extensiones de un campo de números $K$, admite una topología. Dados una $\Zp$-extensión y un $r\geq 0$, los conjuntos 
$$\Delta(K_{\infty},r):=\{K'_{\infty}\in\Delta(K)\,|\, [K_{\infty}\cap K'_{\infty}:K]\geq p^{r} \}$$
forman una base para la topología de Greenberg. El siguiente teorema es el análogo logarítmico al \cref{thm:Greenbound}.

\begin{thm} Sea $K_{\infty}$ una $\Zp$-extensión en $\Delta^{0}(K)$, es decir las plazas arriba de $p$ son finitamente descompuestas. Entonces con respecto a la topología de Greenberg:
\begin{itemize}
\item[(i)] El invariante $\mut$ es acotado en una vecindad de $K_{\infty}$.
\item[(ii)] Si $\mut(K_{\infty}/K)=0$, entonces en una vecindad de $K_{\infty}$ los invariantes $\mut$ son nulos y los invariantes $\lat$ acotados.
\end{itemize}
\end{thm}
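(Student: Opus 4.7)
El plan es imitar la demostración del teorema clásico de Greenberg (\cref{thm:Greenbound}) en el contexto logarítmico, usando el $\LL$-módulo $X'_{\infty}:=\Gal(H'_{\infty}/K^{c})$ que, según la interpretación de Jaulent mencionada en la sección anterior, calcula los grupos de clases logarítmicas mediante $\Clog{s}\isom X'_{\infty}/\w_{s}X'_{\infty}$ para $s\gg 0$. La idea central es comparar $X'_{\infty}(K_{\infty})$ con $X'_{\infty}(K'_{\infty})$ para cada $K'_{\infty}\in\Delta(K_{\infty},r)$, con $r$ suficientemente grande, aprovechando que las dos $\Zp$-extensiones coinciden hasta el nivel $F:=K_{r}=K'_{r}$ y que todas las plazas arriba de $p$ son finitamente descompuestas tanto en $K_{\infty}$ como en $K'_{\infty}$.

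Primero demostraría (i). Escojo $r$ suficientemente grande para que: (a) las plazas arriba de $p$ sean totalmente logarítmicamente ramificadas en $K_{\infty}/F$ y $K'_{\infty}/F$ al mismo tiempo, y (b) los subgrupos de descomposición estén estabilizados. En esta situación, el módulo $X'_{\infty}(K_{\infty}/F)$ admite una presentación explícita como cociente del análogo logarítmico del módulo que aparece en la \cref{prop:Crdescenso}, controlada únicamente por el número (finito) de plazas logarítmicamente ramificadas y por subgrupos de inercia que son isomorfos a $\Gamma$. Construyendo un $\LL$-módulo auxiliar que depende sólo de $K_{\infty}$ y que se sobreyecta canónicamente sobre $X'_{\infty}(K'_{\infty}/F)$ para todo $K'_{\infty}$ en la vecindad, obtengo una cota uniforme del invariante $\mut$, pues $\mu$ no crece bajo sobreyecciones.

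Para (ii), si $\mut(K_{\infty}/K)=0$, entonces $X'_{\infty}(K_{\infty})$ es un $\Zp$-módulo finitamente generado según la versión logarítmica de la \cref{prop:XnoettorsiirgFpfinsiiacotFp}. La sobreyección construida en (i) muestra que $X'_{\infty}(K'_{\infty})$ también es un $\Zp$-módulo de rango finito para $K'_{\infty}$ suficientemente cerca de $K_{\infty}$, lo cual fuerza $\mut(K'_{\infty}/K)=0$. La cota sobre $\lat(K'_{\infty}/K)$ se sigue midiendo el $\Zp$-rango del módulo auxiliar: este rango está dominado por $\lat(K_{\infty}/K)$ más una contribución controlada proveniente de las plazas cuyo comportamiento logarítmico distingue a $K'_{\infty}$ de $K_{\infty}$, contribución que es finita gracias a la hipótesis $K_{\infty}\in\Delta^{0}(K)$.

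La dificultad principal estará en la etapa de comparación. En el caso clásico, la ramificación se controla mediante la teoría local de campos de clases aplicada a la máxima pro-$p$-extensión abeliana no ramificada fuera de $p$; en el caso logarítmico, el análogo es la extensión localmente ciclotómica, cuya inercia local corresponde al subgrupo del grupo de descomposición que fija la $\Zp$-extensión ciclotómica local $\widehat{K_\p}$ y no la máxima extensión no ramificada. Este desplazamiento del concepto de inercia hace que la comparación entre $K_{\infty}$ y $K'_{\infty}$ sea más delicada, ya que su comportamiento local en $p$ puede divergir fuera del nivel $F$. Además, para que los invariantes $\mut(K'_{\infty}/K)$ y $\lat(K'_{\infty}/K)$ estén bien definidos en toda la vecindad, se necesitará asumir la validez uniforme de la conjetura de Gross-Kuz'min, o demostrar que esta propagación es automática en $\Delta^{0}(K)$.
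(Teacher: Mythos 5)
Antes de nada: el texto no demuestra este teorema; lo enuncia en el capítulo de generalizaciones como el análogo logarítmico del \cref{thm:Greenbound} de Greenberg, remitiendo a la literatura (\cite{MR332712}, \cite{Villanueva19}). No hay, pues, una demostración del propio texto con la cual contrastar tu propuesta, así que la evalúo por sus méritos internos.

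La arquitectura general (imitar a Greenberg: controlar $\mut$ con datos de un nivel finito común y, si $\mut(K_{\infty}/K)=0$, pasar a la finitud del $\Zp$-rango para acotar $\lat$) es la correcta, pero hay una laguna genuina justo donde está todo el trabajo. El paso clave que afirmas ---\guillemotleft un $\LL$-módulo auxiliar que depende sólo de $K_{\infty}$ y que se sobreyecta canónicamente sobre $X'_{\infty}(K'_{\infty}/F)$ para todo $K'_{\infty}$ en la vecindad\guillemotright--- no se construye, y tal como está enunciado es dudoso: el módulo de Iwasawa logarítmico de $K'_{\infty}$ depende genuinamente de la parte de $K'_{\infty}$ por encima del nivel común $F$ (y cada uno es módulo sobre un álgebra $\LL$ distinta), de modo que no basta con postular la sobreyección; lo único uniforme en toda la vecindad $\Delta(K_{\infty},r)$ es el dato del nivel finito (el campo $K_{r}$ y su grupo $\Clog{K_{r}}$), y un argumento tipo Greenberg debe extraer de ahí las cotas mediante un teorema de control (análogo logarítmico de la \cref{prop:Crdescenso}) combinado con Nakayama topológico (\cref{lem:nakayama}), o bien trabajando con un módulo sobre el álgebra de Iwasawa multivariable de $\Gal(\widehat{K}/K)$ y especializando; nada de eso aparece en tu esbozo. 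Segundo problema concreto: eliges $r$ \guillemotleft tal que las plazas arriba de $p$ sean totalmente logarítmicamente ramificadas en $K_{\infty}/F$ y $K'_{\infty}/F$ al mismo tiempo\guillemotright, pero pertenecer a $\Delta(K_{\infty},r)$ sólo fija $K'_{\infty}$ hasta el nivel $r$: su comportamiento de ramificación y descomposición por encima de ese nivel no está controlado por la vecindad, y justamente ahí es donde debe intervenir la hipótesis $K_{\infty}\in\Delta^{0}(K)$; tú mismo señalas esta dificultad en el último párrafo, pero no la resuelves, y sin resolverla ni (i) ni (ii) quedan demostrados. Finalmente, la identificación $\Clog{s}\isom X'_{\infty}/\w_{s}X'_{\infty}$ con $X'_{\infty}=\Gal(H'_{\infty}/K^{c})$ es la del caso ciclotómico; para las $K'_{\infty}$ de la vecindad (en general no ciclotómicas) el módulo pertinente es el asociado a la máxima extensión abeliana logarítmicamente no ramificada de cada $K'_{\infty}$, cuyo descenso requiere Gross--Kuz'min en todos los niveles, como correctamente adviertes.
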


Modificando la topología de Kleine al contexto logarítmico, obtenemos que los invariantes no son solamente acotados sino localmente máximos. 

Por último, no está demás decir que suponiendo la conjetura de Gross-Kuz'min, al igual que en el caso clásico (\cref{thm:baba-monky}), el invariante $\mut$ está acotado en $\Delta(K)$. A pesar de que la igualdad entre los invariantes $\mu(K_{\infty}/K)$ y $\mut(K_{\infty}/K)$, sólo está demostrada para $K_{\infty}\in\Delta^{0}(K)$.

\subsection{Perspectivas}

La aritmética logarítmica presenta preguntas nuevas en la teoría de números. Es una rama relativamente nueva con muchas preguntas que contestar, brevemente discutiremos algunas de estas. 

En el \cref{ex:clogvarios} calculamos el grupo de clases logarítmicas para los primeros dos campos $p$-ciclotómicos irregulares, es decir, cuyo grupo de clases tiene una $p$-componente no trivial. Los cálculos muestran que sus grupos de clases logarítmicas en $p$ tampoco son triviales. ¿Será que este es un fenómeno general? y de ser el caso ¿Cuál es su relación con los números de Bernoulli? Es decir, ¿El criterio de Kummer (\cref{thm:kummer-crit}) se puede expresar en términos logarítmicos? 

Es conjeturado que el producto de los grupos de clase logarítmicos $\prod_{p} \Clog{K}$ de un campo de números cuadrático real $K$ es finito. Si la conjetura es cierta, ¿Por qué se da este fenómeno? Además, ¿Será que este fenómeno se replica a otras familias de campos de números? El lector especializado, podrá coincidir que el problema tiene cierta similaridad con la conjetura de Tate-Shafarevich.

Con respecto a la teoría de Iwasawa clásica, sería interesante estudiar el comportamiento del grupo de clases logarítmico en extensiones de Lie $p$-ádicas, es decir extensiones de Galois $E/K$, con $\Gal(E/K)$ un grupo de Lie $p$-ádico. Por ejemplo, cuando el grupo de Galois de la torre $E/K$ es isomorfo a $\Zp\rtimes\Zp$. Además sería interesante, estudiar las relaciones de los invariantes de Iwasawa clásicos y logarítmicos más a fondo. Como lo mencionamos anteriormente, los invariantes $\mu$ y $\mut$ coinciden en un subconjunto denso de las $\Zp$-extensiones, pero ¿Qué pasa fuera de este conjunto denso? Por otro lado los invariantes $\lambda$ y $\lat$ difieren en función de ciertos polinomios ciclotómicos en el caso de que la extensión esté en $\Delta^{0}$, de nuevo la pregunta es ¿Qué pasa fuera de $\Delta^{0}$?. En particular trabajo en curso del segundo autor con Kleine, estudia fenómenos de ramificación mixta.

Finalmente, es de interés estudiar la conjetura principal en su contexto logarítmico. Dado el $\LL$-módulo $\widetilde{X}_{\infty}$ de naturaleza logarítmica, es decir que corresponde a la máxima extension abeliana logarítmicamente no ramificada de una $\Zp$-extensión $K_{\infty}/K$, ¿Existirán elementos $h$ y $\mu$, como en la conjetura principal, tal que exista un pseudo-isomorfismo de $\LL$-módulos
$$\LL/(h\mu)\rightarrow \widetilde{X}_{\infty}$$
y tal que $\mu$ satisfaga cierta fórmula de interpolación? En particular, esto representa tener una intuición y conocimiento profundo de la teoría de Iwasawa y la aritmética logarítmica. 

El lector podrá darse cuenta de que muchos de los avances de la teoría de números clásica que se presentarán en la siguiente sección pueden llevarse al contexto logarítmico y quizás arrojar relaciones sorprendentes.

\newpage
\section[Generalizaciones de la Conjetura Principal]{Generalizaciones de la Conjetura Principal: Curvas elípticas, Motivos y la Conjetura Equivariante de los Números de Tamagawa}\label{sec:generalisaciones-conjetura-princial}

En esta sección final contamos de qué manera los fenómenos alrededor de la Conjetura
Principal que conocimos en los capítulos precedentes se extienden a nuevos terrenos. Paso a paso vamos a ver como
reinterpretarlas y generalizarlas hasta alcanzar una ampliación sustancial. En este camino
no siempre podemos ser completamente exactos porque esto nos obligaría a introducir bastantes nociones nuevas que probablemente llenarían otro libro. Nuestro objetivo es que el lector pueda
tener una intuición del camino a seguir.

En (casi) toda esta sección sea $K_r=\Q(\mu_{p^r})$ para $r\in\Ncero\cup\{\infty\}$ y
$G=\Gal(K_\infty/\Q)$ como en el \cref{sec:mc}.

\subsection{Proemio}

La Conjetura Principal conecta una función \enquote{$L$} compleja -- la función zeta de
Riemann -- a un objeto aritmético -- los grupos de clases. Existen muchas funciones
complejas más que se llaman funciones $L$ \dots\ ¿Tendrán estas funciones también un par
$p$-ádico? ¿Estarán relacionadas con objetos aritméticos? En caso afirmativo
¿Cuáles son estos objetos aritméticos?

Una de las primeras generalizaciones propuestas es la formulación de una Conjetura
Principal para curvas elípticas hecha por Mazur en los años 1970. En este caso tenemos la función $L$ de
Hasse y Weil, que de hecho tiene un análogo $p$-ádico que veremos más adelante. En el lado
algebraico, el papel de los grupos de clases será jugado por grupos de Selmer de la curva
elíptica. Esta formulación, que además es paralela a la Conjetura Principal original de
Iwasawa, resultó ser correcta: fue demostrada en muchos casos por Skinner y Urban en 2003. Este proceso no se detuvo en las curvas elípticas, en realidad se podía formular una
Conjetura Principal para objetos mucho más generales: los denominados motivos, que vamos a
mencionar a grandes rasgos más adelante. En el lado algebraico, Greenberg, Bloch y Kato
definieron grupos de Selmer en gran generalidad alrededor de 1990. Por otro lado, en 1988
Coates y Perrin-Riou formularon conjeturas sobre la existencia de funciones $L$ $p$-ádicas
para ciertos motivos. Estos desarrollos llevaron a la formulación de una Conjetura Principal
para motivos hecha por Greenberg y a una versión mucho más general hecha por Fukaya y Kato en
2006. Sin embargo, de la mayoría de estas generalizaciones hasta ahora no tenemos
demostraciones.

En este capítulo primero vamos a estudiar grupos de Selmer, que son los protagonistas del
lado algebraico de la Teoría de Iwasawa, y explicaremos su conexión con objetos clásicos como
grupos de clases. Luego hablaremos sobre funciones $L$ complejas en gran generalidad. Como primer contacto con estos conceptos expondremos la aplicación a curvas elípticas, que son un objeto de gran
interés en la geometría aritmética, y veremos cómo formular una Conjetura Principal para
ellas. Después de esto discutiremos las funciones $L$ $p$-ádicas en esta situación porque
aquí entrarán las formas modulares en el cuadro. Finalmente esbozamos las generalizaciones de todo lo anterior y la manera en que sumergen en un océano aún más grande. 

Hay que tener un poco de cuidado con algunos encajes, por eso aclaramos esto inmediatamente.  Sea $K$ un campo de números
con un encaje en $\Qbar$, y recuerde que ya fijamos encajes
$\Qbar\hookrightarrow\overline\Q_\ell$ para cada primo $\ell$ y
$\Qbar\hookrightarrow\C$. Estas elecciones inducen varias más, como explicamos
ahora. Primero, ya mencionamos que vía restricción obtenemos inclusiones de grupos
$\G\R\hookrightarrow\GQ$ y $\G{\Q_\ell}\hookrightarrow\GQ$. Esto entonces fija un subgrupo
de inercia de $\GQ$ para cada primo $\ell$, que es el núcleo de la aplicación de
$\G{\Q_\ell}$ al grupo absoluto de Galois del campo residual $\F_\ell$. Lo denotamos como
$I_\ell$; su campo fijo es la máxima extensión no ramificada $\Q_\ell^\nr$. El encaje
$K\hookrightarrow\Qbar$ fija encajes $K\hookrightarrow\C$ y $K\hookrightarrow\overline\Q_\ell$, además para cada primo $\ell$ el núcleo de 
\[ \O_K\hookrightarrow\O_{\overline\Q_\ell}
\twoheadrightarrow\overline{\F}_{\ell} \] es un ideal primo de $\O_{K}$ y por lo tanto una plaza $\lambda\mid\ell$ de $K$. Los encajes
también fijan inclusiones de grupos $\Gal(\Qbar/K)\hookrightarrow\GQ$ y
$\Gal(\overline\Q_\ell/K_\lambda)\hookrightarrow\G{\Q_\ell}$. Dicho esto, a partir de
ahora siempre asumiremos que cada campo de números está encajado en $\Qbar$, y recordaremos
estas elecciones.

\subsection{Representaciones de Galois y grupos de Selmer}

Los grupos de Selmer fueron originalmente introducidos para curvas elípticas (o
variedades abelianas), pero ahora existe una definición bastante general. Al especializar esta definición con los parámetros correctos recobramos la definición de los grupos de clases, esto motiva naturalmente el uso de
grupos de Selmer para las generalizaciones de la Conjetura Principal.

A partir de aquí suponemos algunos conocimientos extras, por ejemplo asumimos que el lector
conoce la teoría de cohomología de Galois, es decir la cohomología de cocadenas continuas
de grupos de Galois con coeficientes en módulos topológicos, como es explicada por ejemplo
en \cite[Cap.\ I, II, esp.\ II.7]{NSW}. Si $K$ es un campo entonces escribimos la
cohomología de su grupo absoluto de Galois $\G K$ como $\HL^*(K,-)$ en lugar de
$\HL^*(\G K,-)$. Si $K$ es una extensión finita de $\Q$ o de $\Q_\ell$
entonces denotamos $K^\nr$ su máxima extensión no ramificada. Es decir, en el caso de una
extensión $K_\lambda/\Q_\ell$ la cohomología $\HL^*(K_\lambda^\nr,-)$ es la cohomología del
subgrupo de inercia $I_\lambda$ en $\G{K_\lambda}$.

\begin{defi}
  Sea $V$ un espacio vectorial de dimensión finita sobre algún campo topológico $L$ con una
  acción continua de $\GQ$. Entonces $V$ se llama una \emph{representación de Galois}\index[def]{representación!de Galois}
  $L$-lineal (o con coeficientes en $L$). Después de escoger una base, esto puede ser visto
  como un morfismo continuo de grupos \[ \rho\colon\GQ\rightarrow\GL_n(L) \] con $n=\dim V$.

  Sea $L/\Qp$ una extension finita con anillo de enteros $\O$. Un $\O$-submódulo $T$ de $V$
  se llama \define{retículo estable} si $T$ es compacto, estable bajo la acción de $\GQ$ y
  $V=L\tensor_\O T$. La representación $V$ se llama \emph{no ramificada}\index[def]{representación!no ramificada} en una plaza $v$
  de $K$ si el subgrupo de inercia $I_v$ actúa trivialmente en $V$. Se llama \emph{no
    ramificada en casi todos lados}\index[def]{representación!no
    ramificada en casi todos lados} si es ramificada solo en una cantidad finita de plazas
  de $K$.
\end{defi}

Por continuidad, cada representación de Galois con coeficientes en una extensión finita de
$\Qp$ contiene un retículo estable (\cref{ejer:reticulo-estable})

En esta sección en la mayoría de los casos tendremos $L=\Qp$. Además, normalmente el retículo
es canónicamente dado -- de hecho, primero conocemos el retículo $T$ y
luego definimos $V=L\tensor_\O T$.

\begin{remark}
  Vía restricción obtenemos acciones de $\G{\Q_\ell}$ y de $\G\R$ en $V$ y $T$.
\end{remark}

\begin{ex}
  Un ejemplo muy importante es el siguiente. Sea
  \[ \Zp(1)=\varprojlim_{r\in\Nuno}\mu_{p^r} \] como en la \cref{defi:zpuno}, que es un
  $\Zp$-módulo compacto que es no canónicamente isomorfo a $\Zp$ con una acción continua de
  $\GQ$. Definimos $\Qp(1):=\Qp\tensor_\Zp\Zp(1)$. Entonces $V=\Qp(1)$ y $T=\Zp(1)$ es un
  ejemplo de una representación de Galois $\Qp$-lineal y un retículo estable.  Esta
  representación es ramificada solo en $p$, así que es no ramificada en casi todos lados.
\end{ex}

Ahora introducimos los grupos de Selmer. La idea es que queremos un grupo que mide
\enquote{la diferencia entre el comportamiento local y global de un objeto aritmético}. Por
ejemplo, si $K$ es un campo de números entonces cada ideal de $\O_K$ se vuelve un ideal
principal en cada completación $\O_{K_v}$ para los primos $v$ de $K$, pero en general no
debe ser un ideal principal en $\O_K$. La diferencia entre la situación local y global es
medida por el grupo de clases (que esencialmente es un grupo de Selmer, véase la
\cref{prop:selmer-class-group} abajo). Los objetos aritméticos generales para cuales
introducimos grupos de Selmer son representaciones de Galois, y usamos la cohomología de
Galois local y global para definirlos.

\begin{defi}\label{defi:selmer-general}
  Sea $L/\Q$ una extensión finita con anillo de enteros $\O$. Sea $V$ una representación de
  Galois con coeficientes en $L$ que es no ramificada en casi todos lados y $T$ un retículo
  estable. Para un campo de números $K$, encajado en $\Qbar$ de tal manera que su grupo absoluto de
  Galois $\G K$ sea un subgrupo de $\GQ$, definimos los siguientes grupos.
  \begin{enumerate}
  \item Sea $v$ una plaza de $K$. Definimos un subgrupo del grupo $\HL^1(K_v,V)$ de la
    cohomología local de Galois como
    \[ \Hf^1(K_v,V) =
      \begin{cases}
        \ker(\HL^1(K_v,V)\rightarrow\HL^1(K^\nr_v,V)), & v\nmid p\infty,\\
        \ker(\HL^1(K_v,V)\rightarrow\HL^1(K_v,V\tensor_\Qp\Bcris)), & v\mid p,\\
        0, & v\mid\infty
      \end{cases}
    \]
    los mapeos siendo la restricción.  Aquí, $\Bcris$ es un cierto anillo sobre cual no
    podemos decir mucho.\footnote{El anillo $\Bcris$ fue definido por Fontaine y es uno de
      los \define{anillos de períodos $p$-ádicos} de la teoría de Hodge $p$-ádica, que no
      explicamos aquí (remitimos a \cite{BrinonConradPAdicHodgeTheory} para esto). Es una
      $\Qp$-álgebra topológica completa con una acción continua de $\GQp$ y algunas
      estructuras más, de manera que $V\tensor_\Qp\Bcris$ tiene una acción de $\GQp$
      diagonalmente.}  Usando esto definimos un subgrupo de $\HL^1(K_v,V/T)$ para cada plaza
    $v$ como
    \[ \Hf^1(K_v,V/T)=\im(\Hf^1(K_v,V)\rightarrow\HL^1(K_v,V/T)). \]
  \item Definimos un subgrupo del grupo $\HL^1(K,V/T)$ de la cohomología global de Galois como
    \begin{align*}
      \Sel(K,V/T)&=\{c\in\HL^1(K,V/T) \mid \forall v\text{ plaza}\colon
                    \mathrm{res}_v(c)\in\Hf^1(K_v,V/T)\}\\
                  &= \ker\Big(\HL^1(K,V/T)\longrightarrow\prod_{v\text{
                    plaza}}\frac{\HL^1(K_v,V/T)}{\Hf^1(K_v,V/T)}\Big)
    \end{align*}
    con $\mathrm{res}_v\colon\HL^1(K,V/T)\rightarrow\HL^1(K_v,V/T)$ el mapeo de restricción.
    Esto es un $\O$-módulo discreto que se llama el \define{grupo de Selmer} de $V/T$ (sobre
    $K$).
  \end{enumerate}
  Más precisamente estos grupos de Selmer a veces se llaman \define{grupos de Selmer de
    Bloch y Kato} porque ellos los introdujeron en \cite{MR1086888}; también existen otras
  variantes con diferentes subgrupos locales en lugar de los $\Hf^1$, pero los de Bloch y
  Kato se comportan bien y son los que se usan en las generalizaciones de la Conjetura
  Principal.
\end{defi}

 Notemos que aunque los llamamos
\enquote{grupos de Selmer}, realmente son $\O$-módulos.

\begin{remark}\label{nota:selmer-warum-so-komisch-def}
  En la definición de arriba, se podría preguntar por qué no definimos
  \[ \tilde\HL_{\mathrm f}^1(K_v,V/T)=\ker(\HL^1(K_v,V/T)\rightarrow\HL^1(K_v^\nr,V/T)) \]
  en lugar de
  \[ \Hf^1(K_v,V/T)=\im(\Hf^1(K_v,V)\rightarrow\HL^1(K_v,V/T)) \] para $v\nmid
  p\infty$. Estos dos grupos en general son diferentes. El segundo es un subgrupo del
  primero y siempre es divisible porque $\Hf^1(K_v,V)$ es un $L$-espacio vectorial. De hecho
  se cumple que
  \[ \Hf^1(K_v,V/T) = \tilde\HL_{\mathrm f}^1(K_v,V/T)_{\text{div}}, \] donde escribimos
  $(\cdot)_{\text{div}}$ como los elementos $p$-divisibles en un $\Zp$-módulo abeliano
  \cite[Lem.\ I.3.5 (i)]{MR1749177}. En particular, si $\tilde\HL_{\mathrm f}^1(K_v,V/T)$ es
  divisible entonces sí tenemos la igualdad
  \[  \Hf^1(K_v,V/T)=\ker(\HL^1(K_v,V/T)\rightarrow\HL^1(K_v^\nr,V/T)). \]
  En este caso podemos describir el grupo de Selmer como
  \begin{multline}\label{eqn:selmer-si-divisible}
    \Sel(K,V/T)= \ker\bigg(\HL^1(K,V/T)\longrightarrow \\
    \prod_{v\nmid
      p\infty}{\HL^1(I_v,V/T)}\times\prod_{v\mid\infty}{\HL^1(K_v,V/T)}\times\prod_{v\mid
      p}\frac{\HL^1(K_v,V/T)}{\Hf^1(K_v,V/T)}\bigg).
  \end{multline}
\end{remark}

Para explicar las relaciones de todo esto con lo anterior, empecemos con calcular el grupo
de Selmer en el caso más simple posible. Sea $\O=\Zp$, $L=\Qp$, $T=\Zp$ y $V=\Qp$ con la
acción trivial de $\GQ$.

\begin{prop}\label{prop:selmer-class-group}
  Sea $K$ un campo de números. Entonces tenemos un isomorfismo canónico
  \[ \Sel(K,\Qp/\Zp) \isom \Cl(K)(p)^\vee ,\]
  donde $\Cl(K)(p)$ es la $p$-parte del grupo de clases de $K$ y $(-)^\vee$ denota el dual
  de Pontryagin.
\end{prop}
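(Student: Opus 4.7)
El plan es identificar primero $\HL^1(K,\Qp/\Zp)$ con $\Hom_{\mathrm{cont}}(\G K,\Qp/\Zp)$ usando que la acción de $\GQ$ en $V=\Qp$ y en $T=\Zp$ es trivial; luego interpretar las condiciones locales de Bloch--Kato como la condición de no ramificación en cada plaza finita; y finalmente aplicar la teoría de campos de clases (\cref{thm:campo-de-hilbert}) para obtener el isomorfismo con $\Cl(K)(p)^\vee$.

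Empezaría observando que, como $\Qp/\Zp$ es un $\G K$-módulo trivial, tenemos $\HL^1(K,\Qp/\Zp)=\Hom_{\mathrm{cont}}(\G K,\Qp/\Zp)$, y cada homomorfismo continuo tiene imagen finita en $\Qp/\Zp$ (porque $\G K$ es compacto y $\Qp/\Zp$ es discreto de torsión), así que factoriza por un cociente cíclico $p$-primario de $\G K^{\mathrm{ab}}$. Después analizaría las condiciones locales. En $v\mid\infty$: como $p$ es impar, $\HL^1(K_v,\Qp/\Zp)=0$ automáticamente, ya que si $K_v=\C$ el grupo de Galois es trivial y si $K_v=\R$ entonces $\Hom(\Z/2\Z,\Qp/\Zp)=0$. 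En $v$ finita con $v\nmid p$: $\Hf^1(K_v,\Qp)=\HL^1_{\mathrm{nr}}(K_v,\Qp)$ es un $\Qp$-espacio vectorial, y el grupo análogo $\HL^1_{\mathrm{nr}}(K_v,\Qp/\Zp)\isom\Hom_{\mathrm{cont}}(\widehat\Z,\Qp/\Zp)\isom\Qp/\Zp$ es divisible; por la \cref{nota:selmer-warum-so-komisch-def} esto garantiza $\Hf^1(K_v,\Qp/\Zp)=\HL^1_{\mathrm{nr}}(K_v,\Qp/\Zp)$, es decir los caracteres triviales en el subgrupo de inercia $I_v$. En $v\mid p$: la representación trivial $\Qp$ es cristalina y no ramificada, así que un hecho estándar de teoría de Hodge $p$-ádica da $\Hf^1(K_v,\Qp)=\HL^1_{\mathrm{nr}}(K_v,\Qp)$ (verificable comparando dimensiones con la fórmula de Euler--Poincaré de Bloch--Kato), de donde otra vez $\Hf^1(K_v,\Qp/\Zp)=\HL^1_{\mathrm{nr}}(K_v,\Qp/\Zp)$.

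Combinando lo anterior, $\Sel(K,\Qp/\Zp)$ es el subgrupo de $\Hom_{\mathrm{cont}}(\G K,\Qp/\Zp)$ formado por los caracteres triviales en toda inercia finita, es decir aquellos que factorizan por el grupo de Galois de la máxima pro-$p$ extensión abeliana no ramificada de $K$. Por el \cref{thm:campo-de-hilbert}, este grupo de Galois es canónicamente isomorfo a la parte pro-$p$ del grupo de clases, $\Cl(K)(p)$. Por lo tanto,
\[\Sel(K,\Qp/\Zp)\isom\Hom(\Cl(K)(p),\Qp/\Zp)=\Cl(K)(p)^\vee.\]

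La parte técnicamente más delicada es la identificación $\Hf^1(K_v,\Qp)=\HL^1_{\mathrm{nr}}(K_v,\Qp)$ para $v\mid p$, cuya definición de Bloch--Kato involucra el anillo de periodos $\Bcris$. Para la representación trivial el cálculo es inmediato porque $\Qp$ es cristalina no ramificada, pero esta identificación falla en general para representaciones más complicadas y constituye una parte no trivial de la teoría de grupos de Selmer; en nuestro contexto alcanza con citar este hecho estándar o verificarlo por un conteo dimensional.
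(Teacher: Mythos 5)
Tu propuesta es correcta y sigue esencialmente el mismo camino que la demostración del texto: identificar $\HL^1(K,\Qp/\Zp)$ con caracteres continuos de $\G K$, reducir las condiciones locales a la no ramificación (usando la divisibilidad de $\Hom(\widehat\Z,\Qp/\Zp)$ para $v\nmid p$ vía la \cref{nota:selmer-warum-so-komisch-def}, el cálculo de Bloch--Kato en $v\mid p$, y $p\neq2$ en las plazas arquimedianas), y concluir con el \cref{thm:campo-de-hilbert}. La única diferencia es de organización (el texto dualiza la sucesión exacta $\bigoplus_v I_v\rightarrow\G K^{\ab}\rightarrow\Gal(H/K)\rightarrow0$ en lugar de caracterizar directamente el grupo de Selmer), y en $v\mid p$ el texto cita la misma identificación que tú atribuyes, correctamente, a un hecho estándar verificable por conteo de dimensiones.
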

\begin{proof}
  Usamos el \cref{thm:campo-de-hilbert} que dice que el grupo de clases $\Cl(K)$ es
  canónicamente isomorfo al grupo de Galois $\Gal(H/K)$ del campo de clases de Hilbert $H$
  de $K$, que es la extensión máxima abeliana no ramificada. Por definición tenemos una
  sucesión exacta de grupos profinitos 
  \begin{equation*}
    \bigoplus_{v\text{ plaza}} I_v\rightarrow\G K^\ab\rightarrow\Gal(H/K)\rightarrow 0
  \end{equation*}
  donde $I_v$ es el grupo de inercia en la plaza $v$ de $K$ (si $v$ es una plaza
  arquimediana entonces $I_v$ es de orden $2$ generado por la conjugación compleja en esta
  plaza).

  Aplicamos el funtor $\Hom_{\Zp}(-,\Qp/\Zp)$ a esta sucesión y usamos que esto es
  lo mismo que $\HL^1(-,\Qp/\Zp)$ porque la acción de todos los grupos en $\Qp/\Zp$ es
  trivial. Además usamos que para cada abeliano grupo finito $A$ tenemos que
  $\Hom(A,\Qp/\Zp)=\Hom(A(p),\Qp/\Zp)$.
  Así obtenemos
  \begin{equation}\label{eqn:sucesion-cl-selmer}\tag{$*$}
    0\rightarrow\Gal(H/K)(p)^\vee\rightarrow\HL^1(K,\Qp/\Zp)\rightarrow\prod_v\HL^1(I_v,\Qp/\Zp). 
  \end{equation}
  
  Queremos usar la descripción en \eqref{eqn:selmer-si-divisible}. Para esto tenemos que
  verificar que $\ker(\HL^1(K_v,\Qp/\Zp)\rightarrow\HL^1(K_v^\nr,\Qp/\Zp))$ es divisible
  para los primos $v\nmid p$. Pero como $\HL^1(-,\Qp/\Zp)=\Hom_{\Zp}(-,\Qp/\Zp)$, los
  elementos en este núcleo son los homomorfismos de $\G{K_v}$ a $\Qp/\Zp$ que son triviales en
  $I_v$, es decir son los que se factorizan a través de $\G{K_v}/I_v\cong\widehat\Z$. Pero
  $\Hom_{\Zp}(\widehat\Z,\Qp/\Zp)\cong\Qp/\Zp$ porque cada tal homomorfismo es únicamente
  determinado por la imagen de $1\in\widehat\Z$. Este grupo es claramente divisible.
  
  Es decir, la sucesión \eqref{eqn:sucesion-cl-selmer} es casi la misma que
  \eqref{eqn:selmer-si-divisible}, las diferencias están en las plazas $v\mid p$ y
  $v\mid \infty$.  Para las plazas $v\mid\infty$ observamos que en el producto a la derecha
  en ambas sucesiones los grupos son triviales porque $I_v$ es de orden $2$ en este caso y
  $p\neq2$, y $\G{K_v}$ es de orden $1$ o $2$. Falta ver que para las plazas $v\mid p$
  tenemos
  \begin{equation*}
    \ker(\HL^1(K_v,\Qp)\rightarrow\HL^1(I_v,\Qp)) =
    \ker(\HL^1(K_v,\Qp)\rightarrow\HL^1(K_v,\Bcris)).
  \end{equation*}
  Una demostración de esto se encuentra en \cite[Ex.\ 3.9]{MR1086888} (teniendo en cuenta el
  \cref{ejer:unramified-morphisms}); la omitimos aquí porque ni siquiera explicamos que es
  $\Bcris$.
\end{proof}

\begin{defi}\label{defi:x-selmer}
  Sea $(K_r)_r$ la torre infinita de campos que usamos también en el \cref{sec:mc}, es decir
  $K_r=\Q(\mu_{p^r})$ para $r\in\Ncero\cup \{\infty\}$, y sea $G=\lim_r\Gal(K_r/\Q)$ su grupo de
  Galois. Fijamos encajes compatibles de todos los $K_r$ en $\Qbar$. Además sea $V$ una
  representación de Galois con coeficientes en $L$ y $T$ un retículo estable. Entonces definimos
  \[ \mathrm X(V/T) = \varprojlim_{r\in\Nuno}\Sel(K_r,V/T)^\vee =
    (\varinjlim_{r\in\Nuno}\Sel(K_r,V/T))^\vee \] donde los mapeos
  $\Sel(K_r,V/T)\rightarrow\Sel(K_{r+1},V/T)$ son las restricciones (véase el
  \cref{ejer:selmer-restriccion}).  Esto es un $\LL(G)$-módulo compacto (aquí $\LL(G)$
  es el álgebra de Iwasawa con coeficientes en $\O$, los enteros de $L$).
\end{defi}

De la \cref{prop:selmer-class-group} obtenemos inmediatamente:

\begin{cor}\label{cor:x-es-x}
  $\mathrm X(\Qp/\Zp)=X_\infty$ es el módulo de la
  \cref{sec:conjetura-principal}.
\end{cor}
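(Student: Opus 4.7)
The plan is to combine Proposition \ref{prop:selmer-class-group} with Pontryagin duality and the description of $X_\infty$ as an inverse limit of class groups along the tower.

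First I would apply Proposition \ref{prop:selmer-class-group} at each finite level $K_r$ to obtain canonical isomorphisms $\Sel(K_r,\Qp/\Zp)\isom C_r^\vee$, where $C_r$ is the $p$-part of $\Cl(K_r)$. The next and most important step is to verify that these isomorphisms are compatible with the transition maps of the two systems: on the Selmer side the transition is the restriction map $\Sel(K_r,\Qp/\Zp)\to\Sel(K_{r+1},\Qp/\Zp)$ (see \cref{ejer:selmer-restriccion}), while on the class-group side the tower $(C_r)_r$ is taken with respect to the relative norm $\mathrm N\colon C_{r+1}\to C_r$, cf.\ \eqref{eqn:diagrama-x-c}. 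Tracing through the proof of Proposition \ref{prop:selmer-class-group}, the isomorphism $\Sel(K_r,\Qp/\Zp)\isom C_r^\vee$ arises by dualizing the class field theory identification $C_r\isom\Gal(H_r/K_r)(p)$ together with the exact sequence describing $\Gal(H_r/K_r)$ as the maximal abelian pro-$p$ quotient of $\G{K_r}$ unramified everywhere. The restriction map on cohomology is dual to the map on Galois groups induced by inclusion, and under the class field theory isomorphism this corresponds precisely to the norm map on ideal class groups. This is exactly the compatibility that appears, for instance, in the conmutativity of \eqref{eqn:diagrama-x-c}.

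Once that compatibility is in place, the conclusion is essentially formal. Taking the colimit we get
\[
  \varinjlim_{r\in\Nuno}\Sel(K_r,\Qp/\Zp)\isom\varinjlim_{r\in\Nuno}C_r^\vee\isom\Big(\varprojlim_{r\in\Nuno}C_r\Big)^\vee\isom X_\infty^\vee,
\]
where the middle isomorphism is the standard duality between direct and inverse limits of compact/discrete abelian groups, and the final identification uses the description $X_\infty\isom\varprojlim_r C_r$ recalled after \eqref{eqn:diagrama-x-c}. Applying Pontryagin duality (Theorem \ref{thm:pontryagin}) once more yields
\[
  \mathrm X(\Qp/\Zp)=\Big(\varinjlim_{r\in\Nuno}\Sel(K_r,\Qp/\Zp)\Big)^\vee\isom X_\infty^{\vee\vee}\isom X_\infty.
\]

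Finally, one should check that all of these isomorphisms are $\LL(G)$-linear, i.e.\ compatible with the Galois action of $G=\Gal(K_\infty/\Q)$ on both sides. This follows because the Artin reciprocity isomorphism is $\Gal(K_r/\Q)$-equivariant (cf.\ \cref{ejer:artin-symbol-eq}), the restriction maps in cohomology are equivariant for the appropriate Galois actions, and Pontryagin duality is functorial with respect to continuous group actions. The main obstacle in the argument is the careful bookkeeping of the transition maps in step two: one must check that the duality between restriction on cohomology and the norm on class groups holds in the form needed, and that the isomorphism of Proposition \ref{prop:selmer-class-group} is natural enough along the tower to pass to the limit -- everything else is formal manipulation of limits and duals.
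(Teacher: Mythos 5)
Tu propuesta es correcta y sigue esencialmente el mismo camino que el texto: identificar $\Sel(K_r,\Qp/\Zp)\isom C_r^\vee$ nivel por nivel mediante la \cref{prop:selmer-class-group}, comprobar que las restricciones del lado de Selmer corresponden, vía la teoría de campos de clases y la conmutatividad de \eqref{eqn:diagrama-x-c} (válida para $r$ suficientemente grande, lo cual basta por cofinalidad), a las normas $C_{r+1}\to C_r$, y concluir pasando al límite con dualidad de Pontryagin; esto es exactamente la verificación que el texto deja como ejercicio.
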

\begin{proof}
  Lo que falta verificar es que los mapeos con respeto a los cuales tomamos los límites son los
  mismos, es decir, que el diagrama
  \begin{equation*}
    \begin{tikzcd}[column sep=0pt, row sep=0pt]
      \HL^1(K_{r+1},\Qp/\Zp) & \supseteq & \Sel(K_{r+1},\Qp/\Zp) & \isom & \Cl(K_{r+1})(p)^\vee \\
      =\Hom(\G{K_{r+1}},\Qp/\Zp) \arrow[dddd, "\mathrm{res}"] & & & & \isom\Hom(\Gal(H_{r+1}/K_{r+1}),\Qp/\Zp)
      \arrow[dddd, "\mathrm{res}"] \\
      & \; \\
      & \; \\
      & \; \\
      \Hom(\G{K_{r}},\Qp/\Zp) & & & & \Hom(\Gal(H_{r}/K_{r}),\Qp/\Zp)\isom\\
      =\HL^1(K_{r},\Qp/\Zp) & \supseteq & \Sel(K_{r},\Qp/\Zp) & \isom & \Cl(K_{r})(p)^\vee \\
    \end{tikzcd}
  \end{equation*}
  conmuta para $r$ suficientemente grande (donde $H_r$ es el campo de clases de Hilbert de
  $K_r$). Lo dejamos como ejercicio.
\end{proof}

También podemos obtener el otro módulo $Y_\infty$ de la \cref{sec:conjetura-principal} como
un módulo $\mathrm X(-)$ de esta forma. En la siguiente proposición escribimos $\Qp/\Zp(1)$
para $\Qp(1)/\Zp(1)$.

\begin{prop}\label{prop:y-infty-selmer}
  Existe un isomorfismo canónico
  \[ \mathrm X(\Qp/\Zp(1))=Y_\infty. \]
\end{prop}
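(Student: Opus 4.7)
El plan es establecer, en paralelo a \cref{prop:selmer-class-group} y \cref{cor:x-es-x}, un isomorfismo canónico $\Sel(K_r,\mu_{p^\infty})^\vee\isom Y_r$ a cada nivel finito $r$, y luego tomar el límite inverso. Primero usaría la teoría de Kummer (vía Hilbert 90) para identificar $\HL^1(K_r,\mu_{p^n})\isom K_r^\times/(K_r^\times)^{p^n}$ globalmente y, análogamente, localmente. Pasando al límite directo en $n$ resulta $\HL^1(K_r,\mu_{p^\infty})\isom K_r^\times\otimes_\Z\Qp/\Zp$, y $\Sel(K_r,\mu_{p^\infty})$ se convierte en un subgrupo específico de $K_r^\times\otimes\Qp/\Zp$.

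Segundo, analizaría las condiciones locales de Bloch-Kato bajo esta identificación. En $v\nmid p\infty$, como el Frobenius actúa en $\Qp(1)$ por multiplicación por la cardinalidad del campo residual (un entero distinto de $1$), uno verifica que $\Hf^1(K_{r,v},\Qp(1))=V/(F-1)V=0$, así que también $\Hf^1(K_{r,v},\mu_{p^\infty})=0$. En $v\mid p$, la condición cristalina corresponde vía la exponencial de Bloch-Kato a la imagen de las unidades locales $\O_{K_{r,v}}^\times$ en $K_{r,v}^\times\otimes\Qp/\Zp$. En las plazas arquimedianas todo se anula porque $p$ es impar.

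Tercero, aplicaría la dualidad global de Poitou-Tate. El dual de Tate de $\mu_{p^\infty}$ es $\Zp$ con acción trivial de Galois, dando un apareamiento perfecto $\HL^1(K_r,\mu_{p^\infty})\times\HL^1(K_r,\Zp)\to\Qp/\Zp$ bajo el cual $\Sel(K_r,\mu_{p^\infty})^\vee$ se identifica con un Selmer dual dentro de $\HL^1(K_r,\Zp)$. Las condiciones locales duales son los anuladores ortogonales: el grupo entero en $v\nmid p\infty$ (anulador de cero) y el complemento ortogonal de la condición cristalina en $v\mid p$. Por la teoría de campos de clases, los caracteres continuos de $G_{K_r}$ no ramificados fuera de $p$ se factorizan a través de $Y_r=\Gal(M_r/K_r)$, y la condición local dual en $p$ coincide con la definición de $M_r$ (que permite ramificación arbitraria en $p$), dando el isomorfismo canónico $\Sel(K_r,\mu_{p^\infty})^\vee\isom Y_r$.

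Finalmente, verificaría la compatibilidad con las aplicaciones del sistema inverso (las restricciones en cohomología corresponden a las restricciones $Y_{r+1}\twoheadrightarrow Y_r$ entre grupos de Galois) y tomaría el límite inverso para obtener $\mathrm X(\Qp/\Zp(1))=\varprojlim_r\Sel(K_r,\mu_{p^\infty})^\vee\isom\varprojlim_r Y_r=Y_\infty$. La parte más difícil será el tercer paso: la dualización de Poitou-Tate en $p$ y la correspondencia exacta con la definición de $M_r$ requieren un análisis local cuidadoso, y pueden aparecer discrepancias finitas controladas a nivel finito (kernels y cokernels provenientes de unidades locales) que tendrán que anularse o estabilizarse al pasar al límite de Iwasawa.
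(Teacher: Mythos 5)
Los dos primeros pasos de tu propuesta (la identificación kummeriana $\HL^1(K_r,\Qp/\Zp(1))\isom K_r^\times\otimes_\Z\Qp/\Zp$ y el cálculo de las condiciones locales: nulas en $v\nmid p\infty$, unidades locales $\O_v^\times\otimes\Qp/\Zp$ en $v\mid p$) coinciden con lo que hace el texto. El hueco está en tu tercer paso. Primero, la dualidad de Poitou--Tate no identifica el dual de Pontryagin de un grupo de Selmer con el grupo de Selmer de condiciones ortogonales para el dual de Tate: sólo da una sucesión exacta en la que intervienen conúcleos de localización y términos $\HL^2$ (de tipo Shafarevich--Tate) que no controlas. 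Segundo, aun ignorando eso, tus condiciones ortogonales están intercambiadas: bajo la dualidad local, el anulador de la condición cristalina $\O_v^\times\otimes\Qp/\Zp\subseteq K_{r,v}^\times\otimes\Qp/\Zp$ en $v\mid p$ son precisamente las clases \emph{no ramificadas} de $\HL^1(K_{r,v},\Zp)$, y el anulador de la condición $0$ en $v\nmid p$ es todo $\HL^1(K_{r,v},\Zp)$; el Selmer dual que produce tu argumento consta pues de caracteres no ramificados \emph{en} $p$ y sin restricción fuera de $p$, que no se factorizan a través de $Y_r=\Gal(M_r/K_r)$ (ahí se necesita exactamente lo contrario). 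Tercero, ninguna identificación nivel a nivel $\Sel(K_r,\Qp/\Zp(1))^\vee\isom Y_r$ puede ser cierta, ni siquiera salvo errores finitos: el corango de $\Sel(K_r,\Qp/\Zp(1))$ es $c-1$ (la parte divisible proviene de $\O_{K_r}^\times\otimes\Qp/\Zp$ por el teorema de las unidades de Dirichlet, y el grupo de clases sólo aporta algo finito), mientras que $\rg_{\Zp}Y_r\ge c+1$ según el \cref{thm:Munramleop}; esa discrepancia de rango al menos $2$ por nivel no se arregla controlando núcleos y conúcleos finitos.

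La dualidad que realmente sostiene la proposición no es Poitou--Tate sobre $K_r$ con coeficientes $\Zp$, sino la teoría de Kummer sobre $K_\infty$, que contiene $\mu_{p^\infty}$ y por eso absorbe el torcimiento ciclotómico. El texto demuestra primero la \cref{prop:m-infty-explicito}, que escribe $M_\infty=\bigcup_r K_\infty(\sqrt[p^r]{D_r})$ con $D_r$ los $\alpha\in K_r^\times$ cuyo ideal es la potencia $p^r$-ésima de un ideal primo a $\mathfrak p_r$; la teoría de Kummer (\cref{thm:kummer}) da entonces $\Gal(K_\infty(\sqrt[p^r]{D_r})/K_\infty)^\vee\isom D_r/(K_r^\times)^{p^r}$, y la demostración concluye con la identificación elemental $D_r/(K_r^\times)^{p^r}=S_r(K_r)$ y la verificación de que el colímite diagonal $\varinjlim_r S_r(K_r)$ calcula $\varinjlim_r\Sel(K_r,\Qp/\Zp(1))$; éste es justamente el mecanismo por el cual la discrepancia de nivel finito desaparece en el límite y se obtiene un isomorfismo genuino, no sólo un pseudo-isomorfismo. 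Para rescatar tu enfoque tendrías que sustituir el paso de Poitou--Tate por esta dualidad de Kummer sobre $K_\infty$ (o por una versión en cohomología de Iwasawa donde demuestres que los términos de error se anulan a lo largo de la torre).
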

\begin{proof}
  Para hacer todo más concreto, empecemos con describir el grupo de cohomología
  $\HL^1(F,\mu_m(F))$ para cualquier campo $F$ y cada $m\in\Nuno$: Existe un isomorfismo
  canónico
  \begin{equation*}
    \HL^1(F,\mu_m(\overline F))\isom F^\times/(F^\times)^m=F^\times\tensor_\Z\Z/p^m\Z,
  \end{equation*}
  véase \cite[p.\ 344, antes de (6.2.2)]{NSW}. 
  Porque tomar cohomología es compatible con límites directos, esto implica que
  \begin{equation*}
    \HL^1(F,\Qp/\Zp(1))\isom\varinjlim_{m\ge1} F^\times/(F^\times)^m=F^\times\tensor_\Z\Qp/\Zp
  \end{equation*}
  si $F$ es un campo de números o una completación de tal. Sea ahora $K$ un campo de números
  y $v$ una plaza de $K$. Si $v$ es arquimediana entonces $\Hf^1(K_v,\Qp/\Zp(1))=0$ por
  definición. Si $v\nmid p$ entonces también tenemos que $\Hf^1(K_v,\Qp/\Zp(1))=0$; los
  argumentos para ver esto los esbozamos en el \cref{ejer:hf-kv-unram}. El caso
  interesante es entonces el caso $v\mid p$: en este caso tenemos que 
  \begin{equation*}
    \Hf^1(K_v,\Qp/\Zp(1))\isom\varinjlim_{m\ge1} \O_v^\times/(\O_v^\times)^m=\O_v^\times\tensor_\Z\Qp/\Zp
  \end{equation*}
  según \cite[Ex.\ 3.9]{MR1086888}. Es decir, el grupo de Selmer lo podemos describir como
  \begin{equation*}
    \Sel(K,\Qp/\Zp(1))\isom\{x\in K^\times\tensor\Qp/\Zp : \forall v\nmid p\colon
    \operatorname{res}_v(x)=0; \forall \mathfrak p\mid p\colon \operatorname{res}_{\mathfrak
      p}(x)\in\O_{\mathfrak p}^\times\tensor\Qp/\Zp \}
  \end{equation*}
  donde
  $\operatorname{res}_v\colon K^\times\tensor\Qp/\Zp\rightarrow K_v^\times\tensor\Qp/\Zp$ es
  la aplicación canónica. Si denotamos
  $\operatorname{res}_{v,m}\colon K^\times\tensor\Z/p^m\Z\rightarrow K_v^\times\tensor\Z/p^m\Z$
  y definimos para $m\ge 1$
  \begin{equation*}
    S_m(K)=\{x\in K^\times\tensor\Z/p^m\Z : \forall v\nmid p\colon
    \operatorname{res}_{v,m}(x)=0; \forall \mathfrak p\mid p\colon \operatorname{res}_{\mathfrak
      p,m}(x)\in\O_{\mathfrak p}^\times\tensor\Z/p^m\Z \}
  \end{equation*}
  entonces es fácil ver que $\Sel(K,\Qp/\Zp(1))\isom\varinjlim_{m\ge1}S_m(K)$.

  Tenemos que comparar esto con el módulo $Y_\infty$. Por definición,
  $Y_\infty=\Gal(M_\infty/K_\infty)$. Si escribimos
  \begin{equation*}
     D_r=\{\alpha\in K_r^\times : (\alpha)=\mathfrak a^{p^r} \text{ para un ideal fraccional
    }\mathfrak a\text{ primo a }\mathfrak p_r \}
  \end{equation*}
  para $r\in\Nuno$ (dónde $\mathfrak p_r$ es el único ideal primo de $\O_{K_r}$ arriba de
  $p$) como en la \cref{prop:m-infty-explicito} entonces esta implica que
  $Y_\infty=\varprojlim_{r\ge1}\Gal(N_r/K_\infty)$ con $N_r=K_\infty(\sqrt[p^r]{D_r})$. La
  extensión $N_r/K_\infty$ es una extensión de Kummer, y el resultado principal de la teoría
  de Kummer permite describir el dual de Pontryagin de su grupo de
  Galois: la versión citada en la demostración del \cref{thm:kummer} dice que
  \begin{equation*}
    \Gal(N_r/K_\infty)^\vee=D_r/(K_r^\times)^{p^r}.
  \end{equation*}
      
  Se puede verificar que la aplicación canónica
  \begin{equation*}
    \varinjlim_{r\ge1}S_r(K_r)\rightarrow
    \varinjlim_{r\ge1}\varinjlim_{m\ge1}S_m(K_r)\isom\varinjlim_{r\ge1}\Sel(K_r,\Qp/\Zp(1))
  \end{equation*}
  es un isomorfismo. Con esto, lo único que falta ver es que
  $D_r/(K_r^\times)^{p^r}=S_r(K_r)$ como subgrupos de $K_r^\times/(K_r^\times)^{p^r}$.  Esto
  resulta de las definiciones de estos subgrupos y lo dejamos como el
  \cref{ejer:detalles-y-selmer}.
\end{proof}

Estas observaciones ya insinúan cómo la Conjetura Principal podría ser generalizada. Sin
embargo, quedan muchas preguntas: ¿Para cuáles tipos de representaciones de Galois podemos
esperar una generalización? ¿Qué tendría que ser la función $L$ $p$-ádica? Y antes que nada,
¿cómo definimos una función $L$ compleja? Vamos a indicar las respuestas a estas preguntas
en las siguientes secciones. 

\ejercicios

\begin{ejer}\label{ejer:reticulo-estable}
  Demuestre que cada representación de Galois con coeficientes en una extensión finita de
  $\Qp$ contiene un retículo estable. Para esto tome cualquier retículo, no necesariamente
  estable, y considere sus trasladados bajo la acción de $\GQ$. Use la continuidad de la
  representación y el hecho de que $\O$ es compacto.
\end{ejer}

\begin{ejer}\label{ejer:unramified-morphisms}
  Demuestre que para cada plaza $v\mid p$ de un campo de números $K$ el espacio vectorial
  \[ \ker(\HL^1(K_v,\Qp)\rightarrow\HL^1(I_v,\Qp)) \]
  tiene dimensión $1$ sobre $\Qp$. Para esto use que $\HL^1(-,\Qp)=\Hom(-,\Qp)$ (ya que la
  acción en $\Qp$ es trivial) y la teoría local de campos de clases para describir estos
  homomorfismos.
\end{ejer}

\begin{ejer}\label{ejer:selmer-restriccion}
  Sea $V$ una representación de Galois con coeficientes en una extensión finita $L/\Qp$ y
  $T$ un retículo estable.  Demuestre que si $K\subseteq K'$ son campos de números entonces
  el mapeo de restricción $\HL^1(K,V/T)\rightarrow\HL^1(K',V/T)$ envía $\Sel(K,V/T)$ a
  $\Sel(K',V/T)$.
\end{ejer}

\begin{ejer}
  Verifique que el diagrama en la demostración del \cref{cor:x-es-x} es conmutativo.
  Use la conmutatividad del diagrama \eqref{eqn:diagrama-x-c} para esto.
\end{ejer}

\begin{ejer}\label{ejer:hf-kv-unram}
  Sea $K$ un campo de números y  $v\nmid p$ una plaza. Escribimos $K_v$ para la completación
  y $K_v^\nr$ para la extensión máxima no ramificada de $K_v$.
  \begin{enumerate}
  \item Verifique (o busque una referencia) que
    \[ \HL^1(F,\Qp(1))=\varprojlim_{r\ge1}F^\times/(F^\times)^{p^r}\tensor_\Zp\Qp \]
    para cualquier campo $F$. Concluya que \[
      \Hf^1(K_v,\Qp(1))=\varprojlim_{r\ge1}(K_v^\nr)^\times/((K_v^\nr)^\times)^{p^r}\tensor_\Zp\Qp. \]
  \item Verifique que la extension $K(\sqrt[p^r]{\alpha})/K$ es no ramificada en $v$ para
    cualquier $\alpha\in K^\times$ y $r\in\Nuno$.
  \item Concluya que $\Hf^1(K_v,\Qp(1))=0$ y $\Hf^1(K_v,\Qp/\Zp(1))=0$.
  \end{enumerate}
\end{ejer}



\begin{ejer}\label{ejer:detalles-y-selmer}
  Usamos la notación de la demostración de la \cref{prop:y-infty-selmer}, es decir
  \begin{align*}
    S_m(K_r)&=\{x\in K_r^\times\tensor\Z/p^m\Z : \forall v\nmid p\colon
    \operatorname{res}_{v,m}(x)=0; \forall \mathfrak p\mid p\colon \operatorname{res}_{\mathfrak
      p,m}(x)\in\O_{\mathfrak p_r}^\times\tensor\Z/p^m\Z \},\\
    D_r&=\{\alpha\in K_r^\times : (\alpha)=\mathfrak a^{p^r} \text{ para un ideal fraccional
    }\mathfrak a\text{ primo a }\mathfrak p_r \}
  \end{align*}
  para $m,r\in\Nuno$, dónde $\mathfrak p_r$ es el único ideal primo de $\O_{K_r}$ arriba de $p$.
  \begin{enumerate}
  \item Demuestre que la aplicación canónica
    \[ \varinjlim_{r\ge1}S_r(K_r)\rightarrow
      \varinjlim_{r\ge1}\varinjlim_{m\ge1}S_m(K_r)  \]
    es un isomorfismo.
  \item Demuestre que los subgrupos $D_r/(K_r^\times)^{p^r}$ y $S_r(K_r)$ de
    $K_r^\times/(K_r^\times)^{p^r}$ coinciden.
  \end{enumerate}
\end{ejer}

\subsection{Funciones $L$ para sistemas compatibles de representaciones}

Recordemos algunos aspectos de las funciones $L$ complejas. Las que conocimos en la \cref{sec:proemio-l} tenían
un producto de Euler, es decir un producto de la forma
\[ L(s)= \prod_{\ell\text{ primo}}P_\ell(\ell^{-s})^{-1} \quad (s\in\C,\ \Re s\gg0) \]
donde $P_\ell\in\Qbar[T]$ es un polinomio: en el caso de la función zeta de Riemann
tenemos $P_\ell=1-T$ para cada primo $\ell$ y en el caso de la función $L$ de un carácter de
Dirichlet $\chi$ tenemos $P_\ell=1-\chi(\ell)T\in\Q(\chi)[T]$ -- véase la
\cref{defi:l-chi}. Notemos que la función zeta de Riemann es un caso especial de una función
$L$ de Dirichlet, es decir, la del carácter trivial. El camino para generalizar la conexión de
los polinomios $P_\ell$ y el carácter $\chi$ se aclara cuando vemos el carácter como una
representación de Galois.

Para explicar esto necesitamos usar los \define{elementos de Frobenius} en $\GQ$. Si
$\ell$ es un primo entonces tenemos el subgrupo (gracias a los encajes que fijamos)
$\G{\Q_\ell}\subseteq\GQ$ que tiene una sobreyección al grupo de Galois absoluto del campo
residual $\F_\ell$ de $\Q_\ell$. Este grupo de Galois $\G{\F_\ell}$ es canónicamente
isomorfo a $\widehat\Z$ con $1\in\widehat\Z$ correspondiendo al automorfismo Frobenius
(\cref{ejer:gf-finito}). Como $\G{\Q_\ell}\twoheadrightarrow\G{\F_\ell}$ es sobreyectivo
podemos escoger un levantamiento que llamamos $\Frob_\ell\in\G{\Q_\ell}\subseteq\GQ$. Este
elemento no es único, pero está bien definido salvo multiplicación por el grupo de inercia
$I_\ell$, que es el núcleo de la aplicación $\G{\Q_\ell}\rightarrow\G{\F_\ell}$; esto será
suficiente para lo que queremos hacer. A partir de ahora fijemos elementos de
Frobenius $\Frob_\ell\in\GQ$ para cada primo $\ell$.

Ahora tomamos un carácter de Dirichlet $\chi$ de conductor $N\in\Nuno$ y lo vemos como una
representación de Galois $\C$-lineal vía
\begin{equation}
  \label{eqn:chi-gal}
  \GQ\twoheadrightarrow\Gal(\Q(\mu_N)/\Q)\isomarrow(\Z/N\Z)^\times\labeledarrow{\chi}
  \overline\Q^\times\subset\C^\times
\end{equation}
(llamamos este mapeo $\GQ\rightarrow\C^\times$ también
$\chi$), que define una acción de $\GQ$ en el $\C$-espacio vectorial $V=\C$ en que
$g\in\GQ$ actúa por multiplicación con
$\chi(g)$.
Esta representación es ramificada en un primo $\ell$ si y solo si $\ell\mid
N$, es decir si y solo si $\chi(\ell)=0$.
La acción de un elemento de Frobenius $\Frob_\ell$ en $V$ en general no está bien definida
porque $\Frob_\ell$ solo está bien definido módulo elementos de $I_\ell$ y este último podría
actuar no trivialmente. Pero si escribimos $V^{I_\ell}$ como el subespacio de $V$ donde
$I_\ell$ actúa trivialmente, entonces la acción de $\Frob_\ell$ en $V^{I_\ell}$ sí está bien
definida; en particular, su polinomio característico lo es. Si definimos $P_\ell(\chi,T)$
como el polinomio\footnote{Aquí y en lo siguiente, la notación $\det(\varphi, V)$ significa
  el determinante de un endomorfismo $\varphi$ de un espacio vectorial $V$.}
\[ P_\ell(\chi,T):= \det(1-\chi(\Frob_\ell)T, V^{I_\ell} ) \] entonces de hecho tenemos
\[ P_\ell(\chi,T)=1-\chi(\ell)T, \] es decir ¡Reconstruimos los polinomios que definen el
producto de Euler a partir de la representación de Galois! Le sugerimos urgentemente al lector
que verifique todas estas afirmaciones.

Las representaciones que estudiamos al principio de esta sección tuvieron coeficientes en
$\Qp$ o una extensión finita $L$. Si usamos la misma fórmula como arriba para definir
polinomios $P_\ell$ para ellas tenemos un problema -- los coeficientes estarán en $L$, que no
podemos encajar en $\C$ para definir una función $L$ compleja. Sin embargo, para las
representaciones $\Qp$ (con la acción trivial) y $\Qp(1)$ que estudiamos antes este problema
no aparece: por supuesto, para la representación trivial tenemos
\[ P_\ell(\Qp,T):= \det(1-T, \Qp^{I_\ell})=1-T \]
para cada primo $\ell$ y en el otro caso tenemos
\[ P_\ell(\Qp(1),T):= \det(1-T, \Qp(1)^{I_\ell})=1-\ell T \] al menos para los primos
$\ell\neq p$, mientras para $\ell=p$ obtenemos $P_p(\Qp(1),T)=1$. ¡Otra vez el lector
debería verificar estas afirmaciones! Es decir, estos polinomios de hecho tienen
coeficientes en $\Qbar\subseteq\Qpbar$, que encajamos en $\C$. Por supuesto hay
representaciones para las cuales esto no es verdad, por eso sólo vamos a usar
representaciones donde los coeficientes de los polinomios que obtengamos estén en $\Qbar$
(véase la siguiente definición).

Si ahora definimos una función $L$ para las representaciones $\Qp$ y $\Qp(1)$ usando la
fórmula del producto de Euler obtenemos la función zeta de Riemann para $\Qp$ y obtenemos
\[ (1-p^{-(s+1)})\zeta(s+1) \] para $\Qp(1)$, es decir la función zeta trasladada por $1$ y
con un factor de Euler faltante. Esto es un poco raro \dots\ queremos tener también el
factor de Euler en $p$. Notemos que también existen las representaciones $\Q_q(1)$ para todos
los otros primos $q\neq p$, y si usamos éstas en lugar de $\Qp(1)$ para definir el polinomio
$P_\ell$ entonces obtenemos lo mismo para $\ell\neq p,q$, pero para $\ell=p$ obtenemos el
polinomio que corresponde al factor de Euler que hacía falta. Es decir, las
representaciones $\Q_q(1)$ para todos los primos $q$ son compatibles de alguna manera, y
para definir $P_\ell$ podemos usar cualquiera de ellas salvo $\Q_\ell(1)$. Esto nos lleva a la siguiente definición.

\begin{defi}\label{defi:sistema-compatible}
  Un \define{sistema compatible de representaciones de Galois} es una colección $V=(V_q)_q$ de
  representaciones de $\GQ$, donde $V_q$ es un espacio vectorial sobre $\Q_q$, para cada
  primo $q$, tal que las siguientes condiciones sean ciertas.
  \begin{enumerate}
  \item Existe un conjunto finito $S$ de primos tal que cada $V_q$ no es ramificado fuera de
    $S\cup\{q\}$. 
  \item\label{defi:sistema-compatible:compatibilidad} Para cada primo $\ell$ y todo
    primo $q\neq\ell$ el polinomio
    \[ P_\ell(V,T):= \det(1-\Frob_\ell T, V_q^{I_\ell} ) \]
    que a priori tiene coeficientes en $\Q_q$ de hecho tiene coeficientes en $\Q$ y no
    depende de $q$.
  \end{enumerate}

  Un poco más general, si $K$ es un campo de números entonces definimos un sistema
  compatible de representaciones de Galois con coeficientes en $K$ como es una colección de
  representaciones 
  $V=(V_{\mathfrak q})_{\mathfrak q}$ de $\GQ$ indexada por todos los primos de $K$ , donde $V_{\mathfrak q}$ es un espacio vectorial sobre
  $K_{\mathfrak q}$ con las condiciones análogas (en este caso los polinomios $P_\ell(V,T)$
  deben tener coeficientes en $K$).

  Si $V$ es un sistema compatible de representaciones de Galois entonces definimos su
  función $L$ como
  \[ L(V,s) := \prod_{\ell\text{ primo}}P_\ell(V,\ell^{-s})^{-1}. \]
  De momento, esto sólo es una expresión formal, porque todavía no sabemos nada sobre
  convergencia. 
\end{defi}

Queremos dar una heurística por qué esta definición es interesante. El teorema de Brauer y
Nesbitt dice que dos representaciones de un grupo profinito son iguales (salvo a
semisimplificación, que no vamos a explicar aquí) si y solo si los polinomios
característicos de los imágenes de todos elementos del grupo bajo las dos representaciones
son iguales. Además, el teorema de densidad de Chebotarev implica que los elementos de
Frobenius son densos en $\GQ$, así que por continuidad la igualdad de los polinomios
característicos de ellos ya es suficiente. Es decir, en la formula que define la función
$L(V,s)$ multiplicamos expresiones que juntas determinan $V$ únicamente (salvo a
semisimplificación). Véase \cite[Thm. 30.16]{MR2215618}\footnote{El teorema de Brauer y
  Nesbitt allá es formulado para un grupo finito, pero examinando la demostración se puede
  ver que la demostración sigue funcionando para un grupo profinito.} y \cite[Thm.\
13.4]{MR1697859}.

\begin{ex}
  \begin{enumerate}
  \item Poniendo $V_q=\Q_q$ con la acción trivial de $\GQ$ para cada primo $q$ nos da un
    sistema compatible de representaciones de Galois. Su función $L$ es la función zeta de
    Riemann.
  \item Poniendo $V_q=\Q_q(1)$ nos da también un sistema compatible de representaciones de
    Galois. Su función $L$ es $\zeta(s+1)$.
  \item Sea $\chi$ un carácter de Dirichlet y $K=\Q(\chi)$. Entonces si ponemos
    $V_{\mathfrak q}=K_{\mathfrak q}$ para cada primo $\mathfrak q$ de $K$ con $g\in\GQ$
    actuando en $V_{\mathfrak q}$ como multiplicación con $\chi(g)$ entonces esto también es
    un sistema compatible de representaciones de Galois, esta vez con coeficientes en
    $K$. Su función $L$ es la función $L$ de Dirichlet $L(\chi,s)$.
  \end{enumerate}
\end{ex}

Un sistema compatible de representaciones de Galois todavía no es la noción final a la cual
se puede generalizar los fenómenos de la Teoría de Iwasawa, pero para nuestros ejemplos
básicos es suficiente (la verdadera noción son los motivos, que mencionamos en la
\cref{sec:etnc}). Resumiendo, tenemos lo siguiente para el sistema compatible de
representaciones de Galois $(\Q_q)_q$:
\begin{itemize}
\item La función $L$ del sistema (la función zeta de Riemann) es meromorfa en todo de $\C$ y
  algunos valores especiales son algebraicos.
\item Fijamos un primo $p$. Entonces estos valores especiales pueden ser interpolados
  $p$-ádicamente, esto conduce a la existencia de una función $L$ $p$-ádica, que (esencialmente)
  es un elemento del álgebra de Iwasawa $\LL(G)$, dónde $G$ es el grupo de Galois de la
  torre infinita $(K_r)_r=(\Q(\mu_{p^r}))_r$ de campos de números.
\item Para el mismo primo $p$, si consideramos el módulo $\mathrm X(\Qp/\Zp)$ para el
  miembro en $p$ del sistema de representaciones y un retículo estable obtenemos un módulo
  noetheriano de torsión sobre $\LL(G)$, y por lo tanto tiene un ideal característico gracias
  a la teoría de estructura de tales módulos.
\item Este ideal característico es generado por la función $L$ $p$-ádica del sistema -- esto
  es la Conjetura Principal.\footnote{Salvo el hecho de que tenemos que restringir a la parte
    $(\cdot)^-$ del módulo y multiplicar la función $L$ $p$-ádica con su denominador. Vamos
    a ignorar estos detalles aquí y en lo que sigue. De hecho, en las generalizaciones estos
    dos fenómenos no ocurren.}
\end{itemize}
Algo similar ocurre para el sistema $(\Q_q(1))_q$, dándonos la otra versión de la
Conjetura Principal. La relación entre estas dos formulaciones y el hecho de que son
equivalentes la discutiremos más tarde (\cref{footnote:ep-eq-x-y} en la
\cpageref{footnote:ep-eq-x-y}).

\ejercicios

\begin{ejer}
  Para los siguientes campos de números $K$, ¿Cuáles primos de $\Q$ son ramificados y cuáles son
  los elementos de Frobenius?
  \begin{enumerate}
  \item $K=\Q(\sqrt d)$ para $d\in\Z$ libre de cuadrados;
  \item $K=\Q(\mu_m)$ con $m\in\Nuno$.
  \end{enumerate}
  Use los resultados para deducir el ley de reciprocidad cuadrática
  \[ \left(\frac p q\right)\left(\frac q p\right)=(-1)^{\frac{(p-1)(q-1)}{4}} \]
  para primos impares $p,q$.
\end{ejer}

\begin{ejer}
  Para un carácter de Dirichlet que vemos como representación de Galois como en
  \eqref{eqn:chi-gal}, ¿En cuáles primos es ramificado?
\end{ejer}

\begin{ejer}
  Demuestre que si $V$ es un espacio vectorial de dimensión $1$ sobre $\C$ con una acción
  de $\GQ$ dada por un carácter de Dirichlet $\chi$ como en \eqref{eqn:chi-gal} entonces
  tenemos
  \[ \det(1-\chi(\Frob_\ell)T, V^{I_\ell} ) = 1-\chi(\ell)T, \]
  para cada primo $\ell$ (no importa si es ramificado o no).
\end{ejer}

\begin{ejer}
  Sea $V=(V_q)_q$ un sistema compatible de representaciones de Galois. Demuestre que
  entonces \[ V(1):=(V_q\tensor_{\Q_q}\Q_q(1))_q \] también es un sistema compatible de
  representaciones de Galois y que
  \[ L(V(1),s)=L(V,s+1). \]
\end{ejer}

\subsection{La Conjetura Principal para curvas elípticas}

Como prometido ahora explicamos cómo las curvas elípticas se insertan en la imagen que
esbozamos hasta ahora. Para esto asumimos que el lector está familiarizado con la teoría básica de
curvas elípticas como es explicada por ejemplo en \cite{MR2514094}. Para toda la sección
fijamos una curva elíptica $E$ sobre $\Q$ y un primo $p$, además, suponemos que la curva tiene
buena reducción ordinaria en $p$.

Al lector interesado en aprender más sobre la Teoría de Iwasawa de curvas elípticas le
recomendamos el texto \cite{MR1860044} de Greenberg y también \cite[§2]{MR3586809}.

Escribimos $E(K)$ para los puntos de $E$ con coeficientes en un campo $K/\Q$. Si $m\in\Nuno$
entonces escribimos $E(K)[m]$ para los elementos de $E(K)$ cuyo orden es divisible por $m$,
es decir los que son anulados por la multiplicación por $m$, que denotamos $[m]$. Se sabe
que $E(\Qbar)[m]$ es isomorfo a $(\Z/m\Z)^2$, aunque el isomorfismo no es canónico. Para cada primo $q$, el
\define{módulo de Tate} $q$-ádico de $E$ está definido como
\[ T_qE := \varprojlim_{l\in\Nuno}E(\Qbar)[q^l], \]
los mapeos $E(\Qbar)[q^{l+1}]\rightarrow E(\Qbar)[q^l]$ siendo $P\mapsto[q]P$. Esto es no
canónicamente isomorfo a $\Z_q^2$. También definimos
\[ V_qE=\Q_q\tensor_{\Z_q}T_pE \]
que es un espacio vectorial de dimensión $2$ sobre $\Q_q$. La acción de $\GQ$ en $E(\Qbar)$
induce acciones continuas en $E(\Qbar)[q^l]$, $T_qE$ y $V_qE$, y por eso $V_qE$ es una
representación de Galois $\Q_q$-lineal y $T_qE$ es un retículo estable.

Sea $p$ un primo.
A continuación resumimos cómo se define el grupo de Selmer de una curva elíptica clásicamente. Fijamos un
campo de números y $l\in\Nuno$ y seguimos \cite[§X.4]{MR2514094} poniendo allá $E=E'$ y
$\phi=[p^l]$. Para cada plaza $v$ de $K$ tenemos una sucesión exacta
\[ 0\rightarrow E[p^l](\overline K_v)\rightarrow E(\overline K_v)\labeledarrow{[p^l]}
  E(\overline K_v) \rightarrow 0 \]
de grupos abelianos discretos con acción de $\G{K_v}$. Tomando la sucesión exacta larga en
cohomología obtenemos
\[  0 \rightarrow E[p^l](K_v)\rightarrow E(K_v)\labeledarrow{[p^l]} E(K_v) \labeledarrow{\partial} \HL^1(K_v,E[p^l](\overline K_v))\rightarrow \dotsm. \]
El morfismo de borde $\partial$ induce un morfismo inyectivo
\[ \kappa_{r,v}\colon E(K_v)/p^l E(K_v) \hookrightarrow\HL^1(K_v,E[p^l](\overline K_v)) \]
que se llama el \define{morfismo de Kummer} (en \cite[§X.4]{MR2514094} lo denotan $\delta$).
Entonces se define el grupo de Selmer de nivel $p^l$ sobre $K$ como las clases en
cohomología de Galois global que localmente están en la imagen del morfismo de Kummer, es
decir
\[ \mathrm S^{(p^l)}(E/K) := \{ c\in\HL^1(K,E[p^l](\overline K)) \mid \forall v\nmid\infty\colon \operatorname{res}_v(c) \in
  \im\kappa_{l,v} \} \]
y luego
\[ \mathrm S^{(p^\infty)}(E/K):=\varinjlim_{l\in\Nuno}\mathrm S^{(p^l)}(E/K)\subseteq
  \varinjlim_{l\in\Nuno}\HL^1(K,E[p^l](\overline K))=\HL^1(K,E[p^\infty](\overline K)) \] (aquí
la última igualdad sigue de \cite[(1.2.5)]{NSW} y el hecho de que cada $E[p^l](\overline K)$
es finito, así que su estabilizador en $\G K$ es abierto).
Además, se puede ver fácilmente que $E[p^\infty](\overline K)\isom V_pE/T_pE$ canónicamente
(véase el \cref{ejer:a-qp-zp-colim}). La siguiente proposición dice que, usando esta
identificación, el grupo de Selmer clásico de hecho es lo mismo que el grupo de Selmer de la
\cref{defi:selmer-general} en esta situación.

\begin{prop}
  Para cada campo de números $K$ tenemos una igualdad
  \[ \mathrm S^{(p^\infty)}(E/K) = \Sel(K,V_pE/T_pE) \]
  de subgrupos de $\HL^1(K,V_pE/T_pE)$.
\end{prop}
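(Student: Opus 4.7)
The plan is to compare both subgroups place by place, since both are defined as kernels (or preimages) of restriction maps to local cohomology. Both $\mathrm S^{(p^\infty)}(E/K)$ and $\Sel(K,V_pE/T_pE)$ are the set of classes in $\HL^1(K,V_pE/T_pE)$ whose restriction to each $v$ lies in a prescribed subgroup of $\HL^1(K_v,V_pE/T_pE)$: the image $\im\kappa_v$ of the limit Kummer map on the one side, and $\Hf^1(K_v,V_pE/T_pE)$ on the other. It therefore suffices to check that for each place $v$ of $K$ these two local subgroups coincide. I would split into three cases: $v\mid\infty$, $v\nmid p\infty$, and $v\mid p$.

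For archimedean $v$ the argument is immediate: because $p$ is odd and $[K_v:\R]\le 2$, the cohomology $\HL^1(K_v,M)$ vanishes for any $p$-primary discrete module $M$, so both local subgroups are trivial. (Note that the classical definition does not impose anything at $\infty$, but since the ambient group is zero, this matches the Bloch-Kato requirement $\Hf^1(K_v,V/T)=0$.) For $v\nmid p\infty$ I would first pass to the sequence $0\to T_pE\to V_pE\to V_pE/T_pE\to 0$ of $\G{K_v}$-modules, noting that the inertia group $I_v$ acts on $T_pE$ through a finite quotient since $v\nmid p$, hence $V_pE^{I_v}$ is a direct summand and classical arguments (using that multiplication by $p$ is bijective on the component group of the Néron model in residue characteristic $\ne p$) show that the Kummer image equals the divisible part of $\HL^1(K_v^{\nr}/K_v,(V_pE/T_pE)^{I_v})$. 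This is precisely $\Hf^1(K_v,V_pE/T_pE)$ in the sense of the \cref{nota:selmer-warum-so-komisch-def}, where this subgroup was identified as the divisible part of the unramified cohomology.

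The main obstacle, and the case that requires the deepest input, is $v\mid p$. Here one cannot argue purely by unramified cohomology: instead one must identify the Kummer image with the image of $\Hf^1(K_v,V_pE)$, where $\Hf^1$ uses Fontaine's ring $\Bcris$. The key fact is that since $E$ has good ordinary reduction at $p$, the representation $V_pE$ restricted to $\G{K_v}$ is crystalline, and the Bloch-Kato exponential
\[ \exp_{\mathrm{BK}}\colon t_E(K_v):=(V_pE\tensor_{\Qp}\Bcris/\mathrm{Fil}^0)^{\G{K_v}}\isomarrow\Hf^1(K_v,V_pE) \]
fits into a commutative square with the formal group logarithm on $E(K_v)\tensor_{\Z}\Qp$; equivalently, the diagram identifying the classical Kummer map with the connecting map of the fundamental exact sequence of $p$-adic Hodge theory commutes. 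Taking the induced map on $V_pE/T_pE$ and comparing images then yields $\im\kappa_v=\Hf^1(K_v,V_pE/T_pE)$. This identification is the content of \cite[Ex.\ 3.10, 3.11]{MR1086888} and is the step that genuinely uses $p$-adic Hodge theory; the other cases are formal once one has the description of the Bloch-Kato local conditions.

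Combining the three local identifications gives equality of the global subgroups, concluding the proof. I would close by remarking that the same argument works with no change whenever $V_pE$ is crystalline at $p$ (e.g.\ good reduction, not necessarily ordinary); for multiplicative or additive reduction the same strategy applies but one must replace crystalline with semistable or potentially semistable and reinterpret the $p$-adic condition accordingly.
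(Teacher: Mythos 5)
Your proposal is correct, and it is essentially the argument the paper delegates to its citation \cite[Prop.\ 1.6.7]{MR1749177}: a place-by-place comparison of the local conditions, trivial at $v\mid\infty$, easy at $v\nmid p\infty$, and resting at $v\mid p$ on the Bloch--Kato exponential and the crystallinity of $V_pE$ as in \cite{MR1086888}, so you are supplying exactly the details the paper leaves to the reference. One small imprecision: at $v\nmid p$ multiplication by $p$ need not be bijective on the N\'eron component group (its order can be divisible by $p$); the cleaner point is that $E(K_v)\tensor_{\Z}\Qp/\Zp=0$ (the formal group is uniquely $p$-divisible there) and $\Hf^1(K_v,V_pE)=0$ because $\HL^0(K_v,V_pE)=0$, so both local subgroups vanish, in agreement with the divisible-part description of $\Hf^1(K_v,V_pE/T_pE)$ you quote.
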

\begin{proof}
  \cite[Prop.\ 1.6.7]{MR1749177}
\end{proof}

Aplicando la \cref{defi:x-selmer} en nuestra situación obtenemos un $\LL(G)$-módulo compacto
$\mathrm X(V_pE/T_pE)$ que simplemente denotamos $\mathrm X(E)$. Se sabe que este módulo es
noetheriano \cite[Lem.\ 13]{MR3586809}, lo que es importante para poder aplicar
la teoría de estructura a este módulo. Como los módulos que aparecen en la Conjetura
Principal clásica tenemos incluso el siguiente resultado, que era sospechado por Mazur:

\begin{thm}[Kato]\label{thm:kato-x-torsion}
  El $\LL(G)$-módulo $\mathrm X(E)$ es de torsión.
\end{thm}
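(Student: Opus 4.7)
El plan es emplear el sistema de Euler de Kato, que es el instrumento decisivo en este problema. Primero invocaría el teorema de modularidad (Wiles, Taylor--Wiles, Breuil--Conrad--Diamond--Taylor): la curva $E/\Q$ proviene de una forma modular cuspidal $f$ de peso $2$ y nivel $N$ igual al conductor de $E$, cuya representación de Galois $p$-ádica asociada coincide con $V_pE$ (salvo un giro de Tate). Esto traslada el problema al ámbito mejor entendido de formas modulares y, en particular, permite disponer de la función $L$ $p$-ádica de $f$ (construida por Mazur--Swinnerton-Dyer y Amice--Vélu), que es un elemento no nulo de $\LL(G)$ interpolando los valores especiales de $L(E,s)$, y cuya no nulidad analítica se verifica en caracteres de torsión usando el teorema de Rohrlich.

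El corazón técnico del argumento es la construcción del sistema de Euler de Kato. A partir de unidades de Siegel y elementos de Beilinson en el $K_2$ de la torre de curvas modulares $Y_1(N)/\Q(\mu_m)$, Kato construye una familia compatible de clases $\{z_m\}_m$ en $\HL^1(\Q(\mu_m), T_pE)$, indexada por enteros $m$ coprimos a $Np$, que satisface las relaciones de norma características de un sistema de Euler. La construcción de estos \emph{elementos zeta de Beilinson--Kato} y la verificación de sus propiedades son extraordinariamente delicadas: requieren cálculos explícitos con unidades modulares, los teoremas de comparación de la teoría de Hodge $p$-ádica y un manejo profundo de la geometría de las curvas modulares y de su $K$-teoría algebraica.

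El paso final es aplicar la maquinaria general de sistemas de Euler (Kolyvagin, Rubin, Perrin-Riou, Mazur--Rubin; véase \cite{MR1749177}): su teorema principal implica que si la imagen del sistema de Euler bajo el mapeo de localización en $p$ --- relacionado con el dual exponencial de Bloch--Kato --- es no trivial, entonces $\mathrm X(E)$ es un $\LL(G)$-módulo de torsión (y se obtiene incluso una divisibilidad de ideales característicos en términos de la función $L$ $p$-ádica). Dicha no nulidad se establece mediante la \emph{ley de reciprocidad explícita} de Kato, que identifica la imagen del elemento zeta bajo la localización con la función $L$ $p$-ádica de $f$; como ésta es no nula, concluimos la torsión de $\mathrm X(E)$. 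El obstáculo principal es, sin lugar a dudas, la construcción misma de los elementos zeta y la demostración de la ley de reciprocidad explícita: ambos son teoremas profundísimos de Kato cuyo desarrollo ocupa cientos de páginas y no puede esbozarse en este espacio, razón por la cual el resultado aparece aquí solamente citado.
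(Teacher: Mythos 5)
Tu propuesta es correcta y coincide en esencia con el texto, que de hecho no demuestra el teorema sino que lo cita directamente como [Kato, Thm.\ 17.4 (1)]; tu esbozo (modularidad, elementos zeta de Beilinson--Kato, ley de reciprocidad explícita, no anulación y maquinaria de sistemas de Euler) describe fielmente los ingredientes de esa demostración citada. Sólo como matiz: en el argumento de Kato la no anulación decisiva es la de los valores complejos $L(E,\chi,1)$ para casi todos los caracteres $\chi$ de conductor potencia de $p$ (Rohrlich), lo cual en el caso ordinario equivale, vía el mapeo de Coleman, a la no nulidad de la función $L$ $p$-ádica que tú invocas.
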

\begin{proof}
  \cite[Thm.\ 17.4 (1)]{MR2104361}
\end{proof}

Como consecuencia de esto se puede aplicar las mismas técnicas que llevaron a la
demostración del \cref{thm:iwasawa-intro} de Iwasawa sobre el crecimiento de los grupos de
clases para obtener un resultado análogo sobre los ordenes de las $p$-partes de los grupos
de Tate-Shafarevich, véase \cite[Prop.\ 14]{MR3586809}.

Ahora toca el turno de hablar de la función $L$ asociada. Para la curva elíptica $E$ tenemos la función $L$ de Hasse
y Weil
\[ L(E,s) = \prod P_\ell(E,\ell^{-s})^{-1} \]
con
\[ P_\ell(E,s)=\begin{cases}
            (1-(\ell+1-\#E(\F_\ell))\ell^{-s}+\ell^{1-2s}) \text{ si } E\text{ tiene buena reducción en }\ell, \\
            (1-\ell^{-s})  \text{ si }  E\text{ tiene reducción multiplicativa escindida en }\ell, \\
            (1+\ell^{-s})  \text{ si }  E\text{ tiene reducción multiplicativa no escindida en }\ell, \\
            1  \text{ si }  E\text{ tiene reducción aditiva en }\ell \\
            \end{cases}
            \]         

que es introducida en \cite[§C.16]{MR2514094} (por ahora, esto es sólo una expresión formal;
hablamos de convergencia en la siguiente sección). Esta función $L$ es un caso especial
de la función $L$ general de la \cref{defi:sistema-compatible}: El sistema compatible de
representaciones de Galois que hay que usar es $(V_qE)_q$, es decir está formado por todos los
módulos de Tate. La primera condición sobre la ramificación de las representaciones es
cierta por el criterio de Néron, Ogg y Shafarevich \cite[Thm.\ VII.7.1]{MR2514094}. En
\cite[§2.7.2]{MR2894984} es demostrado que los polinomios de la
\cref{defi:sistema-compatible} coinciden con los de la definición arriba, esto explica las
fórmulas anteriores; en particular, la segunda condición sobre la compatibilidad también es
cierta.

La discusión de esta función $L$ y la pregunta sobre la existencia de una función $L$
$p$-ádica asociada la posponemos a la siguiente sección para la claridad de la
exposición. Por ahora mencionamos el resultado sólo en una forma provisional, la versión
precisa se encuentra en el \cref{thm:palf-modulformen}.

\begin{thm}[Mazur/Swinnerton-Dyer]
  Existe un elemento $\mu_E\in\LL(G)$ que interpola valores especiales de la función
  $L(E,s)$ $p$-ádicamente.
\end{thm}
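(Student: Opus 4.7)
El plan es imitar la construcción de la función zeta $p$-ádica del \cref{sec:palf} reemplazando los números racionales que aparecen como valores especiales (números de Bernoulli) por símbolos modulares asociados a una forma modular. El primer paso consiste en invocar el teorema de modularidad (Wiles, Taylor--Wiles, Breuil--Conrad--Diamond--Taylor) que asegura la existencia de una \emph{newform} de peso $2$ y nivel $N=\text{conductor}(E)$, digamos $f=\sum_{n\ge1}a_nq^n$, cuya función $L$ coincide con $L(E,s)$. La hipótesis de buena reducción ordinaria significa que $p\nmid N$ y $a_p\in\Zp^\times$. En este caso el polinomio de Hecke $X^2-a_pX+p$ tiene una única raíz $\alpha\in\Zp^\times$ (la otra raíz tiene valuación $1$). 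Esta $\alpha$ jugará el papel del factor de Euler removido en las fórmulas de interpolación.

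El siguiente paso es definir los símbolos modulares. Para cada $r\in\Ncero$ y cada $a\in\Z$ con $(a,p)=1$, consideramos
\[ \lambda_f(a/p^r):=2\pi\complexi\int_{a/p^r}^{\complexi\infty}f(z)\integrald z\in\C. \]
Por un teorema clásico de Manin y Drinfeld, estos valores admiten una descomposición $\lambda_f=\lambda_f^+\Omega^++\lambda_f^-\Omega^-$ donde $\lambda_f^\pm(a/p^r)\in\Q$ y $\Omega^\pm\in\C^\times$ son los periodos reales e imaginarios de $E$. Definimos una colección de distribuciones $p$-ádicas provisorias
\[ \mu_{E,r}(a+p^r\Zp):=\frac{1}{\alpha^r}\lambda_f^+(a/p^r)-\frac{1}{\alpha^{r+1}}\lambda_f^+(a/p^{r-1}) \]
(y análogamente con signo $-$). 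La fórmula es la natural: el segundo sumando resta la contribución del nivel anterior, lo que convierte la familia en un sistema compatible bajo el refinamiento de las particiones de $\Zp^\times$ en los conjuntos cilíndricos. Hay que verificar esta relación de distribución usando las relaciones de Hecke para $f$ en el primo $p$; es aquí donde la igualdad $a_p=\alpha+p/\alpha$ juega su papel.

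El punto central, que constituye el obstáculo principal, es demostrar que estas distribuciones están \emph{acotadas} $p$-ádicamente, de modo que definan una medida en $\Zp^\times$ y no meramente una distribución. Esto se logra precisamente gracias a que $\alpha\in\Zp^\times$: los denominadores $\alpha^r$ no introducen potencias negativas de $p$ y se demuestra, usando la acotación de los símbolos modulares (por ejemplo via la teoría de Mazur, en forma análoga al \cref{clausen-von-staudt} para los Bernoulli), que los valores $\mu_{E,r}$ son uniformemente acotados. Vía la \cref{prop:medidas} esto produce un elemento $\mu_E^\pm\in\LL(\Zp^\times)=\LL(G)$; el elemento $\mu_E$ del enunciado será $\mu_E^+$ (o $\mu_E^-$ según la paridad, véase el \cref{ejer:palf-parity}). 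Si la curva tuviera reducción \emph{supersingular} ambas raíces de $X^2-a_pX+p$ tendrían valuación positiva y este paso fracasaría; en tal caso haría falta introducir las funciones $L$ $p$-ádicas de Pollack o las medidas signadas de Kobayashi.

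Por último, la fórmula de interpolación se deriva por el mismo tipo de cálculo efectuado en la demostración conjunta de los \cref{thm:palf-stickelberger-no-trivial,thm:palf-stickelberger-trivial}: al evaluar $\mu_E$ en un carácter $\chi$ de $G$ de orden finito y conductor $p^r$, los símbolos modulares se reorganizan en una suma de Gauss asociada a $\chi$ multiplicada por $L(E,\chi^{-1},1)/\Omega^\pm$, y la división por $\alpha^r$ produce el factor de Euler modificado $(1-\alpha^{-1}\chi(p))(1-\alpha^{-1}p^{0}\overline\chi(p))$ evaluado apropiadamente. La unicidad del elemento $\mu_E$ se sigue, como en la \cref{prop:unicidad-palf}, de la densidad de los caracteres de la forma $\chi\kappa^{s}$ en $\Hom(G,\Qpbar^\times)$ y del teorema de preparación de Weierstraß.
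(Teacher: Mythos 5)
Tu propuesta es correcta y sigue esencialmente el mismo camino que el texto: éste reduce el enunciado, vía el teorema de modularidad, a la existencia de la función $L$ $p$-ádica de una forma nueva ordinaria de peso $2$ (el \cref{thm:palf-modulformen}), cuya demostración se cita de Mazur/Swinnerton-Dyer y Mazur/Tate/Teitelbaum, y lo que tú esbozas —símbolos modulares, la distribución con los factores $\alpha^{-r}$, la acotación gracias a la raíz unidad del polinomio de Hecke, la fórmula de interpolación con sumas de Gauss y la unicidad vía preparación de Weierstraß— es precisamente el contenido de esa construcción citada. Solo ten presente que la integralidad $\mu_E\in\LL(G)$ (y no únicamente una medida acotada con un denominador fijo) requiere normalizar adecuadamente los períodos $\Omega^\pm$, y que los factores exactos de interpolación son los del \cref{thm:palf-modulformen}, ligeramente distintos de los que escribes.
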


Es decir, las curvas elípticas (sobre $\Q$ con buena reducción ordinaria en $p$) también tienen
una función $L$ $p$-ádica. Con estas preparaciones debería ser claro como formular la
Conjetura Principal para curvas elípticas.

\begin{conj}[Conjetura Principal de Mazur para curvas elípticas]\label{conj:mc-ec}
  Tenemos la igualdad de ideales en $\LL(G)$
  \[ \charideal_{\LL(G)}\mathrm X(E)=(\mu_E). \]
\end{conj}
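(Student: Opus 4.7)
El plan es emular la estrategia bosquejada en la \cref{sec:dem-cp} para la Conjetura Principal clásica, demostrando dos divisibilidades opuestas de ideales y combinándolas. Gracias al \cref{thm:kato-x-torsion} sabemos que $\mathrm X(E)$ es un $\LL(G)$-módulo noetheriano de torsión, y el elemento $\mu_E\in\LL(G)$ ya está construido; por lo tanto ambos lados de la igualdad están bien definidos y basta establecer
\begin{equation*}
  \charideal_{\LL(G)}\mathrm X(E) \subseteq (\mu_E) \quad\text{y}\quad (\mu_E) \subseteq \charideal_{\LL(G)}\mathrm X(E)
\end{equation*}
por separado. A diferencia del caso clásico, aquí la fórmula de números de clases no está disponible para reducir una divisibilidad a la otra, así que es imprescindible atacar ambas.

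Primero abordaría la divisibilidad correspondiente al enfoque de sistemas de Euler. La idea es construir, en analogía con las unidades ciclotómicas de la \cref{sec:coleman}, una colección de clases $z_m\in\HL^1(\Q(\mu_m),T_pE)$ indexadas por enteros $m\in\Nuno$ que cumpla relaciones de compatibilidad precisas bajo las correstricciones cuando $m$ varía. Estos elementos surgen de los \emph{elementos de Beilinson-Kato}, definidos mediante unidades modulares y $K_2$ de curvas modulares, y están disponibles gracias al teorema de modularidad que asocia a $E$ una forma modular parabólica de peso $2$ y nivel adecuado. Aplicando la maquinaria general de sistemas de Euler (Kolyvagin, Rubin) los $z_m$ producen cotas explícitas sobre los grupos de Selmer en cada nivel finito, y pasando al límite se obtiene una divisibilidad de ideales característicos. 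La conexión cuantitativa con $\mu_E$ se realiza vía un mapeo de Coleman (generalización de la construcción de la \cref{prop:u-medida}) que traslada el sistema de Euler a un elemento del álgebra de Iwasawa que resulta ser esencialmente $\mu_E$; ésta es la ley de reciprocidad explícita que vincula clases globales de cohomología con valores especiales $p$-ádicos.

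Para la divisibilidad opuesta usaría el método de congruencias con series de Eisenstein al estilo de Mazur-Wiles y Skinner-Urban. Heurísticamente, si $\mu_E$ fuera divisible por un primo $\mathfrak P$ del álgebra de Iwasawa, entonces la forma modular asociada a $E$ es congruente módulo $\mathfrak P$ a una cierta serie de Eisenstein en un grupo automorfo de mayor rango (típicamente un grupo unitario o simpléctico). Levantando esta congruencia a una familia $p$-ádica de formas automorfas se obtiene una representación de Galois reducible módulo $\mathfrak P$ pero irreducible genéricamente; analizando las extensiones que surgen de esta reducibilidad, mediante un lema del tipo Ribet, se construyen nuevas clases no triviales en el grupo de Selmer dual, forzando a $\charideal_{\LL(G)}\mathrm X(E)$ a contener el primo $\mathfrak P$ con la multiplicidad requerida.

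El obstáculo principal reside en la segunda parte: la construcción de las familias de Hida en grupos automorfos de rango superior y el análisis fino de las congruencias de Eisenstein involucradas exigen hipótesis técnicas adicionales (por ejemplo, irreducibilidad de la representación residual $\overline\rho_{E,p}\colon\GQ\rightarrow\GL_2(\F_p)$, condiciones sobre el comportamiento local en $p$, y restricciones sobre el conductor de $E$) que no es claro cómo eliminar en plena generalidad. La primera parte, aunque técnicamente más accesible gracias a la teoría bien desarrollada de sistemas de Euler, también presenta dificultades serias: el cálculo explícito de la imagen bajo el mapeo de Coleman requiere herramientas sustanciales de teoría de Hodge $p$-ádica (anillos de períodos como $\Bcris$, exponenciales duales de Bloch-Kato), y el control de los términos de error en niveles finitos demanda un estudio cuidadoso de la ramificación salvaje en $p$.
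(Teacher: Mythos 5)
Debes tener presente que el texto no demuestra este enunciado: lo formula explícitamente como \emph{conjetura} (\cref{conj:mc-ec}) y sólo registra su estado actual —Rubin en el caso de multiplicación compleja, Kato para la inclusión $\charideal_{\LL(G)}\mathrm X(E)\supseteq(\mu_E)$ mediante el sistema de Euler de los elementos de Beilinson--Kato, y Skinner--Urban para la inclusión opuesta bajo condiciones técnicas suaves—. Tu esquema reproduce fielmente esa división del trabajo, incluida la observación (que el texto hace en la \cref{sec:dem-cp}) de que aquí la fórmula de números de clases ya no permite deducir una divisibilidad de la otra, y el uso del \cref{thm:kato-x-torsion} para dar sentido al ideal característico. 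En ese sentido tu descripción de la estrategia es correcta y coincide con lo que la literatura efectivamente hace.

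Sin embargo, lo que propones es un programa, no una demostración: todos los pasos sustanciales —la construcción y compatibilidad en norma de las clases de Beilinson--Kato, la ley de reciprocidad explícita que identifica su imagen bajo el mapeo de Coleman con $\mu_E$, las familias de Hida en grupos unitarios y el análisis del ideal de Eisenstein— son precisamente el contenido de los trabajos (largos y profundos) de Kato y de Skinner--Urban, a los que tu texto remite sin aportar los argumentos. Además, como tú mismo reconoces, la segunda divisibilidad sólo se conoce bajo hipótesis adicionales (irreducibilidad de $\overline\rho_{E,p}\colon\GQ\rightarrow\GL_2(\F_p)$, condiciones locales en $p$, restricciones sobre el conductor), de modo que el enunciado tal como está escrito —una igualdad incondicional para toda curva con buena reducción ordinaria en $p$— sigue abierto y ninguna apelación a esos resultados cierra la brecha. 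Un detalle menor de precisión: al final dices que las clases construidas vía congruencias de Eisenstein fuerzan a $\charideal_{\LL(G)}\mathrm X(E)$ a \enquote{contener el primo $\mathfrak P$}; lo que se obtiene es que $\mathfrak P$ divide al ideal característico, es decir $\charideal_{\LL(G)}\mathrm X(E)\subseteq(\mathfrak P^k)$ con la multiplicidad adecuada, mientras que el sistema de Euler da la cota superior $(\mu_E)\subseteq\charideal_{\LL(G)}\mathrm X(E)$; conviene mantener claro cuál método produce cuál inclusión.
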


Al día de hoy, esta conjetura es demostrada en muchos casos. El caso de una curva con multiplicación
compleja la conjetura fue demostrada por Rubin \cite{MR1079839}. En el caso general, suponiendo
algunas condiciones técnicas menudas, Kato \cite{MR2104361} demostró la inclusión
\enquote{$\supseteq$} en la igualdad y Skinner y Urban \cite{MR3148103} demostraron la
otra. Véase también \cite[§2.3.3, §2.4.4, §2.5.5]{MR2334196} para una discusión de las ideas
detrás de estas demostraciones.

\ejercicios

\begin{ejer}\label{ejer:a-qp-zp-colim}
  Sea $A$ un grupo abeliano. Use el hecho de que el producto tensorial es compatible con
  colímites para demostrar que canónicamente
  \[ A\tensor_\Z(\Qp/\Zp)\isom\varinjlim_{l\in\Nuno} A/p^lA \]
  donde los mapeos $A/p^lA\rightarrow A/p^{l+1}A$ en el colímite a la derecha son $a\mapsto
  pa$. Aplique esto al módulo de Tate $A=T_pE$ de una curva elíptica $E$ para demostrar que
  \[  V_pE/T_pE\isom E[p^\infty](\overline K). \]
\end{ejer}

\subsection{Funciones $L$ $p$-ádicas para formas modulares}

En la sección anterior dejamos pendiente la pregunta de cómo debe ser la función $L$ $p$-ádica
de una curva elíptica. En el caso de la función zeta $p$-ádica, esta interpolaba valores
especiales de la función zeta en enteros negativos, que eran algebraicos. ¡Recuerde que en
los enteros negativos la serie que define la función zeta no converge! Además, no obtuvimos estos
valores sino hasta que continuamos la función zeta analíticamente. Por lo tanto, antes de que
podamos pensar en una función de Hasse y Weil $p$-ádica, tenemos que preguntarnos:
\begin{itemize}
\item ¿Dónde converge la serie que define la función $L$ de Hasse y Weil?
\item ¿Tiene una continuación meromorfa o analítica a todo $\C$?
\item ¿También tenemos valores especiales que son algebraicos? ¿Cuáles son estos valores?
\end{itemize}

Estas preguntas se pueden formular también para funciones $L$ más generales como las de la
\cref{defi:sistema-compatible}, y lo que pasa es que en esta generalidad no se sabe casi nada
(aunque hay algunas conjeturas que mencionaremos en la \cref{sec:etnc}). El problema es que
las funciones $L$ son definidas por polinomios de origen aritmético (esencialmente
polinomios característicos de elementos de Frobenius en una representación de Galois), y
esta definición no dice mucho sobre el comportamiento analítico de dichas
funciones. Necesitamos algunas herramientas para sobreponernos a estos obstáculos.

Aquí entra el famoso Programa de Langlands en el escenario. Esto es un tema enorme sobre el que
se podría escribir otro libro, así que sólo indicamos algunas pocas ideas detrás del Programa de Langlands. La filosofía es que para cada \enquote{objeto aritmético} (más precisamente, un
motivo\footnote{Por ahora nos imaginamos un motivo como un sistema compatible de
  representaciones de Galois aunque en realidad es algo más especifico: conjeturalmente,
  cada motivo lleva a un tal sistema, pero no al revés. Diremos un poco más sobre esto en la
  siguiente \cref{sec:etnc}.}) debería existir un objeto \enquote{automorfo} (más
precisamente, una representación automorfa), que es de origen analítico, con la misma
función $L$. Sobre las funciones $L$ de origen automorfo sabemos mucho más y en casos
favorables tenemos por ejemplo la continuación analítica. Esto debería generalizar la teoría
de campos de clases, es decir, la teoría de campos de clases debería de ser una especialización del Programa de Langlands. Sobre $\Q$ tenemos lo siguiente: El teorema de Kronecker y Weber dice
que cada extensión abeliana está contenida en una extensión ciclotómica $\Q(\mu_m)$ para un
$m\in\Nuno$, y el isomorfismo de Artin de la teoría de campos de clases en este caso
simplemente es $\Gal(\Q(\mu_m)/\Q)\isom(\Z/m\Z)^\times$. En particular, las representaciones
de Galois que se factorizan a traves de $\Gal(\Q(\mu_m)/\Q)$ corresponden únicamente a los
caracteres de Dirichlet de conductor un divisor de $m$. Bajo esta correspondencia, la función
$L$ de tal representación de Galois es la función $L$ del carácter de Dirichlet
correspondiente, como mencionamos antes. Esto es el caso más sencillo de una correspondencia
tipo Langlands: los objetos aritméticos son las representaciones de $\GQ$
de orden finito y de una dimensión, y los objetos automorfos son los caracteres de
Dirichlet. Tienen la misma función $L$ y esta afirmación es equivalente al isomorfismo de
Artin. Debido a que conocemos propiedades como la continuación analítica de las funciones $L$ de
Dirichlet, obtenemos esta correspondencia también para las funciones $L$ de dichas representaciones de
Galois. Aunque esto parece casi trivial en este caso básico, de hecho es una encarnación del
Programa de Langlands.

En el caso de curvas elípticas el objeto automorfo que corresponde a ellas son ciertas
formas modulares. En este caso tenemos el siguiente famoso resultado que implica el Último
Teorema de Fermat. A partir de ahora hasta el fin de la sección suponemos que el lector
conoce la teoría básica de formas modulares como es explicada por ejemplo en
\cite{MR2112196} o \cite{MR1291394}. Antes de citar el resultado resumimos la definición de
la función $L$ de formas modulares.

Sea $f\in\mathrm S_k(N,\chi)$ una forma modular cuspidal nueva\footnote{\textit{cuspidal newform}} de peso $k\ge2$, nivel
$N\in\Nuno$ y nebentipo $\chi$; en particular $f$ es una forma propia para todos los operadores
de Hecke. Si la vemos como función en el semiplano superior $\mathbb H$ entonces se escribe
como una serie de Fourier
\[ f(z)=\sum_{n=1}^\infty a_nq^n,\quad q=\e^{2\pi\mathrm i z} \quad(z\in\mathbb H). \] La función
$L$ en este caso está definida como \[ L(f,s)=\sum_{n=1}^\infty a_nn^s. \] Más generalmente,
si $\psi$ es un carácter de Dirichlet cualquiera entonces se define
\[ L(f,\psi,s)=\sum_{n=1}^\infty a_n\psi(n)n^s. \]
Esta función se llama la función de $f$ chanfleada por $\psi$. El siguiente resultado
clásico describe sus propiedades básicas analíticas.

\begin{prop}
  La serie que define $L(f,\psi,s)$ converge absolutamente para $\Re(s)>\frac k2+1$ y la
  función holomorfa que define tiene una continuación analítica a todo $\C$. Además
  tiene un producto de Euler
  \[ L(f,\psi,s) = \prod_{\ell\text{ primo}} (1-\psi(\ell) a_\ell\ell^{-s} +
    \chi\psi^2(\ell)\ell^{k-1-2s})^{-1} \qquad(\Re(s)>\textstyle\frac k2+1) \]
\end{prop}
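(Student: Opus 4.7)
La demostración se puede dividir naturalmente en tres partes, cada una basada en una propiedad distinta de $f$: la cota de los coeficientes de Fourier (para la convergencia), el hecho de que $f$ sea una forma propia de Hecke (para el producto de Euler) y la modularidad de $f$ (para la continuación analítica).

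Para la convergencia absoluta, el plan es obtener la cota clásica de Hecke $|a_n|=O(n^{k/2})$. Esta cota resulta al estudiar la función $y^{k/2}|f(z)|$ con $z=x+\complexi y$, que es invariante bajo la acción del grupo modular y se anula en todas las cúspides porque $f$ es cuspidal, por eso es acotada en $\Hp$. Expresando $a_n$ por la integral de Fourier
\[ a_n = \e^{2\pi n y}\int_0^1 f(x+\complexi y)\e^{-2\pi\complexi n x}\integrald x \]
y eligiendo $y=1/n$ se obtiene $|a_n|\le C n^{k/2}$ con una constante absoluta $C$. Como $|\psi(n)|\le1$, esto da inmediatamente la convergencia absoluta para $\Re(s)>k/2+1$.

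Para el producto de Euler, aprovecharía que $f$ es forma propia de Hecke (y nueva, así que $a_1=1$). Las relaciones multiplicativas entre los operadores de Hecke traducen directamente en relaciones entre los coeficientes: $a_{mn}=a_ma_n$ si $(m,n)=1$ y $a_{\ell^{r+1}}=a_\ell a_{\ell^r}-\chi(\ell)\ell^{k-1}a_{\ell^{r-1}}$ para $\ell$ primo y $r\ge1$. De la primera relación se factoriza $\sum_n a_n\psi(n)n^{-s}$ como producto sobre primos, y de la relación de recurrencia se identifica cada factor local con $(1-\psi(\ell)a_\ell\ell^{-s}+\chi\psi^2(\ell)\ell^{k-1-2s})^{-1}$ via un simple cálculo de series geométricas generalizadas.

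La continuación analítica es la parte más delicada. La idea fundamental es una transformada de Mellin: para $\Re(s)\gg0$, sustituyendo $q=\e^{2\pi\complexi z}$ con $z=\complexi y$ se obtiene
\[ (2\pi)^{-s}\Gamma(s) L(f,s) = \int_0^\infty f(\complexi y)y^{s-1}\integrald y, \]
y una fórmula similar para $L(f,\psi,s)$ usando la forma chanfleada $f_\psi:=\sum a_n\psi(n)q^n$ (que otra vez es una forma modular, de nivel $N\cdot\text{cond}(\psi)^2$, lo cual se ve usando las sumas de Gauss). Primero partiría la integral en $\int_0^{1/\sqrt{Nc}}+\int_{1/\sqrt{Nc}}^\infty$ (donde $c$ es el conductor de $\psi$), y luego aplicaría la involución de Fricke $z\mapsto -1/(Ncz)$ al primer sumando para transformarlo en una integral sobre $(\sqrt{Nc},\infty)$. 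Ambos integrales resultantes convergen para \emph{todo} $s\in\C$ gracias al decaimiento exponencial de $f$ y $f_\psi$ en la cúspide $\complexi\infty$ (consecuencia de ser cuspidales). Esto da la continuación analítica entera de $L(f,\psi,s)$ y de paso una ecuación funcional que relaciona $s$ con $k-s$. El obstáculo técnico principal será rastrear con precisión las normalizaciones y el comportamiento exacto de $f_\psi$ bajo Fricke para que la descomposición efectivamente funcione, pero esto es un ejercicio estándar bien documentado en la teoría clásica (por ejemplo en \cite{MR1291394}).
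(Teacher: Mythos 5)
Tu propuesta es esencialmente correcta, pero conviene señalar que el texto no demuestra esta proposición: la prueba del paper consiste únicamente en remitir a \cite[Thm.\ 3.66]{MR1291394} y \cite[Thm.\ 5.9.2]{MR2112196}, cuyas demostraciones siguen exactamente las tres líneas que tú describes (cota de Hecke $|a_n|=O(n^{k/2})$ para la convergencia, relaciones de Hecke y multiplicatividad de los coeficientes de la forma nueva para el producto de Euler, y transformada de Mellin para la continuación). Así que tu esbozo es en efecto la demostración estándar a la que el paper delega. Dos observaciones menores. Primero, hay una pequeña inconsistencia en la parte analítica: tú mismo dices (correctamente) que $f_\psi$ tiene nivel $Nc^2$ con $c$ el conductor de $\psi$, pero luego usas la involución de Fricke $z\mapsto-1/(Ncz)$ y partes la integral en $1/\sqrt{Nc}$; debería ser $z\mapsto-1/(Nc^2z)$ y el punto de corte $1/(c\sqrt N)$, y además identificar $f_\psi|_kw_{Nc^2}$ requiere la fórmula de Atkin--Li con la suma de Gauß. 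Esto cae dentro del obstáculo de normalizaciones que tú ya señalas, pero es donde hay que tener cuidado. Segundo, si sólo quieres la continuación entera (que es lo único que afirma la proposición, sin ecuación funcional), ni siquiera necesitas la involución de Fricke: como $f_\psi$ es cuspidal, decae exponencialmente en \emph{todas} las cúspides, en particular en $0$, así que $f_\psi(\complexi y)$ decae exponencialmente tanto para $y\to\infty$ como para $y\to0^+$, y la integral de Mellin $\int_0^\infty f_\psi(\complexi y)y^{s-1}\integrald y$ converge para todo $s\in\C$ directamente; dividiendo por $(2\pi)^{-s}\Gamma(s)$ (que no tiene ceros) se obtiene que $L(f,\psi,-)$ es entera. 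El paso por Fricke es lo que compra, adicionalmente, la ecuación funcional $s\leftrightarrow k-s$, que aquí no hace falta.
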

\begin{proof}
  \cite[Thm.\ 3.66]{MR1291394}, \cite[Thm.\ 5.9.2]{MR2112196}
\end{proof}

El Teorema de Modularidad entonces es el siguiente.

\begin{thm}[Wiles, Taylor, Breuil, Conrad, Diamond]
  Sea $E/\Q$ una curva elíptica. Entonces existe una forma modular nueva $f$ de peso $2$ con
  nebentipo trivial cuya función $L$ es la misma que la de $E$, es decir
  \[ L(E,s)=L(f,s) \quad\forall s\in\C. \] En particular $L(E,-)$ tiene
  una continuación analítica a todo de $\C$.
\end{thm}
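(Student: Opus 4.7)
La estrategia para demostrar el Teorema de Modularidad es indirecta: en lugar de construir la forma modular $f$ directamente a partir de la curva $E$, la conexión se establece vía las representaciones de Galois asociadas. El plan es demostrar que la representación $p$-ádica de Galois $\rho_{E,p}\colon\GQ\rightarrow\GL_2(\Z_p)$ que actúa en $T_pE$ proviene de una forma modular nueva en el sentido de Deligne; luego, por el teorema fuerte de multiplicidad uno y una comparación de productos de Euler, esto implica la igualdad $L(E,s)=L(f,s)$.

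Primero, elegiría un primo auxiliar adecuado (típicamente $p=3$) y consideraría la representación residual $\bar\rho_{E,3}\colon\GQ\rightarrow\GL_2(\F_3)$. El primer paso central es demostrar que $\bar\rho_{E,3}$ es \emph{modular}, es decir, que surge de una forma modular módulo $3$. Para esto se aprovecha el hecho crucial de que $\GL_2(\F_3)$ es un grupo resoluble: uno encaja $\GL_2(\F_3)$ en $\GL_2(\Z[\sqrt{-2}])\subseteq\GL_2(\C)$ vía una representación fiel, obteniendo una representación compleja $\tilde\rho$ de $\GQ$ con imagen resoluble. El teorema de Langlands y Tunnell (que descansa sobre cambios de base cíclicos) asegura entonces que $\tilde\rho$ corresponde a una forma automorfa de peso $1$ sobre $\GL_2$, y un argumento de congruencias (usando formas de Eisenstein en peso adecuado) permite deducir la modularidad de $\bar\rho_{E,3}$.

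El segundo paso, y aquí reside la dificultad central, es un \emph{teorema de ascenso de modularidad}: si $\bar\rho_{E,3}$ es modular e irreducible y $\rho_{E,3}$ satisface ciertas condiciones locales (ser cristalina o semistable en $3$ con los pesos de Hodge-Tate correctos, y ramificación apropiada en los primos de mala reducción), entonces $\rho_{E,3}$ misma es modular. El método consiste en identificar un anillo universal de deformaciones $R$, que parametriza las deformaciones de $\bar\rho_{E,3}$ satisfaciendo las condiciones locales prescritas, con un álgebra de Hecke $T$ que actúa en un espacio de formas modulares. El teorema $R=T$ se demuestra por el método de Taylor-Wiles, refinado luego por Diamond, Fujiwara, Kisin y otros, que construye sistemas de módulos auxiliares libres sobre $R$ mediante la elección cuidadosa de primos de Taylor-Wiles. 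El obstáculo técnico más espinoso es el análisis local en $p=3$ cuando $E$ tiene reducción aditiva allá, pues requiere la teoría de tipos locales de Breuil-Mézard y el cálculo fino de los anillos de deformaciones con condiciones de tipo potencialmente semistable.

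Finalmente, habría que tratar el caso en que $\bar\rho_{E,3}$ es reducible, en cuyo cual el teorema de ascenso no aplica. Aquí se utiliza el célebre \emph{truco $3$-$5$} de Wiles: se demuestra que existe otra curva elíptica $E'/\Q$ tal que $\bar\rho_{E',3}\isom\bar\rho_{E,3}$ pero $\bar\rho_{E',5}$ es irreducible y modular (esto último usando que la variedad modular que parametriza las curvas con una $5$-estructura dada tiene puntos racionales gracias a ser una curva de género cero con un punto racional). Aplicando el teorema de ascenso en $p=5$ a $E'$ se obtiene la modularidad de $E'$, y por lo tanto de $\bar\rho_{E',3}=\bar\rho_{E,3}$ con una forma modular concreta, a lo cual se aplica nuevamente el ascenso en $p=3$ para $E$. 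Espero que la parte más ardua sea establecer el teorema $R=T$ en toda la generalidad local requerida, especialmente en los casos de reducción aditiva donde los trabajos de Breuil, Conrad, Diamond y Taylor fueron imprescindibles para extender el resultado semistable original de Wiles a todas las curvas elípticas sobre $\Q$.
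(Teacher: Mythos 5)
El texto no demuestra este teorema: lo cita como resultado profundo (Wiles, Taylor, Breuil, Conrad, Diamond) sin incluir demostración alguna, así que no hay un argumento del artículo con el cual comparar el tuyo. Tu propuesta reproduce a grandes rasgos la estrategia estándar de la literatura (modularidad residual vía Langlands--Tunnell en $p=3$, teoremas de ascenso de modularidad del tipo $R=T$ por el método de Taylor--Wiles con el análisis local delicado en $3$, y el cambio de primo $3$--$5$), y en ese sentido el esquema es fiel; pero conviene ser claro en que, a este nivel de detalle, es un mapa de la demostración y no una demostración: cada paso (Langlands--Tunnell, el teorema $R=T$ en la generalidad local requerida, la compatibilidad local-global que se necesita para pasar de la modularidad de la representación a la igualdad de \emph{todos} los factores de Euler, incluidos los de mala reducción) es por sí mismo un teorema mayor.

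Hay además un error concreto en tu versión del truco $3$--$5$. Tal como lo escribes, eliges $E'$ con $\bar\rho_{E',3}\isom\bar\rho_{E,3}$ y $\bar\rho_{E',5}$ irreducible, y al final aplicas «nuevamente el ascenso en $p=3$ para $E$»; pero el caso problemático es precisamente aquel en que $\bar\rho_{E,3}$ es reducible, y los teoremas de ascenso de Wiles/Taylor--Wiles/BCDT exigen irreducibilidad residual (de hecho irreducibilidad absoluta restringida a $\Gal(\Qbar/\Q(\sqrt{-3}))$), de modo que ese último paso en $p=3$ fallaría. El argumento correcto invierte los papeles de los primos: si $\bar\rho_{E,3}$ es reducible entonces $\bar\rho_{E,5}$ es irreducible (una curva elíptica sobre $\Q$ no puede tener ambas reducibles); se construye $E'$ con $\bar\rho_{E',5}\isom\bar\rho_{E,5}$ y $\bar\rho_{E',3}$ irreducible (usando que la curva modular que parametriza tales $E'$ es de género cero con un punto racional, más irreducibilidad de Hilbert); $E'$ resulta modular por el argumento en $3$, de donde $\bar\rho_{E,5}\isom\bar\rho_{E',5}$ es modular, y el ascenso se aplica entonces en $p=5$ a $E$, no en $p=3$. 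También es anacrónica (aunque inofensiva en espíritu) la referencia a Breuil--Mézard: el análisis local en $3$ de Breuil, Conrad, Diamond y Taylor usa la clasificación de Breuil de esquemas en grupos finitos y planos y cálculos de anillos de deformación con tipos prescritos, previa a la conjetura de Breuil--Mézard propiamente dicha.
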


Este resultado también es una encarnación del Programa de Langlands, y nos provee de las
herramientas deseadas para responder las preguntas planteadas del inicio de esta sección. En general, no hay
ninguna esperanza de responder estas preguntas directamente sin pasar por el mundo
automorfo, y esto ilustra la enorme importancia del Programa de Langlands para la Teoría de
Iwasawa.

Es decir, la existencia de una función $L$ $p$-ádica para una curva elíptica ahora es
equivalente a la existencia de tal función para formas modulares nuevas de peso $2$ con
nebentipo trivial. De hecho, no es mucho más difícil explicar esto para formas modulares
nuevas en general, así que desde ahora ya no hablaremos de curvas elípticas sino de formas
modulares.

A partir de ahora fijamos una forma modular cuspidal nueva $f\in\mathrm S_k(N,\chi)$ como
arriba. La pregunta de algebraicidad de valores especiales es contestada por el siguiente
resultado.

\begin{thm}[Shimura]\label{thm:shimura-algebraicidad}
  Sea $K_f$ el campo de números generado por los coeficientes de Fourier de $f$.
  Existen dos números $\Omega_f^\pm\in\C^\times$ tal que para cada carácter de Dirichlet
  $\psi$ y para $n=1,\dotsc,k-1$ tenemos
  \[ \frac{\mathrm G(\psi^{-1})}{(2\pi\mathrm i)^n \Omega_f^\pm} L(f,\psi,n) \in K_f(\psi). \]
  Aquí el superíndice de $\Omega_f^\pm$ debe ser el signo de $(-1)^n\psi(-1)$ y
  \[ \mathrm G(\psi^{-1}) =\sum_{j=1}^c\psi^{-1}(j)\e^{2\pi\mathrm i j/c} \]
  es la suma de Gauß (con $c$ siendo el conductor de $\psi$).
\end{thm}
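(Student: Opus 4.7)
El plan es seguir la demostración clásica de Shimura, basada en la representación integral de los valores especiales y el isomorfismo de Eichler-Shimura. Primero, partiendo de la transformación de Mellin $\Gamma(s)(2\pi)^{-s}L(f,s)=\int_0^\infty f(\mathrm i y)y^{s-1}\integrald y$ y chanfleando por $\psi$ de conductor $c$, obtendré para $n\in\{1,\dotsc,k-1\}$ una fórmula del tipo
\[ \frac{(n-1)!}{(-2\pi\mathrm i)^n}\mathrm G(\psi^{-1})L(f,\psi,n) = \sum_{a\in(\Z/c\Z)^\times}\psi^{-1}(a)\int_{a/c}^{\mathrm i\infty} f(z)(cz-a)^{n-1}\integrald z. \]
Esto reduce la demostración al estudio de los \emph{símbolos modulares} $\{0,r\}_P:=\int_0^r f(z)P(z)\integrald z$ para $r\in\Q$ y $P\in\C[z]$ un polinomio de grado $\le k-2$, y más concretamente a demostrar que estos símbolos, divididos por un período $\Omega_f^\pm$ apropiado, viven en $K_f(\psi)$ cuando $P$ tiene coeficientes en este campo.

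En el segundo paso invocaré el isomorfismo de Eichler-Shimura, que identifica $\mathrm S_k(N,\chi)\oplus\overline{\mathrm S_k(N,\chi)}$ con un sumando de la cohomología parabólica $\HL^1_{\mathrm{par}}(\Gamma_1(N),V_{k-2}(\C))$ del sistema local $V_{k-2}$ de polinomios de grado $\le k-2$. El punto esencial es que esta cohomología admite una estructura natural sobre $\Q$ (y por lo tanto sobre $K_f$), proveniente de la cohomología singular con coeficientes racionales, sobre la cual actúan los operadores de Hecke y la conjugación compleja inducida por $z\mapsto -\bar z$. La parte $f$-isotípica del $K_f$-modelo de esta cohomología es un espacio vectorial de dimensión $2$ sobre $K_f$, que se descompone como suma directa de dos subespacios de dimensión $1$ según los autovalores $\pm 1$ de la conjugación compleja. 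Se eligen generadores $\xi_f^\pm$ de estos $K_f$-espacios.

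En el tercer paso definiré los períodos $\Omega_f^\pm\in\C^\times$ como las constantes tales que la clase de cohomología asociada a $f$ (construida vía símbolos modulares) se escribe como $\Omega_f^+\xi_f^++\Omega_f^-\xi_f^-$ en la cohomología compleja. La paridad $(-1)^n\psi(-1)$ surge naturalmente: bajo la involución $z\mapsto-\bar z$ el símbolo $\int_{a/c}^{\mathrm i\infty}f(z)(cz-a)^{n-1}\integrald z$ se transforma por el signo $(-1)^{n-1}\psi(-1)$ tras la suma sobre $a$ (tras un cambio $a\mapsto -a$), lo cual significa que la integral aparece en el autoespacio correspondiente. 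Combinando esto con el hecho de que $\mathrm G(\psi^{-1})\in\Qbar$ y que los valores de $\psi^{-1}$ están en $\Q(\psi)$, se obtiene que $L(f,\psi,n)\mathrm G(\psi^{-1})/((2\pi\mathrm i)^n\Omega_f^\pm)\in K_f(\psi)$ con el signo prescrito.

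El obstáculo principal será justificar rigurosamente la existencia de la $K_f$-estructura en $\HL^1_{\mathrm{par}}$ compatible con los operadores de Hecke y la conjugación compleja, y verificar que la parte $f$-isotípica es efectivamente de dimensión $2$ sobre $K_f$ con la acción esperada de la conjugación compleja. Esto es un resultado profundo que requiere o bien el teorema de comparación entre cohomología singular y de De Rham algebraica sobre la curva modular $Y_1(N)$ (vía la interpretación de formas modulares como secciones de haces sobre un modelo racional), o bien un argumento de cohomología de grupos para $\Gamma_1(N)$ con coeficientes en $K_f[z]_{\le k-2}$ (véase \cite{MR1291394}). Una vez establecida esta estructura racional, la contabilidad explícita de los factores de normalización (potencias de $2\pi\mathrm i$, la suma de Gauss, el conductor $c$) es técnica pero rutinaria.
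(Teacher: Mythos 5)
El texto no da una demostración propia de este teorema: se limita a citar el artículo original de Shimura, \cite[Thm.\ 1 (ii)]{MR0463119}. Tu esbozo reconstruye precisamente la demostración estándar de ese resultado (en esencia la de Shimura, en su reformulación mediante símbolos modulares, como en \cite{MR1291394} o \cite{MR0830037}), y en sus líneas generales es correcto: la representación integral de los valores chanfleados vía la transformación de Mellin y la relación de Birch, el isomorfismo de Eichler--Shimura con la estructura racional (de hecho $K_f$-racional y estable bajo Hecke) en la cohomología, la descomposición en autoespacios bajo la involución $z\mapsto-\bar z$, y la definición de $\Omega_f^\pm$ comparando la clase de $f$ con generadores racionales. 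Al completarlo conviene cuidar tres detalles. Primero, el paso con la suma de Gauß requiere que $\psi$ sea primitivo (o un argumento aparte para caracteres imprimitivos). Segundo, los símbolos $\{a/c,\infty\}$ son caminos entre cúspides, así que lo natural es trabajar con la cohomología con soporte compacto (o con el espacio completo de símbolos modulares) y usar que, para la componente isotípica de una forma cuspidal nueva, ésta coincide con la parte parabólica; tal como lo formulas, la reducción a $\HL^1_{\mathrm{par}}$ queda sin justificar. Tercero, la contabilidad de signos no es tan rutinaria como sugieres: con tu normalización la suma chanfleada cae en el autoespacio de autovalor $(-1)^{n-1}\psi(-1)$, mientras que el enunciado etiqueta el período con el signo de $(-1)^n\psi(-1)$; la discrepancia se absorbe al fijar las convenciones (etiquetado de $\Omega_f^\pm$, $G(\psi)$ frente a $G(\psi^{-1})$, $(2\pi\mathrm i)^n$ frente a $(-2\pi\mathrm i)^n$), pero debes verificarla explícitamente para que tu conclusión coincida con la del teorema tal y como está enunciado.
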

\begin{proof}
  \cite[Thm. 1 (ii)]{MR0463119}
\end{proof}

Como se puede ver de este teorema, los valores especiales en general ya no son algebraicos
(porque los números $\Omega_f^\pm$ en general son transcendentes), pero al menos podemos
describir y controlar su parte transcendente con dos números. Otra diferencia, con el resultado
análogo de la \cref{prop:numeros-de-bernoulli-gen} para caracteres de Dirichlet, es que sólo
tenemos valores algebraicos para una cantidad finita de enteros $n$, pero
para una cantidad infinita de chanfles por caracteres. Este fenómeno tiene explicación
en una conjetura de Deligne que abordaremos en la siguiente sección.

Para el siguiente resultado sobre la función $L$ $p$-ádica tenemos que suponer que $f$ es
\define[forma modular ordinaria]{ordinaria} en $p$, esto significa que $|a_p|_p=1$ (donde $|\cdot|_p$ es el valor absoluto
$p$-ádico). Es fácil ver que en este caso el polinomio
\[ X^2-a_pX+\chi(p)p^{k-1}, \]
que se llama el \emph{polinomio de Hecke}\index[def]{polinomio!de Hecke} $p$-ésimo de $f$, tiene una única raíz
$\alpha\in\Qbar$ tal que $|\alpha|_p=1$. Este $\alpha$ aparece en el siguiente resultado.
La condición ordinaria aquí es análoga a la condición en la \cref{sec:stickelberger} que pide
que el conductor del carácter $\chi$ no sea divisible por $p$.

La forma modular asociada a una curva elíptica de buena reducción ordinaria en $p$ es
ordinaria. Mencionamos además que en este caso el campo $K_f$ es $\Q$.

Aquí y en los siguientes resultados, $\LL(G)$ denota el álgebra de Iwasawa con coeficientes
en el anillo de enteros $\O$ de la completación de $K_f$ en la plaza arriba de $p$ fijada
por nuestros encajes (es decir, para una curva elíptica $\O=\Zp$).

\begin{thm}[Mazur/Swinnerton-Dyer, Mazur/Tate/Teitelbaum]\label{thm:palf-modulformen}
  Sean $K_f$ y $\Omega_f^\pm$ como arriba. 

  Entonces existe un único elemento $\mu_f\in\LL(G)$ tal que para
  $n=1,\dotsc,k-1$ y cada carácter de Dirichlet $\psi$ de conductor $p^m$ con $m\in\Ncero$
  tenemos
    \begin{multline*}
      \int_{G}\psi^{-1}\kappa^n\integrald\mu_f = \\
      (n-1)!(1-\alpha^{-1}\psi^{-1}(p)p^{n-1})(1-\alpha^{-1}\chi\psi(p)p^{k-n})\frac{p^{m(n-1)}\mathrm
        G(\psi^{-1})}{\alpha^{m}(2\pi\mathrm
        i)^{n}\Omega_f^\pm}L(f,\psi,n),
  \end{multline*}
  donde otra vez el superíndice de $\Omega_f^\pm$ es el signo de $(-1)^n\psi(-1)$.
\end{thm}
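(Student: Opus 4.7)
The plan is to construct $\mu_f$ via modular symbols and to verify the interpolation formula by comparing two integral representations of $L(f,\psi,n)$: the classical one coming from Mellin inversion, and the $p$-adic one coming from integration against $\mu_f$. Uniqueness will follow as in \cref{prop:unicidad-palf} from the fact that the characters $\psi^{-1}\kappa^n$ appearing in the formula are Zariski dense in $\mathrm{Hom}(G,\Qpbar^\times)$, so I concentrate on existence.

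First, I would introduce the $p$-stabilization $f_\alpha(z)=f(z)-\beta f(pz)\in\mathrm S_k(Np,\chi)$, where $\alpha,\beta$ are the roots of the $p$-th Hecke polynomial $X^2-a_pX+\chi(p)p^{k-1}$ and $\alpha$ is the unit root provided by the ordinarity hypothesis $|a_p|_p=1$. This form is an eigenvector of the Atkin $U_p$-operator with eigenvalue $\alpha$, a fact I would prove by a direct calculation with Hecke operators; the $U_p$-eigenvalue $\alpha$ is what will govern the $p$-adic growth and produce the Euler-like factors $(1-\alpha^{-1}\psi^{-1}(p)p^{n-1})(1-\alpha^{-1}\chi\psi(p)p^{k-n})$ appearing in the interpolation formula.

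Next, following the classical theory of modular symbols, I would define for $a\in\Z$ with $(a,p)=1$ and $j\in\{0,\dotsc,k-2\}$ the periods
\[
  \lambda_j(a,p^m) := \int_{-a/p^m}^{i\infty} f_\alpha(z)\,(z+a/p^m)^{j}\,\mathrm d z,
\]
and use Birch's lemma, which expresses $L(f,\psi,n)$ (after extracting the Gauß sum and the $(2\pi\mathrm i)^n$) as a finite $\psi$-linear combination of the $\lambda_{n-1}(a,p^m)$. By Shimura's \cref{thm:shimura-algebraicidad}, suitably renormalized $p^\pm$-parts of the $\lambda_j(a,p^m)$ lie in the ring of integers $\O$ (after enlarging if necessary). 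I would then define a sequence of $\O$-valued distributions $\mu_{f,m}$ on $G_{p^m}$ by
\[
  \mu_{f,m}(\sigma_a) := \alpha^{-m}\,\frac{\lambda_j(a,p^m)}{\Omega_f^{\pm}},
\]
with appropriate normalizations. The crucial verification is a distribution relation
\[
  \sum_{b\,\mathrm{mod}\,p^{m+1},\,b\equiv a\,(p^m)} \mu_{f,m+1}(\sigma_b) = \mu_{f,m}(\sigma_a),
\]
which I would prove by combining the invariance properties of $f_\alpha$ under the matrix $\begin{pmatrix}1&1\\0&1\end{pmatrix}$ with the $U_p$-eigenvalue identity $U_pf_\alpha=\alpha f_\alpha$; this is exactly where the factor $\alpha^{-m}$ is forced upon us.

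Having the compatible sequence $(\mu_{f,m})_m$ in hand, one obtains an element of $\varprojlim_m\O[G_{p^m}]=\LL(G)$, i.e.\ a genuine $\O$-valued measure on $G$, only because the $p$-adic size of $\mu_{f,m}$ stays bounded as $m\to\infty$, and this is the heart of the matter and the main obstacle: the Amice--Vélu / Vishik boundedness criterion requires that the factor $\alpha^{-m}$ not blow up $p$-adically, which is precisely the ordinarity hypothesis $v_p(\alpha)=0$. Without it, one would obtain only an unbounded (admissible) distribution rather than a measure in $\LL(G)$. With boundedness established, the interpolation formula is then a direct translation: integrating $\psi^{-1}\kappa^n$ against $\mu_f$ produces exactly the finite sum appearing in Birch's lemma for $L(f,\psi,n)$, the factor $\alpha^{-m}$ accounts for the $p^{m(n-1)}\mathrm G(\psi^{-1})/\alpha^m$ prefactor, and the extra Euler factors $(1-\alpha^{-1}\psi^{-1}(p)p^{n-1})(1-\alpha^{-1}\chi\psi(p)p^{k-n})$ arise from the passage between $f$ and its $p$-stabilization $f_\alpha$ (one factor for each root of the Hecke polynomial), while $(n-1)!$ comes from the Mellin transform $\Gamma(n)=(n-1)!$ relating $\int_0^\infty f_\alpha(\mathrm i y)y^{n-1}\mathrm dy$ to $L(f_\alpha,\psi,n)$.
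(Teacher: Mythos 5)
Your outline is essentially the intended argument: the paper itself gives no proof of this theorem and only cites Mazur--Swinnerton-Dyer and Mazur--Tate--Teitelbaum, and the construction in those references is precisely the one you sketch ($p$-stabilization $f_\alpha$, modular symbols, Birch's lemma, the $U_p$-distribution relation with the $\alpha^{-m}$ normalization, boundedness coming from $v_p(\alpha)=0$, and the interpolation computation producing the Euler-type factors and the $(n-1)!$). The one point to treat with care is that Shimura's theorem gives only algebraicity, not $p$-integrality, of the normalized periods, so to land in $\LL(G)$ itself (rather than merely obtaining a bounded measure with values in $L$) one must exploit that the $\lambda_j(a,p^m)$ are values of a single finitely generated modular-symbol lattice and rescale $\Omega_f^\pm$ (defined only up to $K_f^\times$) accordingly --- which is what your phrase \enquote{suitably renormalized} must be made to mean.
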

\begin{proof}
  \cite{MR0830037} (y \cite{MR0354674} para $k=2$)
\end{proof}

La fórmula de interpolación en este teorema se ve mucho menos elegante que la del
\cref{thm:palf-stickelberger-no-trivial} -- no simplemente quitamos un factor de Euler, sino que
aparecen muchas \enquote{expresiones de corrección} que la desproveen de belleza. Sin
embargo, hay una explicación satisfactoria por la cual cada una de estas expresiones debe
aparecer y por qué esta fórmula de hecho es consistente con la del
\cref{thm:palf-stickelberger-no-trivial}. Esbozaremos esta explicación en la siguiente
sección.

Finalmente podemos formular una Conjetura Principal también para formas
modulares. Un resultado famoso de Deligne asocia a una forma modular nueva como antes un
sistema compatible de representaciones de Galois cuya función $L$ es la función $L$ de
$f$. Más precisamente, tenemos lo siguiente.

\begin{thm}[Deligne]
  Sea $f$ una forma modular nueva como antes y $K_f$ el campo generado por los coeficientes
  de Fourier. Entonces para cada primo ${\mathfrak q}$ de $K_f$ existe una representación de Galois
  \[ \rho_{f,{\mathfrak q}}\colon\GQ\rightarrow\GL_2(K_{f,{\mathfrak q}}) \]
  que es no ramificada fuera de $N$, $\infty$ y el primo $q$ de $\Q$ abajo de ${\mathfrak q}$ y tal que
  para todo primo $\ell\neq q$ tenemos
  \[ \det(1-\rho_{f,{\mathfrak q}}(\Frob_\ell) T, V_{\mathfrak q}^{I_\ell} ) = 1- a_\ell T +
    \chi(\ell)\ell^{k-1} T^2 \] (donde $V_{\mathfrak q}=K_{f,{\mathfrak q}}^2$ con la acción
  de $\GQ$ definida por $\rho_{f,\p}$)
\end{thm}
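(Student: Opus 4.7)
El plan es construir las representaciones $\rho_{f,\mathfrak q}$ geométricamente a partir de la cohomología étale de variedades de Kuga-Sato, siguiendo el programa original de Eichler, Shimura y Deligne. Primero distinguiría el caso especial del peso $k=1$: aquí las representaciones no son ponderadas en el sentido de Deligne y su construcción requiere la teoría de Deligne y Serre mediante congruencias con formas de peso mayor, así que lo trataría por separado al final. Para $k\ge2$ la idea central es que los sistemas compatibles buscados aparecerán como \emph{componentes isotípicas} para la acción del álgebra de Hecke en ciertos grupos de cohomología étale.

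Segundo, consideraría la curva modular $X_1(N)$ sobre $\Q$ (o $X_0(N)$ equipada con la acción del nebentipo) y el esquema elíptico universal $\pi\colon\mathcal E\rightarrow Y_1(N)$. Para $k=2$ el objeto relevante es $\mathrm H^1_{\et}(X_1(N)_{\Qbar},\Q_\ell)$, que es isomorfo (salvo un factor de Tate) al módulo de Tate $V_\ell J_1(N)$ de la jacobiana. Para $k>2$ hay que introducir el haz liso $\mathcal F_k=\mathrm{Sym}^{k-2}R^1\pi_*\Q_\ell$ en $Y_1(N)$ y usar la \emph{variedad de Kuga--Sato} (la compactificación lisa del producto fibrado $(k-2)$-fold de $\mathcal E$ sobre $Y_1(N)$); entonces tomaría
\[ W_{\ell,k}:=\mathrm H^1_{\et}(X_1(N)_{\Qbar},j_*\mathcal F_k), \]
donde $j\colon Y_1(N)\hookrightarrow X_1(N)$ es la inclusión. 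La álgebra de Hecke $\mathbb T_\Z$ generada por los $T_n$ y los operadores diamantes actúa $\Gal(\Qbar/\Q)$-equivariantemente en este espacio, y cortaría el sumando $V_{\mathfrak q}$ como la localización de $W_{\ell,k}\otimes_{\Q_\ell}K_{f,\mathfrak q}$ en el ideal máximo $\ker(\lambda_f\colon\mathbb T\rightarrow K_f)$ asociado a los valores propios $a_n$ de $f$. La multiplicidad uno y la teoría de formas nuevas garantizan que $V_{\mathfrak q}$ es de dimensión $2$ sobre $K_{f,\mathfrak q}$.

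Tercero, verificaría las dos propiedades anunciadas. La \emph{no ramificación} fuera de $Nq\infty$ sigue de que $X_1(N)$ y el sistema local $\mathcal F_k$ tienen buena reducción fuera de estos primos, combinada con los teoremas de cambio de base liso y propio para la cohomología étale, que propagan esta buena reducción al subgrupo de inercia $I_\ell$ para $\ell\nmid Nq$. La fórmula para el polinomio característico de $\Frob_\ell$ en $V_{\mathfrak q}^{I_\ell}$ es el corazón del asunto: se obtiene combinando la \emph{relación de congruencia de Eichler--Shimura} sobre $X_1(N)\mod\ell$, que expresa $T_\ell$ en términos de los morfismos Frobenius y Verschiebung en característica $\ell$, con la teoría del carácter ciclotómico para identificar el determinante como $\chi(\ell)\ell^{k-1}$ (esto último usa que $\det\rho_{f,\mathfrak q}=\chi\kappa^{k-1}$, consecuencia de la acción de Hecke y del peso del haz $\mathcal F_k$).

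El obstáculo principal será establecer rigurosamente la relación de Eichler--Shimura para $k\ge3$ en la variedad de Kuga--Sato: esto requiere el análisis detallado de la reducción mala de $X_1(N)$ en primos $\ell\mid N$ (que no nos interesa aquí) y, sobre todo, la descripción geométrica del Frobenius y la Verschiebung en los puntos supersingulares de la curva modular junto con su compatibilidad con los coeficientes del haz local. Técnicamente este es el paso más delicado y es donde Deligne introduce genuinamente nuevas ideas en \cite{MR0354674}. Una vez demostrada esta relación, la compatibilidad del sistema $(V_{\mathfrak q})_{\mathfrak q}$ resulta inmediata: los polinomios $1-a_\ell T+\chi(\ell)\ell^{k-1}T^2$ tienen coeficientes en $K_f$ e independientes de $\mathfrak q$ por construcción, pues los $a_\ell\in K_f$ están determinados por $f$ y no por $\mathfrak q$. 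El caso restante $k=1$ se trataría aproximando $f$ por formas $f_m$ de pesos $k_m\ge2$ congruentes módulo $\ell^m$, obteniendo $\rho_f$ como límite de las representaciones $\bmod\,\ell^m$ obtenidas arriba y demostrando después, usando que la imagen es finita, que esta representación $\ell$-ádica proviene de una representación compleja.
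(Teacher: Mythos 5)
El texto del libro no demuestra este teorema: se enuncia como resultado célebre de Deligne, sin demostración, dentro del capítulo de generalizaciones, así que no hay un argumento interno con el cual comparar tu propuesta. Dicho esto, tu esbozo sigue exactamente la ruta estándar de Eichler--Shimura--Deligne (cohomología étale de $X_1(N)$ con coeficientes en $\mathrm{Sym}^{k-2}R^1\pi_*\Q_\ell$, variedades de Kuga--Sato, corte por el álgebra de Hecke, cambio de base liso y propio para la no ramificación, relación de congruencia de Eichler--Shimura para el polinomio de Frobenius), que es la manera correcta de atacar el enunciado para $k\ge2$; identificas bien que la relación de congruencia con coeficientes es el paso genuinamente difícil.

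Hay, sin embargo, una discrepancia entre lo que tu esbozo produce y lo que el enunciado afirma. El teorema, tal como está formulado, pide la igualdad $\det(1-\rho_{f,\mathfrak q}(\Frob_\ell)T,\,V_{\mathfrak q}^{I_\ell})=1-a_\ell T+\chi(\ell)\ell^{k-1}T^2$ para \emph{todo} primo $\ell\neq q$, incluyendo los $\ell\mid N$, donde la representación es ramificada y los invariantes de inercia pueden tener dimensión menor que $2$. La relación de Eichler--Shimura en buena reducción sólo da la fórmula para $\ell\nmid Nq$; en los primos malos se necesita la compatibilidad local--global (Langlands, Deligne, Carayol), que tú descartas explícitamente con ``que no nos interesa aquí''. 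Tal como está escrito, tu argumento demuestra una versión más débil que la enunciada. Dos observaciones menores: el desvío por el caso $k=1$ (Deligne--Serre) es innecesario, pues $f$ fue fijada de peso $k\ge2$ en el texto; y la cita \cite{MR0354674} que atribuyes a Deligne corresponde en la bibliografía de este libro al trabajo de Mazur y Swinnerton-Dyer usado para el caso $k=2$ de la función $L$ $p$-ádica, no a la construcción de las representaciones.
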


En el caso en que $f$ corresponde a una curva elíptica $E$ la representación del teorema
anterior es la misma que la representación $V_qE$ construida con los módulos de Tate.

Sea $\p\mid p$ el primo de $K_f$ arriba de $p$ fijado por nuestros encajes. Después de
escoger un retículo estable $T_\p$ en $V_\p$ tenemos el grupo de Selmer $\Sel(V_\p/T_\p)$ y
el módulo $\mathrm X(V_\p/T_\p)$ de la \cref{defi:x-selmer}, que denotamos como
$\mathrm X(f)$. El \cref{thm:kato-x-torsion} de Kato de hecho todavía es válido en
esta situación más general y dice que el $\LL(G)$-módulo $\mathrm X(f)$ es noetheriano y de
torsión.  La Conjetura Principal para formas modulares entonces es la afirmación siguiente
(note que la \cref{conj:mc-ec} para curvas elípticas es un caso especial).

\begin{conj}[Conjetura Principal para formas modulares]
  Sea $f$ como antes. Entonces tenemos la igualdad de ideales en $\LL(G)$
  \[ \charideal_{\LL(G)}\mathrm X(f)=(\mu_f). \]
\end{conj}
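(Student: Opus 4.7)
El plan es probar las dos inclusiones por separado, combinando un argumento vía sistemas de Euler con una construcción de congruencias tipo Eisenstein, tal como ya mencionamos en la \cref{sec:dem-cp} en el contexto clásico y como indicamos brevemente más adelante en la discusión de los resultados de Kato y de Skinner y Urban. Primero reduciríamos la afirmación a verificar las inclusiones $\charideal_{\LL(G)}\mathrm X(f)\supseteq(\mu_f)$ y $\charideal_{\LL(G)}\mathrm X(f)\subseteq(\mu_f)$, observando que $\mathrm X(f)$ es noetheriano y de torsión sobre $\LL(G)$ por el \cref{thm:kato-x-torsion} (generalizado por Kato), así que el ideal característico está bien definido y la igualdad puede comprobarse localmente primo a primo en $\LL(G)$ usando el teorema de estructura de \cref{sec:modulos-iwasawa}.

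Para la inclusión \emph{analítica} $\charideal_{\LL(G)}\mathrm X(f)\supseteq(\mu_f)$ seguiría el método de sistemas de Euler de Kato. El punto principal es construir una clase $\mathbf z_f\in\varprojlim_r\HL^1(K_r,T_\mathfrak p)$ a partir de las clases de Beilinson-Kato, que provienen de productos de unidades de Siegel en la $K$-teoría de curvas modulares y cuyas imágenes bajo el regulador de Deligne calculan $L(f,\psi,n)$ para los enteros críticos. Esta clase debe ser interpretada, vía un mapeo de Coleman o ley de reciprocidad explícita de Kato, como el elemento $\mu_f$ del álgebra de Iwasawa, recuperando así la fórmula de interpolación del \cref{thm:palf-modulformen}. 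Una vez construido este sistema de Euler, el formalismo abstracto de Rubin/Kolyvagin acota el tamaño del dual del grupo de Selmer en términos de la imagen de $\mathbf z_f$; esto produce la divisibilidad deseada siempre que $\mathbf z_f$ no sea trivial, lo cual a su vez es consecuencia de la no anulación de valores de $L(f,\psi,n)$ con chanfle suficientemente variable. La parte verdaderamente difícil de esta mitad es la construcción del sistema de Euler en K-teoría y la demostración de la ley de reciprocidad que lo liga a $\mu_f$; esperaría invertir aquí el grueso del esfuerzo técnico, dado que requiere tanto herramientas de geometría de curvas modulares como teoría de Hodge $p$-ádica (mapas exponenciales duales de Bloch y Kato).

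Para la inclusión \emph{algebraica} $\charideal_{\LL(G)}\mathrm X(f)\subseteq(\mu_f)$ adoptaría el método de congruencias de Eisenstein de Skinner y Urban, generalizando la estrategia de Mazur y Wiles que mencionamos en la \cref{sec:dem-cp}. El plan es levantar $f$ a una representación automorfa del grupo unitario cuasi-dividido $U(2,2)$ sobre un campo cuadrático imaginario auxiliar, y construir allí una familia de series de Eisenstein cuyo término constante es esencialmente $\mu_f$ (multiplicado por factores controlados). Como en el caso clásico, si un divisor primo $\mathfrak q$ de $\mu_f$ no apareciera en $\charideal\mathrm X(f)$ entonces la serie de Eisenstein sería congruente módulo $\mathfrak q$ a una forma cuspidal $F$ en $U(2,2)$; la representación de Galois asociada a $F$ sería una extensión no trivial del carácter asociado a $f$ por el carácter trivial, con propiedades locales prescritas, produciendo así una clase no trivial en un grupo de Selmer apropiado. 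Invirtiendo este razonamiento se obtiene la divisibilidad. El principal obstáculo aquí es doble: demostrar que efectivamente existen tales congruencias Eisenstein/cuspidal en $U(2,2)$ (un problema fino de teoría de Hida en rango superior, que exige controlar la estructura entera $\mathfrak q$-ádica del término constante), y extraer una representación de Galois adecuada de $F$ con las propiedades locales requeridas en $p$, pues necesitamos $F$ ordinaria para acceder a la teoría de Iwasawa ordinaria.

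Finalmente, tras ambas inclusiones, concluiría la igualdad. En el caso clásico de Mazur y Wiles la fórmula analítica de números de clases permitía deducir una de las inclusiones a partir de la otra, pero aquí ya no disponemos de tal atajo, razón por la cual ambos métodos son necesarios de forma independiente. La demostración completa requeriría además hipótesis técnicas auxiliares (condiciones en $\chi$, la irreducibilidad residual de la representación de Galois asociada a $f$, y condiciones sobre la imagen del conductor), que en un tratamiento honesto deberían aislarse y discutirse explícitamente al inicio. En resumen, espero que la construcción del sistema de Euler de Kato y la verificación de las congruencias Eisenstein en $U(2,2)$ constituyan los obstáculos medulares, mientras que las reducciones algebraicas en $\LL(G)$ y el paso de una igualdad de ideales característicos a la Conjetura Principal sean rutinarios a la luz de la teoría de estructura de los \cref{sec:modulos-iwasawa,sec:ideales-caracteristicos}.
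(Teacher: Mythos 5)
Tu propuesta no puede contrastarse con una demostración del texto porque el texto no demuestra este enunciado: está formulado como conjetura y, tal como en el caso de curvas elípticas (\cref{conj:mc-ec}), sólo se comenta que los resultados de Kato y de Skinner y Urban la establecen \enquote{en muchos casos} bajo pequeñas condiciones técnicas. Tu plan reproduce fielmente esa estrategia de dos mitades que el texto describe en la \cref{sec:dem-cp}: el sistema de Euler de Kato (clases de Beilinson--Kato en la K-teoría de curvas modulares más una ley de reciprocidad explícita) para la inclusión $\charideal_{\LL(G)}\mathrm X(f)\supseteq(\mu_f)$, y las congruencias de Eisenstein de Skinner y Urban en $U(2,2)$ para la inclusión opuesta; también aciertas al señalar que aquí ya no existe el atajo de la fórmula analítica de números de clases, de modo que ambas inclusiones requieren demostración independiente, y que $\mathrm X(f)$ es noetheriano y de torsión (la generalización del \cref{thm:kato-x-torsion}), lo cual da sentido al ideal característico.

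Sin embargo, como demostración tu texto tiene un hueco esencial: todo el contenido matemático está delegado a teoremas profundos que ni demuestras ni enuncias con precisión (la construcción y no trivialidad del sistema de Euler, la ley de reciprocidad que lo identifica con $\mu_f$, la existencia de las familias de Eisenstein con término constante controlado en $U(2,2)$ y de las representaciones de Galois asociadas con las propiedades locales requeridas en $p$), así que lo que presentas es un programa de trabajo y no una prueba. Más aún, el enunciado tal como está escrito ---una igualdad de ideales para \emph{toda} forma nueva ordinaria $f$--- no se sigue ni siquiera de la combinación de Kato y Skinner--Urban sin hipótesis adicionales (irreducibilidad residual de la representación asociada a $f$, condiciones sobre el nivel, el carácter y la ramificación); tú mismo lo reconoces al final, pero entonces tu argumento, aun completado, probaría una versión con hipótesis y no la conjetura tal cual, que es precisamente la razón por la que el texto la mantiene como conjetura y remite a la literatura en lugar de dar una demostración.
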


Los resultados de Kato, Skinner y Urban que citamos después de la \cref{conj:mc-ec} de
hecho son más generales: no sólo funcionan para curvas elípticas (es decir con $k=2$) sino
para cualquier forma modular $f$ como arriba (con las mismas pequeñas condiciones técnicas
que mencionamos). Es decir, también esta conjetura es demostrada en muchos casos.

\ejercicios

\begin{ejer}
  Sea $f$ una forma modular ordinaria. Demuestre que el polinomio de Hecke
  $p$-ésimo de $f$ \[ X^2-a_pX+\chi(p)p^{k-1}, \] tiene una única raíz $\alpha\in\Qbar$ tal
  que $|\alpha|_p=1$.
\end{ejer}

\begin{ejer}
  Demuestre la unicidad en el \cref{thm:palf-modulformen}. Es decir, aunque la fórmula de
  interpolación sólo es verdad para una cantidad finita de $n\in\Nuno$ (al contrario del
  \cref{thm:palf-stickelberger-no-trivial}) aún así el elemento $\mu_f\in\LL(G)$ es
  determinado únicamente por la fórmula. Esta demostración es análoga a la de la
  \cref{prop:unicidad-palf}.
\end{ejer}

\begin{ejer}
  ¿Que significa la existencia de $\mu_f$ del \cref{thm:palf-modulformen} para los valores
  especiales de la función $L$? Use las proposiciones
  \ref{prop:congruencias-de-kummer-general} y \ref{prop:unidad-solo-depende-de-i} para
  deducir resultados análogos a los corolarios \ref{cor:congruencias-de-kummer-dirichlet} y
  \ref{cor:unidad-depende-solo-de-i}.
\end{ejer}

\begin{ejer}
  ¿Que significa la Conjetura Principal para los grupos de Selmer? Use los lemas
  \ref{lem:wert-einheit-bedeutung} y \ref{lem:cp-tamano} para deducir resultados análogos a
  las proposiciones \ref{prop:cp-implica-kummer} y \ref{prop:consecuencia-numero-de-clases}
  (también para curvas elípticas).
\end{ejer}

\subsection{Motivos y la Conjetura Equivariante de Números de Tamagawa}
\label{sec:etnc}

Hasta ahora hemos conocido varias \enquote{Conjeturas Principales}: dos versiones de la clásica,
la de curvas elípticas y más generalmente aquella para formas modulares. ¿Cómo se pueden
uniformar estas en una sola afirmación general? En esta última sección tratamos de esbozar
algunas ideas de cómo hacerlo.

Aquí entran los motivos que ya mencionamos varias veces. Ellos son de alguna manera
\enquote{el objeto más general de interés aritmético}. En la introducción a este texto
escribimos que la pregunta que queremos estudiar es \enquote{cómo se comportan los números
  enteros o algebraicos y ecuaciones entre ellos en diferentes situaciones}. Ecuaciones (de
polinomios) entre números enteros o algebraicos llevan a un objeto geométrico: una variedad
algebraica.\footnote{Para nosotros, una \define{variedad} es un esquema integral y separado
  de tipo finito sobre un campo.} Entonces, los motivos son como \enquote{pedazos} o
\enquote{componentes} de tales variedades que son aritméticamente interesantes. Son
objetos muy generales que incluyen los objetos que estudiamos hasta ahora y muchos más.
Intentamos a explicar esto un poco más, pero el lector está en libertad de simplemente
pensar en variedades (o ecuaciones).

De manera ligeramente más precisa, la idea es la siguiente. Escribimos
$\mathpzc{Var}(\Q)$ para la categoría de variedades lisas proyectivas sobre $\Q$ (esto se
puede hacer para campos más generales como por ejemplo campos de números, pero para simplificar sólo tratamos el caso de $\Q$). De una variedad se puede considerar su
cohomología de diferentes maneras: existen varios funtores de $\mathpzc{Var}(\Q)$ a
categorías de espacios vectoriales con estructuras adicionales que tienen propiedades
similares aunque son construidos de maneras muy diferentes. Por ejemplo, tomando los puntos
en $\C$ de una variedad algebraica lleva a una variedad analítica compleja, de la cual
podemos tomar la cohomología singular de la topología algebraica. También existe la
cohomología de de Rham (algebraica). Además existe para cada primo $q$ la cohomología étale
$q$-ádica. Todas estas cohomologías tienen propiedades comunes (por ejemplo, sus dimensiones
son iguales, existen resultados de dualidad, \dots), aunque ni siquiera son espacios
vectoriales sobre el mismo campo. El deseo que estimuló la teoría de motivos era el de una
\enquote{teoría de cohomología universal}, que debería existir un funtor
$h\colon\mathpzc{Var}(\Q)\rightarrow\mathpzc{Mot}(\Q)$ a una categoría de motivos sobre $\Q$
tal que todos los funtores de cohomología se factoricen a través de $h$, y tal que
$\mathpzc{Mot}(\Q)$ se encuentre \enquote{más cerca} de las categorías de espacios vectoriales --
idealmente debería ser una categoría abeliana. Si algo así existe, entonces la categoría
$\mathpzc{Mot}(\Q)$ claramente tiene que tener más objetos que $\mathpzc{Var}(\Q)$ que no es
una categoría abeliana -- por eso el motivo $h(X)$ asociado a una variedad $X$ se puede
descomponer aunque $X$ sea irreducible. Esto es lo que quisimos decir cuando hablamos de
\enquote{pedazos} de variedades. Vamos a ilustrar este fenómeno en el \cref{ex:motivos}~\ref{ex:dedekind-zeta}.

Hasta hoy no se ha podido demostrar que una categoría $\mathpzc{Mot}(\Q)$ con las propiedades
deseadas existe, pero se han construido algunos candidatos. Para que estas realmente tengan
dichas propiedades falta que demostrar algunas conjeturas difíciles (las \enquote{conjeturas
  estándar} de Grothendieck). No vamos a explicar más detalles aquí. Simplemente nos
imaginamos que la categoría existe, como de hecho lo hacen muchos textos en la
literatura. Lo que pasa es que, aunque no sea posible hacer todo lo que Grothendieck soñaba
que se pudiera hacer con los motivos, lo que necesitamos para la Teoría de Iwasawa sí es
posible, por eso este enfoque pragmático funciona bien. Lo que necesitamos es
\enquote{tomar cohomología de un motivo}, es decir aplicar los funtores que mencionamos
antes como cohomología singular (que se llama también cohomología de Betti), de Rham o étale
$q$-ádica a un motivo para obtener espacios vectoriales. Estos se llaman las
\define{realizaciones} del motivo. Si $M$ es un motivo, las denotamos como $M_\betti$,
$M_\dR$ y $M_q$, respectivamente.

Al lector que quiera aprender más (y de una manera precisa) sobre los motivos le
recomendamos el maravilloso texto \cite{MR3204952} y para más detalles el libro
\cite{MR2115000}.

Entonces ¿cómo extendemos las ideas de las secciones anteriores a los motivos? Para esto lo
más importante son las realizaciones étales de un motivo. Si $X$ es una variedad lisa proyectiva
sobre $\Q$ y $q$ es un primo entonces la cohomología étale $q$-ádica
\[ \HL^i_\et(X\times_\Q\Qbar,\Q_q) \]
es un espacio vectorial sobre $\Q_q$ con una acción continua de $\GQ$, así que es una
representación de Galois. Por eso, si $M$ es un motivo sobre $\Q$, su realización étale
$q$-ádica $M_q$ también es una representación de Galois. Por supuesto, podemos hacer esto
para cada primo $q$, y la siguiente conjetura salta a la vista.

\begin{conj}
  Sea $M$ un motivo sobre $\Q$. Entonces $(M_q)_q$ es un sistema compatible de
  representaciones de Galois.
\end{conj}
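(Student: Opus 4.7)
El plan sería primero reducir al caso fundamental en que $M=h^i(X)$ para alguna variedad lisa proyectiva $X$ sobre $\Q$ y algún $i\ge0$, puesto que cualquier motivo se construye a partir de tales piezas mediante operaciones categoricas (sumandos directos, torsiones de Tate, duales, etc.) y la compatibilidad del sistema se conserva bajo ellas. Fijemos un conjunto finito $S$ de primos que contenga a los de mala reducción de $X$. Para $\ell\notin S$ y $q\neq\ell$ el teorema de cambio de base propio y liso en cohomología étale garantiza que $M_q=\HL^i_\et(X\times_\Q\Qbar,\Q_q)$ es no ramificada en $\ell$, lo que establece la primera condición de la \cref{defi:sistema-compatible}.

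Para la compatibilidad de los polinomios de Frobenius el caso accesible es $\ell\notin S$ (buena reducción). Aquí el cambio de base propio y liso identifica $M_q^{I_\ell}=M_q$ con la cohomología étale de la reducción especial $X_{\F_\ell}$, y los resultados de Deligne sobre las Conjeturas de Weil implican que el polinomio característico de $\Frob_\ell$ sobre esta cohomología tiene coeficientes enteros independientes de $q\neq\ell$. Para incluir también el caso $q=\ell$ hace falta invocar la comparación entre la cohomología étale $\ell$-ádica y la cohomología cristalina (teoría de Fontaine-Messing, desarrollada posteriormente por Faltings, Tsuji y otros), que garantiza que el Frobenius cristalino y el Frobenius étale tienen el mismo polinomio característico, cerrando así la compatibilidad local en los primos de buena reducción.

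El corazón del problema es $\ell\in S$ (mala reducción), donde se concentra el obstáculo principal. Para $q\neq\ell$ la teoría de monodromía $\ell$-ádica de Grothendieck asocia a la restricción de $M_q$ a $\G{\Q_\ell}$ una representación de Weil-Deligne $\mathrm{WD}_q(M)$, y el enunciado se traduce en que la semisimplificación de $\mathrm{WD}_q(M)$ es independiente de $q$ y que sus polinomios característicos tienen coeficientes en $\Q$. Para el caso delicado $q=\ell$ es imprescindible invocar la teoría de Hodge $p$-ádica: la conjetura potencialmente semiestable de Fontaine, demostrada por Tsuji y por Berger, asocia a $M_\ell$ restringido a $\G{\Q_\ell}$ un $(\varphi,N)$-módulo filtrado del cual se extrae una representación de Weil-Deligne, y se espera que ésta coincida salvo semisimplificación con las obtenidas para $q\neq\ell$.

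La parte verdaderamente difícil será demostrar la igualdad entre las representaciones de Weil-Deligne construidas por caminos completamente distintos para distintos valores de $q$. En honor a la verdad, este enunciado es actualmente un problema abierto en su forma general: sólo se conoce para clases restringidas de motivos, como variedades abelianas (esencialmente por Weil y por el teorema de isogenía de Faltings), curvas, y motivos que admiten una realización automorfa en el sentido del programa de Langlands, donde la compatibilidad es consecuencia de las propiedades de las representaciones de Galois asociadas a representaciones automorfas cuspidales por trabajos de Deligne, Langlands, Clozel, Harris-Taylor, Scholze y otros. A falta de una demostración general, la estrategia habitual en la literatura sobre la CENT es postular esta compatibilidad como hipótesis de trabajo.
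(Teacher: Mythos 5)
Aquí no hay demostración del artículo con la cual comparar: el enunciado aparece como \emph{conjetura} y el texto la asume explícitamente, señalando en un pie de página que en general está abierta, que se conocen casos especiales (curvas elípticas, formas modulares) y que las Conjeturas de Weil dan la compatibilidad de la \cref{defi:sistema-compatible} al menos en casi todas las plazas. Tu conclusión final —que la independencia de $q$ de las representaciones de Weil--Deligne en los primos de mala reducción, y su comparación con la construida vía teoría de Hodge $p$-ádica cuando $q=\ell$, es un problema abierto salvo para clases restringidas de motivos— coincide exactamente con la postura del artículo, así que no hay ninguna laguna \emph{relativa al texto}. Tu bosquejo intermedio (cambio de base propio y liso para la no ramificación fuera de $S$, las Conjeturas de Weil de Deligne para la compatibilidad en buena reducción con $q\neq\ell$, la comparación cristalina para $q=\ell$, y la monodromía $\ell$-ádica junto con la conjetura potencialmente semiestable en mala reducción) es un resumen correcto de los resultados parciales conocidos, pero debes tener claro que es un panorama de la literatura y no una demostración: el paso que tú mismo identificas como el corazón del problema es precisamente lo que nadie sabe probar en general, y el artículo tampoco lo intenta.
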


Suponemos que esta conjetura es cierta.\footnote{En general esta conjetura está abierta,
  pero algunos casos especiales son conocidos (por ejemplo los de curvas elípticas o formas
  modulares). Además, las famosas Conjeturas de Weil (véase por ejemplo \cite{MR926276})
  implican que al menos para \emph{casi todas las plazas} la compatibilidad de la
  \cref{defi:sistema-compatible}~\ref{defi:sistema-compatible:compatibilidad} es cierta.}
En esta sección vamos a explicar más y más conjeturas cada una planteada sobre las
anteriores. Esto quizás parezca un poco tambaleante, pero todas estas conjeturas sí son
conocidas en algunos casos (por ejemplo para curvas elípticas), así que lo peor sería que
fueran válidas solamente en estos casos, pero al menos sabemos que no son vacías.

Con el sistema compatible podemos definir (casi) todos los objetos que necesitamos para una
Conjetura Principal, como la función $L$ y los grupos de Selmer. No se cree que cada
sistema compatible viene de un motivo -- para esto conjeturalmente se necesita una condición
extra al sistema, que en este caso se llama \enquote{geométrico}; esto es el contenido de la conjetura de
Fontaine y Mazur \cite{MR1363495}.
La Conjetura Principal hipotéticamente es verdad para motivos, y esto
explica para cuales sistemas compatibles esperamos una Conjetura Principal: los que vienen
de motivos, o si la conjetura de Fontaine y Mazur es verdad, equivalentemente los
geométricos.

\begin{defi}
  La función $L$ de un motivo $M$ sobre $\Q$ es la función $L$ del sistema compatible
  $(M_q)_q$. La denotamos $L(M,-)$.
\end{defi}

La siguiente conjetura entonces sería una consecuencia de las conjeturas en el Programa de
Langlands.

\begin{conj}
  La función $L(M,-)$ tiene una continuación meromorfa a todo $\C$.
\end{conj}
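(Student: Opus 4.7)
El plan es abordar esta conjetura vía el Programa de Langlands, como se insinúa en el texto inmediatamente antes del enunciado. La estrategia global es asociar al motivo $M$ de dimensión $n$ una representación automorfa $\pi$ de $\GL_n$ sobre $\Q$ (o más precisamente, del grupo adélico $\GL_n(\mathbb A_\Q)$) tal que las funciones $L$ coincidan, $L(M,s)=L(\pi,s)$, y luego aprovechar las propiedades analíticas conocidas de las funciones $L$ automorfas sobre $\GL_n$: tienen continuación meromorfa a todo $\C$, una ecuación funcional $s\leftrightarrow n-s$ (después de una normalización adecuada) y, fuera de un número finito de polos controlados, son holomorfas. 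Todo esto es consecuencia del trabajo fundamental de Godement--Jacquet sobre la teoría de funciones $L$ estándar para $\GL_n$, y es independiente del motivo de origen; por eso la dificultad se traslada enteramente a la cuestión de \emph{modularidad}.

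Antes de enfrentar el caso general, reduciríamos y usaríamos lo que ya se sabe. Primero, si $M$ se descompone en $\mathpzc{Mot}(\Q)$ como $M=M_1\oplus M_2$, el sistema compatible $(M_q)_q$ se descompone compatiblemente y $L(M,s)=L(M_1,s)\,L(M_2,s)$, así que basta con considerar motivos simples. Segundo, varios casos clásicos son accesibles: para motivos de Artin (con realización de imagen finita) el teorema de Brauer escribe el carácter como combinación $\Z$-lineal de caracteres inducidos de subgrupos en los que las funciones $L$ son funciones $L$ de Hecke, cuya continuación meromorfa ya era conocida en el siglo XX; para el motivo asociado a una curva elíptica sobre $\Q$, o más generalmente a una forma modular nueva, el Teorema de Modularidad citado en el texto combinado con el trabajo de Hecke implica la continuación; y para motivos más amplios, los métodos de modularidad potencial de Taylor y colaboradores (levantamientos de Taylor--Wiles, traslado de Arthur--Clozel, functorialidad automorfa) cubren numerosas familias, por ejemplo ciertas variedades de Calabi--Yau, potencias simétricas de curvas elípticas no-CM, y motivos que aparecen en variedades de Shimura.

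El obstáculo principal -- y, hay que reconocerlo, el núcleo del problema -- es probar la modularidad de un motivo $M$ completamente arbitrario. Esta es exactamente una de las conjeturas centrales del Programa de Langlands, profundamente emparentada con la conjetura de Fontaine--Mazur mencionada en el texto, que predice que toda representación de Galois $p$-ádica irreducible y geométrica proviene de un objeto automorfo. Las técnicas de levantamiento modular disponibles hoy (sistemas de Taylor--Wiles y refinamientos) típicamente requieren hipótesis no despreciables sobre el tipo de Hodge--Tate, el conductor en $p$, la estructura residual $\bmod\,p$ y la existencia de una representación residual ya modular a partir de la cual \enquote{deformar}. Superar estas restricciones -- encontrar puntos de partida, controlar anillos locales de deformación en casos ramificados salvajemente, o manejar motivos que no son autoduales -- requeriría ideas genuinamente nuevas. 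Por eso, aunque el plan es conceptualmente claro, ejecutarlo en total generalidad queda fuera del alcance de las técnicas actuales y la conjetura permanece como uno de los grandes problemas abiertos de la aritmética contemporánea.
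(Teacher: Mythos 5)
Debes notar que el enunciado en cuestión es una \emph{conjetura}: el artículo no ofrece demostración alguna, sólo la observación de que el resultado sería consecuencia de las conjeturas del Programa de Langlands. Tu propuesta sigue esencialmente esa misma ruta (asociar al motivo un objeto automorfo y usar las propiedades analíticas de las funciones $L$ automorfas, junto con los casos conocidos: Artin vía Brauer y Hecke, curvas elípticas y formas modulares vía el Teorema de Modularidad, y modularidad potencial), y reconoces correctamente que el paso crucial, la modularidad de un motivo arbitrario, emparentada con la conjetura de Fontaine y Mazur, está fuera del alcance de las técnicas actuales; por lo tanto tu texto coincide con la postura del artículo, pero no constituye (ni puede constituir hoy) una demostración del enunciado, que permanece abierto.
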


Antes de continuar mencionamos algunos ejemplos.

\begin{ex}\label{ex:motivos}
  \begin{enumerate}
  \item El caso más sencillo es el de la variedad $X=\Spec\Q$, que geométricamente es un
    punto. Su cohomología étale $q$-ádica es un espacio vectorial de dimensión $1$ con la
    acción trivial de $\GQ$. Por eso, su función $L$ es la función zeta de Riemann. Este
    motivo lo denotamos como $\Q$.
  \item\label{ex:tate-motive} El sistema de representaciones $(\Q_q(1))_q$ también viene de
    un motivo, que se denota $\Q(1)$. Por supuesto, su función $L$ es
    $L(\Q(1),s)=\zeta(s+1)$ ($s\in\C$).
  \item\label{ex:artin-motive} Mencionamos que para cada carácter de Dirichlet $\chi$ existe
    un motivo $[\chi]$ cuya función $L$ es la de $\chi$. Más generalmente para cada
    \define{representación de Artin} $\rho$, es decir una representación de $\GQ$ con imagen
    finita, existe un motivo $[\rho]$ cuya función $L$ es la función $L$ de Artin de $\rho$.
  \item Una curva elíptica $E$ sobre $\Q$ es una variedad proyectiva lisa y por eso da lugar
    a un motivo.\footnote{Más precisamente, aquí usamos el motivo $h^1(E)$.} Aquí la
    cohomología étale $q$-ádica $\HL^1_\et(E\times_\Q\Qbar,\Q_q)$ \emph{no} es lo mismo que
    $V_qE$ (que usamos para definir la función $L$) sino dual a esto, que gracias al
    apareamiento de Weil en la curva es isomorfo a $V_qE\tensor_{\Q_q}\Q_q(1)$. Por eso la
    función $L$ del motivo asociado a $E$ es $s\mapsto L(E,s+1)$ ($s\in\C$).  
    \item\label{ex:dedekind-zeta} Un ejemplo bastante interesante es el asociado a la variedad
    $\Spec K$ sobre $\Q$ para un campo de números $K$. Se puede verificar que la función $L$
    asociada a su motivo es la función zeta de Dedekind $\zeta_K$ del campo, que es también
    la función $L$ de Artin de la representación trivial de $\Gal(K/K)$.  Supongamos que
    $K/\Q$ es Galois. Entonces el formalismo de Artin (es decir, el hecho de que la función de
    Artin es invariante bajo inducción de representaciones, véase \cite[Prop.\
    3.8]{MR0349635}) dice que $\zeta_K$ es la función $L$ de la representación regular de
    $\Gal(K/\Q)$. De la teoría de representaciones de grupos finitos sabemos que esta
    representación se descompone en irreducibles: si $\rho_1,\dotsc,\rho_k$ son todas las
    representaciones irreducibles de $\Gal(K/\Q)$ entonces la representación regular es
    isomorfa a $\rho_1^{\dim\rho_1}\oplus\dotsm\oplus\rho_k^{\dim\rho_k}$. Su función $L$
    entonces se escribe como un producto de funciones de $L$ de Artin
    \[ \zeta_K(s)=\prod_{i=1}^k L(\rho_i^{\dim\rho_i},s). \] Esta descomposición de la
    función $L$ del motivo $\Spec K$ es la presencia, en el lado de las funciones $L$, del
    fenómeno de que el motivo $\Spec K$ se puede descomponer en $\mathpzc{Mot}(\Q)$
    aunque geométricamente es irreducible (¡es un punto!): cada una de las funciones
    $L(\rho_i,s)$ de hecho es la función $L$ de un submotivo. Esto ilustra la idea de que
    un motivo es un \enquote{pedazo} de una variedad e ilustra la utilidad de motivos:
    realmente nos dan un panorama más fino que las variedades.
  \item Finalmente mencionamos que para formas modulares (más precisamente, formas nuevas)
    también existe un motivo cuya función $L$ es la asociada a la forma modular. Esto es un teorema
    de Scholl \cite{MR1047142}.\footnote{\label{footnote:motivos-con-coef}Estrictamente,
      para formas modulares necesitamos más generalmente motivos con coeficientes en un
      campo de números (en lugar de sólo $\Q$), que en el caso de una forma nueva $f$ es el
      campo $K_f$. Para estos, las realizaciones son espacios vectoriales sobre $K_f$ o sus
      completaciones en lugar de sobre $\Q$ o $\Q_q$.}
  \end{enumerate}
\end{ex}

Ahora, la pregunta importante es la de algebraicidad de valores especiales, que
necesitamos para funciones $L$ $p$-ádicas. Esto es el contenido de una conjetura de Deligne
\cite[Conj.\ 1.8]{MR0546622}. Antes de poder explicarla tenemos que hablar sobre
\define{isomorfismos de comparación} para motivos. Las diferentes maneras de tomar
la cohomología de una variedad proyectiva lisa están relacionadas de manera tal que los
espacios vectoriales que obtenemos son canónicamente isomorfos después de tensarlos con un
campo más grande. Por ejemplo, la cohomología singular y de Rham son canónicamente isomorfas
al aplicar el producto tensorial con $\C$. De manera similar, la cohomología singular y la $p$-ádica son
canónicamente isomorfas al tensar la primera con $\Q_p$. Entre la de de Rham y la
$p$-ádica también existe un tal isomorfismo después de tensar con $\BdR$, que es un campo
de la teoría $p$-ádica de Hodge que no vamos a explicar aquí (remitimos a
\cite{BrinonConradPAdicHodgeTheory} para esto). Estos isomorfismos se extienden
a los motivos, así que tenemos lo siguiente para cada motivo $M$:
Existen isomorfismos canónicos
\begin{align*}
{\mathrm{cp}_\infty}\colon& M_\betti\tensor_\Q\C\isomarrow M_\dR\tensor_\Q\C,\\
{\mathrm{cp}_\et}\colon& M_\betti\tensor_\Q\Q_p\isomarrow M_p,\\
{\mathrm{cp}_\dR}\colon& M_p\tensor_{\Qp} \BdR\isomarrow M_\dR\tensor_\Q \BdR
\end{align*}
(los últimos dos para cada primo $p$) que son compatibles con las varias estructuras extras
que tenemos en ambos lados.

Parte de estas estructuras extras son subespacios canónicos $M_\betti^+\subseteq M_\betti$ y
$M_\dR^0\subseteq M_\dR$.\footnote{Más precisamente, en $M_\betti$ tenemos una acción de
  $\G\R$ y $M_\betti^+$ es el espacio fijo por esta acción, y en $M_\dR$ tenemos una
  filtración descendente que se llama la filtración de Hodge, y $M_\dR^0$ es el paso $0$ de
  esta filtración. Estas estructuras vienen directamente de las estructuras análogas en la
  cohomología de una variedad.} Con esto podemos definir:

\begin{defi}\label{defi:critico}
  Un motivo $M$ se llama \emph{crítico}\index[def]{motivo@motivo!crítico} si la composición
  \[ M_\betti^+\tensor_\Q\C\hookrightarrow
    M_\betti\tensor_\Q\C\labeledarrow{\mathrm{cp}_\infty}M_\dR\tensor_\Q\C\twoheadrightarrow M_\dR/M_\dR^0\tensor_\Q\C \]
  es un isomorfismo. En este caso, si escogemos bases de los $\Q$-espacios vectoriales
  $M_\betti^+$ y $M_\dR/M_\dR^0$ definimos $\Omega_\infty(M)\in\C^\times$ como el
  determinante de este morfismo con respeto a estas bases. Por supuesto, esto depende de
  las bases, es decir $\Omega_\infty(M)$ solo está bien definido salvo a elementos de
  $\Q^\times$, pero esto será suficiente. El número $\Omega_\infty(M)$ se llama el
  \define{período complejo} de $M$.
\end{defi}

La conjetura de Deligne entonces es la siguiente. Obviamente su validez no depende de las
bases que escogemos.

\begin{conj}[Deligne]
  Si $M$ es un motivo crítico entonces \[ \frac{L(M,0)}{\Omega_\infty(M)}\in\Q^\times. \]
\end{conj}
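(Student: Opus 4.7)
El plan es abordar la conjetura por casos, empezando por los motivos más simples del \cref{ex:motivos} y luego indicando qué harían falta para los casos más generales. Como esta conjetura está abierta en general, mi esbozo será necesariamente parcial: verificar casos conocidos y delinear la estrategia donde la conjetura sigue siendo un problema abierto.

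Primero verificaría la conjetura en los casos básicos donde ya tenemos fórmulas explícitas. Para el motivo $M=\Q$ (punto) tenemos $L(M,0)=\zeta(0)=-\frac12\in\Q^\times$ según la \cref{zeta-continuacion}, así que el problema se reduce a calcular el período $\Omega_\infty(\Q)$ usando las realizaciones explícitas del motivo trivial y verificar que éste también es racional. Luego reduciría la conjetura en cada nivel al estudio del motivo $M(n)$ mediante torsiones de Tate: un motivo es crítico en distintos valores enteros dependiendo de la dimensión de $M_\betti^+$ y del paso $M_\dR^0$ de la filtración de Hodge, y la torsión $M\mapsto M(n)$ desplaza los valores especiales como $L(M(n),0)=L(M,n)$, permitiendo traducir la algebraicidad de un valor especial en otro. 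Esto muestra que la \cref{prop:numeros-de-bernoulli-gen} verifica el caso de los motivos $[\chi]$ del \cref{ex:motivos}~\ref{ex:artin-motive} asociados a caracteres de Dirichlet, una vez calculado el período correspondiente.

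A continuación atacaría el caso de motivos de Artin generales usando el formalismo de Artin descrito en el \cref{ex:motivos}~\ref{ex:dedekind-zeta}: toda función $L$ de Artin se descompone en funciones $L$ de Dedekind de subcampos, y combinando esto con el teorema de Klingen y Siegel sobre la racionalidad de los valores $\zeta_K(1-n)$ de funciones zeta de Dedekind de campos totalmente reales se obtiene el resultado para esta clase de motivos. Para motivos asociados a formas modulares nuevas (\cref{footnote:motivos-con-coef} del \cref{ex:motivos}) aplicaría directamente el \cref{thm:shimura-algebraicidad} de Shimura, cuidando que los períodos $\Omega_f^\pm$ que allí aparecen coinciden con el período $\Omega_\infty(M)$ de la \cref{defi:critico} aplicada al motivo de Scholl -- esta identificación de períodos es en sí misma no trivial y requiere comparar la definición abstracta vía isomorfismos de comparación con la construcción concreta de Shimura mediante integrales de períodos sobre ciclos en la curva modular.

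El obstáculo principal -- y la razón por la cual la conjetura sigue abierta -- aparece al tratar con motivos generales que no son de ninguno de los tipos anteriores. Aquí la estrategia propuesta por Deligne y desarrollada por el Programa de Langlands consiste en asociar a cada motivo crítico una representación automorfa con la misma función $L$ y deducir la algebraicidad del valor $L(M,0)/\Omega_\infty(M)$ de propiedades de racionalidad en la teoría de representaciones automorfas (por ejemplo, mediante modelos de Whittaker racionales y cohomología de variedades de Shimura). Pero tanto la existencia de la correspondencia automorfa como la identificación de los períodos automorfos con los períodos motívicos de la \cref{defi:critico} son problemas profundamente difíciles, y únicamente están establecidos en familias específicas. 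Por eso el plan realista es \emph{no} intentar demostrar la conjetura en general sino explotar la estructura adicional disponible en cada ejemplo concreto, tal como hizo Deligne mismo al formular la conjetura examinando sistemáticamente los casos que sí entendía.
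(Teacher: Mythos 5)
Debes notar primero que el enunciado que intentas demostrar es, tal como lo presenta el texto, una \emph{conjetura} abierta de Deligne (citada de \cite{MR0546622}): el texto no ofrece ninguna demostración, y sólo señala que el \cref{thm:shimura-algebraicidad} de Shimura y las proposiciones \labelcref{zeta-continuacion} y \labelcref{prop:numeros-de-bernoulli-gen} son instancias o casos especiales ya demostrados. Tu propuesta es honesta al reconocer esto y, en lo esencial, coincide con la postura del texto: verificar los casos conocidos (motivo trivial, caracteres de Dirichlet vía torsiones de Tate, formas modulares vía Shimura/Scholl) y admitir que el caso general está fuera de alcance. En ese sentido no hay una ``demostración del texto'' con la cual comparar la tuya; lo que escribes es un panorama de evidencia, no una prueba del enunciado, y así debe entenderse.

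Dicho esto, hay dos imprecisiones concretas que convendría corregir. Primero, tu paso para motivos de Artin está formulado al revés: el formalismo de Artin del \cref{ex:motivos}~\ref{ex:dedekind-zeta} descompone la función zeta de Dedekind de un campo Galois en funciones $L$ de Artin, no toda función $L$ de Artin en zetas de Dedekind de subcampos; lo que permite expresar funciones $L$ de Artin mediante funciones de Hecke es el teorema de inducción de Brauer, con exponentes enteros posiblemente negativos, y aun con Klingen--Siegel sólo se cubren los valores críticos de motivos de Artin de tipo apropiado (esencialmente caracteres pares/impares en los enteros correctos), no la clase general que sugieres. Segundo, la identificación de los períodos $\Omega_f^\pm$ de Shimura con el período $\Omega_\infty$ de la \cref{defi:critico} aplicada al motivo de Scholl, que tú mismo señalas como delicada, es efectivamente un resultado no trivial que habría que citar o demostrar aparte; tal cual está, tu esbozo la da por hecha. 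Con esas salvedades, tu plan de ``casos conocidos más estrategia automorfa'' refleja fielmente el estado del arte que el texto describe, pero no constituye ni puede constituir una demostración del enunciado.
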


El \cref{thm:shimura-algebraicidad} para formas modulares es una instancia de esta conjetura
que está demostrada.\footnote{Como mencionamos en la \cref{footnote:motivos-con-coef} en la
  \cpageref{footnote:motivos-con-coef}, para formas modulares necesitamos motivos con
  coeficientes en un campo de números $K$ en lugar de $\Q$. La conjetura de Deligne que
  mencionamos es la versión para motivos con coeficientes en $\Q$, la versión más general
  afirma que la expresión está en $K^\times$. El \cref{thm:shimura-algebraicidad} es la
  conjetura de Deligne para el motivo $M(f)(\psi)(n)$ con la notación que introducimos en la \cref{def:motQ1}.}  También las proposiciones \ref{zeta-continuacion} y
\ref{prop:numeros-de-bernoulli-gen} son casos especiales de esta conjetura.

Ahora estamos casi listos para explicar cómo debería
ser en general una función $L$ $p$-ádica. Antes de esto tenemos que introducir la siguiente notación, que usa el hecho de que la
categoría de motivos tiene un producto tensorial y duales (los cuales en realizaciones se
convierten en el producto tensorial y el dual usual).

\begin{defi}\label{def:motQ1}
  Sea $M$ un motivo.
  \begin{enumerate}
  \item Sea $n\in\Z$. Si $n\ge1$ entonces definimos $\Q(n):=\Q(1)^{\otimes n}$. Para $n=-1$
    definimos $\Q(-1)=\Q(1)^*$ el dual de $\Q(1)$ y para $n\le1$ definimos
    $\Q(n)=\Q(-1)^{\otimes(-n)}$. (Aquí $\Q(1)$ es como en el
    \cref{ex:motivos}~\ref{ex:tate-motive}.)
  \item Definimos $M(n):=M\tensor\Q(n)$ para $n\in\Z$.
  \item Si $\psi$ es un carácter de Dirichlet entonces definimos
    $M(\psi):=M\tensor[\psi]$. Más generalmente, si $\rho$ es una representación de Artin
    entonces definimos $M(\rho):=M\tensor[\rho]$. (Aquí $[\psi]$ y $[\rho]$ son como en el
    \cref{ex:motivos}~\ref{ex:artin-motive}.)
  \end{enumerate}
\end{defi}

Se ve fácilmente que $L(M(n),s)=L(M,s+n)$ para $s\in\C$. La respuesta a la pregunta ¿Cuáles
valores de una función $L$ compleja pueden ser interpolados $p$-ádicamente?
Es presumible: los valores $L(M(n)(\chi),0)=L(M(\chi),n)$ para $n\in\Z$ y caracteres de
Dirichlet $\chi$ tal que el motivo $M(n)(\chi)$ es crítico. Por ejemplo, se puede verificar
que para una forma modular nueva $f$ de peso $k$, un entero $n\in\Z$ y un carácter de
Dirichlet $\chi$ el motivo $M(f)(\chi)(n)$ es crítico si y solo si $1\le n\le k-1$, lo cual es
compatible con el \cref{thm:shimura-algebraicidad}.

En las conjeturas sobre funciones $L$ $p$-ádicas los motivos deben cumplir una condición
adicional que muchas veces se llama la \define{condición de Panchishkin}. No la explicamos
aquí, se encuentra por ejemplo en \cite[§3]{MR1265554} o \cite[§4.2.3]{MR2276851}. Para una
forma modular, esta condición es equivalente a la condición que la forma sea ordinaria.

De manera similar a como definimos el período complejo en la \cref{defi:critico} se puede definir
también un período $p$-ádico, que esencialmente es el determinante de
\[ M_\betti^+\tensor_\Q\BdR\hookrightarrow
  M_\betti\tensor_\Q\BdR
  \labeledarrow{\mathrm{cp}_\et}
  M_p\tensor_\Qp\BdR
  \labeledarrow{\mathrm{cp}_\dR}
  M_\dR\tensor_\Q\BdR\twoheadrightarrow M_\dR/M_\dR^0\tensor_\Q\BdR \] salvo a una pequeña
modificación que omitimos aquí. Siempre suponemos que calculamos este determinante
con respeto a las mismas bases que usamos para calcular $\Omega_\infty(M)$. Esto lleva a un
número $\Omega_p(M)\in(\hat\Q_p^\nr)^\times$, es decir en la completación de la máxima
extensión no ramificada de $\Qp$.\footnote{La modificación que mencionamos garantiza
  que de hecho $\Omega_p(M)\neq0$; como lo definimos arriba puede pasar que la composición
  de mapeos no es un isomorfismo. Además la modificación garantiza también que el
  período de hecho está en el subcampo $\hat\Q_p^\nr\subseteq\BdR$.} Con esta definición
podemos esbozar la conjetura. En ella escribimos
$\tilde\LL(G):=\tilde\O\llbracket G\rrbracket$ para el álgebra de Iwasawa con coeficientes
en $\tilde\O$, que denota el anillo de enteros en $\hat\Q_p^\nr$, y escribimos $\tilde\Phi(G)$
para el anillo de cocientes de $\tilde\LL(G)$ (eso es la localización donde invertimos todos
los elementos que no son divisores de cero). En general las funciones $L$ $p$-ádicas están
en $\tilde\Phi(G)$ y no en $\tilde\LL(G)$, como vimos en el ejemplo de la función zeta $p$-ádica
de Riemann. La función $L$ $p$-ádica conjeturalmente tiene valores en $\tilde\O$.

\begin{conj}\label{conj:palf-motivos}
  Sea $M$ un motivo que cumple la condición de Panchishkin. Entonces existe un único
  elemento $\mu_M\in\tilde\Phi(G)$ tal que para cada carácter de Dirichlet $\psi$ cuyo
  conductor es una potencia de $p$ y cada $n\in\Z$ tal que $M(\chi)(n)$ es crítico tenemos
  \[
    \int_G\psi^{-1}\kappa^n\integrald\mu_M=(\text{\normalfont factores de
      corrección})\cdot\frac{\Omega_p(M(\chi)(n))}{\Omega_\infty(M(\chi)(n))}L(M(\chi)(n),0). \]
\end{conj}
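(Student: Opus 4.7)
El plan se divide en dos etapas claramente distintas: unicidad y existencia.

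Para la unicidad, el paso clave es identificar un elemento $h_M\in\tilde\LL(G)$, análogo al $h_1^{(1)}$ del caso de Riemann, tal que conjeturalmente $h_M\mu_M\in\tilde\LL(G)$. La teoría general de Coates y Perrin-Riou predice la forma de $h_M$ en términos de los caracteres donde algún factor de Euler modificado en $p$ se anula. Una vez identificado $h_M$, la unicidad de $h_M\mu_M$ seguirá por un argumento de densidad idéntico al de la \cref{prop:unicidad-palf}: los caracteres de la forma $\psi^{-1}\kappa^n$ con $\psi$ de conductor potencia de $p$ y $n\in\Z$ en el rango crítico son Zariski densos en $\operatorname{Spec}\tilde\LL(G)$, y el teorema de preparación de Weierstraß (\cref{thm:weierstrass}) implica que una serie de potencias con infinitud de ceros debe ser nula.

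Para la existencia, el único método razonable en generalidad consiste en pasar por el lado automorfo mediante el Programa de Langlands. Concretamente, el esquema es: (i) asociarle al motivo $M$ una representación automorfa $\pi$ con $L(\pi,s)=L(M,s)$, lo cual provee la continuación meromorfa de la función $L$ y, mediante integrales de Rankin-Selberg, el método de Shimura o el método de doblaje, la algebraicidad de los valores críticos (conjetura de Deligne); (ii) construir $\mu_M$ directamente en el lado automorfo como una distribución $p$-ádica asociada a $\pi$ -- vía símbolos modulares en el caso de $\operatorname{GL}_2$ (como Mazur-Tate-Teitelbaum), familias $p$-ádicas de formas ordinarias (Hida), o eigenvarieties en situaciones más generales; (iii) verificar la fórmula de interpolación, comparando el período $p$-ádico de la construcción con $\Omega_p(M)$ a través de los mapeos de comparación $\operatorname{cp}_\dR$ y $\operatorname{cp}_\infty$.

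El paso (iii), y en particular la aparición exacta de los factores de corrección, no es caprichoso: los factores modificados de Euler en $p$, las potencias de $(2\pi\complexi)$ y los factores gamma emergen naturalmente de la teoría de Coleman de integración $p$-ádica y de la teoría $p$-ádica de Hodge, tal como ya se ve en la \cref{sec:coleman} y en el \cref{thm:palf-modulformen}. La dificultad fundamental, y la razón de que la conjetura permanezca abierta en su plena generalidad, es que reposa sobre otras conjeturas todavía inaccesibles -- Langlands, Deligne, la existencia rigurosa del período $p$-ádico $\Omega_p(M)$ -- por lo que ni siquiera está bien formulada sin admitir resultados profundos de entrada. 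Por eso, los avances concretos se concentran en familias específicas donde cada uno de estos ingredientes puede establecerse: caracteres de Hecke (Katz, Coates-Wiles), formas modulares ordinarias, formas de Hilbert (Deligne-Ribet, Wiles), formas de Siegel y varios productos de Rankin-Selberg. Una demostración uniforme que englobe todos estos casos parece ser, hoy por hoy, el gran desafío abierto de esta área.
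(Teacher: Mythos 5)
Lo primero que hay que señalar es que el enunciado en cuestión es una \emph{conjetura}: el texto no ofrece (ni podría ofrecer) una demostración, sino que la presenta como caso especial de la conjetura de Fukaya y Kato, indicando además (vía el \cref{thm:fk}, con $K_\infty=\Q(\mu_{p^\infty})$) que se deduciría de la Conjetura Equivariante de los Números de Tamagawa junto con la conjetura equivariante de los $\varepsilon$-isomorfismos. Tu propuesta tampoco es una demostración —y lo reconoces explícitamente al final—, así que no puede evaluarse como tal; lo que sí cabe comparar son las dos rutas hacia el enunciado. Tu estrategia pasa por el lado automorfo (Langlands, construcciones tipo símbolos modulares, familias de Hida, eigenvarieties) y refleja fielmente cómo se han establecido los casos conocidos (Kubota--Leopoldt, Katz, Mazur--Swinnerton-Dyer, Hida, etc.), pero es una ruta caso por caso; la ruta del texto es conceptual y uniforme: postular la función $L$ $p$-ádica como elemento de un $\mathrm K_1$ cuya existencia e interpolación se siguen de las dos conjeturas equivariantes, lo que además fija de una vez los factores de corrección y los períodos sin depender de la construcción concreta. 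En tu paso (iii) conviene ser más cauto: comparar el período de una construcción automorfa con $\Omega_p(M)$ vía $\mathrm{cp}_\dR$ y $\mathrm{cp}_\infty$ es precisamente el punto más delicado en la práctica (el propio texto subraya que para formas modulares esto requiere un trabajo sustancial), y no se sigue \enquote{naturalmente} de la teoría de Coleman.

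La parte de unicidad de tu plan es la única que admite un argumento genuinamente riguroso y coincide en esencia con la \cref{prop:unicidad-palf}: basta que exista al menos un $n$ crítico, pues entonces los caracteres $\psi^{-1}\kappa^n$ con $\psi$ recorriendo los infinitos caracteres de conductor potencia de $p$ ya son suficientes, y el teorema de preparación de Weierstraß (\cref{thm:weierstrass}, \cref{ejer:weierstrass}) concluye. Nota, sin embargo, que tu reducción previa mediante un denominador $h_M$ con $h_M\mu_M\in\tilde\LL(G)$ es a su vez conjetural (es parte de lo que predicen Coates--Perrin-Riou), de modo que incluso la unicidad queda condicionada a conocer la estructura de polos de $\mu_M$; en el texto este punto se absorbe en la descomposición $\mu_M=\lambda\mu_M'$ de la \cref{nota:ll-vs-ll-tilde}.
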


En esta forma la conjetura es un caso especial de una conjetura de Fukaya y Kato de
\cite{MR2276851}, pero ya antes conjeturas similares han sido formuladas por Coates y
Perrin-Riou \cite{MR1097608,MR1129081}.

\begin{remark}\label{nota:ll-vs-ll-tilde}
  El anillo $\tilde\LL(G)$ que aparece aquí es mucho más grande que $\LL(G)$, donde las
  funciones $L$ $p$-ádicas anteriores vivían. En las versiones de esta conjetura de Coates y
  Perrin-Riou la función $L$ $p$-ádica vive en $\LL(G)$ o su anillo de cocientes (que
  denotamos $\Phi(G)$), pero en su
  fórmula de interpolación no aparece el período $p$-ádico. La versión de aquí de Fukaya y
  Kato incluye el período $p$-ádico y por eso es un poco más elegante y conceptual (como
  explicamos al final de esta sección), pero para incluirla hay que aumentar el anillo.

  Sin embargo, mencionamos que el elemento conjetural $\mu_M$ se puede escribir como
  $\mu_M=\lambda\mu_M'$ con $\lambda\in\tilde\LL(G)$ y $\mu_M'\in\Phi(G)$ de una manera
  esencialmente única, es decir si $\mu_M=\lambda'\mu''_M$ es otra tal descomposición
  entonces $\mu_M'$ y $\mu_M''$ sólo difieren por una unidad en $\LL(G)^\times$.
\end{remark}

Es decir, la conjetura tiene la forma
\[ \frac{\text{valor de la función $L$ $p$-ádica}}{\text{período $p$-ádico}} =
  (\text{factores de corrección})\cdot\frac{\text{valor de la función $L$
      compleja}}{\text{período complejo}}. \]
Los factores de corrección son expresiones sencillas como factores de Euler y
factoriales. En particular son algebraicos, así que lo de arriba es una igualdad en $\Qbar$
y el período $p$-ádico describe la parte transcendente del valor de la función $L$
$p$-ádica, justamente como el período complejo describe la parte transcendente del valor de
la función $L$ compleja.

Notemos que, aunque los períodos no están unívocamente definidos (sólo salvo a un factor
en $\Q^\times$), si cambiamos la base con respeto a cual los definimos entonces ambos
cambian por el mismo factor. Es decir, el valor de la integral en la conjetura arriba no
depende de las bases que escogemos.

En general, no es nada fácil verificar que una función $L$ $p$-ádica construida sea
compatible con esta conjetura. Esto se debe a que por definición los períodos son
difíciles de calcular, mientras que para construir una función $L$ $p$-ádica no existe una
manera estándar y las fórmulas de interpolación que uno obtiene muchas veces contienen
expresiones que son mas bien artefactos del método de construcción y a priori no tienen
un significado conceptual. Por lo tanto, no queda claro si estas expresiones de hecho están relacionadas con los
períodos. Sin embargo, para todas las funciones $L$ $p$-ádicas que hemos visto hasta ahora,
esto sí es cierto. Para las de caracteres de Dirichlet esto es fácil de ver porque los
motivos y los isomorfismos de comparación tienen descripciones muy explícitas. Para las
formas modulares esto es mucho más difícil, pero también es conocido (una demostración se
encuentra en \cite{MichaelDiss}).

Ahora podemos enunciar la Conjetura Principal para motivos. Una conjetura de este estilo fue
formulada por Greenberg en \cite{MR1097613} y \cite[§3, 3.1]{MR1265554}. Para esto sea $M$
un motivo cumpliendo la condición de Panchishkin. Además escojamos un retículo estable $T_p$
en la realización $M_p$, con el cual podemos definir el módulo
$\mathrm X(M):=\mathrm X(M_p/T_p)$, que según un resultado de
\cite{MR1265554} es noetheriano\footnote{En los textos de Greenberg la
  definición del grupo de Selmer es ligeramente diferente, el subgrupo $\Hf^1(K_v,-)$
  para las plazas $v\mid p$ difiere de nuestra definición; sin embargo, se puede demostrar
  que bajo la condición de Panchishkin la versión de Greenberg de hecho es la misma que la
  de Bloch y Kato.} y conjeturalmente es de torsión.
Escribimos $\mu_M=\lambda\mu_M'$ con $\lambda\in\tilde\LL(G)$ y $\mu_M'\in\Phi(G)$ como en
la \cref{nota:ll-vs-ll-tilde}. Según esta nota la veracidad de la siguiente afirmación no
depende de esta descomposición.

\begin{conj}[Conjetura Principal para motivos]\label{conj:cp-motivos}
  Existe $h\in\LL(G)$ tal que tenemos la igualdad de ideales en $\LL(G)$
  \[ \charideal_{\LL(G)}\mathrm X(M)=(h\mu'_M) \]
  y $(h)$ también es el ideal característico de un cierto módulo (que omitimos aquí).
  
  En particular, si $\mu_M\in\tilde\LL(G)$ entonces tenemos la igualdad de ideales en
  $\LL(G)$
  \[ \charideal_{\LL(G)}\mathrm X(M)=(\mu'_M). \]
\end{conj}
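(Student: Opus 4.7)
El plan es demostrar las dos inclusiones del \cref{conj:cp-motivos} por separado, siguiendo las dos estrategias descritas en la \cref{sec:dem-cp} que han tenido éxito en los casos conocidos (la Conjetura Principal clásica, la de curvas elípticas, la de formas modulares ordinarias). Es decir, utilizaría un sistema de Euler apropiado para acotar $\charideal_{\LL(G)}\mathrm X(M)$ por arriba, y técnicas automorfas tipo Mazur y Wiles para acotarlo por abajo.

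Para la inclusión \enquote{$\subseteq$}, construiría un sistema de Euler asociado al motivo $M$, es decir una colección de clases $c_m\in\HL^1(\Q(\mu_m),T_p)$ para $m\in\Nuno$ compatibles con la correstricción y provenientes de alguna fuente geométrica natural de $M$ (por ejemplo, elementos de Beilinson u otras construcciones en la cohomología motívica de $M$, generalizando las unidades ciclotómicas usadas en el \cref{sec:coleman}). Luego aplicaría una versión generalizada del mapeo de Coleman al sistema para obtener la función $L$ $p$-ádica $\mu'_M$, y el formalismo general de sistemas de Euler (siguiendo Rubin y Kato) concluiría que $\charideal_{\LL(G)}\mathrm X(M)$ divide a $(h\mu'_M)$. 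Aquí la \cref{conj:palf-motivos} sobre la fórmula de interpolación tendría que invocarse para identificar el elemento producido por el mapeo de Coleman con la función $L$ $p$-ádica correcta.

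Para la inclusión \enquote{$\supseteq$}, seguiría la estrategia automorfa de Mazur y Wiles en su generalización debida a Skinner, Urban y otros: suponiendo que $M$ proviene, vía el Programa de Langlands, de una representación automorfa $\pi$ de un grupo reductivo, estudiaría una familia de Hida o de Coleman que contenga a $\pi$ y construiría congruencias entre series de Eisenstein y formas cuspidales dentro de esta familia, moduladas por $\mu'_M$. Por medio de las representaciones de Galois asociadas a estas formas cuspidales obtendría extensiones no triviales de Galois cuyo tamaño es controlado por la función $L$ $p$-ádica, y los duales de estas extensiones proveerían suficientes clases en $\mathrm X(M)$ para garantizar la divisibilidad opuesta a la del paso anterior.

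El obstáculo principal es doble y explica por qué la conjetura sigue completamente abierta en esta generalidad. Por un lado, no se conocen sistemas de Euler excepto en una cantidad muy pequeña de situaciones (unidades ciclotómicas, elementos de Kato, de Beilinson-Flach, de Lei-Loeffler-Zerbes,\ldots), y cada nueva construcción exige un trabajo sustancial y ad hoc que depende profundamente de la geometría particular del motivo. Por otro lado, el lado automorfo requiere un entendimiento muy fino de la geometría de la variedad de Shimura asociada al grupo reductivo correspondiente, algo que sólo se tiene para $\mathrm{GL}_2/\Q$ y unas pocas situaciones más. Además, en la mayoría de los casos la propia \cref{conj:palf-motivos} sobre la existencia de $\mu_M$ sigue abierta, así que para muchos $M$ ni siquiera está disponible el enunciado de la Conjetura Principal; por eso, más que una demostración uniforme, lo realista a mediano plazo es extender el resultado motivo por motivo.
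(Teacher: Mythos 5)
Aquí hay que empezar por una aclaración de estatus: el enunciado que intentas demostrar es una \emph{conjetura} (la \cref{conj:cp-motivos}), y el texto no ofrece ninguna demostración de ella; sólo la formula (siguiendo a Greenberg), cita los casos especiales conocidos (Rubin para multiplicación compleja, Kato para una inclusión y Skinner--Urban para la otra en el caso de formas modulares ordinarias) y señala que, \emph{condicionalmente}, se deduce de la Conjetura Equivariante de los Números de Tamagawa junto con la conjetura equivariante de los $\varepsilon$-isomorfismos vía el teorema de Fukaya y Kato (\cref{thm:fk}), especializando a $K_\infty=\Q(\mu_{p^\infty})$. Tu propuesta toma una ruta distinta a esa derivación condicional: la estrategia directa, caso por caso, de la \cref{sec:dem-cp} (sistemas de Euler para la divisibilidad \enquote{$\subseteq$} y congruencias de Eisenstein en familias automorfas para \enquote{$\supseteq$}), que es en efecto el camino por el que se han demostrado todas las instancias conocidas.

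Sin embargo, tal como está, tu texto es un programa y no una demostración, y tú mismo lo reconoces: los ingredientes esenciales no existen en la generalidad requerida. Concretamente, (i) no hay construcción de un sistema de Euler para un motivo $M$ arbitrario que cumpla la condición de Panchishkin, ni un mapeo de Coleman general que lo relacione con $\mu'_M$; (ii) la propia existencia de la función $L$ $p$-ádica (\cref{conj:palf-motivos}) es conjetural, así que el lado derecho de la igualdad ni siquiera está definido incondicionalmente; (iii) el que $\mathrm X(M)$ sea de torsión sobre $\LL(G)$ --- necesario para que $\charideal_{\LL(G)}\mathrm X(M)$ tenga sentido tal como se definió --- sólo se conoce en casos como el \cref{thm:kato-x-torsion}; y (iv) el insumo automorfo (familias de Hida/Coleman, geometría de variedades de Shimura, representaciones de Galois asociadas) sólo está disponible para unos pocos grupos reductivos. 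Por tanto hay una brecha genuina e inevitable: ningún argumento con los métodos que describes cierra la conjetura en general, y lo honesto es presentarla, como hace el texto, o bien como enunciado abierto con casos demostrados, o bien como consecuencia condicional del marco de Fukaya--Kato descrito en la \cref{sec:etnc}.
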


El lector debería comparar esta Conjetura Principal (en el caso
$\mu'_M\notin\tilde\LL(G)$) con la interpretación de la Conjetura Principal clásica que damos
en la \cref{nota:cp-cociente-de-ideales-car}. En cuyo caso explicamos el módulo cuyo
ideal característico es generado por $h$.

Esta conjetura claramente generaliza todas las que hemos visto antes, pero todavía no es el fin de
la historia.  Quedan dos direcciones principales en las cuales podemos generalizar.

Primero, podemos cambiar el campo de base $\Q$ a un campo de números $F$. Esto
significaría estudiar (sistemas de) representaciones de $\G F$ y motivos sobre $F$
(como por ejemplo curvas elípticas). Esto es posible y las generalizaciones en este caso han
sido formuladas, pero esto no cambia mucho conceptualmente, sólo hace que la notación es
menos clara. Por eso ninguneamos esta dirección.

Segundo, es posible cambiar la torre de campo de números $(K_r)_r$ a otra torre, por ejemplo
una más grande. Esto cambia el grupo $G$ y también el álgebra de Iwasawa $\LL(G)$. Incluso
se puede estudiar el caso en que $G$ no es conmutativo. La Teoría de Iwasawa no
conmutativa fue fundada por Harris, Coates, Howson, Ochi y Venjakob y trae algunos nuevos
fenómenos. Por ejemplo, en el lado algebraico la teoría de estructura ya no funciona porque entonces el
anillo $\LL(G)$ se comporta peor generalmente. En particular, ya no tenemos ideales
característicos, que eran esenciales para formular la Conjetura Principal. Se puede arreglar
esto usando la teoría K algebraica para definir \define{elementos característicos} -- estos
son elementos de una modificación de $\mathrm K_1$ del álgebra de Iwasawa que son enviados
al módulo por el morfismo de borde a $\mathrm K_0$. Para formular una Conjetura Principal, las funciones $L$ $p$-ádicas también son elementos de este $\mathrm K_1$ y se
sospecha que son elementos característicos de grupos de Selmer. En su fórmula de
interpolación entonces aparecen no sólo los chanfles del motivo por caracteres de Dirichlet
sino de representaciones de Artin del grupo $G$ (que para $G\isom\Z_p^\times$ como antes,
son justamente los caracteres de Dirichlet que usamos). Para una curva elíptica la teoría es
descrita en \cite{MR2217048}. También queremos mencionar \cite{MR2185787} que es una
introducción muy bonita a estas ideas. Para motivos más generales, la Teoría de Iwasawa no
conmutativa, es un caso especial de las ideas que indicamos enseguida.

Las últimas generalizaciones de las que hablaremos son la denominada \emph{Conjetura
  Equivariante de Números de Tamagawa}\index[def]{conjetura!equivariante de Números de Tamagawa} y la \emph{conjetura equivariante de los
  $\varepsilon$-isomorfismos}\index[def]{conjetura!equivariante de los
  $\varepsilon$-isomorfismos}, formuladas por Burns, Flach, Fukaya y Kato después de
trabajos de Deligne, Beilison, Bloch, Kato, Perrin-Riou, Fontaine, Huber, Kings y otros. En
la introducción mencionamos dos resultados sobre conexiones entre valores especiales de
funciones $L$ y la aritmética que todavía relucen en el cuadro: la fórmula analítica de
números de clases y la Conjetura de Birch y Swinnerton-Dyer. Estas dos afirmaciones de hecho
no son \enquote{$p$-ádicas}, por eso su relación con las Conjeturas Principales no es clara
a primera vista. La Conjetura Equivariante de Números de Tamagawa y la conjetura
equivariante de los $\varepsilon$-isomorfismos están diseñadas como una generalización común
de estas conexiones. Describen de una manera satisfactoria el significado de valores de
funciones $L$ de motivos mediante invariantes cohomológicas del motivo.

Explicamos las ideas de estas conjeturas; aquí seremos muy vagos. La Conjetura de Números
de Tamagawa (todavía no equivariante), también conocida como la conjetura de Bloch y Kato,
es una generalización común de la fórmula analítica de números de clases y la Conjetura de
Birch y Swinnerton-Dyer. En estas dos afirmaciones tenemos una función $L$ de un motivo $M$,
meromorfa en $\C$, que podemos escribir como serie de potencias
\[ L(M,s)=L^*(M)s^{r(M)} + \text{expresiones de orden más alto} \] donde
$L^*(M)\in\C^\times$ es el coeficiente principal y $r(M)\in\Z$ es el orden del cero de la función
en $s=0$ (para la fórmula analítica de números de clases tomamos $M=K(1)$ para un campo de
números $K$ y para la Conjetura de Birch y Swinnerton-Dyer tomamos un motivo construido de la
curva elíptica). Las afirmaciones entonces describen los números $r(M)$ y $L^*(M)$ usando
invariantes del motivo (el número de clases, la orden del grupo de Tate y Shafarevich, el
rango del grupo de Mordell y Weil, \dots). La Conjetura de Números de Tamagawa es una
generalización de esto para cada motivo, expresando $r(M)$ y $L^*(M)$ con invariantes del
motivo.  Entonces la Conjetura de Números de Tamagawa Equivariante filosóficamente predice que los valores de $r(M)$ y $L^*(M)$ \enquote{varían continuamente con el
  motivo $M$}. Finalmente, conjeturalmente la función $L$ de un motivo $M$ tiene una
ecuación funcional que relaciona $L(M,s)$ con $L(M^*(1),-s)$ para $s\in\C$. Esta ecuación
funcional da una relación entre $L^*(M)$ y $L^*(M^*(1))$ y también $r(M)$ y $r(M^*(1))$, por
eso debería haber también una relación entre las Conjeturas de Números de Tamagawa para $M$
y $M^*(1)$. La conjetura de los $\varepsilon$-isomorfismos (no equivariante) describe tal relación de una manera muy sutil y la conjetura equivariante de los
$\varepsilon$-isomorfismos afirma que estas relaciones también varían continuamente con el
motivo. Una introducción a estas ideas se encuentra en \cite{MR2392359} o \cite{MR2088713}.

¿Qué tiene todo esto que ver con la Conjetura Principal? Esto es explicado por el siguiente
teorema. Aquí ya no aparecen grupos de Selmer sino complejos de Selmer,
que son generalizaciones de grupos de Selmer apropiados para esta situación general. El
\enquote{otro módulo} que apareció en la \cref{conj:cp-motivos} ahora está incluido en el
complejo de Selmer.

\begin{thm}[Fukaya/Kato]\label{thm:fk}
  Supongamos que la Conjetura Equivariante de los Números de Tamagawa y la conjetura
  equivariante de los $\varepsilon$-isomorfismos son ciertas.

  Sea $M$  un motivo que cumple la condición de Panchishkin. Fijemos una torre
  $K_\infty/\Q$ tal que $G=\Gal(K_\infty/\Q)$ tenga un subgrupo normal de índice finito que
  es pro-$p$ y topológicamente finitamente generado.

  Entonces:
  \begin{enumerate}
  \item\label{thm:fk:palf} Existe una función $L$ $p$-ádica, que es un elemento de un cierto
    grupo $\mathrm K_1$, que interpola los valores $L(M(\rho)(n),0)$, donde $\rho$ es una
    representación de Artin de $G$ y $n\in\Z$ tal que $M(\rho)(n)$ es crítico.
  \item\label{thm:fk:mc} Esta función es un elemento característico del complejo de Selmer
    de $M$ (esto es la Conjetura Principal).
  \end{enumerate}
\end{thm}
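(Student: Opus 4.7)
El plan es derivar el resultado mediante un proceso de interpolación que ensambla los datos provistos por la Conjetura Equivariante de los Números de Tamagawa (ETNC) a cada nivel finito de la torre $K_\infty/\Q$, usando los $\varepsilon$-isomorfismos equivariantes para garantizar la compatibilidad entre niveles. Primero reformularé todo en el lenguaje de líneas determinantales (o más generalmente, clases en grupos $\mathrm K_1$). Para cada cociente finito $G\twoheadrightarrow G_r$, la ETNC para el motivo $M$ con coeficientes en $\Zp[G_r]$ predice la existencia de una trivialización canónica de una recta fundamental construida a partir de la cohomología motívica y étale de $M$ sobre el campo correspondiente. Esta trivialización produce, al evaluarla en cada carácter de Artin $\rho$ de $G_r$ y cada torcido crítico $M(\rho)(n)$, el valor $L^*(M(\rho)(n))$ modulo los factores de corrección esperados (factores de Euler, factoriales, períodos $\Omega_\infty$ y $\Omega_p$). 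Es decir, la ETNC ya contiene toda la información de interpolación de la \cref{conj:palf-motivos}, pero únicamente a nivel finito.

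Segundo, pasaré al límite inverso a lo largo de los cocientes $G_r$. Las clases $z_r\in\mathrm K_1(\Zp[G_r])$ provistas por la ETNC a priori \emph{no} son compatibles bajo los morfismos de restricción: la ETNC de nivel finito involucra ecuaciones funcionales y factores $\varepsilon$ locales que varían con $r$. Aquí entra la conjetura equivariante de los $\varepsilon$-isomorfismos, cuya función es exactamente proveer, para cada motivo y cada plaza, una interpolación canónica de los factores $\varepsilon$ locales en familias de Iwasawa. Esta interpolación es justo el ingrediente que permite identificar las partes locales de las trivializaciones a diferentes niveles y así establecer la compatibilidad deseada. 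Combinando las dos conjeturas, obtenemos un sistema compatible $(z_r)_r$ que define, por continuidad y compacidad, un elemento global $z\in\mathrm K_1$ del álgebra de Iwasawa apropiada (posiblemente tras localizar en elementos regulares). Este $z$ es nuestra función $L$ $p$-ádica.

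Tercero, la fórmula de interpolación en \ref{thm:fk:palf} seguirá por construcción: los valores en caracteres de $G_r$ son por definición los $z_r$ evaluados, que reproducen los valores $L^*(M(\rho)(n))$ modulo los factores ya mencionados; el análisis local $p$-ádico provisto por los $\varepsilon$-isomorfismos garantiza que los factores de corrección coinciden con aquellos de la \cref{conj:palf-motivos}. Para \ref{thm:fk:mc}, observo que el complejo de Selmer de $M$ sobre $K_\infty$ es precisamente el límite inverso de los complejos cohomológicos que aparecen en la ETNC a nivel finito (aquí uso la condición de Panchishkin para asegurar que las partes \enquote{trascendentes} -- las líneas motívicas y de de Rham -- se cancelan adecuadamente). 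Por lo tanto, la trivialización global $z$ es automáticamente un elemento característico del complejo de Selmer en el sentido de teoría K: el morfismo de clase $\mathrm K_1\to\mathrm K_0$ envía $z$ a la clase del complejo de Selmer.

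El paso más difícil será el segundo, la verificación de compatibilidad bajo el límite. La ETNC a nivel finito sólo predice la existencia de trivializaciones en cada $r$ sin ninguna relación explícita entre ellas. Los $\varepsilon$-isomorfismos son el ingrediente sutil y preciso que se necesita, pero demostrar que encajan con las trivializaciones de la ETNC requiere un análisis delicado del comportamiento de los factores $\varepsilon$ bajo cambios de nivel, así como del comportamiento de la cohomología étale del motivo $M$ en las plazas ramificadas (en particular en $p$, donde interviene la teoría de Hodge $p$-ádica a través de $\BdR$ y $\Bcris$). Además, el formalismo de la teoría K equivariante para álgebras de Iwasawa posiblemente no conmutativas (como en \cite{MR2276851}) es técnicamente exigente, en particular para localizar apropiadamente y manejar el paso de $\mathrm K_1(\LL(G))$ a $\mathrm K_1$ de los anillos de cocientes donde vive en general la función $L$ $p$-ádica.
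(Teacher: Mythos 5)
Primero, una aclaración: el texto no demuestra este teorema; lo enuncia y remite al artículo original de Fukaya y Kato \cite{MR2276851} (y al texto de Venjakob \cite{MR2392359}), así que no hay una demostración en el paper contra la cual comparar tu propuesta paso a paso. Juzgándola por sí misma, tu esbozo reúne los ingredientes correctos (líneas determinantales, clases en $\mathrm K_1$, papel de los factores $\varepsilon$ locales, condición de Panchishkin para comparar la línea fundamental con el complejo de Selmer), pero el mecanismo que describes no es el de Fukaya y Kato y en el punto que tú mismo señalas como el más difícil hay un desajuste. Tú tomas la ETNC como una familia de enunciados a nivel finito (uno por cada cociente $G_r$) y colocas todo el peso en un argumento de compatibilidad y paso al límite, con los $\varepsilon$-isomorfismos como pegamento entre niveles. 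En el marco de \cite{MR2276851} no hay tal límite delicado: las conjeturas se formulan de entrada para una clase general de anillos de coeficientes que incluye la propia álgebra de Iwasawa $\LL(G)$, con compatibilidad bajo cambio de anillo incorporada a la formulación (los \enquote{isomorfismos zeta}), de modo que el enunciado iwasawiano es una instancia directa y no el resultado de ensamblar niveles finitos. El papel de la conjetura de los $\varepsilon$-isomorfismos tampoco es principalmente el de garantizar compatibilidad entre niveles, sino el de trivializar los determinantes locales (sobre todo en $p$, vía la teoría de Hodge $p$-ádica) para pasar de la línea fundamental al complejo de Selmer y para producir, al especializar en giros críticos $M(\rho)(n)$, exactamente los factores de corrección (factores $\varepsilon$, factores de Euler, factoriales y períodos) de la \cref{conj:palf-motivos}.

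Además, los pasos donde reside el contenido real quedan afirmados pero no ejecutados: cómo la evaluación en caracteres de Artin reproduce los valores $L$ con los factores correctos requiere el análisis de los mapeos exponenciales de Bloch y Kato y de los isomorfismos de comparación en las especializaciones de de Rham; y el paso de $\mathrm K_1(\LL(G))$ a $\mathrm K_1$ del anillo localizado (el conjunto de Ore canónico en el caso no conmutativo) junto con el morfismo de borde a $\mathrm K_0$ es precisamente lo que da sentido a \enquote{elemento característico} en \ref{thm:fk:mc}, y tu propuesta lo menciona solo como dificultad técnica pendiente. Como plan de ataque la dirección es razonable, pero tal como está no constituye una demostración ni refleja la arquitectura del argumento de Fukaya y Kato.
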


Para más detalles sobre esto remitimos al texto original \cite{MR2276851} de Fukaya y Kato y
también al texto introductorio \cite{MR2392359} de Venjakob que explica algunas de las ideas
con más detalles.

El elemento conjetural $\mu_M$ de la \cref{conj:palf-motivos} es el de la afirmación
\ref{thm:fk:palf} del \cref{thm:fk} en el caso especial $K_\infty=\Q(\mu_{p^\infty})$. La
\cref{conj:cp-motivos} entonces es equivalente a la afirmación \ref{thm:fk:mc} en este
caso especial.
Es decir, ¡La Conjetura Equivariante de los Números de Tamagawa y la conjetura equivariante
de los $\varepsilon$-isomorfismos implican toda la Teoría de Iwasawa! En particular,
implican la existencia de funciones $L$ $p$-ádicas para muchos motivos, y también
proporcionan la fórmula de interpolación.
Así le dan una explicación más
profunda: es una consecuencia de estas dos conjeturas, y en particular es compatible con la
fórmula analítica de números de clases y la Conjetura de Birch y Swinnerton-Dyer. Además, la
Conjetura Principal, incluso no conmutativa, también es una consecuencia; por ende, son
todas las Conjeturas Principales que aparecieron en este texto. Así la Conjetura
Equivariante de los Números de Tamagawa y la conjetura equivariante de los
$\varepsilon$-isomorfismos unifican toda la Teoría de Iwasawa e implican esencialmente todo
lo que uno podría querer saber sobre funciones $L$ y sus conexiones a la
aritmética.\footnote{\label{footnote:ep-eq-x-y}Mencionamos que la equivalencia de las dos
  formulaciones de la Conjetura Principal que damos en la \cref{sec:conjetura-principal} es
  una encarnación de la conjetura equivariante de los $\varepsilon$-isomorfismos -- note que
  la ecuación funcional en este caso conecta $\zeta(s)$ y $\zeta(1-s)$!}  Por supuesto queda
un largo camino por demostrar, no obstante, la amplia impresión dada por estas ideas
es de una elegancia peculiar, que ojalá persuada al lector de continuar estudiando la
Teoría de Iwasawa.

\newpage
\backmatter
\emergencystretch=1em
\printbibliography
\printindex[def]

\end{document}